\newcommand\pref[1]{\prettyref{#1}}
\newtheorem{pro}{Proposition}[section]
\newtheorem{cor}[pro]{Corollary}
\newtheorem{lem}[pro]{Lemma}
\newtheorem{thm}[pro]{Theorem}
\newtheorem{con}[pro]{Conjecture}
\newtheorem{qst}[pro]{Question}
\newtheorem{prerem}[pro]{Remark}
\newenvironment{rem}{\begin{prerem}\rm}{\hfill $\blacktriangle$\end{prerem}}
\DeclareMathOperator{\cc}{cc}
\newcommand{\vsp}{\vspace{2mm}}
\begin{document}
\title{
	\vspace{3cm}
	\textbf{Edge Clique Cover of Claw-free Graphs}}
\author{Ramin Javadi 
\thanks{Corresponding author}
 \thanks{Department of Mathematical Sciences,
Isfahan University of Technology,
P.O. Box: 84156-83111, Isfahan, Iran. School of Mathematics, Institute for Research in Fundamental Sciences (IPM), P.O. Box: 19395-5746,
Tehran, Iran. This research was in part supported by a grant from IPM (No. 94050079). Email Address: \href{mailto:rjavadi@cc.iut.ac.ir}{rjavadi@cc.iut.ac.ir}.}
 \and Sepehr Hajebi\thanks{Department of Mathematical Sciences,
Isfahan University of Technology,
P.O. Box: 84156-83111, Isfahan, Iran. Email Address: \href{mailto:s.hajebi@math.iut.ac.ir}{s.hajebi@math.iut.ac.ir}.}} 
\date{}
\maketitle
%
\begin{abstract}
The smallest number of cliques, covering all edges of a graph $ G $, is called the (edge) clique cover number of $ G $ and is denoted by $ \cc(G) $. It is an easy observation that if $ G $ is a line graph on $ n $ vertices, then $\cc(G)\leq n $. G. Chen et al. [Discrete Math. 219 (2000), no. 1--3, 17--26; \href{http://www.ams.org/mathscinet-getitem?mr=1761707}{MR1761707}] extended this observation to all quasi-line graphs and questioned if the same assertion holds for all claw-free graphs. In this paper, using the celebrated structure theorem of claw-free graphs due to Chudnovsky and Seymour, we give an affirmative answer to this question for all claw-free graphs with independence number at least three. In particular, we prove that if $ G $ is a connected claw-free graph on $ n $ vertices with three pairwise nonadjacent vertices, then $ \cc(G)\leq n $ and the equality holds if and only if $ G $ is either the graph of icosahedron, or the complement of a graph on $10$ vertices called ``twister'' or the $p^{th}$ power of the cycle $ C_n $, for some positive integer $p \leq \lfloor (n-1)/3\rfloor $. \\[2mm]
\textbf{Keywords:} edge clique covering, edge clique cover number, claw-free graphs, triangle-free graphs.
\end{abstract}
\newpage
\tableofcontents
\newpage
\section{Introduction}\label{sec:intro}
Throughout the paper all graphs are finite without parallel edges and loops, unless it is explicitly mentioned. By a \textit{clique} of $ G $ we mean a subset of pairwise adjacent vertices. The size of the largest clique of $ G $ is called the \textit{clique number} of $ G $ and is denoted by $ \omega(G) $. Also, a subset of pairwise nonadjacent vertices is called a  \textit{stable set} of $ G $ and the size of the largest stable set of $ G $ is called the \textit{independence number} of $ G $ and is denoted by $ \alpha(G) $.  
A clique of size three and a stable set of size three are called a \textit{triangle} and a \textit{triad}, respectively. The graph $ K_{1,3} $ (a star graph with three edges) is called a \textit{claw}. We also use the term ``a claw in $ G $'' for a subset of vertices $ X\subseteq V(G) $ where the induced subgraph of $ G $ on $X$ is isomorphic to $ K_{1,3} $. 
Given a graph $ H $, a graph $ G $ is called $ H$-free, if $ G $ does not have the graph $ H $ as an induced subgraph. 
Thus, \textit{claw-free} graphs are $ K_{1,3} $-free graphs. Also, \textit{triangle-free} graphs and \textit{triad-free} graphs are graphs with no triangle and no triad, respectively. A graph $ G $ is called a \textit{line graph} if $ G $ is the line graph of a graph $ H $ (i.e. the vertices of $ G $ correspond to the edges of $ H $ and two vertices are adjacent in $ G $ if their corresponding edges in $ H $ have a common endpoint). Also, a graph is called a \textit{quasi-line graph} if the neighbourhood of every vertex can be expressed as the union of two cliques. Line graphs, quasi-line graphs and triad-free graphs are examples of claw-free graphs. In a series of elegant papers \cite{seymour1,seymour2,seymour3,seymour4,seymour5}, Seymour and Chudnovsky gave a complete description that explicitly explains the structure of all claw-free graphs. Since then, their structure theorem has been used to prove several conjectures for claw-free graphs. In this paper, we are going to apply this machinery to investigate the edge clique covering of claw-free graphs.
 
Given a graph $ G $, let us say that an edge of $ G $ is covered by a clique $ C $ of $ G $ if both of its endpoints belong to $ C $. An (\textit{edge}) \textit{clique covering} for $ G $  is a collection $ \mathscr{C} $ of cliques of $ G $ such that every edge of $ G $ is covered by a clique in $ \mathscr{C} $. The (\textit{edge}) \textit{clique cover number} (or the \textit{intersection number}, see \cite{erdos}) of $ G $, denoted by $ \cc(G) $, is the least number $ k $ such that $ G $ admits a clique covering of size $ k $. The clique cover number has been studied widely in the literature (see e.g. \cite{sur1,sur2,sur3,erdos}).
In \cite{Chen}, Chen et al.  proved that the clique cover number of a quasi-line graph on $ n $ vertices is at most $ n $. Claw-free graphs being generalization of quasi-line graphs, they questioned if the same result holds for all claw-free graphs.  In order to answer the question of Chen et al., it is enough to answer it for connected claw-free graphs. Regarding this question, in \cite{javadi}, it is proved that if $ G $ is a connected claw-free graph on $ n $ vertices with the minimum degree $ \delta $, then $ \cc(G)\leq n+c\, \delta^{4/3}\log^{1/3}\delta $, for some constant $ c $. Moreover, assuming $ \Delta $ as the maximum degree of $ G $,
 it is proved there that $ G $ admits a clique covering $ \mathscr{C} $ such that each vertex is in at most $ c\Delta /\log \Delta $ cliques of $ \mathscr{C} $, where $ c $ is a constant (though they did not use the structure theorem of claw-free graphs).  
In this paper, we apply the structure theorem to give an affirmative answer to the question of Chen et al. for all claw-free graphs $ G $ with $\alpha(G)\geq  3 $. We also characterize the equality cases, i.e. all such graphs whose clique cover number is equal to their number of vertices. Let us describe the main results of this paper in more detail. 

First, we need a couple of definitions. For a graph $G$ and a nonnegative integer $p$, the \textit{$p^{th}$ power of} $G$, denoted by $G^p$,  is defined as the graph on the same vertices such that two distinct vertices $u,v$ are adjacent in $G^p$ if and only if there is a path in $ G $ between $u$ and $ v $ with at most $ p $ edges  (thus, $ G^1=G $). For instance, when $n\geq 2p+1$, the $p^{th}$ power of the cycle $C_n$ (also called a \textit{circulant graph}), denoted by $C_n^p$, is the graph with vertices $ v_0,v_1,\ldots,v_{n-1} $, such that $ v_i $ is adjacent to $ v_j $ if and only if either $ i-j $ or $ j-i $ modulo $ n $ is in $ \{1,\ldots,p\} $. All graphs $C_n^p$ are clearly claw-free (they are in fact quasi-line graphs).  The graph of \textit{icosahedron} and \textit{the complement of a twister} (depicted in \pref{fig:icosa}) are two more examples of claw-free graphs. The main result of this paper is as follows.
\begin{thm} \label{thm:main1}
Let $ G $ be a connected claw-free graph on $ n $ vertices with $ \alpha(G)\geq 3 $.  Then, $ \cc(G)\leq n$ and equality holds if and only if $ G $ is either the graph of icosahedron, or the complement of a twister, or the $p^{th}$ power of the cycle $C_n$, for some positive integer $p\leq \lfloor (n-1)/3\rfloor$.
\end{thm}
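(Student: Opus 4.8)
The plan is to feed the hypothesis into the Chudnovsky--Seymour structure theorem for connected claw-free graphs with $\alpha(G)\ge 3$ and to argue by induction on $n=|V(G)|$. That theorem asserts that, up to a \emph{thickening} (replacing each vertex by a clique), such a $G$ is either a member of one of finitely many explicitly described \emph{basic} classes --- informally: circular interval graphs, line graphs and their bounded local modifications, the icosahedron and the Schl\"afli graph together with their relatives, the antiprismatic graphs containing a triad, and a few further sporadic classes --- or else it admits one of the nontrivial decompositions of the claw-free decomposition theorem (a $1$-join, a $W$-join, a homogeneous pair, a clique cutset, \emph{etc.}). Two reductions remove inessential features at the outset. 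First, thickenings are harmless: a clique cover of $H$ lifts without any increase in size to a clique cover of any thickening $H'$ of $H$ --- replace each clique $C$ by the union of the blobs over the vertices of $C$ --- so $\cc(H')\le\cc(H)\le|V(H)|\le|V(H')|$, and equality throughout forces the thickening to be trivial; the same holds for deleting a ``twin''. Hence an extremal graph is ``reduced'' and twin-free. Secondly, $\alpha(G)=2$ is excluded by hypothesis: that is exactly the regime --- complements of triangle-free graphs --- for which the structure theorem takes a separate, much less tractable form, so restricting to $\alpha(G)\ge 3$ is what makes the dichotomy above available.

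In the basic case I would treat the classes one at a time. The circular interval graphs carry the main content, since the family $C_n^p$ lives there: from a canonical arc representation one sees that the maximal cliques are exactly the arcs, and one sweep around the circle covers every edge with at most $n$ of them, so $\cc(G)\le n$; analysing equality forces the representation to be uniform, whence $G\cong C_n^p$, and $\cc(C_n^p)=n$ holds precisely when $p\le\lfloor(n-1)/3\rfloor$ --- the lower bound $\cc(C_n^p)\ge n$ comes from the $n$ edges of ``length exactly $p$'', each contained in a unique clique, while for $p$ above this threshold the cliques grow long enough to wrap around and strictly fewer suffice (the borderline values $p=\lfloor(n-1)/3\rfloor$ with $n\not\equiv 0\pmod 3$ are moot, since then $\alpha(C_n^p)\le 2$). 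The proper (linear) interval graphs are the path-like subcase and always have $\cc(G)<n$. For the line-graph class I would invoke and mildly extend the quasi-line result of Chen et al.\ \cite{Chen}: the stars at vertices of degree at least two form a clique cover of $L(H)$ of size at most $|E(H)|=|V(L(H))|$, with equality forcing $H$ to be a cycle, i.e.\ $G\cong C_n^1$. The remaining basic classes are either finite --- the icosahedron, where a counting argument (every clique cover meets each of the $12$ degree-$5$ vertices in at least three of the size-$\le 3$ cliques, so $3\cc(G)\ge 36$) gives $\cc(G)\ge 12$, and a suitable family of $12$ faces attains it; the complement of a twister ($10$ vertices, checked by hand); and the Schl\"afli graph ($27$ vertices, $\omega=6$, so an edge count gives $\cc(G)\ge\lceil 216/15\rceil=15<27$), together with their induced subgraphs --- or come with a combinatorial description tight enough (the antiprismatic graphs with a triad) that $\cc(G)<n$, with slack, is read off directly.

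In the decomposable case, suppose $G$ admits one of the decompositions above --- say a $1$-join, split along its link. This presents $G$ as built from a handful of strictly smaller pieces (the ``strips''), each of which can be completed into an honest connected claw-free graph $G_i$ to which the induction hypothesis applies, giving $\cc(G_i)\le|V(G_i)|$. One then reassembles optimal covers of the $G_i$ into a cover of $G$, merging the cliques that meet a gluing into a single clique of $G$ there, so that each gluing produces a net saving; careful bookkeeping of vertices against cliques gives $\cc(G)\le n$. The crucial point for the characterisation is that one more unit of slack can be squeezed out: the pieces $G_i$ produced by any of these decompositions carry a ``simplicial-like'' feature at the gluing that none of the three extremal graphs possesses (none has a simplicial vertex, and none admits a relevant join), so no $G_i$ is extremal, hence $\cc(G_i)\le|V(G_i)|-1$ for at least one $i$, which forces $\cc(G)<n$. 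Consequently no composed graph is extremal; an extremal $G$ must be basic and reduced, and by the previous paragraph this leaves exactly the icosahedron, the complement of a twister, and $C_n^p$ with $1\le p\le\lfloor(n-1)/3\rfloor$.

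The main obstacle, as is usual when the Chudnovsky--Seymour machinery is applied, will be the decomposition step: making the completed pieces genuinely land in the induction hypothesis forces a run of delicate sub-cases --- pieces or sides that are very small, gluings at vertices with large neighbourhoods, and compositions after which $\alpha$ would drop to $2$ (so that the piece must be completed differently, or treated by the basic-class or $\alpha=2$ analysis directly) --- and the net saving has to be tracked tightly enough to get \emph{strict} inequality for every nontrivial composition, which is what drives the ``only if'' direction. Among the basic classes the delicate points are the exact threshold $p\le\lfloor(n-1)/3\rfloor$ for $\cc(C_n^p)=n$ and checking that no circular interval graph other than a uniform power is extremal; the finite sporadic cases and the antiprismatic case, being respectively small and rigidly structured, should be routine.
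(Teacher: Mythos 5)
Your overall architecture (structure theorem, thickening reduction, class-by-class analysis, extra slack from decompositions) is the right one and matches the paper's in outline, but there is a genuine gap at the point you declare routine. You assert that the antiprismatic graphs containing a triad ``come with a combinatorial description tight enough that $\cc(G)<n$, with slack, is read off directly.'' This is false, and falsely so in the worst possible place: two of the three extremal families live precisely in that class. The complement of a twister is a non-orientable antiprismatic graph on $10$ vertices with $\cc=10$, and $C_{3p+3}^p$ is an (orientable, three-cliqued) antiprismatic graph with $\cc=n$; so there is no uniform slack, and the equality characterisation you are trying to prove is decided inside this class. In the paper this is by far the hardest part: it splits into orientable and non-orientable antiprismatic graphs, the orientable case needs the full Chudnovsky--Seymour taxonomy (cycles of triangles, rings of five, mantled $L(K_{3,3})$, non-$3$-substantial graphs), and the non-orientable case cannot even use the published structure theorem as stated (the paper has to repair a flaw in Chudnovsky--Seymour's 7.2 and then carry out a long independent structural analysis around rotators and the Schl\"afli graph). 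A proof that dismisses this class cannot be completed as written.

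Two smaller points. First, your thickening reduction (``replace each clique by the union of blobs'') is only valid for non-fuzzy thickenings: when the valid set $F$ is nonempty, a pair $\{u,v\}\in F$ has $X_u$ neither complete nor anticomplete to $X_v$, so the union of blobs over a clique of $H$ containing such a pair need not be a clique of the thickening; one must cover $H\setminus F$ and pay roughly $\min(|X_u|,|X_v|)$ extra cliques per fuzzy pair (this is the paper's Lemma~\ref{lem:thickening} and its antiprismatic variant), and your claim that equality forces the thickening to be trivial is not justified without this bookkeeping. Second, your decomposition step runs an induction on $n$ and needs every completed piece to land back in the induction hypothesis; the paper instead avoids induction by proving a standalone theorem on simplicial clique coverings of stripes (with the line-graph-of-a-tree exception), which sidesteps the sub-cases you flag where a piece has $\alpha\le 2$ or is too small. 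Your treatments of the circular interval graphs, the icosahedron (the chromatic-number-of-the-complement-of-the-neighbourhood bound) and the threshold $p\le\lfloor(n-1)/3\rfloor$ are essentially correct and agree with the paper.
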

According to the above theorem, in order to answer the question of Chen et al. completely, it suffices to prove the upper bound $ n $ for the clique cover number of $ n$-vertex triad-free graphs (i.e. graphs $ G $ with $ \alpha(G)\leq 2 $). Since there is no structure theorem for triangle-free graphs (and so triad-free graphs), dealing with the question for these graphs seems to require a completely different approach. In \cite{javadi2}, it is proved that for every triad-free graph $ G $ on $ n $ vertices, $\cc(G)\leq 2(1-o(1)) n $. Combining this result with \pref{thm:main1} gives the following corollary.
\begin{cor}
For every connected claw-free graph $ G $ on $ n $ vertices, we have $ \cc(G)\leq  2(1-o(1)) n $. 
\end{cor}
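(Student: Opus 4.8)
The plan is to derive the corollary by combining the two inputs already at hand: \pref{thm:main1} of this paper, which settles the case of large independence number, and the estimate of \cite{javadi2} for triad-free graphs, which covers the remaining case. Since every graph $G$ satisfies either $\alpha(G)\ge 3$ or $\alpha(G)\le 2$, a single dichotomy on $\alpha(G)$ partitions all connected claw-free graphs into these two regimes, and I would establish the bound separately on each.

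First I would handle the case $\alpha(G)\ge 3$. Here $G$ is a connected claw-free graph on $n$ vertices with three pairwise nonadjacent vertices, so \pref{thm:main1} applies directly and yields $\cc(G)\le n$. This is comfortably within the claimed bound, since $n\le 2(1-\varepsilon)n$ for every $\varepsilon\le\tfrac{1}{2}$, hence in particular $\cc(G)\le 2(1-o(1))n$.

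Next I would treat the complementary case $\alpha(G)\le 2$, that is, $G$ triad-free. For this I would simply invoke the result of \cite{javadi2}, which asserts precisely that $\cc(G)\le 2(1-o(1))n$ for every triad-free graph $G$ on $n$ vertices; a connected claw-free graph with $\alpha(G)\le 2$ is in particular such a graph, so the estimate applies. The only bookkeeping point is to amalgamate the error terms of the two cases into a single $o(1)$ function valid for all connected claw-free graphs on $n$ vertices: writing the triad-free bound as $\cc(G)\le 2(1-\varepsilon_0(n))n$ with $\varepsilon_0(n)\to 0$, the function $\varepsilon(n):=\min\{\varepsilon_0(n),\tfrac{1}{2}\}$ works simultaneously in both cases and still tends to $0$.

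I do not anticipate any genuine obstacle: all the difficulty of the corollary is absorbed into \pref{thm:main1} and into the triad-free estimate of \cite{javadi2}, and what remains is only the elementary case split described above.
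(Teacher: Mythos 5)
Your proposal is correct and is exactly the argument the paper intends: the corollary is stated immediately after the triad-free bound from \cite{javadi2} as a combination of that bound with \pref{thm:main1} via the dichotomy $\alpha(G)\geq 3$ versus $\alpha(G)\leq 2$. Your explicit handling of the $o(1)$ bookkeeping is a harmless elaboration of what the paper leaves implicit.
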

However, improving the upper bound to $ n $ for all triad-free graphs remains open. 
\begin{con}\label{con:tf}
If $ G $ is a graph on $ n $ vertices with $ \alpha(G)\leq 2 $, then $ \cc({G})\leq n $.
\end{con}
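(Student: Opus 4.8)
\textbf{A possible line of attack on \pref{con:tf}.}\ It is convenient to pass to the complement: writing $H := \overline{G}$, the hypothesis $\alpha(G) \le 2$ says exactly that $H$ is triangle-free, the cliques of $G$ are the stable sets of $H$, and \pref{con:tf} becomes the assertion that the non-edges of any triangle-free graph on $n$ vertices can be covered by at most $n$ of its stable sets. Two reductions are immediate by induction on $n$ (note that \pref{con:tf} makes no connectivity assumption, so the inductive hypothesis applies to every induced subgraph with independence number at most $2$). First, a disconnected graph with $\alpha \le 2$ is the disjoint union of at most two cliques and hence is covered by $2 \le n$ cliques; so $G$ may be assumed connected. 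Second, if $G$ has a universal vertex $v$, take a minimum clique cover of $G - v$, adjoin $v$ to each of its cliques (legitimate since $v$ is universal), and correct for the at most two vertices that are isolated in $G - v$; this yields $\cc(G) \le \cc(G - v) + O(1) \le n$. Hence one may assume $G$ is connected with no universal vertex, i.e. $H$ is connected with no isolated vertex.

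The plan then rests on two complementary devices. The first exploits the observation that $\alpha(G) \le 2$ is equivalent to saying that $N^c(x) := V(G) \setminus N_G[x]$ is a clique of $G$ for every vertex $x$, which gives a canonical family $\mathscr{F} = \{N^c(x) : x \in V(G)\}$ of at most $n$ cliques. A short computation shows that an edge $uw$ of $G$ lies in a member of $\mathscr{F}$ precisely when $u$ and $w$ have a common neighbour in $H$; consequently, \emph{if $H$ has diameter at most $2$ then every such pair (being non-adjacent in $H$) has a common $H$-neighbour, so $\mathscr{F}$ already covers all of $E(G)$ with at most $n$ cliques}, and in fact with fewer after discarding empty members and members contained in others. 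The second device handles the opposite extreme, where $H$ is sparse enough to carry a large stable set, i.e. $G$ carries a large clique: if $S$ is a maximum clique of $G$ with $|S| = \omega(G)$, then $E(G[S])$ is covered by the single clique $S$; the edges inside $R := V(G) \setminus S$ are covered by $\cc(G[R]) \le |R|$ cliques by induction; and each edge between $R$ and $S$ is absorbed because $\{r\} \cup (S \cap N_G(r))$ is a clique for every $r \in R$ (a subset of the clique $S$ together with a common neighbour), giving $|R|$ further cliques. Altogether $\cc(G) \le 2n - 2\omega(G) + 1$, which already settles \pref{con:tf} whenever $\omega(G) \ge (n+1)/2$ and, more generally, recovers the bound $2(1 - o(1))n$ of \cite{javadi2}.

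The real difficulty lies in the intermediate regime — $H$ of diameter at least $3$ and with no large stable set — where neither device alone suffices and where the extremal examples live: the circulants $C_n^p$ with $n \in \{3p+1,3p+2\}$ have independence number $2$, are covered optimally not by $\mathscr{F}$ but by the $n$ maximal cliques $\{v_i, v_{i+1}, \dots, v_{i+p}\}$, and force $\cc = n$. The natural next step is to merge the two devices by running the maximum-clique induction with a \emph{strengthened} hypothesis that produces a clique cover of $G[R]$ already ``aligned'' with $S$ — for instance, one in which, for every $r \in R$ and every neighbour $s \in S$ of $r$, some clique of the cover contains $r$ and is entirely adjacent to $s$ — so that this cover can be inflated by vertices of $S$ to absorb the $R$–$S$ edges for free, recovering the crucial savings visible in $C_n^p$. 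Making such a hypothesis genuinely inductive, and controlling the case analysis in the absence of any structure theorem for triangle-free graphs (which is precisely why the method of \pref{thm:main1} does not transfer), is the main obstacle; what seems to be needed is a delicate discharging argument across the distance layers of $H$, supplemented by Ramsey-type lower bounds on $\omega(G)$ and on the diameter of $H$ in terms of $\omega(G)$. Finally, the characterization of the equality cases should drop out by tracking which of the above inequalities are tight — forcing $\mathscr{F}$, or the maximal-clique cover, to use all $n$ cliques with no slack anywhere — which ought to pin down exactly the circulants $C_n^p$ with $n \in \{3p+1,3p+2\}$ as the triad-free graphs with $\cc(G) = n$.
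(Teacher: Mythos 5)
This statement is stated in the paper as an open conjecture, not a theorem: the authors explicitly write that ``improving the upper bound to $n$ for all triad-free graphs remains open,'' and the best bound they cite is $2(1-o(1))n$ from \cite{javadi2}. So there is no proof in the paper to compare against, and your proposal is, by your own admission, not a proof either. To its credit, the parts you do carry out are sound: the complementation to triangle-free graphs, the reductions to the connected case and to the case with no universal vertex, the observation that the non-neighbourhood cliques $N^c(x)$ cover an edge $uw$ of $G$ exactly when $u,w$ have a common neighbour in $\overline{G}$ (hence settle the case $\operatorname{diam}(\overline{G})\le 2$), and the maximum-clique induction giving $\cc(G)\le 2n-2\omega(G)+1$ are all correct, and combined with the Ramsey bound $\omega(G)=\alpha(\overline{G})=\Omega(\sqrt{n\log n})$ they do recover a $2(1-o(1))n$-type estimate. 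Your identification of $C_n^p$ with $n\in\{3p+1,3p+2\}$ as the tight triad-free examples is also consistent with \pref{lem:lemcirc}.

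The genuine gap is the entire intermediate regime, which is precisely where the conjecture lives: when $\overline{G}$ has diameter at least $3$ and $\omega(G)<(n+1)/2$, neither device applies, and your proposed remedy --- an ``aligned'' strengthened induction hypothesis plus a ``delicate discharging argument across the distance layers'' --- is not specified in any checkable form. In particular, the strengthened hypothesis you sketch (for every $r\in R$ and every $S$-neighbour $s$ of $r$, some clique of the cover of $G[R]$ containing $r$ is complete to $s$) is not obviously self-propagating under the recursion, and you give no argument that it is; nor do you indicate how the discharging would be set up or why it would terminate with a total of at most $n$ cliques. Since the absence of a structure theorem for triangle-free graphs is exactly the obstruction the authors name, the proposal should be read as a reasonable research plan rather than a proof: the statement remains unproven both in the paper and in your write-up.
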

In this paper, we also prove the upper bound $ n+1 $ for the clique cover number of a special class of triad-free graphs. Pursuing \cite{seymour6}, a graph $G$ is said to be \textit{tame} if $G$ is an induced subgraph of a connected claw-free graph $H$ such that $H$ has a triad. For instance, every triad-free graph whose vertex set is the union of three cliques is tame. To see this, let $ G $ be a triad-free graph, where $V(G)$ is the union of three cliques $A,B$ and $C$. Now, define $H$ to be the graph obtained from $G$ by adding three new vertices $a,b,c$, such that $\{a,b,c\}$ is a triad, $a$ is adjacent to every vertex in  $A\cup B$, $b$ is adjacent to every vertex in $B\cup C$ and $c$ is adjacent to every vertex in $C\cup A$. Then, $H$ is a claw-free graph with a triad and $G$ is an induced subgraph of $ H $. Thus, $G$ is tame.  In fact, we prove the following theorem. 
\begin{thm} \label{thm:main2}
Let $G$ be a connected graph on $n$ vertices which is tame. If $V(G)$ is not the union of three cliques, then $\cc(G)\leq n$ and otherwise, $\cc(G)\leq n+1$.
\end{thm}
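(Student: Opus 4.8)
My plan is to reduce the statement about a tame graph $G$ to a statement about the claw-free graph $H$ witnessing tameness, and then to invoke Theorem~\ref{thm:main1}. Since $G$ is tame, fix a connected claw-free graph $H$ containing $G$ as an induced subgraph, with a triad $\{a_1,a_2,a_3\}\subseteq V(H)$; thus $\alpha(H)\geq 3$ and Theorem~\ref{thm:main1} applies to $H$, giving $\cc(H)\leq |V(H)|$. The trouble is that $H$ may be much larger than $G$, so this bound alone is useless; instead I would take a \emph{minimal} such $H$, i.e. one with $|V(H)|$ as small as possible, and try to show that for a minimal witness one has $|V(H)\setminus V(G)|$ small and, moreover, that a clique cover of $H$ of size $\leq |V(H)|$ can be pruned down to a clique cover of $G$ of size $\leq n$ (resp. $n+1$). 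Concretely, if $\mathscr{C}$ is a clique cover of $H$ with $|\mathscr{C}|\leq |V(H)|$, then restricting each member of $\mathscr{C}$ to $V(G)$ yields a clique cover of $G$; the loss comes only from members of $\mathscr{C}$ that become empty or singletons after restriction. So the crux is a counting/charging argument: the number of ``wasted'' cliques in $\mathscr{C}$ must be shown to be at least $|V(H)\setminus V(G)|$ minus a small constant, so that $\cc(G)\leq |V(H)| - |V(H)\setminus V(G)| + (\text{small}) = n + (\text{small})$.

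To make this work I would analyze the structure of a minimal witness $H$. In a minimal $H$, every vertex $v\in V(H)\setminus V(G)$ must be ``needed'', which I would turn into structural constraints: for instance, deleting $v$ should destroy either connectivity, or claw-freeness is automatic (deleting a vertex preserves claw-freeness), so the real constraint is that deleting any subset of $V(H)\setminus V(G)$ either disconnects $H$ or kills every triad. This suggests that in a minimal witness, $V(H)\setminus V(G)$ behaves like the three added vertices $a,b,c$ in the paper's ``union of three cliques'' example: a small number of vertices (ideally at most $3$) whose role is exactly to supply a triad while keeping things connected. If I can prove that a minimal witness has $|V(H)\setminus V(G)|\leq 3$ and that the three (or fewer) extra vertices, together with the structure forced on $G$ by claw-freeness of $H$, force $G$ itself to decompose in a controlled way, then I would separately handle the case $V(G)=A\cup B\cup C$ (three cliques) — where the clean example shows the extra vertices form a triad, each adjacent to a union of two of the cliques, and one checks directly that the optimal cover of $H$ uses $n+3$ cliques but three of them restrict trivially to $G$ while we may need to re-cover up to one leftover edge, giving $n+1$ — and the complementary case, where I would argue the extra structure forces a genuine triad already inside a claw-free supergraph on only $n$ or $n+1$ extra-free vertices, letting Theorem~\ref{thm:main1} give $\cc(G)\leq n$ after pruning.

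The main obstacle, I expect, is controlling the size and attachment of $V(H)\setminus V(G)$ in a minimal witness: a priori nothing forbids a minimal claw-free supergraph with a triad from needing many extra vertices arranged in a complicated way (e.g. a long antihole or a piece of one of the circulant/icosahedral exceptional graphs glued on), and ruling such configurations out — or showing that whenever they occur the gain in wasted cliques compensates exactly — is where the real work lies. A secondary difficulty is the boundary between the two cases: deciding precisely when $V(G)$ fails to be a union of three cliques yet $G$ is still tame, and showing that in that regime one never loses the full $+1$. I would likely need the exceptional-graph characterization from Theorem~\ref{thm:main1} here too, to check by hand that the icosahedron, the complement of a twister, and the powers $C_n^p$ do not arise as the ``bad'' supergraph in a way that would push the bound past $n+1$. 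If the minimality analysis proves too delicate, a fallback is to prove Theorem~\ref{thm:main2} directly by adapting the proof of Theorem~\ref{thm:main1}: run the same structure-theorem case analysis on $H$ but track only the edges and cliques incident to $V(G)$, which avoids ever bounding $|V(H)|$ globally.
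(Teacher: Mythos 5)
Your high-level strategy (pass to a claw-free supergraph $H$ with a triad and lean on \pref{thm:main1}) points in the right direction, but the concrete mechanism you propose --- minimal witness plus pruning plus a charging argument --- has two genuine gaps. First, the pruning step does not work as stated: if $\mathscr{C}$ is a clique covering of $H$ with $|\mathscr{C}|\leq |V(H)|$, there is no reason any member of $\mathscr{C}$ should meet $V(G)$ in at most one vertex, so restricting to $V(G)$ may waste nothing and you only get $\cc(G)\leq |V(H)|$, which is useless unless $|V(H)\setminus V(G)|$ is already tiny. Second, the claim that a minimal witness has $|V(H)\setminus V(G)|\leq 3$ is precisely the hard structural content, and your minimality constraints (``deleting extra vertices disconnects $H$ or kills every triad'') do not yield it; a priori the triad of $H$ could sit far from $G$ and be sustained by many extra vertices. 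The paper closes exactly this gap, and does so with a sharper conclusion: in the remaining case ($G$ triad-free, $V(G)$ not a union of three cliques) it shows a \emph{single} extra vertex suffices. It orders $V(H)\setminus V(G)$ as a connected vertex-addition sequence $G=G_0,\ldots,G_s=H$, takes the first $G_t$ containing a triad, and proves via \pref{lem:tame2} and an analysis of the neighbour/non-neighbour partition of $v_t$ that $G_t$ (hence $H'=H[V(G)\cup\{v_t\}]$) is a fuzzy antiprismatic graph all of whose triads pass through $v_t$. Then \pref{thm:antiprismaticA} gives $\cc(H')\leq n+1$, and since the equality cases ($C_{3p+3}^p$ and the complement of a twister) contain two disjoint triads while $H'\setminus v_t=G$ is triad-free, in fact $\cc(H')\leq n$, whence $\cc(G)\leq\cc(H')\leq n$. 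Note this last improvement uses the equality characterization of the antiprismatic theorem, not of \pref{thm:main1} as you suggest.

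Two further corrections. The case where $V(G)$ is the union of three cliques is not handled by pruning a cover of the $(n+3)$-vertex supergraph (again, nothing forces three cliques of that cover to restrict trivially); the paper proves the bound $n+1$ for three-cliqued claw-free graphs directly and independently (\pref{thm:tcB}), and you would need that result or an equivalent. Also, the case where $G$ itself contains a triad is immediate from \pref{thm:main1} with no supergraph needed, and separating it out first is what lets the remaining analysis assume $G$ is triad-free. Your fallback of ``re-running the structure theorem on $H$ while tracking only cliques incident to $V(G)$'' is closer in spirit to what is actually needed, but as written it is a plan rather than a proof.
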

Finally, it is worth noting that investigating the clique cover number of $ K_{1,t}$-free graphs ($ t\geq 4 $), as a generalization of claw-free graphs, seems to be an interesting and challenging problem. 
For a graph $ G $, let us define the \textit{local independence number} of $ G $, denoted by $ \alpha_l(G) $, as the maximum number $ t $ such that $ G $ has a stable set of size $ t $ within the neighbourhood of a vertex. Thus, claw-free graphs are exactly the graphs with $ \alpha_l(G)\leq 2 $. Towards the generalization of \pref{thm:main1}, the following question is naturally raised in the same line of thought, relating the clique cover number and the local independence number.
\begin{qst} \label{qst:k1tfree}
Is it true that for every graph $ G $ on $ n $ vertices, $ 2\cc(G)\leq n\, \alpha_l(G) $?
\end{qst} 
Moreover, a relaxed version of the above question can be asked about the independence number as follows (note that this is a generalization of \pref{con:tf}).
\begin{qst} \label{qst:ktfree}
Is it true that for every graph $ G $ on $ n $ vertices, $ 2\cc(G)\leq n\, \alpha(G) $?
\end{qst}

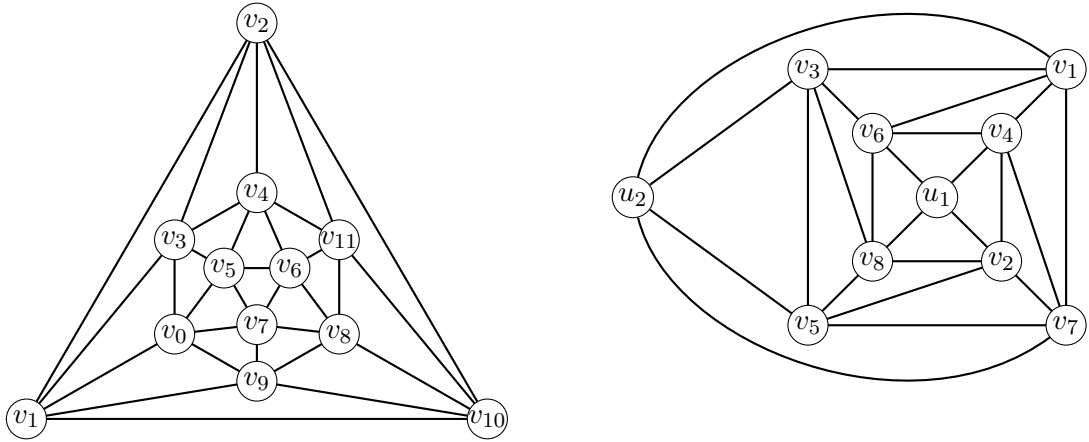
\begin{figure}
  \begin{center}
  \begin{tikzpicture}
\SetVertexNoLabel
\tikzset{VertexStyle/.style ={shape=circle,minimum size=15pt,draw}}
 \begin{scope}[scale=.5]
  \begin{scope}[rotate=-30]
 \grCycle[RA=7,prefix=v]{3}
  \end{scope}
\begin{scope}[rotate=90]
 \grCycle[RA=2.5,prefix=u]{6}
 \end{scope}
\begin{scope}[rotate=30]
 \grCycle[RA=1,prefix=w]{3}
 \end{scope}
 \EdgeFromOneToSeq{v}{u}{0}{3}{5}
 \EdgeFromOneToSeq{v}{u}{2}{1}{3}
 \EdgeFromOneToSel{v}{u}{1}{0,1,5}
  \EdgeFromOneToSeq{w}{u}{1}{0}{2}
  \EdgeFromOneToSeq{w}{u}{2}{2}{4}
  \EdgeFromOneToSel{w}{u}{0}{0,4,5}  
  \AssignVertexLabel{v}{$ v_{{10}} $,$ v_{2} $,$ v_{1} $}
  \AssignVertexLabel{u}{$v_{4}$,$ v_{3} $,$ v_{0} $,$v_{9}$,$ v_{8} $,$ v_{{11}} $}
  \AssignVertexLabel{w}{$v_{6}$,$ v_{5} $,$ v_{7} $}
  \end{scope}
  \end{tikzpicture}
  \hspace{1cm}
    \begin{tikzpicture}
    \begin{scope}[scale=.5]
  \SetVertexNoLabel
  \tikzset{VertexStyle/.style ={shape=circle,inner sep=1mm,minimum size=15pt,draw}}
   \begin{scope}[scale=.6]
    \begin{scope}[rotate=-45]
   \grCycle[RA=8,prefix=v]{4}
    \end{scope}
    \end{scope}
   \begin{scope}[scale=.3]
    \begin{scope}[rotate=-45]
   \grCycle[RA=8,prefix=w]{4}
    \end{scope}
    \end{scope}
  \node at (0,0) [circle,draw,inner sep=1pt,minimum size=15pt] (t) {$u_1$};
  \node at (-8,0) [circle,draw,inner sep=1pt] (s) {$u_2$};
  \draw [-,thick] (s) to (v2);
  \draw [-,thick] (s) to (v3);
  \draw [-,thick] (s) .. controls +(1,4) and +(-4,3)..  (v1);
  \draw [-,thick] (s)  .. controls +(1,-4) and +(-4,-3)..  (v0);
  \draw [-,thick] (w0) to (v0);
   \draw [-,thick] (w0) to (v3);
  \draw [-,thick] (w1) to (v0);
   \draw [-,thick] (w3) to (v3);
   \draw [-,thick] (w1) to (v1);
   \draw [-,thick] (w3) to (v2);
   \draw [-,thick] (w2) to (v1);
   \draw [-,thick] (w2) to (v2);
   \EdgeFromOneToSeq{t}{w}{}{0}{3}
    \AssignVertexLabel{v}{$ v_7 $,$ v_1 $,$ v_3 $,$ v_5 $}
    \AssignVertexLabel{w}{$v_2$,$v_4 $,$ v_6 $,$v_8$}
    \end{scope}
      \end{tikzpicture}
  \end{center}
  \vspace{-10pt}
  \caption{The graph of icosahedron $ G_ 0$ (left) and the complement of a twister (right).} \label{fig:icosa} \label{fig:twister}
\end{figure}

In order to prove \pref{thm:main1}, we deploy the structure theorem in \cite{seymour5}. It basically asserts that for every connected claw-free graph $ G $, either $ V(G) $ is the union of three cliques, or $ G $ is some kind of ``generalized line graph'' which admits a certain structure called \textit{non-trivial strip-structure}, or $ G $ is in one of the three basic classes, namely, \textit{graphs from the icosahedron}, \textit{fuzzy long circular interval graphs} and \textit{fuzzy antiprismatic graphs} (see \pref{sec:structure} for the definitions and the exact statement of the theorem). We will apply this theorem and more intricate structure of the three-cliqued claw-free graphs as well as the antiprismatic graphs to prove \pref{thm:main1}. The most difficult and lengthiest part of the proof (like most of the other applications of the structure theorem) is dealing with antiprismatic graphs.  

The organisation of forthcoming sections is as follows. In \pref{sec:structure}, we recall  from \cite{seymour5} the necessary definitions and the structure theorem. In \pref{sec:equality}, we discuss the claw-free graphs whose clique cover number is equal to the number of their vertices.   In \pref{sec:main}, we give a sketch of the proof of the main results (Theorems \ref{thm:main1} and \ref{thm:main2}). In \pref{sec:strip}, we study the clique covering of claw-free graphs which admit a non-trivial strip-structure. \pref{sec:fci} is devoted to the clique covering of the fuzzy long circular interval graphs. In \pref{sec:3cliques}, using the structure of three-cliqued claw-free graphs from \cite{seymour5}, we investigate their clique cover number. Finally, in Sections~\ref{sec:oap} and \ref{sec:noap}, we go through the clique covering of the antiprismatic graphs.
\subsection{Notation and terminology} \label{sub:notation}
Here, we collect some notation and terminology that is used throughout the paper. For two graphs $ G,H $, we write $ G=H $ if $ G $ is isomorphic to $ H $. In all definitions, we often omit the subscript $G$ (if exists) whenever there is no ambiguity. The graph  $ G $ is called \textit{non-null} if $ V(G) $ is nonempty. The \textit{complement} of $ G $, denoted by $ \overline{G} $, is a graph on the same vertices such that two distinct vertices are adjacent in $ \overline{G} $ if and only if they are nonadjacent in $G$. For a vertex $u\in V(G)$, let $N_G(u)$ stand for the set of all neighbours of $u$ in $G$ and define $N_G[u]=N_G(u)\cup\{u\}$ as the closed neighbourhood of $u$ in $G$. 
For a graph $ G $ and a subset of vertices $ X\subseteq V(G) $, the induced subgraph of $ G $ on $ X $ is denoted by $ G[X] $.
For two sets $X,Y\subseteq V(G)$, $ E_G(X,Y) $ stands for the set of edges in $ G $ whose one end is in $ X $ and another is in $ Y $.  Also, $ E_G(X) $ is a shorthand for $ E_G(X,X) $. For a vertex $ u\in V(G) $ and a set $ X\subseteq V(G)  $, define $N_G(u,X)=N_G(u)\cap X$ as the set of all neighbours of $u$ in $X$, and $N_G[u,X]=N_G(u,X)\cup\{u\}$. For two sets $ X,Y\subseteq V(G) $, define $ \mathscr{N}_G[X;Y]=\{N_G[x,Y]: x\in X\} $. If $F$ is a set of unordered pairs of $V(G)$, then the graph $G\setminus F$ (resp. $ G+F $) is obtained from $G$ by deleting (resp. adding) all edges in $F\cap E(G)$ (resp. $ F\cap E(\overline{G}) $). Also, when $ F $ is singleton, we often drop the brackets. 
Let $ \mathscr{C} $ be a collection of cliques of $ G $ and $ C_1,\ldots, C_k \in \mathscr{C} $. If $C= C_1\cup \ldots \cup C_k $ is a clique of $ G $, then by \textit{merging} $ (C_1,\ldots, C_k) $ in $ \mathscr{C} $ we mean removing the cliques $ C_1,\ldots, C_k $ from $ \mathscr{C} $ and adding the clique $ C $.

Furthermore, we follow some definitions from \cite{seymour5,seymour1}. A vertex $ u\in V(G) $ is called \textit{simplicial} if $ N_G(u) $ is a clique of $ G $. For a vertex $u\in V(G)$ and a set $X\subseteq V(G)\setminus \{u\}$, we say that $u$ is complete to $X$ if $u$ is adjacent to every vertex in $X$ and that $u$ is anticomplete to $X$ if $u$ has no neighbour in $X$. For two disjoint sets $X,Y\subseteq V(G)$, we say that $X$ is complete (resp. anticomplete) to $Y$ if every vertex in $X$ is complete (resp. anticomplete) to $Y$. Also, we say that $ X,Y $ are \textit{matched}, if $ |X|=|Y| $ and every vertex in $ X $ has a unique neighbour in $ Y $ and vice versa. Also, we say that $ X,Y $ are \textit{anti-matched}, if $ X,Y $ are matched in $ \overline{G} $. 
For a claw-free graph $ G $, the set of all vertices of $ G $ which are in at least one triad of $ G $ is called the \textit{core} of $ G $ and is denoted by $ W(G) $. The set $ V(G)\setminus W(G) $ is called the \textit{non-core} of $ G $ and is denoted by $ \tilde{W}(G) $.

\section{Global structure of claw-free graphs} \label{sec:structure}
In this section, we recall from \cite{seymour5,seymour6} the exact statement of the theorem that explicitly describes the structure of claw-free graphs. It will be later applied to prove the main results stated in \pref{sec:intro}. For it, we have to recall a couple of definitions (for more details, see \cite{seymour1,seymour2,seymour3,seymour4,seymour5}).
Given a graph $ G $, a set $ F $ of unordered pairs of $V(G)$ is called a \textit{valid set} for $ G $, if every vertex of $ G $ belongs to at most one member of $F$.
For a graph $ G $ and a valid set $ F $, the graph $G'$ is called a \textit{thickening} of $(G,F)$, if its vertex set $V(G')$ is the disjoint union of some sets $(X_v)_{v\in V(G)}$ such that
\begin{itemize}
\item[(T1)] for each $ v\in V(G) $, $ X_v \subseteq V(G')$ is a clique of $ G' $,
\item[(T2)] if $ u,v\in V(G) $ are adjacent in $ G $ and $ \{u,v\}\not\in F $, then $ X_u $ is complete to $ X_v $ in $ G' $,
\item[(T3)] if $ u,v\in V(G) $ are nonadjacent in $ G $ and $  \{u,v\}\not\in F$, then $ X_u $ is anticomplete to $ X_v $ in $ G' $,
\item[(T4)] if $ \{u,v\}\in F $, then $ X_u $ is neither complete nor anticomplete to $ X_v $ in $ G' $.
\end{itemize}
Here are three classes of claw-free graphs which are needed for the statement of the structure theorem.
\begin{itemize}
\item \textbf{$\mathcal{T}_1$: Graphs from the icosahedron.} The graph of \textit{icosahedron} is the unique planar five regular graph $G_0$ with twelve vertices. In particular, $G_0$ has vertices $v_0,v_1,\ldots, v_{11}$, where for $1\leq i\leq 10$, $v_i$ is adjacent to $v_{i+1},v_{i+2}$ (reading subscripts modulo 10) and $v_0$ is adjacent to $v_1,v_3,v_5,v_7,v_9$ and $v_{11}$ is adjacent to $v_2,v_4,v_6,v_8,v_{10}$ (see \pref{fig:icosa}). Let $G_1$ be obtained from $G_0$ by deleting $v_{11}$ and let $G_2$ be obtained from $G_1$ by deleting $v_{10}$. Furthermore, let $F'=\{\{v_1,v_4\},\{v_6,v_9\}\}$. 
The class $\mathcal{T}_1$ is the set of all graphs $G$ where $G$ is a thickening of either $(G_0,\emptyset)$, or $(G_1,\emptyset)$, or $(G_2,F)$, for some $F\subseteq F'$.
\item \textbf{$\mathcal{T}_2$: Fuzzy long circular interval graphs.} Let $\Sigma$ be a circle and let $\mathcal{I}=\{I_1,\ldots, I_k\}$ be a collection of subsets of $\Sigma$, such that each $I_i$ is homeomorphic to the interval $[0,1]$, no two of $I_1,\ldots, I_k$ share an endpoint, and no three of them have union $\Sigma$. Let $H$ be the graph whose vertex set is a finite subset $V\subseteq \Sigma$ and distinct vertices $u,v\in V$ are adjacent precisely if $u,v\in I_i$ for some $i$. The graph $H$ is called a \textit{long circular interval graph}. The powers of cycles defined in \pref{sec:intro} are examples of long circular interval graphs.  Furthermore, let $F'$ be the set of all pairs $\{u, v\}$ such that $u, v \in V$ are distinct endpoints of $I_i$ for some $i$ and there exists no $j\neq i$ for which $u,v\in I_j$. Also, let $ F \subseteq F'$. Then, for some such $ H $ and $ F $, any thickening $G$ of $ (F,H) $ is called a \textit{fuzzy long circular interval graph}.
The class of all fuzzy long circular interval graphs is denoted by $\mathcal{T}_2$.
\item \textbf{$\mathcal{T}_3$: Fuzzy antiprismatic graphs.}
A graph $H$ is called \textit{antiprismatic} if for every triad $\tau$ and every vertex $v \in V (H) \setminus \tau$, $v$ has exactly two neighbours in $\tau$. Let $u, v$ be two vertices of an antiprismatic graph $H$. We say that the pair $\{u, v\}$ is \textit{changeable} if $u$ is nonadjacent to $v$, and $ G+uv$ is also antiprismatic. 
Let $H$ be an antiprismatic graph and let $F$ be a valid set of changeable pairs of $ H $. Then, for some such $ H$ and $F $, every thickening of pair $ (H,F) $ is called a \textit{fuzzy antiprismatic graph}.
The class of all fuzzy antiprismatic graphs is denoted by $\mathcal{T}_3$.
\end{itemize}
Now, we recall the structure theorem of claw-free graphs from \cite{seymour6}. See \pref{sec:strip} for the definition of a non-trivial strip-structure.
\begin{thm} {\rm \cite{seymour5,seymour6}} \label{thm:seymour1}
Let $G$ be a connected claw-free graph. Then either
\begin{itemize}
\item $G$ admits  a non-trivial strip-structure, or
\item $G$ is in $\mathcal{T}_1\cup \mathcal{T}_2\cup \mathcal{T}_3$, or
\item $V(G)$ is the union of three cliques of $G$.
\end{itemize}
\end{thm}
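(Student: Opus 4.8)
Since \pref{thm:seymour1} is the global structure theorem of Chudnovsky and Seymour, any genuine proof must reconstruct the architecture of their series \cite{seymour1,seymour2,seymour3,seymour4,seymour5}, and I sketch that architecture here. The first move is to pass from graphs to \emph{trigraphs}, in which each unordered pair of vertices is designated strongly adjacent, strongly antiadjacent, or \emph{semiadjacent}, with every vertex lying in at most one semiadjacent pair. This is precisely the device that absorbs thickening: a semiadjacent pair in a basic trigraph is the seed of a valid pair $\{u,v\}\in F$, and a thickening of $(G,F)$ amounts to replacing each vertex by a clique and each semiadjacent pair by an arbitrary bipartite pattern that is neither complete nor anticomplete, exactly matching (T1)--(T4). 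Proving the dichotomy for claw-free trigraphs and then taking thickenings recovers the graph statement, so the goal becomes a structural dichotomy for claw-free trigraphs. One extreme is disposed of immediately: if $\alpha(G)\le 2$ then $G$ has no triad, the antiprismatic condition holds vacuously, and $G$ already lies in $\mathcal{T}_3$. The substance is therefore $\alpha(G)\ge 3$, where genuine triads constrain every neighbourhood (each $N(v)$ has independence number at most $2$, as a triad inside $N(v)$ would complete a claw at $v$).

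The heart of the plan is a \textbf{decomposition theorem}: every claw-free trigraph is either of a short list of \emph{basic} types or admits a structured cutset. I would take a minimal counterexample $G$ and hunt for a decomposition, the engine being a local analysis of homogeneous sets and of the bipartite patterns between a vertex and its non-neighbours. One searches in turn for a \emph{$W$-join} (a homogeneous pair of cliques, which thickening explains), a $0$-join or $1$-join from low connectivity, a \emph{generalized $2$-join}, or a \emph{hex-join}. If none is present, a chain of forcing arguments pins $G$ to a basic class: line trigraphs, long circular interval trigraphs, antiprismatic trigraphs, three-cliqued trigraphs, or a handful of sporadic graphs including the icosahedron and its vertex-deletions $G_1,G_2$. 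This is the origin of the declared classes $\mathcal{T}_1,\mathcal{T}_2,\mathcal{T}_3$, now in switchable (trigraph) form.

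The final step is \textbf{global assembly}: convert the recursive presence of $0$-, $1$- and generalized $2$-joins into a single \emph{non-trivial strip-structure}. Each application of a join is recorded as a strip glued to a hypergraph skeleton, and one verifies that the composition operation of \pref{sec:strip} reassembles $G$ exactly; since the strips are themselves small claw-free pieces, the recursion terminates. A trigraph resisting every decomposition is basic, and its thickenings fall into $\mathcal{T}_1\cup\mathcal{T}_2\cup\mathcal{T}_3$ or into the separately classified family of three-cliqued claw-free graphs (those whose vertex set partitions into three cliques), while everything else inherits a strip-structure. Together with the $\alpha(G)\le 2$ graphs already placed in $\mathcal{T}_3$, these conclusions yield exactly the trichotomy of \pref{thm:seymour1}.

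I expect the dominant obstacle to be the classification of the antiprismatic (equivalently, prismatic via complementation) trigraphs, which is why \cite{seymour1,seymour2} are devoted to it alone. The local condition that every vertex outside a triad has exactly two neighbours in it does not by itself force interval or line structure, and the argument splits according to whether the prismatic complement is \emph{orientable}; the non-orientable case demands a delicate and lengthy enumeration of the changeable pairs and of the sporadic examples. Keeping the semiadjacent/changeable pairs consistent across every decomposition step --- so that thickenings and the strip composition remain compatible --- is the bookkeeping that stretches the full argument across several papers, and it is the part I would expect to be by far the most demanding.
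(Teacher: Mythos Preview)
This theorem is not proved in the paper at all: it is the global structure theorem of Chudnovsky and Seymour, and the paper simply quotes it from \cite{seymour5,seymour6} as a black box on which the subsequent clique-covering arguments are built. There is therefore no proof in the paper to compare your proposal against.

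That said, your sketch of the Chudnovsky--Seymour architecture is accurate in its broad outlines. The passage to trigraphs to absorb thickening, the decomposition theorem (via $W$-joins, $1$-joins, generalized $2$-joins, hex-joins), the assembly of join decompositions into a strip-structure, and the separate and lengthy treatment of the antiprismatic case in \cite{seymour1,seymour2} are all correctly identified. Your remark that the non-orientable antiprismatic classification is the heaviest part is also borne out by the present paper, which spends the bulk of its effort (Sections~\ref{sec:oap} and~\ref{sec:noap}) precisely on navigating that classification when bounding $\cc(G)$. One small clarification: the three-cliqued case is not merely ``separately classified'' alongside the basic classes; in \cite{seymour5} it has its own worn hex-chain decomposition (recalled here as \pref{thm:tcseymour}), and the present paper exploits that finer structure in \pref{sec:3cliques}. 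But as a high-level map of where \pref{thm:seymour1} comes from, your proposal is sound.
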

Finally, we conclude this section with two useful lemmas showing how one can extend a clique covering of a graph to a clique covering of its thickening.
\begin{lem} \label{lem:thickening}
Let $ H $ be a graph and $ F $ be a valid set for $ H $. Also, let $ G $ be a thickening of $ (H,F) $. Assume that no isolated vertex of $ H\setminus F$ belongs to a member of $ F $ and $ H\setminus F $ admits a clique covering of size at most $ |V(H)|-t $, for some number $t$. Then $ G $ admits a clique covering of size at most $ |V(G)|-t $.
\end{lem}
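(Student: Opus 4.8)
The plan is to build a clique covering of $G$ directly out of a small one of $H\setminus F$, paying a controlled number of extra cliques for the ``fuzzy'' pairs of $F$ and for the isolated vertices, and then checking the arithmetic. Recall $V(G)$ is partitioned into the cliques $(X_v)_{v\in V(H)}$, each nonempty, and that (T4) forces $\max(|X_u|,|X_v|)\geq 2$ whenever $\{u,v\}\in F$ (if both were singletons, say $X_u=\{a\},X_v=\{b\}$, then according as $ab\in E(G)$ or not, $X_u$ is complete or anticomplete to $X_v$, contradicting (T4)).

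First I would fix a clique covering $\mathscr{D}$ of $H\setminus F$ with $|\mathscr{D}|\leq |V(H)|-t$ and discard its singleton cliques (which cover no edge); then a vertex of $H$ lies in a member of $\mathscr{D}$ if and only if it is non-isolated in $H\setminus F$. For each $D\in\mathscr{D}$ set $\widehat D:=\bigcup_{v\in D}X_v$; by (T1) and (T2) this is a clique of $G$, since any two vertices of $D$ are adjacent in $H$ and not joined by $F$. The family $\{\widehat D: D\in\mathscr{D}\}$ covers every edge inside $X_v$ for $v$ non-isolated in $H\setminus F$, and every edge of $G$ between $X_u$ and $X_v$ with $uv\in E(H)$ and $\{u,v\}\notin F$ (such a pair is an edge of $H\setminus F$, hence lies in some $D$).

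Two kinds of edges remain. If $v$ is isolated in $H\setminus F$, then by hypothesis $v$ lies in no member of $F$, so for every $w\neq v$ we have $\{v,w\}\notin F$ and (since $v$ is isolated in $H\setminus F$) $vw\notin E(H)$; by (T3), $X_v$ is anticomplete to everything else, so its internal edges are covered by adjoining the single clique $X_v$ — needed only when $|X_v|\geq 2$, i.e. at cost at most $|X_v|-1$ per such $v$. If $\{u,v\}\in F$, then $u,v$ are non-isolated in $H\setminus F$ (again by the hypothesis), so the internal edges of $X_u,X_v$ are already covered; the edges of $G$ between $X_u$ and $X_v$ left over are the cross-edges of (T4), and assuming $|X_u|\leq|X_v|$ the cliques $\{a\}\cup N_G(a,X_v)$ for $a\in X_u$ cover all of them at cost $|X_u|=\min(|X_u|,|X_v|)$. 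Now every edge of $G$ is covered, since an edge between distinct $X_u,X_v$ forces, via (T2)--(T4), either $uv\in E(H)$ with $\{u,v\}\notin F$, or $\{u,v\}\in F$.

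Finally I would tally the cost. Writing $I$ for the set of vertices isolated in $H\setminus F$ (disjoint from the $2|F|$ vertices lying in a member of $F$) and $A=V(H)\setminus(I\cup\bigcup_{e\in F}e)$, the covering has size at most
\[
(|V(H)|-t)+\sum_{v\in I}(|X_v|-1)+\sum_{\{u,v\}\in F}\min(|X_u|,|X_v|).
\]
Using $|V(H)|=|I|+2|F|+|A|$ and $|V(G)|=\sum_{v}|X_v|$, after cancellation the bound $\leq|V(G)|-t$ reduces to
\[
|A|+\sum_{\{u,v\}\in F}\bigl(2+\min(|X_u|,|X_v|)\bigr)\ \leq\ \sum_{v\in A}|X_v|+\sum_{\{u,v\}\in F}(|X_u|+|X_v|),
\]
which follows from $|X_v|\geq 1$ for $v\in A$ together with $\max(|X_u|,|X_v|)\geq 2$ for $\{u,v\}\in F$. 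The only slightly delicate points are that the ``isolated vertex'' hypothesis must be invoked twice — once to know isolated $X_v$'s are whole components, once to know $F$-pairs avoid them — and that an isolated $v$ should be charged $|X_v|-1$ rather than $1$; neither presents a real obstacle, so I expect the proof to go through routinely.
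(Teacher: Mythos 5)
Your proof is correct and follows essentially the same route as the paper: blow up each clique of the covering of $H\setminus F$ to $\bigcup_{v\in C}X_v$, add $X_v$ for isolated vertices with $|X_v|\geq 2$, and cover each fuzzy pair $\{u,v\}\in F$ with the $|X_u|=\min(|X_u|,|X_v|)$ cliques $\{a\}\cup N_G(a,X_v)$, $a\in X_u$, with the same final count. The only cosmetic difference is that you make explicit some checks (discarding singleton cliques, verifying $\max(|X_u|,|X_v|)\geq 2$ via (T4), and the case analysis of which edges remain uncovered) that the paper leaves implicit.
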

\begin{proof}
Let $ (X_v)_{v\in V(H)} $ be as in the definition of thickening. 
Suppose that $ \mathscr{C} $ is a clique covering for $ H\setminus F$ of size at most $ |V(H)|-t $.  
For each $ C\in \mathscr{C} $, define $ X_C=\cup_{u\in C} X_u $ which is a clique of $ G $. Note that if $ F=\emptyset $, then $ \{X_C: C\in \mathscr{C} \} $ is a clique covering for $ G $ of size at most $ |V(G)|-t $. Now, let $ F=\{\{u_1,v_1\}, \ldots, \{u_\ell,v_\ell\} \} $ and w.l.o.g. assume that for every $ i \in \{1,\ldots, \ell\}$, $ |X_{u_i}|\leq |X_{v_i}| $. Note that by (T4), $ |X_{v_i}|\geq 2 $. Also, let $ I $ be the set of all isolated vertices of $ H\setminus F $. 
Now, the collection of cliques
\[(\bigcup_{i=1}^l \mathscr{N}_H[X_{u_i};X_{v_i}])\cup \{X_v: v\in I, |X_v|\geq 2 \} \cup \{X_C: C\in \mathscr{C}\},\]
is a clique covering for $G$ of size at most
\begin{align*}
\sum_{i=1}^\ell |X_{u_i}|+\sum_{v\in I}(|X_v|-1)+|V(H)|-t &\leq 
\sum_{i=1}^{\ell} (|X_{u_i}|+|X_{v_i}|)-2\ell +\sum_{v\in I}|X_v|-|I|+|V(H)|-t\\ &\leq |V(G)|-t.
\end{align*}
\end{proof}
The following lemma is the counterpart of \pref{lem:thickening} for the thickening of an antiprismatic graph.
\begin{lem} \label{lem:thickening_antiprismatic}
Let $ H $ be an antiprismatic graph and $ F $ be a valid set of changeable pairs of  $ H $. Also, let $ G $ be a thickening of $ (H,F) $ containing a triad. Assume that $ H $ admits a clique covering of size at most $ |V(H)|-t $, for some number $t\leq 1$. Then, $ G $ admits a clique covering of size at most $ |V(G)|-t$.
\end{lem}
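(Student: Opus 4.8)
The plan is to reduce to \pref{lem:thickening} wherever it applies and to handle by a direct argument the one case it does not cover. Since a changeable pair is by definition a non-edge of $H$, we have $F\cap E(H)=\emptyset$, so $H\setminus F=H$. Consequently, if no isolated vertex of $H$ belongs to a member of $F$, then \pref{lem:thickening}, applied to $H$ with the given clique covering of $H$ of size at most $|V(H)|-t$, at once yields a clique covering of $G$ of size at most $|V(G)|-t$; here neither the assumption $t\le 1$ nor the presence of a triad is used.

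So assume that some isolated vertex $p$ of $H$ lies in a member $\{p,q\}\in F$; the first step is to determine $H$. A triad of $G$ meets three distinct sets $X_v$ (two vertices of a common $X_v$ are adjacent, by (T1)), and these three sets form a triad of $H$ (by (T2), changeable pairs being non-edges); hence $H$ has a triad. As $p$ has no neighbours and $H$ is antiprismatic, $p$ lies in every triad of $H$. Moreover, if a triad $\tau$ contained $p$ but not $q$, then $\tau$ would still be a triad of the antiprismatic graph $H+pq$, in which the vertex $q\notin\tau$ would acquire a third neighbour in $\tau$, namely $p$; a contradiction. So every triad of $H$ contains both $p$ and $q$. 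Fixing a triad $\{p,q,r\}$ and setting $Z=V(H)\setminus\{p,q,r\}$, each $z\in Z$ has exactly two neighbours in $\{p,q,r\}$, neither of which is $p$, so $z$ is complete to $\{q,r\}$; and if some $z_1,z_2\in Z$ were nonadjacent then $\{p,z_1,z_2\}$ would be a triad omitting $q$, a contradiction. Hence $Z$ is a clique, and $H$ is the disjoint union of the isolated vertex $p$ with the graph on $\{q,r\}\cup Z$ in which $Z$ is a clique, both $q$ and $r$ are complete to $Z$, and $q,r$ are nonadjacent. The non-edges of this $H$ are $\{p,q\}$, $\{p,r\}$, $\{q,r\}$, and the pairs $\{p,z\}$ with $z\in Z$; adding $pz$ would give $z$ three neighbours in $\{p,q,r\}$, so $\{p,z\}$ is not changeable, and therefore the only changeable pairs are $\{p,q\}$, $\{p,r\}$, $\{q,r\}$. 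Since $F$ is valid and contains $\{p,q\}$, we get $F=\{\{p,q\}\}$. Also $\{p,q,r\}$ is the unique triad of $H$ (the only non-neighbour of $q$ other than $p$ is $r$), so every triad of $G$ projects onto it; as $G$ has a triad, $X_p,X_q,X_r$ are all nonempty.

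Next I would transfer this to $G$. Writing $Z^{\ast}=\bigcup_{z\in Z}X_z$, by (T1)--(T3) the sets $X_q\cup Z^{\ast}$ and $X_r\cup Z^{\ast}$ are cliques of $G$, the clique $X_p$ is anticomplete to $X_r\cup Z^{\ast}$, and by (T4) $X_p$ is neither complete nor anticomplete to $X_q$, so $\max(|X_p|,|X_q|)\ge 2$. The collection
\[\{X_q\cup Z^{\ast}\}\cup\{X_r\cup Z^{\ast}\}\cup\{X_p\}\cup\bigl\{\,\{y\}\cup N_G(y,X_p):y\in X_q\,\bigr\}\]
is a clique covering of $G$ of size at most $|X_q|+3$, and, using $\{x\}\cup N_G(x,X_q)$ for $x\in X_p$ in place of its last family, one of size at most $|X_p|+3$; hence $\cc(G)\le\min(|X_p|,|X_q|)+3$. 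As $|V(G)|=|X_p|+|X_q|+|X_r|+|Z^{\ast}|$ and $\max(|X_p|,|X_q|)+|X_r|\ge 3$, this already gives $\cc(G)\le|V(G)|$, which settles every $t\le 0$; and it gives $\cc(G)\le|V(G)|-1$ whenever $\max(|X_p|,|X_q|)+|X_r|+|Z^{\ast}|\ge 4$, in particular whenever $|X_r|\ge 2$, or $Z^{\ast}\neq\emptyset$, or $\max(|X_p|,|X_q|)\ge 3$.

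The sole remaining situation is $0<t\le 1$ with $\max(|X_p|,|X_q|)=2$, $|X_r|=1$ and $Z^{\ast}=\emptyset$; then $|V(G)|\le 5$, the single vertex of $X_r$ has no neighbours in $G$ (it is anticomplete to $X_p\cup X_q$ and $Z^{\ast}=\emptyset$), and it remains to check $\cc(G)=\cc(G[X_p\cup X_q])\le|X_p|+|X_q|=|V(G)|-1$. Here $G[X_p\cup X_q]$ is the union of the cliques $X_p$ and $X_q$ with a bipartite graph which, by (T4), is neither empty nor complete. If one of $X_p,X_q$ is a single vertex this graph is a $P_3$, covered by two cliques. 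If $|X_p|=|X_q|=2$ it is two disjoint edges together with one, two, or three crossing edges; it is covered by four cliques when there are at most two crossing edges, and by the two triangles of $G[X_p\cup X_q]$ when there are three. This completes the argument. I expect the main obstacle to be getting the structural description of $H$ exactly right; once that is settled, the construction of the covering of $G$ is routine.
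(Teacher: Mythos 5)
Your proof is correct and follows essentially the same route as the paper's: reduce to \pref{lem:thickening} unless some isolated vertex of $H$ lies in a member of $F$, and in that remaining case pin down the structure of $H$ (forcing $F=\{\{p,q\}\}$ and $V(H)\setminus\{p,q\}$ to be a clique) and cover $G$ by $X_p$, the two cliques $X_q\cup Z^{\ast}$ and $X_r\cup Z^{\ast}$, and the neighbourhood cliques from the smaller of $X_p,X_q$ into the larger. The only divergence is cosmetic: the paper dispenses with your final small-case analysis by simply not including the clique $X_r\cup Z^{\ast}$ when it is a singleton, which covers no edges.
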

\begin{proof}
First, note that the members of $F$ are non-edges of $H$. Thus, $H\setminus F=H$ and since $G$ contains a triad, $H$ contains a triad, too. If no isolated vertex of $ H$ belongs to a member of $ F $, then by \pref{lem:thickening}, we are done. Now, assume that $ F $ contains a pair $ \{u_1,v_1\} $ such that $ u_1 $ is an isolated vertex of $H$. Since $ \{u_1,v_1\} $ is a changeable pair of $ H $,  $V(H)\setminus \{u_1,v_1\}$  is a clique. 
Also,  since $H$ is antiprismatic and contains a triad, $ v_1 $ has exactly one non-neighbour, say $v'_1$, in $V(H)\setminus \{u_1,v_1\}$. Thus, since $ F $ is valid, we have $ F=\{\{u_1,v_1\}\} $.
Assume that $\{a,b\}=\{u_1,v_1\}$ such that $|X_{a}|\leq |X_b|$ and so $|X_b|\geq 2$. Then, consider the collection $\mathscr{N}[X_a;X_b]\cup \{X_{u_1}, \cup_{v\in V(H)\setminus \{u_1,v'_1\}}X_v \} $ of cliques of $G$ and if $ |\cup_{v\in V(H)\setminus \{u_1,v_1\}}X_v|\geq 2$, then add the clique $\cup_{v\in V(H)\setminus \{u_1,v_1\}}X_v$ to obtain a clique covering for $G$ of size at most $|X_a|+2+|\cup_{v\in V(H)\setminus \{u_1,v_1\}}X_v|-1\leq |V(G)|-|X_b|+1\leq |V(G)|-t$.
\end{proof}

\section{Equality cases}\label{sec:equality}
In this section, we investigate the claw-free graphs whose clique cover number is equal to the number of their vertices. \pref{thm:main1} asserts that these graphs are merely the graph of icosahedron, the complement of a twister, and some powers of cycles.
First, we need the following simple lemma which gives a lower bound for the clique cover number of a graph.
\begin{lem}\label{lem:lowerbound}
Let $G$ be a graph and $(\chi_v: v\in V(G))$ be some positive integers such that for every vertex $v\in V(G)$, the chromatic number of $\overline{G}[N_G(v)]$ is at least $\chi_v$. Then, $\cc(G)\geq \lceil(\sum_{v\in V(G)} \chi_v)/\omega(G)\rceil$, where $\omega(G)$ denotes the clique number of $ G $.
\end{lem}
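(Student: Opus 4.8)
The plan is to count, with multiplicity, the pairs $(v,C)$ where $v\in V(G)$ and $C$ is a clique in a fixed optimal clique covering $\mathscr{C}$ that covers "enough" edges at $v$. Fix an optimal clique covering $\mathscr{C}$, so $|\mathscr{C}|=\cc(G)$. For a vertex $v$, consider the cliques of $\mathscr{C}$ that contain $v$; say they are $C_1,\dots,C_m$. Every edge incident to $v$ is covered by some $C_i$, so the sets $C_1\setminus\{v\},\dots,C_m\setminus\{v\}$ together cover all of $N_G(v)$, and each $C_i\setminus\{v\}$ is a clique of $G[N_G(v)]$, hence a stable set of $\overline{G}[N_G(v)]$. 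Thus these $m$ sets form a cover of $V(\overline{G}[N_G(v)])$ by stable sets, which yields a proper colouring of $\overline{G}[N_G(v)]$ with at most $m$ colours. Therefore $m\geq \chi(\overline{G}[N_G(v)])\geq \chi_v$; that is, every vertex $v$ lies in at least $\chi_v$ cliques of $\mathscr{C}$.

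Now double-count incidences: $\sum_{C\in\mathscr{C}}|C| = \sum_{v\in V(G)} \#\{C\in\mathscr{C}: v\in C\} \geq \sum_{v\in V(G)}\chi_v$. On the other hand every clique $C\in\mathscr{C}$ satisfies $|C|\leq \omega(G)$, so $\sum_{C\in\mathscr{C}}|C|\leq |\mathscr{C}|\,\omega(G) = \cc(G)\,\omega(G)$. Combining the two inequalities gives $\cc(G)\,\omega(G)\geq \sum_{v\in V(G)}\chi_v$, and since $\cc(G)$ is an integer we may round up: $\cc(G)\geq \lceil (\sum_{v} \chi_v)/\omega(G)\rceil$, as claimed.

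I do not expect a serious obstacle here; the only point needing a little care is the translation between "covering $N_G(v)$ by cliques of $G$" and "colouring $\overline{G}[N_G(v)]$", and in particular noting that a clique $C_i$ of $\mathscr{C}$ with $v\in C_i$ but $C_i=\{v\}$ (covering no edge) simply contributes an empty, harmless colour class, so it does no harm to the inequality $m\geq\chi_v$. One should also implicitly use that $\chi_v\geq 1$ is consistent with $\overline{G}[N_G(v)]$ possibly being edgeless or even empty; if $N_G(v)=\emptyset$ then $\chi(\overline{G}[N_G(v)])=0<\chi_v$ would fail, but such an isolated vertex $v$ forces $\cc(G)$-irrelevance and in the intended applications every vertex has a neighbour, so this degenerate case can be excluded or handled by the convention that the hypothesis is only invoked for non-isolated vertices. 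The core of the argument is just the double-counting displayed above.
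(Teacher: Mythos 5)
Your proof is correct and is essentially the same as the paper's: both arguments show that each vertex $v$ lies in at least $\chi_v$ cliques of an optimal covering (via covering $N_G(v)$ by cliques, i.e.\ colouring $\overline{G}[N_G(v)]$) and then double-count vertex--clique incidences against the bound $|C|\leq\omega(G)$. The extra remarks on singleton cliques and isolated vertices are fine but not needed beyond what the paper already does implicitly.
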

\begin{proof}
Let $\mathscr{C}$ be a clique covering for $G$ of size $\cc(G)$. For every vertex $v\in V(G)$, let $r_v$ be the number of cliques in $\mathscr{C}$ containing $v$. Then, $N_G(v)$ can be partitioned into at most $r_v$ cliques of $G$ and so $r_v\geq \chi_v$, for all $v\in V(G)$. Hence, 
\[\sum_{v\in V(G)} \chi_v\leq \sum_{v\in V(G)} r_v =\sum_{C\in \mathscr{C}} |C|\leq |\mathscr{C}|\, \omega(G), \]
as desired. 
\end{proof}
Now, we prove the assertion of \pref{thm:main1} for the graphs in $\mathcal{T}_1$, in the following lemma.
\begin{lem}\label{lem:icosa}
Let $G$ be a graph in $\mathcal{T}_1$ on $n$ vertices. Then, $\cc(G)\leq n$ and equality holds if and only if $G$ is isomorphic to the graph of icosahedron.
\end{lem}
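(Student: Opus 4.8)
The plan is to handle the class $\mathcal{T}_1$ by reducing everything to the three base graphs $G_0,G_1,G_2$ via \pref{lem:thickening}, and then to argue the equality case separately. First I would observe that $\mathcal{T}_1$ consists of thickenings of $(G_0,\emptyset)$, $(G_1,\emptyset)$ and $(G_2,F)$ for $F\subseteq F'=\{\{v_1,v_4\},\{v_6,v_9\}\}$, and that none of $G_0,G_1,G_2$ has an isolated vertex, so the hypothesis of \pref{lem:thickening} about isolated vertices is vacuous. Hence it suffices to exhibit, for each of these base graphs $H$ and each admissible $F$, a clique covering of $H\setminus F$ of size at most $|V(H)|-t$ for a suitable $t\ge 0$; then $G$ admits a clique covering of size at most $|V(G)|-t\le n$. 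For the inequality $\cc(G)\le n$ alone it is enough to take $t=0$, i.e.\ to cover each base graph (with edges of $F$ removed) by at most $|V(H)|$ cliques, which is easy: $G_0$ has $12$ vertices and its edge set decomposes into its $20$ triangular faces, but more economically one writes down an explicit list of at most $12$ triangles covering all $30$ edges (each vertex has degree $5$, and $12\cdot 5/2=30=12\cdot\frac{5}{2}$, so on average each of $12$ triangles must carry $2.5$ edges — a covering by exactly $12$ triangles is forced to be essentially tight, which is the germ of the equality statement); $G_1$ on $11$ vertices and $G_2$ on $10$ vertices have strictly fewer edges and can be covered by strictly fewer than $|V(H)|$ cliques, giving $t\ge 1$ there.

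The key steps, in order, would be: (1) record that $G_0$ is edge-covered by $12$ triangles and that no covering by fewer triangles exists — for the latter I would invoke \pref{lem:lowerbound} with $\chi_v=3$ for every vertex $v$, since $\overline{G_0}[N_{G_0}(v)]$ is the complement of the $5$-cycle (the neighbourhood of each vertex in the icosahedron induces a $C_5$), which is again a $C_5$ and has chromatic number $3$; with $\omega(G_0)=3$ this gives $\cc(G_0)\ge\lceil 12\cdot 3/3\rceil=12$, so $\cc(G_0)=12=n$. (2) For $G_1$ (icosahedron minus one vertex, $11$ vertices) and $G_2$ (minus two vertices, $10$ vertices), exhibit explicit clique coverings of size at most $10$ and $9$ respectively; then for any $F\subseteq F'$, since the pairs in $F'$ are non-edges of $G_2$, we have $G_2\setminus F=G_2$, and \pref{lem:thickening} with $t=\max\{0,|V(H)|-(\text{cover size})\}\ge 1$ yields $\cc(G)\le |V(G)|-1<n$ for every thickening of $(G_1,\emptyset)$ or $(G_2,F)$. (3) Conclude that among graphs in $\mathcal{T}_1$ only thickenings of $(G_0,\emptyset)$ can meet $\cc(G)=n$, and then show that a \emph{proper} thickening of $G_0$ (one where some $X_v$ has size $\ge 2$) satisfies $\cc(G)\le n-1$: here I would again use \pref{lem:thickening}, noting that if even one triangle can be deleted from the $12$-triangle covering of $G_0$ without losing coverage we would be done, but since that covering is tight I instead argue directly that blowing up a vertex $v$ into a clique $X_v$ lets one absorb $|X_v|-1$ of the extra vertices into existing cliques for free (the neighbourhood structure of $G_0$ makes $N[v]$ itself a union of few cliques), so $\cc(G)\le n-(|X_v|-1)\le n-1$.

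The main obstacle I anticipate is step (3): proving that the only thickening of $(G_0,\emptyset)$ with $\cc=n$ is $G_0$ itself. A clean way is to prove the sharper statement that \pref{lem:thickening}'s bound can be improved by one whenever some blow-up class is nontrivial. Concretely, fix $v$ with $|X_v|\ge 2$; in $G_0$ the vertex $v$ together with its five neighbours induces the wheel-like graph on $N[v]$ whose edge set is covered by (say) the single triangle through $v$ and two consecutive neighbours, plus a few triangles among the $C_5$ — I would choose a covering $\mathscr{C}$ of $G_0$ of size $12$ in which $v$ lies in as few cliques as possible, blow up, and then replace the cliques at $v$ by $\mathscr{N}_{G_0}[\{w\};X_v]$-type gadgets to save a clique. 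An alternative, perhaps smoother, route is to use \pref{lem:lowerbound} in the thickening directly with $\chi_u$ chosen as the chromatic number of $\overline{G}[N_G(u)]$: for $u\in X_v$ this neighbourhood is $(X_v\setminus\{u\})\cup\bigcup_{w\sim v}X_w$, whose complement-chromatic-number is still $3$ but whose clique number $\omega(G)$ has now grown, so the rounded ratio $\lceil(\sum_u\chi_u)/\omega(G)\rceil$ drops below $|V(G)|$ — this would even re-prove $\cc(G)\le n$ on the nose and isolate $G_0$ as the unique tight case in one stroke. I would try the second approach first and fall back on the explicit-covering manipulation if the arithmetic of the bound is not quite sharp enough for some small irregular thickening.
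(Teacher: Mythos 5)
Your steps (1) and (2) match the paper's proof essentially verbatim: the explicit $12$-triangle covering of $G_0$, the lower bound $\cc(G_0)\ge 12$ from \pref{lem:lowerbound} with $\chi_v=3$ and $\omega(G_0)=3$, explicit coverings of $G_1$ and $G_2$ of sizes $10$ and $9$, and \pref{lem:thickening} with $t=1$ for thickenings of $(G_1,\emptyset)$ and $(G_2,F)$ (your observation that $G_2\setminus F=G_2$ and that there are no isolated vertices is also correct). The problem is step (3), which you flag as the main obstacle and for which your preferred route is broken. \pref{lem:lowerbound} produces a \emph{lower} bound on $\cc(G)$; the fact that $\lceil(\sum_u\chi_u)/\omega(G)\rceil$ drops below $|V(G)|$ for a proper thickening does not ``re-prove $\cc(G)\le n$'' or isolate the tight case --- it merely fails to certify tightness. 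No manipulation of that lemma can yield an upper bound, so this route cannot close the equality case.

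The good news is that the obstacle you anticipate does not exist. If $G\ne G_0$ is a thickening of $(G_0,\emptyset)$, replace each clique $C\in\mathscr{C}_0$ by $\bigcup_{v\in C}X_v$; this is a clique covering of $G$ of size exactly $12$, and since some $|X_v|\ge 2$ we have $n\ge 13$, hence $\cc(G)\le 12\le n-1$. There is nothing to ``absorb'' and no need to find an $11$-clique covering of $G_0$ or to redesign the covering near the blown-up vertex: the covering does not grow under thickening with $F=\emptyset$, while $n$ does. Your fallback (``$\cc(G)\le n-(|X_v|-1)$'') arrives at the same count, so the argument can be salvaged, but as written the proposal treats a one-line observation as the hard part and leans first on an approach that is logically incapable of delivering the conclusion. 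You should also make explicit the final bookkeeping that the only graphs in $\mathcal{T}_1$ not covered by steps (2) and (3) are $G_0$ itself, which is where equality occurs.
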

\begin{proof}
Let $G_0,G_1,G_2$ be as in the definition of $\mathcal{T}_1$ in \pref{sec:structure}, where $|V(G_0)|=12$, $|V(G_1)|=11$ and $|V(G_2)|=10$ ($ G_0 $ is depicted in \pref{fig:icosa} and $ G_1 $ and $ G_2 $ are obtained from $ G_0 $ by deleting $ v_{11} $ and $ v_{10}, v_{11} $, respectively). First, note that the family of cliques 
\begin{align*}
\mathscr{C}_0=&\{\{v_0,v_1,v_9\},\{v_0,v_1,v_3\}, \{v_2,v_3,v_4\}, \{v_3,v_4,v_5\}, \{v_0,v_5,v_7\}, \{v_1,v_2,v_{10}\}, \\ &\{v_8,v_9,v_{10}\}, \{v_7,v_8,v_9\}, \{v_4,v_6,v_{11}\}, \{v_2,v_{10},v_{11}\}, \{v_6,v_8,v_{11}\}, \{v_5,v_6,v_7\}\},
\end{align*}
is a clique covering for $G_0$ and thus, $\cc(G_0)\leq 12$. 
On the other hand, note that $\omega(G_0)=3 $ and for every vertex $ v\in V(G_0) $, $ \overline{G_0} [N_{G_0}(v)] $ is a cycle on five vertices whose chromatic number is equal to three. Thus, by \pref{lem:lowerbound}, $\cc(G_0)\geq (12\times 3)/3=12$ and so, $\cc(G_0)=|V(G_0)|=12$. Also, if $ G\neq G_0 $ is a thickening of $ (G_0,\emptyset) $, then by replacing each clique $ C\in \mathscr{C}_0 $ with the clique $ \cup_{v\in C} X_v $, one may obtain a clique covering for $ G $ of size $ 12\leq n-1 $. 

In order to obtain a clique covering $ \mathscr{C}_1 $ for $ G_1 $, in $ \mathscr{C}_0 $, replace the cliques $\{v_4,v_6,v_{11}\}$, $\{v_2,v_{10},v_{11}\}$, $\{v_6,v_8,v_{11}\}$ and $\{v_5,v_6,v_7\}$ with the cliques $\{v_4,v_5,v_6\}$ and $\{v_6,v_7,v_8\}$. Also, to obtain a clique covering for $G_2$, in $ \mathscr{C}_1 $, replace the cliques $\{v_1,v_2,v_{10}\}$ and $\{v_8,v_9,v_{10}\}$ with the clique $\{v_1,v_2\}$. Thus, $\cc(G_1)\leq 10$ and $\cc(G_2)\leq 9$. 
Let  $F'=\{\{v_1,v_4\},\{v_6,v_9\}\}$ and $F\subseteq F'$. Since $v_1v_4$ and $v_6v_9$ are non-edges of $G_2$, by \pref{lem:thickening}, if $ G $ is a thickening of either $ (G_1,\emptyset) $ or $(G_2,F)$, then $ \cc(G)\leq n-1 $.
\end{proof}

The following lemma  proves the assertion of \pref{thm:main1} for the powers of cycles.

\begin{lem} \label{lem:lemcirc}
Let $p,n$ be two positive integers such that $n\geq 2p+1$. Then $\cc(C_n^p)\leq n$ and equality holds if and only if $n\geq 3p+1$.
\end{lem}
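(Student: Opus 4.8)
The plan is to split into the ranges $n=2p+1$, $2p+2\le n\le 3p$, and $n\ge 3p+1$, working throughout with the ``window cliques'' $C_a:=\{v_a,v_{a+1},\ldots,v_{a+p}\}$ (subscripts mod $n$). Since $n\ge 2p+1$, each $C_a$ is genuinely a clique of size $p+1$: any two of its vertices differ by at most $p<n/2$ in index, hence are at cyclic distance at most $p$. For the upper bound $\cc(C_n^p)\le n$, valid for every $n\ge 2p+1$, I would simply check that $\{C_0,\ldots,C_{n-1}\}$ is a clique covering: an edge $v_iv_j$ has cyclic distance $d\in\{1,\ldots,p\}$, so writing $j\equiv i+d$ we get $\{v_i,v_j\}\subseteq C_i$.

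For the equality when $n\ge 3p+1$, I would prove the structural claim: \emph{if $n\ge 3p+1$, then every clique of $C_n^p$ containing both $v_a$ and $v_{a+p}$ is contained in $C_a$.} This comes from intersecting the closed neighbourhoods of $v_a$ and $v_{a+p}$, which are two arcs of $2p+1$ consecutive vertices at cyclic distance $p$; over $\mathbb{Z}_n$ they meet only in the arc $\{v_a,\ldots,v_{a+p}\}$ precisely when $n\ge 3p+1$ (for $n\le 3p$ they also overlap on the ``far side'', which is exactly why $3p+1$ is the threshold). Given a clique covering $\mathscr{C}$, for each $a\in\mathbb{Z}_n$ pick $K_a\in\mathscr{C}$ covering the edge $\{v_a,v_{a+p}\}$; then $K_a\subseteq C_a$. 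The map $a\mapsto K_a$ is injective: if $K_a=K_b$ then this clique lies in both $C_a$ and $C_b$, so $v_b,v_{b+p}\in C_a$, and since an arc of $p+1$ consecutive vertices contains at most one pair at cyclic distance $p$, we get $a=b$. Hence $\cc(C_n^p)\ge n$, so equality holds.

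For $2p+1\le n\le 3p$ I would exhibit a covering of size $n-1$. If $n=2p+1$ then $C_n^p=K_n$ and $\cc=1\le n-1$. If $2p+2\le n\le 3p$ (forcing $p\ge 2$), put $q=n-2p\in\{2,\ldots,p\}$ and consider, besides $C_0$, the set $C^{\ast}:=\{v_0,v_p\}\cup\{v_{n-p},v_{n-p+1},\ldots,v_{2p}\}$. Because $n\le 3p$ creates the wrap-around adjacencies, $C^{\ast}$ is a clique, and it contains $v_0$, $v_p$ and $v_{2p}$, so it covers both edges $\{v_0,v_p\}$ and $\{v_p,v_{2p}\}$. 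A short computation shows that among the window cliques the edge $\{v_a,v_{a+p}\}$ is the only edge covered \emph{solely} by $C_a$ (using $p\ge 2$, so $0$ and $p$ are not consecutive mod $n$). Therefore deleting $C_0$ and $C_p$ from $\{C_0,\ldots,C_{n-1}\}$ loses exactly the two edges $\{v_0,v_p\}$ and $\{v_p,v_{2p}\}$, and adding the single clique $C^{\ast}$ restores a covering; the resulting family has size $(n-2)+1=n-1$.

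The conceptual core is the neighbourhood-intersection dichotomy separating $n\ge 3p+1$ from $n\le 3p$, together with the observation that at distance exactly $p$ the covering clique is forced to be $C_a$. The step demanding the most care is the construction in the range $2p+2\le n\le 3p$: one must verify with modular arithmetic that $C^{\ast}$ is a clique, that no edge other than $\{v_0,v_p\}$ and $\{v_p,v_{2p}\}$ is orphaned when $C_0$ and $C_p$ are removed, and handle the degenerate case $q=p$ where $C^{\ast}$ collapses to the triangle $\{v_0,v_p,v_{2p}\}$ — all routine, but the bookkeeping should be done carefully.
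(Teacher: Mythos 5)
Your proposal is correct and follows essentially the same route as the paper: the window cliques $C_a=\{v_a,\ldots,v_{a+p}\}$ give the upper bound, and the lower bound for $n\geq 3p+1$ rests on the same neighbourhood-intersection fact — a clique covering $\{v_a,v_{a+p}\}$ is trapped in $N[v_a]\cap N[v_{a+p}]=C_a$ — which the paper packages instead as ``no single clique covers two of the $n$ edges $\{v_k,v_{k+p}\}$''; the two formulations are interchangeable. The only genuine divergence is the size-$(n-1)$ covering for $2p+2\leq n\leq 3p$: you keep all $n$ windows, delete $C_0$ and $C_p$, and patch the two orphaned edges $\{v_0,v_p\}$ and $\{v_p,v_{2p}\}$ with the single clique $C^{\ast}$, whereas the paper keeps only the $\lceil n/2\rceil$ windows starting at even indices, adds the $\lfloor n/2\rfloor$ uncovered distance-$p$ edges as two-element cliques, and saves one clique by replacing two of them with the $4$-clique $\{v_1,v_{p+1},v_{2p+1},v_{3p+1}\}$. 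Both constructions verify (your ``only $\{v_0,v_p\}$ and $\{v_p,v_{2p}\}$ are orphaned'' claim holds because an edge at distance $d<p$ lies in $p-d+1\geq 2$ windows whose starting indices form a consecutive arc, and $\{0,p\}$ is not such an arc since $2\leq p\leq n-2$), so the choice between them is purely a matter of taste.
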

\begin{proof}
Let $V=\{v_0,\ldots,v_{n-1}\}$ be the vertex set of $C_n^p$ and read all subscripts modulo $n$. Define  $S=\{\{v_k,v_{k+p}\}: 0\leq k\leq n-1\}$ as a subset of $E(C_n^p)$ of size $n$. We claim that if $n\geq 3p+1$, then no pair of edges in $S$ can be covered by a single clique of $G$. To see this, on the contrary and w.l.o.g. assume that the edges $\{v_0,v_p\}, \{v_k,v_{k+p}\}$ are covered by a clique, for some $k\in\{1,\ldots, n-1\}$. Since $v_k$ is adjacent to $v_0$, either $1\leq k\leq p$ or $n-p\leq k\leq n-1$. In the former case, since $p+1\leq k+p\leq 2p\leq n-p-1$, $v_0$ is not adjacent to $v_{k+p}$, and in the latter case, since $2p+1\leq n-p\leq  k\leq n-1$, $v_k$ is not adjacent to $v_p$, a contradiction. This proves the claim, and thus for $n\geq 3p+1$,  $\cc(G)\geq n$. Also the family of cliques $\{\{v_{i},v_{i+1},\ldots, v_{i+p}\}: 0\leq i\leq n-1\}$ is a clique covering for $G$ of size $n$. Thus, when $n\geq 3p+1$, we have $\cc(G)=n$. Also, if $n=2p+1$, then $G$ is isomorphic to the complete graph $K_n$ and thus, $\cc(G)=1\leq n-2$. Now, assume that $2p+2\leq n\leq 3p$. We provide a clique covering for $G$ of size $n-1$. Note that the cliques $C_{i}=\{v_{2i},v_{2i+1},\ldots, v_{2i+p}\}$, $i\in\{0,1,\ldots, \lceil n/2\rceil-1\}$, cover all edges of $G$ except the edges in $S'=\{\{v_{2j+1},v_{2j+1+p}\}: 0\leq j \leq \lfloor n/2\rfloor -1\}$. It can be easily seen that $K=\{v_1,v_{p+1},v_{2p+1},v_{3p+1}\}$ is a clique of $G$ covering two distinct edges $\{v_1,v_{p+1}\}$ and $\{v_{2p+1},v_{3p+1}\}$ in $S'$. Hence, $\{C_i: 0\leq i \leq \lceil n/2\rceil-1\}\cup \{K\}\cup S'\setminus \{\{v_1,v_{p+1}\},\{v_{2p+1},v_{3p+1}\}\}$ is a clique covering for $G$ of size $\lceil n/2\rceil+|S'|-1=n-1$.
This proves \pref{lem:lemcirc}.
\end{proof}
Finally, the following lemma provides the appropriate clique covering for the complement of a twister (an antiprismatic graph depicted in \pref{fig:twister}) and its thickening. 
\begin{lem}\label{lem:twister}
If $G$ is the complement of a twister, then $\cc(G)=|V(G)|=10$. Also, if $G'$ is a thickening of $(G,F)$, for a valid set $F$ of changeable pairs of $ G $, and $G'\neq G$, then $\cc(G')\leq |V(G')|-1$.
\end{lem}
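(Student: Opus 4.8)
The statement has two parts, which I would handle in turn: $\cc(G)=|V(G)|=10$ for $G$ the complement of a twister, and $\cc(G')\le|V(G')|-1$ for every thickening $G'\neq G$.

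\textbf{The graph $G$.} Label $V(G)=\{v_1,\dots,v_8,u_1,u_2\}$ as in \pref{fig:twister}: $v_1,v_3,v_5,v_7$ induce a $4$-cycle in this cyclic order, so do $v_2,v_4,v_6,v_8$; $u_2$ is complete to $\{v_1,v_3,v_5,v_7\}$ and $u_1$ to $\{v_2,v_4,v_6,v_8\}$; $u_1\not\sim u_2$; and there are eight ``cross'' edges $v_1v_4,v_1v_6,v_3v_6,v_3v_8,v_5v_8,v_2v_5,v_2v_7,v_4v_7$. Thus $G$ has $24$ edges, no isolated vertex, the permutation $v_i\mapsto v_{i+1}$ (reading indices modulo $8$), $u_1\leftrightarrow u_2$, is an automorphism, and the edges split into three automorphism orbits of size $8$ (the eight $4$-cycle edges, the eight ``spoke'' edges at $u_1,u_2$, the eight cross edges). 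I would record that for every $v\in\{v_1,\dots,v_8\}$ the graph $\overline{G}[N_G(v)]$ is a $5$-cycle, so $G[N_G(v)]\cong\overline{C_5}\cong C_5$ is triangle-free; hence $G$ has no $K_4$, so $\omega(G)=3$; and that for $v\in\{u_1,u_2\}$ the graph $G[N_G(v)]$ is a $4$-cycle, so $\overline{G}[N_G(v)]\cong 2K_2$.

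\textbf{$\cc(G)=10$.} For the lower bound, apply \pref{lem:lowerbound} with $\omega(G)=3$, $\chi_{v_i}=3$ for $1\le i\le 8$ (legitimate since $\chi(C_5)=3$), and $\chi_{u_1}=\chi_{u_2}=2$ (since $\chi(2K_2)=2$), which gives $\cc(G)\ge\lceil (8\cdot 3+2\cdot 2)/3\rceil=\lceil 28/3\rceil=10$. For the upper bound, exhibit a covering of size $10$; one convenient choice is invariant under the half-turn $v_i\mapsto v_{i+4}$, namely the five triangles $\{v_1,v_3,u_2\},\{v_2,v_4,u_1\},\{v_3,v_5,v_8\},\{v_1,v_4,v_6\},\{v_1,v_3,v_6\}$ together with their images $\{v_5,v_7,u_2\},\{v_6,v_8,u_1\},\{v_1,v_4,v_7\},\{v_2,v_5,v_8\},\{v_2,v_5,v_7\}$; a direct check confirms these ten triangles cover all $24$ edges. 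Hence $\cc(G)=|V(G)|=10$.

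\textbf{Thickenings, the main cases.} I would first show that the changeable pairs of $G$ are exactly the four ``antipodal'' pairs $\{v_1,v_5\},\{v_3,v_7\},\{v_2,v_6\},\{v_4,v_8\}$: the only triads of $G$ are $\{v_1,v_5,u_1\},\{v_3,v_7,u_1\},\{v_2,v_6,u_2\},\{v_4,v_8,u_2\}$, adding an antipodal edge destroys exactly one of them and preserves the antiprismatic condition, while adding any other non-edge of $G$ creates a vertex with three neighbours in a surviving triad. These four pairs are pairwise disjoint, so every admissible $F$ is valid. Let now $G'$ be a thickening of $(G,F)$ with $V(G')=\bigcup_v Y_v$, $n_v=|Y_v|\ge 1$, and $G'\neq G$. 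A thickening with all $Y_v$ singletons and $F\neq\emptyset$ violates (T4), so $G'\neq G$ forces $\sum_v(n_v-1)\ge 1$, i.e. $|V(G')|\ge 11$. If $F=\emptyset$, blowing up the covering of the previous step gives $\cc(G')\le 10\le |V(G')|-1$ (this is \pref{lem:thickening} with $t=0$). If $F\neq\emptyset$, the construction in the proof of \pref{lem:thickening} yields a covering of $G'$ of size $10+\sum_{\{a,b\}\in F}\min(n_a,n_b)$; since $(n_a-1)+(n_b-1)\ge\min(n_a,n_b)$, with equality only if $\max(n_a,n_b)=2$, a short count bounds this by $|V(G')|-1$ \emph{unless} every pair of $F$ has larger part of size exactly $2$ and $n_v=1$ for every $v\notin\bigcup F$.

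\textbf{The remaining (tight) case and the main obstacle.} Here one must save one clique. The device is to have chosen the covering $\mathscr C$ of $G$ so that for each changeable pair $\{a,b\}$ it contains a triangle $\{a,c,d\}$ with $c,d$ common neighbours of $a$ and $b$ (one of the four ``spoke'' triangles always qualifies). Then $Y_c\cup Y_d$ is a clique complete to $Y_a\cup Y_b$, so for any copies $a'\in Y_a,\ b'\in Y_b$ with $a'\sim b'$ the set $Y_c\cup Y_d\cup\{a',b'\}$ is a clique of $G'$; by folding the fuzzy edges into such enlarged (and, where necessary, split) blown-up triangles while checking that no previously covered edge is lost, one builds a covering of $G'$ of size at most $|V(G')|-1$. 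I expect this last case to be the main obstacle: determining the changeable pairs, extracting the exact count from \pref{lem:thickening}'s construction, picking $\mathscr C$ so the absorbing triangle is available for each of the four pairs and for either end of each pair, and verifying---through a case analysis on the bipartite graph between $Y_a$ and $Y_b$ (the perfect-matching configuration being the delicate sub-case), allowing two antipodal pairs to be thickened at once---that the modified families remain cliques and still cover every edge.
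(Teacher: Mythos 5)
Your proposal is correct in substance, and its first half --- the adjacency analysis, the lower bound $\lceil(8\cdot 3+2\cdot 2)/3\rceil=10$ via \pref{lem:lowerbound}, an explicit ten-triangle covering, and the identification of the four antipodal pairs as the only changeable pairs --- coincides with the paper's argument (your ten triangles differ from the paper's $C_1,\ldots,C_{10}$ but cover all $24$ edges just as well). Where you genuinely diverge is the thickening part. You run the generic construction of \pref{lem:thickening}, count $10+\sum_{\{a,b\}\in F}\min(n_a,n_b)$ against $|V(G')|-1$, and are left with a tight residual case (every fuzzy pair of shape $(2,1)$ or $(2,2)$, all other blocks singletons) that you only sketch, by enlarging or splitting blown-up spoke triangles through common neighbours of the fuzzy pair. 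That sketch is workable --- each $(2,1)$ pair costs no extra clique, each $(2,2)$ pair costs at most one split against a budget of two, and the interactions between two pairs sharing $u_1$ or $u_2$ can be absorbed into a single enlarged clique --- but it is exactly the case analysis (matching versus non-matching bipartite configurations, simultaneous pairs) that you defer, and the lemma is not proved until it is written out. The paper sidesteps all of it with one uniform construction: assuming $\{v_1,v_5\}\in F$ and $|X_{v_1}|\geq 2$, it keeps the six blown-up ``cross'' triangles $C_5,\ldots,C_{10}$ and replaces the four blown-up spoke triangles by the neighbourhood families $\mathscr{N}[X_{v_5};X_{u_2}\cup X_{v_1}\cup X_{v_3}]$, $\mathscr{N}[X_{v_7};X_{u_2}\cup X_{v_1}\cup X_{v_3}]$, $\mathscr{N}[X_{v_6};X_{u_1}\cup X_{v_2}\cup X_{v_4}]$ and $\mathscr{N}[X_{v_8};X_{u_1}\cup X_{v_2}\cup X_{v_4}]$, which cover every spoke edge, every (possibly fuzzy) diagonal edge, and the square edges lost with $C_1,\ldots,C_4$, for a total of $|X_{v_5}|+|X_{v_6}|+|X_{v_7}|+|X_{v_8}|+6\leq |V(G')|-1$ precisely because $|X_{v_1}|\geq 2$. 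So the paper's route buys a two-line count in place of your deferred case analysis; if you keep your route, the tight case must actually be carried out in full.
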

\begin{proof}
Let $V(G)=\{u_1,u_2,v_1,\ldots, v_8\}$ as in \pref{fig:icosa}. First, note that $\omega(G)=3$. For every vertex $u\in \{u_1,u_2\}$, $\overline{G}[N_G(u)]$ is a matching on four vertices and thus has the chromatic number equal to two. Also, for every vertex $v\in \{v_1,\ldots, v_8\}$, $\overline{G}[N_G(v)]$ is a cycle on five vertices and so has the chromatic number equal to three. Hence, by \pref{lem:lowerbound}, 
$\cc(G)\geq \lceil{(8\times 3+2\times 2)}/{3}\rceil=10$.
Moreover, the following collection is a clique covering for $G$ and so, $\cc(G)=10$.
\begin{align*}
\mathscr{C}=\{& C_1=\{u_2,v_1,v_7\},   C_2=\{u_2,v_3,v_5\},   C_3=\{u_1,v_4,v_6\},   C_4=\{u_1,v_2,v_8\}, C_5=\{v_1,v_3,v_6\}, \\
& C_6=\{v_2,v_5,v_7\},\   C_7=\{v_3,v_6,v_8\},   C_8=\{v_3,v_5,v_8\},\   C_9=\{v_1,v_4,v_7\},  C_{10}=\{v_2,v_4,v_7\}\}.
\end{align*}
Now, let $G'\neq G$ be a thickening of $(G,F)$. If $F=\emptyset$, then it is clear that $\cc(G')\leq 10\leq |V(G')|-1$. Thus, assume that $F\neq \emptyset$. It is easy to check that the only changeable pairs of $G$ are $v_1v_5$, $v_2v_6$, $v_3v_7$ and $v_4v_8$. Let $ (X_v)_{v\in V(G)}$  be as in the definition of thickening. By symmetry, assume that $\{v_1,v_5\}\in F$ and $|X_{v_1}|\geq 2$. Then, the collection of cliques,
\begin{align*}
&\{\cup_{v\in C_i} X_v: 5\leq i\leq 10\}\cup \mathscr{N}[X_{v_5};X_{u_2}\cup X_{v_1}\cup X_{v_3}]\cup \mathscr{N}[X_{v_6};X_{u_1}\cup X_{v_2} \cup X_{v_4}]\cup\\ & \mathscr{N}[X_{v_7};X_{u_2}\cup X_{v_1}\cup X_{v_3}]\cup \mathscr{N}[X_{v_8};X_{u_1}\cup X_{v_2}\cup X_{v_4}]
\end{align*}
is a clique covering for $G'$ of size $|X_{v_5}|+|X_{v_6}|+|X_{v_7}|+|X_{v_8}|+6\leq |V(G')|-1$, where the last inequality holds since $|X_{v_1}|\geq 2$.
\end{proof}
\section{Outline of the proofs}\label{sec:main}
In this section, we give an outline of our approach towards the proofs of the main theorems of this paper, i.e. Theorems~\ref{thm:main1} and \ref{thm:main2}.
The main tool is \pref{thm:seymour1} which asserts that for every connected claw-free graph $ G $ either $ G $ admits  a non-trivial strip-structure, or $ V(G) $ is the union of three cliques, or $ G $ is in $\mathcal{T}_1\cup \mathcal{T}_2\cup \mathcal{T}_3$. We prove our results for each of these classes separately. First, in the following theorem, we deal with those claw-free graphs which admit a non-trivial strip-structure. The proof of this theorem is given in \pref{sec:strip}.
\begin{thm} \label{thm:stripA}
Let $G$ be a connected claw-free graph on $n$ vertices such that $V(G)$ is not the union of three cliques and $G$ is not in $\mathcal{T}_1\cup \mathcal{T}_2\cup \mathcal{T}_3$. Then, $ \cc(G)\leq n-1$.
\end{thm}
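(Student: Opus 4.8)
The plan is to apply \pref{thm:seymour1}. Since $V(G)$ is not the union of three cliques and $G\notin\mathcal{T}_1\cup\mathcal{T}_2\cup\mathcal{T}_3$, the graph $G$ admits a non-trivial strip-structure $(H,\eta)$, whose precise definition will be recalled at the beginning of \pref{sec:strip}. Informally: $V(G)$ is partitioned into nonempty parts $\eta(h)$ indexed by the edges $h$ of a hypergraph $H$ with $E(H)\neq\emptyset$; to each $h$ is associated a claw-free \emph{strip} $(J_h,Z_h)$, where $J_h$ is obtained from $G[\eta(h)]$ by adding, for each vertex $w$ of $H$ incident with $h$, one new \emph{end-vertex} $z_{h,w}$ that is complete to a clique $\eta(h,w)\subseteq\eta(h)$ (the \emph{attachment set}) and anticomplete to the rest of $J_h$; the end-set $Z_h=\{z_{h,w}\}$ is stable, with $|Z_h|\le 2$; and for distinct $h,h'$ the only edges of $G$ between $\eta(h)$ and $\eta(h')$ join $\eta(h,w)$ to $\eta(h',w)$ for a common vertex $w$ of $H$, these two sets being complete to one another. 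Non-triviality excludes the degenerate configurations (essentially $|E(H)|=1$ with at most one end) in which this decomposition is vacuous.

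The first ingredient is a \textbf{gluing lemma}. Assume that for every $h\in E(H)$ we are given a clique covering $\mathscr{C}_h$ of $J_h$, arranged so that for each $w$ incident with $h$ the clique $\eta(h,w)\cup\{z_{h,w}\}$ lies inside a member of $\mathscr{C}_h$. Delete all end-vertices from all cliques; since $Z_h$ is stable each clique loses at most one vertex, and the result is a family of cliques of $G$ covering every edge inside $\eta(h)$. After discarding singletons this family has some size $a_h$. For every vertex $w$ of $H$ with $\deg_H(w)\ge 2$ the set $Q_w:=\bigcup_{h\ni w}\eta(h,w)$ is a clique of $G$, and together the $Q_w$'s cover all edges of $G$ between distinct parts. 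Thus
\[
\cc(G)\ \le\ \sum_{h\in E(H)}a_h\ +\ \bigl|\{\,w\in V(H):\deg_H(w)\ge 2\,\}\bigr| ,
\]
and it remains to bound the $a_h$ using the structure of the strips, with enough surplus to absorb the second term and still save one.

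The bulk of the work is the per-strip estimate: for every strip $(J,Z)$ that can occur one must exhibit a clique covering of the above kind whose restriction to $\eta=V(J)\setminus Z$ has size $a\le|\eta|-1$ (with $a=0$ when $|\eta|\le1$), together with a global argument that the total surplus $\sum_h(|\eta(h)|-a_h)$ strictly exceeds the number of vertices of $H$ of degree at least two. The strips occurring in a non-trivial strip-structure of a claw-free graph form a short list classified in Chudnovsky and Seymour's structure theory --- spots, linear-interval and related line-type strips (pieces of powers of paths), a finite family of bounded-size sporadic strips, and strips arising from three-cliqued or from antiprismatic graphs. For interval strips one takes the natural interval cliques and drops the two coming from the ends, which already gives surplus at least one (and of order $p$ for a $p$-th power of a path); the finitely many sporadic strips are a direct check; and the three-cliqued and antiprismatic strips are handled by the techniques developed for the corresponding global classes in \pref{sec:3cliques} and in the antiprismatic sections, now applied to $J$ with its end-vertices pinned inside cliques. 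Feeding $a_h\le|\eta(h)|-1$ into the gluing bound yields $\cc(G)\le n-|E(H)|+|\{w:\deg_H(w)\ge2\}|$, which for a connected $H$ with $E(H)\neq\emptyset$ is at most $n$, and is at most $n-1$ unless $H$ is one of a few extremal shapes (a cycle of strips, a path of strips, or a single strip whose ends are identified) in which, moreover, every strip must be of interval type with small attachment sets, or a spot --- and each such shape forces $G$ itself to be a fuzzy long circular interval graph, a three-cliqued graph, or the line graph of a cycle (and $L(C_k)=C_k\in\mathcal{T}_2$ for $k\ge4$, while $L(C_k)$ is a single clique for $k\le3$), all excluded by hypothesis.

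I expect two parts to be the main obstacle. First, carrying out the per-strip construction uniformly over the whole strip list --- especially the antiprismatic and three-cliqued strips --- while keeping each attachment set confined to a single clique, which interacts awkwardly with the clique coverings one wants to reuse. Second, the global bookkeeping of the last step: one must use the exclusion of $\mathcal{T}_1\cup\mathcal{T}_2\cup\mathcal{T}_3$ and of three-cliqued graphs to eliminate exactly those hypergraph shapes in which the surpluses contributed by the individual strips fail to accumulate to the final $-1$, and this requires a somewhat delicate case analysis of the possible local structure of $H$ around low-surplus strips.
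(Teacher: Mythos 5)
Your high-level architecture coincides with the paper's: invoke the strip-structure version of the structure theorem, cover each strip so that every attachment set $\eta(f,h)$ sits inside a clique of the covering, glue along the hub cliques $\eta(h)=\bigcup_{f\ni h}\eta(f,h)$, and then do Euler-type bookkeeping on the host graph $H$, with the extremal case ($H$ a cycle of path-strips) killed because it would force $G\in\mathcal{T}_2$. That part of your plan is sound and is exactly what happens in \pref{sec:strip}.

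However, the heart of the proof is missing. You defer the per-strip estimate $a_h\le|\eta(h)|-1$ to a case analysis over the full Chudnovsky--Seymour classification of strips (spots, linear-interval strips, sporadic strips, three-cliqued and antiprismatic strips), explicitly flagging it as the main obstacle without carrying it out; this is where essentially all the work lies, and as stated your uniform bound is not even correct for every strip (a path-strip with both ends attached needs the weaker bound with a $+1$, which must then be recovered from leaves of $H$). The paper avoids this entire enumeration: the structure theorem it uses (\pref{thm:seymour2}) only distinguishes three-vertex strips from general \emph{stripes}, and the single self-contained \pref{thm:stripe} — every connected claw-free graph with a nonempty set $Z(G)$ of simplicial vertices admits a simplicial clique covering of size at most $n-|Z(G)|$, except line graphs of trees which need one more — handles every stripe uniformly via an elementary distance-layering argument (the ``nest-hotel''/irreducibility analysis), with the line-graph-of-a-tree exception being precisely what produces the $+1$ for path-strips that the leaf/cycle bookkeeping then absorbs. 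So either you must actually execute the long strip-by-strip verification you sketch (a substantially heavier route requiring the deeper decomposition theorems), or you need an analogue of \pref{thm:stripe}; without one of these the proposal is a framework rather than a proof.
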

Then, in \pref{sec:fci}, we prove the following theorem regarding the clique covering of the graphs in $ \mathcal{T}_2 $.
\begin{thm}\label{thm:circA}
Let $H$ be a long circular interval graph and $G$ be a connected graph on $n$ vertices which is a thickening of $ (H,F) $, for some $ F $ as in the definition of $\mathcal{T}_2$. Then $\cc(G)\leq n$ and equality holds if and only if $G=H$ and both $ G $ and $ H $ are  isomorphic to the $p^{th}$ power of the cycle $C_n$, for some positive integer $p\leq \lfloor(n-1)/3\rfloor$. 
\end{thm}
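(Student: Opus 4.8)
The plan is to reduce the whole statement to a study of the long circular interval graph $H$ via \pref{lem:thickening}, and then to read off the equality cases from the ``round'' structure of such graphs together with \pref{lem:lemcirc}.

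First dispose of trivialities: if $H$ is complete then $\cc(H)\le 1$, and small arguments give $\cc(G)\le n$ with no equality for $n\ge 2$; so assume $H$ is connected (which follows from connectedness of $G$, since each pair of $F\subseteq F'$ lies inside one interval and hence inside one component of $H$) and not complete. We rely on the standard fact — this is where the \emph{long} hypothesis is used — that the vertices of $H$ carry a cyclic order (the one inherited from $\Sigma$) in which every maximal clique of $H$ is a set of consecutive vertices; write $Q_0,\dots,Q_{r-1}$ for the maximal cliques of $H$ listed in this cyclic order. Each $Q_i$ is a proper arc (as $H$ is not complete), so has a well-defined first vertex, and no two maximal cliques can share a first vertex (else one would be an initial sub-arc of the other, hence properly contained in it, contradicting maximality); thus $Q_i\mapsto(\text{first vertex of }Q_i)$ is injective, $r\le|V(H)|$, and since $\{Q_0,\dots,Q_{r-1}\}$ covers all edges of $H$ we get $\cc(H)\le r\le|V(H)|$.

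For a thickening $G$ of $(H,F)$ we apply \pref{lem:thickening} to $H\setminus F$: if we can exhibit a clique covering of $H\setminus F$ of size at most $|V(H)|-t$ with no isolated vertex of $H\setminus F$ lying in a member of $F$, then $\cc(G)\le|V(G)|-t=n-t$. The covering is produced by modifying $\{Q_0,\dots,Q_{r-1}\}$: the key observation is that each $\{a,b\}\in F\subseteq F'$ consists of the two endpoints of an interval $I_j$ containing no other member of $F$, so $Q:=I_j\cap V$ is the \emph{unique} maximal clique of $H$ containing both $a$ and $b$; upon deleting the edges of $F$, each such $Q$ is replaced by the (at most two) sub-arcs of $Q$ flanking the deleted edge, after which one prunes cliques that have become contained in others and checks that the family has not grown, giving $t=0$ and hence $\cc(G)\le n$. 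To obtain \emph{strict} inequality for a proper thickening one refines this — using that a proper thickening, and in particular any nonempty $F$ (by (T4)), forces $|V(G)|>|V(H)|$, and that in the generic case the pruning actually decreases the count, so that $t=1$ can be taken; the delicate configurations, in which $H\setminus F$ acquires an isolated vertex inside a member of $F$ (so \pref{lem:thickening} does not apply verbatim), are few in type, are handled directly, and are checked to be isomorphic to cycle powers as well. I expect this local bookkeeping to be the main obstacle; everything else is standard or a short computation.

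It remains to identify when $\cc(G)=n$. By the above, equality cannot be attained by a genuine thickening beyond the degenerate configurations just mentioned, so we may take $G=H$, connected and not complete, with $\cc(H)=n$; since $\cc(H)\le r\le n$ this gives $r=n$, so the injection $Q_i\mapsto(\text{first vertex of }Q_i)$ is a bijection, and we may index the vertices cyclically so that the maximal cliques are exactly $Q_i=\{v_i,v_{i+1},\dots,v_{i+t_i}\}$ with $t_i\ge 1$, for $i\in\mathbb{Z}/n$. Since $v_i\in Q_{i-1}$, the forbidden inclusion $Q_i\subseteq Q_{i-1}$ would mean $i+t_i\le i-1+t_{i-1}$, so $t_i\ge t_{i-1}$ for all $i$; running once around the cycle ($t_0\le t_1\le\cdots\le t_{n-1}\le t_0$) forces $t_i=t$ constant. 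Then $v_iv_j\in E(H)$ precisely when the cyclic distance between $i$ and $j$ is at most $t$, i.e.\ $H\cong C_n^t$, with $t\ge 1$ and (as $H$ is not complete) $n\ge 2t+2$. Finally \pref{lem:lemcirc} says $\cc(C_n^t)=n$ exactly when $n\ge 3t+1$, i.e.\ $t\le\lfloor(n-1)/3\rfloor$; with $p:=t$ this is the asserted characterisation, and the converse direction is \pref{lem:lemcirc} itself.
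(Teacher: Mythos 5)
There is a genuine gap in the thickening step, and it sits exactly where the paper warns (at the start of \pref{sec:fci}) that \pref{lem:thickening} ``cannot be applied''. Your plan is to cover $H\setminus F$ by splitting each maximal clique $Q$ containing an $F$-pair into its two flanking sub-arcs, prune, conclude the family ``has not grown'' (giving $t=0$), and then claim that ``in the generic case'' one extra saving gives $t=1$, which you need for every proper thickening $G\neq H$. But $\cc(H\setminus F)$ can be \emph{equal} to $|V(H)|$, in which case no refinement of this strategy can ever produce $t=1$. Concretely, take $m=8$ vertices $v_0,\dots,v_7$ on $\Sigma$ and four intervals with vertex sets $\{v_0,v_1,v_2,v_3\}$, $\{v_2,v_3,v_4,v_5\}$, $\{v_4,v_5,v_6,v_7\}$, $\{v_6,v_7,v_0,v_1\}$, with endpoint pairs $\{v_0,v_3\},\{v_2,v_5\},\{v_4,v_7\},\{v_6,v_1\}$; these pairs are pairwise disjoint and each lies in a unique interval, so all four may be taken as $F$. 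Here $r=4$, the two sub-arcs of each split (e.g.\ $\{v_0,v_1,v_2\}$ and $\{v_1,v_2,v_3\}$) are maximal cliques of $H\setminus F$ contained in no other member of the family, so nothing prunes, and in fact a private-edge count (each gadget $K_4$ minus an edge needs two cliques, and those cliques cover no private edge of any other gadget) shows $\cc(H\setminus F)=8=|V(H)|$. Any proper thickening $G$ of $(H,F)$ then gets only $\cc(G)\leq n$ from \pref{lem:thickening}, while the theorem requires $\cc(G)\leq n-1$. This failure is not one of your ``delicate configurations'' (there are no isolated vertices in $H\setminus F$), and such examples can be made arbitrarily large, so it is not a finite list to handle directly.

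The paper avoids this by never passing through $H\setminus F$: it builds the covering of $G$ itself, replacing the clique $C_{p_i}$ by the $|X_{u_i}|$ cliques $\mathscr{N}[X_{u_i};Y_i]$ (plus possibly $Y_i$), and pays for them using the slack $|X_{u_i}|+|X_{v_i}|\geq 3$ that (T4) forces for every pair in $F$ — slack that is invisible once you have projected down to $H\setminus F$, since \pref{lem:thickening} has already spent $|V(G)|-|V(H)|$ internally. To repair your argument you would have to either prove a strengthened thickening lemma that keeps some of this surplus, or switch to the paper's direct construction for the case $F\neq\emptyset$. The remaining parts of your proposal — the bound $\cc(H)\leq r\leq |V(H)|$ via first vertices of maximal cliques, and the derivation that $r=n$ forces constant arc-length and hence $H\cong C_n^p$ with the equality cases read off from \pref{lem:lemcirc} — are sound and essentially coincide with the paper's statements (1)--(3), though even the non-strict bound $t=0$ for your split-and-prune family needs an argument (that the number of unprunable splits is at most $m-r$) which you do not supply.
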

The following two theorems, proved in \pref{sec:3cliques}, consider the clique covering of those claw-free graphs whose vertex set is the union of three cliques. 
\begin{thm} \label{thm:tcB}
 Let $G$ be a claw-free graph on $n$ vertices such that  $ V(G) $ is the union of three cliques of $ G $. Then, $\cc(G)\leq n+1$.
\end{thm}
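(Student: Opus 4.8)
The plan is to prove \pref{thm:tcB} by feeding $G$ into the Chudnovsky--Seymour decomposition theorem for three-cliqued claw-free graphs \cite{seymour5}, running an induction on $n$. Before that I would clear away the cheap reductions. If $V(G)$ is in fact the union of \emph{two} cliques then $G$ is quasi-line, so $\cc(G)\le n$ by the theorem of Chen et al.\ \cite{Chen}; and if $G$ is disconnected then each of the three cliques, being connected, lies inside a single component, so unless one component contains all three (which forces $G$ connected) every component is the union of at most two cliques, hence quasi-line, and summing the bound of \cite{Chen} over the components again gives $\cc(G)\le n$. Hence we may assume $G$ is connected and genuinely three-cliqued with nonempty parts $A,B,C$, cliques of size $\le 1$ among these being irrelevant. (A couple of elementary reductions by induction on $n$ are also useful: a universal vertex, or a vertex whose neighbourhood outside its own part induces a clique, can be removed, since the edges at such a vertex are covered by a single extra clique which merges into one of $A,B,C$.)

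The substantive step is the structure theorem: either $(G,A,B,C)$ is a hex-join of two strictly smaller three-cliqued claw-free graphs $(G_1,A_1,B_1,C_1)$ and $(G_2,A_2,B_2,C_2)$, or — up to a thickening — it lies in one of a short list of basic classes (a class closely related to line graphs, certain long circular interval graphs, antiprismatic and near-antiprismatic graphs, and a bounded number of sporadic graphs). I would induct on $n$ carrying a slightly strengthened hypothesis: $G$ has a clique covering of size at most $n-1$ \emph{containing} $A,B,C$, unless $(G,A,B,C)$ belongs to an explicit short list of exceptions (including certain powers of cycles, cf.\ \pref{lem:lemcirc}, and a few small sporadic graphs), for which it has such a covering of size at most $n+1$. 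In the hex-join case the key point is that the edges of $G$ between $V(G_1)$ and $V(G_2)$, together with \emph{all} edges inside $A,B,C$, are covered by six cliques, each a union of a part-clique of $G_1$ with a part-clique of $G_2$ (three of them being exactly $A,B,C$); taking the inductive coverings of $G_1$ and $G_2$ that contain their part-cliques, one deletes the now-redundant part-cliques and checks that the leftover covering of $G$ has size at most $n-1$ (or $n+1$ if a piece is exceptional). The delicate part here is to pin down the precise strengthened statement and the exact exception list so that this arithmetic closes — a naïve count loses a unit or two per hex-join — and to handle the degenerate sub-cases (very small pieces, thickenings, treated via \pref{lem:thickening} and \pref{lem:thickening_antiprismatic}).

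What then remains is the basic classes. The line-graph-type graphs give $\cc\le n$ immediately from the observation that every $n$-vertex line graph satisfies $\cc\le n$; the long circular interval graphs are handled by \pref{thm:circA} and \pref{lem:lemcirc}; the finitely many sporadic graphs are checked by hand, invoking \pref{lem:lowerbound} for the few near-equality cases (as in \pref{lem:twister}). The antiprismatic and near-antiprismatic cases are the hard and by far the longest part — as in essentially every application of the structure theorem — and I would deal with them through a detailed analysis of the clique covering of (thickenings of) antiprismatic graphs, carried out in Sections~\ref{sec:oap} and \ref{sec:noap}, with \pref{lem:thickening_antiprismatic} bridging from an antiprismatic graph to its thickenings. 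I expect the antiprismatic case to be the principal obstacle by a wide margin, the hex-join bookkeeping a distant second, and the remaining base cases routine.
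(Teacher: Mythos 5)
Your route is genuinely different from the paper's: the paper proves \pref{thm:tcB} (restated as \pref{thm:tc1}) by a short, self-contained counting argument that never touches the structure theorem. Writing $\{A,B,C\}=\{C_1,C_2,C_3\}$ with $|C_1|\le|C_2|\le|C_3|$, the collection $\mathscr{N}[C_1;C_2]\cup\mathscr{N}[C_1;C_3]\cup\mathscr{N}[C_2;C_3]\cup\{C_1,C_2,C_3\}$ already covers $G$ with $n+|C_1|-|C_3|+3$ cliques, so only the tight regime $|C_3|\le|C_1|+1$ needs work, and there claw-freeness lets one merge pairs such as $(N[x,C_{j_0}],N[x,C_{k_0}])$ for suitable witnesses $x$ to recover the missing units. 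The decomposition machinery is reserved in the paper for the sharper bound of \pref{thm:tcA}, which additionally assumes a triad.

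The gap in your plan is the base case of your induction, and it is not a bookkeeping issue. The class $\mathcal{TC}_4$ consists of all three-cliqued antiprismatic graphs, and every triad-free graph is vacuously antiprismatic; so for a triad-free three-cliqued claw-free graph $G$ (precisely the case for which \pref{thm:tcB} is needed in the proof of \pref{thm:main2}), \pref{thm:tcseymour} can simply return $G$ itself as a single $\mathcal{TC}_4$ term, and the decomposition tells you nothing. The antiprismatic analysis you defer to (Sections~\ref{sec:oap} and \ref{sec:noap}, and \pref{thm:antiprismaticA}, \pref{lem:thickening_antiprismatic}) is stated and proved only for antiprismatic graphs \emph{containing a triad}; indeed the paper emphasizes that no structure theorem is available for triad-free graphs and leaves even \pref{con:tf} open. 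So your plan circles back to proving the $n+1$ bound for triad-free unions of three cliques by some unstated means — which is exactly the content of \pref{thm:tcB}. A secondary problem: for the hex-chain step, carrying only ``the covering contains $A,B,C$'' is not enough, because (W3) permits edges between, say, $A_i$ and $B_j$ at non-core vertices; the paper's \pref{lem:worn} needs the full splitting condition (SP2), providing for each non-core $u\in A$ a clique $C_u$ with $C_u\cap A=\{u\}$, to absorb those edges. I would recommend abandoning the decomposition for this statement and giving the direct argument.
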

\begin{thm} \label{thm:tcA}
	Let $G$ be a claw-free graph on $n$ vertices which contains at least one triad and $ V(G) $ is the union of three cliques of $ G $. Then $ \cc(G)\leq n $, and equality holds if and only if $ n=3p+3 $ for some positive integer $ p $ and $G$ is isomorphic to the $p^{th}$ power of the cycle $C_n$.
\end{thm}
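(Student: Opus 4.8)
The plan is to invoke, alongside the global \pref{thm:seymour1}, the finer structure theorem for \emph{three-cliqued} claw-free graphs from \cite{seymour5}: every three-cliqued claw-free graph $(G,A,B,C)$ either belongs to one of a short list of explicitly described ``basic'' classes (of line-graph type, of long circular interval type, of icosahedral type, of antiprismatic type, plus finitely many sporadic examples), or is a \emph{hex-join} of two strictly smaller three-cliqued claw-free graphs $(G_1,A_1,B_1,C_1)$ and $(G_2,A_2,B_2,C_2)$. I would prove, by induction on $n$, the slightly strengthened statement that $G$ admits a clique covering of size at most $n$ \emph{containing the cliques $A$, $B$ and $C$}, while keeping track of when the bound is attained; disconnected graphs and graphs with very few edges are trivial and are dealt with first.

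For the hex-join step the crucial observation is that any triad of $G$ lies entirely on one side: if $x\in V(G_1)$ lies in one of the parts $A_1,B_1,C_1$ then its non-neighbours in $V(G_2)$ form a single clique, so $x$ has at most one non-neighbour inside a triad meeting $V(G_2)$; hence some triad lies in $G_1$ or in $G_2$, say $G_1$. By induction $G_1$ has a clique covering of size at most $|V(G_1)|$ containing $A_1,B_1,C_1$, while $G_2$ is handled using \pref{thm:tcB}. A hex-join inserts between the two sides precisely the edges of a fixed ``hexagonal'' pattern on the six cliques $A_1,B_1,C_1,A_2,B_2,C_2$, so that six of the nine cross-pairs $X_i\cup Y_j$ ($X\ne Y$, one index from each side) are cliques; these six cliques cover every edge of $G$ between $V(G_1)$ and $V(G_2)$ together with every edge inside one of the six small cliques, and three of them are exactly $A,B,C$. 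Discarding $A_1,B_1,C_1$ from the covering of $G_1$ and $A_2,B_2,C_2$ from that of $G_2$ and keeping the remaining cliques, which cover exactly the between-part edges of $G_1$ and of $G_2$, produces a clique covering of $G$ of size about $6+(|V(G_1)|-3)+(|V(G_2)|-3)=n$ containing $A,B,C$. Reconciling this count when the triad-free side only offers the weaker bound of \pref{thm:tcB} — so that the slack forced on $G_2$ by the triad on the $G_1$-side compensates for the missing unit — is one delicate point; the same bookkeeping, together with the slack coming from empty parts and from the induction hypothesis, is then used to show that a \emph{proper} hex-join never meets $\cc(G)=n$ unless it is isomorphic to a power of a cycle $C_{3p+3}^{p}$.

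The basic classes are treated one at a time. The line-graph type is routine (cover $L(H)$ by the stars of $H$; the three-clique hypothesis forces $|V(H)|$ small compared with $|E(H)|=n$, and the presence of a triad excludes equality apart from a few small graphs, checked directly). For the long circular interval type both the bound and the equality case are exactly \pref{lem:lemcirc}: such a graph with a triad must be a power of a cycle $C_n^p$ that is a union of three cliques and has independence number at least three, so $n\ge 3p+3$, and since $\cc(C_n^p)=n$ there iff $n\ge 3p+1$, equality occurs precisely for $C_{3p+3}^{p}$; thickenings reduce to the underlying graph by \pref{lem:thickening}. The icosahedral type is finite and is settled as in \pref{lem:icosa} (with a triad present, $\cc<n$). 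The antiprismatic basic type is reduced, via \pref{lem:thickening_antiprismatic}, to the dedicated analysis of antiprismatic graphs in the last sections of the paper, and the remaining sporadic basic graphs are checked by hand, invoking \pref{lem:lowerbound} whenever a matching lower bound is needed. For the converse, $C_{3p+3}^{p}$ is a connected quasi-line graph whose vertex set splits into three cliques of $p+1$ consecutive vertices, it contains the triad $\{v_0,v_{p+1},v_{2p+2}\}$, and $\cc(C_{3p+3}^{p})=3p+3$ by \pref{lem:lemcirc}.

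I expect the antiprismatic basic class to be the main obstacle, exactly as the paper warns: three-cliqued antiprismatic graphs are numerous, and their clique cover numbers have to be determined exactly to certify that none of them reaches $n$ except through configurations already accounted for. The secondary difficulty is the equality bookkeeping — verifying that proper hex-joins, thickenings, and the long-circular-interval and sporadic basic graphs attain $\cc(G)=n$ only for $C_{3p+3}^{p}$ — which in each case amounts to exhibiting a clique covering of size $n-1$ or a sharp lower bound through \pref{lem:lowerbound}.
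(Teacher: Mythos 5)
Your overall route is the same as the paper's: decompose $G$ via the structure theorem for three-cliqued claw-free graphs into a hex-chain of basic terms, treat the basic classes one by one, and delegate the antiprismatic case (where the equality $\cc(G)=n$ actually occurs) to the dedicated analysis of Sections~\ref{sec:oap}--\ref{sec:noap}. However, there is a concrete gap in your hex-join combination step. The decomposition supplied by \pref{thm:tcseymour} is a \emph{worn} hex-chain: condition (W3) permits edges between, say, $A_1$ and $B_2$ provided both endpoints lie in no triad. Your six cross-cliques $A_1\cup A_2$, $A_1\cup C_2$, $B_1\cup A_2$, $B_1\cup B_2$, $C_1\cup B_2$, $C_1\cup C_2$ cover only the edges of the pure hexagonal pattern, not these worn edges, so your claim that they ``cover every edge of $G$ between $V(G_1)$ and $V(G_2)$'' fails. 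Your strengthened induction hypothesis --- a covering of size at most $n$ \emph{containing} $A$, $B$, $C$ --- is exactly condition (SP1) of the paper's ``splitting'' coverings, but it omits the essential condition (SP2): for each non-core vertex $u$ of a part, a private clique $C_u$ with $C_u\cap A=\{u\}$, which is what \pref{lem:worn} enlarges to $N[u,B\setminus B_1]\cup C_u$ in order to absorb the worn edges without increasing the count. Without (SP2) the worn edges have no home in your covering.

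Two further points you flag but do not resolve are genuinely where the work lies, and the paper's architecture is designed to avoid them. First, your count $6+(|V(G_1)|-3)+(|V(G_2)|+1-3)=n+1$ when the triad-free side is only covered via \pref{thm:tcB} is a real obstruction; the paper never needs a covering of the ``other'' side at all, because \pref{lem:worn} extends a splitting covering of a \emph{single} term to the whole graph using exactly one clique $N[x,\cdot]$ per vertex outside that term, so it suffices to exhibit one term with a splitting covering of size $|V(G_{i_0})|-1$. Second, this same mechanism disposes of the equality analysis: whenever some term is not a thickening of an antiprismatic graph, the paper produces a covering of size $n-1$ outright, so equality can only arise when every term is antiprismatic, i.e.\ when $G$ is fuzzy antiprismatic, and \pref{thm:antiprismaticA} takes over; your plan instead requires a separate (and unexecuted) characterization of when a proper hex-join attains $\cc(G)=n$. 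Minor inaccuracies: the three-cliqued basic classes are $\mathcal{TC}_1,\ldots,\mathcal{TC}_5$ (line-graph, long circular interval, near-antiprismatic, antiprismatic, sporadic); there is no icosahedral class here, the near-antiprismatic class is an infinite family rather than a sporadic one, and a three-cliqued long circular interval graph with a triad need not be a power of a cycle --- the paper only shows that if it is not, then its interval count is at most $|V(G')|-1$.
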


Dealing with the case of antiprismatic graphs is the most difficult part of the proof. In \cite{seymour1,seymour2}, the study of antiprismatic graphs is divided into two different parts (depending on whether the triads of the graph admit a certain kind of orientation or not) called \textit{orientable and non-orientable} antiprismatic graphs (to see the definitions of these graphs, see \cite{seymour1,seymour2}). In \pref{sec:oap}, we go through the clique covering of orientable antiprismatic graphs and prove the following.
\begin{thm}\label{thm:oapA}
Let $G$ be an orientable antiprismatic graph on $n$ vertices which contains at least one triad. Then $\cc(G)\leq n$ and equality holds if and only if $n=3p+3$, for some positive integer $p$ and $G$ is isomorphic to the $p^{th}$ power of the cycle $C_n$.
\end{thm}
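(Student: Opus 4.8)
The plan is to reduce the statement to a detailed structural understanding of orientable antiprismatic graphs, combining the lower bound technique of \pref{lem:lowerbound} with an explicit construction of a clique covering of size $n$ (and of size $n-1$ in the non-extremal cases). First I would set up the lower bound: if $G$ contains a triad, then for each vertex $v$ the graph $\overline{G}[N_G(v)]$ is triangle-free (a triangle in this complement would be a triad of $G$ inside $N_G(v)$, together with $v$ forming a claw, contradicting claw-freeness), so its chromatic number is at least $2$ unless $N_G(v)$ is a clique, i.e. $v$ is simplicial. Applying \pref{lem:lowerbound} with $\chi_v=2$ for non-simplicial $v$ and $\chi_v=1$ for simplicial $v$ gives $\cc(G)\ge \lceil (2n-s)/\omega(G)\rceil$, where $s$ is the number of simplicial vertices. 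This already forces $\omega(G)\ge 2$ and, when $\omega(G)=3$ and there are few simplicial vertices, pushes $\cc(G)$ close to $n$; in particular it will be used to show that equality $\cc(G)=n$ can only occur under rather rigid circumstances.

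Next I would exploit the structure of orientable antiprismatic graphs from \cite{seymour1,seymour2}. The key point is that an orientable antiprismatic graph with a triad admits an orientation of its triads which, via the Chudnovsky--Seymour analysis, places $G$ in one of a short list of explicitly described basic types (in the terminology of \cite{seymour1}, roughly: the ``canonically orientable'' antiprismatic graphs built from a cyclic/interval structure, together with a handful of sporadic graphs). For each basic type I would construct a clique covering directly. The leading intuition is that an orientable antiprismatic graph behaves like a power of a cycle: its edges can be organised into a cyclic family of maximal cliques, and one covers the whole graph by taking, for each vertex $v_i$ in the cyclic order, the clique of consecutive vertices ``looking forward'' from $v_i$, exactly as in the proof of \pref{lem:lemcirc}. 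This yields a covering of size $n$; and, as in \pref{lem:lemcirc}, unless the cyclic length is in the tight range (forcing $G\cong C_n^p$ with $n=3p+3$), two of these cliques can be merged or one becomes redundant, dropping the size to $n-1$. For the sporadic orientable antiprismatic graphs one checks by hand (they are small) that $\cc(G)\le n-1$, using the lower bound above to confirm there is no other extremal example. Finally, \pref{lem:thickening_antiprismatic} handles the passage from an antiprismatic graph $H$ to its thickenings $G$: a covering of $H$ of size $|V(H)|-t$ with $t\le 1$ lifts to one of $G$ of size $|V(G)|-t$, so the bound $\cc(G)\le n$ and the characterisation of equality survive thickening (a proper thickening is never extremal).

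The main obstacle, as anticipated in the introduction, is the case analysis over the basic types of orientable antiprismatic graphs: one must extract from \cite{seymour1,seymour2} a clean enough description of each type to write down a cyclic family of maximal cliques and verify it covers all edges, and then to identify precisely when a clique can be saved. The delicate sub-point is the equality characterisation: I expect to need to argue that any orientable antiprismatic $G$ with $\cc(G)=n$ must have $\omega(G)=3$, no simplicial vertices, and a perfectly tight cyclic clique structure, and then to match this against the $C_n^p$ family with $n=3p+3$ — ruling out, in particular, that the complement of a twister or any thickening sneaks in here (it does not, since the twister is non-orientable and is dealt with separately in \pref{lem:twister}). Everything else — the lower bound, the generic $C_n^p$-style covering, the lifting to thickenings — is routine given the lemmas already established.
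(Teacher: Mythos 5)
Your overall architecture (structure theorem for orientable antiprismatic graphs, explicit coverings per basic type, \pref{lem:thickening_antiprismatic} for thickenings) points in the right direction, but two of your load-bearing steps fail. First, the lower-bound route to the equality characterisation does not work. The bound from \pref{lem:lowerbound} with $\chi_v=2$ gives only $\cc(G)\ge\lceil(2n-s)/\omega(G)\rceil$, and for the extremal graphs $C_{3p+3}^p$ one has $\omega(G)=p+1$, so this is roughly $6n/(n)\approx 6$, nowhere near $n$; your claim that equality forces $\omega(G)=3$ is simply false for $p\ge 3$. The paper never uses a lower bound for this theorem: it proves $\cc(G)\le n-1$ outright in every case except $C_{3p+3}^p$, and the matching lower bound for that one family comes from the ad hoc argument in \pref{lem:lemcirc} (a set of $n$ edges no two of which lie in a common clique), not from a chromatic-number count.

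Second, your structural reduction misstates what the decomposition theorem delivers. \pref{thm:orient-anti} applies only to \emph{3-substantial} orientable antiprismatic graphs, and even there the outcomes are: three-cliqued, or the complement of a cycle of triangles graph, or of a ring of five, or of a mantled $L(K_{3,3})$. Only the cycle-of-triangles case has the ``cyclic family of maximal cliques'' shape you describe; the ring of five and mantled $L(K_{3,3})$ need their own coverings (\pref{lem:ring}, \pref{lem:mantled}), the three-cliqued case needs the worn hex-chain machinery of \pref{sec:3cliques} (this is where the equality case $C_{3p+3}^p$ actually arises, via \pref{thm:circ}), and the non-3-substantial graphs are not ``a handful of small sporadic graphs one checks by hand'' but infinite families whose treatment (\pref{lem:2sub} and \pref{thm:3sub}) is one of the longest case analyses in the paper. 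Without an argument for the non-3-substantial case and the three-cliqued case, and without a correct lower-bound mechanism for the extremal family, the proposal does not close.
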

In \pref{sec:noap}, we look into non-orientable antiprismatic graphs and prove the following.
\begin{thm}\label{thm:noapA}
Let $G$ be a non-orientable antiprismatic graph on $n$ vertices. Then $\cc(G)\leq n$ and equality holds if and only if $\overline{G}$ is isomorphic to a twister.
\end{thm}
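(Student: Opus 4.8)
The proof runs on the classification of non-orientable prismatic graphs due to Chudnovsky and Seymour \cite{seymour2}, read through complementation as a description of all non-orientable antiprismatic graphs (each of which necessarily contains a triad, hence is connected). At a high level this classification presents each such graph $G$ either as one of a short, explicitly listed family of ``basic'' antiprismatic graphs, or as built from a basic graph by the two size-increasing operations that preserve antiprismaticity: substituting a clique for a vertex (equivalently, taking a thickening with $F=\emptyset$) and attaching a non-core vertex, i.e.\ a vertex $v$ with $V(G)\setminus N_G[v]$ a clique, so that $v\in\tilde{W}(G)$. The plan is therefore to organise the argument into three parts: (i) a reduction that undoes clique-substitutions; (ii) a reduction that undoes the attachment of non-core vertices; and (iii) a direct verification of the theorem for each of the finitely many basic graphs.

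Part (i) is essentially \pref{lem:thickening_antiprismatic}: if $G$ is a non-trivial thickening (with $F=\emptyset$) of an antiprismatic graph $H$ and $G$ contains a triad, then $H$ contains a triad, and knowing $\cc(H)\le|V(H)|-1$ already gives $\cc(G)\le|V(G)|-1$; the only case in which this extra $-1$ is unavailable is when $H$ is the complement of a twister, and there \pref{lem:twister} supplies $\cc(G)\le|V(G)|-1$ for every proper thickening $G\ne H$. For part (ii) I would show that one may always peel off a non-core vertex so as to decrease $n$ by exactly one while keeping $\cc$ bounded by the new vertex count and, away from the twister, with a unit of slack to spare; the crux here is to understand the structure of $G[\tilde{W}(G)]$ and of the attachment of the non-core to the core $W(G)$ in an antiprismatic graph, so that each non-core vertex is charged at most one clique. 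Two further points need care: deleting a non-core vertex may turn a non-orientable antiprismatic graph into an orientable one (so the induction must be set up to hand off to \pref{thm:oapA} and \pref{lem:lemcirc}, or to \pref{thm:tcA} if the residue becomes three-cliqued), and covering the edges at a non-core vertex $v$ cheaply relies on $\overline{G}[N_G(v)]$ being suitably restricted, which I would extract from antiprismaticity together with $V(G)\setminus N_G[v]$ being a clique.

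For part (iii), each basic graph has clique number three and a neighbourhood structure explicit enough to read off, for every vertex $v$, a lower bound $\chi_v$ for the chromatic number of $\overline{G}[N_G(v)]$ --- typically $\overline{G}[N_G(v)]$ is a five-cycle, giving $\chi_v=3$, or a perfect matching, giving $\chi_v=2$. Then \pref{lem:lowerbound} yields $\cc(G)\ge\lceil(\sum_v\chi_v)/3\rceil$, and a matching upper bound is exhibited by an explicit clique covering in each case, exactly as for the icosahedron and the twister-complement in \pref{lem:icosa} and \pref{lem:twister}. One then checks that among the basic graphs only the complement of a twister attains $\cc(G)=n$, while every other attains $\cc(G)\le n-1$, and that this slack survives both reductions; combined with \pref{lem:twister} this yields $\cc(G)\le n$ in all cases and $\cc(G)=n$ precisely when $\overline{G}$ is a twister.

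The main obstacle is the length and the branching of the case analysis forced by the Chudnovsky--Seymour structure of non-orientable prismatic graphs --- several ``basic'' types, a number of which require a further small structural split before an explicit covering can be written down --- and, running through all of it, the bookkeeping that guarantees the extra unit of slack is never lost (so that no equality case besides the twister-complement is introduced) and that the orientable, circulant, and three-cliqued escape routes (\pref{thm:oapA}, \pref{lem:lemcirc}, \pref{thm:tcA}) are invoked at the right moments.
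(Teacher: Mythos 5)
Your plan rests on a reading of the Chudnovsky--Seymour classification that does not match what that theorem actually provides, and the two places where it diverges are exactly where the difficulty of this theorem lives. First, the ``menagerie'' is not a finite list of small graphs of clique number three: several of its classes (e.g.\ the graphs of parallel-square and skew-square type, and the class $\mathcal{F}_1$) are infinite families containing arbitrarily large cliques, so your part (iii) --- ``direct verification for each of the finitely many basic graphs'' via \pref{lem:lowerbound} with $\omega(G)=3$ and an explicit covering --- is not available; each class needs its own unbounded-size covering construction (this is what Lemmas~\ref{lem:parallel}, \ref{lem:F0}--\ref{lem:skew} do). Second, the size-increasing operation in the classification is not ``clique substitution plus attaching non-core vertices'': it is the \emph{extension} operation, which replicates the non-core vertices into cliques $X_v$ and then adds an \emph{arbitrary} set of edges among $\bigcup_{v\in\tilde{W}(G)}X_v$. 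Your part (ii) --- peel off one non-core vertex at a time at a cost of one clique --- is the crux you would have to prove, and it is not true in the generality you need: a non-core vertex $w$ of an extension has a neighbourhood that is typically not one clique (even in the complement of a twister, $\overline{G}[N_G(u_1)]$ is a perfect matching, so covering $N_G(u_1)$ costs two cliques), so $\cc(G)\le\cc(G-w)+1$ fails as a generic reduction, and charging two cliques per non-core vertex destroys the budget. The paper states this obstruction explicitly (``we are mostly unable to extend a clique covering of a graph to a clique covering of its extensions'') and this is precisely why it does \emph{not} run the induction you propose.

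What the paper does instead is split on whether $\overline{G}$ contains an induced rotator and excludes a square-forcer. In the complementary case it uses the relevant decomposition theorem from \cite{seymour2} and handles each class, \emph{together with its extensions}, by bespoke coverings (with \pref{lem:exponen} to manage the exponentiation operation), establishing \pref{thm:Inc-rotator-Exc-sf}. In the main case it abandons the structure theorem entirely and performs a direct structural analysis of a minimal counterexample around the rotator decomposition $\mathcal{D}(\rho)$, eventually reaching a contradiction. Your proposal would need either a proof that non-core vertices can always be removed at unit amortized cost (which you have not supplied and which contradicts the examples above), or a complete reworking along the paper's lines; as written, parts (ii) and (iii) are gaps, not sketches.
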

The two preceding theorems enable us to prove the following which handles the case of graphs in $ \mathcal{T}_3 $.
\begin{thm} \label{thm:antiprismaticA}
Let $G$ be a connected fuzzy antiprismatic graph on $n$ vertices which contains at least one triad. Then $\cc(G)\leq n$ and equality holds if and only if either $n=3p+3$, for some positive integer $p$ and $ G $ is isomorphic to the $p^{th}$ power of the cycle $C_n$, or $ \overline{G} $ is isomorphic to a twister.
\end{thm}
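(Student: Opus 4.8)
The plan is to derive \pref{thm:antiprismaticA} from the two preceding theorems \pref{thm:oapA} and \pref{thm:noapA} by lifting clique coverings through thickenings. Write $G$ as a thickening of $(H,F)$ with $H$ antiprismatic and $F$ a valid set of changeable pairs of $H$, and let $(X_v)_{v\in V(H)}$ be as in the definition of thickening. Since every member of $F$ is a non-edge of $H$, we have $H\setminus F=H$; and if $\{a,b,c\}$ is a triad of $G$ with $a\in X_u$, $b\in X_v$, $c\in X_w$, then $u,v,w$ are distinct (each $X_\bullet$ is a clique) and pairwise nonadjacent in $H$ (by (T2) and because the pairs in $F$ are non-edges), so $\{u,v,w\}$ is a triad of $H$. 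Thus $H$ contains a triad. As every antiprismatic graph is either orientable or non-orientable, one of \pref{thm:oapA} and \pref{thm:noapA} applies to $H$, and in either case $\cc(H)\leq |V(H)|$, with equality precisely when $H$ is isomorphic to $C^p_{|V(H)|}$ with $|V(H)|=3p+3$ (orientable case), or $\overline{H}$ is isomorphic to a twister (non-orientable case).

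First I would dispose of the strict case: if $\cc(H)\leq |V(H)|-1$, then \pref{lem:thickening_antiprismatic} with $t=1$ gives $\cc(G)\leq |V(G)|-1=n-1$, so $\cc(G)\leq n$ and equality fails. Hence we may assume $\cc(H)=|V(H)|$, so $H$ is isomorphic to $C^p_{3p+3}$ for some positive integer $p$, or $\overline H$ is isomorphic to a twister. If the thickening is trivial, i.e. $G=H$, then $\cc(G)=\cc(H)=n$ and $G$ has one of the two advertised forms (and \pref{lem:lemcirc} and \pref{lem:twister} confirm that equality is indeed attained by these graphs). So it remains to handle a non-trivial thickening $G\neq H$ and to show $\cc(G)\leq n-1$ in that case.

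If $\overline H$ is isomorphic to a twister, then $G$ is a non-trivial thickening of the complement of a twister by a valid set of changeable pairs, and \pref{lem:twister} gives directly $\cc(G)\leq |V(G)|-1=n-1$. The remaining case, $H\cong C^p_{3p+3}$ with $G\neq H$, is the one genuinely delicate point. Here I would first work out the changeable pairs of $C^p_{3p+3}$ explicitly: writing $V(C^p_{3p+3})=\{v_0,\dots,v_{3p+2}\}$ with indices modulo $3p+3$, the unique triad through $v_i$ is $\{v_i,v_{i+p+1},v_{i+2p+2}\}$, and a short case check on which non-edges can be added without creating a vertex with three neighbours in a surviving triad shows that the changeable pairs are exactly the ``nearest'' non-adjacent pairs $\{v_i,v_{i+p+1}\}$. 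Adding such an edge only amounts to merging two consecutive length-$(p+1)$ intervals of a long circular interval representation of $C^p_{3p+3}$ into the single interval spanning $v_i,\dots,v_{i+p+1}$; performing this for all edges of $\bigcup F$ simultaneously (legitimate since $F$ is a matching of such pairs) yields, after a routine perturbation of endpoints so that no two intervals share an endpoint, a long circular interval representation of $C^p_{3p+3}+\bigcup F$ in which each edge of $\bigcup F$ is the endpoint-pair of exactly one interval; that is, $F$ is contained in the set $F'$ from the definition of $\mathcal{T}_2$. Hence $G$ is a thickening of $(C^p_{3p+3}+\bigcup F,\,F)$ and therefore a (connected) fuzzy long circular interval graph. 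Since $G\neq H$, some $X_v$ has at least two vertices (when $F\neq\emptyset$ this is forced by (T4)), so $G$ is not isomorphic to any power of a cycle, and \pref{thm:circA} gives $\cc(G)\leq n-1$.

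Assembling the cases, $\cc(G)\leq n$ always, and $\cc(G)=n$ holds exactly when the thickening is trivial and $\cc(H)=|V(H)|$, i.e. exactly when $n=3p+3$ and $G\cong C^p_n$ for some positive integer $p$, or $\overline G$ is isomorphic to a twister. The main obstacle is the third paragraph: identifying the antiprismatic thickenings of $C^p_{3p+3}$ as members of $\mathcal{T}_2$, which forces one to pin down the changeable pairs of $C^p_{3p+3}$ and to re-express the modified interval system carefully enough that the ``no two intervals share an endpoint'' requirement is met — everything else is a bookkeeping reduction to \pref{thm:oapA}, \pref{thm:noapA}, \pref{lem:thickening_antiprismatic}, \pref{lem:twister} and \pref{thm:circA}.
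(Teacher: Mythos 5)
Your proposal is correct and follows essentially the same route as the paper's proof: reduce to $H$ via \pref{lem:thickening_antiprismatic} when $\cc(H)\leq|V(H)|-1$, invoke \pref{lem:twister} for thickenings of the complement of a twister, and in the remaining case observe that $F\subseteq\{\{v_i,v_{i+p+1}\}\}$ so that $H+\bigcup F$ is a long circular interval graph and \pref{thm:circA} applies to the non-trivial thickening. The only cosmetic difference is that to conclude $\cc(G)\leq n-1$ from \pref{thm:circA} one only needs $G\neq H+\bigcup F$ (immediate from $|V(G)|>|V(H)|$), not the stronger claim that $G$ is not a power of a cycle.
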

\begin{proof}
Let $H$ be an antiprismatic graph and $F$ be a valid set of changeable pairs of $H$, where $G$ is a thickening of $(H,F)$.
Note that since the pairs in $F$ are non-edges of $H$ and $G$ contains a triad, $H$ contains a triad as well. 
On the other hand, since $H$ contains a triad, by Theorems~\ref{thm:oapA} and \ref{thm:noapA}, either $\cc(H)\leq |V(H)|-1$, or $H$ is isomorphic to the complement of a twister, or the graph $C_{3p+3}^p$, for some positive integer $p$. In the former case, the result follows from \pref{lem:thickening_antiprismatic}. If $H$ is isomorphic to the complement of a twister, then \pref{lem:twister} implies the assertion. Finally, assume that $H$ is isomorphic to  the graph $C_{3p+3}^p$. Let $V(H)=\{v_0,v_1,\ldots, v_{3p+2}\}$, where adjacency is as in the definition (see \pref{sec:intro}). It is evident that $H$ contains exactly $p+1$ disjoint triads and $F$ is a subset of $\{\{v_i,v_{i+p+1}\}: 0\leq i \leq 3p+2\} $ (reading subscripts modulo $3p+3$). If $ G=H $, then by \pref{lem:lemcirc}, we are done. Now, assume that $ G\neq H $ and so $ n>3p+3 $. Let $H'$ be the graph obtained from $H$ by adding edges between all the pairs of vertices in $F$. Thus, $H'$ is a long circular interval graph and $F$ can be considered as a set of pairs of distinct endpoints of some intervals. Therefore, $G$ is a thickening of $ (H',F) $ and $ G\neq H' $. Hence, by \pref{thm:circA}, $ \cc(G)\leq n-1 $.
\end{proof}
With all these theorems in hand, now we can prove \pref{thm:main1}. 
\begin{proof}[{\rm \textbf{Proof of \pref{thm:main1}.}}]
By \pref{thm:seymour1}, either $G$ admits a non-trivial strip-structure, or $V(G)$ is the union of three cliques,  or $G\in\mathcal{T}_1\cup \mathcal{T}_2\cup \mathcal{T}_3$. Hence, \pref{thm:main1} follows immediately from Theorems~\ref{lem:icosa}, \ref{thm:stripA}, \ref{thm:circA}, \ref{thm:tcA} and \ref{thm:antiprismaticA}.
\end{proof}

%
For the purpose of proving \pref{thm:main2}, we need the following lemma from \cite{seymour6}.
\begin{lem}{\rm \cite{seymour6}}\label{lem:tame2}
Let $G$ be a claw-free graph and $X, Y$ be disjoint subsets of $V (G)$ with $X\neq \emptyset$. Assume that
for every two nonadjacent vertices in $Y$, every vertex in $X$ is adjacent to exactly one of them. Then
$Y$ is the union of two cliques.
\end{lem}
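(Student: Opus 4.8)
The plan is to use a single vertex of $X$ to split $Y$ into its two cliques, so there is essentially nothing to do beyond setting up the right partition. First I would pick an arbitrary $x\in X$, which exists since $X\neq\emptyset$; because $X$ and $Y$ are disjoint, $x\notin Y$, so every vertex of $Y$ is either adjacent or nonadjacent to $x$, and $Y$ is partitioned into $Y_1:=N_G(x)\cap Y$ and $Y_2:=Y\setminus N_G(x)$.

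Next I would check that each $Y_i$ is a clique of $G$. Suppose some $u,v\in Y_1$ were nonadjacent. Applying the hypothesis to the nonadjacent pair $\{u,v\}\subseteq Y$ together with the vertex $x\in X$, the vertex $x$ must be adjacent to exactly one of $u,v$; but $u,v\in N_G(x)$, a contradiction. Symmetrically, if $u,v\in Y_2$ were nonadjacent, the hypothesis would again force $x$ to be adjacent to exactly one of them, contradicting the fact that $x$ is adjacent to neither. Hence $Y_1$ and $Y_2$ are cliques of $G$, and $Y=Y_1\cup Y_2$ is the union of two cliques, as claimed.

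I do not anticipate a real obstacle: the conclusion is far weaker than the hypothesis suggests, since a \emph{single} vertex of $X$ already does the job and the claw-freeness of $G$ plays no role. The only point requiring care is the bookkeeping that makes $\{Y_1,Y_2\}$ an honest partition of $Y$ — that is, recording $x\notin Y$ from the disjointness of $X$ and $Y$ — and resisting the temptation to invoke the structure theorem or claw-freeness, which are irrelevant to this particular lemma.
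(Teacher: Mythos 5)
Your proof is correct; the paper itself gives no proof of this lemma (it is quoted from the cited reference), and the argument there is exactly the one you give: fix $x\in X$ and observe that both $N_G(x)\cap Y$ and $Y\setminus N_G(x)$ must be cliques, since any nonadjacent pair inside either set would violate the ``exactly one'' hypothesis. Your observation that claw-freeness plays no role in this particular statement is also accurate.
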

\begin{proof}[{\rm \textbf{Proof of \pref{thm:main2}.}}] 
Let $G$ be a connected graph on $n$ vertices which is tame. If $G$ contains a triad, then \pref{thm:main1} implies the desired bound. Also, if $V(G)$ is the union of three cliques, then the result follows from \pref{thm:tcB}. Thus, assume that $G$ is triad-free and $V(G)$ is not the union of three cliques. In this case, we prove that $G$ is obtained from a fuzzy antiprismatic graph containing a triad by deleting a vertex.
Let $ H $ be a connected claw-free graph containing a triad, where $G$ is an induced subgraph of $ H $. One can obtain a sequence of induced subgraphs of $H$, say $G_0,G_1,\ldots, G_s$, such that $G_0=G$, $G_s=H$ and $G_i$ is obtained from $G_{i-1}$ by adding a vertex $v_i$ such that $v_i$ is not anticomplete to $V(G_{i-1})$. Suppose that $G_t$ is the first graph in the sequence which contains a triad. Let $ X,Y $ be the sets of neighbours and non-neighbours of $ v_t $ in $ V(G_{t-1}) $, respectively. \vsp

(1) \textit{There exists two nonadjacent vertices in $ Y $ and for every two nonadjacent vertices $ y,y'\in Y $ and every vertex $ x\in X  $, $ x $ is adjacent to exactly one of $ y $ and $ y' $. Moreover, $X$ is the disjoint union of two cliques $N_H(y,X)$ and $N_H(y',X)$.} \vsp

Since $G_{t-1}$ is triad-free and $G_t$ contains a triad, there exist two nonadjacent vertices in $Y$. 
Now, if $x$ is adjacent to both $ y$ and $y' $, then $\{x,v_t,y,y'\}$ is a claw in $ H $, a contradiction. Also, if $ x $ is nonadjacent to both $ y$ and $y' $, then $ \{x,y,y' \} $ is a triad in $ G_{t-1} $, a contradiction. Therefore, $ x $ is adjacent to exactly one of $ y $ and $ y' $. Hence, since $ G_{t-1} $ is triad-free, it turns out that $X$ is partitioned into two disjoint cliques $N_H(y,X)$ and $N_H(y',X)$. This proves (1). \vsp

By (1) and \pref{lem:tame2}, $Y$ is the union of two cliques $Y_1$ and $Y_2$. Also, let $ \tilde{Y}\subseteq Y $ be the set of all vertices in $ Y $ which have no non-neighbour in $ Y $.  Now, note that $ \overline{G_t}$ induces a bipartite graph on  $Y\setminus \tilde{Y}$ with bipartition $(Y_1\setminus \tilde{Y},Y_2\setminus \tilde{Y})$ and let $G'_1,\ldots, G'_k$ be the connected components of $\overline{G_t}[Y\setminus \tilde{Y}]$. Also, let $A_i= V(G'_i)\cap Y_1$ and $B_i= V(G'_i)\cap Y_2$, $1\leq i\leq k$, where both are nonempty. \vsp

(2) \textit{For $ i=1,\ldots, k $, every vertex in $ X $ is either complete to $ A_i $ and anticomplete to $ B_i $, or vice versa.} \vsp

If $a,a'\in A_i$, then there exist vertices $u_1,\ldots, u_{2l+1}$ in $ V(G_i') $, such that $u_1=a$, $u_{2l+1}=a'$ and $u_j$ is nonadjacent to $u_{j+1}$ in $ G_t $, for every $j\in\{1,\ldots, 2l\}$. Thus, by (1), $N_H(u_1,X)=N_H(u_3,X)=\cdots=N_H(u_{2l+1},X)$. Therefore,  every vertex in $ X $ is either complete or anticomplete to $ A_i $ and the same holds for $ B_i $ similarly. 
Now, since $ A_i $ is not complete to $ B_i $, (2) follows from (1). \vsp

Now, let $G'$ be the graph obtained from $G_t$ by deleting all vertices in $Y\setminus \tilde{Y}$ and adding the new vertices $Y'=\{a_1,b_1,\ldots, a_k, b_k\}$ such that both $ \{a_1,\ldots, a_k\} $ and $ \{b_1,\ldots, b_k\} $ are cliques in $ G' $, $a_i$ is adjacent to $b_j$ in $ G' $ if and only if $i\neq j$, $Y'$ is complete to $\tilde{Y}$ and anticomplete to $ v_t $ and every vertex vertex $x\in X$ is adjacent to the vertex $a_i\in Y'$ (resp. $b_i\in Y'$) if and only if $x$ is complete to $A_i$ (resp. $B_i$). Adjacency of the other vertices is the same as in $G_t$. Let $F$ be the set of all the pairs $\{a_i,b_i\}$ where $A_i$ is not anticomplete to $B_i$ in $ G_t $. It is clear that $G_t$ is a thickening of $(G',F)$. Also, the only triads of $G'$ are $\{v_t,a_1,b_1\}, \ldots, \{v_t,a_k,b_k\}$. Every vertex in  $Y'\cup \tilde{Y}\setminus \{a_i,b_i\}$ is adjacent to both $a_i$ and $b_i$ and nonadjacent to $v_t$ and every vertex in $X$ is adjacent to $v_t$ and, by (2), exactly one of $a_i$ and $b_i$. Hence, $G'$ is an antiprismatic graph (in fact, $ G' $ is a non-2-substantial graph, see \pref{sec:oap} for the definition) and $F$ is a valid set of changeable pairs of $G'$. Therefore, $G_t$ is a fuzzy antiprismatic graph. Let $H'$ be the induced subgraph of $H$ on $V(G)\cup\{v_t\}$. Since, $H'$ is an induced subgraph of $G_t$, $H'$ is also a fuzzy antiprismatic graph on $n+1$ vertices. If $V(G)\cap Y$ is a clique, then by (1), $V(G)$ is the union of three cliques, a contradiction. Hence, $V(G)\cap Y $ contains two nonadjacent vertices and so $H'$ contains a triad. Hence, by \pref{thm:antiprismaticA}, $\cc(H')\leq |V(H')|=n+1$ and equality holds, if and only if  $ H' $ is isomorphic to either the graph $ C_{3p+3}^p $, for some positive integer $ p $, or the complement of a twister. On the other hand, removing the vertex $v_t$ from $H'$ yields the triad-free graph $G$. However, both $ C_{3p+3}^p $ and the complement of a twister contain two disjoint triads. Consequently, $H'$ is  isomorphic to neither $ C_{3p+3}^p $ nor the complement of a twister and thus, $\cc(G)\leq \cc(H')\leq |V(H')|-1=n$. This proves \pref{thm:main2}.
\end{proof}

\begin{rem}
The proof of \pref{thm:main2} is actually explaining the structure of tame graphs which is interesting in its own right. In fact, it is proved there that if $ G $ is connected and tame, then either $ G $ contains a triad, or $ V(G) $ is the union of three cliques, or $ G $ is obtained from a fuzzy antiprismatic graph whose all triads are on a vertex $ v $, by deleting the vertex $ v $. 
\end{rem}

\section{Claw-free graphs with non-trivial strip-structure} \label{sec:strip}
In this section, we are going to prove \pref{thm:stripA}. First, we recall some definition from \cite{seymour5}. 
Given a graph $ G $, a \textit{non-trivial strip-structure} of $ G $ is a pair $ (H,\eta) $, where $ H $ is a graph with no isolated vertex (which is allowed to have loops and parallel edges) with $ |E(H)|\geq 2 $ and a function $ \eta $ mapping each $ f\in E(H) $ to a subset $ \eta(f) $ of $ V(G) $, and mapping each pair $ (f,h) $ where $ f\in E(H) $ and $ h $ is an endpoint of $ f $, to a subset $ \eta(f,h) $ of $ \eta(f) $, satisfying the following conditions.
\begin{itemize}
\item[(S1)] The sets $ \eta(f) (f\in E(H)) $ are nonempty and partition $ V(G) $.
\item[(S2)] For each $ h\in V(H) $, the union of the sets $ \eta(f,h) $ for all $ f\in E(H) $ incident with $ h $ is a clique of  $ G $.
\item[(S3)] For all distinct $ f_1,f_2\in E(H) $, if $ v_1\in \eta(f_1) $ and $ v_2\in \eta(f_2) $ are adjacent, then there exists $ h\in V(H) $ incident with $ f_1,f_2 $, such that $ v_i\in \eta(f_i,h) $, for $ i=1,2 $.
\item[(S4)] For every $ f\in E(H) $, $ h\in V(H) $ incident with $ f $ and $ v\in \eta(f,h) $, $ N_G(v,\eta(f)\setminus \eta(f,h)) $ is a clique of $ G $.
\item[(S5)] For every edge $ f\in E(H) $ with distinct endpoints $h_1$ and $h_2$, either $ \eta(f,h_1)\cap \eta(f,h_2)=\emptyset $ or $ \eta(f,h_1)= \eta(f,h_2)=\eta(f) $.   
\end{itemize}
Also, by a \textit{stripe}, we mean a pair $ (G,Z) $ where $G$ is a claw-free graph such that $ |V(G)|\geq 2 $, and $ Z\subseteq V(G) $ is a stable set of simplicial vertices of $G$ such that every vertex of $G$ has at most one neighbour in $ Z $.
Moreover, for a non-trivial strip-structure $(H,\eta)$ of a graph $G$ and each edge $f\in E(H)$, let us denote the set of all vertices of $H$ incident with $f$ by $\overline{f}$. Also, define $J_f$ to be a graph obtained from $G[\eta(f)]$ by adding a new vertex $z_h$ for every $h\in \overline{f}$, such that $z_h$ is complete to $\eta(f,h)$ and anticomplete to $\eta(f)\setminus \eta(f,h)$. Then, assuming $Z_f=\{z_h: h\in \overline{f}\}$, $(J_f,Z_f)$ is called \textit{the strip of $(H,\eta)$ at $f$}.
The following theorem is a restatement of the structure theorem (\pref{thm:seymour1}) which is proved in \cite{seymour5}.
\begin{thm} {\rm \cite{seymour5}} \label{thm:seymour2}
Let $G$ be a connected claw-free graph. If $V(G)$ is not the union of three cliques and $G$ is not in $\mathcal{T}_1\cup \mathcal{T}_2\cup \mathcal{T}_3$, then $G$ admits a non-trivial strip-structure such that for each strip $(J,Z)$, we have $1\leq |Z|\leq 2$, and either
\begin{itemize}
\item $|V(J)|=3$ and $|Z|=2$, or
\item $(J,Z)$ is a stripe. 
\end{itemize}
\end{thm}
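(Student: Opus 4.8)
The plan is to read off \pref{thm:seymour2} as the repackaged form of the full decomposition theorem of Chudnovsky and Seymour in \cite{seymour5}, rather than to reconstruct the decomposition from scratch; the latter is the endpoint of their entire structural programme and cannot be reproduced in a few lines. Concretely, I would begin from \pref{thm:seymour1}: under the two standing hypotheses — that $V(G)$ is not the union of three cliques and that $G\notin\mathcal{T}_1\cup\mathcal{T}_2\cup\mathcal{T}_3$ — the second and third alternatives of that trichotomy are excluded, so $G$ admits a non-trivial strip-structure $(H,\eta)$. The substantive input to be extracted from \cite{seymour5} is that this strip-structure can be taken in a \emph{canonical} form, meaning that each piece $G[\eta(f)]$ together with its boundary markers $Z_f$ is one of the explicit building blocks of their classification.

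Granting this canonical form, the two refinements in the statement are largely bookkeeping. For the bound $1\le|Z|\le 2$: the marker set $Z_f=\{z_h:h\in\overline{f}\}$ has one vertex per endpoint of the edge $f$, and since $H$ is a graph whose edges have at most two endpoints (a loop has one) and $H$ has no isolated vertices, we get $1\le|\overline{f}|\le 2$, hence $1\le|Z_f|\le 2$. The dichotomy between the two bullets then comes from condition (S5). For an edge $f$ with distinct endpoints $h_1,h_2$, (S5) forces either $\eta(f,h_1)\cap\eta(f,h_2)=\emptyset$ or $\eta(f,h_1)=\eta(f,h_2)=\eta(f)$. In the first case no vertex of $\eta(f)$ is complete to both markers; since each marker $z_h$ is simplicial (its neighbourhood is the clique $\eta(f,h)$, a clique by (S2)) and $Z$ is stable, the pair $(J,Z)$ satisfies every defining condition of a stripe, giving the second bullet. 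In the second case every vertex of $\eta(f)$ is adjacent to both markers, so the stripe requirement that each vertex have at most one neighbour in $Z$ fails; here I would invoke the canonical form from \cite{seymour5} to conclude that such a strip reduces to a single vertex, so $\eta(f)=\{v\}$ and $J=\{v,z_{h_1},z_{h_2}\}$ with $|V(J)|=3$ and $|Z|=2$. These spots are exactly the line-graph-type connectors, which is what makes the strip-structure formalism a genuine generalization of line graphs.

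The main obstacle is precisely the one flagged above: the assertion that the strip-structure can be taken with all pieces among the explicit building blocks, and in particular that the only non-stripe pieces are single-vertex spots of order three, is not verified by any short argument — it is the content of the detailed theorem of \cite{seymour5}, established across the Chudnovsky–Seymour series. In a self-contained write-up my work would therefore reduce to reconciling their vocabulary for strip-structures and their list of admissible stripes with conditions (S1)–(S5) and the definition of a stripe used here, and to checking that, once the basic classes $\mathcal{T}_1\cup\mathcal{T}_2\cup\mathcal{T}_3$ and the three-clique case have been removed, no building block other than the genuine stripes and the spots survives. I expect this reconciliation, rather than any new combinatorial computation, to be where all the care is needed.
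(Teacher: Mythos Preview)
Your proposal is correct and matches the paper's own treatment: \pref{thm:seymour2} is stated there with citation to \cite{seymour5} and no proof, the paper explicitly describing it as ``a restatement of the structure theorem (\pref{thm:seymour1}) which is proved in \cite{seymour5}.'' Your unpacking of the bound $1\le|Z|\le2$ from the definition of $Z_f$ and of the dichotomy via (S5) is accurate bookkeeping, and you are right that the remaining content---claw-freeness of each $J_f$ and the reduction of the non-stripe case to a single-vertex spot---must be imported from the Chudnovsky--Seymour classification rather than reproved.
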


In order to deal with the claw-free graphs with non-trivial strip-structure, first we have to provide appropriate clique coverings for stripes (or more generally, claw-free graphs with simplicial vertices), which is done in the following theorem. For a graph $G$, by $Z(G)$ we denote the set of all simplicial vertices of $G$. Also, a clique covering for $G$ is said to be a \textit{simplicial clique covering}, if it contains the clique $N[z]$, for every $z\in Z(G)$.      

\begin{thm} \label{thm:stripe}
Let $G$ be a connected claw-free graph on $n$ vertices such that $Z(G)\neq \emptyset$. Then, $G$ admits a simplicial clique covering of size at most $ n-|Z(G)|$, unless $G$ is isomorphic to the line graph of a tree on $n+1$ vertices, and in this case,  $G$ admits a simplicial clique covering of size at most $ n-|Z(G)|+1$.
\end{thm}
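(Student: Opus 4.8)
The plan is to induct on the number of vertices of $G$, peeling off simplicial vertices one at a time. Let $z\in Z(G)$ and let $K=N[z]$, which is a clique since $z$ is simplicial. The natural move is to consider $G'=G\setminus\{z\}$: if I can cover the edges of $G'$ with few cliques and then add the single clique $K$ to cover all edges incident with $z$, I gain one clique but lose one vertex, so the count $n-|Z(G)|$ is preserved provided $z$ was the \emph{only} simplicial vertex lost. The difficulty is that deleting $z$ may create new simplicial vertices (every neighbour of $z$ whose neighbourhood outside $K$ becomes a clique), or destroy the connectivity of $G'$, or — worst of all — strip away too much and leave a graph where the inductive bound is weaker. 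So the first real step is to set up the right deletion: I would delete a simplicial vertex $z$ chosen so that $G\setminus\{z\}$ is still connected (such a $z$ exists unless every simplicial vertex is a cutvertex-creator, a case I handle separately by deleting a whole "pendant" clique at once), and carefully track how $Z(G\setminus\{z\})$ compares to $Z(G)\setminus\{z\}$.

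The key structural observation I would use is the following: if $z$ is simplicial with $N(z)=K\setminus\{z\}$, and $v\in K$ is a neighbour of $z$, then since $G$ is claw-free, $N(v)\setminus K$ is a union of two cliques (indeed $N(v)$ minus the clique $K\setminus\{z\}$ minus $z$ behaves well), and more importantly the vertices that \emph{become} simplicial in $G\setminus\{z\}$ all lie in $K$ and their new closed neighbourhoods are sub-cliques of something controllable. I would first reduce to the case where the simplicial vertices of $G$ are pairwise nonadjacent and each lies in its own "private" clique, using the hypothesis of the theorem (note that in a stripe, by definition $Z$ is a stable set of simplicial vertices with every vertex having at most one neighbour in $Z$ — this is exactly the regime where the induction is clean). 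For the general claw-free $G$ with $Z(G)\neq\emptyset$, I would argue that one can always find $z\in Z(G)$ such that deleting $z$ loses exactly one simplicial vertex in the count (i.e.\ $|Z(G\setminus\{z\})|\geq |Z(G)|-1$ unless a very restricted configuration occurs), and then the induction hypothesis gives a simplicial clique covering of $G\setminus\{z\}$ of size at most $(n-1)-|Z(G\setminus\{z\})|\leq n-|Z(G)|$; adding $N_G[z]=K$ (which is forced into any simplicial clique covering of $G$ anyway) yields size at most $n-|Z(G)|+1$, and I then have to \emph{save} one clique by merging $K$ with an existing clique of the cover — this is possible precisely when $K$ is not "maximal and isolated", i.e.\ when some clique of the induction-covering can absorb part of $K$.

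The exceptional case is the line graph of a tree, and I expect this to be the main obstacle — or rather, the place where the bound genuinely degrades by $1$ and must be shown to be tight. In the line graph $L(T)$ of a tree $T$ on $n+1$ vertices, the simplicial vertices correspond to edges of $T$ at a leaf, the natural clique covering uses one clique per vertex of $T$ (namely the set of edges at each vertex), giving $n+1$ cliques for $n$ vertices, and one checks $|Z(L(T))|$ equals the number of leaves of $T$; the arithmetic $n-|Z|+1$ then records exactly the failure to do better, and I must prove no simplicial clique covering of size $n-|Z(L(T))|$ exists (a counting argument via \pref{lem:lowerbound}-type double counting on how many cliques each vertex of $T$ forces). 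So the structure of the proof is: (i) handle $L(T)$ directly, establishing both the upper bound $n-|Z|+1$ and showing the smaller bound fails; (ii) for all other connected claw-free $G$ with a simplicial vertex, induct by deleting one well-chosen simplicial vertex (or one pendant clique), using claw-freeness to control the new simplicial vertices, and using a merging step to recover the one clique spent on $N[z]$; (iii) verify in the induction that the non-$L(T)$ hypothesis is preserved, or that if it fails at some stage the resulting subproblem is an $L(T')$ whose contribution still fits under $n-|Z(G)|$. The delicate bookkeeping in (iii) — ruling out that deletions cascade into a bad line-graph-of-a-tree structure that would cost more than one clique — is where most of the technical work will go.
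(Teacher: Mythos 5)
Your proposal takes a different route from the paper: the paper does not induct at all, but fixes one simplicial vertex $z_0$, builds the distance layers $V_0,\ldots,V_d$ from it, covers every edge by explicit cliques $C[v]$ (forward neighbours plus same-layer neighbours with no common backward neighbour), groups each component $Z_i$ of $G[Z(G)]$ with its common neighbourhood into one clique to reach $n-|Z(G)|+1$, and then saves one more clique via an explicit reducibility dichotomy whose failure is shown to characterize line graphs of trees. An induction that peels off simplicial vertices is a plausible alternative, and you have correctly located where it must be made to work; but at exactly those places the proposal has gaps rather than arguments.

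First, the saving step. If $C$ is a clique of the inductive covering of $G'=G\setminus\{z\}$ and $K=N_G[z]$, then $C\cup K$ is a clique of $G$ only if $C\subseteq K$, since every neighbour of $z$ already lies in $K$; so ``merging $K$ with an existing clique'' is possible only when the covering of $G'$ happens to contain a clique inside $N_G(z)$, and your criterion (``$K$ is not maximal and isolated'') is neither precise nor verified. When $|Z(G')|=|Z(G)|-1$ and no simplicial vertex of $G$ is adjacent to $z$, nothing in the proposal produces such a clique or any other saving; claw-freeness does rescue several of these configurations (e.g.\ if $\deg z=1$ with neighbour $a$, then $N(a)\setminus\{z\}$ is forced to be a clique, so $a$ becomes simplicial in $G'$ and the count improves), but that case analysis is the substance of the theorem and is absent. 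Second, the degeneration into a line graph of a tree: if $G$ is not isomorphic to $L(T)$ but $G\setminus\{z\}$ is, the inductive bound is worse by one and you must save two cliques; you flag this but give no mechanism. (Here too the arithmetic can be rescued: one can check that in this situation $N_G(z)$ must consist of simplicial vertices of $G\setminus\{z\}$ that cease to be simplicial in $G$, so $|Z(G\setminus\{z\})|\geq |Z(G)|+1$; but this is not in the proposal.) Two smaller points: for $v\in N(z)$, claw-freeness forces $N(v)\setminus N[z]$ to be a single clique, not merely a union of two cliques --- this stronger fact is what drives the paper's construction --- and deleting a simplicial vertex never disconnects a connected graph, so the connectivity concern is vacuous. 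As written, the proposal is a programme whose decisive steps remain unproved.
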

\begin{proof}
We begin with setting up some notation and convention. Pick a vertex $z_0\in Z(G)$ and due to the connectedness of $G$, let $V_0,\ldots,V_d$, $d\geq 1$ be the partition of $V(G)$, where $V_i$ is the set of the vertices of $G$ at distance $i$ from $z_0$, $0\leq i\leq d$ (thus, $V_0=\{z_0\}$). 
For every $i\in \{1,\ldots,d\}$ and every nonempty set $X\subseteq V_i$, let $N^-(X)$ denote the set of vertices in $V_{i-1}$ with a neighbour in $X$ (which is clearly nonempty). It is clear that if $i\geq 2$, then $N^-(X)\cap Z(G)=\emptyset$. Also, for every $X\subseteq V(G)$, let $\mathcal{K}(X)$ denote the family of connected components of $G[X]$. Now, note that every connected component of $G[Z(G)]$ is a clique, and let $\mathcal{K}(Z(G))=\{Z_0,\ldots,Z_k\}$, where $k\geq 0$ and $z_0\in Z_0$. Also, for every $i\in \{0,\ldots,k\}$, $V(G)\setminus Z(G)$ can be partitioned into the sets $N_i$ and $V(G)\setminus (Z(G)\cup N_i)$, such that $Z_i$ is complete to $N_i$ and anticomplete to $V(G)\setminus (Z(G)\cup N_i)$. In fact, $N[z_0]=Z_0\cup N_0$ and for every $i\in \{1,\ldots,k\}$, there exists a unique $j\in \{2,\ldots ,d\}$ such that $Z_i\subseteq V_j$ and $N_i\setminus N^-(Z_i)\subseteq V_j$.

Finally, for every $i\in \{0,\ldots,d\}$ and every vertex $v\in V_i$ (assuming $V_{-1}=V_{d+1}=\emptyset$), let $N^{\circ}(v)$ be the set of all neighbours of $v$ in $V_i$ which have no common neighbour with $v$ in $V_{i-1}$, and $N^+(v)$ be the set of all neighbours of $v$ in $V_{i+1}$. Also, let $C(v)=N^{\circ}(v)\cup N^+(v)$ and $C[v]=C(v)\cup \{v\}$.
We observe that,\vsp

(1) \textit{The following hold}.
\begin{itemize}
\item[{\rm(i)}] \textit{For every vertex $v\in V(G)$, $C[v]$ is a clique of $G$. Also, $C[z_0]=V_0\cup V_1$ and for every $z\in Z(G)\setminus \{z_0\}$, $C[z]=\{z\}$}.
\item[{\rm(ii)}] \textit{Let $i\in \{1,\ldots, d\}$ and $K\in \mathcal{K}(V_i)$. Then for every vertex $u\in N^-(K)$, $C[u]\subseteq N^-(K)\cup K$.}
\end{itemize}

The second and third assertion of (i) is obvious. To see the first assertion of (i), note that for every $v\in V(G)\setminus \{z_0\}$, if $C(v)$ contains two nonadjacent vertices $v_1$ and $v_2$, then $\{v,v^-,v_1,v_2\}$ is a claw for a vertex $v^-\in N^-(\{v\})$. This proves (i). Also, to see (ii), note that for every vertex $u\in N^-(K)$, since $C[u]$ is a clique, we have $N^{\circ}(u)\subseteq N^-(K)$ and $N^+(u)\subseteq K$, and thus, $C[u]\subseteq N^-(K)\cup K$. This proves (1).\vsp

We continue with a couple of definitions. Let us say that $G$ is \textit{irreducible}, if
\begin{itemize}
\item[(R1)] for every vertex $v\in V(G)\setminus Z(G)$, $N^+(v)\neq \emptyset$, and
\item[(R2)] for every nonempty set $U\subseteq V(G)\setminus Z(G)$, if $ \cup_{u\in U} C[u] $ is a clique of $ G $, then $|U|=1$. 
\end{itemize}
Otherwise, we say that $G$ is \textit{reducible}. For every $i\in \{1,\ldots ,d\}$, let us say that $V_i$ is a \textit{nest-hotel}, if each member of $\mathcal{K}(V_i)$ is a clique, and for every clique $K\in \mathcal{K}(V_i)$, there exists a vertex $x_K\in V_{i-1}$ such that $N^+(x_K)=K$ and $V_{i-1}\setminus \{x_K\}$ is anticomplete to $K$.\vsp

(2) \textit{If $G$ is irreducible, then for every $i\in \{1,\ldots,d\}$, $V_i$ is a nest-hotel, and consequently, $G$ is isomorphic to the line graph of a tree on $n+1$ vertices.}\vsp

The assertion is trivial for $i=1$. First, we prove the assertion for $i=d\geq 2$. Since $G$ is irreducible, by (R1), we have $V_d\subseteq Z(G)$ and thus, $\mathcal{K}(V_d)\subseteq \mathcal{K}(Z(G))$. 
Therefore, by (1)-(ii),  for every clique $Z_i\in \mathcal{K}(V_d)$ and every $u\in N^-(Z_i)$,  $C[u]\subseteq N^-(Z_i)\cup Z_i$.
Also, $N^-(Z_i)\cup Z_i$ is a clique and $N^-(Z_i)\subseteq V(G)\setminus Z(G)$. Hence, since $G$ is irreducible, by (R2), $|N^-(Z_i)|=1$. Also, by (1)-(i), for every two distinct cliques $Z_i,Z_j\in \mathcal{K}(V_d)$, we have $N^-(Z_i)\cap N^-(Z_j)=\emptyset$. Hence, defining $x_{Z_i}$ as the single member of $N^-(Z_i)$, for every $Z_i\in \mathcal{K}(V_d)$, it turns out that $V_d$ is a nest-hotel.

Now, contrary to (2), let $i_0\geq 2$ be the maximal $i$ such that $V_{i}$ is not a nest-hotel. 
By the above argument, we have $2\leq i_0\leq d-1$. 
First, assume that $ V_{i_0} $ contains vertices $ u,v,w $ such that $ v $ is adjacent to $ u,w $ and $ u,w $ are nonadjacent. Then, $v\in V(G)\setminus Z(G)$ and by (R1), we have $N^+(v)\neq \emptyset$. Since $V_{i_0+1}$ is a nest-hotel, 
$ N^+(u),  N^+(v)  $ and $ N^+(w) $ are disjoint and thus, for every vertex $v^+\in N^+(v)$, $\{v,v^+,u,w\}$ is a claw, a contradiction. Therefore, $G[V_{i_0}]$ contains no induced path of length two, and thus each of its connected components is a clique.
Now, by (1)-(ii), for every $K\in \mathcal{K}(V_{i_0})$ and every $u\in N^-(K)$, $N^+(u)\subseteq K$. We claim that $N^+(u)=K$. For if there exists $v\in K\setminus N^+(u)$, then for every $u^+\in N^+(u)$, we have $u^+\in V(G)\setminus Z(G)$, and thus by (R1), $N^+(u^+)\neq \emptyset$. Consequently, since $V_{i_0+1}$ is a nest-hotel, for every vertex $u^{++}\in N^+(u^+)$, $\{u^+,u^{++},u,v\}$ is a claw, a contradiction. This proves that $N^+(u)=K$, i.e. $N^-(K)$ is complete to $K$. Now, if for some $K\in \mathcal{K}(V_{i_0})$, $N^-(K)$ contains two nonadjacent vertices $u_1,u_2$, then since $N^-(K)$ is complete to $K$, we have $K\cap Z(G)=\emptyset$. Thus by (R1), for every vertex $v\in K$, $N^+(v)\neq \emptyset$, and so for every $v^+\in N^+(v)$, $\{v,v^+,u_1,u_2\}$ is a claw, a contradiction. Therefore, $N^-(K)$ and so $N^-(K)\cup K$ is a clique. Now, by (1)-(ii), for every $u\in N^-(K)$, we have $C[u]\subseteq N^-(K)\cup K$, and since $N^-(K)\subseteq V(G)\setminus Z(G)$, by (R2), we have $|N^-(K)|=1$.  Also, by (1)-(i), for every two distinct cliques $K,K'\in \mathcal{K}(V_{i_0})$, we have $N^-(K)\cap N^-(K')=\emptyset$. Now, defining $x_{K}$ as the single member of $N^-(K)$, it turns out that $V_{i_0}$ is a nest-hotel, a contradiction. This proves (2).\vsp

Now, we claim that $\{C[v]: v\in V(G)\}$ is a clique covering for $G$ of size $n$. To see this, note that for every edge $uv\in E(G)$, either $u\in V_i$ and $ v\in V_{i-1} $, or $ u,v\in V_i$, for some $i\in \{1,\ldots,d\}$. In the former case, $uv$ is covered by $C[v]$. In the latter case, if $u$ and $v$ have no common neighbour in $ V_{i-1}$, then $uv$ is covered by both $C[u]$ and $C[v]$. Also, if $u$ and $v$ have a common neighbour in $ V_{i-1} $, say $ w $, then $uv$ is covered by $C[w]$. This proves the claim. Moreover, by (1)-(i), $C[z]=\{z\}$, for every $z\in Z(G)\setminus \{z_0\}$. Also, for every $i\in \{1,\ldots,k\}$ and every $u\in N^-(Z_i)$, since $Z_i\subseteq C[u]$, $C[u]\subseteq N_i\cup Z_i$. Now, let $V'=(Z(G)\setminus \{z_0\}) \bigcup (N^-(Z_i):i\in \{1,\ldots,k\})$ and $V''=V(G)\setminus V'$. Therefore, the family of cliques $\mathscr{C}=\{C[v]: v\in V''\}\cup \{N_i\cup Z_i: i\in \{1,\ldots,k\}\}$ is a clique covering for $G$ of size $n-|Z(G)|-\sum_{i=1}^k|N^-(Z_i)|+k+1\leq n-|Z(G)|+1$. Also, since $N_G[z]=N_i\cup Z_i$, for every $z\in Z_i$, $i\in \{1,\ldots,k\}$, $\mathscr{C}$ is a simplicial clique covering for $G$.
In addition, for every vertex $v\in V(G)$, there exists a clique in $\mathscr{C}$ containing $C[v]$. 

Now, suppose that $G$ is not isomorphic to the line graph of a tree on $n+1$ vertices. Then, by (2),  $G$ is reducible. Contrary to (R1), assume that $N^+(v)=\emptyset$, for some vertex $v\in V(G)\setminus Z(G)$, say $v\in V_i$ for some $i\in \{1,\ldots,d\}$. Then evidently $v\in V''$ (i.e. $C[v]\in \mathscr{C}$) and $C(v)=N^{\circ}(v)$. We claim that $\mathscr{C}\setminus \{C[v]\}$ is a simplicial clique covering of size at most $n-|Z(G)|$. To see this, let $xy$ be an edge with $x,y\in C[v] $, where $y\in N^{\circ}(v)$. Now, if $x\in N^{\circ}(y)$, then $xy$ is covered by a clique in  $\mathscr{C}\setminus \{C[v]\}$  containing $C[y]$. Otherwise, $x$ and $y$ has a common neighbour $w$ in $V_{i-1}$, and $xy$ is covered  by a clique in  $\mathscr{C}\setminus \{C[v]\}$  containing $C[w]$. This proves the claim.

Next, assume that (R1) holds and contrary to (R2), assume that there exist distinct vertices $u,v\in V(G)\setminus Z(G)$ such that  $K=C[u]\cup C[v]$ is a clique of $ G $. If $u,v\in V''$, then $(\mathscr{C}\cup \{K\})\setminus \{C[u],C[v]\}$ is a simplicial clique covering for $G$ of size at most $n-|Z(G)|$. Otherwise, if say $u\in N^-(Z_i)$, where $Z_i\subseteq V_j$, $i\in \{1,\ldots ,k\}, j\in \{1,\ldots ,d\}$, then $Z_i\subseteq C[u]\subseteq K$, and since $C[v]\subseteq K$, it turns out that $v\in N_i$. Now, if $v\notin N^-(Z_i)$, then since $N^+(v)\neq \emptyset$ (due to (R1)), we have $K\cap V_{j+1}\neq \emptyset$. This contradicts the fact that $K$ is a clique containing $u\in V_{j-1}$. Thus, $v\in N^-(Z_i)$, and so $|N^-(Z_i)|\geq 2$, which implies that $|\mathscr{C}|\leq n-|Z(G)|$. This completes the proof of \pref{thm:stripe}.
\end{proof}

Orlin  in \cite{orlin} (see also \cite{mcguinness}) proved that for every graph $H\neq K_3$, the clique cover number of the line graph of $H$ is equal to the number of vertices of $H$ of degree at least two minus the number of its \textit{wings} (triangles with two vertices of degree two).  If $G$ is the line graph of a tree $T$ on $n+1$ vertices, then $Z(G)$ is equal to the set of pendant edges of $T$ and thus, $\cc(G)=n+1-|Z(G)|$. This, together with \pref{thm:stripe}, implies the following which is not needed for us, however, it seems to be of interest in its own right.
\begin{cor}
Let $G$ be a connected claw-free graph on $n$ vertices such that $Z(G)\neq \emptyset$. Then $\cc(G)\leq n-|Z(G)|+1$ and equality holds if and only if $G$ is isomorphic to the line graph of a tree on $n+1$ vertices.
\end{cor}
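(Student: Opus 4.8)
The statement to be proved is the Corollary: for a connected claw-free graph $G$ on $n$ vertices with $Z(G)\neq\emptyset$, we have $\cc(G)\leq n-|Z(G)|+1$, with equality exactly when $G$ is the line graph of a tree on $n+1$ vertices. The plan is to combine \pref{thm:stripe} with the exact formula of Orlin for the clique cover number of line graphs. The upper bound is immediate: \pref{thm:stripe} already gives a simplicial clique covering of size at most $n-|Z(G)|$ unless $G$ is the line graph of a tree on $n+1$ vertices, and in the exceptional case a covering of size at most $n-|Z(G)|+1$; so in all cases $\cc(G)\leq n-|Z(G)|+1$. The substance is the equality characterization.

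For the ``if'' direction, suppose $G$ is the line graph of a tree $T$ on $n+1$ vertices, so $|E(T)|=n$. First I would record that $Z(G)$ — the simplicial vertices of $G$ — corresponds precisely to the set of pendant (leaf) edges of $T$: an edge $e$ of $T$ is a leaf edge iff all edges of $T$ sharing an endpoint with $e$ share the \emph{same} endpoint with $e$, which is exactly the condition that the neighbourhood of the corresponding vertex in $L(T)$ is a clique (one cannot have a leaf edge incident to $K_3$ inside a tree). Then I invoke Orlin's theorem: since $T$ is a tree on at least, say, two vertices and $T\neq K_3$, the clique cover number of $L(T)$ equals (number of vertices of $T$ of degree $\geq 2$) minus (number of wings of $T$); but a tree has no triangles, hence no wings, so $\cc(G)$ equals the number of non-leaf vertices of $T$. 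Finally a short counting argument: if $T$ has $n+1$ vertices and $\ell$ leaves, then it has $n+1-\ell$ non-leaf vertices, and the number of leaf edges is exactly $\ell$ (each leaf has a unique incident edge, and distinct leaves give distinct edges since $n+1\geq 2$), so $|Z(G)|=\ell$ and $\cc(G)=n+1-\ell=n+1-|Z(G)|$, giving equality. (One must separately dispose of the degenerate cases $n\leq 2$, e.g. $T=K_2$ giving $G=K_1$, or $T=P_3$ giving $G=K_2$, which are trivial.)

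For the ``only if'' direction, suppose $\cc(G)=n-|Z(G)|+1$. By \pref{thm:stripe}, if $G$ were \emph{not} the line graph of a tree on $n+1$ vertices, then $G$ would admit a clique covering of size at most $n-|Z(G)|$, contradicting $\cc(G)=n-|Z(G)|+1$. Hence $G$ must be the line graph of a tree on $n+1$ vertices, which is exactly the claimed conclusion.

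The main obstacle is really only the bookkeeping in the ``if'' direction: correctly identifying $Z(G)$ with the leaf edges of $T$ and applying Orlin's formula with the ``wings''/triangles count vanishing because $T$ is acyclic. Everything else follows formally from \pref{thm:stripe}. I would present the argument in three short paragraphs — upper bound, forward equality via Orlin plus leaf counting, reverse equality via \pref{thm:stripe} — after first stating Orlin's theorem as a cited fact.
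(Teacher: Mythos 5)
Your proposal is correct and follows essentially the same route as the paper: the upper bound and the "only if" direction come directly from \pref{thm:stripe}, and the "if" direction is Orlin's formula applied to a tree (no wings), together with the identification of $Z(G)$ with the pendant edges of $T$. Your extra bookkeeping (verifying the simplicial-vertex/pendant-edge correspondence and flagging the degenerate trees $K_2$, $P_3$) is just a more careful rendering of the two-sentence argument the paper gives.
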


Now, we are ready to prove \pref{thm:stripA} which we restate here.
\begin{thm} \label{thm:strip}
Let $ G $ be a connected claw-free graph on $n$ vertices, such that $V(G)$ is not the union of three cliques and $G$ is not in $\mathcal{T}_1\cup \mathcal{T}_2\cup \mathcal{T}_3$. Then, $ \cc(G)\leq n-1$.
\end{thm}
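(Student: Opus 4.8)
The plan is to feed the refined structure theorem \pref{thm:seymour2} into the clique‑covering machinery of \pref{thm:stripe}. Since $V(G)$ is not the union of three cliques and $G\notin\mathcal{T}_1\cup\mathcal{T}_2\cup\mathcal{T}_3$, there is a non‑trivial strip‑structure $(H,\eta)$ of $G$ in which every strip $(J_f,Z_f)$ satisfies $1\le|Z_f|\le 2$ and is either a stripe or the special case $|V(J_f)|=3,\ |Z_f|=2$. First I would record three cheap facts: $H$ is connected (otherwise (S3) would make $G$ disconnected), $H$ has no isolated vertex and $|E(H)|\ge 2$, so that $\sum_{h\in V(H)}m_h\ge |V(H)|+1$ where $m_h$ is the number of edges of $H$ incident with $h$; and, by (S1), $\sum_{f\in E(H)}|\eta(f)|=n$.

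The covering is built in two layers. For each $h\in V(H)$ put $C_h=\bigcup_{f:\,h\in\overline f}\eta(f,h)$; this is a clique of $G$ by (S2), and by (S3) the family $\{C_h:h\in V(H)\}$ covers every edge of $G$ with ends in two distinct sets $\eta(f_1),\eta(f_2)$, while $C_h$ also covers all edges inside each $\eta(f,h)$. For the edges lying inside a single $\eta(f)$, apply \pref{thm:stripe} to the strip $J_f$ (if $J_f$ is disconnected, to its components; one checks each component again contains some $z_h$, and this case only improves the bound), obtaining a \emph{simplicial} clique covering $\mathscr{C}_f$ of $J_f$ of size at most $|V(J_f)|-|Z(J_f)|$, or one more when $J_f$ is the line graph of a tree. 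Being simplicial, $\mathscr{C}_f$ contains $N_{J_f}[z_h]=\eta(f,h)\cup\{z_h\}$ for every $h\in\overline f$; discard these $|Z_f|$ cliques and delete every remaining auxiliary vertex $z_h$ from the other cliques of $\mathscr{C}_f$, producing a family $\mathscr{C}'_f$ of cliques of $G[\eta(f)]$. Then $\bigl(\bigcup_{f}\mathscr{C}'_f\bigr)\cup\{C_h:|C_h|\ge 2\}$ covers $E(G)$: an edge inside $\eta(f)$ is covered by some clique of $\mathscr{C}_f$, and if that clique was one of the discarded $N_{J_f}[z_h]$, the edge lies in $\eta(f,h)\subseteq C_h$.

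Now the accounting, which is the heart of the argument. Since $|V(J_f)|=|\eta(f)|+|Z_f|$ and $|Z(J_f)|\ge|Z_f|$, one gets $|\mathscr{C}'_f|\le|\eta(f)|-|Z_f|+\delta_f$ with $\delta_f\in\{0,1\}$, and $\delta_f=1$ only when $J_f$ is a line graph of a tree with $|Z(J_f)|=|Z_f|$; since no tree has exactly one leaf, this forces $|Z_f|=2$, the tree to be a path, hence $J_f$ itself to be a path $z_{h_1}v_1\cdots v_\ell z_{h_2}$ with $\eta(f,h_1),\eta(f,h_2)$ singletons — call these the \emph{path‑strips}. Writing $B=|\{h:|C_h|\ge 2\}|$ and $D=\sum_f\delta_f$, the covering above has size at most $\sum_f\bigl(|\eta(f)|-|Z_f|\bigr)+D+B=n-\sum_h m_h+D+B$, so it suffices to prove $\sum_h m_h\ge D+B+1$. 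When $D=0$ this is immediate from $\sum_h m_h\ge|V(H)|+1\ge B+1$. In general, for each path‑strip $f$ the sets $\eta(f,h_i)$ are single vertices, so any endpoint of $f$ of degree one in $H$ has $C_h=\eta(f,h)$ of size $1$ and is not counted by $B$; propagating this bookkeeping (and recalling that a disconnected strip is never a path‑strip) reduces the whole inequality to the extremal situation in which $H$ is essentially a cycle all of whose strips are path‑strips.

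I expect this last reduction to be the main obstacle, because it is precisely where the hypothesis $G\notin\mathcal{T}_2$ must be fed back into an otherwise purely combinatorial count: in the extremal configuration the hub cliques $C_{h}$ are exactly the edges joining consecutive paths, so $G$ is a cycle $C_N$ — more generally, one shows the bad configurations force $G$ to be a power of a cycle — which is a long circular interval graph, contradicting $G\notin\mathcal{T}_2$. (Some care is also needed with loops of $H$ and with degenerate small strips, but those behave like the non‑path case and cause no loss.) Everything preceding this step is routine once \pref{thm:stripe} is available.
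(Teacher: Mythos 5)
Your proposal is correct and follows essentially the same route as the paper: invoke the refined structure theorem to get the strip-structure, cover cross-strip edges by the hub cliques $\eta(h)$, cover each $G[\eta(f)]$ via the simplicial clique coverings of Theorem~\ref{thm:stripe} with the cliques $N[z_h]$ traded for the hubs, and then observe that the count fails to reach $n-1$ only when $H$ is (after accounting for leaves) a cycle all of whose strips are paths with singleton ends, which forces $G$ to be a cycle and hence a member of $\mathcal{T}_2$, a contradiction. Your $D$/$B$ bookkeeping is just a reparametrization of the paper's inequality $\cc(G)\le\nu-\epsilon+\sum_f n_f$ together with its per-leaf savings, so the two arguments coincide in substance.
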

\begin{proof}
By \pref{thm:seymour2}, $G$ admits  a non-trivial strip-structure such that for each strip $(J,Z)$, we have $1\leq |Z|\leq 2$, and either $|V(J)|=3$ and $|Z|=2$, or $(J,Z)$ is a stripe. Let $(H,\eta)$ be such a strip-structure for $G$ with $|V(H)|=\nu$ and $ |E(H)|=\epsilon\geq 2$. Also, let $\lambda\geq 0$ be the number of loops in $E(H)$. For every $h\in V(H)$, define $\eta(h)$ to be the union of the sets $ \eta(f,h) $ for all $ f\in E(H) $ incident with $ h $ (i.e. $ h\in \overline{f} $).  By (S2), $\eta(h)$ is a clique of $ G $. Also, for every $f\in E(H)$, let $(J_f,Z_f)$ be the strip of $(H,\eta)$ at $f$, where  $1\leq |Z_f|\leq 2$ and define $n_f=|\eta(f)|= |V(J_f)|-|Z_f|$.   

Note that since $H$ has no isolated vertex, the connectedness of $G$ along with (S3) implies that $H$ is also connected and for every $f\in E(H)$, the set $ Z_f $ intersects each connected component of $J_f$. In particular, if $|Z_f|=1$, then $J_f$ is connected and if $|Z_f|=2$, then either $J_f$ is connected or $J_f$ has exactly two connected components, each of which containing exactly one member of $Z_f$. 
With this observation, the following statement is directly deduced from \pref{thm:stripe}. \vsp

(1) \textit{For every $f\in E(H)$, $J_f$ admits a simplicial clique covering $\mathscr{C}_f$ such that $|\mathscr{C}_f|=n_f+1$, when $J_f$ is a  path on $n_f+2$ vertices and $|Z_f|=2$, and $|\mathscr{C}_f|\leq n_f$, otherwise. In particular, if $f$ is a loop, then $|\mathscr{C}_f|\leq n_f$.} \vsp

Henceforth, for each $f\in E(H)$, let $\mathscr{C}_f$ be as in (1). Now, remove all vertices in $Z_f$ from the cliques in $\mathscr{C}_f$ to obtain a clique covering $\mathscr{C}'_f$ for $G[\eta(f)]$. Thus, since $\mathscr{C}_f$ is a simplicial clique covering, we have $\eta(f,h)\in \mathscr{C}'_f$, for every $h\in \overline{f}$. Now, consider the collection $\bigcup_{f\in E(H)} \mathscr{C}'_f$ of cliques of $G$, remove the clique $\eta(f,h)$ for every $f\in E(H)$ and every $h\in \overline{f}$ and add the clique $\eta(h)$ for every $h\in V(H)$. Note that the resulting family, called $\mathscr{C}$, is a clique covering for $G$, because by (S3), for every distinct edges $f_1,f_2\in E(H)$, the edges in $E_G(\eta(f_1),\eta(f_2))$ are covered by the cliques $\eta(h), h\in \overline{f}_1\cap \overline{f}_2$. Hence, \vsp

(2) We have,
\[ 
\cc(G)\leq |\mathscr{C}|\leq \nu +\sum_{f\in E(H)} (|\mathscr{C}_f|-|\overline{f}|)=\nu+\lambda-2\epsilon+\sum_{f\in E(H)} |\mathscr{C}_f| \leq \nu-\epsilon+\sum_{f\in E(H)}n_f,
\]
where the last inequality is by (1) (applying $|\mathscr{C}_f|\leq n_f$, if $f$ is a loop, and $|\mathscr{C}_f|\leq n_f+1$, otherwise). \vsp

If $\epsilon \geq \nu+1$, then we are done. On the other hand, since $H$ is connected, we have $\epsilon \geq \nu+\lambda-1$. Hence, it remains to discuss two possibilities that either $\epsilon= \nu-1$ and $\lambda=0$ or $\epsilon=\nu $ and $\lambda\leq 1$.
First, consider the former case. In this case, $H$ is a tree and the right hand side of (2) is equal to $n+1$.
Let $h$ be a \textit{leaf} (a vertex of degree one in $V(H)$) and $f\in E(H)$ be the edge incident with $ h $. If $J_{f}$ is not a path, then by (1), $|\mathscr{C}_{f}|\leq n_{f}$. Also, if $J_{f}$ is a path, then $|\eta(h)|=|\eta(f,h)|=1$, and thus, we can remove the singleton clique  $\eta(h)$ from $\mathscr{C}$. Hence, for each leaf in $V(H)$, we can subtract the right hand side of (2) by one and since $H$ has at least two leaves, we are done. 

Finally, consider the latter case $\epsilon=\nu$ and $\lambda\leq 1$. In this case, the right hand side of (2) is equal to $n$. If $H$ has a leaf, then by the above argument, we can subtract the right hand side of (2) by one and $\cc(G)\leq n-1$. Now, assume that $H$ has no leaf. If $\lambda=1$, then the graph obtained from $H$ by removing its loop is a tree and thus $H$ has a leaf. Therefore, $\lambda=0$ and $H$ is a unicyclic graph with no leaf and no loop, i.e. a cycle (we consider two parallel edges as a cycle).  If for some $f\in E(H)$, either $J_f$ is not a path on $n_f+2$ vertices or $|Z_f|=1$, then $|\mathscr{C}_f|\leq n_f$ and by (2), $\cc(G)\leq n-1$. Therefore, we can assume that for every $f\in E(H)$, $J_f$ is a path and $Z_f$ is the set of its both endpoints. Consequently, $G$ is obtained from $H$ by replacing each edge $f\in E(H)$ by a path on $n_f$ vertices. Hence, $G$ is a cycle. But then $G\in \mathcal{T}_2$, a contradiction. This proves \pref{thm:strip}. 
\end{proof}
\section{Fuzzy long circular interval graphs} \label{sec:fci}
In this section, we are going to prove \pref{thm:circA}. Note that in a long circular interval graph, the vertices included in each interval form a clique and these cliques easily comprise a clique covering, although there are some difficulties in the case of thickening, where \pref{lem:thickening} cannot be applied. We restate \pref{thm:circA} as follows.

\begin{thm} \label{thm:circ}
Let $H$ be a long circular interval graph and $G$ be a connected graph on $n$ vertices which is a thickening of $ (H,F) $, for some $ F $ as in the definition of $\mathcal{T}_2$. Then $\cc(G)\leq n$ and equality holds if and only if $G=H$ and both $ G $ and $ H $ are isomorphic to the $p^{th}$ power of the cycle $C_n$, for some positive integer $p\leq \lfloor(n-1)/3\rfloor$. 
\end{thm}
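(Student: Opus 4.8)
The plan is to split the proof according to whether the thickening is trivial, i.e. whether $G=H$, and to reduce both the upper bound and the equality analysis to the case of an ordinary (non-fuzzy, non-thickened) long circular interval graph. Concretely, if $F=\emptyset$ then any clique cover $\mathscr{C}$ of $H$ lifts to the clique cover $\{\bigcup_{v\in C}X_v:C\in\mathscr{C}\}$ of $G$ of the same size — every vertex of $H$ lies in a member of $\mathscr{C}$ since $H$ is connected with at least two vertices, so each blown-up clique $X_v$ is covered — whence $\cc(G)\leq\cc(H)$; combined with the bound $\cc(H)\leq|V(H)|$ proved below, if in addition $G\neq H$ then $|V(H)|\leq n-1$ and $\cc(G)\leq n-1$. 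If $F\neq\emptyset$ then $G\neq H$ automatically (property (T4) forces some $|X_v|\geq 2$, so $n>|V(H)|$), and I would apply \pref{lem:thickening} with $t=1$; for this I must check that no isolated vertex of $H\setminus F$ belongs to a member of $F$ (true apart from a degenerate pendant configuration, handled separately) and that $\cc(H\setminus F)\leq|V(H)|-1$, which should follow from a small refinement of the argument below, using that a pair of $F$ consists of the two endpoints of an interval and hence is essentially redundant in the interval clique cover. In all cases this reduces the theorem, equality included, to: an ordinary long circular interval graph $H$ on $m$ vertices has $\cc(H)\leq m$, with equality iff $H\cong C_m^p$ for some positive $p\leq\lfloor(m-1)/3\rfloor$.

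For the non-fuzzy upper bound I would normalise the representation: delete every interval containing at most one vertex (it covers no edge) and repeatedly delete any interval whose vertex set is contained in that of another. Listing the vertex sets of the survivors as cyclic arcs $A_1,\dots,A_k$ of the cyclic order $v_0,\dots,v_{m-1}$ around $\Sigma$, these arcs are pairwise incomparable, so their clockwise endpoints are pairwise distinct and $k\leq m$; and since an arc was deleted only when swallowed by a larger one, every edge of $H$ lies in some $A_i$. Thus $\{A_i\}$ is a clique cover and $\cc(H)\leq k\leq m$.

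For the equality case, suppose $\cc(H)=m$; then $k=m$ and $\{A_1,\dots,A_m\}$ is a minimum clique cover. First, $H$ has no simplicial vertex: if $z\in Z(H)$, then \pref{thm:stripe} applied to the connected graph $H$ gives a clique cover of size at most $m-|Z(H)|\leq m-1$ unless $H$ is the line graph of a tree $T$ on $m+1$ vertices, in which case $\cc(H)=m+1-|Z(H)|$ with $|Z(H)|\geq 2$ (a tree on at least two vertices has at least two leaves), again $\cc(H)\leq m-1$, a contradiction. Hence every vertex lies in at least two of the arcs $A_i$. Together with incomparability, the presence of exactly $m$ arcs with distinct clockwise endpoints $v_0,\dots,v_{m-1}$, and minimality of the cover (each arc is forced by some edge lying in it alone), this should force all arcs to have a common length $p+1$ and to be the $m$ cyclic translates $\{v_i,v_{i+1},\dots,v_{i+p}\}$, i.e. $H\cong C_m^p$. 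Finally \pref{lem:lemcirc} gives $\cc(C_m^p)=m$ precisely when $m\geq 3p+1$, equivalently $p\leq\lfloor(m-1)/3\rfloor$, which also supplies the converse (``if'') direction; and the reductions of the first paragraph show that a genuine thickening has $\cc(G)\leq n-1$, so in the equality case $G=H$.

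The step I expect to be the main obstacle is the rigidity argument in the equality case: deducing ``$H$ is a power of a cycle'' from ``$\cc(H)$ equals the number of vertices'' requires a careful analysis of the incomparable cyclic arc system — in particular, of how the condition ``no vertex lies in a single arc'' together with minimality pins each arc down to a fixed length and rotation. A secondary technical nuisance is the bookkeeping for $F\neq\emptyset$ in the thickening reduction, namely establishing $\cc(H\setminus F)\leq|V(H)|-1$ and disposing of the degenerate configurations (such as a pendant vertex of $H$ whose only neighbour is its $F$-partner) in which \pref{lem:thickening} does not apply verbatim.
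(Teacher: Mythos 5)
There is a genuine gap, and it sits exactly where you route the case $F\neq\emptyset$ through \pref{lem:thickening}. That route needs $\cc(H\setminus F)\leq |V(H)|-1$, and this is false in general. Concretely, let $m\geq 3p+4$ and realize $H=C_m^p+v_0v_{p+1}$ as a long circular interval graph by taking one interval whose vertex set is $\{v_0,\dots,v_{p+1}\}$ with endpoints exactly $v_0$ and $v_{p+1}$, together with the usual intervals of $p+1$ consecutive vertices elsewhere; then $\{v_0,v_{p+1}\}$ is a legitimate member of $F'$ (no other interval contains both), and for $F=\{\{v_0,v_{p+1}\}\}$ one has $H\setminus F=C_m^p$, whose clique cover number is exactly $m=|V(H)|$ by \pref{lem:lemcirc}. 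So \pref{lem:thickening} can only be applied with $t=0$, yielding $\cc(G)\leq n$ but not the strict inequality $\cc(G)\leq n-1$ that the equality characterization requires whenever $G\neq H$. Your heuristic that an $F$-pair is ``essentially redundant in the interval clique cover'' is the wrong intuition here: deleting the edge of an $F$-pair destroys the corresponding interval clique rather than making it redundant, and the resulting graph can be a tight power of a cycle. The paper does not pass through $H\setminus F$ at all; it builds a cover of $G$ directly from the blown-up interval cliques $C_i=\bigcup_{v\in I_i\cap V(H)}X_v$ together with the neighbourhood cliques $\mathscr{N}[X_{u_i};Y_i]$ at each fuzzy pair, and wins the needed unit of slack either from a gap $[o_i,o_{i+1})$ containing two vertices (giving $k\leq m-1$, which is what happens in the counterexample above) or, when $H\cong C_m^p$, from the fact that (T4) forces some $|X_{v_1}|\geq 2$. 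You would need to reproduce an argument of this kind; the reduction you propose cannot be repaired.

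Secondarily, the rigidity step you flag as ``the main obstacle'' (that $\cc(H)=m$ forces $H\cong C_m^p$) is left unproved, and your route to it via \pref{thm:stripe} and simplicial vertices is heavier than necessary: once the number of intervals is taken minimal and each half-open arc $[o_i,o_{i+1})$ contains exactly one vertex, the observation that $|I_i\cap V(H)|>|I_{i+1}\cap V(H)|$ would make $I_{i+1}$ redundant already forces all intervals to have the same length, hence $H\cong C_m^{t-1}$. The ``no simplicial vertex'' detour is not needed and, as written, does not by itself pin down the common length and rotation of the arcs.
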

\begin{proof}
Assume that $|V(H)|=m$ and $\Sigma$ and $\mathcal{I}=\{I_1,\ldots, I_k\}$ are as in the definition. Throughout the proof, read all subscripts modulo $k$. For simplicity, we may assume that the number of intervals $k$ is minimal, in the sense that there is no long circular interval graph isomorphic to $H$ whose number of intervals is less than $k$. For every two distinct points $x,y\in \Sigma$, let $[x,y]$ denote the arc in $\Sigma$ joining $x$ to $y$ clockwise. Also, let $[x,y)$ be obtained from $[x,y]$ by removing $y$, and $(x,y]$ and $(x,y)$ are defined similarly. Considering a clockwise orientation for $\Sigma$, for every $i\in \{1,\ldots,k\}$, we denote the opening and closing points of $I_i$ by $o_i,c_i\in \Sigma$, respectively, such that $I_i=[o_i,c_i]$. Furthermore, assume that $o_1,o_2,\ldots, o_k$  are placed on $\Sigma$ exactly in this order (i.e. $(o_i,o_{i+1})\cap \{o_1,o_2,\ldots, o_k\}=\emptyset$, for every $i\in \{1,\ldots, k\}$). First, note that no interval in $\mathcal{I}$ contains another interval in $\mathcal{I}$ (otherwise one might reduce the number of intervals $k$ by deleting the smaller interval without changing adjacency). Thus, the closing points $c_1,c_2,\ldots, c_k$ are placed on $\Sigma$ exactly in this order, and for each $i\in\{1,\ldots, k\}$, $c_i\in (o_i,c_{i+1})$. We need the following.\vsp

(1) \textit{If for each $i\in \{1,\ldots,k\}$, $|[o_i,o_{i+1})\cap V(H)|=1$, then $H$ is isomorphic to the $p^{th}$ power of the cycle $C_m$, for some positive integer $p$}.\vsp

For every $i\in \{1,\ldots,k\}$, assume that $|I_i\cap V(H)|=t_i$. We can observe that there exists a positive integer $t$ such that $t_i=t$ for all $i\in \{1,\ldots,k\}$, and so $H$ is isomorphic to $C_m^{t-1}$. For the contrary assume that there exists $i\in \{1,\ldots,k\}$ such that $t_i>t_{i+1}$. This implies that $I_{i+1}\cap V(H) \subseteq I_i$ and we can remove $I_{i+1}$ from $\mathcal{I}$ without changing adjacency, a contradiction with the minimality of $k$. This proves (1).\vsp

Note that w.l.o.g. we may assume that for every $i\in\{1,\ldots, k\}$, the set $[o_i,o_{i+1})\cap V(H)$ is nonempty (otherwise one could delete the interval $I_i$ without changing adjacency). Hence, \vsp

(2) We have,
\[
k\leq \sum _{i=1}^k |[o_i,o_{i+1})\cap V(H)|=m.
\]
Now, if there exists some $i\in\{1,\ldots, k\}$, for which $|[o_i,o_{i+1})\cap V(H)|\geq 2$, then by (2), we have $k\leq m-1$, and thus the collection of cliques  $\{I_i\cap V(H): 1\leq i\leq k\}$ is a clique covering for $H$ of size at most $m-1$. Also, if for each $i\in \{1,\ldots,k\}$, $|[o_i,o_{i+1})\cap V(H)|=1$, then (1) implies that $H$ is isomorphic to $C^p_m$, for some positive integer $p$. This argument together with \pref{lem:lemcirc}, implies that\vsp

(3) \textit{We have $\cc(H)\leq m$ and equality holds if and only if $H$ is isomorphic to $C^p_m$, for $p\leq \lfloor(m-1)/3\rfloor$}.\vsp

Now, let $G$ be a connected graph on $n$ vertices which is a thickening of $ (H,F) $, for some $F\subseteq F'$, where $F'$ is the set of pairs $\{u,v\}$ such that $u,v\in V(H)$ are distinct endpoints of $I_i$ for some $i\in \{1,\ldots,k\}$.  Also, let $(X_v)_{v\in V(H)}$ be the subsets of $V(G)$ as in the definition of thickening. If $G=H$, then by (3) we are done. Thus, suppose that $G\neq H$, which implies that $m\leq n-1$. For each interval $I_i\in \mathcal{I}$, let ${C_i}=\cup_{v\in I_i\cap V(H)} X_v$. If $F=\emptyset$, then trivially $\{C_1,\ldots,C_k\}$ is a clique covering for $G$ of size $k$, and thus by (2), $\cc(G)\leq  n-1$. Now, assume that $F\neq \emptyset$, say $F=\{\{u_1,v_1\},\ldots, \{u_{\ell},v_{\ell}\}\}$, where for each $i\in\{1,\ldots, \ell\}$, $u_i,v_i\in V(H)$ are the distinct endpoints of the interval $I_{p_i}$, $p_i\in \{1,\ldots ,k\}$. Note that w.l.o.g. we may assume that $ |X_{v_1}|\geq 2 $. Also, since $G$ is connected, for each $i\in \{1,\ldots, \ell\}$, there exists $q_i\in \{1,\ldots ,k\}$ such that either $u_i$ or $v_i$ belongs to the interior of $I_{q_i}$, and by symmetry, we assume that for each $i$, $u_i$ is in the interior of $I_{q_i}$. Also, for every $i\in\{1,\ldots, \ell\}$, let $Y_i=C_{p_i}\setminus X_{u_i}$.

In the sequel, we prove that $G$ admits a clique covering of size at most $n-1$ which completes the proof. First, suppose that there exists some $i\in\{1,\ldots, k\}$, for which $|[o_i,o_{i+1})\cap V(H)|\geq 2$. Thus, by (2), $k\leq m-1$. 
For every $ i\in\{1,\ldots,\ell\} $, let $\mathscr{C}_i=\{Y_i\}$, when $|X_{v_i}|\geq 2$,  and let $\mathscr{C}_i=\emptyset$, when $ |X_{v_i}|=1$. Thus, $|\mathscr{C}_i\cup \mathscr{N}[X_{u_i};Y_i]|\leq |X_{u_i}|+ |X_{v_i}|-1$. Now, the family of cliques $(\bigcup_{i=1}^\ell (\mathscr{C}_{i}\cup \mathscr{N}[X_{u_i};Y_i])) \cup \{{C_i}: i\in \{1,\ldots,k\}\setminus \{j_1,\ldots, j_\ell\}\}$ is a clique covering for $G$ of size at most $\sum_{i=1}^\ell (|X_{u_i}|+|X_{v_i}|-1)+k-\ell\leq \sum_{i=1}^\ell (|X_{u_i}|+|X_{v_i}|)+m-1-2\ell \leq n-1$, as required (note that the edges in $E(X_{u_i})$ are covered since $u_i$ is in the interior of $I_{q_i}$). 

Next, assume that $|[o_i,o_{i+1})\cap V(H)|=1$ for all $i\in\{1,\ldots, k\}$ (and thus by (1), $H$ is isomorphic to the $p^{th}$ power of the cycle $C_m$, for some positive integer $p$). Note that w.l.o.g. we may assume $ u_i=o_{p_i} $ and $v_i=c_{p_i}$. For each $i\in\{1,\ldots, \ell\}$, if $p_i+1\not\in \{p_1,\ldots, p_\ell\}$, then the edges in $E_G(Y_i)$ are covered by the clique ${C}_{p_i+1}$. Otherwise, if $p_i+1=p_j$, for some $j\in\{1,\ldots, \ell\}$, then these edges are covered by the cliques in $\mathscr{N}[X_{u_i};Y_i]\cup \mathscr{N}[X_{u_j};Y_j]$. Hence, $(\bigcup_{i=1}^\ell \mathscr{N}[X_{u_i};Y_i]) \cup \{C_i: i\in \{1,\ldots,k\}\setminus \{p_1,\ldots, p_\ell\}\}$ is a clique covering for $H$ of size $\sum_{i=1}^\ell |X_{u_i}|+k-\ell\leq \sum_{i=1}^\ell (|X_{u_i}|+|X_{v_i}|)+m-2\ell-1 \leq n-1$, where the first inequality is due to (2) and the fact that $|X_{v_1}|\geq 2$ (note that for each $i\in \{1,\ldots, \ell\}$, the edges in $E(X_{u_i})$ are covered, since $u_i$ is included in the interior of $I_{q_i}$).
\end{proof}%
\section{Three-cliqued claw-free graphs} \label{sec:3cliques}
The goal of this section is to prove Theorems~\ref{thm:tcB} and \ref{thm:tcA}. A three-cliqued graph $(G,A,B,C)$ consists of a graph $G$ and three cliques $A,B,C$ of $G$, pairwise disjoint and with the union $V(G)$. 
 If $G$ is also claw-free, then $(G,A,B,C)$ is called a three-cliqued claw-free graph.
First, as a warm-up, we prove \pref{thm:tcB}, restated as follows. Then, using more subtle structure of these graphs from \cite{seymour5}, we improve the upper bound $n+1$ to $n$ for the three-cliqued claw-free graphs containing a triad. 
\begin{thm}\label{thm:tc1}
Let $ (G,A,B,C) $ be a three-cliqued claw-free graph on $ n $ vertices. Then $ \cc(G)\leq n+1$.
\end{thm}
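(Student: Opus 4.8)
The plan is to cover the edges of $G$ in three natural groups: the edges inside each clique, and the edges between the cliques. Since $A$, $B$, $C$ are themselves cliques, adding the three cliques $A$, $B$, $C$ to our covering takes care of all the internal edges at the cost of $3$ cliques; the real work is to cover the edges in $E_G(A,B)\cup E_G(B,C)\cup E_G(C,A)$ efficiently enough that the whole covering has size at most $n+1$. So the heart of the argument is a local one: for each vertex $v$, I want to charge a small number of cliques to $v$ so that all edges incident with $v$ that go to the ``next'' clique (say $A\to B$, $B\to C$, $C\to A$ in a fixed cyclic orientation) are covered, and the total charge is about $n$.

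First I would fix the cyclic order $A,B,C$ and consider, for a vertex $a\in A$, its neighbourhood $N_G(a,B)$ in $B$; since $B$ is a clique this is automatically a clique, so the single clique $\{a\}\cup N_G(a,B)$ covers every edge from $a$ into $B$. Doing this for all $a\in A$ covers $E_G(A,B)$ with $|A|$ cliques; similarly $|B|$ cliques for $E_G(B,C)$ and $|C|$ cliques for $E_G(C,A)$. Together with $A$, $B$, $C$ this is a clique covering of size $|A|+|B|+|C|+3 = n+3$, which is too big by $2$. To recover the factor, I would observe that for at least one vertex in (at least two of) the cliques the ``forward neighbourhood'' clique $\{a\}\cup N_G(a,B)$ is redundant or can be merged: e.g. if some $a\in A$ has $N_G(a,B)=\emptyset$ then its clique is the singleton $\{a\}$, already covered by $A$, so it can be dropped; and if $N_G(a,B)\neq\emptyset$ for every $a\in A$, pick $a$ with $N_G(a,B)$ minimal (under inclusion) and note that one can instead merge it with $B$ or absorb it, again saving one clique. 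Carrying this out in two of the three cliques saves the two extra cliques. The one case that needs a separate easy check is when some clique among $A,B,C$ is empty, where the count is even more favourable. I expect the merging/redundancy bookkeeping — making sure the two saved cliques come from genuinely different places and that no edge is left uncovered — to be the only delicate point, and it is entirely elementary; claw-freeness itself is barely used here (only that each $N_G(v, \text{other clique})$ is a clique, which holds anyway since the parts are cliques), which is why the bound is the weak $n+1$ rather than $n$, and the subsequent \pref{thm:tcA} is where the structure theorem for three-cliqued claw-free graphs does the heavy lifting.
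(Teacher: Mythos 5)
The first part of your covering (size $n+3$) is fine, but the two savings are where the proof breaks, and the step ``pick $a$ with $N_G(a,B)$ minimal and merge it with $B$ or absorb it'' is a genuine gap. The clique $\{a\}\cup N_G(a,B)$ is the \emph{only} clique in your collection containing both $a$ and a vertex of $N_G(a,B)$ (the cliques $A$ and $N_G[c,A]$, $c\in C$, contain no vertex of $B$), so it cannot be absorbed into anything else; and $\{a\}\cup B$ is a clique only when $a$ is complete to $B$, which need not hold. The tell-tale sign is your remark that claw-freeness is barely used: the statement is actually \emph{false} without it. Take $A,B,C$ each of size $k\ge 2$, a perfect matching between each pair of parts, arranged so that no transversal triangle arises (e.g.\ $a_i\sim b_i$, $b_i\sim c_i$, $c_i\sim a_{i+1}$). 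Then every clique meeting two parts is a single cross edge, every clique covering an edge inside a part lies entirely in that part, and $\cc(G)=3k+3=n+3$. This graph has claws, which is exactly what any correct proof must exploit, and your argument would ``prove'' $n+1$ for it.

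The paper's proof uses claw-freeness at precisely the point where your savings fail, and its base covering is not cyclic. Ordering the parts so that $|C_1|\le|C_2|\le|C_3|$, it takes $\mathscr{N}[C_1;C_2]\cup\mathscr{N}[C_1;C_3]\cup\mathscr{N}[C_2;C_3]\cup\{C_1,C_2,C_3\}$, of size $2|C_1|+|C_2|+3=n+|C_1|-|C_3|+3$, so the bound already holds unless $|C_3|\le|C_1|+1$. In the remaining nearly balanced case it either finds two vertices $x,y$ in some part whose neighbourhoods in the other two parts are pairwise disjoint --- and then claw-freeness forces $N(x,C_{j_0})$ to be complete to $N(x,C_{k_0})$, so the two cliques $N[x,C_{j_0}]$ and $N[x,C_{k_0}]$ merge into one, giving one saving for $x$ and one for $y$ --- or, failing that, shows one of the part cliques $C_i$ is itself redundant or replaceable. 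You would need to import a merging argument of this kind, driven by claw-freeness, to close the gap.
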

\begin{proof}
Let $C_1,C_2,C_3$ be three sets of vertices such that $\{A,B,C\}=\{C_1,C_2,C_3\}$ and $ |C_1|\leq |C_2|\leq |C_3| $. For each vertex $ x\in V(G) $ and $ i\in\{1,2, 3\} $, let $ N_i(x)=N(x,C_i) $ and $N_i[x]= N[x,C_i]$. Also, for every $ i,j \in \{1,2, 3\} $, define $\mathscr{N}^i_j=\mathscr{N}[C_i;C_j]$. Evidently, $\mathscr{N}^1_2\cup \mathscr{N}^1_3\cup \mathscr{N}^2_3\cup \{C_1,C_2,C_3\}$ is a clique covering for $G$ of size $2|C_1|+|C_2|+3=n+|C_1|-|C_3|+3$. Thus, whenever $|C_3|-|C_1|\geq 2$, we are done. Now, assume that $|C_3|\leq |C_1|+1$. We consider two cases.
 
First, assume that there exists some $ i\in\{1,2,3\}$, for which some edges in $E(C_i)$ are not covered by the cliques in $\cup_{j\in\{1,2,3\}\setminus \{i\}} \mathscr{N}^j_i$. Let $i_0$ be the smallest such $i$ and also let $\{i_0,j_0,k_0\}=\{1,2,3\}$, where $j_0<k_0$. Therefore, there exist two vertices $x,y \in C_{i_0}$ such that for every $j\in\{1,2,3\}\setminus \{i_0\}$, $N_j(x)$ and $N_j(y)$ are disjoint. Since $G$ is claw-free, $N_{j_0}(x)$ is complete to $N_{k_0}(x)$ and $N_{j_0}(y)$ is complete to $N_{k_0}(y)$. Now, in the clique covering  $\mathscr{N}^{i_0}_{j_0}\cup \mathscr{N}^{i_0}_{k_0}\cup \mathscr{N}^{j_0}_{k_0}\cup\{C_1,C_2,C_3\}$, merge the pairs $(N_{j_0}[x],N_{k_0}[x])$ and $(N_{j_0}[y],N_{k_0}[y])$ to obtain the clique covering $\mathscr{C}$. If $ i_0\leq 2 $, then $|\mathscr{C}|\leq 2|C_{i_0}|+|C_{j_0}|-2+3\leq |C_{i_0}|+|C_{j_0}|+|C_{k_0}|+1= n+1$. Also, if $i_0=3$, then $j_0=1, k_0=2$ and, by minimality of $i_0$, the cliques in $\mathscr{N}^1_2\cup \mathscr{N}^3_2$ cover all edges in $E(C_2)$. Thus, removing the clique $C_2$ from $\mathscr{C}$ yields a clique covering for $G$ of size $2|C_{3}|+|C_{1}|-2+2\leq |C_3|+|C_1|+1+|C_2|= n+1$.

Now, assume that for each  $i\in\{1,2,3\}$,  all edges in $E(C_i)$ are covered by the cliques in $\cup_{j\in\{1,2,3\}\setminus \{i\}} \mathscr{N}^j_i$. If all edges in $E(C_2)$ are covered by the cliques in $\mathscr{N}^3_2$ and all edges in $E(C_1)$ are covered by the cliques in $\mathscr{N}^2_1$, then the family $\mathscr{N}^1_3\cup \mathscr{N}^2_1\cup \mathscr{N}^3_2\cup \{C_3\}$ is a clique covering of size $n+1$. Hence, suppose that either some edges in $E(C_2)$ are not covered by the cliques in $\mathscr{N}^3_2$, or some edges in $E(C_1)$ are not covered by the cliques in $\mathscr{N}^2_1$, and w.l.o.g. assume that the former occurs. Thus, there exist two vertices $x,y\in C_2$ for which $N_3(x)\cap N_3(y)=\emptyset$. Since $G$ is claw-free, $N_3(x)$ is complete to $N_1(x)\setminus N_1(y)$ and $N_3(y)$ is complete to $N_1(y) \setminus N_1(x)$. Now, in the clique covering $\mathscr{N}^1_3\cup\mathscr{N}^2_1\cup\mathscr{N}^2_3\cup\{C_1,C_2\}$, replace the cliques $N_1[x]$, $N_3[x]$, $N_1[y]$ and $N_3[y]$ with the cliques $(N_1(x)\cap N_1(y))\cup \{x,y\}$, $(N_1[x]\cup N_3[x])\setminus N_1(y)$ and $(N_1[y]\cup N_3[y])\setminus N_1(x)$, thereby obtaining a clique covering for $G$ of size $|C_1|+2|C_2|+2-1\leq |C_1|+|C_2|+|C_3|+1=n+1$. This proves \pref{thm:tc1}.
 \end{proof}
 In order to improve the bound in \pref{thm:tc1} for three-cliqued claw-free graphs containing a triad, we need to know their more detailed structure. It is proved in \cite{seymour5} that every three-cliqued claw-free graph can be obtained from some special graphs using a certain construction called ``worn hex-chain''. Let us recall the definition of this construction from \cite{seymour5}.
 
Let $k\geq 1$, and for every $1 \leq i \leq k$, let $(G_i,A_i,B_i,C_i )$ be a three-cliqued graph, where
$V (G_1),\ldots, V (G_k)$ are all nonempty and pairwise disjoint. Let $A = \cup_{i=1}^kA_i$, $B = \cup_{i=1}^kB_i$ and $C = \cup_{i=1}^kC_i$, and let $G$ be the graph with vertex set
$V(G)=\cup_{i=1}^kV(G_i)$ and with adjacency as follows,
\begin{itemize}
\item[(W1)]  for every $1 \leq  i \leq  k$, $G[V (G_i )] = G_i$,
\item[(W2)] for every $1 \leq i < j \leq k$, $A_i$ is complete to $V (G_j ) \setminus B_j$; $B_i$ is complete to $V (G_j ) \setminus C_j$;  and $C_i$ is complete to $V (G_j ) \setminus A_j$, and
\item[(W3)] for every $1 \leq i < j \leq k$, if $u \in A_i$ and $v \in B_j$ are adjacent, then $u, v$ are both in no triads; and the same holds if $u \in B_i$ and $v \in C_j$, and if $u \in C_i$ and $v \in A_j$.
\end{itemize}
Thus, $A,B,C$ are cliques of $G$, and so $(G,A,B,C)$ is a three-cliqued graph. We call the sequence $(G_i,A_i,B_i,C_i) (i = 1,\ldots , k)$ a \textit{worn hex-chain} for $(G,A,B,C)$. Note that every triad of $G$ is a triad of one of $G_1, \ldots , G_k$, and if each term $G_i$ is claw-free, then so is $G$.
  
Let $(G,A,B,C)$ be a three-cliqued graph and $F$ be a valid set for $ G $ such that for every pair $\{u,v\}\in  F$, $ u,v $ are not both in the same set $ A $, $ B $ or $ C $. Then, every thickening $G'$ of $(G,F)$ is also a three-cliqued graph $(G',A',B',C')$, where $A'=\cup_{v\in A} X_v$ and $B',C'$ are defined similarly.
If $(G,A,B,C)$ is a three-cliqued graph and $\{\hat{A},\hat{B},\hat{C}\}=\{A,B,C\}$, then $(G,\hat{A},\hat{B},\hat{C})$
is also a three-cliqued graph, called a \textit{permutation} of $(G,A,B,C)$. 
The following theorem from \cite{seymour5} states that every three-cliqued claw-free graph admits a worn hex-chain whose all terms are obtained from graphs in five basic classes $\mathcal{TC}_1,\ldots,\mathcal{TC}_5$ (defined in \pref{app:3clique}).
\begin{thm} {\rm \cite{seymour5}}\label{thm:tcseymour}
Every three-cliqued claw-free graph admits a worn hex-chain into terms each of which is a thickening of a permutation of a member of one of $\mathcal{TC}_1,\ldots,\mathcal{TC}_5$ with respect to their corresponding sets $F$ $ ( $see \pref{app:3clique} for the definition of $ \mathcal{TC}_1,\ldots,\mathcal{TC}_5) $.
\end{thm}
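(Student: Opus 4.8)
Since a worn hex-chain of length $k=1$ is simply a single three-cliqued graph, the theorem splits into a \emph{decomposition step} and a gluing \emph{induction}, and I would build the argument around the induction. Call $(G,A,B,C)$ \emph{reducible} if $V(G)$ has a partition into two nonempty parts $V_1,V_2$ such that, writing $A_i=A\cap V_i$, $B_i=B\cap V_i$, $C_i=C\cap V_i$, the two-term sequence $\bigl((G[V_1],A_1,B_1,C_1),(G[V_2],A_2,B_2,C_2)\bigr)$ is a worn hex-chain for $(G,A,B,C)$; concretely, (W2) holds across the split ($A_1$ complete to $V_2\setminus B_2$ and the two cyclic variants) and (W3) holds (every edge from $A_1$ to $B_2$, from $B_1$ to $C_2$, or from $C_1$ to $A_2$ has both ends in no triad of $G$). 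Arguing by induction on $|V(G)|$: if $(G,A,B,C)$ is reducible, feed $(G[V_1],A_1,B_1,C_1)$ and $(G[V_2],A_2,B_2,C_2)$ to the induction hypothesis, obtain worn hex-chains for them, and concatenate. The concatenation works because (W2) already forbids a triad of $G$ from being split by $(V_1,V_2)$: if a triad $T$ met both parts, then for any $u\in T\cap V_1$ the (W2) constraints place all of $T\cap V_2$ inside one of the cliques $A_2,B_2,C_2$, so $|T\cap V_2|\le 1$, and ranging $u$ over $T\cap V_1$ then forces $T\cap V_1$ inside one of $A_1,B_1,C_1$ as well (otherwise $T\cap V_2=\emptyset$), contradicting $|T|=3$. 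Hence the triads of $G[V_i]$ are exactly the triads of $G$ contained in $V_i$, so the (W3) information carried by the inductive chains is exactly what (W3) for $G$ requires; the remaining (W1)--(W3) conditions, between a term inside $V_1$ and one inside $V_2$, are immediate from $(V_1,V_2)$ being a worn hex-join. Each $V_i$ is strictly smaller, so the induction terminates with every term a thickening of a permutation of a member of one of $\mathcal{TC}_1,\ldots,\mathcal{TC}_5$; the base case is exactly the \emph{irreducible} three-cliqued claw-free graphs.

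Everything therefore reduces to the decomposition step: an irreducible three-cliqued claw-free graph $(G,A,B,C)$ is a thickening of a permutation of a member of one of $\mathcal{TC}_1,\ldots,\mathcal{TC}_5$. I would split this by independence number. If $\alpha(G)\le 2$ then $G$ is triad-free, so (W3) is vacuous and a worn hex-join becomes an ordinary hex-join; using that claw-freeness makes $N_G(v)$ the union of two cliques for each $v$, and combining this with the three-clique partition, one obtains an ``interval-on-a-circle'' coordinatisation of $A$, $B$, $C$, and wherever that coordinatisation fails to be connected one reads off a hex-join --- so an irreducible such $G$ is a thickening of a member of the circular-interval-type class among $\mathcal{TC}_1,\ldots,\mathcal{TC}_5$. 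If $G$ has a triad $\tau$, note first that $\tau$ meets each of $A$, $B$, $C$ in exactly one vertex (they are cliques covering $V(G)$ and $|\tau|=3$); then classify, for each triad, the handful of neighbourhood patterns an outside vertex may have on it (again forced by claw-freeness), and amalgamate this over all triads. This identifies $G$, up to thickening, with one of the line-graph-type, antiprismatic-type, ``antihat''-type, or finitely many sporadic three-cliqued claw-free graphs constituting $\mathcal{TC}_1,\ldots,\mathcal{TC}_5$; any graph not fitting one of these templates is shown to admit a worn hex-join, witnessed by grouping vertices according to ``having the same pattern on every triad.'' Throughout, one must check that all the decompositions and classifications survive passage to a thickening and keep track of the valid sets $F$ of changeable/endpoint pairs.

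The main obstacle is this decomposition step in the presence of triads: it is a genuinely long case analysis and is precisely the content of the relevant sections of \cite{seymour5}. The difficulties are the many configurations a triad can realise against $(A,B,C)$; the ``worn'' relaxation, which permits cross-block edges only between vertices lying in no triad and so forces careful bookkeeping of the core and the non-core; the handling of thickenings (fuzziness) and of the admissible sets $F$; and the fact that the five basic classes need not themselves be irreducible, so the induction is really ``keep extracting worn hex-joins until none remains, then invoke the classification of the resulting graphs.'' Since \pref{thm:seymour1} only records that $V(G)$ is a union of three cliques without exposing how those cliques sit, this finer decomposition genuinely needs the machinery of \cite{seymour5}, and in this paper the statement is quoted from there rather than reproved.
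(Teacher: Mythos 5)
The paper does not prove this statement: it is imported verbatim from \cite{seymour5} (it is the decomposition theorem for three-cliqued claw-free graphs proved there), and the present paper only cites it. So there is no in-paper proof to compare yours against; the only fair comparison is with the strategy of the cited reference, which your outline describes accurately at a high level. Your gluing step is correct: the argument that a worn hex-join cannot split a triad goes through --- if $u\in T\cap A_1$ and $v\in T\cap V_2$ then (W2) forces $v\in B_2$, so $T\cap V_2$ lies in a single clique and has size at most one, and two nonadjacent vertices of $T\cap V_1$ would force $T\cap V_2$ into two disjoint cliques simultaneously, hence $T\subseteq V_1$ --- so the triads of the terms are exactly the triads of $G$ and the induction concatenates cleanly.

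That said, as a proof your submission is incomplete in exactly the place you flag yourself: the classification of the indecomposable three-cliqued claw-free graphs into thickenings of permutations of members of $\mathcal{TC}_1,\ldots,\mathcal{TC}_5$, together with the bookkeeping of the admissible sets $F$, \emph{is} the theorem, and you only describe the shape of that case analysis rather than carry it out. One smaller point: your treatment of the triad-free case is more complicated than necessary and the specific claim is doubtful. A triad-free graph is vacuously antiprismatic, so a triad-free three-cliqued claw-free graph already lies in $\mathcal{TC}_4$ with $F=\emptyset$; there is no need for a circular-interval coordinatisation, and I do not believe it is true that every irreducible triad-free three-cliqued claw-free graph is a thickening of a long circular interval graph (the adjacency between the three cliques can be quite arbitrary subject only to claw-freeness). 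Since the paper itself quotes the theorem without proof, the appropriate disposition is that your proposal is a faithful summary of why the result holds and where its difficulty lies, not a replacement for the proof in \cite{seymour5}.
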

We will apply \pref{thm:tcseymour} to prove \pref{thm:tcA}. First, we need a couple of lemmas. We begin with a simple observation showing that one can change the ordering in a worn hex-chain.
\begin{lem}\label{lem:worn_ordering}
Assume that $G$ admits a worn hex-chain  $(G_i,A_i,B_i,C_i) (i = 1,\ldots , k)$ and let $i_0,j_0\in \{1, \ldots, k\}$. Then,
\begin{itemize}
\item[\rm (i)] $G$ admits a worn hex-chain   $(G_i',A_i',B_i',C_i') (i = 1,\ldots , k)$ such that for every $i\in\{1,\ldots,k\}$, $(G_i',A_i',B_i',C_i')$ is a permutation of $(G_{i+i_0-j_0},A_{i+i_0-j_0},B_{i+i_0-j_0},C_{i+i_0-j_0})$ $($reading subscripts modulo $k)$. Also, $A_{j_0}'={A}_{i_0}$, $B_{j_0}'={B}_{i_0}$ and  $C_{j_0}'={C}_{i_0}$.
\item[\rm (ii)] if $(G_{i_0},\hat{A}_{i_0},\hat{B}_{i_0},\hat{C}_{i_0})$ is a permutation of $(G_{i_0},A_{i_0},B_{i_0},C_{i_0})$, then $G$ admits a worn hex-chain   $(G_i',A_i',B_i',C_i') (i = 1,\ldots , k)$ such that $G_{i_0}'=G_{i_0}$, $A_{i_0}'=\hat{A}_{i_0}$, $B_{i_0}'=\hat{B}_{i_0}$ and  ${C}_{i_0}'=\hat{C}_{i_0}$. 
\end{itemize} 
\end{lem}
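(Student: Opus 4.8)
The plan is to realize both reorderings as compositions of a small number of ``elementary'' moves on worn hex-chains, and to verify that each such move preserves (W1)--(W3) by a direct inspection of the definition. Write $\rho$ for the cyclic rotation $(A,B,C)\mapsto(B,C,A)$ of a triple of cliques. The only asymmetry in the definition of a worn hex-chain is the following: for $i<j$, the clique $A_i$ is complete to $V(G_j)\setminus B_j$, the clique $B_i$ is complete to $V(G_j)\setminus C_j$, and $C_i$ is complete to $V(G_j)\setminus A_j$ by (W2), while by (W3) the edges between $A_i$ and $B_j$, between $B_i$ and $C_j$, and between $C_i$ and $A_j$ are ``light'', that is, their endpoints lie in no triad. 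Hence the relationship that a pair of blocks $(G_i,G_j)$ with $i<j$ imposes is invariant under the simultaneous rotation $\rho$ of both triples, and is ``inverted'' when one transposes two cliques of each triple and interchanges the roles of $G_i$ and $G_j$. Finally, neither reordering nor relabelling changes $G$ or its set of triads (every triad of $G$ lies inside a single $G_i$, as remarked after (W3)); so for any of the moves below, once (W2) is checked, (W3) follows automatically, since lying in no triad is a property of a vertex of $G$ and the three light pairs of cliques between two blocks are the same unordered pairs before and after the move.

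\textbf{Part (i).}
It suffices to handle the single \emph{forward shift}
\[
(G_1,A_1,B_1,C_1),(G_2,A_2,B_2,C_2),\ldots,(G_k,A_k,B_k,C_k)\ \longmapsto\ (G_2,A_2,B_2,C_2),\ldots,(G_k,A_k,B_k,C_k),(G_1,B_1,C_1,A_1),
\]
together with its mirror, the \emph{backward shift}, which moves the last block to the front and rotates its triple by $\rho^{-1}$. Part (i) then follows by iterating whichever of the two shifts carries $G_{i_0}$ towards position $j_0$ on the side on which $G_{i_0}$ is never the block being moved (hence never relabelled); one checks from the indices that the appropriate direction always exists, so that $G_{i_0}$ lands at position $j_0$ with its triple untouched. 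To justify the forward shift, (W1) is immediate, and the only pairs whose relationship changes are $(G_1,G_j)$ with $j\ge 2$, where $G_1$ passes from being ``before'' $G_j$ to being ``after'' it. Using the three clauses of (W2) for $1<j$ one computes that $A_j$ is complete to $V(G_1)\setminus C_1$, that $B_j$ is complete to $V(G_1)\setminus A_1$, and that $C_j$ is complete to $V(G_1)\setminus B_1$; this is precisely (W2) for the new chain once the triple of $G_1$ is relabelled $(B_1,C_1,A_1)$, and (W3) is then free by the observation above.

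\textbf{Part (ii).}
I use two elementary moves, both of which keep $G_{i_0}$ at position $i_0$. The first is the \emph{global rotation} $(G_i,A_i,B_i,C_i)_{i=1}^k\mapsto(G_i,B_i,C_i,A_i)_{i=1}^k$, which by the first half of the remark above is again a worn hex-chain (for the permutation $(G,B,C,A)$); applied $0$, $1$ or $2$ times it realizes on the triple of $G_{i_0}$ any of its three cyclic permutations. The second is the \emph{reversal with transposition}
\[
(G_i,A_i,B_i,C_i)_{i=1}^{k}\ \longmapsto\ (G_{k+1-i},B_{k+1-i},A_{k+1-i},C_{k+1-i})_{i=1}^{k}.
\]
For a new pair $p<q$, put $a=k+1-q$ and $b=k+1-p$, so $a<b$; the three clauses of (W2) for $a<b$ then give that $B_b$ is complete to $V(G_a)\setminus A_a$, that $A_b$ is complete to $V(G_a)\setminus C_a$, and that $C_b$ is complete to $V(G_a)\setminus B_a$, which is exactly (W2) for the new chain, so it too is a worn hex-chain (for $(G,B,A,C)$). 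Under this move $G_{i_0}$ sits at position $k+1-i_0$ with triple $(B_{i_0},A_{i_0},C_{i_0})$; applying part (i) we move it back to position $i_0$ with that triple unchanged, so the composite move realizes the transposition $\tau$ interchanging the first two cliques of block $i_0$. Since $\langle\rho,\tau\rangle=S_3$, writing the required permutation of $(A_{i_0},B_{i_0},C_{i_0})$ as $\rho^{\,t}$ or as $\tau\rho^{\,t}$ for a suitable $t\in\{0,1,2\}$ and applying the corresponding moves in that order produces a worn hex-chain whose $i_0$-th term is $(G_{i_0},\hat A_{i_0},\hat B_{i_0},\hat C_{i_0})$.

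\textbf{Where the work is.}
The only delicate points are the index arithmetic of the cyclic shift in part (i) — namely, that the direction can always be chosen so that $G_{i_0}$ is never moved, hence never relabelled — and the two short case-checks of (W2) for the elementary moves of part (ii), where one must match $V(G_a)\setminus B_a$ and its cyclic siblings against the relabelled cliques; both are routine, and I do not expect a genuine obstacle beyond bookkeeping.
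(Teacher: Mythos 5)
Your proof is correct and takes essentially the same approach as the paper: the paper writes down the reindexed chains in closed form (cyclic shift with a rotation of the wrapped triples for (i), reflection about $i_0$ with a transposition for (ii)) and leaves the verification of (W1)--(W3) to the reader, whereas you factor the same construction into elementary moves and actually carry out the (W2)/(W3) checks. The two constructions produce the same chains, so this is a matter of presentation rather than a different argument.
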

\begin{proof}
(i) Let $k_0=i_0-j_0$ and for each $i\in\{1,\ldots, k\}$, if $1\leq i+k_0\leq k$, then  define $G_i'=G_{i+k_0}$, $A_i'=A_{i+k_0}$, $B_i'=B_{i+k_0}$ and $C_i'=C_{i+k_0}$, if  $i+k_0\leq 0$, then define $G_i'=G_{i+k_0+k}$, $A_i'=C_{i+k_0+k}$, $B_i'=A_{i+k_0+k}$ and $C_i'=B_{i+k_0+k}$ and if  $i+k_0\geq k+1$, then define $G_i'=G_{i+k_0-k}$, $A_i'=B_{i+k_0-k}$, $B_i'=C_{i+k_0-k}$ and $C_i'=A_{i+k_0-k}$.  It is easy to check that  $(G_i',A_i',B_i',C_i') (i = 1,\ldots , k)$ is a worn hex-chain for $G$.

(ii) Suppose that $\hat{A}_{i_0}=A_{i_0}$, $\hat{B}_{i_0}=C_{i_0}$ and $\hat{C}_{i_0}=B_{i_0}$ (the other cases are similar). 
For every $i\in\{1,\ldots, k\}$, if $1\leq 2i_0-i\leq  k$, then define $G_i'=G_{2i_0-i}$, $A_i'=A_{2i_0-i}$, $B_i'=C_{2i_0-i}$ and $C_i'=B_{2i_0-i}$, if $2i_0-i\leq 0$, then define $G_i'=G_{2i_0-i+k}$, $A_i'=C_{2i_0-i+k}$, $B_i'=B_{2i_0-i+k}$ and $C_i'=A_{2i_0-i+k}$ and finally if $2i_0-i\geq k+1$, then define $G_i'=G_{2i_0-i-k}$, $A_i'=B_{2i_0-i-k}$, $B_i'=A_{2i_0-i-k}$ and $C_i'=C_{2i_0-i-k}$. Again, it is easy to check that $(G_i',A_i',B_i',C_i') (i = 1,\ldots , k)$ is a worn hex-chain for $G$.
\end{proof}
In the following lemma, we elaborate how one can extend a (special) clique covering of one of the terms of a worn hex-chain to a clique covering of the whole graph. Let us say that a clique covering $\mathscr{C}$ of a three-cliqued graph $(G,A,B,C)$ is \textit{splitting} if
\begin{itemize}
\item[(SP1)] we have $A,B,C\in \mathscr{C}$, and
\item[(SP2)] for every $u\in A\cap \tilde{W}(G)$, there exists a clique $C_u$ in $\mathscr{C}\setminus \{A\}$ such that  $C_u\cap A=\{u\}$. The same holds for every $u\in B\cap \tilde{W}(G)$ and every $u\in C\cap \tilde{W}(G)$, where $ \tilde{W}(G) $ is the non-core of $ G $, defined in \pref{sub:notation}.
\end{itemize}
\begin{lem} \label{lem:worn}
Let $G$ be a three-cliqued graph which admits a worn hex-chain $(G_i,A_i,B_i,C_i)$ $(i=1,\ldots,k)$. Assume that there exists $i_0 \in \{1,\ldots, k\}$ such that $G_{i_0}$ has a splitting clique covering $\mathscr{C}$ of size at most $|V(G_{i_0})|-t$, for some number $t$. Then $G$ admits a clique covering of size at most $|V(G)|-t$.
\end{lem}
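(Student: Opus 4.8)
The plan is to build a clique covering of $G$ by hand: keep a mildly modified copy of $\mathscr{C}$ for $G_{i_0}$, use the three big cliques $A=\bigcup_i A_i$, $B=\bigcup_i B_i$, $C=\bigcup_i C_i$ of $G$ to kill all monochromatic edges, and add exactly one extra clique per vertex of the remaining terms to kill all the bichromatic edges.

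\textbf{Normalisation.} By \pref{lem:worn_ordering}(i), after a cyclic shift of the worn hex-chain we may assume $i_0=1$; the covering $\mathscr{C}$ stays splitting for the (permuted) three-cliqued structure of $G_1$ because (SP1) and (SP2) are symmetric in the three cliques. I will also use that $\tilde W(G)\cap V(G_i)=\tilde W(G_i)$ for every $i$, which is immediate from the fact (recorded right after the definition of worn hex-chain) that every triad of $G$ lies inside a single $G_i$ while $V(G_1),\dots,V(G_k)$ partition $V(G)$. Note that claw-freeness of $G$ is never needed.

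\textbf{The covering.} Take $\mathscr{C}$, which contains $A_1,B_1,C_1$ by (SP1), and replace those three cliques by $A,B,C$. This keeps $G_1$ covered --- a monochromatic edge of $G_1$ now sits in $A$, $B$ or $C$, while a bichromatic one was covered by some clique of $\mathscr{C}$ different from $A_1,B_1,C_1$, which survives --- and in addition covers every edge of $G$ with both ends in $A$, both in $B$, or both in $C$. Next, for each non-core vertex $u\in A_1\cap\tilde W(G_1)$ take the private clique $C_u\in\mathscr{C}\setminus\{A_1\}$ with $C_u\cap A_1=\{u\}$ given by (SP2), so $C_u\subseteq\{u\}\cup B_1\cup C_1$, and replace it by $C_u\cup(N_G(u)\cap(B\setminus B_1))$; this is again a clique, since the new vertices lie in the clique $B$, are adjacent to $u$, and by (W2) are complete to $C_u\cap(B_1\cup C_1)$. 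Do the analogous enlargement for $u\in B_1\cap\tilde W(G_1)$ (adding $N_G(u)\cap(C\setminus C_1)$) and for $u\in C_1\cap\tilde W(G_1)$ (adding $N_G(u)\cap(A\setminus A_1)$). Call the result $\mathscr{C}_0$; then $|\mathscr{C}_0|=|\mathscr{C}|\le|V(G_1)|-t$. Finally, for every $j\ge2$ and every $v\in V(G_j)$ add one clique: $K_v:=N_G[v,B]$ if $v\in A_j$, $K_v:=N_G[v,C]$ if $v\in B_j$, $K_v:=N_G[v,A]$ if $v\in C_j$ (each a clique since $A,B,C$ are). The total is at most $|V(G_1)|-t+\sum_{j\ge2}|V(G_j)|=|V(G)|-t$.

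\textbf{Coverage check, and the one delicate point.} It remains to verify that $\mathscr{C}_0\cup\{K_v: j\ge2,\ v\in V(G_j)\}$ covers every edge. Monochromatic edges are covered by $A,B,C$, and all edges inside $G_1$ by $\mathscr{C}$. Consider a bichromatic edge $e=xy$ with colour pair $\{A,B\}$, with $x$ the $A$-coloured end (the pairs $\{A,C\}$ and $\{B,C\}$ are symmetric, with the $C$- resp.\ $B$-coloured end playing the role of $x$, since the rule for $K_v$ rotates the target clique). If $x\notin V(G_1)$ then $K_x=N_G[x,B]\ni y$ and we are done. If $x\in A_1$, then either $y\in B_1$, so $e$ lies inside $G_1$ and is already covered, or $y$ lies in a later term, in which case (W3) forces $x$ (and $y$) to be non-core and hence $y\in N_G(x)\cap(B\setminus B_1)$, so $e$ is covered by the enlarged copy of $C_x$ in $\mathscr{C}_0$. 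This last case --- correctly matching, for each of the three bichromatic patterns, which endpoint's $K_v$ (or whose enlarged private clique) is responsible, and checking every enlargement and every $K_v$ really is a clique --- is the only genuine work, and it is a direct reading of (W1)--(W3) together with ``$A,B,C$ are cliques''. The hypothesis enters precisely through (SP2): it hands every non-core vertex of $G_1$ a private clique meeting its own big clique in just that one vertex, hence with exactly the room needed to absorb that vertex's cross-edges into the later terms; without it the ``no-triad'' cross-edges incident to $G_{i_0}$ could not be paid for inside the budget $|V(G)|-t$.
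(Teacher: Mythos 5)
Your proposal is correct and follows essentially the same route as the paper's proof: normalise to $i_0=1$ via \pref{lem:worn_ordering}(i), replace $A_1,B_1,C_1$ by $A,B,C$, enlarge each private clique $C_u$ of a non-core vertex $u$ of $G_{1}$ by $N[u,B\setminus B_1]$ (resp. $N[u,C\setminus C_1]$, $N[u,A\setminus A_1]$), and add the cliques $\mathscr{N}[A\setminus A_1;B]\cup\mathscr{N}[B\setminus B_1;C]\cup\mathscr{N}[C\setminus C_1;A]$, with the identical count $|V(G_1)|-t+|V(G)|-|V(G_1)|$. The only difference is that you spell out the coverage check and the (W2)/(W3) verifications that the paper leaves to the reader.
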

\begin{proof}
Due to \pref{lem:worn_ordering}(i), w.l.o.g. we may assume that $i_0=1$. 
Define ${A}=\cup_{i=1}^{k} A_i$, ${B}=\cup_{i=1}^{k} B_i$ and ${C}=\cup_{i=1}^{k} C_i$.
By (W3), $A_{1}\cap W(G_1)$ is anticomplete to ${B}\setminus B_{1}$, $B_1\cap W(G_1)$ is anticomplete to ${C}\setminus C_1$ and $C_{1}\cap W(G_1)$ is anticomplete to ${A}\setminus A_1$. For every $u\in \tilde{W}(G_1)$, let $C_u\in \mathscr{C}$ be as in (SP2). For every $u\in A_1\cap \tilde{W}(G_1)$, define $\tilde{C}_u=N[u,B\setminus B_1]\cup C_u$ which is a clique of $G$. Similarly, for every $u\in B_{1}\cap \tilde{W}(G_1)$, define $\tilde{C}_u=N[u,C\setminus C_1]\cup C_u$ and for every $u\in C_{1}\cap \tilde{W}(G_1)$, define $\tilde{C}_u=N[u,A\setminus A_1]\cup C_u$ which are all cliques of $G$.
Moreover, by (SP1), we have $A_{1}, B_{1}, C_{1}\in \mathscr{C}$. Now, let $\mathscr{C}'$ be the collection of cliques of $G$ obtained from $\mathscr{C}$ by replacing the cliques $A_{1}, B_{1}$ and $C_{1}$ with the cliques ${A}$, ${B}$, and  ${C}$, and also replacing the cliques  $C_u$, $u\in \tilde{W}(G_{1})$, with the cliques $\tilde{C}_u$, $u\in \tilde{W}(G_{1})$. It can be easily seen that the collection  $\mathscr{C}'\cup \mathscr{N}[A\setminus A_{1};B]\cup \mathscr{N}[B\setminus B_{1};C]\cup \mathscr{N}[C\setminus C_{1};A]$ is a clique covering for $G$ of size at most $|V(G_1)|-t+|V(G)|-|V(G_1)|=|V(G)-t$. This proves \pref{lem:worn}.
\end{proof}
In the following, as a modification of \pref{lem:thickening}, we prove how one can derive a splitting clique covering for a thickening of a three-cliqued graph.
\begin{lem} \label{lem:thickening_splitting}
Let $(H,A,B,C) $ be a three-cliqued graph and $ F $ be a valid set for $ H $  such that for every pair $\{u,v\}\in F$, $ u,v $ are not both in the same set $ A $, $ B $ or $ C $. Also, let $ G $ be a thickening of $ (H,F) $. Suppose that $\tilde{W}(H\setminus F)=\emptyset$ and $H\setminus F$ admits a splitting clique covering $\mathscr{C}$ of size at most $|V(H)|-t$, for some number $t$. Then $(G,A',B',C')$ admits a splitting clique covering of size at most $|V(G)|-t$, where $A'=\cup_{v\in A} X_v$ and $B',C'$ are defined similarly.
\end{lem}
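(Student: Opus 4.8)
The plan is to refine the construction used in the proof of \pref{lem:thickening}. Let $(X_v)_{v\in V(H)}$ be the blow-up sets of the thickening and write $F=\{\{p_1,q_1\},\dots,\{p_\ell,q_\ell\}\}$; by hypothesis each $p_i$ and $q_i$ lie in different ones of $A,B,C$, and since $F$ is valid the $2\ell$ vertices $p_1,q_1,\dots,p_\ell,q_\ell$ are distinct. For $D\subseteq V(H)$ set $X_D=\bigcup_{v\in D}X_v$. Note that $(H\setminus F,A,B,C)$ is again three-cliqued and, since $\tilde{W}(H\setminus F)=\emptyset$, condition (SP2) is vacuous for $\mathscr{C}$, so $\mathscr{C}$ is just a clique covering of $H\setminus F$ with $A,B,C\in\mathscr{C}$. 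The covering of $G$ will have the shape $\{X_D:D\in\mathscr{C}\}\cup\mathscr{F}_1\cup\dots\cup\mathscr{F}_\ell$, where $\mathscr{F}_i$ is a family of cliques covering exactly the edges of $G$ between $X_{p_i}$ and $X_{q_i}$; all other edges of $G$ are already covered by $\{X_D:D\in\mathscr{C}\}$, since an edge inside some $X_v$ lies inside one of $X_A,X_B,X_C$, while an edge between $X_u$ and $X_v$ with $\{u,v\}\notin F$ forces $u$ adjacent to $v$ in $H\setminus F$ and hence lies in $X_D$ for any $D\in\mathscr{C}$ with $u,v\in D$. Since $A,B,C\in\mathscr{C}$, the cliques $A'=X_A$, $B'=X_B$, $C'=X_C$ lie in the covering, which is (SP1). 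The whole point is therefore to build the $\mathscr{F}_i$ so that (SP2) holds while $|\mathscr{F}_i|\le|X_{p_i}|+|X_{q_i}|-2$; granting this, the total size is at most $|\mathscr{C}|+\sum_{i=1}^\ell(|X_{p_i}|+|X_{q_i}|-2)\le(|V(H)|-t)+\sum_{v\in V(H)}(|X_v|-1)=|V(G)|-t$.

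The key step, and the one that uses $\tilde{W}(H\setminus F)=\emptyset$, is the following description of the non-core vertices of $G$. If $u\in X_v$ lies in $\tilde{W}(G)$ then $v$ belongs to some $F$-pair $\{v,w_0\}$ (necessarily unique, as $F$ is valid), every triad of $H\setminus F$ through $v$ contains $w_0$, and $u$ is complete to $X_{w_0}$; moreover every vertex of $X_v\setminus\tilde{W}(G)$ has a non-neighbour in $X_{w_0}$, which itself lies in $X_{w_0}\setminus\tilde{W}(G)$. I would prove this by the usual triad-lifting argument. First, a triad of $G$ projects to a triad of $H\setminus F$ (adjacency between distinct blobs is complete or anticomplete off $F$, and $F$-pairs are non-edges of $H\setminus F$). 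Now if $\{v,y,z\}$ is a triad of $H\setminus F$ with $\{v,y\},\{v,z\}\notin F$, then (T3) makes $X_v$ anticomplete to $X_y\cup X_z$ and $X_y$ is not complete to $X_z$, so a non-adjacent pair there together with $u$ is a triad of $G$ through $u$; hence (since $v$ lies in a triad of $H\setminus F$) $v$ has an $F$-partner $w_0$ through which all its triads pass, and for each triad $\{v,w_0,z\}$, using $\{v,z\},\{w_0,z\}\notin F$, the only way $u$ can fail to extend it to a triad of $G$ is that $u$ is complete to $X_{w_0}$. The last assertion follows by considering a triad of $G$ through a vertex $x'\in X_v\setminus\tilde{W}(G)$: its projection is a triad of $H\setminus F$ through $v$, hence contains $w_0$, so the vertex of the $G$-triad lying in $X_{w_0}$ is a non-neighbour of $x'$ and is itself in a triad of $G$. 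Establishing this structural dichotomy cleanly is, I expect, the main obstacle of the proof.

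With this in hand, fix an $F$-pair $\{p_i,q_i\}$, call an endpoint $r\in\{p_i,q_i\}$ \emph{active} if $X_r\cap\tilde{W}(G)\ne\emptyset$, and let $s$ be the other endpoint. For each active $r$ put into $\mathscr{F}_i$ the cliques $\{x\}\cup X_s$ for $x\in X_r\cap\tilde{W}(G)$: these are cliques because such $x$ is complete to $X_s$, they meet the big clique among $A',B',C'$ containing $x$ in exactly $\{x\}$, and they are distinct from it since they contain the nonempty set $X_s$, which is disjoint from that big clique (as $r,s$ lie in different ones of $A,B,C$); thus they are exactly the cliques demanded by (SP2) for the vertices of $X_r\cap\tilde{W}(G)$. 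It then remains to cover the edges between $X_{p_i}\setminus\tilde{W}(G)$ and $X_{q_i}\setminus\tilde{W}(G)$, for which one adds a family of the form $\mathscr{N}[\,\cdot\,;\,\cdot\,]$ of size $\min(|X_{p_i}\setminus\tilde{W}(G)|,|X_{q_i}\setminus\tilde{W}(G)|)$; when neither endpoint is active one simply uses $\mathscr{N}_H[X_{p_i};X_{q_i}]$ as in \pref{lem:thickening}, of size $\min(|X_{p_i}|,|X_{q_i}|)\le|X_{p_i}|+|X_{q_i}|-2$, the inequality holding because (T4) forces one side to have at least two vertices. A short inspection of the three configurations (neither, exactly one, or both endpoints active), using the structural fact that each vertex of $X_r\setminus\tilde{W}(G)$ for active $r$ has a non-neighbour in $X_s\setminus\tilde{W}(G)$, yields $|\mathscr{F}_i|\le|X_{p_i}|+|X_{q_i}|-2$ in each case. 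The tight case is when both endpoints are active and $X_{p_i}\setminus\tilde{W}(G)$, $X_{q_i}\setminus\tilde{W}(G)$ are both singletons; there one observes that the unique core vertex of $X_{p_i}$ and the unique core vertex of $X_{q_i}$ cannot be adjacent — a triad of $G$ through the former projects to a triad of $H\setminus F$ through $p_i$, which contains $q_i$, so the former has a non-neighbour in $X_{q_i}$, necessarily the latter — so there is nothing left to cover and $|\mathscr{F}_i|=(|X_{p_i}|-1)+(|X_{q_i}|-1)$. Finally one checks directly that $\mathscr{C}'=\{X_D:D\in\mathscr{C}\}\cup\mathscr{F}_1\cup\dots\cup\mathscr{F}_\ell$ is a clique covering of $G$ satisfying (SP1) and (SP2) and of size at most $|V(G)|-t$, which completes the proof.
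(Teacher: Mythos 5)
Your proof is correct and follows essentially the same route as the paper: blow up the cliques of $\mathscr{C}$, and for each $F$-pair add star cliques $\{x\}\cup X_s$ for the vertices $x$ complete to the opposite blob (which simultaneously cover edges and witness (SP2)) plus a neighbourhood family for the remaining vertices, with the same $|X_{p_i}|+|X_{q_i}|-2$ count and the same tight case where both "non-complete" parts are singletons and the relevant pair is in fact a non-edge. The only cosmetic difference is that you parametrize the construction by $\tilde{W}(G)\cap X_r$ and prove the full structural dichotomy, whereas the paper defines $\tilde{X}_{u_i}$ directly as the set of vertices of $X_{u_i}$ complete to $X_{v_i}$ and only needs the easy inclusion $\tilde{W}(G)\subseteq\bigcup_i(\tilde{X}_{u_i}\cup\tilde{X}_{v_i})$.
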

\begin{proof}
The proof is similar to the proof of \pref{lem:thickening}. Let $ (X_v)_{v\in V(H)} $ be as in the definition of thickening. For each clique $K \in \mathscr{C} $, define $ X_K=\cup_{u\in K} X_u$ which is a clique of $G$ and let $ \mathscr{C}'=\{X_K\ :\ K\in \mathscr{C}\} $. By (SP1), we have $ A,B,C\in \mathscr{C} $ and thus $ A',B',C'\in\mathscr{C}' $. If $F=\emptyset$, then since $\tilde{W}(H\setminus F)=\emptyset$, we have $ \tilde{W}(G)=\emptyset $ and thus $ \mathscr{C}' $ is a splitting clique covering for $G$ of size at most $|V(H)|-t\leq |V(G)|-t$. Now, assume that $F\neq \emptyset$, say $F=\{\{u_1,v_1\}, \ldots, \{u_{\ell},v_{\ell}\} \}$. For every $ i\in\{1,\ldots, \ell \} $, let $ \tilde{X}_{u_i}\subseteq X_{u_i} $ (resp.  $ \tilde{X}_{v_i}\subseteq X_{v_i} $) be the set of vertices in $ X_{u_i} $ (resp. $ X_{v_i} $) which are complete to $ X_{v_i} $ (resp. $ X_{u_i} $). 
Note that by (T4) in the definition of thickening, $X_{u_i}\setminus \tilde{X}_{u_i}$ and $X_{v_i}\setminus \tilde{X}_{v_i}$ are both nonempty. 
Assume w.l.o.g. that for every $i \in \{1,\ldots, \ell\}$, $1\leq |X_{u_i}\setminus \tilde{X}_{u_i}|\leq |X_{v_i}\setminus \tilde{X}_{v_i}| $ and for each $ x\in \tilde{X}_{u_i} $, define $ C_x=X_{v_i}\cup \{x\} $ and for each $ x\in \tilde{X}_{v_i} $, define $ C_x=X_{u_i}\cup \{x\} $. Now, if $|X_{v_i}\setminus \tilde{X}_{v_i}|=1$, then let $ \mathscr{C}_i=\{C_x: x\in \tilde{X}_{u_i}\cup \tilde{X}_{v_i}\}$, and if $|X_{v_i}\setminus \tilde{X}_{v_i}|\geq 2$, then let $ \mathscr{C}_i=\{C_x: x\in \tilde{X}_{u_i}\cup \tilde{X}_{v_i}\}\cup \mathscr{N}[X_{u_i}\setminus \tilde{X}_{u_i};X_{v_i}]$. Note that $|\mathscr{C}_i|\leq |X_{u_i}|+|X_{v_i}|-2$ and the cliques in $\mathscr{C}_i$ cover all the edges in $E(X_{u_i},X_{v_i})$.
Now, the collection of cliques $\mathscr{C}''=\mathscr{C}'\cup (\cup_{i=1}^{\ell}\mathscr{C}_i)$ is a clique covering for $G$ of size at most
\[|V(H)|-t+\sum_{i=1}^{\ell}(|X_{u_i}|+|X_{v_i}|-2) \leq |V(H)|-t-2\ell +\sum_{i=1}^{\ell} (|X_{u_i}|+|X_{v_i}|)\leq |V(G)|-t.\]
It remains to prove that $\mathscr{C}''$ is splitting.  First, since $A',B',C'\in \mathscr{C}''$, (SP1) is satisfied. In addition, since $ \tilde{W}(H\setminus F)=\emptyset$, we have $\tilde{W}(G)\subseteq \bigcup_{i=1}^{\ell} (\tilde{X}_{u_i}\cup \tilde{X}_{v_i})$, and since no pair in $F$ is included in the same set $A, B$ or $C$, for every $x\in \bigcup_{i=1}^{\ell} (\tilde{X}_{u_i}\cup \tilde{X}_{v_i})$, the clique $C_x$ defined above fulfills the conditions in (SP2). This proves \pref{lem:thickening_splitting}.
\end{proof} 
Now, we are ready to prove \pref{thm:tcA}, which is restated below. It should be noted that in the proof, we presume the truth of \pref{thm:antiprismaticA}. In fact, within the proof, we need appropriate clique coverings for three-cliqued antiprismatic graphs which will be given in \pref{sub:3coap}.
\begin{thm} \label{thm:tc2}
Let $G$ be a three-cliqued claw-free graph on $n$ vertices which contains at least one triad. Then $\cc(G)\leq n$, and equality holds if and only if  $n=3p+3$, for some positive integer $p$, and $G$ is isomorphic to the $p^{th}$ power of the cycle $C_n$. 
\end{thm}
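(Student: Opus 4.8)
The plan is to reduce the statement to a property of the five basic classes $\mathcal{TC}_1,\dots,\mathcal{TC}_5$ by running everything through the worn hex-chain decomposition. First I would apply \pref{thm:tcseymour} to fix a worn hex-chain $(G_i,A_i,B_i,C_i)$ $(i=1,\dots,k)$ for $(G,A,B,C)$, each term $G_i$ being a thickening (with respect to a valid set $F_i$ none of whose pairs lies inside a single one of $A_i,B_i,C_i$) of a permutation of a member of one of $\mathcal{TC}_1,\dots,\mathcal{TC}_5$. Since $G$ has a triad and, as noted after (W3), every triad of $G$ is a triad of some term, at least one term contains a triad; by \pref{lem:worn_ordering}(i) I would reorder the chain so that $G_1$ does.

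The engine of the argument is the following assertion about the basic pieces, to be proved by going through $\mathcal{TC}_1,\dots,\mathcal{TC}_5$ (see \pref{app:3clique}): \emph{every three-cliqued claw-free graph $G_0$ which is a thickening of a permutation of a member of $\mathcal{TC}_1,\dots,\mathcal{TC}_5$ admits a splitting clique covering of size at most $|V(G_0)|$, and one of size at most $|V(G_0)|-1$ unless $G_0$ belongs to a short explicit list of exceptional graphs, which are essentially segments of powers of cycles.} By \pref{lem:thickening_splitting} it is enough to verify this for the underlying basic graphs (with the edges of $F_0$ removed), handling separately the few small cases in which some non-core arises. For the non-antiprismatic members of $\mathcal{TC}_1,\dots,\mathcal{TC}_5$ I would build the splitting covering explicitly from the families $\mathscr{N}[A_0;B_0],\mathscr{N}[A_0;C_0],\mathscr{N}[B_0;C_0]$ together with $\{A_0,B_0,C_0\}$: this satisfies (SP1) automatically, and it satisfies (SP2) because a non-core vertex $u\in A_0$ lies in no triad, so that $N[u,B_0]$ (or $N[u,C_0]$) meets $A_0$ only in $u$; one then merges cliques to drop below $|V(G_0)|$, using that in the non-exceptional members some of $A_0,B_0,C_0$ have bounded size or that an actual triad forces a redundant overlap. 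The bookkeeping here is routine but the case split is long, and the antiprismatic members are where the real difficulty concentrates; for those I would invoke the clique coverings for three-cliqued antiprismatic graphs built in \pref{sub:3coap} (which themselves rely on \pref{thm:antiprismaticA}). This antiprismatic analysis is the main obstacle of the whole proof.

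Granting the assertion, \pref{lem:worn} applied to $G_1$ with $t=0$ yields $\cc(G)\le |V(G)|=n$, which is the claimed bound; and if some term admits a splitting covering of size at most $|V(G_i)|-1$, then \pref{lem:worn} with $t=1$ gives $\cc(G)\le n-1$. Hence if $\cc(G)=n$ then no term can beat $|V(G_i)|$, so by the assertion every term is one of the exceptional, circular pieces. Using the adjacency rules (W2)--(W3) I would then check that a worn hex-chain all of whose terms are circular produces a long circular interval graph on $n$ vertices, so \pref{thm:circ} applies and, since $\cc(G)=n$, forces $G$ to be isomorphic to $C_n^p$ for some $p$.

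Finally I would pin down $n$. Since $G$ has a triad, $G$ is not complete, so $n\ge 2p+2$ and hence $\omega(C_n^p)=p+1$; as $V(G)$ is the union of three cliques of $G$, this gives $n\le 3(p+1)$. On the other hand, $G$ has a triad, so $\alpha(C_n^p)=\lfloor n/(p+1)\rfloor\ge 3$, whence $n\ge 3(p+1)$. Therefore $n=3p+3$. Conversely $\cc(C_{3p+3}^p)=3p+3=n$ by \pref{lem:lemcirc}, which completes the characterization of the equality case.
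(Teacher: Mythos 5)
Your global architecture (worn hex-chain via \pref{thm:tcseymour}, splitting coverings fed into \pref{lem:worn}, and the closing computation pinning $n=3p+3$ from $\omega(C_n^p)=p+1$, the three-clique bound and $\alpha\ge 3$) matches the paper in outline, and the final paragraph is correct. But the engine assertion you lean on is false as stated, and the way you propose to handle the antiprismatic terms does not close. A three-cliqued antiprismatic graph in $\mathcal{TC}_4$ may be triad-free (the definition explicitly allows vertices in no triad); for such a term every vertex lies in $\tilde W$, so (SP2) forces, for each $u\in A$, a clique $C_u\ne A$ with $C_u\cap A=\{u\}$, and these cliques are pairwise distinct for distinct $u\in A$. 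Already for the term consisting of three mutually adjacent singletons $A,B,C$ (a $K_3$, which is antiprismatic and three-cliqued) any splitting covering needs at least $4>n$ cliques. Even for antiprismatic terms that do contain a triad, you cannot simply ``invoke'' \pref{sub:3coap}: the coverings produced there (e.g.\ via \pref{lem:worn_triadfree} or via \pref{thm:circ} for chains of thickened triads) are not splitting, and upgrading them to splitting coverings of size at most $|V(G_0)|$ is genuinely new work that the paper never does. Consequently your deduction ``$\cc(G)=n$ $\Rightarrow$ every term is an exceptional circular piece $\Rightarrow$ $G$ is a long circular interval graph'' is unsupported: with triad-free terms admitted to the exceptional list, a chain such as (triad, $K_3$) has all terms exceptional yet yields a $6$-vertex graph that is not a power of a cycle, so the reduction to \pref{thm:circ} does not follow.

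The paper avoids all of this with one observation you are missing: if \emph{every} term of the hex-chain is a thickening of a member of $\mathcal{TC}_4$, then (since every triad of $G$ is a triad of some term) $G$ itself is a fuzzy antiprismatic graph, and \pref{thm:antiprismaticA} is applied to $G$ directly --- no splitting coverings and no \pref{lem:worn} are needed in that branch, and it is this branch alone that produces the equality cases $C_{3p+3}^p$. Splitting coverings are only ever constructed for a term coming from $(\mathcal{TC}_1\cup\cdots\cup\mathcal{TC}_5)\setminus\mathcal{TC}_4$, where by definition every vertex of $G'\setminus F$ lies in a triad, so $\tilde W(G'\setminus F)=\emptyset$, (SP2) is vacuous after \pref{lem:thickening_splitting}, and one always achieves size at most $|V(G')|-1$, giving $\cc(G)\le n-1$ outright. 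You should restructure your proof around this dichotomy rather than trying to prove a uniform splitting-covering statement for all five classes.
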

\begin{proof}
By \pref{thm:tcseymour}, $G$ admits a worn hex-chain $(G_i,A_i,B_i,C_i) (i=1,\ldots,k)$, where for each $i\in \{1,\ldots,k\}$, $(G_i,A_i,B_i,C_i)$ is a thickening of a permutation of a member of one of $\mathcal{TC}_1,\ldots,\mathcal{TC}_5$.  First, assume that for each $i$, $G_i$ is a thickening of a member of $\mathcal{TC}_4$. Since every triad of $G$ is a triad of one of $G_1,\ldots, G_k$, $G$ is also a thickening of a member of $\mathcal{TC}_4$, i.e. $ G $ is a fuzzy antiprismatic graph. Now, since $G$ contains a triad, by \pref{thm:antiprismaticA}, $\cc(G)\leq n$ and equality holds if and only if  $G$ is isomorphic to $C_{3p+3}^p$,  for some positive integer $p$ (note that the complement of a twister is not three-cliqued). Now, assume that there exists some $i_0\in \{1,\ldots,k\}$ such that $G_{i_0}$ is not a thickening of a member of $\mathcal{TC}_4$, i.e. $G_{i_0}$ is a thickening of a permutation of a graph $G'\in (\mathcal{TC}_1\cup \mathcal{TC}_2\cup \mathcal{TC}_3\cup \mathcal{TC}_5)\setminus \mathcal {TC}_4$, with respect to the corresponding valid set $F$. Due to \pref{lem:worn_ordering}(ii), we may assume that $G_{i_0}$ is a thickening of $(G',F)$.

Now, we claim that $G'\setminus F$ admits a splitting clique covering of size at most $|V(G')|-1$ (note that by the definitions in \pref{app:3clique}, we have $\tilde{W}(G'\setminus F)=\emptyset$ and thus, it is enough to check (SP1) in the definition of splitting clique covering). 
This along with \pref{lem:thickening_splitting} implies that $G_{i_0}$ admits a splitting clique covering of size at most $|V(G_{i_0})|-1$. Thus, by \pref{lem:worn}, $G$ admits a clique covering of size at most $n-1$ and the proof is complete. It just remains to prove the claim. 
For this, we consider the following four cases.
\begin{itemize}
\item $G'\in \mathcal{TC}_1$.
Let $ (G',A,B,C)$, $H$, $F'$ and $F$ be as in the definition of $\mathcal{TC}_1$. For every vertex $v\in V(H)$ of degree at least three, let $K_v$ be the set of all edges of $H$ incident with $v$ and note that $K_v$ is a clique of $G'\setminus F$. These cliques together with all pairs in $F'\setminus F$ comprise a clique covering $\mathscr{C}$ for $G'\setminus F$. Also, $A=K_{v_1},B=K_{v_2}$ and $C=K_{v_3}$ are all in $\mathscr{C}$. Thus, $\mathscr{C}$ satisfies (SP1). Moreover, note that if $H$ has $t$ vertices of degree at least three, then $|\mathscr{C}|\leq t+|F'|$. On the other hand, $|F'|$ is equal to the number of vertices of $H$ with degree two, and thus, 
\[|V(G')|=|E(H)|= \frac{1}{2} \sum_{u\in V(H)} \deg(u) \geq \frac{3}{2} t+ |F'|\geq \frac{1}{2} t+|\mathscr{C}|.\]
Therefore, since $t\geq 3$, we have $|\mathscr{C}|\leq |V(G')|-2$.
\item $G'\in \mathcal{TC}_2\setminus \mathcal{TC}_4$.
Let $(G',A,B,C)$ be a graph in $ \mathcal{TC}_2$ with $\Sigma,\mathcal{I}=\{I_1,\ldots, I_k\},L_1,L_2,L_3$ and the valid set $F$ as in the definition, where $A=V(G')\cap L_1$, $B=V(G')\cap L_2$ and $C=V(G')\cap L_3$. Then, $G'\setminus F$ is also a long circular interval graph with some intervals, say $\mathcal{J}=\{J_1,\ldots, J_l\}$ (to obtain these intervals, it is enough to exchange each interval $I_i\in \mathcal{I}$ whose endpoints are in $F$ with two other intervals, each of which is obtained from $I_i$ by a slight moving of one of its endpoints). We may also assume that for every $i\in\{1,2,3\}$, $L_i$ is contained in an interval in $\mathcal{J}$. 
Moreover, since for every $ \{u,v\}\in F $, $ u,v $ do not belong to the same set $ A,B,$ or $C $, $ G'\setminus F $ is three-cliqued. 

First, we observe that $G'\setminus F$ is not isomorphic to $C_m^p$, for any positive integers $m,p$. 
For, on the contrary, if $m\geq 3p+4$, then $G'\setminus F$ is not three-cliqued. Otherwise, if $m\leq 3p+3$, then $G'\setminus F$ is in $\mathcal{TC}_4$ and since $G'$ is claw-free, we have $G'\in \mathcal{TC}_4$, a contradiction. Therefore, $G'\setminus F$ is not isomorphic to $C_m^p$, for any positive integers $m,p$. 
This observation, besides an argument similar to the proof \pref{thm:circ} (Statements (1) and (2)), allows us to assume that the number of intervals $l$ is at most $ |V(G')|-1$ and so the collection of cliques $\mathscr{C}=\{J_i\cap V(G'): 1\leq i\leq l\}$ is a clique covering for $ G'\setminus F $ of size at most $|V(G')|-1$. Moreover, let $J_j$ be an interval in $\mathcal{J}$ containing $L_1$. Thus, $J_j\cap V(G'\setminus F)$ contains $A$. Also, every vertex $u\in J_j\cap (B\cup C)$ is complete to either $A\cup B\setminus \{u\}$ or $A\cup C\setminus \{u\}$, and so $u$ is in no triad, a contradiction with $\tilde{W}(G'\setminus F)=\emptyset$. Thus, $J_j\cap (B\cup C)=\emptyset $ and $J_j\cap V(G'\setminus F)=A$ belongs to $\mathscr{C}$. By a similar argument, $B,C\in \mathscr{C}$. Hence, $ \mathscr{C}$ satisfies (SP1).

\item $G'\in \mathcal{TC}_3$.
 Let $m\geq 2$, $(G',A,B,C)$, $X$ and $F$ be as in the definition of $\mathcal{TC}_3$ and $|V(G')|=3m+2-|X|$. For every edge $e=\{a_i,b_i\}\in E(G'\setminus F)$, define $K_e=(C\cup \{a_i,b_i\})\setminus \{c_i\}$, which is a clique of $G'\setminus F$. Let $A'$ (resp. $B'$) be the set of vertices in $A\setminus \{a_0\}$ (resp. $B\setminus \{b_0\}$) which are anticomplete to $B$ (resp. $A$) in $G'\setminus F$. Now,  the family $\{K_e: e\in E_{G'\setminus F}(A,B)\}\cup\mathscr{N}[A'\cup B';C]\cup \{A,B,C\}$ is a splitting clique covering for $G'\setminus F$ of size at most
\[|E_{G'\setminus F}(A,B)|+|A'|+|B'|+3= |A|+|B|-|E_{G'\setminus F}(A,B)|+1\leq |V(G')|-1, \]
where the last inequality is due to the fact that $|C|\geq 2$.
\item $G'\in \mathcal{TC}_5$.
Let $(G',A,B,C)$, $X$ and $F$ be as in the first construction in the definition of $\mathcal{TC}_5$. First, note that $\mathscr{C}_1=\{A, B, C, \{v_1,v_6,v_7\}, \{v_2,v_3,v_4\}\setminus X,\{v_3,v_4,v_5\}\setminus X\}$ is a splitting clique covering for $G'\setminus F=G'$ of size $6$. If $|X|\leq 1$, then $|V(G')|\geq 7$ and we are done. Otherwise, if $X=\{v_3,v_4\}$, then $|V(G')|=6$ and we can remove the cliques $\{v_2\}$ and $\{v_5\}$ from $\mathscr{C}_1$ to obtain a clique covering for $G'$ of size $4$, as desired.

Next, let $(G',A,B,C)$, $X$ and $F$ be as in the second construction in the definition of $\mathcal{TC}_5$. Consider the collection of cliques $\{A,B,C,\{v_1,v_8,v_9\},\{v_2,v_3\}\setminus X,\{v_6,v_7\}\setminus X\}$ of size $6$ and if $v_2 v_4\in E(G'\setminus F)$, then replace $\{v_2,v_3\}\setminus X$ with $\{v_2,v_3,v_4\}\setminus X$. Also, if $v_5 v_7\in E(G'\setminus F)$, then replace $\{v_6,v_7\}\setminus X$ with $\{v_5,v_6,v_7\}\setminus X$. The resulting family called $\mathscr{C}_2$ is a splitting clique covering for $G'\setminus F$. If $|X|\leq 2$, then $|V(G')|\geq 7$ and we are done. If $|X|=3$, then either $\{v_3,v_4\}\subseteq X$ or $\{v_5,v_6\}\subseteq X$ and consequently either $\{v_2\}$ or $\{v_7\}$ is in $\mathscr{C}_2$, which can be removed. Finally, if $|X|=4$, i.e. $X=\{v_3,v_4,v_5,v_6\}$, then both $\{v_2\}$ and $\{v_7\}$ are in $\mathscr{C}_2$, which can be removed.
\end{itemize}
\end{proof}
\section{Orientable antiprismatic graphs}\label{sec:oap}
The main goal of this and next section is to prove the upper bound $ n $ for the clique cover number of $ n$-vertex antiprismatic graphs which contain a triad (see \pref{thm:antiprismaticA}). This is the most cumbersome part of the proof of \pref{thm:main1} which will occupy the whole rest of this paper. To warm-up, we first prove a weaker version of \pref{thm:antiprismaticA} which has a short independent proof.
\begin{thm} \label{thm:antiprismatic1}
	If $G$ is a connected fuzzy antiprismatic graph on $n$ vertices which contains at least one triad, then $\cc(G)\leq n+3(1+o(1))\log n$.
\end{thm}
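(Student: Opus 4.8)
The plan is to pass first to the non-fuzzy case and then to build, by hand, a clique covering of an antiprismatic graph organised around a single triad. For the reduction, write $G$ as a thickening of $(H,F)$ with $H$ antiprismatic and $F$ a valid set of changeable pairs; the members of $F$ are non-edges of $H$, so $H\setminus F=H$, and $H$ contains a triad because $G$ does. Taking $t=\min\{1,|V(H)|-\cc(H)\}$ we have $t\le 1$ and $\cc(H)\le |V(H)|-t$, so \pref{lem:thickening_antiprismatic} gives $\cc(G)\le |V(G)|-t\le n+\max\{0,\cc(H)-|V(H)|\}$. Hence it suffices to prove that \emph{every} antiprismatic graph $H$ with a triad satisfies $\cc(H)\le |V(H)|+3(1+o(1))\log|V(H)|$, and no connectedness of $H$ will be needed.

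So fix an antiprismatic $H$ with a triad $\tau_0=\{a,b,c\}$ and put $N=|V(H)|$. Every vertex outside $\tau_0$ has exactly two neighbours in $\tau_0$, so $V(H)\setminus\tau_0=A\cup B\cup C$, where $A$, $B$, $C$ consist of the vertices whose unique non-neighbour in $\tau_0$ is $a$, $b$, $c$ respectively; permuting $a,b,c$ we may assume $|A|\le|B|\le|C|$. The structural facts I would establish are: (i) each of $\overline H[A],\overline H[B],\overline H[C]$ is a disjoint union of edges and isolated vertices, i.e.\ each of $A,B,C$ induces a complete graph minus a matching; and (ii) for every $x\in A$ both $N_H(x)\cap B$ and $N_H(x)\cap C$ are cliques of $H$, and likewise with the roles of the three parts permuted. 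Both follow by applying the antiprismatic condition to well-chosen triads: if $\{p,q\}$ is a non-edge inside, say, $B$, then $\{b,p,q\}$ is itself a triad, and forcing every vertex outside it to have exactly two neighbours in $\{b,p,q\}$ shows that any third vertex of $B$ is complete to $\{p,q\}$ — which gives (i) for $B$ — and that every vertex of $A\cup C$ is adjacent to exactly one of $p,q$, so $N_H(x)\cap B$ omits an endpoint of each matching non-edge of $\overline H[B]$ and is therefore a clique, which gives (ii).

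Granting (i) and (ii), I would assemble the covering in two blocks. First, by (ii) every clique of the form $N_H[x,Y]$ used below is well defined, and the family $\mathscr{N}_H[A;B]\cup\mathscr{N}_H[A;C]\cup\mathscr{N}_H[B;C]$ covers every edge of $H$ with ends in two distinct parts, using $2|A|+|B|\le|A|+|B|+|C|=N-3$ cliques. Second, since $b,c$ are complete to $A$ (and symmetrically for the other parts) while $a,b,c$ are pairwise non-adjacent, each of the induced subgraphs $H[A\cup\{b,c\}]$, $H[B\cup\{a,c\}]$, $H[C\cup\{a,b\}]$ is a complete graph minus a matching; clique coverings of these three graphs take care of all remaining edges, namely those inside a single part and those incident to $\tau_0$. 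A complete graph on $m'$ vertices minus a matching can be clique-covered by $(1+o(1))\log_2 m'$ cliques (using a qualitatively $2$-independent family of transversals of the matching), so the three coverings together contribute at most $3(1+o(1))\log_2 N$ cliques, whence $\cc(H)\le (N-3)+3(1+o(1))\log_2 N\le N+3(1+o(1))\log N$.

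The main obstacle is the structural step: identifying the correct auxiliary triads and deducing (i) and (ii), in particular the assertion that the neighbourhood of a vertex of one part induces a clique inside each of the other two parts. Once that is in hand, the two-block covering and its count are routine, and the only logarithmic contribution comes from clique-covering the three "complete graph minus a matching" pieces; a more careful accounting of those pieces is what pins the constant in the statement down to $3$. (For the finitely many graphs $H$ too small for the asymptotics to be meaningful the trivial bound on $\cc(H)$ suffices.)
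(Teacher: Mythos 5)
Your proposal is correct and follows essentially the same route as the paper: reduce to the antiprismatic graph $H$ via \pref{lem:thickening_antiprismatic}, partition $V(H)$ by the unique non-neighbour in a fixed triad, cover the cross edges with the $\mathscr{N}[\cdot;\cdot]$ cliques (using that each vertex's neighbourhood in another part is a clique), and cover the three ``complete graph minus a matching'' pieces with $(1+o(1))\log$ cliques each via the Gregory--Pullman bound. The only difference is cosmetic — your sets $A\cup\{b,c\}$ are exactly the paper's non-neighbour sets $X$, and you spell out the structural verifications that the paper leaves to the reader.
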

\begin{proof}
	Let $G$ be a thickening of $(H,F)$, where $ H $ is an antiprismatic graph on $m$ vertices and $ F $ is a valid set of changeable pairs of $ H $. Since  the pairs in $F$ are non-edges of $H$ and $G$ contains a triad, $H$ contains a triad, say $\tau=\{x,y,z\}$, as well. First, we claim that $\cc(H)\leq m+3(1+o(1))\log m$. To see this, note that every vertex in $V(H)\setminus \tau$ is nonadjacent to exactly one of the vertices $x,y$ or $z$. Let  $X, Y,Z$ be the set of non-neighbours of $x,y,z $ in $H$, respectively. Therefore, $V(H)$ is the disjoint union of $X\setminus \tau, Y\setminus \tau, Z\setminus \tau$ and $\tau$. Since $H$ is antiprismatic, we observe that,
	\begin{itemize}
		\item For every vertex $u$ in $V(H)\setminus X$, the set $N(u,X)$ is a clique of $H$. The same holds for $Y,Z$. 
		\item The induced subgraph $\overline{H}[X]$ is the union of an induced matching and some isolated vertices. The same holds for $Y,Z$.
	\end{itemize}
	(The proofs are straightforward and left to the reader.) 
	Gregory and Pullman in \cite{pullman} proved that the edges of the complement of a perfect matching on $2t$ vertices can be covered by at most $(1+o(1)) \log t$ cliques. Thus, all the edges in $E(X)$ can be covered by at most $(1+o(1))\log m$ cliques and the same holds for $Y$ and $Z$. On the other hand, the collection of cliques $ \mathscr{N}[X\setminus \{y,z\};Y] \cup \mathscr{N}[Y\setminus \{x,z\};Z]\cup \mathscr{N}[Z\setminus \{x,y\};X]$ covers all the remaining edges. Hence, $H$ admits a clique covering of size at most $m+3(1+o(1))\log m$. Now, by \pref{lem:thickening_antiprismatic}, we have $\cc(G)\leq n+3(1+o(1))\log m\leq  n+3(1+o(1))\log n.$
	This proves \pref{thm:antiprismatic1}.
\end{proof}
In this section, we focus particularly on a special class of antiprismatic graphs called orientable antiprismatic graphs (see \pref{sec:main}) and prove \pref{thm:oapA}, restated as follows.
\begin{thm}\label{thm:oap}
	Let $G$ be an orientable antiprismatic graph on $n$ vertices which contains at least one triad. Then $\cc(G)\leq n$ and equality holds if and only if $n=3p+3$, for some positive integer $p$ and $G$ is isomorphic to the $p^{th}$ power of the cycle $C_n$.
\end{thm}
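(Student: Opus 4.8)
Since $\cc(C_{3p+3}^p)=3p+3$ by \pref{lem:lemcirc}, the ``if'' part of the equality statement is already in hand, so it remains to prove that $\cc(G)\le n$ for every orientable antiprismatic graph $G$ on $n$ vertices with a triad, and that this inequality is \emph{strict} unless $G\cong C_{3p+3}^p$ for some positive integer $p$. The plan is to split according to whether $V(G)$ is the union of three cliques. The three-cliqued graphs are precisely where the extremal examples live, and I would treat that subcase in \pref{sub:3coap} via the structure of three-cliqued antiprismatic graphs ($\mathcal{TC}_4$ in the notation of \pref{app:3clique}); for every other orientable antiprismatic graph I would appeal to the Chudnovsky--Seymour classification of orientable antiprismatic graphs and prove the sharper bound $\cc(G)\le n-1$.

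\textbf{The three-cliqued case (\pref{sub:3coap}).}
Suppose $V(G)=A\cup B\cup C$ with $A,B,C$ cliques. By \pref{thm:tcseymour}, $G$ admits a worn hex-chain whose terms are thickenings of permutations of members of $\mathcal{TC}_1,\dots,\mathcal{TC}_5$; each term is an induced subgraph of $G$, hence again antiprismatic, so I may take every term to be a thickening of a permutation of a member of $\mathcal{TC}_4$, and then $G$ itself is a thickening of a permutation of a member of $\mathcal{TC}_4$. Using \pref{lem:worn_ordering}, \pref{lem:thickening_splitting} and \pref{lem:worn}, the problem reduces to exhibiting, for each relevant member $H\in\mathcal{TC}_4$ with a triad, a \emph{splitting} clique covering of size at most $|V(H)|-1$, the single exception being $H\cong C_{3p+3}^p$, for which one claims only a splitting covering of size $|V(H)|$. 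The covering would be built from the off-diagonal families $\mathscr{N}[A;B]\cup\mathscr{N}[A;C]\cup\mathscr{N}[B;C]$ together with $\{A,B,C\}$ (which already cover $G$; cf.\ the proof of \pref{thm:tc1}); the antiprismatic axiom --- every vertex outside a triad has exactly two neighbours in it --- forces many of the cliques $N[x,\cdot]$ to coincide, and a merging scheme organized around these coincidences leaves at least one clique unused, unless the coincidence pattern is rigid enough to pin $H$ down to a power of a cycle. The tightness for $C_{3p+3}^p$ is exactly \pref{lem:lemcirc}.

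\textbf{Graphs that are not the union of three cliques.}
Here I would invoke the Chudnovsky--Seymour description of orientable antiprismatic graphs (equivalently, of orientable prismatic graphs, for the complement). It is convenient to separate according to whether $G$ is \emph{$2$-substantial}, the notion recalled in \pref{sec:oap}: the non-$2$-substantial graphs have, essentially, all of their triads through a common vertex and can be covered with a clique to spare by an ad hoc argument, whereas the $2$-substantial ones form a short, explicitly described list of graphs together with a few gluing constructions. For each graph on the list I would write down by hand a clique covering of size at most $n-1$; for the glued graphs the single-clique saving is inherited from a constituent piece, and since none of the graphs in this case is the union of three cliques, none is a power of a cycle, so no new equality case is introduced. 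Together with the three-cliqued case this establishes \pref{thm:oap}.

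\textbf{Where the difficulty lies.}
The main obstacle is the three-cliqued case, and inside it the exact determination of the extremal graphs: one has to check \emph{uniformly} over all orientable members of $\mathcal{TC}_4$ with a triad that the natural covering can be pruned by one clique unless the graph is $C_{3p+3}^p$, and that this one-clique saving is preserved both by thickening and by hex-chaining. Tracking precisely which equalities among the neighbourhood-cliques $N[x,\cdot]$ are forced by antiprismaticity, and then selecting a family of mergings that is at once legal and genuinely wasteful, is the technical heart of the proof; by contrast, once the classification of orientable antiprismatic graphs is available, the non-three-cliqued part --- although spread over several classes --- is comparatively routine.
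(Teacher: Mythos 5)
There is a genuine gap in your treatment of the three-cliqued case, and it is the heart of the theorem. Your proposed reduction is circular: $\mathcal{TC}_4$ is by definition the class of \emph{all} three-cliqued antiprismatic graphs, so observing that every term of the hex-chain from \pref{thm:tcseymour} is antiprismatic and concluding that ``$G$ is a thickening of a permutation of a member of $\mathcal{TC}_4$'' returns you exactly to your starting point ($G$ already is such a graph). The claim that the problem ``reduces to exhibiting, for each relevant member $H\in\mathcal{TC}_4$ with a triad, a splitting clique covering of size at most $|V(H)|-1$'' is a restatement of the problem, not a reduction, and the proposed construction (``a merging scheme organized around these coincidences leaves at least one clique unused, unless the coincidence pattern is rigid enough to pin $H$ down to a power of a cycle'') has no structural theorem behind it. The ingredient you are missing is the \emph{second-level} decomposition of three-cliqued antiprismatic graphs, \pref{thm:anti3col}: a worn hex-chain whose terms are triad-free, or $L(K_{3,3})$, or complements of canonically-coloured path-of-triangles graphs. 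The paper then needs a splitting clique covering of size $|V(H)|-1$ for each non-tripod path-of-triangles complement (\pref{lem:path}), the lemma on hex-chains with a triad-free term adjacent to a triad (\pref{lem:worn_triadfree}), and a separate analysis of tripods; the equality case $C_{3p+3}^p$ emerges precisely when every term is a thickening of a triad, at which point $G$ is a fuzzy long circular interval graph and \pref{thm:circA} applies. None of this is recoverable from a generic ``merging scheme'' on $\mathscr{N}[A;B]\cup\mathscr{N}[A;C]\cup\mathscr{N}[B;C]\cup\{A,B,C\}$.

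Your outline of the non-three-cliqued half is closer to the paper but also underestimates it. The correct dividing line is $3$-substantiality, not $2$-substantiality: the structure theorem \pref{thm:orient-anti} applies only to $3$-substantial orientable antiprismatic graphs (yielding the complement of a cycle of triangles graph, of a ring of five, or of a mantled $L(K_{3,3})$, each handled by an explicit covering of size $\leq n-1$), while the non-$3$-substantial graphs --- which need not have all triads through a single vertex --- require the long bespoke arguments of \pref{lem:2sub} and \pref{thm:3sub}. Calling this ``comparatively routine'' and ``an ad hoc argument with a clique to spare'' does not describe a proof; in the paper this is one of the heaviest case analyses. Finally, your closing claim that no new equality case arises outside the three-cliqued setting does need the observation (made in the paper) that the complement of a twister, which is an equality case for non-$3$-substantial antiprismatic graphs, is not orientable.
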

In order to prove this theorem, we apply the structure theorem of orientable antiprismatic graphs from \cite{seymour1} as stated in the following. Note that the description of ``prismatic'' graphs (the graphs whose complement are antiprismatic) is given in \cite{seymour1}, so we have reformulated the definitions and the statements in terms of the complements. For every positive integer $k$, an antiprismatic graph $G$ is called to be \textit{$k$-substantial} if for every $S\subseteq V(G)$ of size at most $k-1$, there exists a triad $\tau$ in $G$ such that $\tau\cap S=\emptyset$.  
\begin{thm} {\rm \cite{seymour1}} \label{thm:orient-anti}
	Every 3-substantial orientable antiprismatic graph is either three-cliqued, or the complement of a cycle of triangles graph, or the complement of a ring of five, or the complement of a mantled $L(K_{3,3})$ $($see the definitions of these graphs in \pref{app:oap}$)$.
\end{thm}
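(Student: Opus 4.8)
The plan is to feed the structure theorem for orientable antiprismatic graphs (\pref{thm:orient-anti}) into explicit clique-cover constructions, after first reducing to the $3$-substantial case to which that theorem applies. Throughout I would use the basic fact, obtained from \pref{lem:tame2} with $X=N_G(v)$ and $Y=V(G)\setminus N_G[v]$, that for \emph{every} vertex $v$ of an antiprismatic graph $G$ the set $V(G)\setminus N_G[v]$ is the union of two cliques: any two nonadjacent vertices $y_1,y_2\in V(G)\setminus N_G[v]$ form a triad $\{v,y_1,y_2\}$, so by the antiprismatic condition each $x\in N_G(v)$ has exactly one neighbour among $y_1,y_2$, and \pref{lem:tame2} applies. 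The equality case $C^p_{3p+3}$ is three-cliqued and will emerge only from the three-cliqued part of the analysis; every other structural outcome will be shown to satisfy the strict bound $\cc(G)\le n-1$.

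\textbf{Reduction to $3$-substantial graphs.} Suppose $G$ is not $3$-substantial, so some set $S$ with $1\le|S|\le 2$ meets every triad. When a single vertex $v$ meets every triad, I would first use the observation above to write $V(G)\setminus N_G[v]$ as a union of two cliques $Y_1,Y_2$; then, fixing a triad $\{v,a,b\}$ with $a\in Y_1$ and $b\in Y_2$, the sets $N_G(a)\cap N_G(v)$ and $N_G(b)\cap N_G(v)$ partition $N_G(v)$ and each is a clique, since two nonadjacent vertices in, say, $N_G(a)\cap N_G(v)$ would form with $b$ a triad avoiding $v$, a contradiction. Assembling these pieces, and running the parallel argument in the remaining subcase where a pair $\{u,v\}$ but no single vertex meets every triad (this is exactly where $C_6$ lives), I would show that $V(G)$ is the union of three cliques and hence reduce to \pref{thm:tcA}. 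Here the orientability is what keeps out the non-orientable twister complement, the other antiprismatic graph with $\cc=n$.

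\textbf{The $3$-substantial case.} Now \pref{thm:orient-anti} gives four possibilities. If $G$ is three-cliqued, the bound and the characterization of equality as $G\cong C^p_{3p+3}$ are supplied by the three-cliqued analysis of \pref{sub:3coap} (equivalently \pref{thm:tcA}); note $C^p_{3p+3}$ is $3$-substantial precisely when $p\ge 2$, while $C_6$ was already absorbed above. In each of the three remaining cases, where $G$ is the complement of a cycle of triangles graph, of a ring of five, or of a mantled $L(K_{3,3})$, I would use the explicit description from \pref{app:oap} to hand-build a clique covering of size at most $n-1$, thereby excluding these graphs from the equality case. The orientation is used to organize the adjacencies within each class so that the covering cliques can be read off from the defining combinatorial data, much as the coverings for the icosahedron and the twister complement were exhibited earlier.

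\textbf{Main obstacle.} The genuinely laborious part is the last one: for each of the three exotic complement classes one must produce a concrete covering and verify that it has size $\le n-1$, which requires unwinding the definitions in \pref{app:oap} together with a careful edge-by-edge check (the analogue of the computations behind the $\cc$-equality cases already treated). A secondary difficulty is making the reduction to $3$-substantial fully rigorous, in particular establishing three-cliquedness in the ``$2$-substantial but not $3$-substantial'' subcase, where the two cliques coming from $V(G)\setminus N_G[v]$ must be recombined with the neighbourhood cliques of the two hitting vertices; there the finer orientable-antiprismatic structure, rather than the antiprismatic condition alone, does the work.
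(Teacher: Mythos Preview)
The statement you were given, \pref{thm:orient-anti}, is not proved in this paper at all: it is quoted verbatim from \cite{seymour1} and used as a black box. Your proposal does not attempt to prove it either; you \emph{invoke} it. What you have actually sketched is a proof of \pref{thm:oap} (the clique-cover bound for orientable antiprismatic graphs), so there is no ``paper's own proof'' of the stated theorem to compare against.

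Read as a plan for \pref{thm:oap}, your second half agrees with the paper: the three exotic classes are each dispatched by an explicit covering (Lemmas~\ref{lem:cycle}, \ref{lem:mantled}, \ref{lem:ring}), and the three-cliqued case goes to \pref{thm:3cap}. But your reduction step contains a genuine error. You claim that a non-$3$-substantial orientable antiprismatic graph is three-cliqued, arguing by splitting $N_G(v)$ and $V(G)\setminus N_G[v]$ each into two cliques and then ``recombining''. Four cliques do not make three, and the recombination does not go through: the two partitions of $N_G(v)$ induced by different non-edges in $V(G)\setminus N_G[v]$ need not agree, and the isolated vertices of $\overline{G}[V(G)\setminus N_G[v]]$ are unconstrained with respect to $N_G(v)$. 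The paper treats non-$3$-substantial, non-three-cliqued antiprismatic graphs as a separate, substantial case (\pref{thm:3sub}, resting on the long \pref{lem:2sub}); for orientable $G$ the conclusion there is $\cc(G)\le n-1$, but it is obtained by building clique coverings directly in several subcases, not by establishing three-cliquedness. Your closing remark that ``the finer orientable-antiprismatic structure \ldots does the work'' is exactly where the gap sits: that structure does not force three cliques, and the paper never claims it does.
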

In \pref{sub:3coap}, we deal with three-cliqued antiprismatic graphs. In \pref{sub:substantial}, using \pref{thm:orient-anti}, we complete the proof for 3-substantial graphs and finally in \pref{sub:degenerate} we tackle the case of non-3-substantial graphs, thereby establishing  \pref{thm:oap}.
\subsection{Three-cliqued antiprismatic graphs}\label{sub:3coap}
Firstly, we consider the case of three-cliqued antiprismatic graphs and prove the following.
\begin{thm}\label{thm:3cap}
	If $G$ is a three-cliqued antiprismatic graph on $n$ vertices which contains at least one triad, then $\cc(G)\leq n$ and equality holds if and only if $n=3p+3$, for some positive integer $p$ and $G$ is isomorphic to the $ p^{th} $ power of the cycle $C_n$.
\end{thm}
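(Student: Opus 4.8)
The plan is to reduce to the structure theorem for three-cliqued claw-free graphs, \pref{thm:tcseymour}, and to show that the only three-cliqued antiprismatic graph with a triad for which the bound $\cc(G)\le n$ is tight is a power of a cycle $C_{3p+3}^p$; for that graph the value $\cc=n=3p+3$ is already recorded, being the case $n\ge 3p+1$ of \pref{lem:lemcirc} together with the matching lower bound proved there. Thus the real task is to establish the \emph{strict} inequality $\cc(G)\le n-1$ for every three-cliqued antiprismatic $G$ with a triad that is not isomorphic to some $C_{3p+3}^p$.

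First I would apply \pref{thm:tcseymour} to obtain a worn hex-chain $(G_i,A_i,B_i,C_i)$ $(i=1,\dots,k)$ for $(G,A,B,C)$ in which every $G_i$ is a thickening of a permutation of a member of one of $\mathcal{TC}_1,\dots,\mathcal{TC}_5$ with respect to its valid set $F$, and then split into two cases. Suppose first that some term, say after re-indexing by \pref{lem:worn_ordering} the term $G_{i_0}$, is a thickening of $(G',F)$ with $G'\in(\mathcal{TC}_1\cup\mathcal{TC}_2\cup\mathcal{TC}_3\cup\mathcal{TC}_5)\setminus\mathcal{TC}_4$. Inspecting the four explicit descriptions in \pref{app:3clique}, one checks in each sub-case (the condition $\tilde W(G'\setminus F)=\emptyset$ being automatic, so only (SP1) needs verifying) that $G'\setminus F$ has a \emph{splitting} clique covering of size at most $|V(G')|-1$: for $\mathcal{TC}_1$ (a line-graph-type construction) and for $\mathcal{TC}_3,\mathcal{TC}_5$ one writes the covering down by hand and counts; for $\mathcal{TC}_2\setminus\mathcal{TC}_4$ one uses that $G'\setminus F$ is a three-cliqued long circular interval graph which is not a power of a cycle, so the counting from the proof of \pref{thm:circ} applies. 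Then \pref{lem:thickening_splitting} gives $G_{i_0}$ a splitting clique covering of size at most $|V(G_{i_0})|-1$, and \pref{lem:worn} promotes this to $\cc(G)\le n-1$ (this is exactly the argument that recurs later in the proof of \pref{thm:tc2}).

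In the remaining case every $G_i$ is a thickening of a member of $\mathcal{TC}_4$; since a worn hex-chain of members of $\mathcal{TC}_4$ is again (a thickening of) a member of $\mathcal{TC}_4$, the graph $G$ is itself a thickening of some $H\in\mathcal{TC}_4$ with respect to a valid set of changeable pairs, and $H$ has a triad because $G$ does. If $G\neq H$ then $|V(H)|\le n-1$, and combining a clique covering of $H$ of size at most $|V(H)|$ (produced in the next step) with \pref{lem:thickening_antiprismatic} yields $\cc(G)\le n-1$. So the heart of the matter is the case $G=H\in\mathcal{TC}_4$. Here I would use the explicit cyclic description of the members of $\mathcal{TC}_4$ from \pref{app:3clique}: a base ``staircase'' antiprismatic pattern on the three cliques $A,B,C$ arranged cyclically, together with a deletion set $X$ and (trivial) thickening data. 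The natural candidate for a clique covering is the family of ``interval cliques'' of this cyclic pattern, the analogue of the cliques $\{v_i,\dots,v_{i+p}\}$ used for $C_n^p$ in \pref{lem:lemcirc}; a direct count shows this family has size at most $n$, and size exactly $n$ only when $X=\emptyset$ and the three cliques are perfectly balanced, which forces $H\cong C_{3p+3}^p$. For that graph \pref{lem:lemcirc} gives $\cc=n$, and its lower bound shows no other graph in this family is extremal, completing the equality characterization.

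The main obstacle is the last step: teasing out of the (fairly involved) definition of $\mathcal{TC}_4$ a workable clique covering, doing the bookkeeping with the deletion set $X$ and the changeable-pair data carefully enough to decide precisely when the count equals $n$ rather than $n-1$, and arranging the proper-thickening sub-case so that it does not reintroduce a dependence on \pref{thm:antiprismaticA} (which is proved later using the present theorem). A secondary but still nontrivial point is verifying that $\mathcal{TC}_4$ is closed under forming worn hex-chains, and carrying out the small splitting-clique-covering computations for $\mathcal{TC}_1$, $\mathcal{TC}_3$, $\mathcal{TC}_5$ and $\mathcal{TC}_2\setminus\mathcal{TC}_4$.
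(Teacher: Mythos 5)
Your first case (some term of the hex-chain lies outside $\mathcal{TC}_4$) is fine and is essentially the argument the paper uses for \pref{thm:tcA}; the splitting-covering computations for $\mathcal{TC}_1$, $\mathcal{TC}_2\setminus\mathcal{TC}_4$, $\mathcal{TC}_3$, $\mathcal{TC}_5$ together with \pref{lem:thickening_splitting} and \pref{lem:worn} do go through. The problem is the remaining case, which is where all the content of the theorem lives.

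You reduce to ``$G$ is (a thickening of) a member of $\mathcal{TC}_4$,'' and then propose to exploit ``the explicit cyclic description of the members of $\mathcal{TC}_4$ from \pref{app:3clique}: a base staircase antiprismatic pattern \dots\ together with a deletion set $X$.'' No such description exists: $\mathcal{TC}_4$ is defined in \pref{app:3clique} simply as the class of \emph{all} antiprismatic graphs whose vertex set is partitioned into three cliques. (You appear to have conflated it with $\mathcal{TC}_3$, the near-antiprismatic graphs, which do carry the sets $\tilde A,\tilde B,\tilde C$ and a deletion set $X$.) Consequently your reduction via \pref{thm:tcseymour} is vacuous in this case: it reduces the problem of covering a three-cliqued antiprismatic graph with a triad to the problem of covering a three-cliqued antiprismatic graph with a triad. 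There is no ``interval clique'' family to write down and no bookkeeping to do, because $\mathcal{TC}_4$ has no finer combinatorial presentation in this paper. Your correct instinct to avoid invoking \pref{thm:antiprismaticA} (which depends on the present theorem) leaves you with no tool at all for the central case.

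The paper closes this gap with a different input: \pref{thm:anti3col}, a structure theorem from \cite{seymour1} specific to three-cliqued \emph{antiprismatic} graphs, decomposing them into worn hex-chains whose terms are triad-free, or $L(K_{3,3})$, or complements of canonically-coloured path-of-triangles graphs. The real work is then \pref{lem:path} (an explicit splitting clique covering of size $n-1$ for the path-of-triangles complements that are not tripods) and \pref{lem:worn_triadfree} (handling a triad-free term adjacent to a triad term in the chain), with the tripod and all-triad cases reducing to \pref{thm:circ} and \pref{lem:lemcirc}, which is where $C_{3p+3}^p$ emerges as the unique equality case. Without \pref{thm:anti3col} or an equivalent decomposition of three-cliqued antiprismatic graphs, your proof cannot be completed.
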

To prove \pref{thm:3cap}, we apply the following structure theorem from \cite{seymour1} (reformulated in terms of the complements) as well as \pref{lem:worn}. Note that the line graph of $K_{3,3}$ is self-complementary.  
\begin{thm}{\rm \cite{seymour1}} \label{thm:anti3col}
	Every three-cliqued antiprismatic graph admits a worn hex-chain whose all terms are either triad-free, or isomorphic to the line graph of $K_{3,3}$, or the complement of a canonically-coloured path of triangles graph  $($see the definitions of these graphs in \pref{app:oap}$)$.
\end{thm}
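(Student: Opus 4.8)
The plan is to obtain \pref{thm:anti3col} as the complement-reformulation of the corresponding structure theorem for prismatic graphs established in \cite{seymour1}; accordingly, the work lies entirely in setting up a faithful dictionary between the antiprismatic setting used throughout this paper and the prismatic setting of \cite{seymour1}, and then checking that both the global decomposition and its three families of basic terms survive complementation. First I would record the elementary duality that $G$ is antiprismatic if and only if $\overline{G}$ is prismatic: a set is a triad of $G$ exactly when the same set is a triangle of $\overline{G}$, and the antiprismatic requirement that every vertex outside a triad $\tau$ have exactly two neighbours in $\tau$ becomes, in $\overline{G}$, the requirement that every vertex outside the triangle $\tau$ have exactly one neighbour in it, which is precisely the prismatic condition. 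In particular $W(G)$ and $\tilde{W}(G)$ are carried to their analogues in $\overline{G}$, so the core/non-core bookkeeping transfers verbatim.

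Next I would translate the global construction. A three-cliqued structure $(G,A,B,C)$ complements to a partition of $V(\overline{G})$ into three stable sets, which is exactly the proper $3$-colouring datum that \cite{seymour1} attaches to a three-coloured prismatic graph. I would then verify that the worn hex-chain axioms (W1)--(W3) are the termwise complements of the gluing operation of \cite{seymour1}: (W1) is preserved since $G[V(G_i)]=G_i$ becomes $\overline{G}[V(\overline{G_i})]=\overline{G_i}$; the complete-to relations in (W2) between the cliques $A_i,B_i,C_i$ become anticomplete-to relations between the corresponding stable sets; and the condition (W3), that certain cross edges occur only between vertices lying in no triad, dualizes to a statement about non-edges between vertices lying in no triangle. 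Because the same sequence $(i=1,\dots,k)$ indexes both chains, a worn hex-chain for $G$ and the corresponding chain for $\overline{G}$ are literally complements of one another.

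It then remains to match the three families of basic terms. A term is triad-free exactly when its complement is triangle-free; the line graph of $K_{3,3}$ is self-complementary, as already noted just before the statement, so that family is fixed by the duality; and the complement of a canonically-coloured path of triangles graph complements back to a canonically-coloured path of triangles graph (the objects named in \pref{app:oap}). With the dictionary in place I would invoke the prismatic structure theorem of \cite{seymour1}, which decomposes $\overline{G}$ into a worn hex-chain over these three prismatic families, and read it off in $G$ to obtain the asserted decomposition.

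The main obstacle I anticipate is not the complement bijection on vertices and edges, which is routine, but confirming that the auxiliary orientation data defining the \emph{canonically-coloured} path of triangles graphs is preserved: this notion encodes a consistent orientation of the triads (equivalently, of the triangles in the complement), and one must check that complementation sends a valid canonical colouring to a valid canonical colouring rather than merely to some three-colouring. A secondary care point is the ``worn'' flexibility in (W3): since worn hex-chains permit extra cross adjacencies precisely at non-core vertices, I would have to confirm that the vertices excused from the adjacency constraints in \cite{seymour1} are exactly the complements of the vertices excused by (W3), so that after passing to $\overline{G}$ no term is misclassified into a smaller class (for instance, a genuine path-of-triangles term being mistaken for a triangle-free one).
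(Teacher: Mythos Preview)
The paper does not give its own proof of this statement: it is quoted directly from \cite{seymour1} with the remark that the definitions and statements there have been ``reformulated in terms of the complements,'' and no further argument is supplied. Your proposal, which sets up the prismatic/antiprismatic dictionary and checks that the worn hex-chain axioms and the three basic families are carried faithfully under complementation, is precisely the verification that the paper leaves implicit; it is correct and is exactly the intended route.
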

Let $G$ be a graph whose vertex set is the union of three disjoint cliques $X_1,X_2, X_3$ such that $X_2$ contains a  subset $ \hat{X}_2 $ with $ |\hat{X}_2|=1 $ which is anticomplete to $X_1, X_3$. Also, $1\leq |X_1|=|X_3|\leq 2$, $X_1, X_3$ are anti-matched and every vertex in $X_2\setminus \hat{X}_2$ is either complete to $X_1$ and anticomplete to $X_3$, or anticomplete to $X_1$ and complete to $X_3$. Let us call the three-cliqued graph $(G,X_1,X_2,X_3)$ a \textit{tripod}.  A tripod is an example of the complement of a canonically-coloured path of triangles graph (with setting $m=1$ in the definition) which should be treated separately. In the following, we give a splitting clique covering for the complement of a canonically-coloured path of triangles graph which is not a tripod (see the definition of a splitting clique covering in \pref{sec:3cliques}). 
\begin{lem} \label{lem:path}
	Let $G$ be the complement of a canonically-coloured path of triangles graph on $n$ vertices which is not a tripod. Then $G$ admits a splitting clique covering of size at most $n-1$.
\end{lem}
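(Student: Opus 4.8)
The plan is to read the required covering off the explicit description of the complement of a canonically-coloured path of triangles graph given in \pref{app:oap}. Write $ G $ for this graph; it comes with a sequence of triangles $ T_1,\ldots,T_m $ of the underlying path of triangles graph, so that in $ G $ each $ T_i $ is a triad (at most one of its three vertices may be absent), the three colour classes $ A,B,C $ of the canonical colouring are cliques of $ G $ partitioning $ V(G) $, and, since $ G $ is antiprismatic, for each $ T_i=\{a_i,b_i,c_i\} $ (labelled so that $ a_i\in A $, $ b_i\in B $, $ c_i\in C $) and each $ k\neq i $ the vertex $ a_k $ has exactly one neighbour among $ b_i,c_i $, and symmetrically for $ b_k $ and $ c_k $. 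The case $ m=1 $ is precisely a tripod, which is excluded; I would also go through the appendix to discard the finitely many small degenerate instances that happen to be tripods, and so may assume $ m\geq 2 $.

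The heart of the argument is to extract from the ``canonical'' nature of the colouring that, read along the path $ T_1,\ldots,T_m $, the cross-adjacency (the bipartite graphs $ E_G(A,B) $, $ E_G(B,C) $ and $ E_G(C,A) $) has an interval-like shape: concretely, that $ G $ is (an induced subgraph of) the linear analogue of a long circular interval graph, with $ A,B,C $ themselves being three of the defining intervals. Given this, the cliques corresponding to the intervals form a clique covering of $ G $, it contains $ A,B,C $ so (SP1) holds, and the number of intervals is bounded by $ n $ exactly as in the proof of \pref{thm:circ}; the additional unit of saving that brings the count down to $ n-1 $ comes from the two ends of the path, where -- in contrast to the genuinely tight cyclic family $ C_{3p+3}^p $ -- a terminal interval can always be absorbed into its neighbour. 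If the interval picture turns out to be inconvenient to set up cleanly, the fallback is to build $ \mathscr{C} $ by hand: take $ A,B,C $ (covering all edges inside a colour class) together with, for each $ i $, the cliques $ \{a_i\}\cup N_G(a_i,B) $, $ \{b_i\}\cup N_G(b_i,C) $, $ \{c_i\}\cup N_G(c_i,A) $ -- these are cliques because $ A,B,C $ are -- and then prune this list, merging $ \{a_i\}\cup N_G(a_i,B) $ with $ \{a_i\}\cup N_G(a_i,C) $ whenever together they form a clique and deleting members rendered redundant by the monotonicity forced by the canonical colouring, until at most $ n-1 $ cliques remain; the target inequality then has exactly the shape of the $ \mathcal{TC}_3 $ count in the proof of \pref{thm:tc2}, each cross-edge being charged to a clique that also covers an edge inside a colour class.

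It remains to check condition (SP2), i.e. that $ \tilde W(G)=\emptyset $: every present vertex of a triangle $ T_i $ lies in the triad $ T_i $ when $ T_i $ is intact, and when $ T_i $ is missing a vertex one produces a triad through the two survivors using an adjacent triangle and the canonical colouring -- and since $ m\geq 2 $ there is always such a neighbour; note it is exactly the middle triangle of a tripod for which this fails, which is the reason tripods must be set aside. The step I expect to be the real obstacle is the counting in the second paragraph: the canonical colouring splits the triangles into several ``types'' (it is not uniform along the path, and the two end triangles behave specially), and for some types the generic merge or deletion is unavailable, so one must track which ones and verify that every such loss is offset elsewhere, so that the bound stays at $ n-1 $ and does not slip to $ n $. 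Working this through requires a careful walk along the path using the full case analysis of the definition in \pref{app:oap}.
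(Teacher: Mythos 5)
Your proposal rests on a model of the graph that does not match the definition in \pref{app:oap}, and the three load-bearing claims all fail because of it. The complement of a path of triangles graph is not a sequence of $m$ triads $\{a_i,b_i,c_i\}$: its vertex set is a union of $2m+1$ cliques $X_1,\ldots,X_{2m+1}$ of \emph{arbitrary} sizes, with the odd-indexed ones split into parts $L,M,R$ and with $R_{2i-1}$ \emph{anti-matched} to $L_{2i+1}$ when $|\hat{X}_{2i}|=1$. Three consequences. First, your reduction to $m\geq 2$ is wrong: a tripod is only the $m=1$ instance with $|X_1|=|X_3|\leq 2$, whereas $m=1$ instances with $|R_1|=|L_3|\geq 3$ are not tripods, are not ``finitely many small degenerate instances'', and must be covered by the lemma (the paper's proof devotes a separate case to $m=1$). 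Second, the central structural claim — that $G$ is a linear interval graph with $A,B,C$ among the intervals — is false: the bipartite graph between $R_{2i-1}$ and $L_{2i+1}$ is a complete bipartite graph minus a perfect matching, which for parts of size $\geq 3$ has no consecutive (interval) ordering; so neither the interval covering nor its count is available, and the ``fallback'' covering by the cliques $N_G[a_i,B]$ etc.\ is never actually counted (you yourself flag the counting as the unresolved obstacle, and it is exactly where the work lies: the paper's count is $n+1-\sum_i(|R_{2i-1}|+|\hat{X}_{2i}|-1)$ plus case-dependent corrections, and reaching $n-1$ requires invoking (P4) and distinguishing which $R_{2i-1}$ are empty and which $\hat{X}_{2i}$ are singletons).

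Third, your verification of (SP2) via $\tilde{W}(G)=\emptyset$ is incorrect. The non-core is $\tilde{W}(G)=\bigcup_{i=1}^m (X_{2i}\setminus\hat{X}_{2i})$, which is nonempty whenever some $\hat{X}_{2i}$ is a proper subset of $X_{2i}$ — these vertices lie in no triad of $G$. Condition (SP2) is therefore a genuine constraint: for each such vertex $x$ one must exhibit a clique of the covering meeting its colour class only in $\{x\}$, which is precisely why the paper's covering includes the dedicated cliques $\tilde{C}_{i,x}$ for $x\in\tilde{X}_{2i}$. Without these, even a correct count of $n-1$ would not yield a \emph{splitting} clique covering, and the lemma could not be fed into \pref{lem:worn}. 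As it stands the proposal does not establish the statement.
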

\begin{proof}
	Let $X_1,\ldots, X_{2m+1}$ be as in the definition. Also, for every $i\in\{1,\ldots, m\}$, define $\tilde{X}_{2i}=X_{2i}\setminus \hat{X}_{2i}$ (for every $i\not\in \{1,\ldots, 2m+1\}$, all sets $X_i, \tilde{X}_i,L_i,M_i,R_i$ are supposed to be empty). Let $A=X_1\cup X_4\cup X_7\cup\ldots, B=X_2\cup X_5\cup X_8\cup \ldots$, $C=X_3\cup X_6\cup X_9\cup \ldots$, and
	for every $i\in\{-1,\ldots, m-1\}$, define 
	\[C_i= \ldots \cup X_{2i-8}\cup X_{2i-5} \cup X_{2i-2} \cup R_{2i-1}\cup X_{2i+2} \cup  X_{2i+5}\cup X_{2i+8}\cup \ldots,\]
	(note that $C_{-1}=C$, $C_0=B$ and $C_1=A$).
	Moreover, for every $x\in R_{2m-1}$, define
	\[C'_{x}=N[x,L_{2m+1}]\cup X_{2m-2}\cup X_{2m-5}\cup X_{2m-8}\cup \ldots. \]
	Note that, by (P2)-(2) and (P5), all above sets are cliques of $G$.
	Now, for every $i\in \{1,\ldots, m-1\}$ and every $x\in M_{2i+1}$, define $C''_{i,x}$ as follows,
	\begin{itemize}
		\item if $|\hat{X}_{2i}|=1$, then $C''_{i,x}= M_{2i-1}\cup R_{2i-1}\cup N[x, M_{2i+3}]\cup L_{2i+3}$,
		\item if $|\hat{X}_{2i}|>1$, then $C''_{i,x}=N[x, M_{2i-1}\cup X_{2i}]\cup M_{2i+3}\cup L_{2i+3}$.
	\end{itemize}
	Note that the former is a clique due to (P2)-(2), (P6)-(1) and (P7)-(1). Also, note that if $|\hat{X}_{2i}|>1$, then by (P1),  $1<i<m$ and  $|\hat{X}_{2i-2}|=|\hat{X}_{2i+2}|=1$ and thus by (P6)-(4) and (P7)-(2), $M_{2i-1}, M_{2i+1}$ are matched and both are anti-matched to $\hat{X}_{2i}$. This along with (P2)-(2) and (P5) implies that the latter is also a clique. 
	Also, by (P1) and (P6)-(4),  for every $i\in\{1,\ldots, m-1\}$, $M_{2i+1}$ is nonempty.
	Finally, for every $i\in\{1,\ldots, m\}$, define 
	\[Y_i=\ldots\cup \tilde{X}_{2i-8}\cup \tilde{X}_{2i-2} \cup \tilde{X}_{2i+2}\cup \tilde{X}_{2i+8}\cup \ldots,\]
	and for every  $i\in\{1,\ldots, m\}$ and every $x\in \tilde{X}_{2i}$, define $\tilde{C}_{i,x}$ as follows,
	\begin{itemize}
		\item if $|\hat{X}_{2i}|=1$, then $\tilde{C}_{i,x}=N[x, Y_i\cup M_{2i-1} \cup R_{2i-1}\cup M_{2i+1}\cup L_{2i+1}]$,
		\item if $|\hat{X}_{2i}|>1$, then $\tilde{C}_{i,x}=N[x, Y_i]$,
	\end{itemize}
	which is a clique due to (P2)-(2) and (P5). Define the collection of cliques $\mathscr{C}$ as follows.
	\begin{align*}
		\mathscr{C}=& \{C_i: -1\leq i\leq m-1\} \cup \{N[x,C_i]: 1\leq i\leq m-1, x\in L_{2i+1}\}
		\cup\{C'_x: x\in R_{2m-1}\} \\\quad & \cup \{C''_{i,x}: 1\leq i\leq m-1, x\in M_{2i+1}\} \cup \{\tilde{C}_{i,x}: 1\leq i\leq m, x\in \tilde{X}_{2i}\}.
	\end{align*}
	Note that by (P6)-(1) and (P7)-(1), $|R_{2i-1}|=|L_{2i+1}|$, for every $i\in\{1,\ldots, m\}$. This together with (P3) implies that, 
	\begin{align*}
		|\mathscr{C}|&= m+1+\sum_{i=1}^{m-1} (|L_{2i+1}|+ |M_{2i+1}|)+|R_{2m-1}|+\sum_{i=1}^m|\tilde{X}_{2i}| \\
		&= m+1+\sum_{i=1}^{m} (|L_{2i+1}|+ |M_{2i+1}|+|\tilde{X}_{2i}|)\\
		&= n+1-\sum_{i=1}^{m} (|R_{2i-1}|+|\hat{X}_{2i}|-1).
	\end{align*}
	First, suppose that $ m=1 $. Then, $ B,C\in \mathscr{C} $ and by (P4), $ R_1\neq \emptyset $. Thus, adding the clique $A= X_1 $ to $ \mathscr{C} $ yields a splitting clique covering for $ G $ of size $ n+2-|R_1| $. If $ |R_1|\geq 3 $, then we are done. If $ |R_1|\leq 2 $, then since $ G $ is not a tripod, we have $ |R_1|=2 $ and at least one of the two edges in $ E(R_1,L_3) $ are covered by the cliques $ \tilde{C}_{1,x} $, $ x\in \tilde{X}_{2} $. Thus, we may remove one of the cliques $ C'_x, x\in R_1 $, from $ \mathscr{C} $ to obtain a splitting clique covering of size $ n-1 $.  
	
	Now, suppose that $ m\geq 2 $ and let $I=\{i: 1< i< m, |\hat{X}_{2i}|>1, R_{2i+1}\neq \emptyset\}$. We leave the reader to check that the only edges of $G$ which are not covered by $\mathscr{C}$ are the edges in $E(R_{2i+1},L_{2i-1})$, $i\in I$. If $ |\hat{X}_{2i}|>1$, then by (P5) and (P7)-(2), $R_{2i+1}$ is complete to $L_{2i-1}$. Therefore, the collection $\mathscr{C}'=\mathscr{C}\cup \{R_{2i+1}\cup L_{2i-1}: i\in I\}$ is a clique covering for $G$. Also, $A,B,C\in \mathscr{C}$ and $\tilde{W}(G)=\cup_{i=1}^m \tilde{X}_{2i}$. Thus, the cliques $\tilde{C}_{i,x}$, $x\in \tilde{X}_{2i}$, satisfy (SP2) in the definition of splitting clique covering and $\mathscr{C}'$ is a splitting clique covering. It remains to calculate the cardinality of $\mathscr{C}'$.
	
	If at least two of the sets $R_{2i-1}$, $i\in\{1,\ldots, m\}$, are nonempty, then since $|I|\leq \sum_{i=1}^{m} (|\hat{X}_{2i}|-1)$, we have $|\mathscr{C}'|\leq n-1$. Now, assume that at most one of the sets  $R_{2i-1}$, $i\in\{1,\ldots, m\}$, is nonempty and thus $|I|\leq 1$. If $R_1\neq \emptyset$, then $I=R_{2m-1}=\emptyset$  and thus by (P4), $|\hat{X}_{2m-2}|>1$.  Therefore, $|\mathscr{C}'|=|\mathscr{C}|\leq n-1$. Finally, if $R_1=\emptyset$, then by (P4), $|\hat{X}_4|>1$. Now, if $m=2$, then $I=\emptyset $ and by (P4), we have $R_3\neq \emptyset$ and thus $|\mathscr{C}'|=|\mathscr{C}|\leq n-1$.  Also, if $m\geq 3$, then for every $x\in M_3$, $C''_{1,x}=N[x,M_5]\subseteq \cup_{y\in M_5} C''_{2,y}$ and we may remove the cliques $C''_{1,x}$, $x\in M_3$, from $\mathscr{C}'$ to obtain a splitting clique covering of size at most $n-1$. 
\end{proof}
We also need the following lemma.
\begin{lem} \label{lem:worn_triadfree}
	Let $G$ be a three-cliqued graph on $n$ vertices which admits a worn hex-chain $(G_i,A_i,B_i,C_i)$ $(i=1,\ldots, k)$. If for some $ i_0\in\{1,\ldots, k\} $, $ G_{i_0} $ is triad-free and $ G_{i_0+1} $ is a triad  $($reading $i_0+1$ modulo $ k )$, then $ \cc(G)\leq n-1 $.
\end{lem}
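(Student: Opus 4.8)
The plan is to merge the two consecutive terms $G_{i_0}$ and $G_{i_0+1}$ into a single term of a new worn hex-chain for $G$, to equip that merged term with a splitting clique covering that is one clique shorter than its vertex count, and then to invoke \pref{lem:worn} with $t=1$.

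First I would use \pref{lem:worn_ordering}(i) to reposition the hex-chain so that $i_0=1$; then $G_1$ is triad-free and $G_2$ is a triad, say $V(G_2)=\{a,b,c\}$ with $A_2=\{a\},B_2=\{b\},C_2=\{c\}$. Next I would observe that merging two consecutive terms of a worn hex-chain again produces a worn hex-chain — conditions (W1)--(W3) for the merged term against every remaining term are inherited verbatim from those of $G_1$ and $G_2$, and the three merged sets are still cliques since they are subsets of $A=\cup A_i$, $B=\cup B_i$, $C=\cup C_i$. Thus $G$ admits the worn hex-chain $(G^*,A^*,B^*,C^*),(G_3,A_3,B_3,C_3),\ldots,(G_k,A_k,B_k,C_k)$ with $G^*=G[V(G_1)\cup V(G_2)]$, $A^*=A_1\cup\{a\}$, $B^*=B_1\cup\{b\}$, $C^*=C_1\cup\{c\}$ (if $k=2$ this is just $G$ itself). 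Writing $H=G_1$, a triad-free three-cliqued graph with parts $A_1,B_1,C_1$, the interaction rules (W2) and (W3) between $G_1$ and $G_2$ — together with the fact that $a,b,c$ lie in the triad $\{a,b,c\}$, so the ``no triad'' clause of (W3) forces non-adjacencies — pin down the structure of $G^*$ completely: $a$ is complete to $A_1\cup B_1$ and anticomplete to $C_1\cup\{b,c\}$, and cyclically for $b$ and $c$. From this one checks routinely that $\{a,b,c\}$ is the only triad of $G^*$, that $G^*$ is claw-free, and that the non-core of $G^*$ is exactly $V(H)$.

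The heart of the argument is to produce for such a $G^*$ a splitting clique covering of size at most $|V(G^*)|-1=|V(H)|+2$. The natural candidate consists of $A^*,B^*,C^*$ together with, for every $u\in A_1$, the clique $\{u\}\cup N_H(u,C_1)\cup\{c\}$ (and cyclically $\{v\}\cup N_H(v,A_1)\cup\{a\}$ for $v\in B_1$ and $\{w\}\cup N_H(w,B_1)\cup\{b\}$ for $w\in C_1$); these are genuine cliques by the structure above, they verify (SP1) and (SP2), and they cover every edge of $G^*$ — but they number $|V(H)|+3$, exactly one too many. The remaining work is to shave a single clique, which I expect to need a short case analysis on how many of $A_1,B_1,C_1$ are nonempty, using that $H$ is triad-free (equivalently $\overline{H}$ is triangle-free) to recycle or merge one clique whenever $V(H)$ meets at least two of the three parts. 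The one genuinely separate situation is when $V(H)$ lies inside a single part: then one of $a,b,c$ is isolated in $G^*$ and $\cc(G^*)$ is trivially small, and for $k>2$ one argues directly on $G$ (or falls back on a different term of the re-chunked hex-chain). Once the $|V(H)|+2$ bound is in place, \pref{lem:worn} immediately gives $\cc(G)\le |V(G)|-1=n-1$.

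I expect the shaving step — getting down from $|V(H)|+3$ to $|V(H)|+2$ cliques while keeping the covering splitting, and cleanly disposing of the single-part degenerate case — to be the main obstacle; the rest is bookkeeping with the hex-chain axioms and the already established machinery on worn hex-chains.
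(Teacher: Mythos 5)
Your reduction to a merged term $G^*$ and \pref{lem:worn} is clean up to the point you yourself flag as the main obstacle, but that obstacle is fatal: the merged term $G^*$ need \emph{not} admit a splitting clique covering of size $|V(G^*)|-1$. Take $H=G_{i_0}$ to be a triangle with one vertex in each part, $A_1=\{x\}$, $B_1=\{y\}$, $C_1=\{z\}$, $x,y,z$ pairwise adjacent (certainly triad-free and three-cliqued). Then $G^*$ has six vertices $x,y,z,a,b,c$, its only maximal cliques are $\{x,y,z\}$, $\{x,y,a\}$, $\{y,z,b\}$, $\{z,x,c\}$, and no clique of $G^*$ contains two of the pairs $\{y,a\}$, $\{z,b\}$, $\{x,c\}$, so covering those three edges already costs three cliques, none of which can be $A^*=\{x,a\}$, $B^*=\{y,b\}$ or $C^*=\{z,c\}$. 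Hence \emph{every} covering satisfying (SP1) has size at least $6=|V(G^*)|$, and the bound $|V(G^*)|-1$ you need for \pref{lem:worn} with $t=1$ is unattainable. Embedding this $H$ and the triad into a hex-chain with $k\geq 3$ (e.g.\ appending a single further vertex) gives a concrete instance of the lemma where your plan cannot be completed; note this is not your degenerate "single part" case, since $V(H)$ meets all three parts.

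The underlying issue is that (SP1) forces all three part-cliques $A^*,B^*,C^*$ into the covering, and that rigidity is exactly what costs the one clique you are trying to save. The paper's proof avoids \pref{lem:worn} entirely here: it reorders so that the triad is $G_1$ and the triad-free term is $G_k$, starts from $\mathscr{N}[A';B]\cup\mathscr{N}[B';C]\cup\mathscr{N}[C';A]$ on the middle terms, and then does a case analysis on the triad-free term $G_k$ in which the final covering is free to drop one of $A,B,C$ or to replace them by "mixed" cliques such as $A'\cup A_1\cup C_k$ — precisely the moves that the splitting requirement forbids. If you want to keep your merged-term strategy, you would have to weaken the notion of splitting covering (and reprove the extension lemma accordingly); as written, the proposal does not go through.
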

\begin{proof}
	Due to \pref{lem:worn_ordering}(i), w.l.o.g. we may assume that $ G_{1} $ is a triad and $ G_k $ is triad-free. Let  ${A}=\cup_{i=1}^{k} A_i$, ${B}=\cup_{i=1}^{k} B_i$ and ${C}=\cup_{i=1}^{k} C_i$, ${A}'=\cup_{i=2}^{k-1} A_i$, ${B}'=\cup_{i=2}^{k-1} B_i$ and ${C}'=\cup_{i=2}^{k-1} C_i$. Note that $A_1$ is anticomplete to $B$, $B_1$ is anticomplete to $C$ and $C_1$ is anticomplete to $A$. Define $\mathscr{C}=\mathscr{N}[A';B]\cup\mathscr{N}[B';C] \cup\mathscr{N}[C';A]$. 
	First, assume that there are two of the sets $A_k,B_k,C_k$, say $ A_k $ and $ B_k $, such that some vertex in one of them is complete to the other (note that it includes the case that at least one of $ A_k, B_k,C_k $ is empty). If there is a vertex in $ A_k $ complete to $ B_k $, then the collection of cliques $\mathscr{C}\cup \mathscr{N}[A_k;B]\cup \mathscr{N}[B_k;C]\cup \mathscr{N}[C_k;A]\cup \{A,C \}$ is a clique covering for $G$ of size $|A'|+|B'|+|C'|+|A_k|+|B_k|+|C_k|+2=n-1$. Also, if there is a vertex in $B_k$ complete to $A_k$, then the collection $\mathscr{C}\cup \mathscr{N}[A_k;A'\cup  A_1\cup C_k]\cup \mathscr{N}[B_k;B'\cup B_1\cup A_k]\cup \mathscr{N}[C_k;C'\cup C_1\cup B_k]\cup \{C'\cup C_1\cup B_k,A'\cup A_1\cup C_k\}$ is a clique covering for $ G $ of size $ n-1 $. 
	Therefore, we may assume that every vertex in one of the sets $A_k,B_k,C_k$ has a non-neighbour in each of two others. In addition, we claim that every vertex in one of the sets $A_k,B_k,C_k$ has a neighbour in each of two others. For if say there exists a vertex $x\in A_k$ which is anticomplete to $B_k$, then by the assumption, $x$ has a non-neighbour $y$ in $C_k$ and also $y$ has a non-neighbour $z$ in $B_k$, and so $\{x,y,z\}$ is a triad in $ G_k $, a contradiction. This proves the claim. Moreover, assume that $|A_k|\leq |B_k|\leq |C_k|$ (the other cases are similar).
	
	Now, if every two distinct vertices in $B_k$ have a common neighbour in  $A_k\cup C_k$, then $\mathscr{C}\cup \mathscr{N}[A_k;B]\cup \mathscr{N}[C_k;B_k\cup C_1\cup C']\cup \mathscr{N}[A_k;C_k\cup A_1\cup A']\cup \{A,C\}$ is a clique covering for $G$ of size $|\mathscr{C}|+2|A_k|+|C_k|+2\leq |\mathscr{C}|+|A_k|+|B_k|+|C_k|+2=n-1$ (note that, by the above claim, the edges in $E(B)$ are covered by the cliques in $\mathscr{N}[A_k;B]\cup \mathscr{N}[C_k;B_k\cup C_1\cup C']$).
	Finally, if there exist two distinct vertices $u,v\in B_k$ with no common neighbour in $A_k\cup C_k$, then since $G_k$ is triad free, $N[u,A_k\cup C_k]$ and $N[v,A_k\cup C_k]$ are both cliques. Hence, the collection of cliques $\mathscr{C}\cup \mathscr{N}[B_k\setminus\{u,v\};A_k] \cup  \mathscr{N}[B_k\setminus\{u,v\};C_k]\cup  \mathscr{N}[A_k;C_k\cup A_1\cup A']\cup\{N[u,A_k\cup C_k], N[v,A_k\cup C_k],A_k\cup B_1\cup B', B_k\cup C_1\cup C', B,C   \}$ is a clique covering for $G$ of size $|\mathscr{C}|+2(|B_k|-2)+|A_k|+6\leq |\mathscr{C}|+|A_k|+|B_k|+|C_k|+2=n-1$ (note that the edges in $ E(A) $ are covered by the cliques in $ \mathscr{N}[A_k;C_k\cup A_1\cup A']\cup \{A_k\cup B_1\cup B'\} $). This proves \pref{lem:worn_triadfree}.
\end{proof}
Now, we are ready to prove \pref{thm:3cap}. 
\begin{proof}[{\rm \textbf{Proof of \pref{thm:3cap}.}}]
	By \pref{thm:anti3col}, $G$ admits a worn hex-chain  $(G_i,A_i,B_i,C_i)$ $(i=1,\ldots, k)$, where each $G_i$ is either triad-free or isomorphic to the line graph of $K_{3,3}$, or the complement of a canonically-coloured path of triangles graph. 
	
	Firstly, suppose that there exists $ i_0\in\{1,\ldots,k\} $ such that $G_{i_0}$ is isomorphic to the line graph of $K_{3,3}$ on the vertex set $\{a^i_j\ :\ 1\leq i,j\leq 3\}$, where $a^i_j$ is adjacent to $a^{i'}_{j'}$ if and only if $i=i'$ or $j=j'$. Then, $A_{i_0},B_{i_0},C_{i_0}$ are three disjoint triangles of $G_{i_0}$ and $\{\{a^i_1,a^i_2,a^i_3\}, \{a^1_i,a^2_i, a^3_i\}\ :\  1\leq i\leq 3\}$ is a splitting clique covering for $G_{i_0}$ of size six. Hence, \pref{lem:worn} implies that $G$ admits a clique covering of size at most $n-3$, as desired. 
	
	Secondly, suppose that  there exists $ i_0\in\{1,\ldots,k\} $ such that $G_{i_0}$ is the complement of a canonically-coloured path of triangles graph which is not a tripod. Then, by \pref{lem:path}, $G_{i_0}$ admits a splitting clique covering of size at most $|V(G_{i_0})|-1$, and thus by virtue of \pref{lem:worn}, we obtain a clique covering for $G$ of size at most $n-1$. 
	
	Thirdly, assume that there exists $i_0\in\{1,\ldots, k\}$ such that $G_{i_0}$ is a tripod with the sets $X_1,X_2, \hat{X}_2,X_3$ as in the definition, where  $X_2\setminus \hat{X}_2\neq \emptyset$.  In the light of \pref{lem:worn_ordering}(i), assume that $ i_0=k $. 
	Let $\tilde{X_1}$ (resp. $\tilde{X}_3$) be the set of vertices in $ X_2 \setminus \hat{X}_2$ which are complete to $ X_1 $ (resp. $ X_3 $). Then, since $X_2\setminus \hat{X}_2\neq \emptyset$, either $\tilde{X_1}$ or $\tilde{X}_3$, say $\tilde{X}_1$, is nonempty and by \pref{lem:worn_ordering}(ii), we may suppose that $A_k=X_1, B_k=X_2  $ and $ C_k=X_3 $.
	Let $(H,A',B',C')$ be a three-cliqued graph which admits a worn hex-chain $ (H_i,A_i',B_i',C_i')$ $(i=1,\ldots, k+2) $ such that for every $ i\in\{1,\ldots, k-1\}   $, $ H_i=G_i, A_i'=A_i, B_i'=B_i$ and $C_i'=C_i$. Also, $ H_{k} $ and $H_{k+2}$ are both cliques and $ H_{k+1} $ is a triad on the vertex set $\{a,b,c\}$, where $ A_{k}'=\emptyset $, $ B_k'=\tilde{X}_1 $, $ C_k'=\emptyset $, $ A'_{k+1}=\{a\} $, $ B'_{k+1}=\{b\} $, $ C'_{k+1}=\{c\}$,  $ A'_{k+2}=\emptyset $, $ B'_{k+2}=\tilde{X}_3 $ and $ C'_{k+2}=\emptyset $. Since $H_k$ is triad-free and $H_{k+1}$ is a triad, by \pref{lem:worn_triadfree}, $H$ admits a clique covering of size at most $|V(H)|-1$. Now, if $|X_1|=|X_3|=1$, then $G$ is isomorphic to $H$ (by an isomorphism mapping the single vertices of $G$ in $X_1,\hat{X}_2, X_3$  to the vertices $a,b,c$ in $H$, respectively)  and if $|X_1|=|X_3|=2$, then $G$ is a thickening of $(H,F)$, for $F=\{\{a,c\}\}$ (we leave the reader to check that $\{a,c\}$ is a changeable pair of $H$). Thus, by \pref{lem:thickening_antiprismatic}, $\cc(G)\leq n-1$. 
	
	Therefore, we may assume that for all $i\in\{1,\ldots,k\}$, $G_i$ is either triad-free or a tripod with $ X_2=\hat{X}_2 $. Hence, each $ G_i $ is either triad-free, or a thickening of a triad. 
	First, assume that there is some $ i $ such that $ G_i $ is triad-free. 
	If all terms $ G_i $ are triad-free, then so is $ G $, a contradiction. Therefore, there is some $ i_0\in\{1,\ldots, k\} $ such that $ G_{i_0} $ is triad-free and $G_{i_0+1}$ is a thickening of a triad (reading $ i_0+1 $ modulo $ k $). Hence, Lemmas~\ref{lem:worn_triadfree} and \ref{lem:thickening_antiprismatic} yield that $ \cc(G)\leq n-1 $.
	Finally, assume that each term $ G_i $ is a thickening of a triad. Note that if all terms $G_i$ are triads, then $G$ is isomorphic to the graph $C_{3k}^{k-1}$ (by the isomorphism mapping the single vertices of $G$ in $A_1,\ldots,A_k$, $B_1,\ldots,B_k$, $C_1,\ldots,C_k$ to the vertices $v_{0},\ldots,v_{3k-1}$ of $C_{3k}^{k-1}$, respectively), and if each $ G_i $ is a thickening of a triad, then it is easy to check that $ G $ is a fuzzy long circular interval graph. Thus, in this case, by \pref{thm:circ}, $\cc(G)\leq n$ and equality holds if and only if $ G $ is isomorphic to $C_{n}^{p}$, for some positive integer $ p\leq \lfloor (n-1)/3\rfloor $. However, if $ n>3p+3 $, then $ C_n^p $ is not three-cliqued and if $ n<3p+3 $, then $ C_n^p $ is triad-free. Therefore, $\cc(G)\leq n$ and equality holds if and only if $ n=3p+3 $ and $ G $ is isomorphic to $C_{n}^{p}$. This proves \pref{thm:3cap}.
\end{proof}
\subsection{Other 3-substantial antiprismatic graphs}\label{sub:substantial}
According to Theorems~\ref{thm:orient-anti} and \ref{thm:3cap}, in order to complete the proof of \pref{thm:oap} for 3-substantial orientable antiprismatic graphs, we need to prove the assertion for the complement of a cycle of triangles graph, the complement of a mantled $L(K_{3,3})$ and the complement of a ring of five (see \pref{app:oap} for the definitions). 
\begin{lem} \label{lem:cycle}
	If $G$ is the complement of a cycle of triangles graph on $n$ vertices, then $\cc(G)\leq n-1$.
\end{lem}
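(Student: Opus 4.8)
The plan is to exhibit, for such a graph $G$, an explicit clique covering of size at most $n-1$, adapting the construction used for the complement of a path of triangles graph in \pref{lem:path} and exploiting the fact that a cyclic arrangement of triangles has no endpoints.

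First I would recall from \pref{app:oap} the structure of a cycle of triangles graph: it is determined by a cyclic sequence of triangles $X_1,\dots,X_{2m}$ (with indices read modulo $2m$), the terms of even and odd index playing different roles, together with the link sets $L_i,M_i,R_i$, the distinguished parts $\hat X_{2i}$ of the even terms, and adjacency rules that are the cyclic counterparts of the conditions (P1)--(P7) used in \pref{lem:path}. Thus $V(G)=X_1\cup\dots\cup X_{2m}$, each $X_i$ is a triad of $G$, the non-core is $\tilde W(G)=\cup_i\tilde X_{2i}$, and the edges of $G$ run between consecutive triangles and, along ``diagonals'', between triangles farther apart, all governed by these rules.

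Next I would transcribe the covering from \pref{lem:path}: the long diagonal cliques (the analogues of the $C_i$, which in the cyclic case close up into cycles of triangles instead of terminating at two endpoints), the cliques $N[x,\cdot]$ that absorb the sets $L_{2i+1}$, the cliques playing the role of $C''_{i,x}$ that absorb the sets $M_{2i+1}$, and the cliques playing the role of $\tilde C_{i,x}$ that absorb the sets $\tilde X_{2i}$. One then verifies, exactly as in \pref{lem:path} and using the cyclic adjacency rules, that each of these wrap-around sets is genuinely a clique of $G$ and that every edge of $G$ lies in one of them; the only troublesome edges are, as there, those between $R_{2i+1}$ and $L_{2i-1}$ when $|\hat X_{2i}|>1$, and they are covered by adjoining the cliques $R_{2i+1}\cup L_{2i-1}$.

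The decisive point is the count. In \pref{lem:path} the analogous family has size roughly $n+1$ before refinement, with the excess coming from the two endpoint triangles; in the cyclic setting there are no endpoint triangles, so the same bookkeeping already delivers a covering of size at most $n$, and one further clique can be removed because, with the sequence wrapped around, at least one diagonal clique is contained in the union of the others — equivalently, some single clique covers two distinct edges — in the spirit of the $|R_1|=2$ reduction and the ``$C''_{1,x}\subseteq\bigcup_y C''_{2,y}$'' reduction at the end of \pref{lem:path}. The hard part, as always with antiprismatic graphs, will be making this last saving rigorous: one has to treat separately the degenerate configurations in which various link sets $L_i,M_i,R_i$ are empty, in which some $|\hat X_{2i}|>1$, or in which $m$ is small, and one should also confirm that a cycle of triangles graph is never isomorphic to a power $C_n^p$ with $n\ge 3p+1$ (ruled out by its adjacency rules, which force a two-layered link structure that circulants lack). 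Once these cases are dispatched, $\cc(G)\le n-1$ follows.
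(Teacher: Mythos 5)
Your overall strategy matches the paper's: transport the covering of \pref{lem:path} to the cyclic setting, using wrap-around diagonal cliques $C_i=L_{2i-1}\cup X_{2i}\cup X_{2i+3}\cup\cdots\cup X_{2i+6k}$ (this is where $m\equiv 2\pmod 3$ is used), plus cliques absorbing the $R$-, $M$- and $\tilde X$-layers, plus the patches $L_{2i-1}\cup R_{2i+1}$ when $|\hat X_{2i}|>1$. The bookkeeping gives $|\mathscr{C}|=n-\sum_{i=1}^m(|L_{2i-1}|+|\hat X_{2i}|-1)$, so whenever some $L_{2i-1}\neq\emptyset$ or some $|\hat X_{2i}|>1$ you are already at $n-1$ (the remark about ruling out powers of cycles is beside the point: the lemma asserts $\cc(G)\le n-1$ outright, with no equality case).

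The gap is in the degenerate case $L_{2i-1}=R_{2i-1}=\emptyset$ and $|\hat X_{2i}|=1$ for all $i$, where the count is exactly $n$ and a saving must be found. Your proposed mechanism --- that some diagonal clique is contained in the union of the others, or that some clique in the family covers two edges it could cede --- does not work: each $C_i$ is the only clique in the family covering $E(X_{2i})$, and each $M$-layer clique $C'_{i,x}=M_{2i-1}\cup M_{2i+1}\cup M_{2i+3}$ is the only one covering the ``distance-$4$'' edge between $M_{2i-1}$ and $M_{2i+3}$ (no $C_j$ contains two odd layers at index-distance $4$, and the $\tilde C_{j,y}$ only reach $M_{2j\pm 1}$). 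So nothing in the naive family is redundant. The paper's fix is not a removal but a redesign of the $M$-covering: since in this case every $M_{2i+1}$ is a single vertex and odd layers at index-distance $2,4,6$ are pairwise complete (by (C2) and (C5)-(1)), the $m$ cliques $C'_{i,x}$ can be replaced by the $\lfloor (m+1)/2\rfloor$ cliques $M_{4i-3}\cup M_{4i-1}\cup M_{4i+1}\cup M_{4i+3}$, which drops the total to $m+\lfloor(m+1)/2\rfloor+\sum_i|\tilde X_{2i}|\le n-1$ using $m\ge 5$. You need this (or an equivalent consolidation) to close the argument.
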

\begin{proof}
	Let $m\geq 5$ and  $m=3k+2$ for some integer $k$ and also let $X_1,\ldots, X_{2m}$ be as in the definition and define $\tilde{X}_{2i}=X_{2i}\setminus \hat{X}_{2i}$. Throughout the proof, read subscripts modulo $2m$. For every $i\in\{1,\ldots, m\}$, define 
	\[C_i= L_{2i-1}\cup X_{2i}\cup X_{2i+3}\cup X_{2i+6}\cup \ldots \cup X_{2i+6k},\]
	which by (C2) and (C4) is a clique of $G$. 
	Note that by (C1) and (C5)-(4), $M_{2i+1}$ is nonempty for every $i\in\{1,\ldots, m\}$. Now, for every $i\in \{1,\ldots, m\}$ and every $x\in M_{2i+1}$, define the clique $C'_{i,x}$ as follows,
	\begin{itemize}
		\item if $|\hat{X}_{2i}|=1$, then $C'_{i,x}= M_{2i-1}\cup R_{2i-1}\cup N[x, M_{2i+3}]\cup L_{2i+3}$,
		\item if $|\hat{X}_{2i}|>1$, then $C'_{i,x}=N[x, M_{2i-1}\cup X_{2i}]\cup M_{2i+3}\cup L_{2i+3}$.
	\end{itemize}
Finally, for every $i\in\{1,\ldots, m\}$, define
\[Y_i=\tilde{X}_{2i+2}\cup \tilde{X}_{2i+8}\cup \ldots \cup \tilde{X}_{2i+2+6k},\]
and for every $x\in \tilde{X}_{2i}$, define $\tilde{C}_{i,x}$ as follows,
	\begin{itemize}
		\item if $|\hat{X}_{2i}|=1$, then $\tilde{C}_{i,x}= N[x, M_{2i-1}\cup R_{2i-1}\cup M_{2i+1}\cup L_{2i+1} \cup Y_i]$,
		\item if $|\hat{X}_{2i}|>1$, then $\tilde{C}_{i,x}=N[x, Y_i]$.
	\end{itemize}
	Now, define $\mathscr{C}=\{C_i: 1\leq i\leq m\}\cup \{N[x,C_i]: 1\leq i\leq m, x\in R_{2i-3}\}\cup  \{C'_{i,x}: 1\leq i\leq m, x\in M_{2i+1}\}  \cup \{\tilde{C}_{i,x}: 1\leq i\leq m, x\in \tilde{X}_{2i}\} $. Thus,
	\[|\mathscr{C}|=m+\sum_{i=1}^m ( |M_{2i-1}|+|R_{2i-1}|+|\tilde{X}_{2i}|)= n-\sum_{i=1}^m (|L_{2i-1}|+|\hat{X}_{2i}|-1).\]
	Also, let $I=\{i: 1\leq i\leq m, |\hat{X}_{2i}|>1, L_{2i-1}\neq \emptyset\}$. It is easy to check that the only edges which are not covered by $ \mathscr{C} $ are the edges in $ E(L_{2i-1},R_{2i+1}), i\in I $.
	Therefore, $ \mathscr{C}'=\mathscr{C}\cup \{L_{2i-1}\cup R_{2i+1} : i\in I\} $ is a clique covering for $ G $. If either $L_{2i-1}$ is nonempty or $|\hat{X}_{2i}|>1$ for some $i$, then  $|\mathscr{C}'|\leq n-1$. Now, assume that $L_{2i-1}=R_{2i-1}=\emptyset $ and $|\hat{X}_{2i}|=1$ for all $i\in\{1,\ldots, m\}$. In this case, remove the cliques $C'_{i,x}$, for all  $1\leq i\leq m$ and $x\in M_{2i+1}$ from $\mathscr{C}$ and add the following cliques
	\[\forall 1\leq i\leq \left\lfloor \frac{m+1}{2} \right\rfloor,\  C'_i=M_{4i-3}\cup M_{4i-1}\cup M_{4i+1}\cup M_{4i+3}. \]
	Thus, in this case, $\cc(G)\leq m+\lfloor (m+1)/2\rfloor+\sum_{i=1}^m |\tilde{X}_{2i}|= \lfloor (m+1)/2\rfloor+ \sum_{i=1}^m |X_{2i}|\leq n-1$.
\end{proof}
The following two lemmas handle the last two cases of the 3-substantial antiprismatic graphs.
\begin{lem} \label{lem:mantled}
	If $G$ is the complement of a mantled $L(K_{3,3})$ on $n$ vertices, then $\cc(G)\leq n-1$.
\end{lem}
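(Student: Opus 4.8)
Recall from \pref{app:oap} the construction of a mantled $L(K_{3,3})$: it is built from a copy of $L(K_{3,3})$ on nine vertices, which I label $a^i_j$ $(1\le i,j\le3)$ so that $a^i_j$ is adjacent to $a^{i'}_{j'}$ exactly when $i=i'$ or $j=j'$, by attaching a mantle $M$ of the remaining $n-9$ vertices according to the rules listed there. Since $L(K_{3,3})$ is self-complementary, the nine vertices $a^i_j$ likewise induce a copy of $L(K_{3,3})$ in $G$, and its $18$ edges are covered by the six pairwise edge-disjoint triangles
\[
R_i=\{a^i_1,a^i_2,a^i_3\}\ \ (1\le i\le 3),\qquad C_j=\{a^1_j,a^2_j,a^3_j\}\ \ (1\le j\le 3).
\]
So the core of $G$ already admits a clique covering of size $6=9-3$, leaving a budget of $(n-1)-6=|M|+2$ further cliques toward the target $n-1$.

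The plan is then to process the mantle vertices one at a time, charging to each $v\in M$ at most one new clique, so that the resulting covering has size at most $6+|M|=n-3\le n-1$. For a given $v$ I would read off from the attachment rules, together with the fact that $G$ is antiprismatic, that $N_G(v)$ splits into at most two cliques, each contained either in one of $R_1,\dots,R_3,C_1,\dots,C_3$ enlarged by $v$, or in an auxiliary clique already introduced for earlier mantle vertices sitting at the same row and column. Concretely: if $N_G(v)$ is a clique, add $N_G[v]$; if $N_G(v)$ meets the core along (part of) a row $R_i$ and (part of) a column $C_j$, enlarge $R_i$ to $R_i\cup\{v\}$ and add only the one further clique $(N_G(v)\cap C_j)\cup\{v\}$; and when several mantle vertices are attached at the same pair of positions, route all but one of each family's edges through a single common clique, exactly as the sets $M_{2i+1}$ are treated in the proofs of \pref{lem:cycle} and \pref{lem:path}. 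If every $v\in M$ can be charged at most one new clique in this way, the count is immediate.

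The only point where the precise definition in \pref{app:oap} has to be invoked is to confirm this charging --- or, in the worst case, to show that the few mantle vertices whose neighbourhood is neither a clique nor of the clean ``one row plus one column'' shape overspend by at most two cliques in total, so that the two spare units of the budget absorb them (typically because at most a bounded amount of such mantle hangs off each of the nine base positions), or else that their presence forces $G$ into one of the already-handled shapes --- three-cliqued, or a thickening to which \pref{lem:thickening} applies. I expect that structural bookkeeping --- the case analysis of how the mantle attaches to the six base triangles --- to be the bulk of the work; once it is in place the cardinality computation mirrors those in \pref{lem:cycle} and \pref{lem:path} and yields $\cc(G)\le n-1$.
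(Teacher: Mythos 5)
There is a genuine gap, and it stems from a misreading of the definition in \pref{app:oap}. The ``mantle'' of the complement of a mantled $L(K_{3,3})$ is not a collection of vertices each attached to part of a row and part of a column of the core: it consists of six (possibly large) cliques $V^1,V^2,V^3,V_1,V_2,V_3$, where each $v\in V^i$ is anticomplete to its own row $\{a^i_1,a^i_2,a^i_3\}$ and \emph{complete to the remaining six core vertices}, i.e.\ to two full rows, which form two triangles joined by a perfect matching and hence need two cliques through $v$ just to cover $E(\{v\},W)$. Moreover $V^1\cup V^2\cup V^3$ is \emph{complete} to $V_1\cup V_2\cup V_3$, the adjacency inside $V^1\cup V^2\cup V^3$ (and inside $V_1\cup V_2\cup V_3$) is arbitrary subject only to being triad-free, and these mantle sets can be arbitrarily large. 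Your charging scheme of ``one new clique per mantle vertex plus two spare'' never accounts for the complete bipartite set of edges between the upper and lower mantles, nor for the arbitrary triad-free edges among $V^1,V^2,V^3$; and your fallback that the exceptional vertices ``overspend by at most two cliques in total'' has no basis, since there is no bound on how much mantle hangs off each position.

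Covering those mantle--mantle edges within the budget is in fact the crux of the paper's argument. The paper first splits into two cases. If some $V_i$ and some $V^j$ are empty, a direct covering by the cliques $A^i_j=V^i\cup\{a^j_1,a^j_2,a^j_3\}$, $B^i_j=V_i\cup\{a^1_j,a^2_j,a^3_j\}$ together with one clique $N[v,\cdot]$ per remaining mantle vertex already achieves $n-1$. Otherwise, say all of $V^1,V^2,V^3$ are nonempty, and the key structural step is that since $G[V^1\cup V^2\cup V^3]$ is triad-free there is an index $i_0$ such that every vertex of $V^{i_0+1}$ has a neighbour in $V^{i_0}$; this lets the edges of $E(V^{i_0+1},V_{i_0+1})$ ride for free inside the cliques $N[v,U^{i_0}]$, and only then does the count close with at most two extra cliques. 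None of this appears in your proposal, and the ``row plus column'' picture on which your bookkeeping rests does not describe the graph, so the argument as written cannot be completed. You would need to restate the covering in terms of the sets $V^i,V_j$ and supply the triad-free observation (or an equivalent device) to handle the complete bipartite part.
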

\begin{proof}
	Let $a_j^i, V^i, V_j$, $i,j\in\{1,2,3\}$, be as in the definition and read all subscripts and superscripts modulo $3$. 
	For every $i,j\in\{1,2,3\}$, $i\neq j$, define
	\[
	A^i_j= V^i\cup \{a^{j}_1,a^{j}_2,a^{j}_3\}, \quad
	B_j^i= V_i\cup \{a_{j}^1,a_{j}^2,a_{j}^3\}.\]
	First, assume that at least one of $V_1,V_2,V_3$ and at least one of $V^1,V^2,V^3$ are empty, say $V_3=V^3=\emptyset$.  In this case, the family of cliques $ \mathscr{N}[V^1;V^2\cup V_1]\cup \mathscr{N}[V_1;V_2\cup V^2]\cup \mathscr{N}[V^2;V_2]\cup \mathscr{N}[V_2; V^1] \cup \{ A^i_j, B_j^i: 1\leq i\leq 2, 1\leq j\leq 3, j\neq i\}$ is a clique covering for $G$ of size $n-1$. Now, assume that either all $V^1,V^2,V^3$, or all $V_1,V_2,V_3$ are nonempty. Assume w.l.o.g. that the former case occurs. 
	Since $G[V^1\cup V^2\cup V^3]$ contains no triad, there exists $i_0\in\{1,2,3\}$ such that every vertex in $V^{i_0+1}$ has a neighbour in $V^{i_0}$. For every $i\in\{1,2,3\}$, define $U^i= V^{i+1}\cup \left\{a^{i+2}_i,a^{i+2}_{i+1}, a^{i+2}_{i+2}\right\}$, if $V_{i+1}=\emptyset$, and define $U^i= V^{i+1}\cup V_{i+1} \cup \left\{a^{i+2}_i, a^{i+2}_{i+2}\right\}$, otherwise. Also, let $U_i= V_{i+1}\cup V^{i+1} \cup \left\{a_{i+2}^i, a^{i+2}_{i+2}\right\}$ and let
	\[\mathscr{C}=\bigcup_{i=1}^3 \left(\{A^i_{i+1}, B^i_{i+1}\} \cup \mathscr{N}[V^i;U^i]\cup \mathscr{N}[V_i;U_i]\right). \]
	Note that the edges in $E(V^{i_0+1},V_{i_0+1})$ are covered by the cliques in $\mathscr{N}[V^{i_0}; U^{i_0}]$. 
	For every $i\in\{i_0,i_0+2\}$, if $V_i\neq \emptyset$, then  the edges in $E(V^i,V_i)$ are possibly not covered by the cliques in $\mathscr{C}$. Also, for every $i\in\{1,2,3\}$, if $V_{i-1}=\emptyset$ and $V_{i+1}\neq \emptyset$, then the edges in $E(V^i,\left\{a_{i+1}^{i+2}\right\}) $ are not covered by the cliques in $\mathscr{C}$. In both of these cases, add the clique $V^i\cup V_i\cup \left\{a_{i+1}^{i+2}\right\}$ to $\mathscr{C}$. We leave the reader to check that this procedure adds at most two cliques to $\mathscr{C}$ and yields a clique covering for $G$ of size at most $|\mathscr{C}|+2=n-1$. 
\end{proof}
\begin{lem} \label{lem:ring}
	If $G$ is the complement of a ring of five on $n$ vertices, then $\cc(G)\leq n-2$.
\end{lem}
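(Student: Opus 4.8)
The plan is to follow the pattern of Lemmas \pref{lem:cycle} and \pref{lem:mantled}, namely to exhibit an explicit clique covering of $G$ of size at most $n-2$, using the description of the complement of a ring of five recalled in \pref{app:oap}. First I would fix notation from that description: write $V(G)=V_1\cup\cdots\cup V_5$ (subscripts read modulo $5$) for the five blocks, together with their distinguished (core/non-core) subsets, the linear orders on the blocks, and the complete/anticomplete/matched (or anti-matched) relations that hold between the various pairs $(V_i,V_{i+1})$ and $(V_i,V_{i+2})$ — in particular the matched relations that produce the triads of $G$. Then I would record the structural facts that drive the construction: that $\omega(G)\le 3$, that each block (or each block minus its distinguished part) is a clique of $G$, that the union of two suitably chosen consecutive blocks is a clique, and that for a vertex $x$ lying in one block the neighbourhood of $x$ inside the ``forward'' blocks is a clique (this is the antiprismatic analogue of the claw-freeness inequalities used in \pref{thm:tc1}). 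All of these are routine consequences of the prismatic axioms.

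Next I would build the cover. The naive cover assigns to each vertex $x\in V_i$ a single clique, namely the closed neighbourhood of $x$ inside $V_i\cup V_{i+1}$, together with a bounded number of ``link'' cliques to pick up the edges inside the matchings between non-consecutive blocks, plus the blocks themselves where needed; this already gives roughly $n+O(1)$ cliques. The saving down to $n-2$ must be extracted from the fact that a ring of five is a cycle of blocks of length exactly five: I would track precisely which edges between $V_i$ and $V_{i+1}$, and which edges inside the matchings, are \emph{already} covered by the per-vertex cliques, and show that at least two cliques in the naive list are redundant — either because two link cliques coincide, or because a degenerate block (a singleton, or a block equal to its distinguished part) contributes a singleton clique that can simply be discarded. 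In the genuinely degenerate cases — some block empty or small, or some matching absent so that two blocks become complete or anticomplete — one obtains an even stronger bound directly, much as in the first half of \pref{lem:mantled}.

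The main obstacle is the case analysis. Unlike \pref{lem:path}, a ring of five is a true $5$-cycle of blocks, so there is no free endpoint to amputate; every unit of slack has to come from a structural coincidence forced by the prismatic axioms, and one must check carefully that no edge — in particular no edge inside a block or inside one of the matchings — is left uncovered after cliques are dropped or merged. I would organise the proof by first disposing of the cases in which some block is empty or a singleton, or in which some pair of consecutive blocks is complete or anticomplete, and then treating the ``generic'' case in which all of the matched relations are genuinely present; in that last case the $-2$ arises because two of the link cliques between second-neighbour blocks are absorbed into the per-vertex cliques. Together with Lemmas \pref{lem:cycle} and \pref{lem:mantled}, this completes the verification of \pref{thm:oap} for the $3$-substantial orientable antiprismatic graphs.
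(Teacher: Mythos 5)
Your proposal starts from a misremembered definition, and this undermines the whole plan. The complement of a ring of five (see \pref{app:oap}) is \emph{not} a cyclic arrangement of five blocks with distinguished core subsets and matched/anti-matched relations between them — that description belongs to the complement of a cycle of triangles graph, which is the subject of \pref{lem:cycle}. Here the vertex set is the disjoint union of a fixed ten-vertex part $W=\{a_1,\ldots,a_5,b_1,\ldots,b_5\}$ and \emph{six} cliques $V_0,V_1,\ldots,V_5$, with $\{b_1,\ldots,b_5\}$ a clique, $\{a_i,b_{i+1},a_{i+2}\}$ triangles, $V_0$ complete to $V_1\cup\cdots\cup V_5$ and to the $a_i$'s, $V_i$ complete to $V_{i+2}$, and the only arbitrary adjacency being between $V_i$ and $V_{i+1}$. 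Several of the ``structural facts'' you propose to record are therefore false: $\omega(G)\leq 3$ fails outright (e.g.\ $\{b_1,\ldots,b_5\}$ has size five), there are no matchings producing the triads, and $N(x,V_{i+1}\cup V_{i+2})$ need not be a clique since the adjacency between $V_{i+1}$ and $V_{i+2}$ is complete while that between $x$ and $V_{i+1}$ is arbitrary. A case analysis organised around ``degenerate matchings'' and ``blocks equal to their distinguished part'' has nothing to attach to.

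Your per-vertex cliques $N[x,V_{i+1}]$ are in the right spirit — the paper's cover uses $C_{i,x}=N[x,V_{i+1}]\cup\{b_{i+2},b_{i+3}\}$ for $x\in V_i$ — but the accounting of where the saving of two comes from is wrong. There is no redundancy to be squeezed out of link cliques between second-neighbour blocks ($V_i$ is simply complete to $V_{i+2}$, and these edges are absorbed into a handful of big cliques such as $V_i\cup V_{i+2}\cup\{a_i,b_{i+1},a_{i+2}\}$). The saving comes entirely from the fixed part: the $10+|V_0|$ vertices of $W\cup V_0$ are covered by only $9$ cliques (one for $\{b_1,\ldots,b_5\}$, five of the form $V_i\cup V_{i+2}\cup\{a_i,b_{i+1},a_{i+2}\}$, and three of the form $V_0\cup V_i\cup V_{i+2}\cup\{a_i,a_{i+2}\}$) when $V_0\neq\emptyset$, and by only $6$ when $V_0=\emptyset$, giving $\cc(G)\leq n-1-|V_0|$ respectively $n-4$ with no case analysis at all. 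Since your write-up exhibits no explicit cliques and verifies no coverage, and its structural premises are incorrect, it does not yet constitute a proof; the fix is to restate the definition correctly and then the explicit nine-plus-per-vertex cover above settles the lemma in a few lines.
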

\begin{proof}
	Let $V_i, a_i,b_i, 1\leq i\leq 5$, be as in the definition and consider the following cliques in $G$ (reading subscripts modulo 5).
	\begin{align*}
		&C_0=\{b_1,\ldots,b_5\},\\
		\forall i\in\{1,\ldots ,5\},\qquad &C_i=V_i\cup V_{i+2}\cup \{a_i,b_{i+1},a_{i+2}\}, \\
		\forall i\in\{1,2,3\},\qquad &C'_i=V_0\cup V_{i}\cup V_{i+2}\cup \{a_i,a_{i+2}\}, \\
		\forall i\in\{1,\ldots,5\}, \forall x\in V_i, \quad &C_{i,x}= N[x,V_{i+1}]\cup \{b_{i+2},b_{i+3}\}.
	\end{align*}
	The collection of cliques $\mathscr{C}=\{C_i: 0\leq i\leq 5\}\cup \{C'_i: 1\leq i\leq 3\} \cup \{C_{i,x}: 1\leq i\leq 5, x\in V_i \}$ forms a clique covering for $G$. If $V_0$ is nonempty, then $|\mathscr{C}|\leq n-2$ and if $V_0$ is empty, then one can remove the cliques $C'_1,C'_2,C'_3$ from $\mathscr{C}$ to obtain a clique covering of size $n-4$.
\end{proof}
\subsection{Non-3-substantial antiprismatic graphs}\label{sub:degenerate}
Finally, we encounter the case of non-3-substantial antiprismatic graphs, i.e. antiprismatic graphs whose all triads meet a fixed set of at most two vertices. They include several cases that should be dealt separately and this makes the proof lengthy. The following lemma would be fruitful which will be used in the proof of the next theorem as well as in \pref{sec:noap}. 
\begin{lem} \label{lem:2sub}
	Assume that $G$ is an antiprismatic graph on $n$ vertices which is not three-cliqued. Also, let $a_0$ be a vertex of $G$ which is contained in a triad, let $A=N_{\overline{G}}(a_0)$ and $\tilde{A}\subseteq A$ be the set of all vertices in $ A$ which have no non-neighbour in $A$. If for every $v\in A\setminus \tilde{A}$, $N_G(v,V(G)\setminus A)$ is a clique of $ G $, then $\cc(G)\leq n$ and equality holds if and only if $\overline{G}$ is isomorphic to a twister.
\end{lem}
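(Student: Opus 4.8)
The plan is to work with the partition $V(G)=\{a_0\}\cup A\cup B$, where $B:=N_G(a_0)$, so that $a_0$ is complete to $B$ and anticomplete to $A$, and to combine the antiprismatic condition with the hypothesis on the sets $N_G(v,B)$ to produce an explicit clique covering of size at most $n$, keeping careful track of when $n-1$ cliques already suffice.

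First I would determine the structure of $A$. Since $a_0$ lies in a triad and is anticomplete to $A$, every triad through $a_0$ has the form $\{a_0,y,z\}$ with $y,z\in A$ nonadjacent. Applying the antiprismatic property to such a triad and an arbitrary $w\in A\setminus\{y,z\}$ (which is nonadjacent to $a_0$, hence must have its two neighbours in the triad among $\{y,z\}$) forces $w$ adjacent to both $y$ and $z$; iterating this, $\overline{G}[A]$ is the disjoint union of an induced matching $\{y_1z_1,\dots,y_mz_m\}$ with $m\geq 1$ and the set $\tilde A$ of its isolated vertices, and $A=A_1\cup A_2$ is the union of the cliques $A_1=\tilde A\cup\{y_1,\dots,y_m\}$ and $A_2=\{z_1,\dots,z_m\}$. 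Also, since $a_0$ is complete to $B$, no triad of $G$ lies inside $B$; and for each $i$, applying the antiprismatic property to the triad $\{a_0,y_i,z_i\}$ and the vertices of $B$ shows that $\{N_G(y_i,B),N_G(z_i,B)\}$ partitions $B$ into two cliques (using the hypothesis, since $y_i,z_i\in A\setminus\tilde A$).

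Next I would assemble a clique covering. The edges inside $A$ (a complete graph minus the matching $\{y_iz_i\}$) and inside $B$ (the union of two cliques) each need only a few cliques; the edges at $a_0$ go into $\{a_0\}\cup N_G(y_1,B)$ together with the clique $N_G(z_1,B)$. The delicate part is the $A$--$B$ edges: for $v\in A\setminus\tilde A$ the set $N_G(v,B)$ is a clique, so $\{v\}\cup N_G(v,B)$ is a clique, and by grouping vertices of $A$ whose neighbourhoods in $B$ coincide (or are nested) one controls the total count, while each $w\in\tilde A$ is absorbed using that $N_G(w,B)$ is a union of at most two cliques (as $B$ is). Counting, the resulting covering has size at most $n$, and at most $n-1$ except when the structure is very rigid --- essentially when $m$ and $|\tilde A|$ are small and the partitions $\{N_G(y_i,B),N_G(z_i,B)\}$ essentially coincide. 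In that residual case I would bring in the hypothesis that $G$ is not three-cliqued: if the partitions coincide to a single $\{B_1,B_2\}$ and $\tilde A$ is placed suitably, then $\{a_0\}\cup B_1$, $B_2$ and the two cliques of $A$ collapse into three cliques covering $V(G)$, contradicting this hypothesis; so only finitely many small configurations survive, and a direct check on them forces $\overline{G}$ to be the twister. The converse --- that $\cc(G)=n=10$ whenever $\overline{G}$ is a twister --- is exactly \pref{lem:twister}.

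The main obstacle is this residual rigid case: one must certify that among all near-extremal configurations only the twister attains $\cc(G)=n$, which requires a delicate interplay of the not-three-cliqued hypothesis, the clique-partition structure of $B$, and the adjacency of $\tilde A$ to $B$, rather than a single clean estimate.
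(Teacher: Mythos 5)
Your setup is the same as the paper's: the decomposition $V(G)=\{a_0\}\cup A\cup B$, the observation that the non-edges inside $A$ form an induced matching $x_1y_1,\dots,x_ky_k$ plus the isolated vertices $\tilde A$, and the fact that each matched pair partitions $B$ into the two cliques $N_G(x_i,B)$ and $N_G(y_i,B)$. All of that is correct and is exactly statement-for-statement how the paper begins. The problem is that everything after this point --- which is where essentially all of the work of the lemma lives --- is asserted rather than proved, and the single covering scheme you sketch does not obviously give the bound.

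Concretely: your covering uses one clique $\{v\}\cup N_G(v,B)$ per $v\in A\setminus\tilde A$ and up to two cliques per $w\in\tilde A$, plus a bounded number of cliques for $E(A)$, $E(B)$ and the edges at $a_0$. That costs roughly $2k+2|\tilde A|+O(1)$ cliques, while $n=1+2k+|\tilde A|+|B|$; so your count is only $\le n$ when $|B|\gtrsim|\tilde A|$, which need not hold. The paper gets around this by playing off \emph{two} coverings against each other: $\mathscr{C}^1_i$, built from $\mathscr{N}[B;A]$ (cost controlled by $|B|$), and $\mathscr{C}^2_i$, built from $\mathscr{N}[A\setminus\tilde A;B]\cup\mathscr{N}[\tilde A;U_i]\cup\mathscr{N}[\tilde A;V_i]$ (cost controlled by $|A|+|\tilde A|$), and showing that if \emph{both} fail to reach $n-1$ then the sizes $|U_i|,|V_i|$ are pinned between $|\tilde A|+k-1$ and $|\tilde A|+k$ (for $k\ge3$), resp.\ $|\tilde A|+1$ and $|\tilde A|+3$ (for $k=2$). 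Your claim that ``only finitely many small configurations survive'' is also unjustified: even in the near-extremal regime $\tilde A$ and the $B_j$'s can a priori be arbitrarily large, and the paper needs the further case analysis (its statements (2)--(4) for $k\ge3$ and (5)--(11) for $k=2$, the latter using the finer partition $B_1,\dots,B_4$ and the adjacency pattern of $\tilde A$ to the $\hat B_j$'s) to reduce to $k=2$, $|\tilde A|=1$, $|B_j|=1$ for all $j$, which is the twister. So the proposal identifies the right skeleton but leaves the decisive counting and extremal arguments as gaps that cannot be filled by the covering you describe without substantial additional work.
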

\begin{proof}
	Let $B=N_G(a_0)=V(G)\setminus (A\cup\{a_0\})$. Since $ G $ is antiprismatic and $ a_0 $ is in a triad, we may assume that $A\setminus\tilde{A}=X\cup Y$, where $X=\{x_1,\ldots, x_k\}$ and $Y=\{y_1,\ldots, y_k\}$, such that the only non-edges whose both endpoints are in $A$ are $x_iy_i$, $i=1,\ldots, k$. Also, every vertex in $ B $ is adjacent to exactly one of $ x_i $ and $ y_i $, for each $ i\in\{1,\ldots, k\} $. Thus, by the assumption, for each $i\in\{1,\ldots,k\}$, $B$ is partitioned into two disjoint cliques $U_i=N(x_i,B)$ and $V_i=N(y_i,B)$. We may assume w.l.o.g. that $|V_i|\leq |U_i|$, for all $i\in\{1,\ldots, k\}$. First, we claim that \vsp
	
	(1) \textit{We have $k\geq 2$. Also, for every $i\in\{1,\ldots,k\}$, $U_i$ is not complete to $V_i$ and every vertex in $U_i$ has a neighbour in $V_i$ and vice versa.} \vsp
	
	For if $ k=1 $, then $V(G)$ is the union of three cliques $ A\setminus \{y_1\}$, $ V_1\cup \{y_1\}$ and $U_1\cup \{a_0\}$, a contradiction. Also, if $U_i$ is complete to $V_i$ for some $ i\in\{1,\ldots,k\} $, then $V(G)$ is the union of  three cliques  $X$, $A\setminus X$ and $U_i\cup V_i\cup \{a_0\}$, a contradiction.  Finally, assume that there exists a vertex $u\in U_i$ with no neighbour in $V_i$. If $ u $ is nonadjacent to some $ x_j \in X $, then $ u \in V_j $ and since $V_j $ is a clique, we have $V_i\cap V_j=\emptyset $ and thus $ x_j $ is complete to $ V_i $. Similarly, if $ u $ is nonadjacent to some $ y_j \in Y $, then $ y_j $ is complete to $ V_i $.  Therefore, $V_i$ is complete to $ N_{\overline{G}}(u,X\cup Y)$ and consequently $V$ is the union of three cliques $U_i\cup \{a_0\}$, $V_i\cup N_{\overline{G}}(u,X\cup Y) $ and $A\setminus N_{\overline{G}}(u,X\cup Y)$, a contradiction. This proves (1). \vsp
	
	By (1), for each $i\in\{1,\ldots,k\}$, there exist $u_i\in U_i$ and $v_i\in V_i$, such that $u_i$ and $v_i$ are nonadjacent. Then, for each $i\in\{1,\ldots,k\}$, $X_i=N(u_i,X\cup Y)$ and $Y_i=N(v_i,X\cup Y)$ are two disjoint cliques partitioning $X\cup Y$ and it is evident that $|X_i|=|Y_i|=k$.
	Now, for every $i\in\{1,\ldots,k\}$, define
	\[\mathscr{C}^1_i= \mathscr{N}[B; A]\cup \mathscr{N}[X_i;Y_i\cup \tilde{A}]\cup \mathscr{N}[V_i;U_i\cup \{a_0\}]\cup \{U_i\cup \{a_0\}, V_i\cup \{a_0\}\}, \]
	and
	\[\mathscr{C}^2_i=\mathscr{N}[A\setminus \tilde{A};B]\cup \mathscr{N}[\tilde{A};U_i]\cup \mathscr{N}[\tilde{A};V_i]\cup \mathscr{N}[X;Y\cup \tilde{A}]\cup \mathscr{N}[V_i;U_i\cup \{a_0\}]\cup \{X\cup \tilde{A}\}.\]
	First, note that due to (1), the edges in $ E(\{a_0\}, U_i\cup V_i) $ are covered by the cliques in $ \mathscr{N}[V_i;U_i\cup \{a_0\}] $. Also, by (1), we have $k\geq 2$. Thus,  for each $i\in \{1,\ldots k\}$, the edges in $E(A)$ are covered by the cliques in  $\mathscr{N}[B; A]\cup \mathscr{N}[X_i;Y_i\cup \tilde{A}]\subseteq \mathscr{C}^1_i$, and so $\mathscr{C}^1_i$ is a clique covering for $G$ of size $|U_i|+2|V_i|+k+2$.
	
	Moreover, note that for each $i\in\{1,\ldots,k\}$, the edges in $E(U_i)$ and $E(V_i)$ are covered by the cliques in $ \mathscr{N}[A\setminus \tilde{A};B]\subseteq \mathscr{C}^2_i$. Also, if $k\geq 3$, then the edges in $E(Y)$ are covered by the cliques in $\mathscr{N}[X;Y\cup \tilde{A}]\subseteq \mathscr{C}^2_i$. Hence, if $k\geq 3$, then for every $i\in \{1,\ldots ,k\}$, $\mathscr{C}^2_i$ is also a clique covering for $G$ of size $|A|+|\tilde{A}|+k+|V_i|+1$. Now, we prove the lemma in two cases of $ k\geq 3 $ and $ k=2 $. 
	
	First, assume that $ k\geq 3 $. If there exists some $i\in\{1,\ldots,k\}$, such that either $|V_i|\leq |\tilde{A}|+k-2$, or $|\tilde{A}|+k\leq |U_i|-1$,  then either $\mathscr{C}^1_i$ or $\mathscr{C}^2_i$ is a clique covering for $G$ of size at most $n-1$. Thus, suppose that for every $i\in\{1,\ldots,k\}$, 
	\[|\tilde{A}|+k-1 \leq |V_i|\leq |U_i|\leq  |\tilde{A}|+k.\]
	We provide a clique covering for $ G $ of size at most $ n-1 $ in the following three possibilities. \vsp
	
	(2) \textit{Assume that $ k\geq 3 $ and for every vertex $x_i\in X$ and $v\in A\setminus\{y_i\}$, $x_i$ and $v$ have a common neighbour in $B$. Then $ \cc(G)\leq n-1 $. } \vsp
	
	In this case, the edges whose one end is in $ X $ and the other end is in $ A $ are covered by the cliques in $ \mathscr{N}[B; A] $. Now, in the clique covering $\mathscr{C}^1_1$, replace the cliques in $\mathscr{N}[X_1;Y_1\cup \tilde{A}]$ with the clique $Y\cup \tilde{A}$ to obtain a clique covering of size $|U_1|+2|V_1|+3\leq |U_1|+|V_1|+|\tilde{A}|+2k\leq n-1$. This proves (2).\vsp
	
	(3)  \textit{Assume that $ k\geq 3 $ and for every vertex $x_i\in X$ and $v\in X\cup Y\setminus \{y_i\}$, $x_i$ and $v$ have a common neighbour in $B$, however, there exists a vertex $x_{i_0}\in X$ and a vertex $v_0\in \tilde{A}$ such that $x_{i_0}$ and $v_0$ have no common neighbour in $B$. Then $ \cc(G)\leq n-1 $.} \vsp
	
	In this case, $N(v_0,B)\subseteq V_{i_0}$. Now, if $|U_{i_0}|=|\tilde{A}|+k$, then remove the clique $ N[v_0, U_{i_0}] $ from $\mathscr{C}^2_{i_0}$ to obtain a clique covering of size $n-1$. Otherwise, if $|U_{i_0}|=|V_{i_0}|=|\tilde{A}|+k-1$, then in the collection $ \mathscr{C}^1_{i_0} $, replace the cliques in $\mathscr{N}[X_{i_0};Y_{i_0}\cup \tilde{A}]$ with the cliques $ X\cup \tilde{A} $ and $Y\cup \tilde{A}$ to obtain a clique covering of size $|U_{i_0}|+2|V_{i_0}|+4\leq |U_{i_0}|+|V_{i_0}|+|\tilde{A}|+2k\leq n-1$ (note that due to the assumption, the edges in $ E(X,Y) $ are covered by the cliques in $ \mathscr{N}[B;A] $). This proves (3).\vsp
	
	(4)  \textit{Assume that $ k\geq 3 $ and there exist vertices $x_{i_0}\in X$ and $v_0\in X\cup Y\setminus \{y_{i_0}\}$ which have no common neighbour in $B$. Then, $\cc(G)\leq n-1$.} \vsp
	
	First, assume that $v_0=y_{j_0}\in Y$, for some $ j_0\neq i_0 $. Thus, $U_{i_0}\cap V_{j_0}=\emptyset $ and so $U_{i_0}\subseteq U_{j_0}$ and $V_{j_0}\subseteq V_{i_0}$. Therefore, since $ |U_{j_0}|-|V_{j_0}|\leq 1 $, we have $U_{i_0}=U_{j_0}$ and $V_{i_0}=V_{j_0}$. In this case, in  $\mathscr{C}^2_{i_0}$, replace the cliques $N[x,B], x\in\{x_{i_0},y_{i_0},x_{j_0},y_{j_0}\}$, with the cliques $ U_{i_0}\cup \{x_{i_0},x_{j_0}\}$ and $V_{i_0}\cup \{y_{i_0},y_{j_0}\}$ to obtain a clique covering of size at most $n-1$. Now, assume that $v_0=x_{j_0}\in X$, for some $ j_0\neq i_0 $. Thus, $U_{i_0}\cap U_{j_0}=\emptyset$ and then $U_{j_0}\subseteq V_{i_0}$ and $U_{i_0}\subseteq V_{j_0}$. Since $ |V_i|\leq |U_i|$ for all $ i $, we have  $U_{j_0}=V_{i_0}$ and $U_{i_0}=V_{j_0}$ and in $\mathscr{C}^2_{i_0}$, replacing the cliques $N[x,B], x\in\{x_{i_0},y_{i_0},x_{j_0},y_{j_0}\}$ with the cliques $U_{i_0}\cup \{x_{i_0},y_{j_0}\}$ and $V_{i_0}\cup \{y_{i_0},x_{j_0}\}$ yields a clique covering of size at most $n-1$. This proves (4). \vsp 
	
	Hence, if $ k\geq 3 $, then by (2), (3) and (4), we are done. Now, assume that $ k=2 $. 
	Note that for every $i\in\{1,2\}$, $\mathscr{C}^1_i$ is a clique covering for $G$. Also, adding the clique $Y$ to $\mathscr{C}^2_i$  yields a clique covering for $G$. Thus, if there exists some $i\in\{1,2\}$, such that either $|V_i|\leq |\tilde{A}|$, or $|\tilde{A}|\leq |U_i|-4$,  then $\cc(G)\leq n-1$. Hence, suppose that for every $i\in\{1,2\}$,
	\[|\tilde{A}|+1\leq  |V_i|\leq |U_i|\leq  |\tilde{A}|+3.\]
	Now, let $B_1=U_1\cap V_2$, $B_2=U_1\cap U_2$, $B_3=V_1\cap U_2$ and $B_4=V_1\cap V_2$. 
	Also, for every $i\in\{1,2,3,4\}$, let $\hat{B}_i\subseteq B_i$ be the set of vertices in $B_i$ which are anticomplete to $B_{i+2}$ (reading $ i+2 $ modulo $4$). In the sequel, we need the following three facts. \vsp
	
	(5) \textit{For every $ i\in\{1,2\} $, $ B_i $ is not complete to $ B_{i+2} $ and the edges $x_1x_2, x_1y_2, y_1x_2$ and $y_1y_2$ are covered by the cliques in $\mathscr{N}[B;A]$. Also, for every $ {v}\in \tilde{A} $, if $ N({v},B) $ is not a clique, then all edges incident with $ {v} $ are covered by the cliques in $\mathscr{N}[B;A]$.} \vsp
	
	To see the first claim, note that if say $ B_1 $ is complete to $ B_3 $,  then $ V(G) $ is the union of three cliques $ B_1\cup B_2\cup B_3\cup \{a_0\} $, $ B_4\cup Y $ and $ \tilde{A}\cup X $, a contradiction. Therefore,  for every $ i\in\{1,2\} $, $ B_i $ is not complete to $ B_{i+2} $ and thus, all sets $B_i$, $1\leq i\leq 4$, are nonempty. Hence, the four edges $x_1x_2, x_1y_2, y_1x_2$ and $y_1y_2$ are covered by the cliques in $\mathscr{N}[B;A]$.
	Now, assume that the vertex $v\in \tilde{A}$ is adjacent to vertices $b,b'\in B$, where $ b $ and $ b' $ are nonadjacent. Let $ v'\in A $ be a vertex distinct from $ v $. Since $G$ is claw-free, $v'$ is adjacent to at least one of $b,b'$ and thus the edge $vv'$ is covered by the clique $N[b,A]$ or $N[b',A]$. This proves (5). \vsp
	
	(6) \textit{For every $ v\in \tilde{A} $, either $ v $ is anticomplete to $ B $, or there is some $ i\in\{1,2,3,4\} $ such that $ v $ is complete to $ \hat{B}_i\cup \hat{B}_{i+1} $ $($reading $ i+1 $ modulo $4)$.}\vsp
	
	Assume that $ v\in \tilde{A} $ and for every $ i\in\{1,2,3,4\} $, $ v $ is not complete to $ \hat{B}_i\cup \hat{B}_{i+1} $. Thus, there exists some $ i\in\{1,2\} $ such that $ v $ has two non-neighbours in $ \hat{B}_i $ and $ \hat{B}_{i+2} $, say  $ b_1\in \hat{B}_1 $ and $ b_3\in \hat{B}_3 $. Therefore, $ v $ is anticomplete to $ B_2\cup B_4 $ (since if say $ v $ has a neighbour $ b $ in $ B_2\cup B_4 $, then $ \{b,b_1,b_3,v\} $ would be a claw). Also, since by (5), $ B_2 $ is not complete to $ B_4 $, by a similar argument, $ v $ is anticomplete to $ B_1\cup B_3 $ and so $ v $ is anticomplete to $ B $.  This proves (6). \vsp
	
	Now, let $\mathscr{C}'$ be the collection of cliques obtained from $\mathscr{C}^2_1$ by removing all the cliques in $ \mathscr{N}[A\setminus \tilde{A};B] \cup \mathscr{N}[X;Y\cup \tilde{A}]$  and adding the cliques $B_1\cup \{x_1,y_2\}$, $B_2\cup \{x_1,x_2\}$, $B_3\cup \{x_2,y_1\}$, $B_4\cup \{y_1,y_2\}$, $ Y\cup \tilde{A} $, $ U_1 $ and $V_1$. Note that $ \mathscr{C}' $ is a clique covering for $ G $ of size $ 2|\tilde{A}|+|V_1|+8 $.
	Also, we have \vsp
	
	(7) \textit{If there exists $ v_0\in \tilde{A} $ which is complete to $ \hat{B}_1\cup \hat{B}_2 $, then $ \mathscr{C}'\setminus \{U_1\} $ is a clique covering for $ G $.} \vsp
	
	Since $ U_1=B_1\cup B_2 $, it is enough to show that edges in $ E(B_1,B_2) $ are covered by the cliques in $ \mathscr{C}'\setminus \{U_1\}  $. To see this, note that the cliques in $\mathscr{N}[V_1;U_1\cup \{a_0\}]$ cover all edges between $B_1$ and $B_2$ except the edges between $\hat{B}_1$ and $\hat{B}_2$, while these edges are covered by the clique $N[v_0,U_1]$. This proves (7).\vsp
	
	Now, we complete the proof for $k=2$ through the following four possibilities. \vsp
	
	(8) \textit{Assume that $k=2$ and for every vertex $v\in \tilde{A}$, $N(v,B)$ is not a clique of $G$. Then $ \cc(G)\leq n-1 $.} \vsp
	
	First, note that by (5), all edges whose both endpoints are in $A$ are covered by  the cliques in $\mathscr{N}[B; A]$. Hence, removing the cliques in $\mathscr{N}[X_i;Y_i\cup \tilde{A}]$ from $\mathscr{C}^1_i$ leads to a clique covering for $G$ of size $|U_i|+2|V_i|+2$. Therefore, if there exists some $i\in\{1,2\}$ such that $|V_i|\leq |\tilde{A}|+2$, then $\cc(G)\leq n-1$. Now, assume that for every $i\in\{1,2\}$, $|U_i|=|V_i|=|\tilde{A}|+3$. Thus, $ |B_1|=|B_3| $ and $ |B_2|=|B_4| $. If $ \tilde{A} $ is empty, then remove the cliques $ X\cup \tilde{A} $ and $ Y\cup \tilde{A} $ from $ \mathscr{C}' $ to get a clique covering  of size $ |V_1|+6= n-2 $. Now, assume that $ \tilde{A} $ is nonempty and let $ v_0\in \tilde{A} $. Due to (6), assume w.l.o.g. that $ v_0 $ is complete to $ \hat{B}_1\cup \hat{B}_2 $.  Thus, by (7), $ \mathscr{C}'\setminus \{U_1\} $ is a clique covering of size $ 2|\tilde{A}|+|V_1|+7=n-1 $. This proves (8). \vsp
	
	(9) \textit{Assume that $k=2$ and there are at least two vertices $v,v'\in \tilde{A}$ such that  $N(v,B)$ and $N(v',B)$  are both cliques of $G$. Then $ \cc(G)\leq n-1 $.} \vsp
	
	First, assume that $ N(v,B) $ is empty. In this case, in the clique covering $\mathscr{C}^2_i$, one can replace the cliques $ N[v,U_i],N[v,V_i], N[v',U_i]$ and $N[v',V_i] $ with the cliques $ N[v',B]$ and $Y$ to obtain a clique covering of size $2|\tilde{A}|+|{V}_i|+5\leq |\tilde{A}|+|{U}_i|+|V_i|+4 = n-1$. Now, assume that $ N(v,B) $ is nonempty. If there exists some $i\in\{1,2\}$ such that $|U_i|\geq |\tilde{A}|+2$, then adding the clique $ N[v,B] $ to the above collection yields a clique covering of size at most $ n-1 $. Finally, assume that for every $i\in\{1,2\}$, $|U_i|=|V_i|=|\tilde{A}|+1$. By (6), w.l.o.g. we may assume that $ v $ is complete to $ \hat{B}_1\cup \hat{B}_2 $. Thus, by (7), the collection obtained from  $\mathscr{C}'$ by removing the clique $ U_1 $ and merging the pairs $ (N[v,U_1],N[v,V_1])$ and $(N[v',U_1],N[v',V_1]) $  is a clique covering for $G$ of size $2|\tilde{A}|+|V_1|+5=n-1$. This proves (9).\vsp
	
	(10) \textit{Assume that $k=2$ and there exists a vertex  $v_0\in \tilde{A}$ such that $N(v_0,B)$ is a nonempty clique of $G$ and for every vertex $v \in\tilde{A}\setminus \{v_0\}$, $N(v,B)$ is not a clique of $G$. Then $ \cc(G)\leq n-1 $.}\vsp 
	
	Assume w.l.o.g. that $v_0$ has a neighbour in $B_1$.  First, note that in the collection $ \mathscr{C}^2_i $, merging the pair $(N[v_0,U_i],N[v_0,V_i])$  and adding the clique $Y$ yield a clique covering for $ G $ of size $2|\tilde{A}|+|{V}_i|+7$. Thus, if there exists some $i\in\{1,2\}$, such that $|U_i|=|\tilde{A}|+3$, then $\cc(G)\leq n-1$. Moreover, by (5),  the cliques in $\mathscr{N}[B;A]$ cover all the edges in $ E(A)\setminus \{v_0x_2,v_0y_1\} $  (note that since $v_0$ has a neighbour in $B_1$, the edges $v_0x_1$ and $v_0y_2$ are covered). Thus, replacing the cliques in $\mathscr{N}[X_i; Y_i\cup \tilde{A}]$ with the clique $\tilde{A}\cup \{x_2,y_1\}$ in $\mathscr{C}^1_i$ leads to a clique covering for $G$ of size $|U_i|+2|V_i|+3$. Thus, if there exists some $i\in\{1,2\}$, such that $|V_i|=|\tilde{A}|+1$, then $\cc(G)\leq n-1$. Consequently, assume that for every $i\in\{1,2\}$, $|U_i|=|V_i|=|\tilde{A}|+2$. Thus, $|B_1|=|B_3|$ and $|B_2|=|B_4|$. By (6), w.l.o.g. we may assume that $ v_0 $ is complete to $ \hat{B}_1\cup \hat{B}_2 $. Thus, by (7), the collection obtained from  $\mathscr{C}'$ by removing $ U_1 $ and merging the pair $(N[v_0,U_1], N[v_0,V_1]) $ is a clique covering for $G$ of size $2|\tilde{A}|+|V_1|+6=n-1$. This proves (10). \vsp
	
	(11) \textit{Assume that $k=2$ and there exists a vertex $v_0\in \tilde{A}$, such that $N(v_0,B)$ is empty and for every vertex $v \in\tilde{A}\setminus \{v_0\}$, $N(v,B)$ is not a clique of $G$. Then $\cc(G)\leq n$ and equality holds if and only if $\overline{G}$ is isomorphic to a twister.} \vsp
	
	Note that removing the cliques $N[v_0,U_i]$ and $N[v_0,V_i]$ from $\mathscr{C}^2_i$ and adding the clique $Y$ yield a clique covering for $ G $ of size $2|\tilde{A}|+|{V}_i|+6$. Thus, if there exists some $i\in\{1,2\}$, such that $|\tilde{A}|+2\leq |U_i|$, then $\cc(G)\leq n-1$. Hence, suppose that for every $i\in \{1,2\}$, $|U_i|=|V_i|=|\tilde{A}|+1$. First, assume that $|\tilde{A}|\geq 2$ and let $v\in \tilde{A}\setminus \{v_0\}$. By (6), assume w.l.o.g. that $v$ is complete to $\hat{B}_1\cup \hat{B}_2$. Thus, by (7), the collection of cliques obtained from $\mathscr{C}'$ by removing the cliques $ U_1 $, $ N[v_0,U_1] $ and $N[v_0,V_1]$ is a clique covering for $G$ of size $2|\tilde{A}|+|V_1|+5=n-1$. Finally, assume that $|\tilde{A}|=1$. Then, $|B_1|=|B_2|=|B_3|=|B_4|=1$, say $ B_i=\{b_i\} $, $ i=1,2,3,4 $, and so $|V(G)|=10$. Therefore, by (5), for every $i\in\{1,2\}$, $B_i$ is anticomplete to $B_{i+2}$.  Hence, $G$ is isomorphic to the complement of a twister (by mapping the vertices $ a_0,v_0,b_1,b_2,b_3, b_4, x_1,x_2,y_1,y_2 $ of $ G $ to the vertices $ u_1,u_2,v_4,v_6,v_8,v_2,v_1,v_3,v_5,v_7 $ of the complement of a twister in \pref{fig:twister}, respectively). Thus, by \pref{lem:twister}, $\cc(G)=10$. This proves (11). \vsp
	
	Finally, (8), (9), (10) and (11) imply the assertion for $k=2$. This proves \pref{lem:2sub}.
\end{proof}
Now, we are ready to complete the proof of \pref{thm:oap} providing appropriate clique covering for non-$3$-substantial antiprismatic graphs.
\begin{thm} \label{thm:3sub}
Let $G$ be an antiprismatic graph on $n$ vertices which is neither $3$-substantial nor three-cliqued and  contains a triad. Then $\cc(G)\leq n$ and equality holds if and only if $\overline{G}$ is isomorphic to a twister. 
\end{thm}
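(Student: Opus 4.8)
The plan is to reduce the whole statement to one application of \pref{lem:2sub}. Since $G$ is not $3$-substantial but contains a triad, there is a nonempty set $Z\subseteq V(G)$ with $|Z|\le 2$ meeting every triad of $G$; fix such a $Z$ of minimum size, so that $|Z|\in\{1,2\}$ and, by minimality, every vertex of $Z$ lies in some triad. For a vertex $a\in Z$ write $A=N_{\overline G}(a)$, $B=N_G(a)$, and let $\tilde A\subseteq A$ be the set of vertices of $A$ with no non-neighbour in $A$, as in \pref{lem:2sub}. The first step is the observation that the hypothesis of \pref{lem:2sub} for $a$ -- that $N_G(v,B)$ is a clique for every $v\in A\setminus\tilde A$ -- can only fail in a very restricted way: if $v\in A$ has a non-neighbour $v'\in A$ and two non-adjacent neighbours $b,b'\in B$, then $\{a,v,v'\}$ is a triad, and applying the antiprismatic property to this triad and to each of $b,b'$ (both of which are distinct from $a,v,v'$ and adjacent to $a$ and to $v$) shows $v'$ is non-adjacent to $b$ and to $b'$; hence $\{v',b,b'\}$ is a triad, and it avoids $a$.

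When $|Z|=1$, say $Z=\{a_0\}$, this already finishes the proof: a failure of the hypothesis for $a_0$ would produce a triad avoiding $a_0$, contradicting the choice of $Z$. Hence \pref{lem:2sub} applies with $a_0$, giving $\cc(G)\le n$ with equality if and only if $\overline G$ is a twister; and since the complement of a twister contains two disjoint triads it has no one-vertex hitting set, so in this subcase equality never occurs, in agreement with the statement.

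Now suppose $|Z|=2$, say $Z=\{a_0,a_1\}$. If the hypothesis of \pref{lem:2sub} holds for $a_0$ or for $a_1$, we are done exactly as above, since both $a_0$ and $a_1$ lie in triads and $G$ is not three-cliqued. So assume it fails for both. By the observation, $a_0$ lies in a triad $\{a_0,s,s'\}$ in which $s$ has two non-adjacent neighbours $b,b'\in N_G(a_0)$, and then $\{s',b,b'\}$ is a triad avoiding $a_0$, hence containing $a_1$; symmetrically there is an analogous configuration around $a_1$ producing a triad through $a_0$. The remaining work is a case analysis according to whether $a_0a_1\in E(G)$ and according to whether $a_1$ plays the role of ``$s'$'' or of one of ``$b,b'$'' in the triad $\{s',b,b'\}$ (and likewise for $a_0$). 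In each configuration one uses the antiprismatic property repeatedly to pin down the adjacencies between $a_0$, $a_1$, their common and private non-neighbours, and the sets $U_i,V_i,\tilde A$ appearing in the proof of \pref{lem:2sub}, and concludes one of three things: $V(G)$ is the union of three cliques, which is excluded by hypothesis; $G$ is the complement of a twister, so $\cc(G)=n$ by \pref{lem:twister}; or the explicit clique covering constructed in the proof of \pref{lem:2sub} for $a_0$ (or for $a_1$) can be shortened by one clique, giving $\cc(G)\le n-1$.

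The last step is where the difficulty lies: isolating the single $10$-vertex configuration in which the bound $n$ is attained from all the larger configurations in which it can be beaten by one. Everything before it is short; the main obstacle is carrying out this local case analysis around $a_0$ and $a_1$ cleanly, which is the reason the argument is long.
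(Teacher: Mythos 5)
Your reduction framework is essentially the paper's: the paper also splits on whether a single vertex or a pair of vertices meets all triads, verifies the hypothesis of \pref{lem:2sub} whenever it can (your observation that a failure of that hypothesis at $a$ produces a triad avoiding $a$ is exactly the argument the paper uses in the non-$2$-substantial case and again to dispose of the sub-case where the two hitting vertices are nonadjacent and share no triad), and your $|Z|=1$ case is complete and correct.

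The gap is that the entire hard case --- the hypothesis of \pref{lem:2sub} failing at both vertices of $Z$ --- is announced but not carried out, and the one concrete mechanism you propose for it does not work. You claim that in each surviving configuration ``the explicit clique covering constructed in the proof of \pref{lem:2sub} for $a_0$ can be shortened by one clique.'' But that covering is built from the sets $U_i=N(x_i,B)$ and $V_i=N(y_i,B)$, which are cliques precisely because of the hypothesis you are now assuming fails; when $N_G(v,B)$ is not a clique for some $v\in A\setminus\tilde A$, the collections $\mathscr{C}^1_i$ and $\mathscr{C}^2_i$ are not clique coverings at all, so there is nothing to shorten. The paper does something genuinely different here: it fixes the two (or three) interacting triads through $u$ and $v$, partitions $V(G)$ minus those triads into explicitly described pieces ($A,B,C$ with $C$ a clique in the nonadjacent case; $A,B,C,D,E$ in the adjacent case), derives the adjacencies between the pieces from the antiprismatic property, and then writes down new clique coverings of size $n-1$ piece by piece --- and in the sub-case $C=\emptyset$ it does not construct anything locally but instead identifies $G$ as an extension of a member of $\mathcal{F}_1$ and invokes \pref{lem:F1}. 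Neither the explicit constructions nor the $\mathcal{F}_1$ reduction is recoverable from your sketch, and isolating the twister as the unique equality case lives entirely inside that omitted analysis (via \pref{lem:2sub} and \pref{lem:twister}), so as written the proposal establishes only the easy half of the theorem.
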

\begin{proof} 
	If $G$ is not $2$-substantial, then there exists a vertex $u$ meeting all triads of $ G $. Let $ A $ be the set of non-neighbours of $ u $ and let $ \{u,v,w\} $ be a triad of $ G $. Then, $ N(v,V(G)\setminus A) $ is a clique (otherwise, $ v $ is adjacent to $ x,y \in V(G)\setminus A $, where $ x,y $ are nonadjacent and then $ \{x,y,w\}$ is a triad not containing $ u $, a contradiction). Therefore, the result follows from \pref{lem:2sub}.
	Now, assume that $G$ is $2$-substantial and not $3$-substantial. Then, 
	there exist two vertices $u,v$ meeting all triads. We prove the theorem in the following two cases. \vsp
	
	(1) \textit{If $u$ and $v$ are nonadjacent, then $ \cc(G)\leq n $ and equality holds if and only if $ \overline{G} $ is isomorphic to a twister.}\vsp
	
	If $u$ and $v$ are not contained in a common triad, then the result follows from \pref{lem:2sub}. 
	Thus, assume that $u$ and $v$ are contained in a triad, say $T=\{u,v,w\}$. Let $A,B,C$ be the sets of non-neighbours of $u,v,w$ in $V(G)\setminus T$, respectively, which partition $V(G)\setminus T$. By the assumption, $ C $ is a clique of $ G $. If $C=\emptyset$, then
	$ G $ is an extension of a member of $ \mathcal{F}_1 $ that will be handled in \pref{lem:F1} (for the definition of $\mathcal{F}_1$ and the extension, see \pref{app:noap} and \pref{sub:Inc-rotator-Exc-sf}, respectively). So, suppose that $C$ is nonempty. Also, let $A=X\cup Y\cup A'$ and $B=Z\cup W\cup B'$, where $X=\{x_1,\ldots,x_k\}$, $Y=\{y_1,\ldots, y_k\}$, $Z=\{z_1,\ldots, z_l\}$ and $W=\{w_1,\ldots, w_l\}$ and the only non-edges in $A$ are $x_iy_i$, $1\leq i\leq k$, and the only non-edges in $B$ are $z_iw_i$, $1\leq i\leq l$. Also, since $G$ is $2$-substantial, $k,l\geq 1$. Assume w.l.o.g. that $x_1$ is complete to $Z$  and anticomplete to $W$, $y_1$ is complete to $W$ and anticomplete to $Z$, $z_1$ is complete to $X$ and anticomplete to $Y$ and $w_1$ is complete to $Y$ and anticomplete to $X$. Moreover, let $X'= N(z_1,A')$, $Y'=N(w_1,A')$, $Z'=N(x_1,B')$, $W'=N(y_1,B')$, $Z''=N(x_1,C)$ and $W''=N(y_1,C)$. Thus, $A'=X'\cup Y'$, $B'=Z'\cup W'$ and $C=Z''\cup W''$. Since $x_i,y_i$ are contained in no triad than $\{x_i,y_i,u\}$, $N(x_i,B\cup C)$ and $N(y_i,B\cup C)$ are cliques of $ G $. Thus, $Z\cup Z'\cup Z''$ and $W\cup W'\cup W''$ are cliques. Similarly, $X\cup X'\cup Z''$ and $Y\cup Y'\cup W''$ are cliques. Moreover, assume w.l.o.g. that $k\leq l$ and $Z''$ is nonempty.
	Now, we claim that $X$ is complete to $Z$. Since $Z''$ is nonempty, let $z\in Z''$. If $x_i\in X$ is adjacent to some $w_j\in W$,  then $z$ is adjacent to $w_j$ (because $N(x_i,B\cup C)$ is a clique). Thus, $ G $ induces a claw on $ \{z,v,z_j,w_j\} $, a contradiction. Hence, $X$ is anticomplete to $W$  and thus complete to $Z$ and then $Y$ is anticomplete to $Z$ and complete to $W$. 
	Let $\tilde{X}\subseteq X'$ (resp. $\tilde{Y}\subseteq Y'$) be the set of all vertices in $X'$ (resp. $Y'$) which are complete to $Z$ (resp. $W$). Now, we claim that $X'\setminus \tilde{X}$ and $Y'\setminus \tilde{Y}$ are complete to $C$. For assume that $x\in X'\setminus \tilde{X}$ has a non-neighbour $ w''\in W'' $. By the definition, $ x $ also has a non-neighbour $ z_j\in Z $. Thus, $ \{x,w'',z_j\} $ is a triad disjoint from $ \{u,v\} $, a contradiction. Hence,  $X'\setminus \tilde{X}$ (and similarly  $Y'\setminus \tilde{Y}$) are complete to $ C $. 
	Now, define
	\begin{align*}
		\mathscr{C}=&\mathscr{N}[C;B\cup \{u\}]\cup \mathscr{N}[B';A\cup\{w\}] 
		\cup \mathscr{N}[(X'\setminus \tilde{X})\cup (Y'\setminus \tilde{Y});B\cup\{w\}]  \\
		& \cup \mathscr{N}[ \tilde{X};W''\cup Y\cup Y'\cup \{v\}] \cup \mathscr{N}[\tilde{Y};Z''\cup X\cup X'\cup \{v\}] \cup \mathscr{N}[X;Y] \cup  \mathscr{N}[Z;W\cup B'\cup \{w\}]  \\
		& \cup \{ C_1= X\cup X'\cup (Y'\setminus\tilde{Y}) \cup Z'' \cup \{v\}, 
		C_2= Y\cup Y'\cup (X'\setminus \tilde{X}) \cup W'' \cup \{v\}\} \\
		& \cup \{ C_3= X\cup \tilde{X}\cup Z \cup \{w\}, C_4= Y\cup \tilde{Y}\cup W\cup \{w\},
		C_5= C\cup\{v\}\}.
	\end{align*}
	
	If $W''=\emptyset$, then in $ \mathscr{C} $, replace $C_5$ with the clique $W\cup B'\cup \{u\}$. 
	If $ l\geq 2$, then it is easy to check that $\mathscr{C}$ is a clique covering for $G$ of size $|C|+|A'|+|B'|+k+l+5\leq n-1$, as desired. Now, assume that $k=l=1$ and thus, $\tilde{X}=X'$ and $\tilde{Y}=Y'$. In this case, let $ \mathscr{C}' $ be the collection obtained from $ \mathscr{C} $ by removing the degenerate clique in $ \mathscr{N}[X;Y] $. If $ W''= \emptyset $, then $ \mathscr{C}' $ is a clique covering of size $ n-1 $ and we are done. If $ W''\neq \emptyset $, then the only edges which are possibly not covered by $\mathscr{C}'  $ are the edges in $ E(W,Z') $. Thus, if $ Z' $ is empty, then again we are done. Finally, assume that $ W'' $ and $ Z' $ (and by symmetry all sets $ W', X' ,Y' $) are nonempty. In this case, replace four cliques $C_1,C_2,C_3,C_4,C_5$ in $\mathscr{C}'$ with five cliques $X\cup X'\cup Z\cup Z''$, $Y\cup Y'\cup W\cup W''$, $W\cup B'\cup \{w\}$, $A'\cup \{w\}$ and $ C\cup \{v\} $ to form a clique covering of size $n-1$. \vsp
	
	(2) \textit{If $u$ and $v$ are adjacent, then $ \cc(G)\leq n $ and equality holds if and only if $ \overline{G} $ is isomorphic to a twister.} \vsp
	
	If any triad on $u$ does not intersect any triad on $v$, then the result follows from \pref{lem:2sub}. Thus, assume that there exist two triads $T_1=\{u,w,x\}$ and $T_2=\{v,w,z\}$ and thus $ x $ is complete to $ \{z,v\} $ and $ z $ is complete to $ \{x,u\} $.
	Note that $V'=V(G)\setminus \{u,v,w,z,x\}$ is partitioned into five cliques $A= N_G(u,V')\cap N_{\overline{G}}(v,V')$, $B=   N_{\overline{G}} (u,V') \cap N_G(v,V') $, $C=N_{\overline{G}} (u,V')\cap N_{\overline{G}} (v,V')$, $D=N_{\overline{G}} (w,V')$ and $E=N_{G} (u,V')\cap N_{G} (v,V')\setminus D $. Thus, $u$ is complete to $A\cup D\cup E$, $v$ is complete to $B\cup D\cup E$, $w$ is complete to $A\cup B\cup C\cup E$, $x$ is complete to $B\cup C\cup D$ and $z$ is complete to $A\cup C\cup D$. Let $\tilde{A}\subseteq A$ and $\tilde{B}\subseteq B$ be the sets of vertices respectively in $A$ and $B$ which are complete to $C$. Also, let $\tilde{C}\subseteq C$ be the set of vertices in $C$ which are complete to $A\cup B$. It is evident that $\tilde{W}(G)=\tilde{A}\cup \tilde{B}\cup \tilde{C}\cup D\cup E$. We may assume w.l.o.g. that $\tilde{A}\neq A$, $\tilde{B}\neq B$ and $\tilde{C}\neq C$, otherwise if say $A=\tilde{A}$, then every triad of $G$ meets $\{u,w\}$ and $ u,w $ are nonadjacent and by (1), we are done.
	Moreover, note that since every triad of $G$ meets $\{u,v\}$, both $N_{\overline{G}}(z,V')= B\cup E$ and $N_{\overline{G}}(x,V')= A\cup E$ are cliques. Thus, since $G$ is antiprismatic, $E$ is anticomplete to $C\setminus \tilde{C}$. Moreover, since $G$ is antiprismatic and $N_{\overline{G}}(u,V')=B\cup C$, every vertex in $ B $ (resp. $ C $) has at most one non-neighbour in $ C $ (resp. $ B $) and for every vertex $y$ in $V'\setminus (B\cup C)$, $N(y, B\cup C)$ is a clique. Similarly, every vertex in $ A $ (resp. $ C $) has at most one non-neighbour in $ C $ (resp. $ A $) and  for every vertex $y$ in $V'\setminus (A\cup C)$, $N(y, A\cup C)$ is a clique. We prove (2) in two cases. 
	
	Firstly, assume that $|C\setminus \tilde{C}|\geq 2$. Also, assume w.l.o.g. that $|A\setminus \tilde{A}|\geq |B\setminus \tilde{B}|$. Thus, every vertex in $B$ has a non-neighbour in $A\setminus \tilde{A}$. Now, we claim that $E$ is complete to $D$. For contrary, suppose that $d\in D$ is not adjacent to $e\in E$. The facts that $E$ is anticomplete to $C\setminus \tilde{C}$ and the induced subgraph of $G$ on $C\cup D\cup E$ is triad-free yield that $d$ is complete to $C\setminus\tilde{C}$. Therefore, since $G$ is antiprismatic, $d$ is anticomplete to $B\setminus \tilde{B}$ and $A\setminus \tilde{A}$. Thus, there exists a triad in $A\cup B\cup D$, a contradiction. This shows that $E$ is complete to $D$.
	Now, let $D'\subseteq D$ be the set of vertices in $D$ which are complete to $B$ and define
	\begin{align*}
		\mathscr{D}=& \mathscr{N}[A;D\cup \{z,u\}] \cup \mathscr{N}[B;A\cup C \cup \{w\}] \cup
		\mathscr{N}[C;A\cup \{w\}] \\
		& \cup \mathscr{N}[E;B\cup C\cup \{w\}] \cup  \mathscr{N}[D\setminus D';B\cup C\cup \{x\}] \cup  \mathscr{N}[D';C\cup \{x,z\}] \\
		& \cup \{C\cup \{x,z\}, B\cup D'\cup \{x,v\}, D\cup E\cup \{u,v\}, A\cup E\cup \{w\}\}.
	\end{align*}
	Note that every vertex in $D\setminus D'$ has a neighbour in $A$, otherwise there would be a triad in $A\cup B\cup D$. Therefore, the edges in $ E(\{z\},D) $ are covered by the cliques in $ \mathscr{N}[A;D\cup \{z,u\}] \cup \mathscr{N}[D';C\cup \{x,z\}] $. This implies that $\mathscr{D}$ is a clique covering for $G$ of size $|A|+|B|+|C|+|D|+|E|+4=n-1$.
	
	Secondly, assume that $|C\setminus \tilde{C}|=1$ and $ C\setminus \tilde{C}=\{c_0\} $. Thus, $|B\setminus \tilde{B}|=|A\setminus \tilde{A}|=1$ and $A\setminus \tilde{A}$ is complete to $B\setminus \tilde{B}$. Assume w.l.o.g. that $|\tilde{B}|\leq |\tilde{A}|$. Also, let $D_1\subseteq D$ be the set of vertices in $D$ which are adjacent to $c_0$ and let $D_2=D\setminus D_1$. By above argument, $D_2$ is complete to $E\cup (B\setminus \tilde{B}) \cup (A\setminus \tilde{A})$ and $D_1$ is complete to $\tilde{A}\cup  \tilde{B}$ (if say $ d_1\in D_1 $ is nonadjacent to $ \tilde{a}\in \tilde{A} $, then $ \{\tilde{a}, d_1\} \cup B\setminus \tilde{B} $ is a triad). Now, define
	\begin{align*}
		\mathscr{D}'=& \mathscr{N}[A;D\cup \{z,u\}] \cup \mathscr{N}[B;A\cup C \cup \{w\}] \cup  \mathscr{N}[C;D\cup \{z,x\}]\\
		& \cup \mathscr{N}[E;B\cup C\cup \{w\}] \cup \mathscr{N}[D_1; E\cup D_2\cup \{u,v\}] \cup \mathscr{N}[D_2;B\cup C\cup \{x\}] \\
		& \cup \{C\cup \tilde{A}\cup \{w\},D_2\cup E\cup \{u,v\}, B\cup \{x,v\}, A\cup E\cup \{w\}\}.
	\end{align*}
	
	It is easy to verify that the cliques in $\mathscr{D}'$ cover all edges of $ G $ except the edges in $E(D_1,\tilde{B})$ (note that the edges in $ E(D_1\cup\{z,x\}) $ are covered by the cliques in $ \mathscr{N}[C;D\cup \{z,x\}] $ and the edges in $ E(A\setminus \tilde{A}, C) $ are covered by the cliques in $ \mathscr{N}[B;A\cup C\cup \{w\}] $). Thus, if $\tilde{B}=\emptyset$, then $\mathscr{D}'$ is a clique covering for $G$ of size $n-1$. 
	Now, assume that $ \tilde{B} \neq \emptyset $. Note that $ \tilde{C}\neq \emptyset $, because if $ \tilde{C}=\emptyset $, then $ V(G) $ is the union of three cliques $ A\cup \{z,u\} $, $ \tilde{B}\cup D_1\cup \{x,c_0\} $ and $ D_2\cup E\cup (B\setminus \tilde{B})\cup \{v\} $, a contradiction. Now, let $ \mathscr{D}'' $ be the collection obtained from $ \mathscr{D}' $ by replacing the cliques $N[c_0,D\cup \{z,x\}]=D_1\cup\{c_0,z,x\}$ and  $C\cup \tilde{A}\cup \{w\}$  with the cliques  $ D_1\cup \tilde{B} \cup\{c_0,x\} $ and $ C\cup \tilde{A}\cup \{z\}$. The facts that $\tilde{B}$ and thus $\tilde{A}$ are nonempty, $ D_1 $ is complete to $ \tilde{A} $ and $ D_2 $ is complete to $ A\setminus \tilde{A} $, imply that the edges in $ E(\{z\},D) $ are covered by the cliques in $\mathscr{N}[A;D\cup \{z,u\}]$ and the edges in $ E(\{w\},C) $ are covered by the cliques in $\mathscr{N}[B;A\cup C \cup \{w\}]$. Also, since $\tilde{C}\neq \emptyset$, the edge $zx$ is covered by the cliques in  $\mathscr{N}[\tilde{C};D\cup \{z,x\}]$. Hence, $ \mathscr{D}'' $ is a clique covering for $G$ of size $n-1$. 
\end{proof}
Now, we are ready to prove of \pref{thm:oap}.
\begin{proof}[{\rm\textbf{Proof of \pref{thm:oap}.}}]
	Let $G$ be an orientable antiprismatic graph on $n$ vertices containing a triad. If $ G $ is three-cliqued, then the result follows from \pref{thm:3cap}. If $ G $ is neither three-cliqued nor 3-substantial, then by \pref{thm:3sub}, $ \cc(G)\leq n-1 $ (note that the complement of a twister is not an orientable antiprismatic graph, see \cite{seymour1}). Now, assume that $ G $ is 3-substantial and not three-cliqued. Thus, by \pref{thm:orient-anti}, $ G $ is either the complement of a cycle of triangles graph, or the complement of a ring of five, or the complement of a mantled $L(K_{3,3})$. Hence, by Lemmas~\ref{lem:cycle}, \ref{lem:mantled} and \ref{lem:ring}, $ \cc(G)\leq n-1 $.
\end{proof}

\section{Non-orientable antiprismatic graphs} \label{sec:noap}
The main goal of this section is to prove \pref{thm:noapA}, which we restate as follows.
\begin{thm}\label{thm:noap}
	Let $G$ be a non-orientable antiprismatic graph on $n$ vertices. Then $\cc(G)\leq n$ and equality holds if and only if $\overline{G}$ is isomorphic to a twister.
\end{thm}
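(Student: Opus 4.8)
The plan is to follow the template of \pref{sec:oap} for the orientable case, with \pref{thm:orient-anti} replaced by the structure theorem for non-orientable prismatic graphs from \cite{seymour2}, read (as everywhere in this paper) in terms of the complements. Since $G$ is non-orientable it contains a triad, so every lemma of \pref{sec:oap} and \pref{sec:noap} that assumes the presence of a triad is at my disposal. I would first clear away two cases. If $V(G)$ is the union of three cliques, then \pref{thm:3cap} gives $\cc(G)\le n$ with equality only when $G$ is isomorphic to $C_{3p+3}^{p}$ for some $p$; but that graph is orientable (it is precisely the extremal example of \pref{thm:oap}), so for our non-orientable $G$ we in fact get $\cc(G)\le n-1$. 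If $G$ is not three-cliqued and not $3$-substantial, then \pref{thm:3sub} applies verbatim and gives $\cc(G)\le n$, with equality if and only if $\overline{G}$ is a twister; this is also where the extremal example lives, since the complement of a twister is non-orientable (see \cite{seymour1}), is not $3$-substantial, and has $\cc=10=n$ by \pref{lem:twister}.

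The remaining, and main, case is that $G$ is $3$-substantial and not three-cliqued. Here I would invoke the classification of $3$-substantial non-orientable prismatic graphs in \cite{seymour2}: up to complementation, $G$ is one of a bounded list of explicit graphs, built from a few base graphs — notably the Schl\"afli graph and the various ``rotators'', collected in \pref{app:noap} — by thickening and by certain vertex-addition (``extension'') operations. I would organize this case as the structure theorem dictates, splitting on whether $\overline{G}$ contains a rotator and whether it contains the Schl\"afli graph (the content of \pref{sub:Inc-rotator-Exc-sf} and its companion subsections). For each base graph $H$ one writes down, by hand, a clique covering of size at most $|V(H)|-1$, and then \pref{lem:thickening_antiprismatic} lifts it to a covering of $G$ of size at most $n-1$. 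For the extension cases — where $G$ arises from a smaller antiprismatic graph by adding one or two vertices, e.g. the extensions of a member of $\mathcal{F}_1$ (see \pref{lem:F1}) — I would delete the added vertices and either recurse on the smaller graph or reduce to a configuration governed by \pref{lem:2sub}; since none of these $3$-substantial graphs has a twister complement, each such reduction yields $\cc(G)\le n-1$.

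Combining the three cases gives $\cc(G)\le n$, with equality precisely when $\overline{G}$ is a twister. The hard part is unquestionably the $3$-substantial case: the base list is long (the Schl\"afli graph, all the rotator variants, and their thickenings and extensions), each base graph demands its own tailored clique covering, and one must check carefully that every one of them — including every extension — admits a covering of size at most $n-1$, i.e. that the twister complement is genuinely the unique extremal graph and never resurfaces inside a $3$-substantial family. The most delicate point is making the reductions to \pref{lem:2sub} legitimate: its hypothesis — that $N_G(v,V(G)\setminus A)$ be a clique for every $v\in A\setminus\tilde A$ — must be verified from the local structure at the chosen vertex $a_0$ before the lemma can be applied, and keeping track of which extension operation was performed is the main bookkeeping burden.
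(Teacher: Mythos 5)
Your handling of the first two cases is fine: a non-orientable antiprismatic graph is never three-cliqued (4.2 in \cite{seymour1}), so that case is vacuous, and \pref{thm:3sub} does dispose of the non-$3$-substantial case as you say, with the twister complement emerging there as the unique extremal graph. The gap is in your main case. The structure theorem of \cite{seymour2} builds non-orientable antiprismatic graphs from the ``menagerie'' not by thickening but by the \emph{extension} operation --- replicate every non-core vertex and then add an \emph{arbitrary} set of edges between the replication classes --- and \pref{lem:thickening_antiprismatic} does not lift a clique covering through that operation: the freshly added edges between distinct classes $X_u$ and $X_v$ are not covered by the blown-up cliques, and there is no generic control on how many extra cliques they cost. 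This is exactly why the paper declines to use the structure theorem for the main case (``we are mostly unable to extend a clique covering of a graph to a clique covering of its extensions'') and instead proves \pref{thm:Inc-rotator-Exc-sf} --- the complement of a counterexample must contain an induced rotator and exclude a square-forcer --- invoking the decomposition theorem only for the complementary situation, where each class (parallel-square, skew-square, $\mathcal{F}_0,\dots,\mathcal{F}_4$) receives a bespoke covering of its extensions in Lemmas~\ref{lem:parallel}--\ref{lem:skew}, not a generic lifting.

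For the surviving case (rotator present, no square-forcer) the paper does not enumerate base graphs at all; it runs a from-scratch structural exploration (Subsections~\ref{sub:Inc-rot}--\ref{sub:2,2} and \pref{app:schlafli}): the twelve-clique decomposition attached to a rotator, the quasi-triad bounds of \pref{lem:T,R Facts and Tools}, the classification of $G[T]$ and $G[R]$ into types, the elimination of inflated Schl\"{a}fli-antiprismatic counterexamples, and a long case analysis on $(|I_T(\rho)|,|I_R(\rho)|)$. Your fallback of ``delete the added vertices and reduce to \pref{lem:2sub}'' also does not go through in general: that lemma requires a vertex $a_0$ in a triad such that $N_G(v,V(G)\setminus A)$ is a clique for every $v\in A\setminus\tilde A$, which is essentially a failure-of-substantiality condition and is precisely what is unavailable for the genuinely $3$-substantial graphs. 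So the proposal, while correctly shaped at the top level, is missing the central technical content of the proof.
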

Chudnovsky and Seymour in \cite{seymour2} proved that every non-orientable antiprismatic graph can be obtained from graphs in some explicitly described classes, whose union is called the \textit{menagerie}, by applying a certain operation, what we call ``extension'' in the sequel. Unfortunately, despite of the other constructions like thickening and worn hex-chain, we are mostly unable to extend a clique covering of a graph to a clique covering of its extensions. On the other hand, the graphs in each of the classes of the menagerie require an exclusive method to handle their clique covering and the worse is that for most of these classes, the process is divided into several cases that should be treated separately. For these reasons, we find it best not to use the structure theorem of \cite{seymour2} directly and instead follow their approach in finding the structure theorem. More details will appear later on. 
Also, in \cite{seymour2}, everything is stated for prismatic graphs, so we have to reformulate the definitions and statements in terms of the complements and for the sake of convenience, we maintain the titles of the classes.  
Furthermore, note that all three-cliqued antiprismatic graphs are orientable (see 4.2 in \cite{seymour1}) which have already been handled in previous section. Therefore, for simplicity, the graphs are generally assumed not to be three-cliqued.

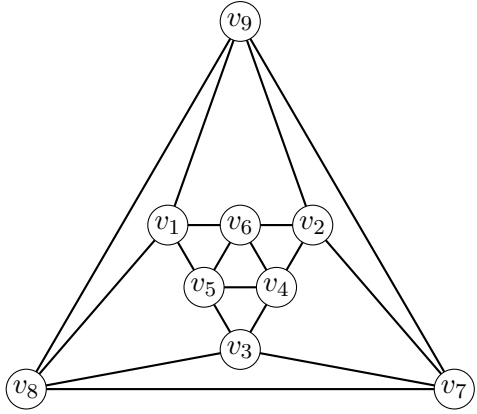
\begin{figure}[t]
	\begin{center}
		\begin{tikzpicture}
		\begin{scope}[scale=.5]
		\SetVertexNoLabel
		\tikzset{VertexStyle/.style ={shape=circle,inner sep=1mm,minimum size=15pt,draw,fill=white}}
		\begin{scope}[rotate=-30]
		\grCycle[RA=6.5,prefix=v]{3}
		\end{scope}
		\begin{scope}[rotate=30]
		\grCycle[RA=2.2,prefix=u]{3}
		\end{scope}
		\begin{scope}[rotate=-30]
		\grCycle[RA=1.1,prefix=w]{3}
		\end{scope}
		\AssignVertexLabel{v}{$v_7$,$v_9 $,$v_8$}
		\AssignVertexLabel{u}{$v_2 $,$v_1$,$v_3$}
		\AssignVertexLabel{w}{$v_4$,$v_6 $,$v_5$}
		\draw[-,thick] (u0) to (v0);
		\draw[-,thick] (u0) to (v1);
		\draw[-,thick] (u1) to (v1);
		\draw[-,thick] (u1) to (v2);
		\draw[-,thick] (u2) to (v2);
		\draw[-,thick] (u2) to (v0);
		\end{scope}
		\end{tikzpicture}
	\end{center}
	\vspace{-10pt}
	\caption{The complement of a rotator.}
	\label{fig:rotator}
\end{figure}

Let us begin with recalling a couple of definitions from \cite{seymour2}. Consider a graph with the vertex set $\{v_1,\cdots , v_9\}$ and edges as follows; $\{v_4, v_5, v_6\}$ and $\{v_7, v_8, v_9\}$ are both triangles, and for $i = 1, 2, 3$, $v_i$ is complete to $\{v_4,\ldots,v_9\}\setminus \{v_{i+3}, v_{i+6}\}$. This graph is the complement of a \textit{rotator}, defined in \cite{seymour2} (see \pref{fig:rotator}). Also, a pair $(u,v)$ of vertices of a graph $G$ is said to be a \textit{square-forcer} in $ G $, if $u,v$ are nonadjacent and the set of vertices nonadjacent to both $u,v$ is stable. As a first step to prove \pref{thm:noap}, we are going to show that the complement of a counterexample to \pref{thm:noap} contains an induced rotator and also excludes a square-forcer.

\begin{thm}\label{thm:Inc-rotator-Exc-sf}
	Let $G$ be a counterexample to \pref{thm:noap}. Then $\overline{G}$ contains an induced rotator and contains no square-forcer.
\end{thm}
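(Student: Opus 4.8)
The plan is to establish the two assertions separately, in each case by reducing to the ``degenerate'' antiprismatic graphs already dealt with.

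First I would record that $G$ has a triad (an antiprismatic graph with no triad is vacuously orientable) and that $G$ is not three-cliqued, since by~\cite{seymour1} every three-cliqued antiprismatic graph is orientable. If $G$ were not $3$-substantial, then $G$ would be an antiprismatic graph with a triad which is neither $3$-substantial nor three-cliqued, so \pref{thm:3sub} would give $\cc(G)\le n$ with equality if and only if $\overline{G}$ is isomorphic to a twister --- which is precisely the conclusion of \pref{thm:noap}, contradicting the choice of $G$. Hence $G$ is $3$-substantial, so $\overline{G}$ is a $3$-substantial non-orientable prismatic graph; then by the structure theory of~\cite{seymour1,seymour2} (reformulated in terms of the complement, as in this paper), $\overline{G}$ contains an induced rotator, which is the first assertion.

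For the second assertion, suppose toward a contradiction that $\overline{G}$ has a square-forcer $(u,v)$. Passing to $G$, this says that $uv\in E(G)$ and that $K:=N_G(u)\cap N_G(v)$ is a clique of $G$, so that $\{u,v\}\cup K$ is a clique. Put
\[
P:=N_G(u)\setminus N_G[v],\qquad Q:=N_G(v)\setminus N_G[u],\qquad R:=V(G)\setminus\bigl(N_G[u]\cup N_G[v]\bigr),
\]
so that $V(G)$ is the disjoint union of $\{u,v\}\cup K$, $P$, $Q$ and $R$. The key preliminary observations, all immediate from antiprismaticity, are: each of $P,Q,R$ is a clique (a non-edge $\{r,r'\}\subseteq R$ makes $\{u,r,r'\}$ a triad on which $v$ has a single neighbour, and a non-edge inside $P$, respectively $Q$, makes a triad together with $v$, respectively $u$, on which $u$, respectively $v$, has three neighbours), and every triad of $G$ meets the clique $\{u,v\}\cup K$ (a triad contained in $P\cup Q\cup R$ is a transversal of these three cliques, and then $u$ has only one neighbour in it). A short refinement of the last point shows that the triads disjoint from $R$ are exactly the sets $\{\kappa,p,q\}$ with $\kappa\in K$, $p\in P$, $q\in Q$, while the triads meeting $R$ are exactly the sets $\{u,q,r\}$, $\{v,p,r\}$ or $\{\kappa,w,r\}$ with $\kappa\in K$, $p\in P$, $q\in Q$, $w\in P\cup Q$, $r\in R$.

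Now if $K=\emptyset$ then $V(G)=(\{u\}\cup P)\cup(\{v\}\cup Q)\cup R$ is a union of three cliques, and if $R=\emptyset$ then $V(G)=(\{u,v\}\cup K)\cup P\cup Q$ is a union of three cliques; in either case $G$ is three-cliqued, hence orientable by~\cite{seymour1}, contradicting non-orientability. So we may assume $K\neq\emptyset$ and $R\neq\emptyset$. In this last case $G[V(G)\setminus R]$ is three-cliqued with parts $\{u,v\}\cup K$, $P$, $Q$, and one must analyse precisely how the clique $R$ --- which is anticomplete to $\{u,v\}$ --- attaches to the rest, using the description of the triads above together with the $3$-substantiality of $G$; combining this with the equality characterisations in \pref{thm:tcA}, \pref{thm:tcB}, \pref{lem:2sub} and \pref{thm:3sub}, one builds a clique covering of $G$ of size at most $n$ for which equality forces $\overline{G}$ to be isomorphic to a twister, so that $G$ satisfies \pref{thm:noap} --- contradicting the choice of $G$. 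The main obstacle is exactly this remaining sub-case $K,R\neq\emptyset$: one has to pin down the attachment of $R$ and the interaction of the (up to) three types of triad tightly enough to control the number of cliques needed, which is the sort of intricate case analysis that pervades the antiprismatic part of the paper.
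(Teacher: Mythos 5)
Your high-level observations are sound as far as they go (the counterexample has a triad, is not three-cliqued, and is $3$-substantial by \pref{thm:3sub}; the sets $P,Q,R$ in your square-forcer partition are cliques and every triad meets $\{u,v\}\cup K$), but both halves of the argument have genuine gaps.

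For the rotator assertion, the step ``$G$ is $3$-substantial and non-orientable, hence by the structure theory $\overline{G}$ contains an induced rotator'' is not a valid citation of \cite{seymour1,seymour2} and is false as a general implication. What \cite{seymour2} actually gives (restated here as \pref{thm:exc-rotator-seymour}) is that a non-orientable antiprismatic graph whose complement is rotator-free is an \emph{extension of a member of} $\mathcal{F}_0\cup\cdots\cup\mathcal{F}_4$; nothing forces such graphs to be non-$3$-substantial. Indeed the classes $\mathcal{F}_0$ and $\mathcal{F}_4$ are built from large induced subgraphs of the Schl\"{a}fli graph and contain three pairwise disjoint triads, so they are $3$-substantial, non-orientable and rotator-free. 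Consequently, to conclude that a \emph{counterexample} has a rotator you must separately verify that every extension of every member of $\mathcal{F}_0,\ldots,\mathcal{F}_4$ already satisfies the conclusion of \pref{thm:noap}; that is precisely the content of Lemmas~\ref{lem:F0}--\ref{lem:F4} (together with \pref{lem:parallel} and \pref{lem:exponen}), and it is the bulk of the work, none of which appears in your proposal.

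For the square-forcer assertion, your partition $V(G)=(\{u,v\}\cup K)\cup P\cup Q\cup R$ and the disposal of the cases $K=\emptyset$ and $R=\emptyset$ are correct, but the remaining case $K,R\neq\emptyset$ --- which you acknowledge is ``the main obstacle'' --- is exactly where the theorem lives, and ``one builds a clique covering of $G$ of size at most $n$'' is an assertion, not an argument. The paper resolves it by invoking 8.2 of \cite{seymour2} (folded into \pref{thm:Seymour-Inc-rotator-Exc-sf-revised}): a non-orientable antiprismatic graph whose complement has a square-forcer is an extension of a graph of parallel-square or skew-square type (your $K$ and $R$ correspond to the two ``cross'' cliques in those descriptions), and then Lemmas~\ref{lem:parallel} and \ref{lem:skew} supply the explicit coverings, including the delicate near-equality cases where $\overline{G}$ must be shown to be a twister or the count must be pushed down to $n-1$. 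Without either reproving that structure or carrying out an equivalent bare-hands analysis of how the clique $R$ attaches to $K\cup P\cup Q$, the proof is incomplete at its critical step.
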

The proof of \pref{thm:Inc-rotator-Exc-sf}, given in \pref{sub:Inc-rotator-Exc-sf}, uses a decomposition theorem in \cite{seymour2} for those non-orientable antiprismatic graphs whose complement either contain no induced  rotator or contain a square-forcer. For the case of non-orientable antiprismatic graphs whose complement contain an induced rotator and no square-forcer, there is another decomposition theorem in \cite{seymour2}. Nevertheless, we prefer not to use this theorem, and in Subsections~\ref{sub:Inc-rot}, \ref{sub:types}, \ref{sub:schlafli}, \ref{sub:max=3} and \ref{sub:2,2}, we tackle these graphs by a structural exploration similar to the one in \cite{seymour2}, thereby completing the proof of \pref{thm:noap}.

\subsection{Proof of \pref{thm:Inc-rotator-Exc-sf}}\label{sub:Inc-rotator-Exc-sf}
In order to prove \pref{thm:Inc-rotator-Exc-sf}, we need a result from \cite{seymour2}. We begin with a definition. Given a graph $G$ and a vertex $v\in V(G)$, by \textit{replicating} $ v $, we mean replacing the vertex $ v $ with a nonempty clique $ X_v $ such that every vertex in $ X_v $ is adjacent to a vertex $ u\in V(G) \setminus \{v\} $ if and only if $ v $ is adjacent to $ u $. Also, we say that the graph $ H $ is an \textit{extension} of $ G $, if $ H $ is obtained from $ G $ by replicating the vertices in $ \tilde{W}(G) $ and then adding edges between some pairs of nonadjacent vertices in $\cup_{v\in \tilde{W}(G)} X_v$. 
The following theorem is a direct consequence of 2.2 and 7.2 in \cite{seymour2} and determines the structure of non-orientable antiprismatic graphs whose complement excludes an induced rotator (see \pref{app:noap} for the definition of the classes $\mathcal{F}_0,\mathcal{F}_1,\mathcal{F}_2,\mathcal{F}_3$ and $\mathcal{F}_4$). 

\begin{thm}{\rm \cite{seymour2}}\label{thm:exc-rotator-seymour}
	Let $ G $ be a non-orientable antiprismatic graph such that $ \overline{G} $ contains no induced  rotator. Then, $ G $ is an extension of a member of $\mathcal{F}_0\cup \mathcal{F}_1\cup \mathcal{F}_2\cup \mathcal{F}_3\cup \mathcal{F}_4$.
\end{thm}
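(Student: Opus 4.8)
The plan is to obtain this statement by complementation from the two cited results of Chudnovsky and Seymour \cite{seymour2}, which are phrased for prismatic graphs. First I would pass to the complement and set $P=\overline{G}$, so that $P$ is a non-orientable prismatic graph containing no induced rotator---exactly the setting of \cite{seymour2}---since orientability of the triad-orientations of $G$ corresponds to orientability of the triangle-orientations of $P$. The core/non-core dichotomy transfers cleanly: a vertex lies in $W(G)$ precisely when it lies in a triad of $G$, i.e.\ in a triangle of $P$, and $\tilde{W}(G)$ consists of exactly those vertices of $P$ lying in no triangle. Since the \emph{extension} operation of \pref{sub:Inc-rotator-Exc-sf} acts only on $\tilde{W}(G)$, the whole argument reduces to understanding how $P$ is assembled from its triangle-meeting part.

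The reduction step uses $2.2$ of \cite{seymour2}. Reformulated in our language, replicating a vertex $v\in\tilde{W}(G)$ by a clique $X_v$ whose members share the same neighbourhood outside $X_v$, and then adding some edges inside $\cup_{v\in\tilde{W}(G)}X_v$, corresponds in $P$ to substituting a stable set for a triangle-free vertex followed by admissible edge modifications between these stable sets. This is precisely the operation around which $2.2$ is built, and it is the inverse of identifying ``twins''. Thus the extension defined in \pref{sub:Inc-rotator-Exc-sf} is exactly the complement of C--S's substitution, and it suffices to classify the \emph{reduced} graphs, namely those $G$ in which no further identification of non-core twins is possible.

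For the classification I would invoke $7.2$ of \cite{seymour2}, which lists the reduced non-orientable prismatic graphs that contain no rotator as members of finitely many explicit families. Translating each of these families to the complement yields exactly the classes $\mathcal{F}_0,\mathcal{F}_1,\mathcal{F}_2,\mathcal{F}_3,\mathcal{F}_4$ as defined in \pref{app:noap}. Combining this with the reduction of the previous paragraph gives that every $G$ satisfying the hypotheses is an extension of a member of $\mathcal{F}_0\cup\mathcal{F}_1\cup\mathcal{F}_2\cup\mathcal{F}_3\cup\mathcal{F}_4$, as required.

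The main obstacle is faithful translation and bookkeeping rather than new combinatorics. One must verify case by case that the operation in $2.2$ of \cite{seymour2} complements to precisely the replication-plus-edge-addition in our definition of extension---in particular that edges are added only among the cliques $X_v$ with $v\in\tilde{W}(G)$, matching the clause ``adding edges between nonadjacent vertices in $\cup X_v$''---and that the output list of $7.2$ complements term-by-term to the appendix families $\mathcal{F}_0,\ldots,\mathcal{F}_4$, with the orientable cases and any rotator-containing reductions correctly excluded. Checking this correspondence on the boundary cases, especially small graphs that might formally sit in several families or be simultaneously orientable, is where the care is needed.
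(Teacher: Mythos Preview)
Your overall strategy---pass to the complement $P=\overline{G}$ and invoke 2.2 and 7.2 of \cite{seymour2}---is exactly what the paper does; this theorem is cited rather than proved from scratch. However, there is a genuine gap in your account. You write that ``translating each of these families to the complement yields exactly the classes $\mathcal{F}_0,\mathcal{F}_1,\mathcal{F}_2,\mathcal{F}_3,\mathcal{F}_4$'', but this is not what happens: the class $\mathcal{F}_0$ does \emph{not} arise from the list in 7.2 of \cite{seymour2}. As the paper records in \pref{rem:7.2seymour}, the proof of 7.2 in \cite{seymour2} contains an error---in Statement~(6) of that proof, certain graphs on $12$ to $18$ vertices are asserted to be induced subgraphs of the Schl\"afli graph via an explicit map, but that map is not an isomorphism, and in some cases no such embedding exists at all. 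The class $\mathcal{F}_0$ was introduced by the present authors precisely to house these exceptional graphs and thereby repair the statement.

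So the work is not merely ``faithful translation and bookkeeping'': one must actually re-examine the case analysis inside the proof of 7.2, locate where the claimed Schl\"afli embedding fails, and describe the graphs that fall through. Your final paragraph about ``boundary cases'' gestures at care being needed, but does not identify that a substantive correction---the creation of a new class---is required rather than just a dictionary check.
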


\begin{rem}\label{rem:7.2seymour}
	In fact, \pref{thm:exc-rotator-seymour} is the corrected version of 7.2 in \cite{seymour2}, whose proof is flawed. More precisely, in Statement~(6) in the proof, a number of graphs on between $12$ and $18$ vertices arise which are claimed to be isomorphic to a subgraph of the Schl\"{a}fli graph (see \pref{sub:Inc-rot} for the definition) by introducing an isomorphism map. Nevertheless, this map is actually not an isomorphism. The worse is that in some cases, these graphs are not even isomorphic to any subgraph of the Schl\"{a}fli graph. To correct the proof, we introduced the additional class $\mathcal{F}_0$ in \pref{app:noap}, including these graphs and consequently \pref{thm:exc-rotator-seymour} is valid in its present form.
\end{rem}

Now, \pref{thm:exc-rotator-seymour} together with 2.2, 5.1 and 8.2 in \cite{seymour2} implies the following theorem (for the definition of the graphs of parallel-square type and skew-square type, see \pref{app:noap}). Note that if $G$ is an extension of an antiprismatic graph $H$, where $\overline{G}$  contains an induced rotator and a square-forcer, then $\overline{H}$ also contains an induced rotator and a square-forcer (we leave the reader to verify it, also see the proof of 8.1 in \cite{seymour2}). 

\begin{thm}{\rm \cite{seymour2}} \label{thm:Seymour-Inc-rotator-Exc-sf-revised}
	Let $G$ be a non-orientable antiprismatic graph such that $ \overline{G} $ either contains no induced rotator or contains a square-forcer. Then $G$ is an extension of either a graph of parallel-square type, or a graph of skew-square type, or a member of  $\mathcal{F}_0\cup \mathcal{F}_1\cup \mathcal{F}_2\cup \mathcal{F}_3 \cup \mathcal{F}_4$.
\end{thm}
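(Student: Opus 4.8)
The plan is to treat this as a bookkeeping consequence of the three cited results from \cite{seymour2} together with \pref{thm:exc-rotator-seymour}, organised around the two cases offered by the hypothesis. First I would dispose of the easy case: if $\overline{G}$ contains no induced rotator, then \pref{thm:exc-rotator-seymour} immediately gives that $G$ is an extension of a member of $\mathcal{F}_0\cup\mathcal{F}_1\cup\mathcal{F}_2\cup\mathcal{F}_3\cup\mathcal{F}_4$, which is one of the desired conclusions, so there is nothing more to prove. Hence from now on I may assume that $\overline{G}$ contains an induced rotator and contains a square-forcer.

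Next I would reduce to a non-replicated graph. By the complement-reformulated version of 2.2 in \cite{seymour2}, $G$ is an extension of some non-orientable antiprismatic graph $H$ which itself admits no proper replication, i.e. $H$ is the base of the extension. The key point, extracted from the proof of 8.1 in \cite{seymour2} (and recorded in the discussion preceding the statement), is that replicating a non-core vertex and adding edges inside the replicated clique can neither destroy an induced rotator nor destroy a square-forcer in the complement; consequently $\overline{H}$ also contains an induced rotator and a square-forcer. If $\overline{H}$ happened to contain no induced rotator we would again be done by \pref{thm:exc-rotator-seymour} applied to $H$, since an extension of an extension is an extension; so I may assume $\overline{H}$ contains both an induced rotator and a square-forcer.

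Then I would apply the structural input 8.2 of \cite{seymour2}, in its complement-reformulated and $\mathcal{F}_0$-corrected form (see \pref{rem:7.2seymour}), to the base graph $H$: it asserts that a non-orientable antiprismatic graph with no proper replication whose complement contains an induced rotator and a square-forcer is of parallel-square type or of skew-square type, with 5.1 of \cite{seymour2} supplying the verification that these two constructions do produce all the relevant such graphs and interact correctly with the extension operation. Since $G$ is an extension of $H$, it follows that $G$ is an extension of a graph of parallel-square type or of skew-square type. Combining the two cases yields exactly the stated trichotomy.

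The main obstacle I expect is not any new argument but the faithful translation between the prismatic language of \cite{seymour2} and the antiprismatic/complement language used here, together with the need to propagate the correction flagged in \pref{rem:7.2seymour} through 8.2 as well as 7.2: one must check that the ``missing'' graphs which fail to embed in the Schl\"{a}fli graph are precisely those collected into $\mathcal{F}_0$, so that the square-forcer case genuinely lands in parallel-square type, skew-square type, or $\mathcal{F}_0\cup\cdots\cup\mathcal{F}_4$ and nowhere else. A secondary technical point is to make the reduction-to-base step rigorous, confirming that ``an extension of an extension is an extension'' and that both moves comprising an extension (replication of a non-core vertex, and adding an edge inside a replicated clique) preserve induced rotators and square-forcers of the complement.
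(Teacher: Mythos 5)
Your proposal follows essentially the same route as the paper, which likewise obtains the theorem purely by combining \pref{thm:exc-rotator-seymour} (for the rotator-free case) with 2.2, 5.1 and 8.2 of \cite{seymour2} (for the case of a rotator together with a square-forcer), plus the observation that an induced rotator and a square-forcer in $\overline{G}$ descend to the complement of the underlying non-replicated graph $H$. The one slip is directional: what is needed (and what the paper asserts, leaving the verification to the reader) is that the extension operation cannot \emph{create} these structures in the complement, i.e.\ that any rotator or square-forcer of $\overline{G}$ already sits inside $\overline{H}$; your justification via ``cannot destroy'' argues the converse implication, which is not the one used.
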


Through the following seven lemmas, we investigate the clique cover number of the extensions of the graphs in these classes, starting with the graphs of  parallel-square type. Indeed, in the following lemma, we prove something more which will be also used in the proof of \pref{lem:F2,3}.
\begin{lem} \label{lem:parallel}
	Let $ H_0 $ be a graph of parallel-square type with $V(H_0)=A\cup B\cup C\cup D\cup Z\cup \{u,v,x,y\}$ as in the definition. Assume that $H$ is obtained from $ H_0 $ by possibly deleting the vertex $u$. Also, if $u\notin V(H)$, then there exist vertices $a_{i_0}\in A$, $b_{j_0}\in B$, $c_{k_0}\in C$ and $d_{l_0}\in D$  such that $a_{j_0},a_{k_0}\notin A$, $a_{l_0}\notin A\setminus \{a_{i_0}\}$, $b_{i_0},b_{l_0}\notin B$,$b_{k_0}\notin B\setminus \{b_{j_0}\}$, $c_{i_0},c_{l_0}\notin C$, $c_{j_0}\notin C\setminus \{c_{k_0}\}$, $d_{j_0},d_{k_0}\notin D$ and $d_{i_0}\notin D\setminus \{d_{l_0}\}$. Also, suppose that $H$ is not three-cliqued. Then, for every extension $ G $ of $H$ on $n$ vertices, we have $\cc(G)\leq n-1$.
\end{lem}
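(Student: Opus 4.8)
The plan is to give an explicit splitting-style clique covering of $H$ (of size at most $|V(H)|-1$) and then pull it back through the extension operation by a routine argument analogous to \pref{lem:thickening} and \pref{lem:thickening_splitting}: replicating a vertex $v$ into a clique $X_v$ costs at most $|X_v|-1$ extra cliques per replicated vertex, provided for each replicated vertex we have a clique in our covering meeting $A\cup B\cup C$ (or the relevant non-core set) in exactly that one vertex, and provided the internal edges of each $X_v$ get covered. Since the replicated vertices all lie in $\tilde W(H)$ and by the structure of parallel-square graphs $\tilde W(H)$ is contained in $A\cup B\cup C\cup D$ together with $Z$ and the four special vertices, I would first record which vertices of $H$ are simplicial-like (non-core) and arrange the covering of $H$ so that each such vertex is ``privately covered'' by some clique. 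So the first step is to fix notation for parallel-square type: the five cliques $A,B,C,D$ and $Z$, the four vertices $u,v,x,y$, the pairing of $A$ with $B$ (anti-matching), of $C$ with $D$, etc., and to identify $\tilde W(H)$ precisely.

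The core of the argument is the explicit covering of $H$ itself. I would build it from the $\mathscr N[\cdot;\cdot]$ blocks: cover the edges between $A$ and $B$ by $\mathscr N[A\setminus A';B]$ (or $\mathscr N[A;B]$ when $u$ is present, absorbing $u$), the edges between $C$ and $D$ similarly, the edges from $Z$ to everything by a small fixed number of big cliques (using that $Z$ is complete to a large part of the graph), and the edges incident to $\{u,v,x,y\}$ by a bounded number of cliques. The key accounting is that the union of the two ``long'' cliques $A\cup C\cup(\text{stuff})$ and $B\cup D\cup(\text{stuff})$ plus $\mathscr N[A;B]$-type families saves at least one clique because at least one of the sets $Z$, or one of $A,B,C,D$, is large enough (or, when everything is small, $H$ is one of finitely many graphs that can be checked by hand). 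This is exactly the mechanism used in \pref{thm:tc1}, \pref{lem:2sub}, and the $\mathcal{TC}$-cases of \pref{thm:tc2}, so I would reuse that template. When $u\notin V(H)$, the extra hypotheses about $a_{i_0},b_{j_0},c_{k_0},d_{l_0}$ are precisely what is needed to ensure $H$ is still antiprismatic after deleting $u$ and to guarantee that the edges which would have been covered ``through $u$'' are instead covered through these four distinguished vertices; I would use them to replace the clique $N[u,\cdot]$-family by a small collection of cliques on $\{a_{i_0},b_{j_0},c_{k_0},d_{l_0}\}$ together with their neighbourhoods.

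I expect the main obstacle to be the bookkeeping in the tight case, i.e.\ when all of $|A|,|B|,|C|,|D|,|Z|$ are small (say each at most $2$ or $3$) and $u$ is deleted, so that none of the ``$Z$ is big'' or ``$A$ is big'' savings apply. There one must exhibit the $-1$ saving by a clever merge (two cliques whose union is again a clique, as in the final paragraphs of \pref{thm:3sub} and the proof of \pref{lem:2sub}), using that $H$ is \emph{not} three-cliqued: the non-three-cliqued hypothesis forces the existence of two disjoint ``non-edges'' in suitable places, and I would locate a pair of cliques in the covering whose union is a clique exactly because of one of those non-edges. As a fallback for the genuinely small cases, since the number of graphs with all five cliques of size $\le 3$ and no $u$ is finite, one can in principle check them directly, though I would prefer to avoid that by the merging argument. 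Finally, once the bound $\cc(H)\le |V(H)|-1$ (with the splitting property) is in hand, the passage to an arbitrary extension $G$ with $n$ vertices is the same replication estimate as in \pref{lem:thickening}: each of the $|V(G)|-|V(H)|$ extra vertices contributes at most one extra clique after cancelling the ``private'' clique, giving $\cc(G)\le (|V(H)|-1)+(n-|V(H)|)=n-1$, and the internal edges of the replicated cliques are handled because each replicated vertex sits in $\tilde W(H)$ and its private clique can be expanded with $\mathscr N[\cdot;\cdot]$ exactly as in the proofs of \pref{lem:thickening_splitting} and \pref{lem:worn}.
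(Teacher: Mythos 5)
Your plan has a genuine gap at its load-bearing step: you propose to cover $H$ first and then ``pull back through the extension operation by a routine argument analogous to \pref{lem:thickening} and \pref{lem:thickening_splitting}.'' But an extension is not a thickening. By definition, an extension of $H$ replicates each non-core vertex $v$ into a clique $X_v$ \emph{and then adds an arbitrary set of edges between nonadjacent vertices of} $\bigcup_{v\in \tilde W(H)} X_v$. So for \emph{every} pair of nonadjacent non-core vertices $v,v'$ the bipartite graph between $X_v$ and $X_{v'}$ can be arbitrary (neither complete nor anticomplete), and a covering of $H$ lifted by $C\mapsto \bigcup_{w\in C}X_w$ leaves all of those added edges uncovered. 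In a thickening this happens only for the pairs in the valid set $F$, where each vertex lies in at most one such pair, so the extra cost $\sum_i|X_{u_i}|$ can be charged against $\sum_i(|X_{u_i}|+|X_{v_i}|-2)$; in an extension there is no such bound on the number of ``partial'' pairs, and the same accounting collapses. The paper itself flags exactly this: it states that, unlike thickening and worn hex-chains, one is ``mostly unable to extend a clique covering of a graph to a clique covering of its extensions.'' The paper's proof of this lemma therefore works \emph{directly in $G$}: it sets $X_v=\{v\}$ for core vertices, proves (its statement (1)) that families such as $\mathscr{N}_{G}[X_A;X_B\cup X_C]$ consist of cliques of $G$, and builds the covering out of closed neighbourhoods computed in $G$, so that whatever edges the extension happened to add are absorbed automatically. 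Your proposal never confronts these added edges, and the replication estimate ``each extra vertex contributes at most one extra clique after cancelling the private clique'' is simply false in this setting.

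Beyond that, the combinatorial heart of the lemma is left as a template rather than carried out. The paper does not obtain the $-1$ saving from ``$Z$ or one of $A,B,C,D$ being large'': it splits into the case $|U|=|Z|=1$ (where a direct covering of size $n-1$ exists), and the case where $U$ or $Z$ is empty, where the hypothesis that $H$ is not three-cliqued is used to show $E_H(A,C)$ and $E_H(B,D)$ are both nonempty; the saving then comes from the matched indices $a_{i_1}c_{i_1},\dots$ and a carefully ordered WLOG on $|X_A|,|X_B|$ and the matched parts of $X_C,X_D$, with the residual tight cases ($|X_A|=|X_B|$, $\ell=\ell'=1$, $U=Z=\emptyset$) resolved by two further explicit coverings, including one for $|X_A|=|X_B|=2$ where $n=21$ is not in play and a ten-clique covering is written down. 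The extra hypotheses on $a_{i_0},b_{j_0},c_{k_0},d_{l_0}$ in the $u\notin V(H)$ case are used not to restore antiprismaticity but to guarantee, for instance, that $X_{a_{i_0}}$ is complete to $X_B$ and $X_{d_{l_0}}$ is complete to $X_C$, so that the edges from $Z$ into $X_B$ and $X_C$ are caught by the neighbourhood cliques. None of this is recoverable from the template you cite, so the proof as proposed does not go through.
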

\begin{proof}
	Define $U=V(H)\cap \{u\}$. Let $\tilde{A}\subseteq A, \tilde{B}\subseteq B, \tilde{C}\subseteq C$ and $\tilde{D}\subseteq D$ be the sets of vertices which are respectively complete to $B\cup D, A\cup C, B$ and $A$. It is easy to check that $\tilde{W}(H)\subseteq \tilde{A}\cup \tilde{B}\cup \tilde{C}\cup \tilde{D}$ (for instance, if say $u\in \tilde{W}(H)$, then $C\cup D\cup Z\cup \{x\}$ is a clique and consequently,  $V(H)$ is the union of three cliques $A\cup \{u,y\},B\cup \{v\}$ and $C\cup D\cup Z\cup \{x\}$, a contradiction). For every $v\in V(H)$, let $X_v\subseteq V(G)$ be as in the definition of the extension, if $v\in \tilde{W}(H)$, and let $X_v=\{v\}$, otherwise. Also, for every $S\subseteq V(H)$, let $X_S=\cup _{s\in S}X_s$ (note that $X_Z=Z$, $X_U=U$ and $v,x,y\in V(G)$). We need the following.\vsp
	
	(1) \textit{All the members of $\mathscr{N}_{G}[X_A;X_B\cup X_C]$, $\mathscr{N}_{G}[X_B;X_A\cup X_D]$, $\mathscr{N}_{G}[X_C;X_A\cup X_B]$, $\mathscr{N}_{G}[X_C;X_D\cup X_A]$, $\mathscr{N}_{G}[X_D;X_A\cup X_B]$ and $\mathscr{N}_{G}[X_D;X_B\cup X_C]$ are cliques of $G$.}\vsp
	
	For if say there exists $a\in X_A$ adjacent to $b\in X_B$ and $c\in X_C$, where $b,c$ are nonadjacent, then $c\in W(H)$, say $c=c_j$, and so $a=a_j\in W(H)$ and $b=b_j\in W(H)$. This implies that $a$ is nonadjacent to $b$, a contradiction. This proves (1). \vsp
	
	Now, we conclude\vsp
	
	(2) \textit{If $|U|=|Z|=1$, then $\cc(G)\leq n-1$.}\vsp
	
	By (1), the family $\mathscr{N}_{G}[X_A;X_B\cup X_C\cup Z]\cup \mathscr{N}_{G}[X_B;X_C\cup Z]\cup \mathscr{N}_{G}[X_C;X_D\cup Z]\cup \mathscr{N}_{G}[X_D;X_A\cup X_B\cup Z]\cup \{A\cup \{u,y\},B\cup \{u,v\},C\cup\{v,x\},D\cup\{x,y\}\}$ is a clique covering for $G$ of size $|X_A|+|X_B|+|X_C|+|X_D|+4=n-1$. This proves (2).\vsp
	
	Moreover, we have\vsp
	
	(3) \textit{If either $U$ or $Z$ is empty, then $E_H(A,C)$ and $E_H(B,D)$ are both nonempty.}\vsp
	
	For if say $E_H(A,C)=\emptyset$, then every vertex in $B$ is complete to either $A$ or $C$, and defining $B'\subseteq B$ as the set of vertices in $B$ that are complete to $A$, implies that $V(H)$ is the disjoint union of three cliques $A\cup B'\cup U\cup Z, (B\setminus B')\cup C\cup \{v\}$ and $D\cup \{x,y\}$, a contradiction. This proves (3).\vsp
	
	Now, by (2), we may suppose that either $U$ or $Z$ is empty. Thus, by (3), we may assume that $E_H(A,C)=\{a_{i_1}c_{i_1},\ldots, a_{i_{\ell}}c_{i_{\ell}}\}$ and $E_H(B,D)=\{b_{j_1}d_{j_1},\ldots, b_{j_{\ell '}}d_{j_{\ell '}}\}$ for some $\ell,\ell '\geq 1$. Henceforth, w.l.o.g. assume that $|X_A|\leq |X_B|$, if $|X_A|=|X_B|$, then $|X_{\{c_{i_1},\ldots,c_{i_{\ell}}\}}|\geq |X_{\{d_{j_1},\ldots,d_{j_{\ell'}}\}}|$, if also $|X_{\{c_{i_1},\ldots,c_{i_{\ell}}\}}|=|X_{\{d_{j_1},\ldots,d_{j_{\ell'}}\}}|$, then $|X_C\setminus X_{\{c_{i_1},\ldots,c_{i_{\ell}}\}}|\geq |X_D\setminus X_{\{d_{j_1},\ldots,d_{j_{\ell'}}\}}|$ and if also $|X_C\setminus X_{\{c_{i_1},\ldots,c_{i_{\ell}}\}}|= |X_D\setminus X_{\{d_{j_1},\ldots,d_{j_{\ell'}}\}}|$, then $|E(X_{b_{j_0}},X_C\setminus X_{\{c_{i_1},\ldots,c_{i_{\ell}}\}})|\geq |E(X_{a_{i_0}},X_D\setminus X_{\{d_{j_1},\ldots,d_{j_{\ell'}}\}})|$. Also, we use the following easy observation whose proof is left to the reader.\vsp
	
	(4) \textit{For every $i\in \{i_1,\ldots ,i_{\ell}\}$ $($resp. $j\in \{j_1,\ldots ,j_{\ell}\}$$)$ and every $a\in X_{a_{i}}$ and $c\in X_{c_{i}}$ $($resp. $b\in X_{b_{j}}$ and $d\in X_{d_{j}}$$)$, we have $N_{G}[a,X_B]=N_{G}[c,X_B]$ and $N_{G}[a,X_D]=N_{G}[c,X_D]$ $($resp. $N_{G}[b,X_A]=N_{G}[d,X_A]$ and $N_{G}[b,X_C]=N_{G}[d,X_C]$$)$.}\vsp
	
	Consider the following family,
	\begin{align*}
		\mathscr{C}=&\mathscr{N}_{G}[X_A;X_B\cup X_C\cup Z]\cup \mathscr{N}_{G}[X_C\setminus X_{\{c_{i_1},\ldots,c_{i_{\ell}}\}};X_B\cup \{v\}]\cup \mathscr{N}_{G}[X_D;X_B\cup X_C\cup Z]\cup \\
		&\mathscr{N}_{G}[X_A;X_D\cup \{y\}]\cup 
		\{X_A\cup U\cup \{y\}, X_B\cup U\cup \{v\},X_C\cup\{v,x\},X_D\cup\{x,y\}\}
	\end{align*}
	whose elements by (1) are cliques of $G$. 
	Note that by (4), the edges in $E_G(X_{\{c_{i_1},\ldots,c_{i_{\ell}}\}},X_B)$ are covered by the cliques in $\mathscr{N}_{G}[X_A;X_B\cup X_C\cup Z]$. 
	If $ Z=\emptyset $, then $ \mathscr{C} $ is a clique covering. Also, if $ Z $ is nonempty, then $ U=\emptyset $ and $ \mathscr{C} $ is also a clique covering (because $X_{a_{i_0}}$ is complete to $X_B$ and $X_{d_{l_0}}$ is complete to $X_C$ and thus the edges in $E_G(Z,X_B)$ and $E_G(Z,X_C)$ are covered by the cliques in $\mathscr{N}_{G}[X_A;X_B\cup X_C\cup Z]$ and $\mathscr{N}_{G}[X_D;X_B\cup X_C\cup Z]$). Also, $ |\mathscr{C}|=n-(|X_B|-|X_A|)-|X_{\{c_{i_1},\ldots,c_{i_{\ell}}\}}|-|U|-|Z|+1$. Hence, if either $ U\neq\emptyset $, or $ Z\neq\emptyset $, or $|X_B|-|X_A|\geq 1$ or $|X_{\{c_{i_1},\ldots,c_{i_{\ell}}\}}|\geq 2$, then we are done. So, we may assume that $ U=Z=\emptyset $, $|X_A|=|X_B|$ and $|X_{\{c_{i_1},\ldots,c_{i_{\ell}}\}}|=|X_{\{d_{j_1},\ldots,d_{j_{\ell'}}\}}|=1$ (i.e. $\ell=\ell'=1$). 
	
	In this case, let $ \mathscr{C}' $ be obtained from $ \mathscr{C} $ by removing the clique $X_B\cup U\cup \{v\}$. 
	Since $ X_{a_{i_0}} $ is complete to $ X_B $ and $X_{c_{k_0}}$ is complete to $X_B\setminus X_{b_{j_0}}$, the only edges of $G$ that are possibly not covered by the cliques in $\mathscr{C}'$ are the edges in $E_G(\{v\},X_{b_{j_0}})$. Now, if there exists a vertex $c_k\in C$, where $k\notin \{j_0,i_1\}$, then $X_{c_k}$ is complete to $X_{b_{j_0}}$, and thus $\mathscr{C}'$ is a clique covering for $G$ of size $n-(|X_B|-|X_A|)-|X_{c_{i_1}}|\leq n-1$. Therefore, we may assume that $j_0=k_0$ and $C=\{c_{i_1},c_{k_0}\}$. This implies that $b_{j_0},c_{k_0}\in W(H)$ and thus $|X_C\setminus X_{c_{i_1}}|=|X_{c_{k_0}}|=1$ and $E(X_{b_{j_0}},X_C\setminus X_{c_{i_1}})=\emptyset$. Therefore, $|X_D\setminus X_{d_{j_1}}|=1$ and so $E(X_{a_{i_0}},X_D\setminus X_{d_{j_1}})=\emptyset$. In particular,  $X_C=\{c_{i_1},c_{k_0}\}$, $X_D=\{d_{j_1},d_{l_0}\}$, $i_0=l_0$ and $j_0=k_0$.
	Now, if $|X_A|=|X_B|\geq 3$, then the family of cliques $\mathscr{N}_G[X_C\cup X_D;X_A\cup X_B]\cup \mathscr{N}_G[X_C;X_D]\cup \mathscr{N}_G[X_A\setminus \{a_{i_1}\};X_B]\cup \{X_A\cup \{y\},X_B\cup  \{v\},X_C\cup\{v,x\},X_D\cup\{x,y\}\}$ is a clique covering for $G$ of size $9+|X_A|=n-|X_B|+2\leq n-1$. Also, if $|X_A|=|X_B|=2$ (i.e. $X_A=\{a_{i_0},a_{i_1}\}$ and $X_B=\{b_{j_0},b_{j_1}\}$), then the family of cliques $\{N_G[a_{i_1},X_B\cup X_C],N_G[a_{i_1},X_C\cup X_D],N_G[b_{j_1},X_C\cup X_D],N_G[b_{j_1},X_D\cup X_A],\{a_{i_0},b_{j_0}\},\{c_{k_0},d_{l_0}\},X_A\cup \{y\},X_B\cup  \{v\},X_C\cup\{v,x\},X_D\cup\{x,y\}\}$ is a clique covering for $G$ of size $10=n-1$. This completes the proof of \pref{lem:parallel}.
\end{proof}

In the following, we examine the extensions of the graphs in the class $\mathcal{F}_0$.

\begin{lem} \label{lem:F0}
	Let $G$ be a graph in $\mathcal{F}_0$ and $G'$ be an extension of $G$ on $n$ vertices. Then $\cc(G')\leq n-1$.
\end{lem}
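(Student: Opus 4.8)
The plan is to reduce \pref{lem:F0} to a finite verification together with a lifting argument for the extension operation, since $\mathcal{F}_0$ (defined in \pref{app:noap}) is a bounded collection of explicitly prescribed graphs, each on $n_0$ vertices with $12\le n_0\le 18$. For a fixed $G\in\mathcal{F}_0$, the first step is to read off the non-core $\tilde W(G)$ from the description of $G$, using that a vertex lies in $\tilde W(G)$ exactly when its set of non-neighbours is a clique of $G$. Because $G$ has bounded order, $\tilde W(G)$ is small, and I would record in each case the set $F$ of non-edges of $G$ with both ends in $\tilde W(G)$, checking that $F$ is a valid set for $G$ (no vertex lies in two members of $F$) and that its members are changeable pairs. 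Granting this, every extension $G'$ of $G$ is a thickening of $(G,F')$ for some $F'\subseteq F$: replicating the non-core vertices produces the blow-ups $X_v$ with the clean completeness and anticompleteness of a thickening, while the edges inserted inside $\bigcup_{v\in\tilde W(G)}X_v$ are precisely the pairs realising condition (T4) for the members of $F'$; if the added edges happen to make some pair of $F'$ complete, one first adds the corresponding edge to $G$, which keeps $G$ antiprismatic and only helps.

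The second step is to supply, for each $G\in\mathcal{F}_0$, an explicit clique covering of size at most $n_0-2$. This is a routine hand computation: the graphs in $\mathcal{F}_0$ have $\overline G$ with no induced rotator, so they are dense in the antiprismatic sense and $E(G)$ can be covered economically by triangles and larger cliques; since $n_0\ge 12$, trimming the resulting family to size $n_0-2$ is easy, and in tight cases I would write the covering out directly and, if useful, compare with the lower bound of \pref{lem:lowerbound}. I would carry this out representative by representative, as was done in \pref{lem:icosa} and \pref{lem:twister}.

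These two steps combine at once. Let $G'$ be an extension of $G\in\mathcal{F}_0$ on $n$ vertices. By the first step $G'$ is a thickening of $(G,F')$ for a valid set $F'$ of non-edges inside $\tilde W(G)$, and $G\setminus F'=G$ admits a clique covering of size at most $n_0-2=|V(G)|-2$ by the second step. If no isolated vertex of $G$ lies in a member of $F'$, then \pref{lem:thickening} with $t=2$ gives a clique covering of $G'$ of size at most $|V(G')|-2=n-2\le n-1$; otherwise a member of $F'$ contains an isolated vertex of $G$, and, using that a non-orientable antiprismatic graph has a triad (so $G$ and $G'$ both contain one), \pref{lem:thickening_antiprismatic} yields the same bound. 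Hence $\cc(G')\le n-1$.

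The main obstacle lies entirely in the first step: verifying, for every $G\in\mathcal{F}_0$, that the non-core non-edges form a valid set, i.e.\ that no non-core vertex is nonadjacent to two other non-core vertices. This is the structural fact that makes an extension behave like an ordinary thickening with respect to a valid set, and it has to be checked graph by graph from \pref{app:noap}. Should it fail for some sporadic member of $\mathcal{F}_0$, the extension would no longer be a thickening of $(G,F')$ for valid $F'$, and one would have to argue directly: use that $G'$ is still antiprismatic to restrict which edges may be inserted among the blow-ups of $\tilde W(G)$, and then construct a bespoke clique covering of $G'$ carrying an extra parameter for the sizes of the $X_v$, in the spirit of the proof of \pref{lem:thickening_splitting}.
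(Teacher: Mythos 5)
There is a genuine gap, and it sits exactly where you flagged the ``main obstacle'': for the graphs in $\mathcal{F}_0$ the non-core non-edges do \emph{not} form a valid set, so your entire reduction to \pref{lem:thickening}/\pref{lem:thickening_antiprismatic} collapses. Concretely, $\tilde{W}(G)=V(G)\cap\{s_1^3,s_3^3,t_3^2,t_3^3\}$, and the non-edges of $G$ inside this set are \emph{all four} pairs between $\{s_1^3,s_3^3\}$ and $\{t_3^2,t_3^3\}$: three of them ($s_1^3t_3^2$, $s_1^3t_3^3$, $s_3^3t_3^2$) are deleted in the construction of $G$, and the fourth, $s_3^3t_3^3$, is already a non-edge of the Schl\"{a}fli graph (since $s_j^i$ and $t_{j'}^{i'}$ are adjacent exactly when $j\neq i'$, and here $j=i'=3$). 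So when all four vertices are present (which the definition of $\mathcal{F}_0$ permits), the non-edge graph on $\tilde{W}(G)$ is a $K_{2,2}$, and each of the four vertices lies in two non-edges. No subset $F'$ of these pairs that witnesses the actual (arbitrary) adjacency between the blow-ups can be a valid set, so $G'$ is not a thickening of $(G,F')$ in the paper's sense, and \pref{lem:thickening} simply does not apply. Adding edges to $G$ to ``complete'' some pairs does not rescue this, since completing one pair still leaves a vertex in two remaining non-edges.

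Your contingency plan --- ``argue directly'' with a bespoke covering parametrized by the $|X_v|$ --- is precisely what the paper's proof consists of, and it is the entire content of the lemma rather than a sporadic patch. The paper writes down $12$ explicit cliques of $G'$ and then covers all edges meeting $X_{t_3^2}\cup X_{t_3^3}$ (whatever the inserted adjacency among the blow-ups happens to be) by the families $\mathscr{N}_{G'}[X_{t_3^2};X_{s_1^3}\cup X_{s_3^3}\cup\{t_1^2,t_2^2\}]$ and $\mathscr{N}_{G'}[X_{t_3^3};X_{s_1^3}\cup X_{s_3^3}\cup\{s_2^3\}]$, paying $|X_{t_3^2}|+|X_{t_3^3}|$ cliques and recovering the deficit from the optional vertices $r_2^1$, $r_1^2$ and the sets $X_{s_1^3}$, $X_{s_3^3}$; when all of these are empty it falls back to a hand-built $11$-clique covering of the resulting $12$-vertex graph. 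None of this is supplied by your proposal, so the argument as written does not establish the lemma.
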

\begin{proof}
	Let $I_1,I_2$ and $I_3$ be as in the definition of the class $\mathcal{F}_0$. Let $R_2^1=V(G)\cap \{r_2^1\}$ and $R_1^2=V(G)\cap \{r_1^2\}$. This is evident that $\tilde{W}(G)=V(G)\cap \{s_1^3,s_3^3,t_3^2,t_3^3\}$. For every $v\in \tilde{W}(G)$, let $X_v\subseteq V(G')$ be as in the definition of extension, and for every $v\in \{s_1^3,s_3^3,t_3^2,t_3^3\}\setminus V(G)$, define $X_v=\emptyset$. Now, let $\mathscr{C}$ be the family containing the following $12$ cliques
	\[\begin{array}{lll}
	\{r_1^1,r_1^3,s_1^2\}\cup R_1^2,& \{r_1^1,r_1^3,s_2^3\}\cup X_{s_1^3}\cup X_{s_3^3}\cup R_1^2,& \{r_2^2,r_2^3,s_1^1\}\cup R_2^1,\\
	\{r_2^2,r_2^3,s_2^3\}\cup X_{s_1^3}\cup X_{s_3^3}\cup R_2^1, & \{r_3^1,r_3^2,s_1^1,s_1^2\}\cup X_{t_3^2}\cup X_{t_3^3},& \{s_1^1,s_1^2,t_1^2,t_2^2\}\cup X_{s_1^3}, \\
	\{s_2^3,t_3^1\}\cup X_{s_3^3}, &\{r_1^3,r_2^3,t_1^2,t_2^2\}\cup X_{s_3^3}, &\{r_2^2,r_3^2,t_1^2\}\cup R_1^2, \\
	\{t_2^2,r_1^1,r_3^1\}\cup R_2^1,&\{t_3^1,r_1^1,r_3^1\}\cup X_{t_3^2}\cup X_{t_3^3}\cup R_2^1, & \{t_3^1,r_2^2,r_3^2\}\cup X_{t_3^2}\cup X_{t_3^3}\cup R_1^2.
	\end{array}\]
	It can be easily seen that $\mathscr{C}\cup \mathscr{N}_{G'}[X_{t_3^2};X_{s_1^3}\cup X_{s_3^3}\cup \{t_1^2,t_2^2\}]\cup \mathscr{N}_{G'}[X_{t_3^3};X_{s_1^3}\cup X_{s_3^3}\cup \{s_2^3\}]$ is a clique covering for $G'$ of size $n-|R_2^1|-|R_1^2|-|X_{s_1^3}|-|X_{s_3^3}|$. Thus, if at least one of the sets $R_2^1,R_1^2,X_{s_1^3}$ or $X_{s_3^3}$ is nonempty, then we are done. Now, assume that $R_2^1=R_1^2=X_{s_1^3}=X_{s_3^3}=\emptyset$ and in $\mathscr{C}$, replace the cliques $\{s_1^1,s_1^2,t_1^2,t_2^2\}\cup X_{s_1^3}$ and $ \{s_2^3,t_3^1\}\cup X_{s_3^3} $ with the cliques $\{s_1^1,s_1^2,t_1^2,t_2^2\}\cup X_{t_3^2}$ and $ \{s_2^3,t_3^1\}\cup X_{t_3^3} $ respectively, thereby providing a clique covering for $G$ of size $n-|X_{t_3^2}|-|X_{t_3^3}|$. Hence, if either $X_{t_3^2}$ or $X_{t_3^3}$ is nonempty, then we are done. Finally, assume that $X_{t_3^2}=X_{t_3^3}=\emptyset$. In this case, $|V(G')|=12$ and the following $11$ cliques
	\[\begin{array}{llllll}
	\{s_1^2,t_2^2, r_1^1,r_3^1\}, & \{s_1^1, t^2_1,t^2_2, r_2^3\}, & \{s_1^1, s^2_1, r_3^1,r^2_3 \}, & \{s_1^1, t^2_1, r^2_2,r^2_3\}, & \{s^2_1, t^2_1, r^3_1\}, & \{t^2_2, r^3_1,r^3_2\}, \\
	\{s_2^3, r_1^1,r^3_1\}, & \{t^1_3, r_1^1,r^1_3\}, & \{s^3_2,r_2^2,r^3_2 \}, & \{ t^1_3,r^2_2, r^2_3 \}, & \{s^3_2, t^1_3\},
	\end{array}\]
	provide a clique covering for $G'$ of size $n-1$. This proves \pref{lem:F0}.
\end{proof}
In the following, we look into the extensions of the graphs in the class $\mathcal{F}_1$.
\begin{lem} \label{lem:F1}
	Let $G$ be a graph in $\mathcal{F}_1$ and $G'$ be an extension of $G$ on $n$ vertices  which is not three-cliqued. Then $\cc(G')\leq n$ and equality holds if and only if $\overline{G'}$ is isomorphic to a twister. 
\end{lem}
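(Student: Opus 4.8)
The plan is to deduce \pref{lem:F1} from \pref{lem:2sub}. Fix $G\in\mathcal F_1$ and an extension $G'$ of $G$ on $n$ vertices that is not three-cliqued. First I would dispatch the routine preliminaries. An extension of an antiprismatic graph is again antiprismatic: replicating a non-core vertex and then adding edges only among the replicas creates no new triad, and every vertex still has exactly two neighbours in each triad; so $G'$ is antiprismatic. Every triad of $G'$ projects onto a triad of $G$ (the replicas of a vertex form a clique, so the three vertices of a triad of $G'$ come from three distinct, pairwise nonadjacent vertices of $G$), and conversely every triad of $G$ is a triad of $G'$ since triads live in the core, which is not replicated and is incident with no added edge. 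In particular $G'$ has a triad, and a set of vertices meets every triad of $G$ if and only if it meets every triad of $G'$. Finally, if $G$ were three-cliqued then so would be $G'$ (each $X_v$ is a clique inside the part containing $v$); hence $G$ is not three-cliqued either.

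Let $\{u,v,w\}$, $A$, $B$ be as in the definition of $\mathcal F_1$ in \pref{app:noap}: $\{u,v,w\}$ is a triad, $u\not\sim v$, every triad of $G$ meets $\{u,v\}$, the sets $A$ and $B$ of non-neighbours of $u$ and of $v$ outside $\{u,v,w\}$ are each an induced matching plus isolated vertices in $\overline G$, and $w$ is complete to $A\cup B$. Since $G$ is not three-cliqued, at least one of $A,B$ is not a clique; using that the definition of $\mathcal F_1$ is symmetric in $u,v$, I would arrange that $B$ is a clique (if both $A$ and $B$ can be non-cliques, this is the one spot where I would fall back on the explicit definition of $\mathcal F_1$ or, failing that, on a direct clique covering in the spirit of \pref{lem:parallel}). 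Set $a_0=u$. As $u$ is a core vertex it is not replicated, so $a_0\in V(G')$ and $a_0$ lies in the triad $\{u,v,w\}$ of $G'$. Put $A'=N_{\overline{G'}}(a_0)$, so $A'$ is the union of the replica-cliques of the vertices of $A\cup\{v,w\}$. Writing $\tilde A'$ for the set of vertices of $A'$ with no non-neighbour in $A'$, a replica of a non-core vertex of $A$ (which is complete to the rest of $A$) lies in $\tilde A'$, so $A'\setminus\tilde A'$ consists exactly of $v$, $w$, and the matched vertices of $\overline G[A]$, and for each of these its non-neighbours in $A'$ coincide with those in $G$.

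Then I would check the hypothesis of \pref{lem:2sub}, that $N_{G'}(\beta,V(G')\setminus A')$ is a clique for each $\beta\in A'\setminus\tilde A'$. For $\beta=v$ this set is $N_{G'}(u)\cap N_{G'}(v)$: two nonadjacent vertices $a,a'$ in it would each have exactly the neighbours $u,v$ inside $\{u,v,w\}$, hence be nonadjacent to $w$, so $\{a,a',w\}$ would be a triad meeting neither $u$ nor $v$. For $\beta=w$ this set equals the extension $B'$ of $B$ (because $w$ has no non-neighbour outside $\{u,v,w\}$), which is a clique by our choice of $B$. For a matched vertex $\beta$ of $\overline G[A]$ with non-neighbour $\beta'\in A$, we have that $\{u,\beta,\beta'\}$ is a triad and $\beta'\sim v$; if $b,b'\in N_{G'}(\beta)\setminus A'$ were nonadjacent, then, since $b,b'\notin A'$ forces $b,b'\sim u$, each of $b,b'$ would have exactly the neighbours $u,\beta$ in $\{u,\beta,\beta'\}$ and hence be nonadjacent to $\beta'$, so $\{b,b',\beta'\}$ would be a triad avoiding both $u$ and $v$ — a contradiction. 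Thus \pref{lem:2sub} applies and gives $\cc(G')\le n$, with equality if and only if $\overline{G'}$ is isomorphic to a twister; this equality case does occur, as \pref{lem:twister} shows that the complement of a twister has clique cover number equal to its number of vertices.

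The main obstacle I anticipate is the reduction step ``one of $A,B$ is a clique'': pinning this down (or circumventing it when both $A$ and $B$ may be non-cliques) genuinely needs the explicit description of $\mathcal F_1$ from the appendix, and in the latter situation one must supply a separate direct clique covering for that configuration. Everything else — that extensions of antiprismatic graphs are antiprismatic, the identification of $A'\setminus\tilde A'$, and the three cases of the verification — is routine once the definitions are unwound.
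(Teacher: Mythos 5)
There is a genuine gap, and it sits exactly where you flagged it. Your reduction to \pref{lem:2sub} with $a_0=u=s$ requires that for the third triad vertex $w$ (the vertex of $R$ in the notation of \pref{app:noap}) the set $N_{G'}(w,V(G')\setminus A')$ be a clique; since $w$ is complete to $A\cup B$ and anticomplete to $\{s,t\}$, that set is precisely (the extension of) $B$. Your plan to ``arrange that $B$ is a clique'' by the $u\leftrightarrow v$ symmetry cannot work: once one derives (as the paper does, from antiprismaticity and $B\cup R\neq\emptyset$) that $t$ is complete to $A$ and $s$ is complete to $B$, each of $A$ and $B$ splits into at most two cliques, and if \emph{either} of them were a single clique then $V(G)$ would be the union of three cliques (e.g.\ $B\cup\{s\}$, $A_1\cup\{t\}$, $A_2\cup R$), contradicting the hypothesis. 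So in every instance relevant to the lemma \emph{both} $A$ and $B$ fail to be cliques, the hypothesis of \pref{lem:2sub} fails whenever $R\neq\emptyset$ (and one checks it also fails for the other natural choices $a_0=t$ or $a_0=w$), and the fallback ``direct clique covering'' you defer to is not optional --- it is the entire content of that case. The paper handles it by exhibiting an explicit covering
\[\mathscr{N}_{G'}[A;B\cup R]\cup \mathscr{N}_{G'}[X;(A\setminus X)\cup\{t\}]\cup \mathscr{N}_{G'}[U;(B\setminus U)\cup\{s\}]\cup\{X,(A\setminus X)\cup\{t\}\}\]
of size at most $n-1$ (with a small adjustment when $a=b=1$), where $X\cup Y$ and $U\cup V$ are the anti-matched parts of $A$ and $B$; nothing of this kind appears in your proposal.

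Your treatment of the case $R=\emptyset$ is essentially the paper's: there $A'=A\cup\{t\}$, the vertex $w$ does not exist, and the three verifications you give (for $t$, and for the matched vertices of $A$, using antiprismaticity of the triads $\{s,x_i,y_i\}$) are correct, so \pref{lem:2sub} applies and yields the bound together with the twister equality case. Two smaller points: the triad $\{u,v,w\}$ you take ``from the definition'' only exists when $R\neq\emptyset$, and the facts you use (that $w$ is complete to $A\cup B$, that $t$ is complete to $A$, and that every triad meets $\{s,t\}$) are not stated in the definition of $\mathcal{F}_1$ but must be derived from antiprismaticity, as the paper does at the start of its proof.
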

\begin{proof}
	Let $A,B,R,s,t$ be as in the definition of the class $\mathcal{F}_1$. Since $ G $ is antiprismatic, $ R $ is complete to $ A\cup B $. 
	Let $\tilde{A}\subseteq A$ be the set of all vertices in $A$ which have no non-neighbour in $A$ and let $\tilde{B}\subseteq B$ be defined analogously. 
	Note that from the definition, both $A$ and $B$ are the union of at most two disjoint cliques.
	Also, $ B\cup R$ is nonempty (since otherwise $ G $ and so $ G' $ is three-cliqued).
	Since $ G $ is antiprismatic and $ B\cup R$ is nonempty, $ t $ is complete to $ A $. Similarly, $ s $ is complete to $ B $. Now, if either $\tilde{A}=A $ or $\tilde{B}=B$, then $G$ and so $ G' $ is three-cliqued. This implies that $\tilde{A}\neq A $ and $\tilde{B}\neq B$. Hence, $\tilde{W}(G)=\tilde{A}\cup \tilde{B}$. 
	Now, let $A=\tilde{A}\cup X\cup Y$, where $X= \{x_1,\ldots, x_a\}$ and $Y=\{y_1,\ldots, y_a\}$ and $x_iy_i$, $1\leq i\leq a$, are the only non-edges with both ends in $A$. Also, let $B=\tilde{B} \cup U\cup V$, where $U= \{u_1,\ldots, u_b\}$ and $V=\{v_1,\ldots, v_b\}$ and $u_iv_i$, $1\leq i\leq b$, are the only non-edges with both ends in $B$.
	By abuse of notation, denote the set $\cup _{v\in \tilde{A}}X_v$ of vertices of $ G' $ by $\tilde{A}$ and the same assumption is applied to $\tilde{B}$ (note that  adjacency between $ \tilde{A} $ and $ \tilde{B} $ is arbitrary in $ G' $). If $ R $ is empty, then $ N_{\overline{G'}}(s)=A\cup \{t\}$ and for every vertex $ x\in A\cup \{t\} $,  $ N_{G'}(x,B\cup \{s\}) $ is a clique of $ G' $. Therefore, if $ R$ is empty, then the result follows from \pref{lem:2sub} (by setting $ u=s $). Hence, assume that $ R $ is nonempty. 
	Also, w.l.o.g. assume that $a\leq b$. Now, define
	\[\mathscr{C}=\mathscr{N}_{G'}[A;B\cup R]\cup \mathscr{N}_{G'}[X;(A\setminus X)\cup  \{t\}]\cup \mathscr{N}_{G'}[U;(B\setminus U)\cup \{s\} ]\cup \{X, (A\setminus X)\cup \{t\}\}. \]
	Note that since $\tilde{A}\neq A$, every vertex in $B$ has a neighbour in $A$, and so the edges in $E(R,V(G')\setminus R)$ are all covered by the cliques in $\mathscr{N}[A;B\cup R]$. Also, w.l.o.g. we may assume that $U\subseteq N(x_1,B)$ and $V\subseteq N(y_1,B)$. Therefore, the cliques in $\mathscr{C}$ cover all edges of $G'$ except possibly the edges in $E(V,\tilde{B}\cup \{s\})$. If $ b\geq 2 $, then these edges are covered by the cliques in $ \mathscr{N}_{G'}[U;(B\setminus U)\cup \{s\}] $ and thus $ \mathscr{C} $ is a clique covering for $ G' $ of size 
	$|\mathscr{C}|= |A|+a+b+2\leq |A|+|B|+2= n-1$.
	If $ b=1 $, then $ a=1 $ and in the collection $\mathscr{C}$, we may replace the clique $ X $ with the clique $\{v_1,s\}\cup \tilde{B}$ to obtain a clique covering of size $ |\mathscr{C}|\leq n-1  $. This proves \pref{lem:F1}.
\end{proof} 

Prior to examination of the classes $\mathcal{F}_2,\mathcal{F}_3$ and $\mathcal{F}_4$, we need to recall an operation defined in \cite{seymour2} which is used in the definition of these classes in \pref{app:noap}.

Let $\tau=\{a,b,c\}$ be a triad of an antiprismatic graph $H$. In \cite{seymour2}, $\tau$ is said to be a \textit{leaf triad} of $H$ at $c$, if both $a$ and $b$ belong to only one triad of $H$ (namely $\tau$). Let $\tau =\{a,b,c\}$ be a leaf triad of $H$ at $c$. 
Define subsets $D_1$, $D_2$, $D_3$ of the set of non-neighbours of $c$ as follows. If $v$ is nonadjacent to $c$ in $H$, let
\begin{itemize}
	\item $v\in D_1$ if $v$ belongs to a triad that does not contain $c$;
	\item $v\in D_2$ if $v\in W(H)\setminus \tau$, and every triad containing $v$ also contains $c$ (and hence is unique);
	\item $v\in D_3$ if $v\in \tilde{W}(H)$.
\end{itemize}
Thus, the four sets $D_1,D_2,D_3$ and  $\{a,b\}$ are pairwise disjoint sets whose union is the set of non-neighbours of $c$ in $H$. Suppose that $D_1,D_2$ are both cliques. Let $A,B,C$ be three pairwise disjoint sets of new vertices, and let $G$ be obtained from $H$ by deleting $a$, $b$ and adding the new vertices in $A\cup B\cup C$, with adjacency as follows:
\begin{itemize}
	\item $A$, $B$ and $C$ are cliques;
	\item every vertex in $A$ has at most one non-neighbour in $B$, and vice versa;
	\item every vertex in $V (H)\setminus \{a,b\}$ adjacent to $a$ (resp. $b$) in $H$ is complete to $A$ (resp. $B$) in $G$, and every vertex in $V (H)\setminus \{a,b\}$ nonadjacent to $a$ (resp. $b$) in $H$ is anticomplete to $A$ (resp. $B$);
	\item every vertex in $C$ is anticomplete to $D_1\cup D_3$, and complete to $V (H)\setminus (D_1\cup D_3\cup \{a,b\})$;
	\item every vertex in $C$ is nonadjacent to exactly one end of every non-edge between $A$ and $B$, and nonadjacent to every vertex in $A\cup B$ adjacent to all other vertices in $A\cup B$.
\end{itemize}
The graph $G$ is said to be obtained from $H$ by \textit{exponentiating} the leaf triad $\tau =\{a,b,c\}$. We leave the reader to check that $G$ is also antiprismatic and if $ H $ is three-cliqued, then so is $ G $.
Now, in the following, we give a method to extend a clique covering of $H$ (and its extensions) to a clique covering of $G$ (and its extensions).
\begin{lem} \label{lem:exponen}
	Assume that $H$ is an antiprismatic graph, $\tau=\{a,b,c\}$ is a leaf triad of $H$ at $c$ and $G$ is obtained from $H$ by exponentiating $\tau$. Also, suppose that for some fixed number $k$, every extension $H'$ of $H$ satisfies $\cc(H')\leq |V(H')|-k$ and if $ H+ab $ is not three-cliqued, then every extension $H''$ of $H+ab$ satisfies $\cc(H'')\leq |V(H'')|-k$. Let $ G' $  be an extension of $G$ which is not three-cliqued. Then, we have $\cc(G')\leq |V(G')|-k$.
\end{lem}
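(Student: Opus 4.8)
The plan is to build a clique covering of $G'$ of size at most $|V(G')|-k$ by reducing to a clique covering of a suitable extension of $H$ or of $H+ab$, whose existence is granted by hypothesis. First I would fix notation: let $A,B,C,D_1,D_2,D_3$ be as in the definition of exponentiating, so that $V(G)=(V(H)\setminus\{a,b\})\cup A\cup B\cup C$, and let $(X_v\ :\ v\in V(G))$ be the family of cliques witnessing that $G'$ is an extension of $G$, with $X_v=\{v\}$ for $v\notin\tilde W(G)$. A short inspection of the adjacency rules of the exponentiation shows $\tilde W(G)\subseteq\tilde W(H)\cup A\cup B\cup C$ (no vertex of $D_1\cup D_2$ can lose all of its triads, since their triads lie inside $V(H)\setminus\{a,b,c\}$ and survive), so every replicated vertex of $G'$ lies in $\tilde W(H)\cup A\cup B\cup C$. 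Let $\tilde A\subseteq A$ (resp. $\tilde B\subseteq B$) be the vertices of $A$ (resp. $B$) complete to $B$ (resp. $A$) in $G'$; since every vertex of $A$ has at most one non-neighbour in $B$ and vice versa, the non-edges between $A$ and $B$ form a matching between $A\setminus\tilde A$ and $B\setminus\tilde B$ (plus isolated vertices on each side), which will play the role of a valid set exactly as in \pref{lem:thickening}.

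The heart of the argument is to collapse the triple $A,B,C$ back down. In the main case, where the non-edges between $A$ and $B$ still realise the triad $\{a,b,c\}$ (informally $A,B$ behave like two distinct nonadjacent vertices and $C$ is nonempty), I would exhibit an extension $H'$ of $H$ together with a clique covering $\mathscr C$ of $H'$ of size at most $|V(H')|-k$: one recovers $H'$ from $G'$ by keeping a single representative of each blow-up, mapping the images of $A$, $B$, $C$ back to $a$, $b$, $c$, while recording the remaining copies (those arising from $A\setminus\tilde A$, $B\setminus\tilde B$, $C\cap\tilde W(G)$, $\tilde W(H)$) and their $X$-replications as the blow-ups of $H'$, and checking that the edges of $G'$ added inside $\cup_{v\in\tilde W(G)}X_v$ are legal extension edges of $H'$. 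In the remaining (degenerate) case — $A$ complete to $B$, which by the last adjacency rule of the exponentiation forces $C$ anticomplete to $A\cup B$ — the set $A\cup B$ is a clique and I would instead relate $G'$ to an extension $H''$ of $H+ab$, using that $H+ab$ is not three-cliqued because $G'$ is not. Having fixed the auxiliary graph ($H'$ or $H''$) and its covering $\mathscr C$, I would expand each clique of $\mathscr C$ through $a$ (resp. $b$, resp. $c$) by substituting the full sets built from $A$ (resp. $B$, resp. $C$) — these remain cliques of $G'$ by the exponentiation rules — and then append a bounded family of cliques of the form $\mathscr N_{G'}[\,\cdot\ ;\ \cdot\,]$ covering the leftover edges: those inside and between the blow-ups of $A$ and $B$ (handled verbatim as in \pref{lem:thickening} and \pref{lem:thickening_antiprismatic} using the matching above), those from $C$ to $D_2\cup N_H(c)$, and the edges of $E_{G'}(A\cup B,\,C)$.

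The delicate point, which I expect to be the main obstacle, is the bookkeeping: one must verify that the number of cliques added beyond $|\mathscr C|$ never exceeds $|V(G')|-|V(H')|$ (resp. $|V(G')|-|V(H'')|$), so that the total stays $\le|V(G')|-k$. This rests on the same accounting as in \pref{lem:thickening} — a matched pair $\{u,v\}$ of blow-up sets contributes $|X_u|+|X_v|$ vertices but only $|X_u|+|X_v|-2$ cliques, and every surplus clique is absorbed by one of the freshly created vertices of $A\cup B\cup C$ — together with deciding cleanly, from the shape of the anti-matching between $A$ and $B$ in $G'$, which of the two reductions applies and checking that no spurious triads or non-core vertices appear in $H'$ or $H''$. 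I would then dispose of the small configurations (some of $A$, $B$, $C$ empty or a singleton, or $C$ meeting no triad so that $C\cap\tilde W(G)=\emptyset$) by a direct count, adding or deleting a few explicit cliques in the style of the degenerate sub-cases of Lemmas~\ref{lem:parallel} and \ref{lem:F1}.
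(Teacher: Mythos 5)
Your overall strategy coincides with the paper's: split on whether $A$ is complete to $B$, reduce to an extension of $H+ab$ in the complete case (checking, as you note, that $H+ab$ is not three-cliqued because $G'$ is not) and to an extension of $H$ otherwise, inflate each clique of the granted covering through $a$, $b$, $c$ into $A$, $B$, $C\cup\{c\}$, and pay for the remaining edges with roughly $|A|+|B|+|C|-2$ extra cliques, using the ``adjacent to exactly one end of every non-edge'' rule to form cliques such as $D_2\cup N_{G'}[z,A\cup B\cup D_3]$. This is exactly the paper's architecture.

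There is, however, a concrete gap in your inventory of leftover edges. Since $\tilde{W}(G)=\tilde{W}(H)\cup\tilde{A}\cup\tilde{B}\cup C$, the extension $G'$ may add edges between the blow-ups of $\tilde{A}\cup\tilde{B}\cup C$ and the blow-ups of $\tilde{W}(H)$ (in particular between $C$ and $D_3$), and none of these are covered by the inflated cliques of $\mathscr{C}$; your list mentions only the edges between the blow-ups of $A$ and $B$, the edges from $C$ to $D_2\cup N_H(c)$, and $E_{G'}(A\cup B,C)$. The omission is not cosmetic, because the budget is tight: in the non-complete case you have only $|A|+|B|+|C|-2$ cliques to spend, and the edges between $A$ and $B$ together with $E_{G'}(A\cup B,C)$ already force essentially one clique per vertex of $A\cup\tilde{B}\cup C$, i.e.\ $|A|+|\tilde{B}|+|C|$ cliques. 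Covering the extra extension edges by separate $\mathscr{N}$-families would overshoot. The paper's fix is to fold them into the same per-vertex cliques: for $x\in\tilde{A}$ it takes $C_x=B\cup N_F[x,\tilde{W}(H')]$, where $F$ is the graph of still-uncovered edges; this is a clique because $N_{\overline{H}}(a,\tilde{W}(H))$ is a clique and every non-neighbour of $a$ in $V(H)\setminus\{c\}$ is a neighbour of $b$, and it simultaneously covers $x$'s edges to $B$ and its new edges into $\tilde{W}(H')$ (similarly for $\tilde{B}$, and $D_3$ is absorbed into the $C$-cliques). You would need this packing, or an equivalent one, to make the count close; you would also need the sharper fact that $A\setminus\tilde{A}$ and $B\setminus\tilde{B}$ lie in the core (each non-edge between $A$ and $B$ forms a triad with $c$), so these vertices are never replicated and the degenerate saving when $|B\setminus\tilde{B}|=1$ is available. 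Your stated containment $\tilde{W}(G)\subseteq\tilde{W}(H)\cup A\cup B\cup C$ is too weak to guarantee this.
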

\begin{proof}
	Let $A,B, C, D_1,D_2$ and $D_3$ be subsets of $V(G)$ as in the definition, and also let $\tilde{A}$ (resp. $\tilde{B}$) be the set vertices in $A$ (resp. $B$) which are complete to $B$ (resp. $A$). Note that $\tilde{W}(G)=\tilde{W}(H)\cup \tilde{A}\cup \tilde{B}\cup C$. 
	Now, consider an extension $G'$ of $G$ and by abuse of notation, let $\tilde{A}=\bigcup _{v\in \tilde{A}}X_v$, $\tilde{B}=\bigcup _{v\in \tilde{B}}X_v$ and $C=\bigcup _{v\in C}X_v$ (note that adjacency between $ C $ and $ \tilde{A}\cup \tilde{B}\cup D_3 $ is arbitrary in $ G' $). We assume that $ G' $ is not three-cliqued and we are going to provide a clique covering for $G'$ of size at most $|V(G')|-k$. For every $z\in C$, define $C_z=D_2\cup N_{G'}[z,A\cup B\cup D_3]$. Note that since $H$ is antiprismatic, $A\cup B$ is complete to $D_2\cup D_3$, and since $ D_3 \subseteq \tilde{W}(H)\cap N_{\overline{H}}(c)  $ and $ D_2\subseteq N_{\overline{H}}(c) $, $D_2\cup D_3$ is a clique. Also, every vertex $z\in C$ is adjacent to exactly one end of every non-edge between $A$ and $B$. Therefore, $C_z$ is a clique of $G'$. 
	
	First, assume that $\tilde{A}=A$ and thus, $\tilde{B}=B$. In this case, it is easy to see that $G'$ is obtained from an extension $H''$ of $H+ab$ by adding the new set of vertices $C$, which is complete to $V(H'')\setminus (D_1\cup D_3\cup A\cup B)$ and anticomplete to $D_1$ and adjacency between $C$ and $A\cup B\cup D_3$ is arbitrary. Thus, $|V(G')|=|V(H'')|+|C|$. Also, since $ G' $ is not three-cliqued, neither is $ H+ab $ (because every vertex adjacent to $ c $ in $ H+ab $ is complete to $ C $ in $ G' $). Thus, by the assumption, there exists a clique covering $\mathscr{C}$ for $H''$ of size at most $|V(H'')|-k$. For every clique in $\mathscr{C}$ containing $c$, replace $c$ with $C\cup\{c\}$, and also add the clique $C_z$, for all $z\in C$, thereby obtaining a clique covering for $G'$ of size at most $|V(H'')|-k+|C|=|V(G')|-k$, as desired.
	
	Next, assume that $\tilde{A}\neq A$ and then $\tilde{B}\neq B$. It is clear that $G'$ is obtained from an extension $H'$ of $H$ by exponentiating the leaf triad $\tau$ and then adding some edges between $C\cup \tilde{A}\cup \tilde{B}$ and $\tilde{W}(H')$, and between $C$ and $\tilde{A}\cup \tilde{B}$ as well. Also, $|V(G')|=|V(H')|+|A|+|B|+|C|-2$. By the assumption, there exists a clique covering $\mathscr{C}$ for $H'$ of size at most $|V(H')|-k$. For every clique in $\mathscr{C}$ containing $a$, $b$ or $c$, replace $a$, $b$ and $c$ with $A$, $B$ and $C\cup \{c\}$, respectively to obtain a collection of cliques of $G'$, called $ \mathscr{C}' $, covering all edges of $G'$ except the edges between $A$ and $B$ and some edges between vertices in $\tilde{W}(G')$. Let $F$ be the subgraph of $G'$ induced on the set of these uncovered edges.  
	Since $N_{\overline{H}}(a,\tilde{W}(H))$ is a clique of $H$, for every $x\in \tilde{A}$, $N_F[x,\tilde{W}(H')]$ is a clique of $G'$. Now, for every $x\in \tilde{A}$, define $C_x=B\cup N_F[x,\tilde{W}(H')]$ which is a clique of $G'$ (because in $ H $ every non-neighbour of $a$ in $V(H)\setminus \{c\}$ is a neighbour of $b$). Also, for every $x\in A\setminus \tilde{A} $, define $C_x= N_{G'}[x, B\setminus \tilde{B}]$ and for every $y\in\tilde{B}$, define $C_y= A\cup N_F[y,\tilde{W}(H)]$, which are obviously cliques of $G'$. Now, the cliques in $\{C_x: x\in  A\cup \tilde{B}\cup C\}$ cover all the edges of $F$, and thus adding them to $\mathscr{C}'$ yields a clique covering for $G'$ of size at most $|V(H')|-k+|A|+|\tilde{B}|+|C|$. Now, if $|B\setminus \tilde{B}|\geq 2$, then we are done. Otherwise, if $|B\setminus \tilde{B}|=1$, then for $x\in A\setminus \tilde{A}$, we have $C_x=\{x\}$. Thus, removing $ C_x, x\in A\setminus \tilde{A}$, yields the desired clique covering of size at most $ |V(G')|-k $. This proves \pref{lem:exponen}.
\end{proof}

The following inquires into the extensions of the graphs in both classes $\mathcal{F}_2$ and $\mathcal{F}_3$.
\begin{lem}\label{lem:F2,3}
	Let $G$ be a graph in $\mathcal{F}_2\cup \mathcal{F}_3$ and $G'$ be an extension of $G$ on $n$ vertices  which is not three-cliqued. Then $\cc(G')\leq n-1$.
\end{lem}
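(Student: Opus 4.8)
The plan is to reduce the statement to \pref{lem:parallel} by means of the exponentiation machinery of \pref{lem:exponen}. Recall from \pref{app:noap} that every member of $\mathcal{F}_2$ is obtained from a graph of parallel-square type — possibly with the vertex $u$ deleted, precisely in the form permitted by \pref{lem:parallel} — by exponentiating one leaf triad, and that every member of $\mathcal{F}_3$ is obtained from such a base graph by exponentiating two leaf triads in succession. So I would prove, by induction on the number of exponentiations performed, the following statement with the constant $k = 1$ of \pref{lem:exponen}: for the graph $H$ currently in hand, every extension $H'$ of $H$ that is not three-cliqued satisfies $\cc(H') \le |V(H')| - 1$.

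In the base case $H$ is a graph of parallel-square type, possibly with $u$ deleted, and \pref{lem:parallel} gives exactly $\cc(H') \le |V(H')| - 1$ for each non-three-cliqued extension $H'$, once one checks that the side condition of \pref{lem:parallel} — the existence of the distinguished vertices $a_{i_0}, b_{j_0}, c_{k_0}, d_{l_0}$ when $u$ is absent — is guaranteed by the way these base graphs are specified in \pref{app:noap}. I would also note at the outset that, since both exponentiating a leaf triad and forming an extension preserve the property of being three-cliqued, a three-cliqued base graph would force the final graph $G'$ to be three-cliqued, contrary to hypothesis; hence the non-three-cliqued assumption propagates down the construction and the base graph may be assumed not three-cliqued.

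For the inductive step, let $G$ be obtained from a graph $H$ by exponentiating a leaf triad $\tau = \{a, b, c\}$ at $c$; I would apply \pref{lem:exponen} with $k = 1$. Its first hypothesis — a bound of $|V(H')| - 1$ for every non-three-cliqued extension $H'$ of $H$ — is the induction hypothesis. Its second hypothesis requires that whenever $H + ab$ is not three-cliqued, every extension of $H + ab$ also meets this bound. This is the crux: one must verify, going through the definitions in \pref{app:noap} and of the parallel-square type, that $H + ab$ is antiprismatic and is either three-cliqued (nothing to prove) or again one of the graphs already covered with the sharper bound, namely a graph of parallel-square type with $u$ possibly deleted, or a graph built from such a one by strictly fewer exponentiations than $G$. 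Granting this, \pref{lem:exponen} yields $\cc(G') \le |V(G')| - 1$ for every non-three-cliqued extension $G'$ of $G$, which is the assertion.

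The main obstacle is exactly the verification just described: for each leaf triad occurring in the definitions of $\mathcal{F}_2$ and $\mathcal{F}_3$, one must identify the graph $H + ab$ and confirm it is three-cliqued or falls into a class already handled with the strengthened inequality. A point worth emphasizing is that this strengthened bound at the base — $\cc(H') \le |V(H')| - 1$ rather than merely $\cc(H') \le |V(H')|$ — is essential, since \pref{lem:exponen} with $k = 1$ tolerates no equality case among the base or intermediate graphs; so one must also check that the graphs of the type treated in \pref{lem:F1}, which admit only the weaker bound (with equality for the complement of a twister), never arise as base graphs here. This is also the structural reason why no twister equality case survives in \pref{lem:F2,3}.
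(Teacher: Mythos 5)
Your proposal is correct and follows essentially the same route as the paper: reduce to \pref{lem:parallel} for the parallel-square base graphs (with the $u$-deleted variant for $\mathcal{F}_3$), observe that adding the edge $ab$ of the exponentiated leaf triad yields again a graph of parallel-square type (the paper does this by reindexing $a_1$ as a new $a_\ell$), and then invoke \pref{lem:exponen} with $k=1$ once per exponentiation, propagating the non-three-cliqued hypothesis down the construction exactly as you describe. Your framing as an induction on the number of exponentiations is just an explicit packaging of what the paper does in one step for $\mathcal{F}_2$ and leaves to the reader for the two-step case of $\mathcal{F}_3$.
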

\begin{proof}
	First, suppose that $G\in \mathcal{F}_2$ and let $H$ be the graph of parallel square type with $V(H)=A\cup B\cup C\cup  D\cup \{u,v,x,y\}$, where $G$ is obtained from $H$ by exponentiating the leaf triad $\{a_1,b_1,x\}$, as in the definition of $ \mathcal{F}_2 $. 
	Since $ G' $ is not three-cliqued, neither are $ G $ and $ H$. Thus, by \pref{lem:parallel}, every extension $ H' $ of $H$ admits a clique covering of size at most $|V(H')|-1$.
	On the other hand, note that $H+a_1b_1$ is also a graph of parallel-square type. To see this, let $\ell =1+\max\{i:a_i\in A\text{ or } b_i\in B \text{ or } c_i\in C \text{ or } d_i\in D \}$. Now, replacing the vertex $a_1$ with $a_{\ell}$ in the set $A$ yields a graph of parallel-square type which is isomorphic to  $H+a_1b_1$. 
	Hence, if $ H+a_1b_1 $ is not three-cliqued, then by \pref{lem:parallel}, every extension $ H'' $ of  $ H+a_1b_1$ admits a clique covering of size at most $|V(H'')|-1$. Consequently, by \pref{lem:exponen}, $\cc(G')\leq n-1$.
	
	Moreover, for the graph $G\in \mathcal{F}_3$, the result follows from a similar argument with the aid of Lemmas~\ref{lem:parallel} and \ref{lem:exponen} (we leave the details to the reader).
\end{proof}
The following investigates the extensions of the graphs in $ \mathcal{F}_4 $.
\begin{lem}\label{lem:F4}
	Let $G$ be a graph in $\mathcal{F}_4$ and $G'$ be an extension of $G$ on $n$ vertices which is not three-cliqued. Then $\cc(G')\leq n-1$.
\end{lem}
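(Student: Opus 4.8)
The plan is to follow the strategy of the proof of \pref{lem:F2,3}: realise every member of $\mathcal{F}_4$ as the result of exponentiating a leaf triad of a base graph $H$ drawn from a class already under control, and then invoke \pref{lem:exponen} with $k=1$. First I would extract from the definition of $\mathcal{F}_4$ in \pref{app:noap} the base graph $H$ together with the leaf triad $\tau=\{a,b,c\}$ at $c$ whose exponentiation produces $G$. According to the construction of $\mathcal{F}_4$, $H$ should be (a permutation of, or a graph isomorphic to) a graph in one of the families for which we already have the bound ``$\cc(H')\le|V(H')|-1$ for every extension $H'$ of $H$ that is not three-cliqued'' --- namely a graph of parallel-square type (so \pref{lem:parallel} applies), or of skew-square type, or a member of $\mathcal{F}_1$ with the twister extension excluded, or one of the finitely many exceptional graphs handled by an explicit covering in the style of \pref{lem:F0}.

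Having fixed this, the two inputs required by \pref{lem:exponen} are: (i) the base bound for extensions of $H$, and (ii) the same bound for extensions of $H+ab$, provided $H+ab$ is not three-cliqued. For (ii) I would check, exactly as in \pref{lem:F2,3} where adding the edge $a_1b_1$ to a parallel-square graph and re-indexing gave another parallel-square graph, that $H+ab$ again satisfies the defining conditions of its class; this is the structural verification that adding the single missing edge of the leaf triad keeps us inside the family, so that the same base lemma applies to it. With (i) and (ii) in hand, \pref{lem:exponen} yields $\cc(G')\le n-1$ for every extension $G'$ of $G$ that is not three-cliqued, which is the assertion.

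The main obstacle I expect is the base case: depending on exactly which family $H$ falls into, one may need a dedicated argument (analogous to \pref{lem:parallel}, or an explicit covering in the spirit of \pref{lem:F0}) showing that every non-three-cliqued extension of $H$ has clique cover number at most one less than its number of vertices, and one must carefully isolate the degenerate sub-configurations --- when $H+ab$ or $G'$ turns out to be three-cliqued (excluded by hypothesis), when one of the cliques $A,B,C$ or one of the sets $D_1,D_2,D_3,\tilde A,\tilde B$ occurring in the exponentiation has size at most one, or when a set playing the role of $R$ in \pref{lem:F1} is empty so that \pref{lem:2sub} must be applied instead. In those cases \pref{lem:exponen} either does not literally apply or its conclusion must be recovered by deleting a singleton clique from the constructed covering, exactly as was done at the end of the proof of \pref{lem:exponen}.
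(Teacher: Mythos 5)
Your overall framework is the right one and matches the paper's: a member of $\mathcal{F}_4$ is by definition obtained from a base graph $H$ by exponentiating the leaf triad $\{t_1^1,t_2^2,t_3^3\}$, and the proof does go through \pref{lem:exponen} with $k=1$, which requires the bound $\cc(H')\leq |V(H')|-1$ for every extension $H'$ of $H$ and the analogous bound for extensions of $H+t_1^1t_2^2$. However, there is a genuine gap in how you propose to supply those two hypotheses. The base graph $H$ here is \emph{not} a graph of parallel-square or skew-square type, nor a member of $\mathcal{F}_1$; it is the specific induced subgraph of the Schl\"{a}fli graph on $Y\cup\{s_j^i:(i,j)\in I\}\cup\{t_1^1,t_2^2,t_3^3\}$ described in the definition of $\mathcal{F}_4$, and none of \pref{lem:parallel}, \pref{lem:skew}, \pref{lem:F1} or \pref{lem:2sub} applies to it. So the step ``invoke the already-established base bound for the class containing $H$'' has no class to invoke, and the analogy with \pref{lem:F2,3} (where $H+a_1b_1$ was again of parallel-square type after re-indexing) breaks down: $H+t_1^1t_2^2$ is likewise not a member of any previously treated family.

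What the paper actually does at this point is the substantive content of the proof, and it is absent from your proposal: it writes down an explicit collection of $11$ cliques covering $H$ (the six cliques $S_j\cup\{t^i_i\}$, three cliques of the form $R^3_a\cup R^3_b\cup S^c$, and two cliques $Y\cup\{t^1_1\}$, $Y\cup\{t^2_2\}$), notes that since $|I|\geq 8$ the non-core $\tilde{W}(H)$ is contained in the clique $Y$ so that every extension of $H$ is merely a thickening of $(H,\emptyset)$ and \pref{lem:thickening} transfers the bound, and then separately constructs, by merging two pairs of these cliques and adding $\mathscr{N}[X_{t_3^3};X_{Y\cup\{t_1^1,t_2^2\}}]$, a covering of size $9+|X_{t_3^3}|\leq |V(H'')|-2$ for any extension $H''$ of $H+t_1^1t_2^2$. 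You correctly anticipate that ``a dedicated argument'' might be needed, but without identifying $H$ as this Schl\"{a}fli subgraph and exhibiting these coverings, the hypotheses of \pref{lem:exponen} are not established and the proof does not close.
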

\begin{proof}
	Let $H$ be the subgraph of Schl\"{a}fli graph induced on $V(H)=Y\cup \{s_j^i: (i,j)\in I\}\cup \{t_1^1,t_2^2,t_3^3\}$, where $Y$ and $I$ are as in the definition of the class $\mathcal{F}_4$ and $G$ be obtained from $H$ by exponentiating the leaf triad $\{t_1^1,t_2^2,t_3^3\}$. Also, for every $ j\in \{1,2,3\} $, let $ R^3_j=V(H)\cap\{ r_j^3\}$, $ S_j=V(H)\cap \{s_j^1,s_j^2,s_j^3\} $ and $ S^j=V(H)\cap \{s^j_1,s^j_2,s^j_3\} $. Now the following cliques
	\[\begin{array}{lllllll}
	S_1\cup\{t^2_2\}, & S_1\cup \{t^3_3\}, & S_2\cup\{t^1_1\}, & S_2\cup\{t^3_3\}, & S_3\cup\{t^1_1\}, & S_3\cup\{t^2_2\},\\
	R^3_1\cup R^3_2\cup S^3, & R^3_2\cup R^3_3\cup S^1, & R^3_1\cup R^3_3\cup S^2, & Y\cup \{t^1_1\}, &  Y\cup \{t^2_2\},
	\end{array} \]
	form a clique covering $\mathscr{C}$ for $ H $ of size $11\leq |V(H)|-1$. Also, since $|I|\geq 8$, we have $\tilde{W}(H)\subseteq Y$, which is a clique of $H$. Thus, every extension $ H' $ of $H$ is in fact a thickening of $(H,\emptyset)$, and thus by \pref{lem:thickening}, $ H' $ admits a clique covering of size at most $|V(H')|-1$.
	
	Moreover, note that $\tilde{W}(H+t_1^1t_2^2)$ is contained in $\{t_1^1,t_2^2,t_3^3\}\cup Y$. Now, let $ H'' $ be an extension of $H+t_1^1t_2^2$, where $ X_v $, $ v\in \tilde{W}(H+t_1^1t_2^2) $, are as in the definition and for every $ v\in W(H+t_1^1t_2^2) $, set $ X_v=\{v\} $. For every clique $C$ of $H+t_1^1t_2^2$, let $X_C=\cup _{v\in C}X_v$ which is a clique of $ H'' $ and define $\mathscr{C}'=\{X_{C}:C\in \mathscr{C}\}$. Now, in $\mathscr{C}'$, merge the pairs $(X_{S_3\cup \{t_1^1\}},X_{S_3\cup \{t_2^2\}})$ and $(X_{Y\cup \{t^1_1\}},X_{Y\cup \{t^2_2\}})$ and add the cliques in $\mathscr{N}[X_{t_3^3}; X_{Y\cup \{t_1^1,t_2^2\}}]$, thereby obtaining a clique covering for $H''$ of size $9+|X_{t_3^3}|\leq |V(H'')|-2$. Now, \pref{lem:F4} follows immediately from \pref{lem:exponen}. 
\end{proof}

Eventually, we consider the extensions of the graphs of skew-square type.

\begin{lem}\label{lem:skew}
	Let $G$ be a non-orientable antiprismatic graph of skew-square type and $G'$ be an extension of $G$ on $n$ vertices. Then $\cc(G')\leq n$ and equality holds if and only if $ \overline{G'} $ is isomorphic to a twister.
\end{lem}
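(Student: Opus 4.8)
The plan is to route as much as possible through \pref{lem:2sub}, and to handle the residual configurations by an explicit clique covering in the style of \pref{lem:parallel}. First I would recall from \pref{app:noap} the structure of a graph $G$ of skew-square type: its vertex set splits as $A\cup B\cup C\cup D$ together with a bounded set of ``special'' vertices, where $A,B,C,D$ are cliques forming a cyclic square, consecutive sides are anti-matched, the diagonals $A$--$C$ and $B$--$D$ carry prescribed near-complete adjacencies, and each special vertex attaches to a pair of opposite sides — the ``skew'' pattern of the anti-matchings being precisely what prevents a consistent orientation. Since $G$ is non-orientable it is not three-cliqued, and the same is true of every extension $G'$; moreover $\tilde W(G)$ is readable from the definition and consists of twins inside $A,B,C,D$ and of special vertices complete to everything they see, so in any extension $G'$ the blow-up sets $X_v$ are attached to the neighbouring parts in the predictable way.

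First I would attempt to apply \pref{lem:2sub} outright. Choosing $a_0$ to be a special vertex lying in a triad, set $A_0=N_{\overline{G'}}(a_0)$ and let $\tilde A_0\subseteq A_0$ be the vertices with no non-neighbour in $A_0$. Using that $G'$ is antiprismatic together with the skew-square structure, a vertex $v\in A_0\setminus\tilde A_0$ lies in a unique triad, which contains $a_0$; from this one reads off that $N_{G'}(v,V(G')\setminus A_0)$ is a clique of $G'$. When a suitable $a_0$ exists, \pref{lem:2sub} then yields $\cc(G')\le n$ with equality exactly when $\overline{G'}$ is a twister — including the strictness of the inequality in the non-twister subcase — and we are done.

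The remaining task is the configurations in which no such $a_0$ is available; these are the skew analogues of the branches of \pref{lem:parallel} in which a side, or a special vertex, is absent. Here I would construct a clique covering of size at most $n-1$ directly: a vertex $v\in A_0\setminus\tilde A_0$ whose neighbourhood outside $A_0$ fails to be a clique is, by antiprismaticity, complete to a whole side of the square, so the internal edges of that side are already covered by the anti-matching cliques $\mathscr N_{G'}[\,\cdot\,;\,\cdot\,]$; merging the appropriate pairs $(N_{G'}[x,\cdot],N_{G'}[y,\cdot])$ and deleting the now-redundant clique of that side gives the saving, exactly as in \pref{lem:parallel}. One then splits according to which special vertices are present and to the sizes of the matched pairs on the diagonals, and in each branch produces a covering of size $\le n-1$, the single exception being the extremal case $|V(G')|=10$ with all matched sets trivial, where one checks by inspection that $\overline{G'}$ is a twister and invokes \pref{lem:twister}. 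The passage from $G$ to an arbitrary extension $G'$ is then routine: if $\tilde W(G)$ happens to be a clique the covering lifts by \pref{lem:thickening_antiprismatic}, and otherwise the same explicit argument is carried out with the blow-up sets $X_v$ replacing single vertices, precisely as in the earlier lemmas of this section.

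The hard part will be the bookkeeping of the skew case distinction. In contrast to the parallel-square case the two diagonals play genuinely different roles, so symmetry cannot be used to collapse the branches, and in each one must check both that the clique being discarded is really covered elsewhere and that the merges are licensed, i.e. that the relevant unions are in fact cliques of $G'$. Establishing the sharp equality statement also requires ruling out $\cc(G')=n$ in every branch except the twister — equivalently, exhibiting in each remaining branch at least one side, special vertex, or anti-matched pair that forces a saving — which is where the bulk of the case analysis is concentrated.
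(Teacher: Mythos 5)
There is a genuine gap, and it starts with the definition: a graph of skew-square type is not the ``cyclic square'' $A\cup B\cup C\cup D$ with anti-matched consecutive sides that you describe -- that is essentially the \emph{parallel}-square type handled in \pref{lem:parallel}. By the definition in \pref{app:noap}, a skew-square type graph has vertex set $A\cup B\cup C\cup\{s,t,d_1,d_2,d_3\}$, where $A,B,C$ and $\{s,t,d_1,d_2,d_3\}$ are cliques, $a_i\sim b_j$ iff $i=j$, $a_i,b_i\sim c_j$ iff $i\neq j$, $s$ is complete to $B$ only and $t$ to $A$ only, and the $d_i$ attach in the prescribed index-dependent way. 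Because your structural picture is wrong, the ``residual configurations'' you plan to treat in the style of \pref{lem:parallel} are not the configurations that actually arise, and the case analysis you sketch cannot be carried out as stated. More importantly, the central reduction to \pref{lem:2sub} fails: that lemma requires that for every $v\in A_0\setminus\tilde A_0$ (with $A_0=N_{\overline{G'}}(a_0)$) the set $N_{G'}(v,V(G')\setminus A_0)$ be a clique. Taking $a_0=s$, say, gives $A_0=A\cup C$, and for $a_j\in A$ with $c_j\in C$ and $b_j\in B$ the set $N_{G'}(a_j,V(G')\setminus A_0)$ contains both $t$ and $b_j$, which are nonadjacent; similar obstructions occur for every other natural choice of $a_0$ (e.g.\ $c_j$ sees nonadjacent pairs $b_{j'},d_{i}$ outside $A_0$). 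So the ``apply \pref{lem:2sub} outright'' step does not go through, and it is not a matter of a few exceptional branches.

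You are also missing the dichotomy the paper actually uses, which is where the twister equality case comes from. The paper first splits on whether $\overline{G}$ contains an induced rotator: if not, \pref{thm:exc-rotator-seymour} places $G'$ as an extension of a member of $\mathcal{F}_0\cup\cdots\cup\mathcal{F}_4$, and the conclusion (including the twister equality case, which enters through \pref{lem:F1}, the one place where \pref{lem:2sub} is legitimately invoked with $a_0=s$ and $R=\emptyset$) follows from Lemmas~\ref{lem:F0}--\ref{lem:F4}. If $\overline{G}$ does contain a rotator, the paper classifies the possible rotators inside the skew-square structure, shows $s,t\in W(G)$ and that the index set $I=\{i:a_i\in A,\ b_i\in B\}$ is nonempty, and then builds an explicit covering $\mathscr{C}$ from the cliques $K_l$, $\mathscr{N}_{G'}[A';B'\cup C']$, etc., finishing with a case analysis on $|C|$ and $|I|$ that always yields $\cc(G')\le n-1$. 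Without the rotator split you have no mechanism producing the twister as the unique equality case, and without the correct structure you have no workable substitute for the explicit covering.
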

\begin{proof}
	Let $A,B,C,s,t,d_1,d_2$ and $d_3$ be as in the definition of the graphs of skew-square type.
	If $ \overline{ G }$ contains no induced rotator, then by \pref{thm:exc-rotator-seymour}, $ G $ and so $ G' $ is an extension of a non-orientable antiprismatic graph in $ \mathcal{F}_0\cup \mathcal{F}_1\cup\mathcal{F}_2\cup \mathcal{F}_3\cup \mathcal{F}_4 $ and the assertion follows from Lemmas~\ref{lem:F0}, \ref{lem:F1}, \ref{lem:F2,3} and \ref{lem:F4}.
	Thus, assume that $ G $ contains the complement of a rotator as an induced subgraph. Note that the only triads of $G$ are of the form $\{s,a_j,c_j\}$ or $\{t,b_j,c_j\}$, for some $j$, or $\{a_i,b_{i'},d_{i''}\}$, where $\{i,i',i''\}=\{1,2,3\}$. Therefore, since the complement of a rotator contains exactly three disjoint triads, (leaving the reader to check the details) one can see that the only rotators of $\overline{G}$ are induced on the subsets of $V(G)$ of the following four forms, 
	\[\begin{array}{ll}
	R_1=\{a_i,b_{i'},d_{i''}\}\cup \{a_{i'},b_{i''},d_i\}\cup \{b_i,c_i,t\},
	& R_2=\{a_i,b_{i'},d_{i''}\}\cup \{a_{i'},b_{i''},d_i\}\cup \{a_{i''},c_{i''},s\},  \\
	R_3=\{a_i,b_{i'},d_{i''}\}\cup \{a_{i'},c_{i'},s\}\cup \{b_j,c_j,t\},
	& R_4=\{a_i,b_{i'},d_{i''}\}\cup \{a_{j},c_{j},s\}\cup \{b_i,c_i,t\},
	\end{array}\]
	where $\{i,i',i''\}=\{1,2,3\}$ and $j\in \mathbb{N}\setminus \{i,i'\}$. Let $\tilde{A}\subseteq A$ and $\tilde{B}\subseteq B$ be the set of vertices in $A$ and $B$, respectively,  which are complete to $C$. Also, let $\tilde{C}\subseteq C$ be the set of vertices in $C$ which are complete to $A\cup B$. Note that $s,t\in W(G)$ (otherwise if say $s\in \tilde{W}(G)$, then $A$ is complete to $C$ and $V(G)$ would be the union  of three cliques $A\cup C, B$ and $\{s,t,d_1,d_2,d_3\}$, so  $G$ is orientable, a contradiction). Thus, $\tilde{W}(G)\subseteq \tilde{A}\cup \tilde{B}\cup \tilde{C}\cup \{d_1,d_2,d_3\}$. For $ \{i,i',i''\}=\{1,2,3\} $, note that $ a_i\in \tilde{W}(G) $ if and only if $ a_i\in \tilde{A} $ and  $ b_{i'}, b_{i''}\not\in B $ and the similar assertion holds for $  b_i\in \tilde{W}(G)  $.  Let $I=\{i\ :\ a_i\in A\text{ and } b_i\in B\}$. If $I=\emptyset$, then every vertex in $C$ is complete to either $A$ or $B$,  and if $C'$ is the set of vertices in $C$ which are complete to $A$ in $G$, then $V(G)$ is the union of three cliques $A\cup C', (C\setminus C')\cup B$ and $\{s,t,d_1,d_2,d_3\}$, a contradiction. Thus, $I\neq \emptyset$.
	
	Let $ G' $ be an extension of $G$ on $n$ vertices and $ X_v $, $ v\in \tilde{W}(G) $, be as in the definition. Also, for every $ v\in W(G) $, define $ X_v=\{v\} $ and for every $ v\not\in V(G) $, define $ X_v=\emptyset $. We are going to prove that $ \cc(G')\leq n-1 $. 
	Let $A'=\cup_{k\geq 1}X_{a_k}$, $B'=\cup_{k\geq 1}X_{b_k}$, $C'=\cup_{k\geq 1}X_{c_k}$ and $D'=X_{d_1}\cup X_{d_2}\cup X_{d_3}$. Also, define $I_1= \{k\ :\ 1\leq k\leq 3, a_k\in \tilde{W}(G)\}$, $I_2=\{k\ :\ 1\leq k\leq 3, b_k\in \tilde{W}(G)\}$ and $I_3=\{k\ :\ c_k\in \tilde{C}\}$ and for each $ l\in\{1,2,3\} $, let $K_l=X_{a_l}\cup X_{b_l}\cup (\cup_{k\in I_1} X_{a_k})\cup (\cup_{k\in I_2} X_{b_k})\cup (\cup_{k\in I_3} X_{c_k}) \cup (\cup _{k\in \{1,2,3\}\setminus \{l\}}X_{c_k})$. It is easy to verify that $ K_l $ is a clique of $ G' $ and also for every $a\in A'$, $N_{G'}[a,B'\cup C']$ is a clique of $G'$. Therefore, 
	\begin{align*}
		\mathscr{C}=&(\cup _{l=1}^3 \mathscr{N}_{G'}[X_{d_l};K_l])\cup \mathscr{N}_{G'}[A';B'\cup C']\cup \mathscr{N}_{G'}[\cup _{l\in \mathbb{N}\setminus I}X_{b_l};C']\\
		&\cup \{D'\cup \{s,t\},A'\cup \{t\},B'\cup \{s\},(\cup_{l\geq 4}X_{a_l})\cup D',(\cup_{l\geq 4}X_{b_l})\cup D'\}
	\end{align*}
	is a family of cliques of $G'$ and $|\mathscr{C}|=n-|C'|-|\cup _{l\in I}X_{b_l}|+3\leq n-|C|-|I|+3$. Note that for every $l\in I$, all the vertices in $X_{a_l}\cup X_{b_l}$ have the same set of neighbours in $C'$. Therefore, the cliques in $\mathscr{C}$ cover all edges of $G'$ except possibly the edges in $E(C')$. Now, we observe that,\vsp
	
	(1) \textit{If $|C|+|I|\geq 4$, then $ \cc(G')\leq n-1 $}.\vsp 
	
	Since $\overline{G}$ contains a rotator induced on a set of the form $R_k$ for some $k\in \{1,\ldots,4\}$, either $\{k: 1\leq k\leq 3, a_k\in A\}$ or  $\{k: 1\leq k\leq 3, b_k\in B\}$ has cardinality at least two. Now, if both of the sets $\{k: k\geq 4, a_k\in A\}$ and $\{k: k\geq 4, b_k\in B\}$ are nonempty, then either $A$ or $B$, say $A$, has cardinality at least three, and so the edges of $G'$ in $E(C')$ are covered by the cliques of $\mathscr{C}$ in $\mathscr{N}_{G'}[A';B'\cup C']$. Consequently, $\mathscr{C}$ is a clique covering for $G$ of size $|\mathscr{C}|\leq n-|C|-|I|+3\leq n-1$. If either $\{k: k\geq 4, a_k\in A\}$ or $\{k: k\geq 4, b_k\in B\}$ is empty, then in $ \mathscr{C} $, replace the clique $(\cup_{k\geq 4}X_{a_k})\cup D'$ or $(\cup_{k\geq 4}X_{b_k})\cup D'$, respectively, with the clique $C'$ to obtain a clique covering for $G'$ of size at most $n-1$. This proves (1).\vsp
	
	Note that $ C $ is nonempty (otherwise, $ G $ would be three-cliqued). If either $ |C|\geq 3 $ or $ |I|\geq 3 $, then since $ I,C\neq \emptyset $, by (1), we are done. Hence, either $ |C|=1 $ and $ 1\leq |I|\leq 2 $, or $ |C|=2 $ and $|I|=1$. First, suppose that $ |C|=1 $ and $ 1\leq |I|\leq 2 $. Now, $\overline{G}$ should contain a rotator induced on a subset of $V(G)$ of the form $R_1$ or $R_2$. Consequently, $C'=C=\{c_{i_0}\}$, for some $i_0\in\{1,2,3\}$, $|I|=2$ and at least one of the sets $\{k: 1\leq k \leq 3, a_k\in A\}$ or $\{k: 1\leq k \leq 3, b_k\in B\}$, say the second one, is equal to $\{1,2,3\}$. Now, if both of the sets $\{k: k\geq 4, a_k\in A\}$ and $\{k: k\geq 4, b_k\in B\}$ are nonempty, then $|B|\geq 4$, and it is enough to modify $\mathscr{C}$ by replacing the cliques in $\mathscr{N}_{G'}[\cup _{k\in \mathbb{N}\setminus I}X_{b_k};C']$ with the cliques in $\mathscr{N}_{G'}[C';B']$, thereby obtaining a clique covering for $G'$ of size $n-|B'|+3\leq n-|B|+3\leq n-1$. If either $\{k: k\geq 4, a_k\in A\}$ or $\{k: k\geq 4, b_k\in B\}$ is empty, then removing the clique $(\cup_{k\geq 4}X_{a_k})\cup D'$ or $(\cup_{k\geq 4}X_{b_k})\cup D'$ from $\mathscr{C}$ yields a clique covering for $G'$ of size $|\mathscr{C}|-1\leq n-|C|-|I|+2=n-1$. 
	
	Finally, suppose that $ |C|=2 $ and $|I|=1$. Thus, $\overline{G}$ should induce a rotator on a subset of $V(G)$ of the form $R_3$ or $R_4$. By symmetry, we may assume that $\overline{G}$ contains a rotator induced on a subset of $V(G)$ of the form $R_3$. Thus, $\{a_i,b_{i'},d_{i''}\}\cup \{a_{i'},c_{i'},t\}\cup \{b_j,c_j,s\}\subseteq V(G)$ and $C'=C=\{c_{i'},c_j\}$, where $\{i,i',i''\}=\{1,2,3\}$ and $j\in \mathbb{N}\setminus \{i,i'\}$.  Also, since $ |I|=1 $, $ b_i\not\in B $ and $ a_j\not\in A $. Now, if either $\{k: k\geq 4, a_k\in A\}$ or $\{k: k\geq 4, b_k\in B\}$ is empty, then again removing the clique $(\cup_{k\geq 4}X_{a_k})\cup D'$ or $(\cup_{k\geq 4}X_{b_k})\cup D'$ from $\mathscr{C}$ yields a clique covering for $G'$ of size $|\mathscr{C}|-1\leq n-|C|-|I|+2=n-1$ (note that $E(C')=\{c_{i'}c_j\}$ and the edge $c_{i'}c_j$ is covered by the clique $N_{G'}[a_i,B'\cup C']\in \mathscr{C}$). Thus, we may assume that both  $\{k: k\geq 4, a_k\in A\}$ and $\{k: k\geq 4, b_k\in B\}$ are nonempty and so $|A|\geq 3$.
	
	For every $k\in \{1,2,3\}$, define $L_k=X_{d_k}\cup X_{c_{k+1}}\cup X_{c_{k+2}}\cup (\cup_{l\in \mathbb{N}\setminus \{k+1,k+2\}}X_{a_{l}})$ and $M_k=X_{d_k} \cup X_{c_{k+1}}\cup X_{c_{k+2}}\cup (\cup_{l\in \mathbb{N}\setminus \{k+1,k+2\}}X_{b_{l}})$ (reading $k+1$ and $k+2$ modulo $3$). Consider the following family of cliques of $G'$ 
\begin{align*}
\mathscr{C}'=&\mathscr{N}_{G'}[B'\setminus (X_{b_{i'}}\cup X_{b_{i''}}\cup X_{b_{j}});A']\cup \{L_k,M_k\ :\ 1\leq k\leq 3\}\\ & \cup \{D'\cup \{s,t\},A'\cup \{t\},B'\cup \{s\},X_{a_{i'}}\cup X_{b_{i'}}\cup X_{c_{j}}\}.
\end{align*}	
If  $j=i''$, then the $\mathscr{C}'$ is a clique covering for $G'$ of size  $10+|\cup_{k\in \mathbb{N}\setminus \{i',i''\}} X_{b_k}|\leq n-|A|+1\leq n-2$.
Also, if $j\neq i''$ and $b_{i''}\in B$, then adding the cliques $(A'\cup X_{c_{i'}}\cup X_{c_j})\setminus X_{a_{i'}}$ and $(B'\cup X_{c_j})\setminus X_{b_j}$ to $\mathscr{C}'$ yields a clique covering of size  $12+|\cup_{k\in \mathbb{N}\setminus \{i',i'',j\}} X_{b_k}|\leq n-|A|+2\leq n-1$.
Finally, if $j\neq i''$ and $b_{i''}\not\in B$, then in $\mathscr{C}'$, add the cliques $(A'\cup X_{c_{i'}}\cup X_{c_j})\setminus X_{a_{i'}}$ and $(B'\cup X_{c_j})\setminus X_{b_j}$ and merge the pair $(M_i,M_{i''})$, thereby obtaining a clique covering for $G'$ of size $11+|\cup_{k\in \mathbb{N}\setminus \{i',j\}} X_{b_k}|\leq n-|A|+2\leq n-1$. This completes the proof of \pref{lem:skew}.
\end{proof}

Now, we are in the position to prove \pref{thm:Inc-rotator-Exc-sf}.
\begin{proof}[{\rm \textbf{Proof of \pref{thm:Inc-rotator-Exc-sf}.}}] 
	Let $ G $ be a counterexample to \pref{thm:noap}.
	On the contrary, assume that $\overline{G}$ either contains no induced rotator or contains a square-forcer. By \pref{thm:Seymour-Inc-rotator-Exc-sf-revised}, $G$ is an extension of a graph $ H $, where $ H $ is either a graph of parallel-square type, or a graph of skew-square type, or a member of  $\mathcal{F}_0\cup \mathcal{F}_1\cup \mathcal{F}_2\cup \mathcal{F}_3 \cup \mathcal{F}_4$. 
	Since $ G $ is non-orientable, so is $ H $ (and both are not three-cliqued). Hence, by Lemmas~\ref{lem:parallel}, \ref{lem:F0}, \ref{lem:F1}, \ref{lem:F2,3}, \ref{lem:F4} and \ref{lem:skew}, $\cc(G)\leq n$ and  equality holds if and only if $\overline{G}$ is isomorphic to a twister, a contradiction. This proves \pref{thm:Inc-rotator-Exc-sf}.
\end{proof}

\subsection{Including a rotator} \label{sub:Inc-rot}
Let $ G $ be a counterexample to \pref{thm:noap}. In view of \pref{thm:Inc-rotator-Exc-sf}, henceforth we may assume that $\overline{G}$ contains an induced rotator and no square-forcer. 
For some reasons, in contrast to the most arguments in this paper, we do not get involved with the detailed structure of such antiprismatic graphs. Instead, with arguments similar to \cite{seymour2}, we derive some structural properties of these graphs required for the construction of our clique coverings.
One of these graphs which play an important role in the characterization of non-orientable antiprismatic graphs is the \textit{Schl\"{a}fli graph}. Let us recall its definition from \cite{seymour2}. The Schl\"{a}fli graph is a graph $\Gamma$ on the set $\{r_j^i,s_j^i,t_j^i: 1\leq i, j\leq 3\}$ of $27$ vertices, with the following adjacency. Assume that $1\leq i,i',j,j'\leq 3$.
\begin{itemize}
	\item $s_j^i$ is adjacent to $s_{j'}^{i'}$, $t_j^i$ is adjacent to $t_{j'}^{i'}$ and $r_j^i$ is adjacent to $r_{j'}^{i'}$, if and only if either $i=i'$ or $j=j'$ (and not both).
	\item $s_j^i$ is adjacent to $t_{j'}^{i'}$, $t_j^i$ is adjacent to $r_{j'}^{i'}$ and $r_j^i$ is adjacent to $s_{j'}^{i'}$, if and only if $ j\neq i' $.
\end{itemize}
Note that, $ \overline{\Gamma} $ induces a rotator on $\{s_1^1,s_2^2,s_3^3;t_3^1,t_3^2,t_3^3;r_1^3,r_2^3,r_3^3\}$. In the sequel, we will see that the structure of $ G $, a counterexample to \pref{thm:noap}, is very similar to $ \Gamma $.
First of all, we set a couple of notations and definitions. Through the rest of the paper, read all subscripts and the superscripts modulo $3$. Suppose that $G$ contains the complement of a rotator as an induced subgraph called  $\rho=(s_1^1,s_2^2,s_3^3;t_3^1,t_3^2,t_3^3;r_1^3,r_2^3,r_3^3)$, where in $G$, $\{s_1^1,s_2^2,s_3^3\}$ is a triad, $T_3(\rho)=\{t_3^1,t_3^2,t_3^3\}$ and $R^3(\rho)=\{r_1^3,r_2^3,r_3^3\}$ are two triangles, $T_3(\rho)$ is anticomplete to $R^3(\rho)$ and for every $i,j\in\{1,2,3\}$, $s_i^i$ is adjacent to $t^j_3$ and $r_j^3$, if and only if $i\neq j$. The reason for this puzzling notation is that the subgraph of $G$ induced on the vertices in $\rho$ is isomorphic to the subgraph of the Schl\"{a}fli graph induced on the same vertices. 
Let $V_0(\rho)$ be the set of vertices of $G$ which are not in $\rho$. For every $i\in \{1,2,3\}$, define
\begin{itemize}
	\item $T^i(\rho)$ is the set of vertices in $V_0(\rho)$ which are nonadjacent to $s_i^i$, complete to $\{r_{i+1}^3,r_{i+2}^3\}$ and anticomplete to $\{t_3^{i+1},t_3^{i+2}\}$.
	\item $R_i(\rho)$ is the set of vertices in $V_0(\rho)$ which are nonadjacent to $s_i^i$, complete to $\{t_3^{i+1},t_3^{i+2}\}$ and anticomplete to $\{r_{i+1}^3,r_{i+2}^3\}$.
	\item $S_{i+2}^{i+1}(\rho)$ is the set of vertices in $V_0(\rho)$ which are nonadjacent to $s_i^i$, complete to $\{t_3^{i+1},r_{i+2}^3\}$ and anticomplete to $\{t_3^{i+2},r_{i+1}^3\}$.
	\item $S_{i+1}^{i+2}(\rho)$ is the set of vertices in $V_0(\rho)$ which are nonadjacent to $s_i^i$,  complete to $\{t_3^{i+2},r_{i+1}^3\}$ and anticomplete to $\{t_3^{i+1},r_{i+2}^3\}$.
\end{itemize}
Also, let $J=\{(i,j): 1\leq i,j\leq 3, i\neq j\}$. Since $G$ is antiprismatic, it is easy to verify that $\mathcal{D}(\rho)=(S_j^i(\rho), (i,j)\in J; T^i(\rho),R_i(\rho), i\in \{1,2,3\})$ is a partition of $V_0(\rho)$ into $12$ cliques (to see a proof for this fact, see 10.2 in \cite{seymour2}), which we call \textit{the decomposition corresponding to $\rho$} (see \pref{fig:decom_rotator}). Note that the decomposition depends not only on the rotator $\rho$ but also on the ordering of vertices in $\rho$. Henceforth, in all the definitions, we omit the term $\rho$ when there is no ambiguity.
For more convenience, define 
\[S=\bigcup_{(i,j)\in J} S_j^i, \ T=\bigcup_{i=1}^3T^i, \ R=\bigcup_{i=1}^3R_i,\  S'=S\cup \{s_1^1,s_2^2,s_3^3\}, \ T'=T\cup T_3,\ R'=R\cup R^3.\]
Also for every $i\in \{1,2,3\}$, define $S^i=S_{ i+1 }^i\cup S_{ i+2 }^i$ and $S_i=S_i^{ i+1 }\cup S_i^{ i+2 }$. Subsequently, let 
\[I_T=\{i: 1\leq i\leq 3 \text{ and }T^i\neq \emptyset\} \text{ and } I_R=\{j: 1\leq j\leq 3 \text{ and }R_j\neq \emptyset\}.\]

In order to provide appropriate clique covering for $G$,  we need to know more about adjacency between the above sets of vertices. Firstly, we need a definition. For every graph $H$, we define a triple $(x,y,U)$ to be a \textit{quasi-triad} of $H$, if $U$ is a subset of $V(H)$ (possibly empty) and $x,y$ are two distinct nonadjacent vertices in $V(H)\setminus U$ anticomplete to $U$. The following is a number of easy and useful observations about the structure of $G$ which are proved (directly or indirectly) in \cite{seymour2} and help us to give desired clique coverings. Here, we omit the proof.
\begin{lem}\label{lem:T,R Facts and Tools}
	Let $G$ be a counterexample to \pref{thm:noap}. Then the following hold (Statements {\rm (iii-vii)} also hold with $T,R$ exchanged).\\
	{\rm (i)} Let $(x,y,U)$ be a quasi-triad of $G$. Then $|U|\leq 1$, and every vertex in $V(G)\setminus (U\cup\{x,y\})$ which is complete to $\{x,y\}$, is anticomplete to $U$.\\
	{\rm (ii)}  For all $(i,j)\in J$, $|S_j^i|\leq 1$. Also, for every $k\in \{1,2,3\}$, $T^k$ and $R_k$ are both cliques of $G$. \\
	{\rm (iii)} For every $(i,j)\in J$, every vertex in $T^i$ has at most one non-neighbour in $T^j$.\\
	{\rm (iv)} For every $i\in \{1,2,3\}$ and every vertex $t\in T^i$, the sets $N_G(t,T\setminus T^i)$ and $N_{\overline{G}}(t, T\setminus T^i)$ are both cliques of $G$. Consequently, $G[T]$ contains no triad.\\
	{\rm (v)} For every vertex $t\in T$, the sets $N_G(t,R)$ and $N_{\overline{G}}(t,R)$ are both cliques of $G$.\\
	{\rm (vi)} For every pair of nonadjacent vertices $t^1,t^2\in T$, $(R^1,R^2)$ is a partition of $R$ into two cliques, where $R^1=N(t^1,R)$ and $R^2=N(t^2,R)$. Consequently, if $t^1$ and $t^2$ are two vertices in $T$ with a common non-neighbour $t^3\in T$, then $N(t^1,R)=N(t^2,R)=R\setminus N(t^3,R)$.\\
	{\rm (vii)} If $|I_T|=3$, then there exists $(i,j)\in J$ such that every vertex in $T^i$ has a neighbour in $T^j$.\\
	{\rm (viii)} For every $i\in \{1,2,3\}$, both $S^i\cup \{s_i^i\}$ and $S_i\cup \{s_i^i\}$ are cliques of $G$, and there are no more edges in $G[S']$.\\
	{\rm (ix)} For every $(i,j)\in J$, the sets $S_j\cup T^i$ and $S^i\cup R_j$ are both cliques of $ G $.
\end{lem}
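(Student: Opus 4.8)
Here $G$ is a counterexample to \pref{thm:noap}, so by \pref{thm:Inc-rotator-Exc-sf} it is a non-orientable antiprismatic graph whose complement contains an induced rotator and no square-forcer; I fix $\rho$ and its decomposition $\mathcal{D}(\rho)$ as above and use freely that $\mathcal{D}(\rho)$ partitions $V_0(\rho)$ into twelve cliques (the already-cited fact, 10.2 in \cite{seymour2}). The only engine needed is the defining property of antiprismatic graphs: for every triad $\tau$ and every $v\notin\tau$, $v$ has exactly two neighbours in $\tau$, so $0$, $1$ or $3$ neighbours is forbidden. I would dispatch (i) directly: if $(x,y,U)$ is a quasi-triad and $u,u'\in U$ are distinct then $\{x,y,u\}$ is a triad in which $u'$, being anticomplete to $\{x,y\}$, has at most one neighbour, which is impossible; hence $|U|\le1$, and if $U=\{u\}$ then any $v\notin\{x,y,u\}$ complete to $\{x,y\}$ already has its two neighbours in the triad $\{x,y,u\}$, so $v\not\sim u$.

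For (ii)--(ix) the plan is uniform: in each case build a triad from vertices of $\rho$ together with members of the sets in question, and count. Two preliminary observations make this mechanical. First, since $T_3$ is anticomplete to $R^3$, every pair $\{t_3^k,r_l^3\}$ is a non-edge of $G$, and for each $m$ the set $\{s_m^m,t_3^m,r_m^3\}$ is a triad (its three non-edges being $s_m^m\not\sim t_3^m$, $s_m^m\not\sim r_m^3$, $t_3^m\not\sim r_m^3$). Second, feeding these triads into the antiprismatic property upgrades the definitions of the decomposition classes to a complete adjacency record against all nine vertices of $\rho$: a vertex of $T^i$ is moreover complete to $\{t_3^i,r_i^3\}$, hence to all of $R^3$, with non-neighbours in $\rho$ exactly $\{s_i^i,t_3^{i+1},t_3^{i+2}\}$; dually a vertex of $R_j$ is complete to all of $T_3$ with non-neighbours exactly $\{s_j^j,r_{j+1}^3,r_{j+2}^3\}$; and each $S$-class acquires an analogous pattern with exactly three non-neighbours in $\rho$. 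From these records one reads off that any two of $T^1,T^2,T^3$ (respectively $R_1,R_2,R_3$, respectively the relevant pairs of $S$-classes) share exactly one common non-neighbour among the vertices of $\rho$.

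Granting this: for (ii), an $S$-class is a clique and is anticomplete to some non-edge $\{t_3^k,r_l^3\}$, so two distinct members $x,y$ would make $\{t_3^k,r_l^3,x\}$ a triad in which $y$ has only the neighbour $x$---impossible (the clause on $T^k,R_k$ being cliques is part of the partition fact). For (iii), if $t\in T^i$ has non-neighbours $t_1,t_2\in T^j$, let $t_3^k$ be the common non-neighbour of $T^i,T^j$ in $\rho$; then $\{t,t_1,t_3^k\}$ is a triad in which $t_2$ has only the neighbour $t_1$. For (iv), non-adjacent non-neighbours $t',t''$ of $t\in T^i$ lying outside $T^i$ must lie in distinct classes, so $\{t,t',t''\}$ is a triad in which the relevant $t_3^k$ has three neighbours; the neighbour-clique claim is the same argument applied to $\{t',t'',t_3^i\}$, and "$G[T]$ contains no triad" is an immediate corollary. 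Parts (v),(vi) are the $T$--$R$ analogues, using that $T^i$ is complete to $R^3$ and $R_j$ to $T_3$: for non-adjacent $t^1,t^2\in T$ the triad $\{t^1,t^2,t_3^k\}$ forces every $r\in R$ (adjacent to $t_3^k$) to be adjacent to exactly one of $t^1,t^2$, which is the partition claim, while the "consequently" clause follows by applying this to $(t^1,t^3)$ and $(t^2,t^3)$; the clique claims of (v) are handled exactly as in (iv). For (vii), if every ordered pair $(i,j)$ with $i\ne j$ admitted a vertex of $T^i$ with no neighbour in $T^j$, then one vertex chosen for each of $(1,2),(2,3),(3,1)$ would form a triad in $G[T]$, contradicting (iv). For (viii),(ix), running the same triad-counting on the $S$-classes yields that all the listed unions are cliques.

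The main obstacle, as always with such lemmas, is the bookkeeping rather than any single deduction---twelve classes, nine items, subscripts and superscripts modulo $3$ with the idiosyncratic conventions of the $S$-classes, and a dualization of the prismatic formulation of \cite{seymour2} into the antiprismatic one. The one genuinely substantive point is the last clause of (viii), that $G[S']$ has no edge outside the cliques $S^i\cup\{s_i^i\}$ and $S_i\cup\{s_i^i\}$: a hypothetical extra edge survives all of the rotator triads, so here one must invoke the no-square-forcer hypothesis on an edge lying inside $V_0(\rho)$, or equivalently the finer analysis of $\mathcal{D}(\rho)$ carried out in \cite{seymour2}. It is for this reason reasonable, as the paper does, to cite \cite{seymour2} for the statement as a whole.
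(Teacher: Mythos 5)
The paper gives no proof of this lemma at all: it states the nine items as ``easy and useful observations \dots proved (directly or indirectly) in \cite{seymour2}'' and explicitly omits the argument, so there is nothing to compare line by line. Your reconstruction --- the antiprismatic ``exactly two neighbours in every triad'' rule, fed with the rotator triads $\{s_m^m,t_3^m,r_m^3\}$ and the partition fact 10.2 of \cite{seymour2}, after first upgrading each class of $\mathcal{D}(\rho)$ to a complete adjacency record against all nine vertices of $\rho$ --- is correct and is essentially the argument of \cite{seymour2} transcribed into the complement. All of (i)--(vii) and (ix) check out as you describe.

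Two corrections, neither fatal. First, in (iv) your phrase ``the relevant $t_3^k$ has three neighbours'' in the triad $\{t,t',t''\}$ is backwards: since $t,t',t''$ lie in three distinct classes $T^i,T^j,T^k$, each $t_3^m$ has exactly \emph{one} neighbour there (still a contradiction), while it is the vertices of $R^3$ that have three; either count works. Second, and more substantively, your claim that the last clause of (viii) ``survives all of the rotator triads'' and needs the no-square-forcer hypothesis is wrong --- it too is pure triad counting. For $u\in S_b^a$ the set $\{u,t_3^b,r_a^3\}$ is a triad ($u$ is anticomplete to $\{t_3^b,r_a^3\}$ by definition of $S_b^a$, and $T_3$ is anticomplete to $R^3$); a vertex $v\in S_{b'}^{a'}$ with $a'\neq a$ and $b'\neq b$ is adjacent to every $t_3^m$ with $m\neq b'$ and every $r_m^3$ with $m\neq a'$, hence to both $t_3^b$ and $r_a^3$, and therefore must be nonadjacent to $u$. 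Together with $s_j^j\sim v$ iff $j\in\{a',b'\}$ (read off from the triads $\{s_m^m,t_3^m,r_m^3\}$), this accounts for every non-edge of $G[S']$ outside the two cliques, so the whole lemma really is elementary triad bookkeeping and no appeal to \pref{thm:Inc-rotator-Exc-sf} is needed anywhere in it.
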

According to \pref{lem:T,R Facts and Tools}~(ii), for every $(i,j)\in J$, $ |S_j^i|\leq 1 $. Let us denote the unique possible element of $S_j^i$ by $s_j^i$ (we often use $s_j^i\in S$ and $s_j^i\notin S$ instead of $S_j^i\neq \emptyset$ and $S_j^i=\emptyset$). 
\begin{figure}
	\begin{center}
		\begin{tikzpicture}
		\begin{scope}[scale=.5]
		\SetVertexNoLabel
		\tikzset{VertexStyle/.style ={shape=circle,inner sep=1mm,minimum size=15pt,draw}}
				\begin{scope}[xshift=-55]
		\node at (-2.5,2) [rectangle,draw,inner xsep=1.9cm,inner ysep=1.5cm] (S) {};
\node at (S.south west) [above left] {${S'}$};
		\node at (-.5,0) [inner sep=1pt] (s33) {$\bullet$};
		\node at (s33.north west) [xshift=-4pt] {$ s_3^3$};
		\node at (-.5,2) [circle,draw,inner sep=0pt] (s12) {$S_3^2$};
		\node at (-.5,4) [circle,draw,inner sep=0pt] (s13) {$S_3^1$};
		\node at (-2.5,0) [circle,draw,inner sep=0pt] (s21) {$S_2^3$};
        \node at (-2.5,2) [inner sep=1pt] (s22) {$ \bullet $};
		\node at (s22.north west) [xshift=-4pt] {$ s_2^2$};
        \node at (-2.5,4) [circle,draw,inner sep=0pt] (s23) {$S_2^1$};
		\node at (-4.5,0) [circle,draw,inner sep=0pt] (s21) {$S_1^3$};
		\node at (-4.5,2) [circle,draw,inner sep=0pt] (s22) {$S_1^2$};
		\node at (-4.5,4) [inner sep=1pt] (s11) {$\bullet$};
		\node at (s11.north west) [xshift=-4pt] {$ s_1^1$};
\node at (-.5,2.1) [rotate=-90,rounded rectangle,dashed,thick,red,draw,inner xsep=1.35cm,inner ysep=.5cm] (x1) {};
\node at (-2.5,0) [rounded rectangle,dashed,thick,blue,draw,inner xsep=1.5cm,inner ysep=.44cm] (z1) {};
		\end{scope}
		\node at (5,3.7) [rounded rectangle,draw,inner xsep=7mm,inner ysep=4pt] (t1) {$T^1$};
		\node at (5,1.9) [rounded rectangle,draw,inner xsep=7mm,inner ysep=4pt] (t2) {$T^2$};
		\node at (5,0.1) [rounded rectangle,draw,inner xsep=7mm,inner ysep=4pt] (t3) {$T^3$};
		\node at (8,3.7) [inner sep=1pt] (t31) {$\,\bullet\ t_3^1$};
		\node at (8,1.9) [inner sep=1pt] (t32) {$\,\bullet\ t_3^2$};
		\node at (8,0.1) [inner sep=1pt] (t33) {$\,\bullet\ t_3^3$};
		\node at (5.25,2) [rectangle,draw,inner xsep=1.85cm,inner ysep=1.5cm] (T) {};
		\node at (5,1.9) [rectangle,draw,dashed,inner xsep=1.1cm,inner ysep=1.3cm] (TT) {};	
		\node at (T.south west) [above left] {${T'}$};
		\node at (TT.west) [left] {$T$};
\node at (5.6,3.7) [rounded rectangle,dashed,red,thick,draw,inner xsep=1.6cm,inner ysep=.5cm] (x2) {};
\draw [bend left,red,thick,dashed] (x1.west) to node [below,xshift=-2mm] {$ X_3^1 $} (x2);
				\begin{scope}[xshift=30]
\node at (12,2.7) [rotate=-90,rounded rectangle,draw,inner xsep=7mm,inner ysep=4pt] (r1) {$R_1$};
\node at (14,2.7) [rotate=-90,rounded rectangle,draw,inner xsep=7mm,inner ysep=4pt] (r2) {$R_2$};
\node at (16,2.7) [rotate=-90,rounded rectangle,draw,inner xsep=7mm,inner ysep=4pt] (r3) {$R_3$};
\node at (12,.2) [inner sep=1pt] (r13) {$ \bullet$};
\node at (r13.south) [yshift=-5pt] { $ r^3_1$};
\node at (14,.2) [inner sep=1pt] (r23) {$\bullet$};
\node at (r23.south) [yshift=-5pt] {$ r^3_2$};
\node at (16,.2) [inner sep=1pt] (r33) {$\bullet$};
\node at (r33.south) [yshift=-5pt] {$ r^3_3$};
		\node at (13.8,2) [rectangle,draw,inner xsep=1.9cm,inner ysep=1.5cm] (R) {};
		\node at (14,2.7) [rectangle,dashed,draw,inner xsep=1.4cm,inner ysep=1.05cm] (RR) {};
		\node at (R.south west) [above left] {${R'}$};
		\node at (RR.west) [left] {$R$};
\node at (14,2) [rotate=-90,rounded rectangle,dashed,thick,blue,draw,inner xsep=1.45cm,inner ysep=.46cm] (z2) {};
\draw [bend left,blue,dashed,thick] (z2.east) [yshift=3cm] to node [above,xshift=-5mm,yshift=-8pt] {$ Z_2^3 $} (z1.south east);
		\end{scope}
		\end{scope}
		\end{tikzpicture}
	\end{center}
	\vspace{-35pt}
	\caption{The decomposition corresponding to a rotator. \label{fig:decom_rotator}}
\end{figure}
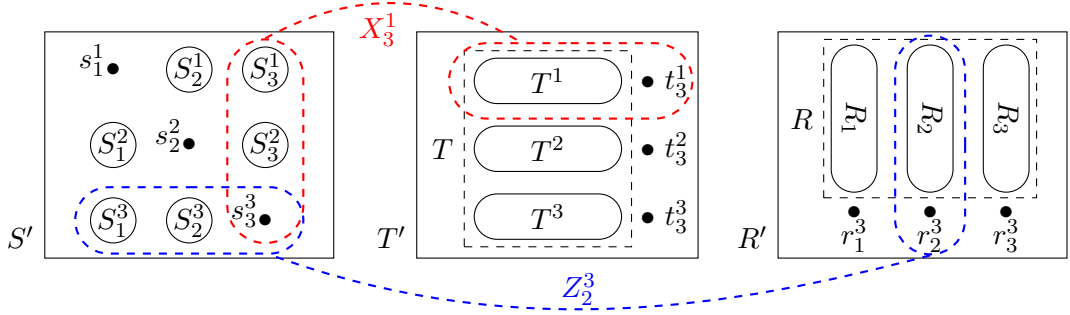

\pref{lem:T,R Facts and Tools} discloses the fact that the adjacency in $ G $ is very similar to the adjacency in the Schl\"{a}fli graph, except the adjacency between vertices in $ T\cup R $. For instance, in \pref{fig:decom_rotator}, each row and column in $ {S'},{T'} $ and $ {R'} $ is a clique and for $ 1\leq i,j\leq 3 $, column $ j $ in $ {S'} $ (resp. ${R'} $) is complete to row $ i $ in ${T'} $ (resp. ${S'} $) if and only if $ i\neq j $ (see \pref{lem:T,R Facts and Tools}~(viii,ix)). 
In fact, 
as it will be seen later, the major obstacle to proving \pref{thm:noap} is covering the edges of $G$ in $\mathcal{E}(G)=\bigcup_{(i,j)\in J}(E(T^i,T^j)\cup E(R_i,R_j))\cup E(T',R')\cup E(T_3)\cup E(R^3)$. 

Here, we introduce two collections of cliques of $G$ covering the edges in  $E(G)\setminus \mathcal{E}(G)$, and their various modified versions will be used to provide suitable clique coverings for $G$. The first one comes from a natural column-to-row clique covering of the Schl\"{a}fli graph and is defined as follows. For every $(i,j)\in J$, let $X_j^i=S_j \cup T^i\cup \{s_j^j,t_3^i\}$ and $Z_j^i=R_j \cup S^i\cup \{s_i^i,r_j^3\}$ 
which are cliques of $G$. Define $\mathscr{O}(G)=\{X_j^i,Z_j^i: (i,j)\in J\}$ and note that $|\mathscr{O}(G)|=12$.
The second collection is a bit more complicated, however enables us to conquer several sporadic cases. For every $(i,j)\in J$, define
\[\Delta_j^i=(S^i\setminus S_j^i)\cup \{s_i^i\}\cup T^j\cup (R^3\setminus \{r_i^3\}),\quad \nabla _j^i=(S_i\setminus S_i^j)\cup \{s_i^i\}\cup (T_3\setminus \{t_3^i\})\cup R_j.\]
Also, for every $i\in \{1,2,3\}$, let $\Omega ^i=S^i\cup T^i\cup \{t^i_3\}$ and $\Omega _i=S_i\cup R_i\cup \{r^3_i\}$. Note that by \pref{lem:T,R Facts and Tools}~(viii,ix) and the definitions, all the sets $\Delta _j^i, \nabla _j^i,\Omega ^i$ and $\Omega _i$ are cliques of $G$. Now, let $\mathscr{P}(G)$ be the family of cliques of $G$ constructed according to the following algorithm.
\begin{itemize}
	\item[\textbf{P1.}] Let $\mathscr{P}(G)=\emptyset$.
	\item[\textbf{P2.}] For every $i\in \{1,2,3\}$, if $T^i\neq \emptyset$, then add the cliques $\Delta _i^{ i+1 }$ and $\Delta _i^{ i+2 }$  to $\mathscr{P}(G)$.
	\item[\textbf{P3.}] For every $i\in \{1,2,3\}$, if either $T^i\neq \emptyset$ or $|S^i|= 2$, then add the clique $\Omega ^i$ to $\mathscr{P}(G)$.
	\item[\textbf{P4.}] For every $i\in \{1,2,3\}$, if $R_i\neq \emptyset$, then add the cliques $\nabla _i^{ i+1 }$ and $\nabla _i^{ i+2 }$  to $\mathscr{P}(G)$.
	\item[\textbf{P5.}] For every $i\in \{1,2,3\}$, if either $R_i\neq \emptyset$ or $|S_i|= 2$, then add the clique $\Omega _i$ to $\mathscr{P}(G)$.
	\item[\textbf{P6.}] For every $(i,j)\in J$ with $s_j^i\in {S}$, add both cliques $\Delta _{6-i-j}^i$ and $\nabla _{6-i-j}^j$ to $\mathscr{P}(G)$ (if they are not added before).
	\item[\textbf{P7.}] For every $i\in \{1,2,3\}$, add one of the cliques $\Delta _j^i$, $j\in \{1,2,3\}\setminus \{i\}$ and also one of the cliques $\nabla _j^i$, $j\in \{1,2,3\}\setminus \{i\}$  to $\mathscr{P}(G)$ (if no such cliques are added before).
\end{itemize}
Then, we have $|\mathscr{P}(G)|\leq 18$. Now, we state a simple lemma about the edges covered by the cliques in $\mathscr{O}(G)$ and $\mathscr{P}(G)$, which follows from previous arguments and we leave the proof to the reader.
\begin{lem}\label{lem:O&P} The following hold. \\
	{\rm (i)} The cliques in $\mathscr{O}(G)$ cover all the edges in $E(G)\setminus \mathcal{E}(G)$.\\
	{\rm (ii)} The cliques in $\mathscr{P}(G)$ cover all the edges in $E(G)\setminus \mathcal{E}(G)$, as well as the edges in $E(T,R^3)\cup E(T_3,R)\cup E(T_3)\cup E(R^3)$.
\end{lem}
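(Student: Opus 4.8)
\textbf{Proof proposal for Lemma~\ref{lem:O&P}.}

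The plan is to verify the two statements by a direct, case-free inspection of the definitions of $\mathscr{O}(G)$ and $\mathscr{P}(G)$, using Lemma~\ref{lem:T,R Facts and Tools} to guarantee that the various sets listed in the cliques are actually pairwise complete. First I would recall that $E(G)\setminus \mathcal{E}(G)$ consists precisely of the following kinds of edges: edges inside $S'$; edges inside each $T^i$ or $R_i$; edges between a column $S_j$ (resp. $S^i$) and a row $T^i$ (resp. a column $R_j$) of the Schl\"afli-like array, i.e.\ edges of $E(S,T')$ and $E(S,R')$ of the ``compatible'' parity $i\neq j$; edges between $S'$ and $T_3$, and between $S'$ and $R^3$; and that is all — by the partition Lemma~\ref{lem:T,R Facts and Tools} together with (viii),(ix), every edge of $G$ is either of one of these forms or lies in $\mathcal{E}(G)$.

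For (i): I would check that each edge of $E(G)\setminus\mathcal{E}(G)$ is contained in one of the $12$ cliques $X_j^i = S_j\cup T^i\cup\{s_j^j,t_3^i\}$ or $Z_j^i = R_j\cup S^i\cup\{s_i^i,r_j^3\}$, $(i,j)\in J$. An edge inside $T^i$ lies in $X_j^i$ for any $j\neq i$ (and $T^i$ is a clique by Lemma~\ref{lem:T,R Facts and Tools}(ii)); symmetrically for $R_i$ inside $Z_j^i$. An edge inside $S'$ is either inside a row $S^i\cup\{s_i^i\}$ — covered by $Z_j^i$ — or inside a column $S_j\cup\{s_j^j\}$ — covered by $X_j^i$ — and by Lemma~\ref{lem:T,R Facts and Tools}(viii) these are the only edges of $G[S']$. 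An edge of $E(S_j,T^i)$ with $i\neq j$ is inside $X_j^i$ (using Lemma~\ref{lem:T,R Facts and Tools}(ix): $S_j\cup T^i$ is a clique); an edge from $s_j^j$ to $T^i$ is likewise inside $X_j^i$, and an edge from $S_j$ to $t_3^i$ is inside $X_j^i$ as well. Symmetrically the edges of $E(S^i,R_j)$, of $E(s_i^i,R_j)$ and of $E(S^i,r_j^3)$ lie in $Z_j^i$. That exhausts $E(G)\setminus\mathcal{E}(G)$, proving (i).

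For (ii): I would argue that the families $\mathscr{P}(G)$ and $\mathscr{O}(G)$ cover the same edges in $E(G)\setminus\mathcal{E}(G)$, and then that $\mathscr{P}(G)$ additionally covers $E(T,R^3)\cup E(T_3,R)\cup E(T_3)\cup E(R^3)$. Note first that every clique of $\mathscr{P}(G)$ is of the form $\Delta_j^i$, $\nabla_j^i$, $\Omega^i$ or $\Omega_i$, which are cliques of $G$ by Lemma~\ref{lem:T,R Facts and Tools}(viii),(ix) and the definitions; indeed $\Delta_j^i = (S^i\setminus S_j^i)\cup\{s_i^i\}\cup T^j\cup(R^3\setminus\{r_i^3\})$ is a clique because $S^i\cup\{s_i^i\}$ is a clique, $R^3$ is a triangle, $S^i\cup R^3$ has the compatible parity, $T^j$ is a clique, and $T^j$ is complete to $R^3$ minus the ``wrong'' vertex (again Lemma~\ref{lem:T,R Facts and Tools}(ix) and the definition of $T^j$ as anticomplete to $\{t_3^{j+1},t_3^{j+2}\}$ but complete to two of the $r^3$'s). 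The covering claim for $E(G)\setminus\mathcal{E}(G)$ then follows by the same enumeration as in (i), tracking the steps \textbf{P2}--\textbf{P7}: steps \textbf{P2},\textbf{P4} handle most row–column $S$-to-$T$ and $S$-to-$R$ edges; steps \textbf{P3},\textbf{P5} handle the remaining column-of-$S'$ edges and the $S^i$-to-$t_3^i$, $S_i$-to-$r_i^3$ edges via $\Omega^i,\Omega_i$; steps \textbf{P6},\textbf{P7} are exactly engineered so that whenever an $s_j^i$ is present, or in any case one $\Delta_j^i$ and one $\nabla_j^i$ per $i$, the last stray edges incident to the $s_j^i$'s get covered. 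Finally, since $\Delta_j^i\supseteq T^j\cup(R^3\setminus\{r_i^3\})$ and, by steps \textbf{P2},\textbf{P6},\textbf{P7}, for every $j$ at least one index $i\neq j$ contributes $\Delta_j^i$, and moreover every pair $\{t^3,t^3{}'\}\subseteq R^3$ and every edge of $E(T^j,R^3)$ appears together inside some $\Delta_j^i$ — here one uses that the two excluded vertices $r_i^3$, $i\neq j$, are different, so the union of the two cliques $\Delta_j^{i}$ over $i\in\{1,2,3\}\setminus\{j\}$ covers all of $E(T^j,R^3)\cup E(R^3)$ — the edges of $E(T,R^3)$ and $E(R^3)$ are covered; symmetrically $\nabla_j^i\supseteq (T_3\setminus\{t_3^i\})\cup R_j$ gives $E(T_3,R)\cup E(T_3)$. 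This proves (ii).

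The routine but slightly fiddly part — and the only place where any real care is needed — is the bookkeeping in the proof of (ii) showing that the conditional steps \textbf{P2}--\textbf{P7} never leave an edge of $E(G)\setminus\mathcal{E}(G)$ uncovered: one must check, for each possible configuration of which $S_j^i$ are nonempty and which $T^i,R_i$ are nonempty, that the cliques actually inserted by the algorithm still cover the edges incident to the present $s_j^i$'s, and that the ``at least one $\Delta_j^i$ (resp.\ $\nabla_j^i$) for each $i$'' guarantee of step \textbf{P7} combines with steps \textbf{P2},\textbf{P6} to cover $E(T,R^3)\cup E(T_3,R)\cup E(T_3)\cup E(R^3)$ even when some $T^j$ or $R_j$ is empty (in which case the corresponding $\Delta$'s or $\nabla$'s are still well-defined cliques and still cover the relevant $R^3$- or $T_3$-edges). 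Since this is a finite check over a handful of symmetric cases and uses nothing beyond the definitions and Lemma~\ref{lem:T,R Facts and Tools}, I would present it tersely and leave the remaining symmetric verifications to the reader, exactly as the statement of the lemma invites.
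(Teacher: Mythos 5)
Your proposal is correct and is essentially the verification the paper has in mind: the paper gives no written proof of this lemma (it explicitly leaves it to the reader as following "from previous arguments"), and a direct enumeration of $E(G)\setminus\mathcal{E}(G)$ against the definitions of $X_j^i,Z_j^i,\Delta_j^i,\nabla_j^i,\Omega^i,\Omega_i$ together with Lemma~\ref{lem:T,R Facts and Tools}(viii),(ix) is exactly the intended argument. Two small points of care: your enumeration of $E(G)\setminus\mathcal{E}(G)$ omits the edges in $E(T^i,\{t_3^i\})$ and $E(R_j,\{r_j^3\})$ (they are nevertheless covered, by $X_j^i$ and $\Omega^i$, resp.\ $Z_j^i$ and $\Omega_i$); and your justification that $\Delta_j^i$ is a clique cites only the definitional completeness of $T^j$ to $\{r_{j+1}^3,r_{j+2}^3\}$, whereas $\Delta_j^i$ contains $r_j^3$, so one also needs that $T^j$ is complete to $r_j^3$ — this follows not from the definition of $T^j$ but from antiprismaticity applied to the triad $\{s_j^j,t_3^j,r_j^3\}$ (each $t\in T^j$ is nonadjacent to $s_j^j$, hence adjacent to both $t_3^j$ and $r_j^3$), and the same observation is what makes $E(T^j,\{r_j^3\})\subseteq E(T,R^3)$ nonempty and covered in part (ii).
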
 
Note that, to avoid repetition, we refuse to refer to \pref{lem:O&P} and it is assumed that the reader will be considering its truth throughout the rest of the paper. Furthermore, we derive more information about $ G $ based on the fact that $ \overline{G} $ excludes square-forcer. 
\begin{lem}\label{lem:square-forcer}
	Let $G$ be a counterexample to \pref{thm:noap} and $\{i,j,k\}=\{1,2,3\}$. The following hold.\\
	\rm{(i)} If $G[T\setminus T^i]$ is a clique and  $k\notin I_R$, then either $s_i^j\in S$, or $s_l^i\in S$, for some $l\in I_T\setminus \{i\}$.\\
	\rm{(ii)} If $G[R\setminus R_i]$ is a clique and  $k\notin I_T$, then either $s_j^i\in S$, or $s_i^l\in S$, for some $l\in I_R\setminus \{i\}$.
\end{lem}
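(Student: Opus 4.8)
The plan is to prove both statements by finding a square-forcer in $\overline{G}$ whenever the hypothesis holds but the conclusion fails, contradicting \pref{thm:Inc-rotator-Exc-sf}. Since (i) and (ii) are symmetric under exchanging $T$ with $R$ (and correspondingly the roles of the sets $S^i$ versus $S_i$, and $t^i_3$ versus $r^3_i$, via \pref{lem:T,R Facts and Tools}), I would carry out the argument only for (i) and remark that (ii) follows by the $T\leftrightarrow R$ symmetry noted in \pref{lem:T,R Facts and Tools}.

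\textbf{Setup for (i).} Assume $G[T\setminus T^i]$ is a clique, $k\notin I_R$ (so $R_k=\emptyset$), and for contradiction that $s_i^j\notin S$ and $s_l^i\notin S$ for every $l\in I_T\setminus\{i\}$. The candidate square-forcer I would test is the pair $(s_i^i,s_j^j)$: these are two of the three pairwise-nonadjacent vertices of the triad $\{s_1^1,s_2^2,s_3^3\}$, so they are nonadjacent. I must show that $U:=N_{\overline G}[s_i^i,s_j^j]$ — the set of vertices nonadjacent in $G$ to both $s_i^i$ and $s_j^j$ — is a stable set of $G$. First locate $U$ inside the decomposition $\mathcal D(\rho)$: by the very definitions of the twelve cliques $S^a_b$, $T^a$, $R_a$, a vertex is a non-neighbour of $s_i^i$ precisely when it lies in $T^i\cup R_i\cup S^i\cup S_i$ (the ``$i$-row and $i$-column'' pieces), together with $s_i^i$ itself and the other vertices of $\rho$ nonadjacent to $s_i^i$, namely $s_j^j,s_k^k$. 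Intersecting the non-neighbourhoods of $s_i^i$ and $s_j^j$ and using \pref{lem:T,R Facts and Tools}~(viii) (that $S'$ has only the stated edges) one finds $U\subseteq S^i_j\cup S^j_i\cup (T^i\cap N_{\overline G}(s_j^j))\cup(T^j\cap N_{\overline G}(s_i^i))\cup (R_i\cap R_j\text{-type pieces})\cup\{s_k^k\}$; but a vertex of $T^i$ is by definition a non-neighbour of $s_i^i$ and adjacent to nothing forcing nonadjacency to $s_j^j$ in general, so I would compute this intersection carefully using that $T^j,R_j$ are the non-neighbour-sets of $s_j^j$ of the appropriate type. The upshot is that $U$ is contained in a small explicitly described region, roughly $S^i_j\cup S^j_i\cup T'^{(ij)}\cup R'^{(ij)}\cup\{s_k^k\}$ where $T'^{(ij)}$ is a part of $T^i\cup T^j$ and $R'^{(ij)}$ a part of $R_i\cup R_j$; the contradiction hypothesis ($s_i^j\notin S$ and no $s_l^i\in S$ with $l\in I_T$) kills the $S$-vertices, $R_k=\emptyset$ together with the structure of $R$ kills or controls the $R$-part, and $G[T\setminus T^i]$ being a clique controls the $T^j$-part.

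\textbf{Verifying stability of $U$.} With $U$ pinned down, I would check that no two vertices of $U$ are adjacent in $G$, treating the possible pairs: two vertices both in $T^j\cap U$ cannot be adjacent because they would then lie in the clique $G[T\setminus T^i]$ — fine, but stability needs \emph{non}adjacency, so here I instead argue these are excluded from $U$ by the contradiction hypothesis or reduce to $|T^j\cap U|\le 1$ via \pref{lem:T,R Facts and Tools}~(iii); a vertex of $T$ versus a vertex of $R$ is nonadjacent when they are in the $\overline G$-neighbourhoods forced, using \pref{lem:T,R Facts and Tools}~(v),(vi); $s_k^k$ versus anything in $U$ is nonadjacent because $U\subseteq N_{\overline G}(s_i^i)$ and $N_{\overline G}(s_i^i)$ splits as in (viii). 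The key external input is \pref{lem:T,R Facts and Tools}~(i): once $(s_i^i,s_j^j,U)$ is shown to be a quasi-triad it is automatically ``small,'' and more importantly the hypotheses are exactly tuned so that $\overline G[U]$ has no edge. Then $(s_i^i,s_j^j)$ is a square-forcer, contradicting \pref{thm:Inc-rotator-Exc-sf}, which proves (i); (ii) is the mirror image.

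\textbf{Expected main obstacle.} The delicate point is the precise determination of $U=N_{\overline G}[s_i^i,s_j^j]$ and the verification that, under the three standing assumptions, every remaining vertex of $U$ lies in a single clique or is pairwise nonadjacent — in other words, disentangling which of the twelve decomposition cliques contribute to $U$ and ruling out a stray adjacent pair coming from $T^j$ or from the $T$–$R$ interface. This is where I would lean hardest on parts (iii)–(vi) of \pref{lem:T,R Facts and Tools} (non-neighbour counts inside $T$, and the partition of $R$ by any non-edge of $T$), since those are exactly the statements that prevent $\overline G[U]$ from containing an edge once the $S$-vertices and $R_k$ have been eliminated. Everything else is bookkeeping with the definitions of the sets $S^i_j,T^i,R_i$ and the adjacency rules of the Schläfli-like structure.
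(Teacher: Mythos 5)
Your overall plan --- exhibit a square-forcer and contradict \pref{thm:Inc-rotator-Exc-sf} --- is exactly the paper's, but your choice of pair is fatally wrong, and the error is one of complementation. \pref{thm:Inc-rotator-Exc-sf} forbids a square-forcer \emph{in $\overline{G}$}; unwinding the definition, such a pair must be nonadjacent in $\overline{G}$, i.e.\ \emph{adjacent in $G$}, and the set of vertices nonadjacent in $\overline{G}$ to both (i.e.\ their common $G$-neighbourhood) must be stable in $\overline{G}$, i.e.\ a clique of $G$. Your pair $(s_i^i,s_j^j)$ consists of two vertices of a triad of $G$, hence is an \emph{edge} of $\overline{G}$ and can never be a square-forcer of $\overline{G}$. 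Moreover, the set you propose to analyse --- the common $G$-non-neighbourhood of $s_i^i$ and $s_j^j$ --- is just $\{s_k^k\}$: the $G$-non-neighbours of $s_i^i$ in $V_0(\rho)$ are $T^i\cup R_i\cup S^j_k\cup S^k_j$, not $T^i\cup R_i\cup S^i\cup S_i$ as you write (by \pref{lem:T,R Facts and Tools}~(viii) the sets $S^i$ and $S_i$ are \emph{complete} to $s_i^i$), and these pieces are disjoint from the corresponding pieces for $s_j^j$. So none of the three hypotheses of the lemma would ever be used --- a sure sign the argument cannot prove the statement --- and even if the set were stable you would only have produced a square-forcer of $G$, which \pref{thm:Inc-rotator-Exc-sf} does not forbid.

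The paper instead takes the pair $(s_i^i,r_k^3)$ for (i) (and the mirror pair for (ii)). These are adjacent in $G$ since $i\neq k$, and their common $G$-neighbourhood is $T^j\cup T^k\cup R_k\cup S^i_j\cup S^i_k\cup S^j_i\cup\{r_j^3\}$. Each assumption is then used exactly once to make this a clique of $G$: the hypothesis $k\notin I_R$ deletes $R_k$; the failure of the conclusion deletes $S^j_i$ (which would otherwise be nonadjacent to $S^i_j\cup S^i_k$ and to $r_j^3$) and deletes $S^i_l$ whenever $T^l\neq\emptyset$ for $l\in\{j,k\}$ (since $S^i_l$ need not be complete to $T^l$); and the hypothesis that $G[T\setminus T^i]$ is a clique, together with \pref{lem:T,R Facts and Tools}~(viii),(ix) and the fact that $r_j^3$ is complete to $T$ and to every $S^a_b$ with $a\neq j$, handles all remaining pairs. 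If you redo your computation with this pair, the bookkeeping you describe goes through.
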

\begin{proof}
	If (i) does not hold, then $(s_i^i,r_k^3)$ is a square-forcer in $\overline{G}$, a contradiction to \pref{thm:Inc-rotator-Exc-sf}. The truth of (ii) follows similarly.  
\end{proof}

We conclude this subsection with three useful lemmas which will be used in the forthcoming subsections. For simplicity, we omit the term $ \rho $ within the proofs. 

\begin{lem}\label{lem:|I_T|+|I_R|>=4}
	Let $G$ be a counterexample to \pref{thm:noap}. Then for every rotator $\rho$ of $\overline{G}$, we have  $|I_T(\rho)|+|I_R(\rho)|\geq 4 $.
\end{lem}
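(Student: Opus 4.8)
Suppose, for a contradiction, that $\overline{G}$ has a rotator $\rho=(s_1^1,s_2^2,s_3^3;t_3^1,t_3^2,t_3^3;r_1^3,r_2^3,r_3^3)$ with $|I_T(\rho)|+|I_R(\rho)|\le 3$, keeping all the notation of the decomposition $\mathcal{D}(\rho)$. First I would exploit the symmetry of a rotator that interchanges its two triangles: the sequence $\rho'=(s_1^1,s_2^2,s_3^3;\,r_1^3,r_2^3,r_3^3;\,t_3^1,t_3^2,t_3^3)$ is again a rotator of $\overline{G}$, and a direct check shows that $\mathcal{D}(\rho')$ is obtained from $\mathcal{D}(\rho)$ by exchanging $T^i\leftrightarrow R_i$ and $S^i_j\leftrightarrow S^j_i$ for all indices; in particular $I_T(\rho')=I_R(\rho)$ and $I_R(\rho')=I_T(\rho)$. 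Hence we may assume $|I_T|\le|I_R|$, and then $|I_T|+|I_R|\le 3$ forces $|I_T|\le 1$. Thus $T$ is a single clique $T^{i_0}$ (possibly empty) by \pref{lem:T,R Facts and Tools}(ii), and so $E_G(T^i,T^j)=\emptyset$ for every $(i,j)\in J$. The goal is to build a clique covering of $G$ of size at most $n$, of size at most $n-1$ unless $\overline{G}$ is a twister; as $G$ is a counterexample to \pref{thm:noap} (and hence not three-cliqued, being non-orientable), either conclusion is a contradiction.

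Start from $\mathscr{P}(G)$. By \pref{lem:O&P}(ii) it covers every edge of $G$ outside $\mathcal{E}(G)$ together with all edges of $E_G(T,R^3)\cup E_G(T_3,R)\cup E_G(T_3)\cup E_G(R^3)$; since $T_3$ is anticomplete to $R^3$ and $E_G(T^i,T^j)=\emptyset$, the only edges of $G$ still uncovered lie in $E_G(T,R)\cup\bigcup_{(i,j)\in J}E_G(R_i,R_j)$. Moreover $\mathscr{P}(G)\subseteq\{\Delta^i_j,\nabla^i_j:(i,j)\in J\}\cup\{\Omega^i,\Omega_i:i\in\{1,2,3\}\}$, so $|\mathscr{P}(G)|\le 18$, and when $|I_T|+|I_R|\le 3$ most of the steps P2--P6 do not fire, so in fact $|\mathscr{P}(G)|$ is bounded by $|S|\le 6$, $|I_R|$ and an absolute constant. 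Hence, once one covers the residual edges by a bounded number $r$ of further cliques, the tally $|\mathscr{P}(G)|+r$ against $n=|S|+|T|+|R|+9$ already gives $\cc(G)\le n-1$ whenever $|S|+|T|+|R|$ exceeds a fixed threshold; this leaves only finitely many genuinely small configurations, to be disposed of directly.

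For the residual edges I would split according to whether $G[R]$ is a clique. If it is, the single clique $R$ covers all of $\bigcup_{(i,j)}E_G(R_i,R_j)$, so after adjoining $R$ only $E_G(T,R)$ remains; here $T,R$ are cliques, $N_G(t,R)$ is a clique for each $t\in T$ by \pref{lem:T,R Facts and Tools}(v), and $S_j\cup T^{i_0}$, $S^{i_0}\cup R_j$ are cliques by \pref{lem:T,R Facts and Tools}(ix), so a bounded modification of $\mathscr{P}(G)$ absorbs these edges within the budget, except for a few configurations with $|S|+|T|+|R|$ very small, which I would settle by an explicit covering or, where its hypotheses hold, by \pref{lem:2sub} applied to $a_0=s^k_k$ for a suitable $k$ (which already yields $\cc(G)\le n$ with equality only for the twister). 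If $G[R]$ is not a clique, fix nonadjacent $r^1,r^2\in R$; by the $T\leftrightarrow R$ form of \pref{lem:T,R Facts and Tools}(vi) the sets $N_G(r^1,T)$ and $N_G(r^2,T)$ partition $T$ into two cliques, and, using \pref{lem:T,R Facts and Tools}(iv),(vii) together with \pref{lem:square-forcer}(ii) (recall $|I_R|\le 3$) to pin down how $R_1,R_2,R_3$ interact with one another and with $T$, one covers $E_G(T,R)\cup\bigcup_{(i,j)}E_G(R_i,R_j)$ by a bounded family of cliques; the count once more gives $\cc(G)\le n-1$, with the handful of extremal small cases checked by hand.

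The main obstacle is exactly this last bookkeeping in the subcases where $|I_R|\in\{2,3\}$: there the edges running between distinct $R_j$'s genuinely must be covered, and one has to combine the partition lemma (\pref{lem:T,R Facts and Tools}(vi) in its $T\leftrightarrow R$ form) with \pref{lem:square-forcer} to see that the local structure is rigid enough for a bounded number of extra cliques to suffice, and then verify in each extremal configuration that the resulting covering has size at most $n$, with equality forcing $\overline{G}$ to be isomorphic to a twister. The remainder is a direct, if tedious, edge-by-edge verification.
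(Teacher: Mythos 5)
Your symmetry reduction (swapping the two triangles of the rotator to interchange $T^i(\rho)\leftrightarrow R_i(\rho)$) is legitimate and is essentially how the paper also normalizes the cases, and your identification of the residual edge set $E_G(T,R)\cup\bigcup_{(i,j)\in J}E_G(R_i,R_j)$ after applying \pref{lem:O&P}(ii) is correct. However, the quantitative heart of your argument is a genuine gap: you claim the residual edges can be covered by ``a bounded number $r$ of further cliques,'' so that $|\mathscr{P}(G)|+r\le n-1$ once $|S|+|T|+|R|$ exceeds a fixed threshold, leaving only finitely many small configurations. This is false. Covering $E_G(T,R)$ in general requires on the order of $\min(|T|,|R|)$ cliques (the natural coverings are $\mathscr{N}[T;R]$ or $\mathscr{N}[R;T]$, of sizes $|T|$ and $|R|$ respectively); no absolute constant suffices when both $T$ and $R$ are large. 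Since $n=|S|+|T|+|R|+9$, a covering of size (constant)${}+|T^{i_0}|$ or (constant)${}+|R_p|$ is only at most $n-1$ if the surplus $|S|+|T\setminus T^{i_0}|+\dots$ pays for it, and the entire content of the paper's proof is exactly this bookkeeping: each sub-case produces an explicit covering whose size is computed as $n$ minus a quantity like $|S|+|T^2|-2$, and the contradiction is extracted by forcing that quantity to be positive via \pref{lem:square-forcer} (which guarantees specific vertices of $S$ exist) and further structural claims such as the paper's statement (3) (some $T^{3-i}$ has all its vertices with neighbours in $T^i$, so that $\mathscr{N}[T^i;T^{3-i}\cup R^3]$ absorbs extra edges at no additional cost). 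None of this appears in your proposal, and it cannot be replaced by a threshold-plus-finite-check argument.

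Two smaller points. First, your appeal to \pref{lem:2sub} with $a_0=s_k^k$ is unsupported: that lemma requires $N_G(v,V(G)\setminus A)$ to be a clique for every $v\in A\setminus\tilde A$ where $A=N_{\overline{G}}(s_k^k)$, and there is no reason this hypothesis holds here; the paper does not use \pref{lem:2sub} in this lemma. Second, the ``finitely many small configurations checked by hand'' are not an afterthought even in the regime where your counting would work; in the paper these small cases (e.g.\ $|R_p|=1$, $|S|\le 2$) each need a bespoke rearrangement of $\mathscr{O}(G)$ or $\mathscr{P}(G)$, and omitting them leaves the proof incomplete.
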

\begin{proof}
	Suppse not, let $|I_T|+|I_R|\leq 3$. Then, we have\vsp
	
	(1) \textit{Both $T$ and $R$ are nonempty.}\vsp
	
	Suppose not, by symmetry, let $R=\emptyset$. Merge the pairs $(Z_2^1,Z_3^1),(Z_1^2,Z_3^2)$ and $(Z_1^3,Z_2^3)$ in $\mathscr{O}(G)$, and call the resulting collection $\mathscr{O}_1(G)$. First, assume that $|I_T|\leq 2$, say $T^3=\emptyset$. By \pref{lem:square-forcer}~(ii) (applying to $(i,j,k)=(1,2,3)$ and $(i,j,k)=(2,1,3)$), both $s_2^1,s_1^2\in S$. Thus, $|S|\geq 2$, and adding the cliques in $\mathscr{N}[T^1;T^2\cup R^3]\cup \{T^2\cup R^3,T_3\}$ to $\mathscr{O}_1(G)$, yields a clique covering $\mathscr{C}_1$ for $G$ of size $11+|T^1|=n-|S|-|T^2|+2\leq n-|T^2|$. Thus, $T^2=\emptyset$ and by \pref{lem:square-forcer}~(i) (applying to $(i,j,k)=(3,1,2)$), at least one of $s_3^1,s_1^3\in S$, and so $|S|\geq 3$, which implies that $|\mathscr{C}_1|\leq n-1$, a contradiction. Therefore, $|I_T|=3$.  By \pref{lem:T,R Facts and Tools}~(vii) and w.l.o.g. we may assume that for every $t\in T^3$, $N(t,T^2)\neq \emptyset$. Thus, the cliques in $\mathscr{N}[T^2;T^3\cup R^3]$ cover all the edges in $E(T^3,R^3)\cup E(R^3)$. Also, by \pref{lem:T,R Facts and Tools}~(iv), $\mathscr{N}[T^1;(T\setminus T^1)\cup R^3]$ is a set of cliques of $G$. Thus, $\mathscr{O}_1(G)\cup \mathscr{N}[T^1;(T\setminus T^1)\cup R^3]\cup \mathscr{N}[T^2;T^3\cup R^3]\cup \{T_3\}$ 
	is a clique covering for $G$ of size $n-|S|-|T^3|+1\leq n-|S|$. Hence, $S=\emptyset$. Now, the cliques in $(\mathscr{P}(G)\setminus \{\Delta _2^1,\Delta_3^1,\Delta_3^2,\Delta _1^3 \}) \cup (\bigcup_{i=1}^3\mathscr{N}[T^i;\Delta_{ i+1 }^{ i+2 }])$ cover all edges covered by $\mathscr{P}(G)$ as well as the edges in $E(T)$ and so this family is a clique covering for $G$ of size $8+\sum _{i=1}^3|T_i|=n-1$, a contradiction. This proves (1).\vsp
	
	(2) \textit{We have $|I_T|+|I_R|=3$.}\vsp
	
	Suppose not, let $|I_T|+|I_R|\leq 2$. By (1), $|I_T|=|I_R|=1$, say $I_T=\{p\}$ and $I_R=\{q\}$, for some $p,q \in \{1,2,3\}$. In $\mathscr{O}(G)$, merge the pairs $(X^{p+1}_{p},X^{p+2}_{p})$ and $(Z_{q+1}^{q},Z_{q+2}^{q})$ and add the cliques in $\mathscr{N}[T^p;R_q]\cup \{T^p\cup R^3,T_3\cup R_q\}$ to the resulting family, thereby obtaining a clique covering $\mathscr{C}$ for $G$ of size $12+|T^p|=n-|S|-|R_q|+3$. Thus $|S| \leq 2$. Now, if $p=q$, say $p=q=1$, then by \pref{lem:square-forcer}~(i) and (ii) (both applying to $(i,j,k)=(1,2,3)$ and $(i,j,k)=(1,3,2)$), all $s_2^1,s_3^1,s_1^2,s_1^3\in S$, a contradiction. Thus, $p\neq q$, say $p=1$ and $q=2$. Again by \pref{lem:square-forcer}~(i) (applying to $(i,j,k)=(1,2,3)$ and $(i,j,k)=(3,1,2)$), $s_1^2\in S$ and at least one of $s_1^3,s_3^2\in S$. Thus, $|S|=2$ (i.e. either $ S=\{s_1^2,s_1^3\}$ or $ S=\{s_1^2,s_3^2\} $). Now, in $\mathscr{C}$, remove the clique $X_3^1$, replace the cliques $X_2^1$ with the clique $X_2^1\cup S_3^2$ and replace the cliques in $\mathscr{N}[T^p;R_q]$ with the cliques in $\mathscr{N}[T^1;R_2\cup \{s_3^3,t_3^1\}]$. This yields a clique covering for $G$ of size $11+|T^1|=n-|R_2|\leq n-1$, a contradiction. This proves (2). \vsp
	
	By (1) and (2) and due to symmetry, we may assume that $I_T=\{1,2\}$ and $I_R=\{p\}$, for some $p \in \{1,2,3\}$. First, note that if $p \in \{1,2\}$, then by \pref{lem:square-forcer}~(ii) (applying to $(i,j,k)=(p,3-p,3)$), $s_{3-p }^p\in S$ (and so $|S|\geq 1$). Also, if $p=3$,  then by \pref{lem:square-forcer}~(ii) (applying to $(i,j,k)=(l,3-l,3)$), for every $l\in \{1,2\}$, $\{s_{3-l}^l,s_l^3\}\cap S\neq \emptyset$ (and so $|S|\geq 2$). Now, in $\mathscr{O}(G)$, merge the pair $(Z_{p+1}^{p},Z_{p +2}^p)$ and call the resulting collection $\mathscr{O}'(G)$. We prove the following claim.\vsp
	
	(3) \textit{There exists $i\in I_T$ such that every vertex in $T^{3-i}$ has a neighbour in $T^i$.}\vsp
	
	Suppose not, pick $t^1\in T^1$ with no neighbour in $T^2$. Thus $(t^1,t_3^3,T^2)$ is a quasi-triad of $G$ and by \pref{lem:T,R Facts and Tools}~(i), $|T^2|=1$, say $T^2=\{t^2\}$. Similarly, since $t^2$ has no neighbour in $T^1$, we have $T^1=\{t^1\}$. 
	Now, $\mathscr{C}=\mathscr{O}'(G)\cup \mathscr{N}[R_p;T]\cup  \{T^1\cup R^3,T^2\cup R^3,T_3\cup R_p\}$ is a clique covering for $G$ of size $14+|R_p|\leq n-|S|+3$. Thus, $ |S|\leq 3$. 
	
	Firstly, assume that $p=3$, and so $2\leq |S|\leq 3$. Now, if either $|S|=2$ or $S_3\neq \emptyset$, then $|\mathscr{P}(G)|\leq 11+|S|$. Thus, in $\mathscr{P}(G)$, replacing the cliques $\Omega ^ i$ with the clique $\Omega ^i\cup N(t^i,R)$ yields a clique covering for $G$ of size at most $11+|S|=n-|R_3|\leq n-1$, a contradiction. Also, if $|S|=3$ and  $S_3=\emptyset$, then $|\mathscr{P}(G)|\leq 12+|S|$. Thus, in $\mathscr{P}(G)$, replacing the cliques $\Omega ^ i$ with the clique $\Omega ^i\cup N(t^i,R)$ yields a clique covering for $G$ of size at most $12+|S|=n-|R_3|+1$. Hence, $|R_3|=1$,  and by \pref{lem:T,R Facts and Tools}~(vi), say $R_3$ is complete to $t^1$ and anticomplete to $t^2$. Now, remove the cliques $Z_3^1,Z_3^2$ as well as the single clique in $\mathscr{N}[R_3;T]$ from $\mathscr{C}$ and add the cliques $R_3\cup S_1^2\cup \{s_1^1,r_3^3\}$ and $R_3\cup S_2^1\cup \{s_2^2,t^1,r_3^3\}$, thereby obtaining a clique covering for $G$ of size $14=n-1$.
	
	Secondly, suppose that $ p\in\{1,2\} $, say $ p=1 $ and so $ s_2^1\in S $. 
	Now, if $|S|=|S_2^1|=1$, then in $\mathscr{P}(G)$, replace the clique $ \Omega_1 $ with the cliques in $ \mathscr{N}[R_1;T\cup \{r_1^3\}]$, thereby obtaining a clique covering of size $11+|R_1|=n-1$. If $|S|=2$, then in the case that $s_1^2\in S$, in the collection $ \mathscr{O}'(G)\cup \{T^1\cup R^3,T^2\cup R^3,T_3\cup R_1\} $, remove the clique $Z_1^3$, replace the cliques $Z_2^3$, $X_3^1$ and $X_3^2$ with the cliques $Z_2^3\cup \{r_1^3\}$, $X_3^1\cup N(t^1,R_1)$ and $X_3^2\cup N(t^2,R_1)$. Since by \pref{lem:T,R Facts and Tools}~(vi), every vertex in $R_1$ is either adjacent to $t^1$ or $t^2$, this yields to a clique covering for $G$ of size $13=n-|R_1|\leq n-1$. Also, in the case that $s_1^2\notin S$, we have $ |\mathscr{P}(G)|= 12+|S_3^1|$. Now, in $\mathscr{P}(G)$, and if $S_1=\emptyset$, then replace the clique $\Omega _1$ with the cliques in $\mathscr{N}[R_1;(\Omega _1\cup T)\setminus R_1]$, and if $S_1\neq \emptyset$ (i.e. $S_3^1=\emptyset$), then add the cliques in $\mathscr{N}[R_1;T]$, thereby obtaining a clique covering for $G$ of size at most $12+|R_1|=n-1$.
	
	Finally, in the case that $|S|=3$, if $s_3^1\notin S$, then removing the cliques in $\mathscr{N}[R_1;T]$ from $\mathscr{C}$ and replacing the clique $X_3^i$ with the cliques $X_3^i\cup N(t^i,R_1)$ for every $i\in \{1,2\}$, yields a clique covering for $G$ of size $14=n-|R_1|\leq n-1$. Thus, $s_3^1\in S$. Also, if $S^3=\emptyset$, then removing the cliques in $\mathscr{N}[R_1;T]$ from $\mathscr{C}$ and replacing the clique $Z_1^3$ with the cliques in  $\mathscr{N}[R_1;T\cup \{s_3^3,r_1^3\}]$ yields a clique covering for $G$ of size $13+|R_1|=n-1$. Thus, $S=\{s_2^1,s_3^1,s_j^3\}$, for some $j\in \{1,2\}$. Now, if $j=1$, then remove the clique $T^2\cup R^3$ from $\mathscr{C}$ and replace the cliques $Z_3^1,Z_3^2$ and $Z_2^3$ with the cliques $(Z_3^1\cup \{t^2\})\setminus S_2^1,Z_3^2\cup S_2^1$ and $Z_2^3\cup \{t^2,r_1^3\}$, and if $j=2$, then remove the clique $T^1\cup R^3$ from $\mathscr{C}$ and replace the cliques $Z_3^2$ and $Z_2^3$ with the cliques $Z_3^2\cup \{t^1,r_1^3\}$ and $Z_2^3\cup \{t^1\}$, thereby obtaining a clique covering for $G$ of size $13+|R_1|=n-1$. This proves (3).\vsp
	
	Now, (3) ensures that say every vertex in $ T^2 $ has a neighbour in $ T^1 $. Consequently, the collection $\mathscr{O}'(G)\cup \mathscr{N}[R_p;T]\cup \mathscr{N}[T^1;T^2\cup R^3]\cup \{T_3\cup R_p\}$ is a clique covering for $G$ of size $n-|S|-|T^2|+3$. Thus, we have $|S|+|T^2|\leq 3$. 
	
	First, suppose that $p=3$. Then $|S|=2$ and $|T^2|=1$,  say $T^2=\{t^2\}$. Let $X=N(t^2,T^1)$. Note that by the assumption, $|X|\geq 1$ and by \pref{lem:T,R Facts and Tools}~(iii), $|T^1\setminus X|\leq 1$. If $|T^1\setminus X|=1$, then in $\mathscr{P}(G)\cup \mathscr{N}[T^2;T^1]$, replace the clique $\Omega_3$ with the cliques in $\mathscr{N}[R_3;(\Omega _3\cup T)\setminus R_3]$, to obtain a clique covering  for $G$ of size at most $13+|R_3|=n-|X|\leq n-1$. If $T^1\setminus X=\emptyset$ and $|S^3|\leq 1$, then in $\mathscr{P}(G)$, replace the clique $\Omega_3$ with the cliques in $\mathscr{N}[R_3;(\Omega _3\cup T)\setminus R_3]$ and also choose $i\in \{1,2\}$ such that $s^3_i\notin S$ and replace the clique $\Delta _{3-i}^3$ with the clique $\Delta _{3-i}^3\cup T^i$, to obtain a clique covering for $G$ of size at most $12+|R_3|=n-|X|\leq n-1$. Also, if $T^1\setminus X=\emptyset$ and $|S^3|=2$, then in the family $\mathscr{O}'(G)\cup \mathscr{N}[R_3;T]\cup \{T_3\cup R_3\}$, in order to cover the edges in $E(T\cup R^3)$, replace the cliques $X_3^1,X_3^2,Z_2^1$ and $Z_1^2$ with the cliques $T\cup \{s_3^3,r_1^3,r_2^3\},\{s_3^3,t_3^1,t_3^2\},Z_2^1\cup T^2\cup \{r_3^3\}$ and $Z_1^2\cup T^1\cup \{r_3^3\}$, thereby having a clique covering for $G$ of size $12+|R_3|=n-|X|\leq n-1$. 
	
	Finally, suppose that $p \in \{1,2\}$, say $p =1$. Now, if $ |S|=|S_2^1|=1$, then in $\mathscr{P}(G)\cup \mathscr{N}[R_1;T]$, replace the cliques $\Delta_1^3$ and $\Delta_2^3$ with the cliques in $\mathscr{N}[T^1;\Delta_2^3] $ to obtain a clique covering for $G$ of size at most $10+|T^1|+|R_1|\leq n-1$. Also, if $|S|=2$, then $|T^2|=1$, say $T^2=\{t^2\}$. Again, let $X=N(t^2,T^1)$, where $|X|\geq 1$ and  $|T^1\setminus X|\leq 1$. Now, consider the collection $\mathscr{P}(G)$, in order to cover the edges in $E(T,R)$, if $S_1=\emptyset$, then replace the clique $\Omega _1$ with the cliques in  $\mathscr{N}[R_1;(\Omega _1\cup T)\setminus R_1]$ and if $S_1\neq \emptyset$, then add the cliques in $\mathscr{N}[R_1;T]$. Also, in order to cover the edges in $E(T^1,T^2)$, if $S^3=T\setminus X=\emptyset$, then merge the pair $(\Delta _1^3,\Delta _2^3)$, if $S^3=\emptyset$ and $T\setminus X\neq \emptyset$, then replace the clique $\Delta _2^3$ with the clique $\Delta _2^3\cup X$, if $S^3\neq \emptyset$ and $T\setminus X=\emptyset$, then choose $i\in \{1,2\}$ such that $s^3_i\notin S$, and replace the clique $\Delta _{3-i}^3$ with the clique $\Delta _{3-i}^3\cup T^i$, and if $S^3\neq \emptyset$ and $T\setminus X\neq \emptyset$, then add the clique $N[t^2;T^1]$. We leave the reader to check that this procedure yields a clique covering for $G$ of size at most $n-1$, a contradiction. This proves \pref{lem:|I_T|+|I_R|>=4}.
\end{proof}

\begin{lem}\label{lem:both clique}
	Let $G$ be a counterexample to \pref{thm:noap}. Then for every rotator $\rho$ of $\overline{G}$, either $T(\rho)$ or $R(\rho)$ is not a clique. 
\end{lem}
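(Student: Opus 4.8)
The statement asserts that in a counterexample $G$ to \pref{thm:noap}, no rotator $\rho$ can have both $T(\rho)$ and $R(\rho)$ forming cliques. I would argue by contradiction: suppose $T:=T(\rho)$ and $R:=R(\rho)$ are both cliques, and produce a clique covering of $G$ of size at most $n-1$ (and check that the twister does not arise), contradicting that $G$ is a counterexample. The key observation is that when $T$ and $R$ are both cliques, almost all of the ``hard'' edges in $\mathcal{E}(G)$ disappear: the only edges of $\mathcal E(G)$ that are not automatically handled are those in $E(T',R')\cup E(T_3)\cup E(R^3)$ together with a few between $T$, $R$, $T_3$, $R^3$. So the family $\mathscr{P}(G)$ from \pref{sub:Inc-rot} already covers $E(G)\setminus\mathcal E(G)$ and, by \pref{lem:O&P}(ii), also $E(T,R^3)\cup E(T_3,R)\cup E(T_3)\cup E(R^3)$; what remains to be dealt with are the edges in $E(T,R)$ and the edges of $T'$ and $R'$ incident with the hubs $t_3^i$, $r_i^3$.

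First I would invoke \pref{lem:|I_T|+|I_R|>=4}, so that $|I_T|+|I_R|\ge 4$; by symmetry (swapping the roles of $T$ and $R$, which \pref{lem:T,R Facts and Tools} permits) we may assume $|I_T|=3$, i.e. all three sets $T^1,T^2,T^3$ are nonempty. Since $T$ is a clique, $G[T]$ is a clique containing $T^1,T^2,T^3$, hence by \pref{lem:T,R Facts and Tools}(v) and the fact that $N_G(t,R)$ is a clique for every $t\in T$ (and $R$ is a clique), the bipartite adjacency between $T$ and $R$ is strongly constrained. Using \pref{lem:T,R Facts and Tools}(vi): any two nonadjacent vertices of $T$ would partition $R$ into two cliques — but here $T$ has no nonadjacent pair, so I would instead exploit the structure directly. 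Concretely, I would build the covering starting from $\mathscr{P}(G)$, then for each $i\in\{1,2,3\}$ enlarge the clique $\Omega^i=S^i\cup T^i\cup\{t_3^i\}$ (or the analogous $\Delta_j^i$) by adjoining $N_G(t,R)$ for a suitable $t\in T^i$, or — when $N_G(\cdot,R)$ is not uniform on $T^i$ — add cliques from $\mathscr{N}[T^i;R]$; since each such $\mathscr{N}[T^i;R]$ has size $|T^i|$ and we are saving the cliques $X_j^i,Z_j^i$ of $\mathscr{O}(G)$, a careful bookkeeping of $|\mathscr{P}(G)|\le 18$ against $n=9+|S|+|T|+|R|$ should leave slack. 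The cases $|I_R|\in\{1,2,3\}$ and the small-$|S|$ situations will need the square-forcer constraints \pref{lem:square-forcer} exactly as in the proof of \pref{lem:|I_T|+|I_R|>=4}: forbidding a square-forcer $(s_i^i,r_k^3)$ forces certain $S_j^i$ to be nonempty, which adds to $|S|$ and hence to $n$, absorbing the extra cliques.

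The main obstacle, I expect, will be the genuinely small cases — when $|S|$, some $|T^i|$, or some $|R_j|$ equals $0$ or $1$ — where the generic counting is tight and one must cover the edges among the six ``hub'' vertices $t_3^1,t_3^2,t_3^3,r_1^3,r_2^3,r_3^3$ and their links to $T,R,S$ using ad hoc merges and replacements of cliques in $\mathscr{O}(G)$, much as in the endgame of the previous lemma. In particular I would need to rule out (or identify) configurations producing exactly $n$ cliques and verify that none of them is (the complement of) a twister; since a twister has two disjoint triads while the rotator-plus-clique structure here typically has far more triad structure, this check should be routine but must be spelled out. I would also double-check that the symmetry reduction to $|I_T|=3$ is legitimate, i.e. that if instead $|I_R|=3$ the identical argument applies with $T\leftrightarrow R$, $t_3^i\leftrightarrow r_i^3$, $S^i\leftrightarrow S_i$ — this is exactly the symmetry noted parenthetically in \pref{lem:T,R Facts and Tools} and \pref{lem:square-forcer}, so no new work is required there.
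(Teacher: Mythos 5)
There is a genuine gap, and it sits at the very first reduction. From \pref{lem:|I_T|+|I_R|>=4} you only get $|I_T(\rho)|+|I_R(\rho)|\geq 4$; this does \emph{not} let you assume $|I_T(\rho)|=3$ "by symmetry", because the case $|I_T(\rho)|=|I_R(\rho)|=2$ survives and no swap of $T$ and $R$ removes it. Worse, that is precisely the essential case: the paper's proof of this lemma shows (its statements (1)--(4)) that all configurations with $|I_T|+|I_R|\geq 5$, or with $|S|\neq 3$, or with $|T|+|R|\geq 5$, are disposed of by counting arguments based on $\mathscr{C}=\mathscr{O}(G)\cup\{T\cup R^3,\,T_3\cup R\}$ and $\mathscr{P}(G)$ together with \pref{lem:square-forcer}; what remains is exactly $|I_T|=|I_R|=2$, $|T|=|R|=2$, $|S|=3$, with $T$ neither complete nor anticomplete to $R$. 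That residual case is then killed by a claw-freeness argument (each $r\in R$ is complete or anticomplete to $T$, forcing a missing $s^p_{i_0}$, which frees one clique) yielding a covering of size exactly $n-1$. Your plan never reaches this configuration, so the contradiction is not established.

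Two smaller issues. First, your accounting of what $\mathscr{P}(G)$ leaves uncovered is off: by \pref{lem:O&P}(ii) the edges incident with the hubs $t_3^i,r_i^3$ \emph{are} covered; what remains are the cross edges $E(T^i,T^j)$, $E(R_i,R_j)$ ($i\neq j$) and $E(T,R)$ — and when $T,R$ are cliques the first two families are nontrivial and must still be paid for. Second, your intended use of \pref{lem:T,R Facts and Tools}(vi) is unavailable here, as you yourself note ($T$ has no nonadjacent pair), yet that is the main tool for controlling $E(T,R)$ elsewhere in the paper; the replacement (the "complete or anticomplete" dichotomy via explicit claws using the vertices $s^i_j\in S$) is exactly the ingredient your sketch defers to "careful bookkeeping", and it is where the content of the proof lies. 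The high-level strategy — contradiction via a covering of size $\leq n-1$ built from $\mathscr{O}(G)$/$\mathscr{P}(G)$ plus square-forcer constraints — matches the paper, but as written the proposal does not close the argument.
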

\begin{proof}
	On the contrary, assume that both $T$ and $R$ are cliques of $G$. Note that the cliques in  $\mathscr{C}=\mathscr{O}(G)\cup \{T\cup R^3,T_3\cup R\}$ cover all the edges in $E(G)\setminus E(T,R)$. \vsp
	
	(1) \textit{We have $|S|\geq 2$.}\vsp
	
	Suppose not, let $|S|\leq 1$. Let $\{k,k',k''\}=\{1,2,3\}$ and $S= S_{k'}^k$. Considering $\mathscr{P}(G)$, for every $i\in\{1,2,3\}\setminus \{k\}$ (resp. $i\in \{1,2,3\}\setminus \{k'\}$), if both $\Delta^i_{i+1}, \Delta^i_{i+2}\in \mathscr{P}(G)$ (resp. $\nabla^{i}_{i+1},\nabla_{i+2}^{i}\in \mathscr{P}(G)$), then merge the pair $(\Delta^i_{i+1}, \Delta^i_{i+2})$ (resp. $(\nabla^{i}_{i+1},\nabla_{i+2}^{i})$) and if $T^{k'}$ (resp. $R_{k}$) is nonempty, then replace the clique $\Delta^k_{k'}$ (resp. $\nabla_{k}^{k'}$) with the clique $\Delta^k_{k'}\cup T^{k''}$ (resp. $\nabla_{k}^{k'}\cup R_{k''}$), thereby covering all the edge in $E(T)\cup E(R)$. Also, in order to cover the edges in $E(T,R)$, for every $j\in I_T\setminus \{k\}$, replace the clique $\Omega^j$ with the cliques in $\mathscr{N}[T^j;(\Omega ^j \cup R)\setminus T^j]$ and for every $j\in I_R$, replace the clique $\Omega_j$ with the cliques in $\mathscr{N}[R_j;(\Omega _j \cup T^k)\setminus R_j]$. This provides a clique covering for $G$ of size at most $8+|T|+|R|\leq n-1$. This proves (1). \vsp
	
	(2)\textit{ We have $|I_T|, |I_R|\geq 2$, and thus $|T|,|R|\geq 2$. }\vsp
	
	Suppose not, w.l.o.g. let $|I_R|\leq 1$. By \pref{lem:|I_T|+|I_R|>=4}, $|I_T|=3$ and $I_R=\{p\}$, for some $p\in \{1,2,3\}$. Now, in $\mathscr{C}$, merge the pair $(Z_{p+1}^p, Z_{p+2}^p)$ and add the cliques in $\mathscr{N}[R_{p};T]$ to obtain a clique covering for $G$ of size $13+|R_p|=n-|S|-|T|+4$, which by (1) is at most $n-1$, a contradiction. This proves (2). \vsp
	
	(3) \textit{Let $\{k,k',k''\}=\{1,2,3\}$. If $k,k'\in I_T$, then  $S^{k''}\neq \emptyset$. Similarly,  if $k,k'\in I_R$, then $S_{k''}\neq \emptyset$.} \vsp
	
	Suppose not, by symmetry, assume that $1,2\in I_T$ and $S^3$ is empty. Considering $\mathscr{C}$, for every $j\in I_R\cap \{1,2\}$, replace the clique $Z_j^3$ with the cliques in $\mathscr{N}[R_j;T^1\cup T^2\cup \{s_3^3,r_j^3\}]$ and for every $j\in\{1,2\}$, replace the clique $X^j_{3-j}$ with the cliques in  $\mathscr{N}[T^{j};R_3\cup S_{3-j}\cup \{s_{3-j}^{3-j},t_3^{j}\}]$, thereby  covering all the edges in $E(T^1\cup T^2, R)$. Now, add the cliques in $\mathscr{N}[T^3,R]$ to the resulting family to obtain a clique covering $\mathscr{C}'$ for $G$. If $\{1,2\}\subseteq I_R$, then by (1), $|\mathscr{C}'|=n-|S|-|R_3|+1\leq n-1$. Otherwise, by (2), $I_R=\{j,3\}$, for some $j\in\{1,2\}$ and by (1), $|\mathscr{C}'|=n-|S|-|R_3|+2\leq n-1$, a contradiction. This proves (3). \vsp
	
	(4) \textit{We have $|S|=3$, $|I_T|=|T|=|I_R|=|R|=2$. Also, $T$ is neither complete nor anticomplete to $R$.}\vsp
	
	By symmetry, assume that $|T|\leq |R|$. Note that the collection $\mathscr{C}\cup \mathscr{N}[T;R]$ is a clique covering for $G$ of size $n-|S|-|R|+5$. Thus, $|S|+|R|\leq 5$. Consequently, by (1) and (2), we have $2\leq |S|\leq 3$. Now, if  either $|I_T|=3$ or $|I_R|=3$, then by (3), $|S|\geq 3$, and so $|S|+|R|\geq 6$, which is impossible. Hence, by (2), $|I_T|=|I_R|=2$, and w.l.o.g. let $I_T=\{1,2\}$ and $I_R=\{1,p\}$, for some $p\in\{2,3\}$. Note that by \pref{lem:square-forcer}~(i) (applying to $(i,j,k)=(1,p,5-p)$), $S_2^1\cup S_1^p\neq \emptyset$.  Now, in order to prove that $|S|=3$, note that if $p=2$, then by (3), both $S^3,S_3\neq \emptyset$ and so $|S|=3$. Also, if $p=3$ and $|S|=2$, then since by (3) both $S^3,S_2\neq \emptyset$, we have $S\subseteq S_2^1\cup S_1^3\cup S_2^3$. Now, considering $\mathscr{C}$, replace the cliques $X_3^1$ and $X_2^1$ with the clique $X_3^1\cup S^3_2$ and the cliques in $\mathscr{N}[T^1;R_3\cup S^1_2\cup \{s_2^2,t_3^1\}]$ to cover the edges in $E(T^1,R_3)$. Also, replace the cliques $Z_3^2$ and $Z_3^1$ with the clique $Z_3^2\cup S^1_2$ and the cliques in $\mathscr{N}[R_3;T^2\cup \{s^1_1,r_3^3\}]$ to cover the edges in $E(T^2,R_3)$. Moreover, replace the clique $X_3^2$ with the cliques in $\mathscr{N}[T^2;R_1\cup \{s_3^3,t_3^2\}]$ to cover the edges in $E(T^2,R_1)$ and replace the clique $Z_1^2$ with the cliques in $\mathscr{N}[R_1;T^1\cup \{s_2^2,r_1^3\}]$ to cover the edges in $E(T^1,R_1)$. This yields a clique covering for $G$ of size $10+|T|+|R|=n-1$, a contradiction. Therefore, $|S|=3$. This, together with (2) and inequalities $|T|\leq |R|$ and $|S|+|R|\leq 5$, implies that $|I_T|=|T|=|I_R|=|R|=2$. Finally, note that if $T$ is either complete or anticomplete to $R$, then either $\mathscr{C}\cup \{T\cup R\}$ or $\mathscr{C}$ would be a clique covering for $G$ of size at most $15=n-1$, a contradiction. This proves (4).\vsp
	
	Henceforth, by (4) and w.l.o.g. we may assume that $I_T=\{1,2\}$, $I_R=\{1,p\}$, for some $p\in\{1,2,3\}$, $T^i=\{t^i\}$ and $R_j=\{r_j\}$, for every $i\in \{1,2\}$ and $j\in \{1,p\}$. Now, by \pref{lem:square-forcer}, $ S^1_2\cup S^p_1\neq \emptyset $ and w.l.o.g. we may assume that $S_2^1\neq \emptyset$ (note that if $S_2^1=\emptyset$, then $S_1^p\neq \emptyset$ and we may consider the rotator $\rho' =(s_1^1,s_p^p,s_{5-p}^{5-p}; r_1^3,r_p^3,r_{5-p}^3; t_3^1,t_3^p,t_3^{5-p})$, where $S^1_2(\rho ')\neq \emptyset$). We observe that $r_1$ is either complete or anticomplete to $T$ (otherwise, if $t^ir_1$ is an edge and $t^{3-i}r_1$ is a non-edge, then $\{s_{3-i}^1,t^1,t^2,r_1\}$ induces a claw). Now, if $r_1$ is anticomplete to $T$, then so is $r_{p}$ (otherwise, if  $t^ir_{p}$  is an edge, for some $i\in\{1,2\}$, then $\{r_p,s_i^1,t^i,r_1\}$ induces a claw) and thus $T$ is anticomplete to $R$, which contradicts (4). Thus, $r_1$ is complete to $T$. Note that by (4), $r_p$ is nonadjacent to $t^{i_0}$, for some ${i_0}\in \{1,2\}$. Thus,  $s^p_{i_0}\notin S$  (otherwise,  $\{r_1, s_{i_0}^p,t^{i_0},r_p\}$ would be a claw). Now, add the clique $T\cup R_1$ to $\mathscr{C}$, and if $r_p$ is adjacent to $t^{3-{i_0}}$, then replace the clique $X_{{i_0}}^{3-{i_0}}$ with the clique $X^{3-{i_0}}_{i_0}\cup \{r_p\}$ to cover the edge $t^{3-i_0}r_p$, thereby obtaining a clique covering for $G$ of size $15=n-|S|+2$, which by (4) is equal to $n-1$, a contradiction. This proves \pref{lem:both clique}.
\end{proof}

\begin{lem}\label{lem:one is complete to R}
	Let $G$ be a counterexample to \pref{thm:noap} with  $|I_T(\rho)|,|I_R(\rho)|\geq 2$, for some rotator $\rho$ of $\overline{G}$,  say $\{i,j\}\subseteq I_T(\rho)$ for some $i\neq j$. Also, let $t^i\in T^i(\rho)$ is nonadjacent to $t^j\in T^j(\rho)$ and $R(\rho)$ is a clique. If one of the following holds, then exactly one of $t^i$ and $t^j$ is complete to $R(\rho)$ and the other one is anticomplete to $R(\rho)$.\\
	{\rm (i)} $i\in I_R(\rho)$ and $S_{j}^i(\rho)\neq \emptyset$.\\
	{\rm (ii)} There exists $l\in I_R(\rho)\setminus \{i,j\}$ such that for every $l'\in I_R(\rho)\cap \{i,j\}$, $S_{l'}^l(\rho)\neq \emptyset$.\\
	The same statement holds when $T$ is replaced with $R$ and the subscripts and the superscripts are interchanged.
\end{lem}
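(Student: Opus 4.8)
The plan is to exploit the fact that $G$ is a \emph{counterexample} to \pref{thm:noap}: by the earlier lemmas, every rotator $\rho$ of $\overline{G}$ satisfies $|I_T(\rho)|+|I_R(\rho)|\geq 4$ (\pref{lem:|I_T|+|I_R|>=4}), not both $T(\rho)$ and $R(\rho)$ are cliques (\pref{lem:both clique}), and the collections $\mathscr{O}(G),\mathscr{P}(G)$ cover $E(G)\setminus\mathcal{E}(G)$ together with the extra edges recorded in \pref{lem:O&P}. So the strategy for \pref{lem:one is complete to R} is the usual one in this section: assume the conclusion fails, and in each case build a clique covering of $G$ of size at most $n-1$, contradicting the fact that $G$ is a counterexample. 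First I would dispose of the trivial claw-freeness constraint: since $R$ is a clique and $t^i,t^j$ are nonadjacent with $s_{?}^{?}$-vertices around them, for any $r\in R$ the set $\{r,x,t^i,t^j\}$ being a claw (where $x$ is a common neighbour of $t^i,t^j$ coming from the relevant nonempty $S$-set) forces $r$ to be adjacent to at most one of $t^i,t^j$; dually, using \pref{lem:T,R Facts and Tools}~(v) applied to each $t\in T$, the set $N_G(t,R)$ is a clique, and since $R$ is a clique this gives no restriction directly, so the real leverage is that $t^i$ (resp.\ $t^j$) is \emph{either complete or anticomplete} to $R$: indeed if $t^i$ had a neighbour $r$ and a non-neighbour $r'$ in $R$, then $r,r'$ adjacent (as $R$ is a clique), and one checks $\{t^i, r, r', s\}$ or an analogous $4$-set (with $s$ a suitable vertex from the nonempty $S$-set, complete to $t^i$ and to $R$ via \pref{lem:T,R Facts and Tools}~(ix)) is a claw in $G$, a contradiction. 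That gives the dichotomy ``complete or anticomplete'' for each of $t^i,t^j$ separately.

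With that dichotomy in hand, the conclusion can only fail in one of three ways: (a) both $t^i$ and $t^j$ are complete to $R$; (b) both are anticomplete to $R$; (c) the two ``complete/anticomplete'' labels are not opposite but one of them is forced the wrong way --- but since each is either complete or anticomplete, failing ``exactly one is complete and the other anticomplete'' means precisely (a) or (b). So I only need to rule out (a) and (b) under hypothesis (i), and then again under hypothesis (ii); the statement with $T,R$ interchanged is symmetric and needs no separate argument. In case (b), both $t^i,t^j$ are anticomplete to $R$, so the edges $E(\{t^i,t^j\},R)$ are empty and the relevant portion of $\mathcal{E}(G)$ shrinks; I would take the baseline covering $\mathscr{O}(G)\cup\{T'\cup R^3\text{-type cliques}\}$, merge the $Z$-cliques indexed at the missing $R$-coordinate, and add $\mathscr{N}[R;T]$-type cliques only for the vertices of $T$ not equal to $t^i,t^j$; because $S_j^i\neq\emptyset$ (hypothesis (i)) or the three $S^l_{l'}$ sets are nonempty (hypothesis (ii)), the bookkeeping gains enough singleton-clique savings to bring the total to $\le n-1$, exactly as in the proofs of \pref{lem:|I_T|+|I_R|>=4} and \pref{lem:both clique}. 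In case (a), both $t^i,t^j$ complete to $R$, I would instead enlarge the $X$-cliques $X^i_{\cdot},X^j_{\cdot}$ (or the $\Omega^i,\Omega^j$ cliques in $\mathscr{P}(G)$) by adjoining $R$, absorbing the edges $E(\{t^i,t^j\},R)$ for free, and then the nonemptiness of the prescribed $S$-sets again yields the savings needed to reach $n-1$.

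The main obstacle I expect is the same as everywhere in this section: keeping the clique-covering size accounting airtight while many of the twelve/eighteen baseline cliques are being merged, deleted, or augmented, and making sure that after all substitutions every edge of $\mathcal{E}(G)$ --- especially the edges inside $E(T^i,T^j)$, $E(T',R')$, and the triangles $E(T_3),E(R^3)$ --- is still covered. In particular one must use \pref{lem:T,R Facts and Tools}~(iii),(iv),(vi) to handle the non-neighbour $t^j$ of $t^i$: the common-non-neighbour structure of \pref{lem:T,R Facts and Tools}~(vi) is what pins down $N(t^i,R)$ versus $N(t^j,R)$ and is the reason the dichotomy ``exactly one complete'' should hold generically; the content of the lemma is exactly that under hypotheses (i) or (ii) the degenerate possibilities (a),(b) are excluded. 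I would organise the write-up as two short blocks, one assuming (i) and one assuming (ii), each splitting into the sub-cases (a) and (b), and in every sub-case exhibit the modified collection and a one-line size estimate of the form ``$|\mathscr{C}|\le n - |S_j^i| - (\text{something}) + (\text{constant}) \le n-1$''. No genuinely new idea beyond the toolkit already assembled in \pref{lem:T,R Facts and Tools}, \pref{lem:O&P}, and \pref{lem:square-forcer} should be required.
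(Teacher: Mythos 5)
There is a genuine gap, and it sits exactly where the lemma's content lives. The whole point of \pref{lem:one is complete to R} is to show that, under hypothesis (i) or (ii), each of $t^i,t^j$ is \emph{either complete or anticomplete} to $R$; once that dichotomy is in hand, \pref{lem:T,R Facts and Tools}~(vi) finishes immediately, because $N(t^i,R)$ and $N(t^j,R)$ partition the nonempty set $R$, so "both complete" and "both anticomplete" are vacuously impossible and no clique-covering contradiction is needed. Your proposal inverts this: you assert the dichotomy in one sentence and then spend the bulk of the argument building coverings of size $n-1$ to exclude cases (a) and (b), which are already ruled out by (vi) and $R\neq\emptyset$. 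Meanwhile the actual failure mode — a mixed neighbourhood $\emptyset\subsetneq N(t^i,R)\subsetneq R$, with $t^j$ picking up the complementary clique — is never confronted.

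The sentence that is supposed to carry the dichotomy does not work. You take $r\in N(t^i,R)$, $r'\in R\setminus N(t^i,R)$ and a vertex $s$ from the hypothesised nonempty $S$-set that is "complete to $t^i$ and to $R$ via \pref{lem:T,R Facts and Tools}~(ix)", and claim $\{t^i,r,r',s\}$ is a claw. Check the candidate centres: $t^i$ is nonadjacent to $r'$; $s$ would need $t^i,r,r'$ pairwise nonadjacent, but $rr'\in E(G)$ since $R$ is a clique; $r$ would need $s$ nonadjacent to $t^i$, contradicting your choice of $s$; $r'$ is nonadjacent to $t^i$. So no centre works, and (ix) in any case only gives $s_j^i$ complete to $R_j$, not to all of $R$. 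The paper's proof is a short purely structural argument with no coverings at all: it splits on whether $t^i$ is adjacent to $R_i$ or not, and in each branch exhibits a claw centred at a vertex of $R\setminus R_i$ whose leaves include the rotator vertex $s_i^i$ (which is nonadjacent to both $t^i$ and $R_i$) or the vertex $s_j^i$ together with $t^j$ (whose adjacencies to $R$ are pinned down by (vi)). Those are the configurations your write-up would need, and they are qualitatively different from the four-set you propose; as it stands the key step is unproved.
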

\begin{proof}
	First, assume that (i) holds. Note that if $t^i$ is nonadjacent to some $r_i\in R_i$, then $t^i$ is anticomplete to $R\setminus R_i$ (otherwise $\{r,s_i^i,t^i,r_i\}$ is a claw, for some $r\in R\setminus R_i$ adjacent to $t^i$). Consequently, $t^i$ is anticomplete to $R_i$ (otherwise by \pref{lem:T,R Facts and Tools}~(vi), $\{r,s_{j}^i,t^{j},r'_i\}$ would be a claw for every $r'_i\in R_i$ adjacent to $t^i$ and every $r\in R\setminus R_i$). Therefore, $t^i$ is anticomplete to $R$ and by \pref{lem:T,R Facts and Tools}~(vi), $t^j$ is complete to $R$, as desired. Now, if $t^i$ is complete to $R_i$, then so is to $R\setminus R_i$ (otherwise $\{r,s_{j}^i,t^{j},r_i\}$ would be a claw for some $r\in R\setminus R_i$ and every $r_i\in R_i$). Thus, $t^i$ is complete to $R$ and so by \pref{lem:T,R Facts and Tools}~(vi), $t^j$ is anticomplete to $R$.
	
	Next, suppose that (ii) holds, and w.l.o.g. assume that  $i\in I_R$.  Note that if $t^i$ is nonadjacent to some $r_i\in R_i$, then $t^i$ is anticomplete to $R\setminus R_i$ (otherwise $\{r,s_i^i,t^i,r_i\}$ is a claw, for some $r\in R\setminus R_i$). Consequently, $t^i$ is anticomplete to $R_i$ (otherwise $\{r'_i,s_{i}^l,t^{i},r\}$ would be a claw for every $r'_i\in R_i$ adjacent to $t^i$ and every $r\in R_l$), and thus anticomplete to $R$, as required. Now, if $t^i$ is complete to $R_i$, then $t^i$ is also  complete to $R_l$ (otherwise, $\{r_i,s_{i}^l,t^{i},r_l\}$ would be a claw for some $r_l\in R_l$ nonadjacent to $t^i$ and every $r_i\in R_i$) and so complete to $R_j$ (otherwise,  $\{r_j,s_{j}^l,t^{j},r_l\}$ would be a claw for some $r_j\in R_j$ nonadjacent to $t^i$ and every $r_l\in R_l$), as desired. This proves \pref{lem:one is complete to R}.
\end{proof}

\subsection{Types of $G[T]$ and $G[R]$}\label{sub:types}
In order to construct appropriate clique coverings for $G$, we need to know more about the structure of $G[T]$ and $G[R]$. Let us begin with a definition (all definitions and theorems here are stated in terms of $G[T]$ and the analogous ones are also valid for $G[R]$, where $T$ is replaced with $R$ and the subscripts and superscripts are interchanged).

Note that if $|I_T(\rho)|\leq 1$, for some rotator $\rho$ of $\overline{G}$, then $T(\rho)$ is a clique of $G$. Now, assume that $|I_T(\rho)|=2$, say $I_T(\rho)=\{q,q'\}$.
We say that $G[T(\rho)]$ is of \textit{type $1$}, if there exist nonadjacent vertices $t^q\in T^q(\rho)$ and $t^{q'}\in T^{q'}(\rho)$ such that $t^{q}$ is complete to $U_q(\rho)=T^{q'}(\rho)\setminus \{t^{q'}\}$ and  $t^{q'}$ is complete to $U_{q'}(\rho)=T^{q}(\rho)\setminus \{t^{q}\}$. If $U_q(\rho)$ (resp. $U_{q'}(\rho)$) is empty, then we say that $G[T(\rho)]$ is of \textit{type $1.1$ at $q$ (resp. $q'$)}. If both $U_q(\rho)$ and $U_{q'}(\rho)$ are nonempty, then we say that $G[T(\rho)]$ is of \textit{type $1.2$}. The following lemma more or less reveals the structure of $G[T(\rho)]$ in case of $|I_T(\rho)|=2$. The proof is obvious and left to the reader.

\begin{lem}\label{lem:G[T] structure |I_T|=2}
	Let $G$ be a counterexample to \pref{thm:noap} with $|I_T(\rho)|=2$, for some rotator $\rho$ of $\overline{G}$. Then, either $T(\rho)$ is a clique or $G[T(\rho)]$ is of type $1$.
\end{lem}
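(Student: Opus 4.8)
The plan is to show that the only way $T(\rho)$ can fail to be a clique is the configuration described by type $1$, and that this is essentially a direct consequence of parts (ii) and (iii) of \pref{lem:T,R Facts and Tools}. First I would fix notation: write $I_T(\rho)=\{q,q'\}$, so that, since $T^i(\rho)=\emptyset$ for the third index $i$, we have $T(\rho)=T^q(\rho)\cup T^{q'}(\rho)$. By \pref{lem:T,R Facts and Tools}(ii), both $T^q(\rho)$ and $T^{q'}(\rho)$ are cliques of $G$; hence if $T(\rho)$ is not a clique, every non-edge of $G[T(\rho)]$ must have one endpoint in $T^q(\rho)$ and the other in $T^{q'}(\rho)$. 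So I may pick nonadjacent vertices $t^q\in T^q(\rho)$ and $t^{q'}\in T^{q'}(\rho)$.

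The key step is then the ``at most one non-neighbour'' property. Applying \pref{lem:T,R Facts and Tools}(iii) to the pair $(q,q')\in J$, the vertex $t^q$ has at most one non-neighbour in $T^{q'}(\rho)$; since $t^{q'}$ is already such a non-neighbour, $t^q$ is complete to $T^{q'}(\rho)\setminus\{t^{q'}\}=U_q(\rho)$. Applying the same lemma to the pair $(q',q)\in J$ gives, symmetrically, that $t^{q'}$ is complete to $T^{q}(\rho)\setminus\{t^{q}\}=U_{q'}(\rho)$. These are exactly the two conditions in the definition of $G[T(\rho)]$ being of type $1$, and it refines to type $1.1$ at $q$ (resp.\ at $q'$), or to type $1.2$, according to whether $U_q(\rho)$ is empty, $U_{q'}(\rho)$ is empty, or both are nonempty. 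This completes the argument for $G[T(\rho)]$, and the statement for $G[R(\rho)]$ follows verbatim by interchanging $T$ with $R$ and the subscripts with the superscripts, using the versions of \pref{lem:T,R Facts and Tools}(ii),(iii) valid with $T,R$ exchanged.

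I do not expect any genuine obstacle here: the entire content of the lemma is already packaged in the bookkeeping facts established for the decomposition $\mathcal{D}(\rho)$ corresponding to a rotator. The only two points requiring a line of care are (a) the observation that, because $T^q(\rho)$ and $T^{q'}(\rho)$ are cliques whose union is $T(\rho)$, a non-edge of $G[T(\rho)]$ is automatically a non-edge between the two parts, so that a candidate pair $t^q,t^{q'}$ genuinely exists when $T(\rho)$ is not a clique; and (b) that the definition of type $1$ demands nothing beyond the two completeness conditions we exhibit (note in particular that each $U_q(\rho)$, being a subset of the clique $T^{q'}(\rho)$, is automatically a clique, so no extra verification is needed).
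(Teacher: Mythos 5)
Your proof is correct and is exactly the argument the paper has in mind: the paper states only that ``the proof is obvious and left to the reader,'' and the intended content is precisely your two applications of \pref{lem:T,R Facts and Tools}(ii) and (iii) to locate a cross non-edge $t^qt^{q'}$ and deduce the two completeness conditions defining type $1$. No gaps; nothing further is needed.
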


Next, suppose that $|I_T(\rho)|=3$. Let ${q}\in \{1,2,3\}$. We say that $G[T(\rho)]$ is of \textit{type $2$ at $q$}, if $T^{q'}(\rho)=\{t^{q'}\}$ for every $q'\in \{1,2,3\}\setminus \{q\}$ and there exists $t^{q}\in T^{q}(\rho)$ which is anticomplete to $ T(\rho)\setminus T^q(\rho) $ and $ T(\rho)\setminus\{t^q\} $ is a clique. 
Moreover, we say that $G[T(\rho)]$ is of \textit{type $3$ at $q$}, if $T^{q}(\rho)=\{t_1^{q},t_2^{q}\}$ and there exists $t_1^{q+1}\in T^{q +1}(\rho)$ and $t_2^{q+2}\in T^{q+2}(\rho)$ such that both $T_2^{q +1}(\rho)= T^{q +1}(\rho)\setminus \{t_1^{q +1}\}$ and $T_1^{q +2}(\rho)= T^{q+2}(\rho)\setminus \{t_2^{q +2}\}$ have cardinality at most one (this implies that $ 4\leq |T|\leq 6 $ and we denote the unique possible members of $ T_2^{q+1}(\rho) $ and $ T_1^{q+2}(\rho) $ by $ t_2^{q+1} $ and $ t_1^{q+2} $, respectively), the sets $T_1(\rho)=\{t_1^{q},t_1^{q +1}\}\cup T_1^{q +2}(\rho)$ and $T_2(\rho)=\{t_2^q,t_2^{q +2}\}\cup T_2^{q +1}(\rho)$ are both cliques of $G$ and there are no more edges in $E(T(\rho))$ except the edges in $E(T^1(\rho))\cup E(T^2(\rho))\cup E(T^3(\rho))$. Now, we state the following lemma, which discloses the structure of $G[T(\rho)]$ when $|I_T(\rho)|=3$.

\begin{lem}\label{lem:G[T] structure |I_T|=3}
	Let $G$ be a counterexample to \pref{thm:noap} with $|I_T(\rho)|=3$, for some rotator $\rho$ of $\overline{G}$. Then either $T(\rho)$ is a clique or $G[T(\rho)]$ is of type $\tau $ at $q$, for some $\tau \in \{2,3\}$ and some $q \in \{1,2,3\}$.
\end{lem}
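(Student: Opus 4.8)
The plan is to analyze the non-edges of $G[T(\rho)]$; the argument for $G[R(\rho)]$ is symmetric, interchanging sub- and superscripts throughout, so I suppress $\rho$. By \pref{lem:T,R Facts and Tools}(ii) each of $T^1,T^2,T^3$ is a clique, and since $|I_T|=3$ all three are non-empty; by \pref{lem:T,R Facts and Tools}(iv), $G[T]$ is triad-free. If $T$ is a clique we are in the first alternative, so assume $G[T]$ has a non-edge; as every $T^i$ is a clique, this non-edge runs between two distinct parts.

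The key observation $(\ast)$, extracted from \pref{lem:T,R Facts and Tools}(iii)--(iv): if $xy$ is a non-edge with $x\in T^i$, $y\in T^j$ and $\{i,j,k\}=\{1,2,3\}$, then no vertex of $T^k$ is adjacent to both $x$ and $y$ (for such a vertex $z\in T^k$, the clique $N_G(z,T\setminus T^k)$ would contain the non-edge $xy$), so $T^k$ is the disjoint union of the non-neighbours of $x$ in $T^k$ and those of $y$ in $T^k$; these sets are disjoint since $G[T]$ is triad-free and each has size at most one by \pref{lem:T,R Facts and Tools}(iii). Hence $|T^k|\le 2$, and if $|T^k|=2$ its two vertices are exactly the unique $T^k$-non-neighbour of $x$ and the unique one of $y$.

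Next I would split according to which of the pairs $\{1,2\},\{1,3\},\{2,3\}$ carry a non-edge (at least one does). If they all lie in one pair, say $\{1,2\}$, then $T^3$ is complete to $T^1\cup T^2$, contradicting $(\ast)$ applied to a $T^1$--$T^2$ non-edge (which forces $T^3=\emptyset$). If exactly two pairs carry non-edges, they share a common part, say $T^1$, and $T^2$ is complete to $T^3$; then $(\ast)$ forces $|T^2|=|T^3|=1$, a common vertex $t^1\in T^1$ is the (unique) non-neighbour of both singletons, and $T^1\setminus\{t^1\}$ is complete to $T^2\cup T^3$ --- this is precisely type $2$ at $1$. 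If all three pairs carry non-edges, $(\ast)$ gives $|T^1|,|T^2|,|T^3|\le 2$, and they cannot all be $1$ (three singletons with all three pairwise non-edges form a triad). Running through the remaining size patterns $(2,1,1)$, $(2,2,1)$, $(2,2,2)$ up to relabelling, a short deduction from $(\ast)$ --- applied to one non-edge of each colour --- pins down every remaining edge of $G[T]$, and one finds that the non-edge graph is a $4$-vertex path, a $5$-vertex path, and a $6$-cycle respectively, which one checks to be type $3$ at the part of size $2$ (any of the three in the last case). Configurations not matching type $3$ (a single cross non-edge, a perfect matching on six vertices, a $6$-vertex path, a shorter path plus an extra edge) are each eliminated directly by $(\ast)$, so no new alternatives arise.

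The main obstacle is the bookkeeping in the last case: the definition of type $3$ is rigid --- two prescribed cliques $T_1,T_2$ meeting prescribed parts, with \emph{no} further edges inside $T$ --- so after $(\ast)$ has narrowed down the non-edge graph one must still exhibit the partition $T=T_1\sqcup T_2$, choose the index $q$ correctly, and verify the ``no more edges'' clause, all while keeping the cyclic $\{1,2,3\}$-indexing straight. No appeal to \pref{lem:square-forcer} or to part (vii) of \pref{lem:T,R Facts and Tools} appears necessary; parts (ii), (iii), (iv) suffice.
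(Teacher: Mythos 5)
Your proof is correct, and it takes a genuinely different route from the paper's. The paper splits on whether some part $T^q(\rho)$ is ``dominating'' (every vertex of $T\setminus T^q$ has a neighbour in $T^q$): in the non-dominating case it uses quasi-triads and \pref{lem:T,R Facts and Tools}(i) to force all three parts to be singletons (giving type $2$), and in the dominating case it works locally around a single non-edge $t^1t^2$, introducing the neighbourhood sets $A=N(t^1,T^2)$, $B=N(t^1,T^3)$, $C=N(t^2,T^1)$, $D=N(t^2,T^3)$ and again invoking part (i) --- via quasi-triads that involve the vertices $t_3^i$ \emph{outside} $T$ --- to get $|A|,|B|,|D|\leq 1$; the two alternatives then correspond to whether $A\cup B$ is empty. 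You instead classify by which of the three pairs of parts carry a cross non-edge, and your single observation $(\ast)$ (no vertex of the third part is adjacent to both ends of a cross non-edge, by part (iv), combined with the at-most-one-non-neighbour bound of part (iii)) does all the work: it rules out the one-pair case, yields type $2$ in the two-pair case (the extra application of $(\ast)$ needed to see that the two singletons share their unique $T^1$-non-neighbour is implicit but does go through), and in the three-pair case caps every part at size $2$ and then pins down the non-edge pattern in each of the profiles $(2,1,1)$, $(2,2,1)$, $(2,2,2)$; I checked that each resulting configuration is indeed type $3$ at a part of size $2$. What each approach buys: yours stays entirely inside $G[T]$ and needs only parts (ii)--(iv) of \pref{lem:T,R Facts and Tools}, avoiding the quasi-triad machinery and the ambient vertices $t_3^i$ altogether, at the cost of a three-way size-profile check; the paper's is more compact on the page because its statement (1) packages the whole structure around one non-edge, but it leans on part (i).
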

\begin{proof}
	In the proof, we omit the term $\rho$. Suppose that $T$ is not a clique.
	First, assume that for every $q \in \{1,2,3\}$, there is $q '\in \{1,2,3\}\setminus \{q \}$ and some vertex $t^{q '}\in T^{q '}$ anticomplete to $T^{q }$. Thus, $(t^{q '},t^{q ''}_3,T^{q })$ is a quasi-triad, where $\{q ,q ',q ''\}=\{1,2,3\}$, and thus by \pref{lem:T,R Facts and Tools}~(i), for every $q \in \{1,2,3\}$, we have $|T^{q} |=1$, say $T^{q} =\{t^{q }\}$. On the other hand, by \pref{lem:T,R Facts and Tools}~(iv), the set of neighbours and non-neighbours of $t^{1 }$ in $T\setminus T^{1 }$ are both cliques, and also since $T$ is not a clique, $t^{1 }$ is not complete to $T\setminus T^{1 }$. Now, if $t^{1 }$ is nonadjacent to both $t^{2}$ and $t^{3}$, then by \pref{lem:T,R Facts and Tools}~(iv), $t^{2}$ and $t^{3}$ are adjacent and $G[T]$ is of type $2$ at $1$, and if $t^{1 }$ is adjacent to $t^{2}$ (resp. $t^{3}$) and nonadjacent to $t^{3}$ (resp. $t^{2}$), $t^2,t^3$ are nonadjacent and so $G[T]$ is of type $2$ at ${3}$ (resp. ${2}$).
	
	Next, assume that there exists $q\in \{1,2,3\}$, say $ q=1 $, such that every vertex in $T\setminus T^q$ has a neighbour in $T^q$. If every vertex in $T\setminus T^1$ is complete to $T^1$, then by \pref{lem:T,R Facts and Tools}~(iv), $T$ is a clique, a contradiction. Thus, we may assume that there exists a vertex $t^1\in T^1$, nonadjacent to a vertex in $T\setminus T^1$, say $t^2 \in T^2$. Define $A=N(t^1,T^2)$, $B=N(t^1,T^3)$, $C=N(t^2,T^1)$ and  $D=N(t^2,T^3)$, where by the assumption, $C\neq \emptyset$. Then,\vsp
	
	(1) \textit{The sets $ A\cup B\cup\{t^1\} $ and $ C\cup D\cup \{t^2\} $ are both cliques and there are no more edges in $E(T)$ except the edges in $ E(T^1)\cup E(T^2)\cup E(T^3) $. Also, $|A|,|B|,|D|\leq 1$.} \vsp
	
	By \pref{lem:T,R Facts and Tools}~(iv), it is evident that $ A\cup B\cup\{t^1\} $ and $ C\cup D\cup \{t^2\} $ are both cliques, $t^1$ is anticomplete to $D$, $t^2$ is anticomplete to $B$, and  $(B,D)$ is a partition of $T^3$. Also, since $(t^1,t^2_3,D)$ and  $(t^1_3,t^2,B)$ are quasi-triads of $G$, by \pref{lem:T,R Facts and Tools}~(i), $|B|, |D|\leq 1$, and since $t^1$ and $t^2_3$ are complete to $ A $, $A$ is anticomplete to $D$. Similarly, since  $t^1_3$ and $t^2$ are complete to $ C $, $C$ is anticomplete to $B$. 
	Now, either $ B $ or $ D $ is nonempty. If $B$ is nonempty, say $B=\{b\}$, then $(t^2_3, b, C)$ is a quasi-triad and $t^2_3$ and $b$ are complete to $A$. Thus, \pref{lem:T,R Facts and Tools}~(i) implies  that $A$ is anticomplete to $C$. Also, if $D$ is nonempty, say $D=\{d\}$, then $(t^1_3, d,A) $ is a quasi-triad and $t^1_3$ and $d$ are complete to $C$. Thus, again by \pref{lem:T,R Facts and Tools}~(i), $A$ is anticomplete to $C$. Finally, since for every $c\in C$, $(t^3_3, c,A)$ is a quasi-triad,  by \pref{lem:T,R Facts and Tools}~(i), $|A|\leq 1$. This proves (1). \vsp
	
	Now, by (1), if both $A$ and $B$ are empty, then $G[T]$ is of type $2$ at $1$. If either $A$ or $B$ is nonempty, say $B=\{b\}$, then $(t^2_3,b,C)$ is a quasi-triad and by \pref{lem:T,R Facts and Tools}~(i), $|C|=1$. Hence, $G[T]$ is of type $3$ at $1$. This proves \pref{lem:G[T] structure |I_T|=3}.
\end{proof}

We close this subsection with the following lemma, which includes the first application of \pref{lem:G[T] structure |I_T|=3}.

\begin{lem}\label{lem:|I_T|=3 |I_R|=1}
	Let $G$ be a counterexample to \pref{thm:noap}. Then for every rotator $\rho$ of $\overline{G}$, we have  $|I_T(\rho)|,|I_R(\rho)|\geq 2$.
\end{lem}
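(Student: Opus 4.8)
By \pref{lem:|I_T|+|I_R|>=4} we already know that $|I_T(\rho)|+|I_R(\rho)|\geq 4$, so the only remaining case to rule out is $|I_T(\rho)|=3$ and $|I_R(\rho)|=1$ (and, by symmetry, $|I_T(\rho)|=1$, $|I_R(\rho)|=3$). So I would suppose for a contradiction that $|I_T|=3$ and $I_R=\{p\}$ for some $p\in\{1,2,3\}$. Then $R(\rho)=R_p$ is a clique, which puts us in the regime where \pref{lem:both clique} and \pref{lem:one is complete to R} apply once we know the shape of $G[T]$. The starting collection of cliques will be $\mathscr{O}(G)$ with the pair $(Z_{p+1}^p,Z_{p+2}^p)$ merged (since $R_j=\emptyset$ for $j\neq p$ these cliques are wasteful), together with $T\cup R^3$ and $T_3\cup R_p$; this already covers everything except the edges inside $E(T)$ and between $T$ and $R_p$.

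The heart of the argument is a case analysis on the structure of $G[T(\rho)]$ via \pref{lem:G[T] structure |I_T|=3}: either $T$ is a clique, or $G[T]$ is of type $2$ at some $q$, or of type $3$ at some $q$. If $T$ is a clique, then both $T$ and $R$ are cliques and \pref{lem:both clique} gives an immediate contradiction. If $G[T]$ is of type $2$ at $q$, then $T\setminus\{t^q\}$ is a clique, $t^q$ is anticomplete to $T\setminus T^q$, and $T^{q'}=\{t^{q'}\}$ for $q'\neq q$; here I would add the single clique $T\setminus\{t^q\}$ (plus $\mathscr{N}[\{t^q\};N(t^q,R_p)]$-type cliques and, if needed, the clique $T^q\cup R^3$ or $T\cup R^3$) to cover $E(T)$, and use \pref{lem:one is complete to R} (with the roles of $T$ and $R$ reversed, applied to the nonadjacent pair inside $T$, after checking that one of its hypotheses (i) or (ii) holds — this is where \pref{lem:square-forcer} produces the needed $s^i_j\in S$, hence $|S|$ is forced up) to conclude that one of $t^q$ and its non-neighbour is complete to $R$ and the other anticomplete, so $E(T,R_p)$ is covered cheaply. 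A bookkeeping count then shows the resulting family has size $\leq n-1$. The type-$3$ case is handled the same way: $E(T)$ is covered by the two cliques $T_1(\rho)$ and $T_2(\rho)$ from the definition of type $3$ (plus possibly $T^q\cup R^3$), and $E(T,R_p)$ is covered using \pref{lem:one is complete to R} after verifying the hypothesis from \pref{lem:square-forcer}.

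I expect the main obstacle to be the careful accounting in each subcase: one has to chase exactly which edges among $E(T,R_p)$, $E(T_3,R_p)$, $E(T,R^3)$, $E(T_3)$, $E(R^3)$ are covered by the base collection after the merging, then verify that each replacement/addition of cliques both covers the leftover edges and keeps the total $\leq n-1$, handling the sporadic small cases ($|T^q|=1$ versus $|T^q|=2$, $U_q$ empty or not, $S^{q}$ of size $0,1$ or $2$) where one extra singleton clique threatens to push the count to $n$. As in the proofs of \pref{lem:|I_T|+|I_R|>=4} and \pref{lem:both clique}, the safeguard against the count reaching $n$ is always that \pref{lem:square-forcer} forces enough of the $S^i_j$ to be nonempty; the subtlety is matching each forced element of $S$ to a clique in $\mathscr{P}(G)$ (or a merged clique in $\mathscr{O}(G)$) that it lets us suppress. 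Once all of type $2$ and type $3$ are cleared, the symmetric statement with $T$ and $R$ interchanged gives $|I_T(\rho)|=1$, $|I_R(\rho)|=3$ as well, completing the proof.
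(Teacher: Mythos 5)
Your overall skeleton matches the paper's: reduce via \pref{lem:|I_T|+|I_R|>=4} to the case $|I_T(\rho)|=3$, $|I_R(\rho)|=1$, observe that $R(\rho)=R_p$ is a clique, start from $\mathscr{O}(G)$ with $(Z_{p+1}^p,Z_{p+2}^p)$ merged, and split on the type of $G[T(\rho)]$ via \pref{lem:both clique} and \pref{lem:G[T] structure |I_T|=3}. However, the step on which your whole count rests --- ``one of $t^q$ and its non-neighbour is complete to $R$ and the other anticomplete, so $E(T,R_p)$ is covered cheaply'' --- does not go through. \pref{lem:one is complete to R} explicitly assumes $|I_T(\rho)|,|I_R(\rho)|\geq 2$ (and this hypothesis is symmetric, so ``reversing the roles of $T$ and $R$'' does not evade it); here $|I_R(\rho)|=1$. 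The hypothesis is not cosmetic: its proof manufactures claws using a vertex $r\in R\setminus R_i$, which does not exist when $R=R_p$. Moreover its conclusion is simply false in this regime. All that \pref{lem:T,R Facts and Tools}~(vi) gives you is that a non-edge $t^it^j$ of $T$ partitions the clique $R_p$ into the two cliques $N(t^i,R_p)$ and $N(t^j,R_p)$, \emph{both of which may be nonempty}; the paper's proof works throughout with this two-piece partition $R_p=R^1\cup R^2$ (see statements (1) and (2) and the type-$3$ analysis there), and the entire difficulty of the lemma is paying for the extra cliques needed to cover $E(T,R_p)$ across both pieces while keeping the total at $n-1$. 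Even the hypotheses (i)/(ii) of \pref{lem:one is complete to R} cannot be arranged here: (i) needs $i\in I_R$ and (ii) needs some $l\in I_R\setminus\{i,j\}$, which with $|I_R|=1$ generically fail, and \pref{lem:square-forcer} cannot be used to conjure them.

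Concretely, what is missing is the content of the paper's case analysis: for type $2$ at $q$ one must first show $p\neq q$ (which you do not address and which itself requires a page of clique constructions), and then in each of the remaining configurations one must exhibit explicit replacements such as $X_3^i\cup N(t^i,R)$, $\mathscr{N}[R_p;(Z_p^{3-p}\cup T^3)\setminus R_p]$, etc., whose sizes are bounded using the inequalities $|S|+|X|\leq\cdots$ extracted from a first, cruder covering. A secondary slip: your base collection includes $T\cup R^3$, which is not a clique once $T$ is not a clique (every $T^j$ is complete to $R^3$), so it must be replaced by $T^q\cup R^3$ and $(T\setminus\{t^q\})\cup R^3$ (type $2$) or $T_1\cup R^3$, $T_2\cup R^3$ (type $3$) as in the paper. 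As written, the proposal identifies the right reduction and the right case split but does not contain a valid argument for covering $E(T,R_p)$ within the budget, which is the heart of the lemma.
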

\begin{proof}
	On the contrary and w.l.o.g. assume that $|I_R|\leq 1$. By \pref{lem:|I_T|+|I_R|>=4}, we have $|I_T|=3$ and $|I_R|=1$, say $I_R=\{p \}$, for some $p \in \{1,2,3\}$. Now, merge the pair $(Z_{p +1}^{p },Z_{p +2}^{p })$ in $\mathscr{O}(G)$ and call the resulting collection $\mathscr{O}'(G)$.\vsp
	
	(1) \textit{If $G[T]$ is of type $2$ at $q$, for some ${q}\in \{1,2,3\}$, then $p\neq q$.}\vsp
	
	Suppose not, w.l.o.g. assume that $p=q=1$. Let $t^1, t^2$ and $t^3$ be as in the definition of type $2$ at $1$, and $X=T^1\setminus \{t^1\}$. Note that by \pref{lem:T,R Facts and Tools}~(vi), $R_1$ can be partitioned into $R^1=N(t^1,R_1)$ and $R^2=N(t^2,R_1)=N(t^3,R_1)$. Also, the cliques in $\mathscr{O}_1(G)=\mathscr{O}'(G)\cup \{ R^3\cup \{t^1\}, R^3\cup T\setminus \{t^1\},T_3\cup R_1\}$ cover all the edges in $E(G)\setminus E(T,R)$. We claim that $S^1\neq \emptyset$. For if $S^1=\emptyset$, then applying \pref{lem:square-forcer}~(i) to $(i,j,k)=(1,2,3)$ and $(i,j,k)=(1,3,2)$ implies that $|S_1|=2$. Now in $\mathscr{O}_1(G)$, replace the cliques $X_2^1,X_3^2$ and $X_2^3$ with the cliques $(X_2^1\cup R^1)\setminus X,X_3^2\cup R^2$ and $X_2^3\cup R^2$, and also add the cliques in $\mathscr{N}[X;R_1\cup S_2\cup \{s_2^2\}]$, thereby obtaining a clique covering for $G$ of size $14+|X|=n-|S|-|R|+2\leq n-1$, a contradiction. This proves the claim. Thus, assume that $s_k^1\in S$, for some $k\in \{2,3\}$. 
	Now,  in  $\mathscr{O}_1(G)\cup \mathscr{N}[R;T]$, if $S_1=\emptyset$, then remove the clique $R^3\cup \{t^1\}$ and replace the cliques $Z_3^2$ and $Z_2^3$ with the cliques $Z_3^2\cup \{t^1\}$ and $Z_2^3\cup \{t^1,r_1^3\}$ and call the resulting family $\mathscr{C}$, which is a clique covering for $G$. By the claim, if either $X\neq \emptyset$, or $|S|\geq 3$, or $|S|=2$ and $S_1=\emptyset$, then $|\mathscr{C}|\leq n-1$, a contradiction. Thus, $X=\emptyset$ and either $|S|=1$ or $|S|=2$ and $S_1\neq \emptyset$. Therefore, by the above claim, $S= S^1_k\cup S_1^{k'}$ for some $k'\in \{2,3\}$ and $|\mathscr{C}|=n$. Now, if $|R_1|=1$, say $R_1=\{r_1\}$, then remove the single clique in $\mathscr{N}[R;T]$ from $\mathscr{C}$ and replace the cliques $Z_1^2$ and $Z_1^3$ with the cliques  $Z_1^{k'}\cup N(r_1,T^{5-k'})$ and $Z_1^{5-k'}\cup N(r_1,T^1\cup T^{k'})$ in the resulting family, and if $|R_1|\geq 2$, then remove the cliques in $\mathscr{N}[R;T]$ from $\mathscr{C}$, replace the clique $X_{5-k}^1$ with the clique $X_{5-k}^1\cup R^1$ and add the clique $R^2\cup \{t^2,t^3\}$. This procedure yields a clique covering for $G$ of size at most $n-1$, a contradiction. This proves (1). \vsp

	(2) \textit{$G[T]$ is of type $3$.}\vsp
	
	Suppose not, by Lemmas~\ref{lem:both clique} and \ref{lem:G[T] structure |I_T|=3}, $G[T]$ is of type $2$ at $q$, for some $q\in \{1,2,3\}$. By (1), $p\neq q$, say $p=2$ and $q=1$. Let $t^1, t^2$ and $t^3$ be as in the definition of type $2$ at $1$, and $X=T^1\setminus \{t^1\}$. Note that by \pref{lem:T,R Facts and Tools}~(vi), $R_2$ can be partitioned into $R^1=N(t^1,R_2)$ and $R^2=N(t^2,R_2)=N(t^3,R_2)$. Also, the cliques in  $\mathscr{O}_2(G)=\mathscr{O}'(G)\cup \{R^3\cup \{t^1\}, R^3\cup T \setminus \{t^1\}, T_3\cup R_2\}$ cover all the edges in $E(G)\setminus E(T,R)$. We claim that either $S^1\neq \emptyset$ or $S=\{s_1^2\}$. On the contrary, assume that $S^1=\emptyset$ and $S\neq \{s_1^2\}$. Then, applying \pref{lem:square-forcer}~(i) to $(i,j,k)=(1,2,3)$ implies that $s_1^2\in S$ and thus $|S|\geq 2$. Now, in $\mathscr{O}_2(G)$, replace the cliques $X_2^1$ and $X_3^1$ with the cliques $(X_2^1\cup S_3^2)\setminus S_2^3,(X_3^1\cup S_2^3\cup R^1)\setminus (S_3^2\cup X)$, replace the clique $Z_2^1$  with the cliques in $\mathscr{N}[R_2;\{r^3_2,s_1^1,t^2,t^3\}]$ and also add the cliques in $\mathscr{N}[X;R_2\cup S_2^3\cup \{s_3^3\}]$, thereby obtaining a clique covering $\mathscr{C}$ for $G$ of size $13+|R|+|X|=n-|S|+1\leq n-1$, a contradiction. This proves the claim. Now, in  $\mathscr{O}_2(G)\cup \mathscr{N}[R;T]$, if either $S_1=\emptyset$ or $S=\{s_1^2\}$, then remove the clique  $ R^3\cup \{t^1\}$ and replace the cliques $Z_3^1,Z_3^2$ and $Z_1^3$ with the cliques $Z_3^1\cup S_1^2,(Z_3^2\cup \{t^1\})\setminus S_1^2$ and $Z_1^3\cup \{t^1,r_2^3\}$ and call the resulting family $\mathscr{C}$, which is a clique covering for $G$.  If either $X\neq \emptyset$, or $|S|\geq 3$, or $|S|=2$ and $S_1=\emptyset$, then $|\mathscr{C}|\leq n-1$, a contradiction. Thus, $X=\emptyset$ and either $|S|=1$ or $|S|=2$ and $S_1\neq \emptyset$. Therefore, by the above claim, $S=S_1\cup S^1$, and in particular, either $|S|=|S^1|=1$, or $|S|=|S^2_1|=1$, or $|S_1|=|S^1|=1$. Consequently, $|\mathscr{C}|\leq n$. 
	Then, remove the cliques in $\mathscr{N}[R;T]$ from $\mathscr{C}$ and in order to cover the edges in $E(T,R)$, replace the cliques $X_3^1$ and $X_3^2$ with the cliques $X_3^1\cup R^1$ and $X_3^2\cup R^2$. Also, if $s^2_1\notin S$, then replace the clique $X_1^3$ with the clique $X_1^3\cup R^2$ and if $s^2_1\in S$, then replace the cliques $X_1^3$ and $X_2^3$ with the cliques $(X_1^3\cup S_2^1\cup R^2)\setminus S_1^2$ and $(X_2^3\cup S_1^2)\setminus S_2^1$. This yields a clique covering of size at most $n-1$, a contradiction. This proves (2). \vsp

	By (2), $G[T]$ is of type $3$ at $q$, for some $q\in \{1,2,3\}$, say $q=1$. Let $T_1=\{t_1^1,t_1^2\}\cup T_1^3$ and $T_2=\{t_2^1,t_2^3\}\cup T_2^2$ be as in the definition of type $3$ at $1$. Note that by \pref{lem:T,R Facts and Tools}~(vi), $R_p$ can be partitioned into the sets $R^1$ and $R^2$ such that for every $i\in \{1,2\}$, $T_i$ is complete to $R^i$ and anticomplete to $R^{3-i}$. Now adding the cliques in $\mathscr{N}[R;T]$ to the family of cliques $\mathscr{O}_3(G)=\mathscr{O}'(G)\cup \{T_1\cup R^3,T_2\cup R^3,T_3\cup R_p\}$ provides a clique covering for $G$ of size $14+|R|=n-|S|-|T|+5$. Thus, $|S|+|T|\leq 5$. Also, if $|T|=5$, then $S=\emptyset$, and in this case, it is enough to remove the clique $\Omega _p$ from $\mathscr{P}(G)$ and add the cliques $T_1\cup R^1\cup \{r_p^3\}$ and $T_2\cup R^2\cup \{r_p^3\}$, thereby obtaining a clique covering for $G$ of size $14=n-|R|\leq n-1$, which is impossible. Thus, $|T|=4$ and $|S|\leq 1$. First, assume that either $S=\emptyset$ or $|S|=1$ and both $R^1,R^2\neq \emptyset$. In this case, considering  $\mathscr{P}(G)$, add the cliques in $\mathscr{N}[R_p;T\cup \{r_p^3\}]$, if $S=\emptyset$, then replace the cliques $\Delta _3^2$ and $\Delta _2^3$ with the cliques $\Delta _3^2\cup \{t_2^1\}$ and $\Delta _2^3\cup \{t_1^1\}$ and if $S_p=\emptyset$, then remove the clique $\Omega _p$, to obtain a clique covering for $G$ of size at most $n-1$, a contradiction. Therefore, we have $|S|=1$ and $R_p=R^{i_0}$, for some $i_0\in \{1,2\}$. Now, if $S_{p+2}^{p+1}\cup S_{p+1}^{p+2}=\emptyset$, then let $\{p,p',p''\}=\{1,2,3\}$ such that $S_p^{p'}=\emptyset$, and in $\mathscr{O}_3(G)$, replace the cliques $Z_p^{p'}$ and $Z_p^{p''}$ with the cliques $Z_p^{p'}\cup (T_{i_0}\cap T^{p''})$ and $Z_p^{p''}\cup (T_{i_0}\cap (T^p\cup T^{p'}))$. Also, if $S=S_{p+2}^{p+1}\cup S_{p+1}^{p+2}$, then in $\mathscr{O}_3(G)$, replace the cliques $Z_{p}^{p +1}$ and $Z_{p}^{p +2}$ with the clique $(Z_{p}^{p +1}\cup S_{p+1}^{p+2}\cup (T_i\setminus T^{p+1}))\setminus S_{p+2}^{p+1}$ and $(Z_{p}^{p +2}\cup S_{p+2}^{p +1}\cup (T_i\cap T^{p+1}))\setminus S_{p +1}^{p +2}$. This provides a clique covering for $G$ of size $14=n-|R_p|\leq n-1$, again a contradiction. This proves \pref{lem:|I_T|=3 |I_R|=1}.
\end{proof}

\subsection{Schl\"{a}fli-antiprismatic graphs}\label{sub:schlafli}
Many of the classes of graphs in the ``menagerie'' introduced in \cite{seymour2} are obtained by some modifications on some induced subgraphs of the Schl\"{a}fli graph $\Gamma$ (see \pref{sub:Inc-rot}). 
Our goal in this subsection is to provide appropriate clique coverings for some induced subgraphs of $\Gamma$, and obtain more properties of the graph $G$, the counterexample to \pref{thm:noap}. Let us begin with a definition. 
Let $G$ be an antiprismatic graph whose complement contains a rotator $ \rho $. Suppose that for every $i\in \{1,2,3\}$, $T^i(\rho)\subseteq \{t^i_1,t^i_2\}$ and $R_i(\rho)\subseteq\{r_i^1,r_i^2\}$ and adjacency in $G[T\cup R]$ is the same as in $\Gamma[T\cup R]$. Then, $G$ is an induced subgraph of $\Gamma$ and we say that the graph $G$ is \textit{Schl\"{a}fli-antiprismatic $($with respect to $\rho)$}. Also, $ G $ is called \textit{inflated Schl\"{a}fli-antiprismatic} (ISA for short) with respect to $ \rho $ if $ G $ is obtained from a Schl\"{a}fli-antiprismatic graph by replicating vertices in the non-core.  

The following is a technical lemma whose proof is lengthy and is postponed to \pref{app:schlafli}. 

\begin{lem}\label{lem:ccschlafli}
	Let $G$ be a counterexample to \pref{thm:noap} such that neither $ T(\rho)$ nor $ R(\rho) $ is a clique of $ G $, for some rotator $\rho$ of $\overline{G}$. Then $G$ is not ISA with respect to $\rho$.
\end{lem}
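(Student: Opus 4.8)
We argue by contradiction. Suppose, in the setting of \pref{lem:ccschlafli}, that $G$ is ISA with respect to $\rho$; the plan is to exhibit a clique covering of $G$ of size at most $n-1$, or else to show that $\overline{G}$ is isomorphic to a twister --- in either case contradicting the choice of $G$. By definition of ISA there is a Schl\"{a}fli-antiprismatic graph $H$ with respect to $\rho$ from which $G$ is obtained by replicating the vertices of $\tilde{W}(H)$. In particular, $\rho$ is a rotator of $\overline{G}$ for which the adjacency between the sets $T^i(\rho)$, $R_i(\rho)$ and between $T(\rho)$ and $R(\rho)$ is governed by the Schl\"{a}fli graph $\Gamma$; so every edge of $\mathcal{E}(G)$ lying inside $T(\rho)$, inside $R(\rho)$, or between $T(\rho)$ and $R(\rho)$ follows that prescribed pattern.

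The first step is to pin down the coarse shape of $G[T(\rho)]$ and $G[R(\rho)]$. By \pref{lem:|I_T|=3 |I_R|=1} we have $|I_T(\rho)|,|I_R(\rho)|\geq 2$, and since by hypothesis neither $T(\rho)$ nor $R(\rho)$ is a clique, Lemmas~\ref{lem:G[T] structure |I_T|=2} and \ref{lem:G[T] structure |I_T|=3} force $G[T(\rho)]$ to be of type $1$, $2$ or $3$ (according as $|I_T(\rho)|$ is $2$ or $3$), and likewise $G[R(\rho)]$. Matching these types against the Schl\"{a}fli adjacency inside $T(\rho)$ restricts, for each value of $|I_T(\rho)|$ and $|I_R(\rho)|$, the Schl\"{a}fli indices that can occur and hence the precise adjacency pattern; in addition, \pref{lem:one is complete to R} together with its $T\leftrightarrow R$ counterpart determines which vertices of $T(\rho)$ are complete, respectively anticomplete, to $R(\rho)$, and symmetrically.

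The core of the proof is then a finite case analysis over the remaining data: the pair $(|I_T(\rho)|,|I_R(\rho)|)$, the types of $G[T(\rho)]$ and $G[R(\rho)]$, and the set of $(i,j)\in J$ with $S_j^i(\rho)\neq\emptyset$ --- the last of these constrained by \pref{lem:square-forcer}, and each $S_j^i(\rho)$ of size at most $1$ by \pref{lem:T,R Facts and Tools}. For each configuration I would exhibit a clique covering with $\cc(G)\leq n-1$, built --- exactly in the spirit of the proofs of \pref{lem:|I_T|+|I_R|>=4}, \pref{lem:both clique} and \pref{lem:|I_T|=3 |I_R|=1} --- by starting from $\mathscr{O}(G)$ or $\mathscr{P}(G)$, merging pairs such as $(Z_j^i,Z_j^{i'})$ or $(\Delta_j^i,\Delta_{j'}^i)$, enlarging some of these cliques by a fragment of $T(\rho)$ or $R(\rho)$, and adjoining families $\mathscr{N}[\,\cdot\,;\,\cdot\,]$ to cover the residual edges of $\mathcal{E}(G)$ inside $T(\rho)$, inside $R(\rho)$ and between them. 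The count closes because $|S_j^i(\rho)|\leq 1$ and because \pref{lem:square-forcer} forces enough of the sets $S_j^i(\rho)$ to be nonempty, so that the cliques one adds are offset by the savings from the merges.

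The main obstacle is bookkeeping, not any genuinely new idea. The case tree is large, and in its tightest branches --- small $|S(\rho)|$, both $G[T(\rho)]$ and $G[R(\rho)]$ of ``two-row'' type, and a particular Schl\"{a}fli cross-adjacency between $T(\rho)$ and $R(\rho)$ --- the merges and replacements must be chosen so as to save exactly the single clique that is needed, while still covering every edge of $\mathcal{E}(G)$; verifying the latter genuinely uses the explicit adjacency of $\Gamma$ on $T(\rho)\cup R(\rho)$. Exploiting the available symmetries --- cyclic permutation of $\{1,2,3\}$, and the $T\leftrightarrow R$ duality obtained by reading $\rho$ in the reverse order --- collapses most of the branches, and carrying this out in full is what occupies \pref{app:schlafli}.
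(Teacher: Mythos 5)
Your overall strategy (assume $G$ is ISA, reduce via \pref{lem:|I_T|=3 |I_R|=1} and the type lemmas, then run a finite case analysis producing coverings of size at most $n-1$) points in the right general direction, but it omits the two ingredients that actually make the paper's case analysis close, and in its tightest branches the plan as stated would fail. First, the paper's proof rests on \pref{lem:schlafli Ultra}, which bounds $|S(\rho)|+|T(\rho)|+|R(\rho)|\leq 7$; this is proved by a genuinely new device, the $4$-ultrahomogeneity of the Schl\"{a}fli graph (any triangle of $\Gamma$ disjoint from $V(G)$ can be carried by an automorphism onto $\{s_1^1,s_1^2,s_1^3\}$, yielding a $16$-clique covering and hence $n\leq 16$). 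That bound is invoked in essentially every branch to pin $|S|$, $|T|$ and $|R|$ down to one or two values, and nothing in your outline replaces it; your claim that the count closes because \pref{lem:square-forcer} forces enough of the sets $S_j^i$ to be nonempty runs the wrong way here, since the hard configurations are precisely those with $|S|\leq 1$ and $|T|+|R|$ as small as $4$ or $5$.

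Second, the paper does not build its coverings from $\mathscr{O}(G)$ or $\mathscr{P}(G)$ at all. It uses an explicit automorphism $\mu$ of $\Gamma$ to pull back the row/column covering $\mathscr{S}(G)$ --- whose six extra cliques $Y_j^i$ already cover all of $E(T')\cup E(R')\cup E(T',R')$ for an ISA graph --- to an $18$-clique covering $\mathscr{M}(G)$ of the whole graph; every branch of the case analysis is then a matter of deleting, merging or slightly perturbing cliques of $\mathscr{M}(G)$ to get below $n$. With your starting point, in a configuration such as $|I_T|=|I_R|=2$, $|T|=|R|=2$, $|S|=1$ (so $n=14$ and a covering of size $13$ is required), $\mathscr{O}(G)$ has $12$ cliques and covers none of $E(T_3)$, $E(R^3)$ or $E(T',R')$, while $\mathscr{P}(G)$ covers those but already has well over $13$ cliques and still leaves $E(T,R)$ uncovered; no combination of the merges and $\mathscr{N}[\cdot\,;\cdot]$ additions you describe is shown to recover the deficit. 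Since the entire content of the lemma is exactly this case analysis, which your proposal defers wholesale, the argument has a genuine gap rather than being a complete alternative proof.
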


Now, using the above lemma, in the following lemma we deduce more properties of the counterexample to \pref{thm:noap} to be used in forthcoming subsections. We omit the term $ \rho $ within the proofs.

\begin{thm}\label{thm:IT+IR>=5}
Let $G$ be a counterexample to \pref{thm:noap} with $|I_T(\rho)|+|I_R(\rho)|\geq 5$, for some rotator $\rho$ of $\overline{G}$. Then, either $ T(\rho) $ or $ R(\rho) $ is a clique. 
\end{thm}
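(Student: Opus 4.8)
The plan is to argue by contradiction. Suppose that neither $T(\rho)$ nor $R(\rho)$ is a clique of $G$. Since $|I_T(\rho)|\le 3$ and $|I_R(\rho)|\le 3$, the hypothesis $|I_T(\rho)|+|I_R(\rho)|\ge 5$ forces $\{|I_T(\rho)|,|I_R(\rho)|\}$ to be either $\{2,3\}$ or $\{3,3\}$; passing if necessary to the reversed rotator $\rho'=(s_1^1,s_2^2,s_3^3;r_1^3,r_2^3,r_3^3;t_3^1,t_3^2,t_3^3)$ (which interchanges the roles of $T$ and $R$, swapping subscripts and superscripts, and preserves both $|I_T|+|I_R|\ge 5$ and the assumption that neither side is a clique), I may assume $|I_T(\rho)|=3$. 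As $T(\rho)$ is not a clique, \pref{lem:G[T] structure |I_T|=3} gives that $G[T(\rho)]$ is of type $2$ or of type $3$ at some $q\in\{1,2,3\}$; and, according to whether $|I_R(\rho)|$ equals $2$ or $3$, \pref{lem:G[T] structure |I_T|=2} (applied to $G[R(\rho)]$, which is not a clique) or \pref{lem:G[T] structure |I_T|=3} gives that $G[R(\rho)]$ is of type $1$, or of type $2$ or $3$ at some $q'$.

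For each of the resulting combinations of types I would then use \pref{lem:T,R Facts and Tools} --- in particular the quasi-triad bound (i), the facts (iii)--(vii) controlling the adjacency inside and between $T(\rho)$ and $R(\rho)$ --- together with \pref{lem:one is complete to R} and \pref{lem:square-forcer}, to pin the adjacency of $G[T(\rho)\cup R(\rho)]$ down completely. The upshot of each type description is that in every clique $T^i(\rho)$ (and $R_i(\rho)$) at most two vertices can belong to the core $W(G)$: the remaining vertices are forced into $\tilde W(G)$ and behave as replicas of one of those two, while the induced subgraph on the $\le 2$ representative vertices of each $T^i(\rho)$ and $R_i(\rho)$ reproduces exactly the adjacency of the corresponding vertices $t^i_1,t^i_2,r^1_i,r^2_i$ of the Schl\"afli graph $\Gamma$. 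Hence $G$ is obtained from a Schl\"afli-antiprismatic graph (with respect to $\rho$) by replicating non-core vertices, i.e.\ $G$ is ISA with respect to $\rho$, contradicting \pref{lem:ccschlafli} and finishing the proof. A small number of degenerate sub-configurations will not immediately appear in this ISA form --- for instance when some $T^i(\rho)$ or $R_i(\rho)$ is empty, or when a type-$2$ configuration collapses so that a non-representative vertex of $T^q(\rho)$ actually lies in the core --- and these I would dispose of directly by exhibiting a clique covering of size at most $n-1$, assembled in the usual way from modified copies of $\mathscr{O}(G)$ or $\mathscr{P}(G)$ as in the proofs of \pref{lem:|I_T|+|I_R|>=4} and \pref{lem:|I_T|=3 |I_R|=1}.

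The main obstacle is exactly this verification: across all the type combinations for $G[T(\rho)]$ and $G[R(\rho)]$ one must check that the ``extra'' vertices of the cliques $T^i(\rho)$ and $R_i(\rho)$ genuinely lie in $\tilde W(G)$ and are replicas of a representative, and that the representative adjacency between $T(\rho)$ and $R(\rho)$ is truly the Schl\"afli one; this is where \pref{lem:T,R Facts and Tools}(v)--(vii) and \pref{lem:one is complete to R} do the heavy lifting, and where the bookkeeping --- especially reconciling type $1$ of $G[R(\rho)]$ with type $2$ or $3$ of $G[T(\rho)]$, and ruling out stray core vertices in the type-$2$ case --- is most delicate. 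Once the reduction to the ISA case is secured, \pref{lem:ccschlafli} closes the argument, and the handful of remaining ad hoc cases are routine given the machinery already developed.
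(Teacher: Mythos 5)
Your overall strategy is exactly the one the paper follows: assume neither $T(\rho)$ nor $R(\rho)$ is a clique, classify $G[T(\rho)]$ and $G[R(\rho)]$ by type via \pref{lem:G[T] structure |I_T|=2} and \pref{lem:G[T] structure |I_T|=3}, use \pref{lem:T,R Facts and Tools}, \pref{lem:one is complete to R} and \pref{lem:square-forcer} to pin down the adjacency, reduce to the ISA case and invoke \pref{lem:ccschlafli}, and dispose of the remainder by explicit clique coverings. So the skeleton is right. The problem is that the proposal stops at the skeleton, and where it does commit to specifics it gets them wrong. The claim that the type analysis forces ISA except in a ``small number of degenerate sub-configurations'' arising ``when some $T^i(\rho)$ or $R_i(\rho)$ is empty, or when a type-$2$ configuration collapses so that a non-representative vertex of $T^q(\rho)$ lies in the core'' does not match what actually happens. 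The genuinely non-ISA configurations arise when specific vertices of $S(\rho)$ are \emph{absent}, so that the quasi-triad arguments of \pref{lem:T,R Facts and Tools}(i) cannot force the Schl\"afli adjacency between the residual sets (the $X$, $U^1$, $U^2$ of the type definitions). For instance, in the case $|I_T|=3$, $|I_R|=2$ with $G[T]$ of type $2$ at $q=1$ and $t^1$ nonadjacent to $r_1$, ISA fails precisely when $S^2_1=\emptyset$, $S^1_2\cup S^1_3\neq\emptyset$, $X\neq\emptyset$, $U^1\neq\emptyset$ and $U^2=\emptyset$; analogous exceptional patterns occur in the type-$2$-at-$3$ case, the type-$3$ case, and throughout the $|I_T|=|I_R|=3$ analysis. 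None of these involves an empty part or a stray core vertex.

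These exceptional cases are neither few nor routine: they constitute the bulk of the argument. Each requires a bespoke clique covering built from a modified $\mathscr{O}(G)$ or $\mathscr{P}(G)$, with counting that often lands exactly at $n-1$ only after a further refinement conditioned on $|S|$, $|X|$, $|U^1|$, etc. (and in several places one must first rule out $|X|=|U^1|=1$ with the wrong adjacency by a separate ISA argument, then delete a singleton clique). Moreover, in the $|I_T|=|I_R|=3$ branch you also need the preliminary step that both $G[T]$ and $G[R]$ must be of type $2$ (if either is of type $3$, ISA follows directly from \pref{lem:T,R Facts and Tools}(vi)), together with constraints relating the apices $q$ and $q'$ of the two type-$2$ structures; these are not mentioned in your plan. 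Without carrying out the case-by-case adjacency verification, identifying correctly which configurations escape ISA, and exhibiting the coverings of size at most $n-1$ for each of them, the argument is an outline rather than a proof.
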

In order to prove \pref{thm:IT+IR>=5}, we need the following three lemmas.

\begin{lem}\label{lem:reduce-to-schl3,2-1}
	Let $G$ be a counterexample to \pref{thm:noap} with $|I_T(\rho)|=3$, $I_R(\rho)=\{1,2\}$, for some rotator $\rho$ of $\overline{G}$. If $G[T(\rho)]$ is of type $2$ at $ q $, for some $ q\in\{1,2\} $, then $ R(\rho) $ is a clique of $ G $.
\end{lem}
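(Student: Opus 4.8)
\textbf{Proof approach for \pref{lem:reduce-to-schl3,2-1}.}
The plan is to assume towards a contradiction that $R=R(\rho)$ is not a clique, and show that this forces $G$ to be ISA with respect to $\rho$, contradicting \pref{lem:ccschlafli}. Since $|I_R|=2$, by \pref{lem:G[T] structure |I_T|=2} applied to $G[R]$ (with the roles of $T$ and $R$ interchanged), either $R$ is a clique or $G[R]$ is of type $1$. So $G[R]$ is of type $1$, and we get nonadjacent vertices $r_1\in R_1$, $r_2\in R_2$ with $r_1$ complete to $R_2\setminus\{r_2\}$ and $r_2$ complete to $R_1\setminus\{r_1\}$. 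The aim is to bound $|R_1|,|R_2|$ so that, modulo replication of non-core vertices, $G[R]$ has at most two vertices in each $R_i$; symmetrically, since $G[T]$ is of type $2$ at $q$, the vertices of $T$ are controlled by the definition of type $2$ (one special vertex $t^q$ anticomplete to $T\setminus T^q$, and $T\setminus\{t^q\}$ a clique, with the two other $T^{q'}$ being singletons).

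First I would establish the quasi-triad bounds. Write $q=1$ without loss of generality (the case $q=2$ is symmetric), and let $t^1,t^2,t^3$ and $X=T^1\setminus\{t^1\}$ be as in the definition of type $2$ at $1$. By \pref{lem:T,R Facts and Tools}~(vi), for each nonadjacent pair $t^i,t^j\in T$ the set $R$ splits into two cliques $N(t^i,R)$ and $N(t^j,R)$; since $t^1$ is anticomplete to $\{t^2,t^3\}$ and $t^2,t^3$ are adjacent, this gives a partition of $R$ into $N(t^1,R)$ and $N(t^2,R)=N(t^3,R)$, and similarly each vertex of $X$ gives such a partition. Next I would use \pref{lem:one is complete to R}: whenever $i\in I_R$ and $S_j^i\neq\emptyset$ (or the analogous condition (ii) holds), a nonadjacent pair in $T^i,T^j$ has exactly one member complete to $R$ and the other anticomplete; combined with \pref{lem:square-forcer} (which forces certain $s_j^i\in S$ because $I_R=\{1,2\}$ and so $3\notin I_R$), this pins down the adjacency between $T$ and $R$ to be exactly the Schl\"{a}fli pattern once we know the sizes. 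The crucial size bound is $|R_i|\leq 2$ up to replication: any vertex of $R_1$ with no neighbour in $R_2$, together with the appropriate $t_3$-type vertex and $R_2$, forms a quasi-triad, so by \pref{lem:T,R Facts and Tools}~(i) that situation is very constrained; the vertices of $R_i$ that are complete to the rest of $R_i$ and behave identically are exactly replicas of one non-core vertex. Grinding through the cases of which $S_j^i$ are nonempty (using \pref{lem:square-forcer} to see $|S|$ is small and \pref{lem:T,R Facts and Tools}~(viii,ix) for the clique structure of $S'$) shows that after collapsing replicas, $G$ restricted to $T\cup R$ with the vertices $s_i^i,t_3^i,r_i^3$ is isomorphic to an induced subgraph of $\Gamma$ of the required shape, i.e. $G$ is ISA with respect to $\rho$.

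Then the finish is immediate: $G$ is ISA with respect to $\rho$, and neither $T$ nor $R$ is a clique (type $2$ at $q$ is not a clique, and $R$ was assumed non-clique), so \pref{lem:ccschlafli} gives a contradiction. Hence $R$ is a clique.

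The main obstacle I expect is the case analysis needed to show the $T$--$R$ adjacency is exactly the Schl\"{a}fli pattern and that $|R_1|,|R_2|\leq 2$ up to replication: one must rule out, using the claw-freeness encoded in \pref{lem:T,R Facts and Tools}, the possibility that some vertex of $R_i$ is neither complete nor anticomplete to a $T^j$ in the wrong way, and must carefully track which of the (at most two or three) vertices of $S$ are present, since \pref{lem:square-forcer} only guarantees certain $s_j^i$ lie in $S$ conditionally. This bookkeeping — essentially mirroring the structural deductions of \cite{seymour2} — is routine but delicate, and is where the real length of the argument lives; everything else is a short appeal to \pref{lem:ccschlafli}.
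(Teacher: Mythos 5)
Your overall framework agrees with the paper's for most of the argument: assume $G[R(\rho)]$ is of type $1$ (via \pref{lem:G[T] structure |I_T|=2}), use \pref{lem:square-forcer} to force some $s_{k'}^{k}\in S\cap\{s_1^2,s_2^1,s_3^1\}$, then pin down the $T$--$R$ adjacency with claw-freeness, \pref{lem:one is complete to R} and the quasi-triad bound of \pref{lem:T,R Facts and Tools}~(i), and finally contradict \pref{lem:ccschlafli}. However, your central claim --- that the hypothesis ``$R$ is not a clique'' always forces $G$ to be ISA --- is false, and this is a genuine gap. Writing $q=1$, $X=T^1\setminus\{t^1\}$, $U^1=R_2\setminus\{r_2\}$, $U^2=R_1\setminus\{r_1\}$, the paper's deductions pin down every adjacency \emph{except} that between $X$ and $U^1$ in the subcase where $t^1$ is nonadjacent to $r_1$, $S_1^2=\emptyset$ (so only $s_2^1$ or $s_3^1$ is available from \pref{lem:square-forcer}), $X\neq\emptyset$, $U^1\neq\emptyset$ and $U^2=\emptyset$. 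There the quasi-triads you invoke simply do not exist: the available $s$-vertex gives a quasi-triad controlling $E(X,U^2)$ or $E(X,U^1)$ only when $(k,k')=(2,1)$ or $U^2\neq\emptyset$, and in the residual configuration neither holds. Concretely, one can have $|X|=|U^1|=1$ with $X$ anticomplete to $U^1$; then $x\in X$ has no neighbour in $R_2$ while $t^1$ has exactly one, which matches no induced subgraph of $\Gamma$ (and $x$ is not a replica of $t^1$ since their neighbourhoods differ at $r_2$), so $G$ is not ISA and \pref{lem:ccschlafli} cannot be applied.

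The paper closes this case by an entirely different mechanism: setting $\{X,U^1\}=\{A,B\}$ with $|A|\leq|B|$, it builds an explicit clique covering of size at most $n-|B|+1$ from a modified $\mathscr{O}(G)$ (with further subcases on $S_1$, $S_3^2\cup S_2^3$ and $|S|$), which contradicts the counterexample assumption unless $|B|=1$; and when $|X|=|U^1|=1$ it splits on whether $X$ is complete to $U^1$ (then ISA, contradiction) or anticomplete (then a degenerate clique can be dropped from the covering, again a contradiction). Your proposal contains no analogue of this covering construction, so as written it does not prove the lemma. The ``routine but delicate bookkeeping'' you anticipate is not merely delicate --- in this one configuration it provably cannot succeed, and a second tool is required.
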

\begin{proof}
	By symmetry, assume that $ G[T] $ is of type $ 2 $ at $ q=1 $ with $t^1, t^{2}$ and $t^{3}$ as in the definition. Also, let $X=T^1\setminus \{t^1\}$. By the contrary, assume that $ R $ is not a clique and so by \pref{lem:G[T] structure |I_T|=2}, $G[R]$ is of type $1$ with $r_1\in R_1, r_2\in R_2$, $U^1\subseteq R_2$ and $U^2\subseteq R_1$ as in the definition.  Note that by \pref{lem:square-forcer}~(i)(applying to $(i,j,k)=(1,2,3)$), there is a vertex $s_{k'}^k\in S\cap \{s_1^2,s_2^1,s_3^1\}$. First, assume that $r_1$ is adjacent to $t^1$. Thus, by \pref{lem:T,R Facts and Tools}~(vi), $r_2$ is complete to $\{t^2,t^3\}$. For every $i\in \{1,2\}$ and every $r\in U^i$, let $A_i(r)=\{s_{k'}^k,t^{k'},r_i,r\}$ and $B_i(r)=\{s_i^i,t^i,r_i,r\}$. 
	Now, since $A_i(r)$, $i=1,2, r\in U^i$, is not a claw, $t^1$ is complete to $U^1$ and anticomplete to $U^2$ and  $\{t^2,t^3\}$ is anticomplete to $U^1$ and complete to $U^2$. Also, $r_2$ is complete to $X$ (otherwise, for $x\in X$ nonadjacent to $r_2$, if $(k,k')=(2,1)$, then $\{t^2,s_{1}^2,r_2,x\}$  would be a claw and if $(k,k')=(1,j)$, for some $j\in \{2,3\}$, then $\{x,s_{j}^1,t^{j},r_1\}$ would be a claw). Thus, $r_1$ is anticomplete to $X$.
	Moreover, since $(s_1^1,t^1,U^2)$ is a quasi-triad of $G$ and $U^1$ is complete $\{s_1^1,t^1\}$, by \pref{lem:T,R Facts and Tools}~(i), $U^1$ is anticomplete to $U^2$, and also since $(s_1^1,r_1,X)$ is a quasi-triad of $G$ and $U^1$ is complete $\{s_1^1,r_1\}$ (resp. $U^2$ is anticomplete to $s_1^1$), $U^1$ is anticomplete to $X$ (resp. $U^2$ is complete to $X$). Hence, $G$ is ISA, a contradiction with \pref{lem:ccschlafli}.
	
	Now, assume that $t^1$ is nonadjacent to $r_1$. By \pref{lem:T,R Facts and Tools}~(vi), $r_1$ is complete to $\{t^2,t^3\}$ and $r_2$ is adjacent to $t^1$ and nonadjacent to $ t^2,t^3 $. Now, since $B_i(r)$, $i=1,2, r\in U^i$, is not a claw, by  \pref{lem:T,R Facts and Tools}~(vi), $t^1$ is anticomplete to $U^1$ and complete to $U^2$ and $\{t^2,t^3\}$ is complete to $U^1$ and anticomplete to $U^2$. Also, $r_1$ is complete to $X$ and thus $r_2$ is anticomplete to $X$ (otherwise, for every $x\in X$ nonadjacent to $r_1$, $\{t^2,s_1^1,r_1,x\}$ would be a claw). 
	Moreover, since $(s_{k'}^k,t^{k'},U^{3-k})$ is a quasi-triad of $G$ and $U^k$ is complete to $\{s_{k'}^k,t^{k'}\}$, by \pref{lem:T,R Facts and Tools}~(i), $U^1$ is anticomplete to $U^2$.
	Furthermore, if $ (k,k')=(1,j) $, for some $ j\in\{2,3\} $, then $ (s^1_{j}, t^{j},U^2) $ is a quasi-triad and if $ (k,k')=(2,1) $, then $ (s_1^2,r_2,X) $ is a quasi-triad. Therefore, by \pref{lem:T,R Facts and Tools}~(i), $ X $ is anticomplete to $ U^2 $. Now, if either $ (k,k')=(2,1) $, or $ X=\emptyset $, or $ U^1=\emptyset $, or $ U^2\neq \emptyset $, then $ X $ is complete to $ U^1 $ and thus, $G$ is ISA, which contradicts \pref{lem:ccschlafli}. Hence, $ S^2_1=\emptyset $, $ S^1_2\cup S^1_3\neq \emptyset $, $ X\neq \emptyset $, $ U^1\neq \emptyset $ and $ U^2=\emptyset $. 
	 Assume that $\{X,U^1\}=\{A,B\}$, where $1\leq |A|\leq |B|$. In the sequel, we are going to give a clique covering $\mathscr{C}$ for $G$ of size at most $n-|B|+1$.
	 
	 To see this, consider the family of cliques $\mathscr{O}(G)\cup \{T^1\cup R^3,(T\setminus \{t^1\})\cup R^3,T_3\cup R_2,T_3\cup (R\setminus \{r_2\})\}$, replace the cliques $X_1^2,X_1^3$ and $Z_1^2$ with the cliques $X_1^2\cup U^1,X_1^3\cup U^1$ and $Z_1^2\cup X$ and add the clique $\{t^1,r_2\}$ as well as the cliques in $\mathscr{N}[A;B]$, and call the resulting collection $\mathscr{O}'(G)$. The cliques in $ \mathscr{O}'(G) $ cover all the edges in $E(G)\setminus \{t^2r_1,t^3r_1\}$. In order to cover $t^2r_1$ and $t^3r_1$, in $\mathscr{O}'(G)$, 
	if $S_1= \emptyset$, then replace the cliques $X_1^2\cup U^1$, $X_1^3\cup U^1, X_3^2$ and $X_2^3$ with the cliques  $X_1^2\cup U^1\cup S_3^1$, $X_1^3\cup U^1\cup S_2^1, (X_3^2\cup R_1)\setminus S_3^1$ and $(X_2^3\cup R_1)\setminus S_2^1$, if $S_1\neq \emptyset $ and $S_3^2\cup S_2^3=\emptyset$, then replace the cliques $Z_1^2\cup X$ and $Z_1^3$ with the cliques $Z_1^2\cup X\cup \{t^3\}$ and $Z_1^3\cup \{t^2\}$, 
	and finally, if $S_1\neq \emptyset$ and $S_3^2\cup S_2^3\neq \emptyset$, then add the clique $\{t^2,t^3,r_1\}$ and call the resulting clique covering $\mathscr{C}$. In the latter case, $|S|\geq 3$ and $|\mathscr{C}|=18+|A|\leq n-|B|+1$, as required. Also, in the first two cases, $|\mathscr{C}|=17+|A|=n-|S|-|B|+3$, which implies the claim, when $|S|\geq 2$. Now, assume that $|S|=1$, i.e. $|S|=|S^1_p|=1$. In this case, take $\mathscr{C}$ (which is obtained by the first modification on $\mathscr{O}'(G)$) and remove the clique $T_3\cup (R\setminus \{r_2\})$, and to cover the edges in $E(R_1, T_3\cup U^1)$, replace the cliques $(X_3^2\cup R_1)\setminus S_3^1, Z_1^2\cup X$ and $ Z_3^2$ with the cliques $(X_3^2\cup R_1\cup U^1)\setminus S_3^1, \{r_1,s_2^2,t_3^1\}\cup X$ and $\{r^3_1,r^3_3, s_2^2\}$.  This yields a clique covering of size $16+|A|=n-|B|+1$, as desired. Therefore, $|B|=1$ and thus $|X|=|U^1|=1$. Now, if $X$ is complete to $U^1$, then $G$ is ISA, which contradicts \pref{lem:ccschlafli}, and if $X$ is anticomplete to $U^1$, then removing the single clique in $\mathscr{N}[A;B]$ from $\mathscr{C}$ yields a clique covering for $G$ of size $n-1$, a contradiction. This proves \pref{lem:reduce-to-schl3,2-1}.
\end{proof}

\begin{lem}\label{lem:reduce-to-schl3,2-2}
	Let $G$ be a counterexample to \pref{thm:noap} with $|I_T(\rho)|=3$, $I_R(\rho)=\{1,2\}$, for some rotator $\rho$ of $\overline{G}$. If $G[T(\rho)]$ is of type $2$ at $ q=3 $, then $ R(\rho) $ is a clique of $ G $. 
\end{lem}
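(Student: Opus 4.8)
The statement to be proved is the counterpart of Lemma~\ref{lem:reduce-to-schl3,2-1} for the case when $G[T(\rho)]$ is of type $2$ at $q=3$. The overall strategy mirrors that of Lemma~\ref{lem:reduce-to-schl3,2-1}: assume toward a contradiction that $R=R(\rho)$ is not a clique, invoke Lemma~\ref{lem:G[T] structure |I_T|=2} to conclude that $G[R]$ is of type $1$ (with vertices $r_1\in R_1$, $r_2\in R_2$, and sets $U^1\subseteq R_2$, $U^2\subseteq R_1$ as in the definition), and then argue that the combined adjacency between $T$ and $R$ is forced to coincide with that of $\Gamma$, so that $G$ is inflated Schl\"afli-antiprismatic with respect to $\rho$, contradicting Lemma~\ref{lem:ccschlafli}; the residual cases where this forcing breaks down will be handled by exhibiting an explicit clique covering of size $\le n-1$.

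First I would write down the type-$2$-at-$3$ data: $T^1(\rho)=\{t^1\}$, $T^2(\rho)=\{t^2\}$, $T^3(\rho)$ contains a vertex $t^3$ anticomplete to $T\setminus T^3$ with $T\setminus\{t^3\}$ a clique, and set $X=T^3\setminus\{t^3\}$. Next, using Lemma~\ref{lem:square-forcer}(i) applied to a suitable triple (here $(i,j,k)$ with $i=3$, since $3\notin I_R$ after a relabelling — one must be careful which index is missing from $I_R$; in fact $I_R=\{1,2\}$ so $k=3\notin I_T^{\,c}$ is not available, and instead one applies it with $k$ chosen among $\{1,2\}$ appropriately, exactly as in the proof of Lemma~\ref{lem:reduce-to-schl3,2-1}) I would extract a nonempty $S$-vertex in a prescribed location; this provides the "anchor" simplicial-type vertex used repeatedly below. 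Then I would run the claw-freeness bookkeeping: for each $r\in U^1\cup U^2$, writing down the candidate claws analogous to the sets $A_i(r)$ and $B_i(r)$ of Lemma~\ref{lem:reduce-to-schl3,2-1}, and splitting on whether $r_1$ is adjacent to $t^3$ or not. In each branch, Lemma~\ref{lem:T,R Facts and Tools}(vi) (the partition of $R$ by a non-edge of $T$) together with the quasi-triad machinery of Lemma~\ref{lem:T,R Facts and Tools}(i) pins down: the adjacency of $t^1,t^2,t^3$ to $U^1,U^2$; the adjacency of $r_1,r_2$ to $X$; and the adjacency of $U^1$ to $U^2$ and of $X$ to $U^1,U^2$. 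The key structural point is that $t^3$ being anticomplete to $T\setminus T^3$ makes the type-$2$-at-$3$ situation genuinely different from type $2$ at $1$ or $2$: the non-simplicial vertex $t^3$ is now on the "third" index, so the roles of the two triangles $T_3(\rho)$, $R^3(\rho)$ relative to $t^3$ are twisted, and one has to re-derive the forcing with this asymmetry in mind.

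If all the forced adjacencies match $\Gamma$, then $G$ is ISA with respect to $\rho$ and Lemma~\ref{lem:ccschlafli} gives the contradiction. Otherwise the obstruction localises to a small number of $S$-vertices being absent and one of $X$, $U^1$, $U^2$ being empty (as in the final paragraph of Lemma~\ref{lem:reduce-to-schl3,2-1}), and in those residual configurations I would build a clique covering by starting from $\mathscr{O}(G)$ (or $\mathscr{P}(G)$) augmented with the triangle-covering cliques $T^3\cup R^3(\rho)$, $(T\setminus\{t^3\})\cup R^3(\rho)$, $T_3(\rho)\cup R_i$, etc., then absorbing the vertices of $X$, $U^1$, $U^2$ into enlarged $X^i_j$ / $Z^i_j$ cliques and into the matching-type families $\mathscr{N}[\cdot;\cdot]$, and finally counting: writing $\{X,U^1\}=\{A,B\}$ with $|A|\le|B|$, one gets a covering of size at most $n-|B|+1$, forcing $|B|=1$, hence $|X|=|U^1|=1$, and the two sub-cases $X$ complete to $U^1$ (ISA, contradiction) versus $X$ anticomplete to $U^1$ (delete one redundant clique, size $n-1$, contradiction) finish it.

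The main obstacle I expect is not any single deep idea but the sheer case-management: getting the orientation/indexing right so that the claw arguments actually apply (since type $2$ at $3$ twists which triangle "sees" $t^3$), and then making the clique-count come out to $n-1$ in every residual branch without double-covering or under-covering the edges incident to $r_1,r_2,t^3$ and the edges inside $R^3(\rho)$, $T_3(\rho)$. As in Lemma~\ref{lem:reduce-to-schl3,2-1}, the delicate point will be the cases $|S|=1$, where the slack is tightest and one must remove a carefully chosen clique (e.g.\ some $T_3(\rho)\cup(R\setminus\{r_2\})$) and reroute the few edges it covered through modified $X^i_j$, $Z^i_j$ cliques; I would do this branch last and most carefully.
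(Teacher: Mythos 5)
Your overall strategy is the right one and matches the paper's: assume $R(\rho)$ is not a clique, use Lemma~\ref{lem:G[T] structure |I_T|=2} to get that $G[R(\rho)]$ is of type $1$, pin down the adjacencies between $T$, $R$, $X$, $U^1$, $U^2$ via claws and quasi-triads until $G$ is forced to be ISA (contradicting Lemma~\ref{lem:ccschlafli}), and handle the residual configurations with an explicit clique covering of size at most $n-|B|+1$ followed by the $|B|=1$ endgame. However, there is a concrete error in your plan: the ``anchor'' step via Lemma~\ref{lem:square-forcer} cannot be carried out in this configuration. Part~(i) of that lemma requires $k\notin I_R(\rho)$, and since $I_R(\rho)=\{1,2\}$ this forces $k=3$ and hence $i\in\{1,2\}$; but when $G[T(\rho)]$ is of type $2$ at $q=3$, the vertex $t^3$ is anticomplete to $T\setminus T^3\supseteq T^{3-i}\neq\emptyset$, so $G[T\setminus T^i]$ is not a clique for either $i\in\{1,2\}$ and the hypothesis fails; part~(ii) is also unavailable since $I_T(\rho)=\{1,2,3\}$. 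Your hedge about choosing $k$ among $\{1,2\}$ does not rescue this, because the lemma explicitly requires $k\notin I_R$. Fortunately the step is also unnecessary: in the paper's argument all the claws used to force the adjacencies are of the form $\{s_1^1,t^1,r_i,r\}$ and $\{x,s_1^1,t^1,r_1\}$, i.e.\ they use only the rotator vertices $s_1^1,s_2^2$ (and $s_3^1$ only conditionally, in the branch where $s_3^1\in S$), so no square-forcer anchor is needed.

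Beyond that, the proposal leaves unexecuted precisely the part of the proof where essentially all the work lies. The paper's residual case is the specific configuration $S_3^1=\emptyset$, $X\neq\emptyset$, $U^2\neq\emptyset$, $U^1=\emptyset$ with $X$ not complete to $U^2$, and disposing of it requires an explicit covering built from $\mathscr{O}(G)$ augmented by $T^3\cup R^3$, $(T\setminus\{t^3\})\cup R^3$, $T_3\cup R_1$, $T_3\cup(R\setminus\{r_1\})$ and modified $X_j^i$, $Z_j^i$ cliques, with four sub-cases on $S$ to cover the edges $t^1r_2,t^2r_2$, plus a further separate branch for $|S|=|S_3|\leq 1$ handled through $\mathscr{P}(G)$ to reach $n-|X|\leq n-1$. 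None of these constructions or counts appear in your proposal, and they are not routine: the count $n-|B|+1$ only comes out right after the correct choice of which cliques to enlarge and which to delete in each sub-case. As written, the proposal is a correct outline of the paper's approach with one unworkable step and without the case analysis that actually proves the bound.
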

\begin{proof}
Suppose that $ G[T] $ is of type $ 2 $ at $ 3 $ with $t^1, t^{2}$ and $t^{3}$ as in the definition and let $X=T^3\setminus \{t^3\}$. Also, by the contrary, assume that $ R $ is not a clique and so by \pref{lem:G[T] structure |I_T|=2}, $ G[R] $ is of type $ 1 $, where $ r_1\in R_1, r_2\in R_2$, $U^1\subseteq R_2$ and $U^2\subseteq R_1$ are as in the definition. By \pref{lem:T,R Facts and Tools}~(vi), either $ r_1$ is complete to $ t^3$ and anticomplete to $ \{t^1,t^2\} $, or $ r_1$ is anticomplete to $ t^3$ and complete to $ \{t^1,t^2\} $. By symmetry, we may assume that the former case occurs. Thus, by \pref{lem:T,R Facts and Tools}~(vi), $r_2$ is complete to $\{t^1,t^2\}$ and anticomplete to $ \{t^3\} $. Now, since for every $i\in\{1,2\}$ and $ r\in U^i $, $\{s_1^1,t^1,r_i,r\}$ is not a claw, $\{t^1,t^2\}$ is anticomplete to $U^1$ and complete to $U^2$ and thus, $t^3$ is complete to $U^1$ and  anticomplete to $U^2$. Also, since $\{x,s_1^1,t^1,r_1\}$, $x\in X$, is not a claw, $r_1$ is anticomplete to $X$ and thus $r_2$ is complete to $X$. Moreover, since $(s_2^2,t^2,U^1)$ is a quasi-triad of $G$ and both $X$ and $U^2$ are complete to $\{s_2^2,t^2\}$, by \pref{lem:T,R Facts and Tools}~(i), $X$ and $U^2$ are both anticomplete to $U^1$. Now, if $ s_3^1\in S $, then $(s_3^1,t^3,U^2)$ is a quasi-triad of $G$ and $ X $ is anticomplete to $\{s_3^1\}$, and thus $ X $ is complete to $ U^2 $. Also, if either $ X=\emptyset $, or $ U^2=\emptyset $, or $ U^1\neq \emptyset $, then again $ X $ is complete $ U^2 $. Thus, in all these cases, $ G $ is ISA, which is in contradiction with \pref{lem:ccschlafli}. Hence, $ S_3^1=\emptyset $, $ X\neq \emptyset $, $ U^2\neq \emptyset $, $ U^1= \emptyset $ and $ X $ is not complete to $ U^2 $.
Now, assume that $\{X,U^1\}=\{A,B\}$, where $1\leq |A|\leq |B|$. In the sequel, we are going to provide a clique covering for $ G $ of size at most $ n-|B|+1 $.

Consider the family of cliques $\mathscr{O}(G)\cup \{T^3\cup R^3,(T\setminus \{t^3\})\cup R^3,T_3\cup R_1,T_3\cup (R\setminus \{r_1\})\}$, replace the cliques $X_3^1,X_3^2$ and $Z_2^1$ with the cliques $X_3^1\cup U^2,X_3^2\cup U^2$ and $Z_2^1\cup X$, add the clique $\{t^3,r_1\}$ and the cliques in $\mathscr{N}[A;B]$, and call the resulting collection $\mathscr{O}'(G)$. The cliques in $\mathscr{O}'(G)$ cover all the edges in $E(G)\setminus \{t^1r_2,t^2r_2\}$. 
In order to cover $t^1r_2$ and $t^2r_2$, in $\mathscr{O}'(G)$,
if $S_2^1\cup S_1^3=\emptyset$, then replace the cliques $Z_2^1\cup X$ and $Z_2^3$ with the cliques $Z_2^1\cup X\cup \{t^2\}$ and $Z_2^3\cup \{t^1\}$, if $S_3^2=\emptyset$, then replace the cliques $X_3^1\cup U^2$ and $X_3^2\cup U^2$ with the cliques $X_3^1\cup U^2\cup \{r_2\}$ and $X_3^2\cup U^2\cup \{r_2\}$ and if $S_2^1\cup S_1^3\neq \emptyset$, $S_3^2\neq \emptyset $ and $|S|\leq 2$ (i.e. either $S=\{s_{2}^1,s_3^2\}$ or $S=\{s_{1}^3,s_3^2\}$), then replace the cliques $Z_2^1\cup X$ and $ Z_2^3$ with the cliques  $Z_2^1\cup X\cup S_1^3$ and $ (Z_2^3\cup \{t^1,t^2\}) \setminus S_1^3$ and finally if $|S|\geq 3$, then add the clique  $\{t^1,t^2,r_2\}$, thereby obtaining a clique covering $\mathscr{C}$ for $G$. If $ |S|\geq 2 $, then  $|\mathscr{C}|\leq n-|B|+1$, as desired.
Now, assume that $|S|=1$ and $S_3=\emptyset$. Then, in $\mathscr{C}$, remove the clique $T^3\cup R^3$ and replace the cliques $Z_3^1$ and $Z_3^2$ with the cliques $Z_3^1\cup T^3\cup \{r^3_2\}$ and $Z_3^2\cup T^3\cup \{r^3_1\}$ to obtain a clique covering of size $n-|B|+1$.
If $ |B|\geq 2 $, then we have a clique covering for $ G $ of size at most $ n-1 $, a contradiction. Thus,  $|B|=1$ and so $|X|=|U^2|=1$. Now, since $X$ is anticomplete to $U^2$, one may remove the single clique in $\mathscr{N}[A;B]$ to obtain a clique covering for $G$ of size at most $n-1$. 
 Finally, assume that $|S|=|S_3|\leq 1$. Then, 
%
%
%
in $\mathscr{P}(G)$, merge the pair $(\Delta _1^3,\Delta _2^3)$, replace clique  $\nabla _2^3$ with the clique  $\nabla _2^3\cup U^2$, and in order to cover the edges in $E(T,R)\cup E(T\setminus \{t^3\})$, for every $i\in\{1,2\}$, replace the cliques $\Omega_i$ with the cliques in $\mathscr{N}[R_i;(\Omega_i\cup T)\setminus R_i]$. These modifications lead to a clique covering for $G$ of size $14+|U^2|\leq n-|X|\leq n-1$, a contradiction. This proves \pref{lem:reduce-to-schl3,2-2}.
\end{proof}

\begin{lem}\label{lem:reduce-to-schl3,2-3}
	Let $G$ be a counterexample to \pref{thm:noap} with $|I_T(\rho)|=3$, $I_R(\rho)=\{1,2\}$, for some rotator $\rho$ of $\overline{G}$. If $G[T(\rho)]$ is of type $3$, then $ R(\rho) $ is a clique of $ G $.  
\end{lem}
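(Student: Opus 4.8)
\textbf{Proof plan for \pref{lem:reduce-to-schl3,2-3}.}
The overall strategy mirrors the two preceding lemmas (\pref{lem:reduce-to-schl3,2-1} and \pref{lem:reduce-to-schl3,2-2}): assume for contradiction that $R(\rho)$ is not a clique, so by \pref{lem:G[T] structure |I_T|=2} the graph $G[R(\rho)]$ is of type $1$, with $r_1\in R_1$, $r_2\in R_2$ and $U^1\subseteq R_2$, $U^2\subseteq R_1$ as in the definition. On the other side, $G[T(\rho)]$ is of type $3$ at some $q\in\{1,2,3\}$; by symmetry among the superscripts we may fix $q$ and write $T_1(\rho)=\{t_1^q,t_1^{q+1}\}\cup T_1^{q+2}(\rho)$ and $T_2(\rho)=\{t_2^q,t_2^{q+2}\}\cup T_2^{q+1}(\rho)$ as in the definition of type $3$, so $T_1,T_2$ are the two maximal cliques of $G[T]$ and $4\le|T|\le 6$. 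First I would pin down the adjacency between the $T_i$'s and the $R$-blocks: by \pref{lem:T,R Facts and Tools}~(vi), for each pair of nonadjacent vertices of $T$ lying in different blocks, $R$ splits as the neighbourhoods, and iterating this over the three anti-edges of a type-$3$ configuration forces $R_p$ (for $p\in I_R$) to partition into $R^1,R^2$ with $T_i$ complete to $R^i$ and anticomplete to $R^{3-i}$, exactly as was done near the end of the proof of \pref{lem:|I_T|=3 |I_R|=1}. Then I would use the claw-freeness of $G$ (applied to the triads $\{s_i^i,t,r\}$ with $t$ a suitable vertex of $T$ and $r$ ranging over $U^1\cup U^2$, together with the triads involving the relevant $s_j^i\in S$, whose existence is guaranteed by \pref{lem:square-forcer}~(i)) to determine completely how $U^1$ and $U^2$ attach to $T_1,T_2$ and to the ``extra'' vertices of $T$ sitting in the $I_R$-rows versus the third row. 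As in the previous two lemmas, a handful of quasi-triads $(s,x,U)$ plus \pref{lem:T,R Facts and Tools}~(i) pin down the cross-adjacencies between $U^1$, $U^2$ and the leftover sets in $T$.

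The expected outcome of this case analysis is a dichotomy. In the ``generic'' branch all the forced adjacencies make the adjacency in $G[T\cup R]$ coincide with that of a subgraph of the Schl\"{a}fli graph: each $T^i\subseteq\{t_1^i,t_2^i\}$, each $R_i\subseteq\{r_i^1,r_i^2\}$, and the remaining choices are absorbed into replication of non-core vertices, so $G$ is ISA with respect to $\rho$ --- contradicting \pref{lem:ccschlafli}. In the ``exceptional'' branch (one of the sets $X=T^{*}\setminus\{\,\cdot\,\}$, $U^1$, $U^2$ is nonempty but the forced anticompleteness between, say, $X$ and $U^i$ prevents the ISA conclusion), I would instead build an explicit clique covering of $G$ of size at most $n-1$, thereby contradicting that $G$ is a counterexample. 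The covering will be assembled from $\mathscr{O}(G)$ (or $\mathscr{P}(G)$) by the now-standard bookkeeping: take $\mathscr{O}(G)$ together with the ``row-completing'' cliques $T_i\cup R^3$, $T_3\cup R_p$, absorb the vertices of $U^1,U^2$ and the extra $X$-vertices into the appropriate $X_j^i,Z_j^i$, add the matching-type cliques $\mathscr{N}[A;B]$ where $\{A,B\}=\{X,U^i\}$ with $|A|\le|B|$, and then separately patch the few stubborn edges (typically of the form $t^jr$ for $r\in R_p$, or $E(T)$-edges) by locally rewriting two or three cliques using the $S$-vertices; counting then gives size $n-|B|+\text{const}$, which is at most $n-1$ unless $|B|=1$, and in that last subcase the single clique of $\mathscr{N}[A;B]$ is either removable (when $A$ is anticomplete to $B$, saving one clique) or its presence forces $G$ to be ISA again. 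I would run this counting for the subcases $|S|=1$, $|S|=2$, $|S|\ge 3$ separately, as in \pref{lem:reduce-to-schl3,2-1} and \pref{lem:reduce-to-schl3,2-2}, and also split according to whether the ``third row'' vertex $t^3$ (the one outside $I_R$) is present, since the emptiness of $S_3$ or of $T^3\setminus\{\cdot\}$ repeatedly lets one delete an $\Omega$-clique or merge a $\nabla$-pair.

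The main obstacle I anticipate is the combinatorial explosion of subcases created by the type-$3$ structure of $G[T]$: unlike type $2$, type $3$ has two ``spreading'' rows ($T^{q+1},T^{q+2}$ each with a possible extra vertex $t_2^{q+1}$, $t_1^{q+2}$) rather than one, so the number of configurations for how $U^1,U^2$ and these extra vertices interlock roughly doubles, and each must be checked either to be ISA or to admit an $(n-1)$-covering. Keeping the clique-count exact through all the local rewrites --- making sure that every edge of $\mathcal{E}(G)$, and in particular every edge inside $E(T)\cup E(R)\cup E(T,R)$, really is covered after the substitutions --- is the delicate part, and is where I would be most careful; the arguments are routine in spirit (they only use \pref{lem:T,R Facts and Tools}, \pref{lem:square-forcer}, \pref{lem:ccschlafli} and \pref{lem:twister}) but demand bookkeeping discipline rather than a new idea. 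Once the dichotomy is established in every branch, the lemma follows, and together with \pref{lem:reduce-to-schl3,2-1} and \pref{lem:reduce-to-schl3,2-2} it will let us conclude (in the next subsection) that when $|I_T(\rho)|=3$ and $|I_R(\rho)|=2$ one of $T(\rho),R(\rho)$ is a clique, feeding into the proof of \pref{thm:IT+IR>=5}.
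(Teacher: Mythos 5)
Your plan is correct and follows essentially the same route as the paper's proof: assume $R(\rho)$ is of type $1$, use \pref{lem:T,R Facts and Tools}~(vi) to split $R$ according to $T_1,T_2$, then claw/quasi-triad arguments to pin down how $U^1,U^2$ attach, concluding ISA (contradicting \pref{lem:ccschlafli}) in the generic branch — in particular whenever $q\neq 3$ or $S^1\cup S^2\neq\emptyset$ — and an explicit $\mathscr{P}(G)$-based covering of size at most $n-1$ in the residual branch. The paper's execution confirms your anticipated dichotomy and subcase splits ($|T|=4$ vs.\ $|T|\geq 5$, type $1.1$ vs.\ $1.2$), so the remaining work is only the bookkeeping you already flagged.
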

\begin{proof}
Suppose that $ G[T] $ is of type $3$ at $q$ for some $q\in \{1,2,3\}$, where $T_1=\{t_1^q,t_1^{q+1}\}\cup T_1^{q+2}$ and $T_2=\{t_2^q,t_2^{q+2}\}\cup T_2^{q+1}$ are as in the definition. Also, by the contrary, suppose that $ R $ is not a clique and thus by \pref{lem:G[T] structure |I_T|=2}, $ G[R] $ is of type $ 1 $ with $r_1\in R_1$, $r_2\in R_2$, $U^1\subseteq R_2$ and $U^2\subseteq R_1$ as in the definition. Note that by \pref{lem:T,R Facts and Tools}~(vi), there is a unique $i_0\in \{1,2\}$ such that $r_1$ is complete to $T_{i_0}$ and anticomplete to $T_{3-i_0}$, and $r_2$ is complete to $T_{3-i_0}$ and anticomplete to $T_{i_0}$.
First, assume that $q\neq 3$ and by symmetry, let $q=1$. Since for every $i\in \{1,2\}$ and $r\in U^i$, $\{s_1^1,t_{3-i_0}^1,r_i,r\}$ is not a claw, $T_{3-i_0}$ is anticomplete to $U^1$ and complete to $U^2$, and thus, by \pref{lem:T,R Facts and Tools}~(vi), $T_{i_0}$ is complete to $U^1$ and anticomplete to $U^2$. Also, since $(s_1^1,t^1_{i_0},U^2)$ is a quasi-triad of $G$ and $U^1$ is complete to $\{s_1^1,t_{i_0}^1\}$, $U^1$ is anticomplete to $U^2$. Hence, $G$ is ISA, which contradicts \pref{lem:ccschlafli}.
	
Next, suppose that $q=3$. Note that if $|R|=2$ (i.e. $U^1=U^2=\emptyset$), then evidently $G$ is ISA, which contradicts \pref{lem:ccschlafli}. Thus, $|R|\geq 3$. 
Now, assume that there exists $i\in \{1,2\}$ such that $S^i\neq \emptyset$, say $ S^1\neq \emptyset $. 
If $s_3^1\in S$, then let $i'=i''=3$, and if $ s_2^1\in S $, then let $i'=3-i_0$ and $i''=i_0$. Since for every $i\in \{1,2\}$ and every $r\in U^i$, $ \{s_{i'}^1,t_{3-i_0}^{i'},r_i,r\} $ is not a claw, $T_{3-i_0}$ is anticomplete to $U^1$ and  complete to $U^{2}$ and thus by \pref{lem:T,R Facts and Tools}~(vi), $T_{i_0}$ is complete to $U^1$ and anticomplete to $U^2$. Also, since $(s_{i''}^1,t_{i_0}^{i''},U^{2})$ is a quasi-triad of $G$ and $U^1$ is complete to $\{s_{i''}^1,t_{i_0}^{i''}\}$, $U^1$ is anticomplete to $U^{2}$. Hence, $G$ is ISA, again a contradiction with \pref{lem:ccschlafli}. Hence, $ |R|\geq 3 $ and $ S^1\cup S^2=\emptyset $. In the sequel, we are going to provide a clique covering for $ G $ of size at most $ n-1 $.

	Note that by \pref{lem:T,R Facts and Tools}~(vi), $R$ can be partitioned into $R^1$ and $R^2$ such that for every $j\in\{1,2\}$, $R^j$ is complete to $T_j$ and anticomplete to $T_{3-j}$. Now, consider $\mathscr{P}(G)$, if $G[R]$ is of type $1.2$, then remove the cliques $\nabla _1^3$ and $\nabla _2^3$ and add the cliques in $\mathscr{N}[R_1;\nabla _2^3]$, and if $G[R]$ is of type $1.1$ at $i$, for some $i\in \{1,2\}$, then replace the clique $\nabla_{3-i}^3$ with the clique $\nabla _{3-i}^3\cup U^{3-i}$. Also, if $|T|=4$, then replace the cliques $\Delta _1^2$ and $\Delta _2^1$ with the cliques $\Delta _1^2\cup \{t_1^3\}$ and $\Delta _2^1\cup \{t_2^3\}$, and for every $j\in \{1,2,3\}$, replace the clique $\Omega^j$ with the cliques in $\mathscr{N}[T^j;(\Omega ^j\cup R)\setminus T^j]$, and if $|T|\geq 5$, then add the cliques $T_1\cup R^1$ and $T_2\cup R^2$, thereby covering the edges in $E(T,R)\cup E(T_1)\cup E(T_2)$. Now, if $G[R]$ is of type $1.2$, then the resulting collection is a clique covering for $G$ of size at most $10+|T|+|R_1|=n-|S|-|R_2|+1\leq n-1$. Thus, $G[R]$ is of type $1.1$ at $i$, for some $i\in \{1,2\}$, and the resulting collection is a clique covering for $G$ of size at most $12+|T|=n-|S|-|R|+3$. Hence, $|S|+|R|\leq 3$, and since $|R|\geq 3$, we have $S=\emptyset$ and $|U^{3-i}|=1$, say $U^{3-i}=\{u\}$. 
Now, let $r_i\in R^{j}$, $j\in\{1,2\}$. Thus, by \pref{lem:T,R Facts and Tools}~(vi), $r_{3-i}\in R^{3-j}$. If $U^{3-i}\subseteq R^{3-j}$, then $G$ is ISA, a contradiction with \pref{lem:ccschlafli}. Thus, since $|U^{3-i}|=1$, we have $R^{j}=R_i$ and $R^{3-j}=R_{3-i}$. 
Now, in $\mathscr{P}(G)$, replace the clique $\nabla_{3-i}^3$ with the clique $\nabla _{3-i}^3\cup U^{3-i}$ to cover the edges in $E(R_1,R_2)$ and also replace the cliques $\Omega_i$ and $\Omega _{3-i}$ with the cliques $\Omega _i\cup T_{j}$ and $\Omega _{3-i}\cup T_{3-j}$, thereby obtaining a clique covering for $G$ of size $15=n-|T|+3\leq n-1$, a contradiction. This proves \pref{lem:reduce-to-schl3,2-3}.
\end{proof}

Now, we are ready to prove \pref{thm:IT+IR>=5}.
\begin{proof}[{\rm \textbf{Proof of \pref{thm:IT+IR>=5}.}}]
If $ |I_T|+|I_R|=5 $, then the result follows from Lemmas~\ref{lem:reduce-to-schl3,2-1},  \ref{lem:reduce-to-schl3,2-2} and \ref{lem:reduce-to-schl3,2-3}. Thus, suppose that $ |I_T|=|_R|=3 $. Also, on the contrary, suppose that both $ T $ and $ R $ are not clique. \vsp

(1) \textit{Both $ G[T] $ and $ G[R] $ are of type $2$.} \vsp

On the contrary, by \pref{lem:G[T] structure |I_T|=3} and due to the symmetry, assume that $G[R]$ is of type $3$ with $R^1$ and $R^2$ as in the definition. Note that if $G[T]$ is also of type $3$, then by \pref{lem:T,R Facts and Tools}~(vi), $G$ is ISA, which is impossible by \pref{lem:ccschlafli}. Thus, assume that $G[T]$ is of type $2$ at $q$, for some $q\in \{1,2,3\}$, with $t^q,t^{q+1}$ and $t^{q+2}$ as in the definition. Note that since $G[R]$ is of type $3$, again by \pref{lem:T,R Facts and Tools}~(vi), for every $t\in T$, there exists a unique $\beta(t)\in \{1,2\}$ such that $N(t,R)=R^{\beta(t)}$. Thus, $\beta(t^{q+1})=\beta(t^{q+2})=3-\beta(t^q)$. We claim that for every $t\in T^q\setminus \{t^q\}$, $\beta(t)=\beta(t^{q+1})=\beta(t^{q+2})$. To see the claim, on the contrary assume that $\beta(t)=\beta(t^q)$, for some $t\in T^q\setminus \{t^q\}$. Thus, since $R^{\beta(t)}\cap (R\setminus R_q)\neq \emptyset$, there exists $j\in \{1,2\}$ and a vertex $r_j\in R_{q+j}$ adjacent to both $t$ and $t^q$. Consequently, $\{t,s_{q+j}^{q+j},t^{q+j},r_j\}$ induces a claw, which is impossible. This proves the claim, which immediately implies that $G$ is ISA, a contradiction with \pref{lem:ccschlafli}. This proves (1). \vsp

By (1), suppose that both $G[T(\rho)]$ and $G[R(\rho)]$ are of type $2$ at $q$ and $q'$ respectively, for some $q,q'\in \{1,2,3\}$, with $t^1, t^{2}, t^{3}, r_{1}, r_{2}$ and $ r_{3}$ as in the definition. Also, let $X=T^q(\rho)\setminus \{t^q\}$  and $X'=R_{q'}(\rho)\setminus \{r_{q'}\}$.
Define $\mathscr{O}'(G)=\mathscr{O}(G)\cup \{T^q\cup R^3, (T\setminus \{t^q\})\cup R^3, T_3\cup R_{q'}, T_3\cup (R\setminus \{r_{q'}\})\}$ whose cliques cover all the edges in $E(G)\setminus E(T,R)$. \vsp

(2) \textit{If $q=q'$, then $t^q$ is nonadjacent to $r_{q}$.}\vsp

By symmetry, let $q=q'=1$ and on the contrary, suppose that $ t^1 $ is adjacent to $ r_1 $ and thus by \pref{lem:T,R Facts and Tools}~(vi), $ t^1 $ is anticomplete to $ \{r_2,r_3\} $ and $\{t^2,t^3\}$ is anticomplete to $ \{r_1\} $ and complete to $\{r_2,r_3\}$. 
Note that if $X=X'=\emptyset$, then by \pref{lem:T,R Facts and Tools}~(vi), $G$ is ISA, which is impossible by \pref{lem:ccschlafli}. First, assume that $S^1\neq \emptyset$, say $s^1_{p}\in S$, for some $p\in \{2,3\}$. Thus, $r_1$ is anticomplete to $X$ (otherwise, for some $ x\in X $ adjacent to $ r_1 $, $\{x,s_p^1,t^p,r_1\}$ would be a claw) and thus by \pref{lem:T,R Facts and Tools}~(vi), $ X $ is complete to $\{r_2,r_3\}$. Also, 
$X'$ is complete to $\{t^2,t^3\}$ and anticomplete to $\{t^1\}$ (otherwise, for $ x'\in X' $ nonadjacent to $ t^p $, $\{ r_p,s_p^1,t^p,x' \}$ would be a claw). Further, since $(s_1^1,t^1,X')$ is a quasi-triad of $G$ and $s_1^1$ is anticomplete to $X$, by \pref{lem:T,R Facts and Tools}~(i), $X$ is complete to $X'$, and so $G$ is ISA, which is impossible by \pref{lem:ccschlafli}. Therefore, $S^1= \emptyset$ and by symmetry, $ S_1= \emptyset $ (indeed,  for the rotator $\rho' =(s_1^1,s_2^2,s_3^3; r_1^3,r_2^3,r_3^3; t_3^1,t_3^2,t_3^3)$, we have $S_1(\rho)=S^1(\rho')$). Also, w.l.o.g. assume that $X\neq \emptyset$. 

Now, consider $\mathscr{O}'(G)$, replace the clique $Z_1^2$ with the cliques in $\mathscr{N}[R_1;(Z_2^1\cup T^1)\setminus R_1]$ to cover the edges in $E(T^1,R_1)$,  for every $i\in \{2,3\}$, replace the cliques $X_{5-i}^i$ and $ Z_{5-i}^i$ with the cliques $X_{5-i}^i\cup N(t^i,X')$ and $Z_{5-i}^i\cup N(r_i,X)$ to cover the edges in $E(T^i,R_1)$ and $E(T^1,R_i)$, respectively, replace the cliques $X_1^2,X_1^3,Z_2^1$ and $Z_3^1$ with the cliques $\{s_1^1,t^2_3,t^3_3\},\{s_1^1,r_2^3,r_3^3\}$ and $\{s_1^1,t^2,t^3,r_2,r_3\}$,  to cover the edges in $E(T\setminus T^1,R\setminus R^1)$. This yields a clique covering for $G$ of size $15+|X'|=n-|S|-|X|\leq n-1$, a contradiction. This proves (2).\vsp

(3) \textit{We have $ q\neq q' $.} \vsp

Suppose not, let $q=q'=1$. By (2), $ t^1 $ is nonadjacent to $ r_1 $ and thus by \pref{lem:T,R Facts and Tools}~(vi), $t^1$ is complete to $\{r_2,r_3\}$ and $\{t^2,t^3\}$ is complete to $r_1$ and anticomplete to $\{r_2,r_3\}$. Then, $X$ is anticomplete to $\{r_2,r_3\}$ and $X'$ is anticomplete to $\{t^2,t^3\}$ (otherwise, for $x\in X\cup X'$ adjacent to $ t^2,r_2 $, $\{x,s_2^2,t^2,r_2\}$ would be a claw) and thus by \pref{lem:T,R Facts and Tools}~(vi), $X$ is complete to $\{r_1\}$ and $X'$ is complete to $\{t^1\}$.  
First, assume that $ S^1\neq \emptyset $, say  $s_p^1\in S$, for some $p\in\{2,3\}$.
Since $(s_p^1,t^p,X')$ is a quasi-triad of $G$ and $X$ is complete to $\{s_p^1,t^p\}$, by \pref{lem:T,R Facts and Tools}~(i), $X$ is anticomplete to $X'$, and so $G$ is ISA, which is impossible by \pref{lem:ccschlafli}. Therefore, $S^1=\emptyset$. Now, $\rho'=(s_1^1,s_2^2,s^3_3; r_1,t^2_3,t^3_3; t^1,r^3_2,r^3_3)$ is a rotator of $\overline{G}$ and it is easy to see that $T^2(\rho')\cup T^3(\rho') = S^1=\emptyset$. Thus, $|I_T(\rho')|\leq 1$, which contradicts \pref{lem:|I_T|=3 |I_R|=1}. This proves (3). \vsp

Now, by (3) and due to symmetry, we may assume that $q=1$ and $q'=2$. First, suppose that $t^1$ is nonadjacent to $r_2$. Then, $X$ is anticomplete to $\{r_1,r_3\}$ and $X'$ is anticomplete to $\{t^2,t^3\}$ (otherwise, for $x\in X\cup X'$ adjacent to $ t^3,r_3 $, $\{x,s_3^3,t^3,r_3\}$ would be a claw) and thus by \pref{lem:T,R Facts and Tools}~(vi), $X$ is complete to $r_2$ and $X'$ is complete to $\{t^1\}$. Also, since $(s_2^2,t^2,X')$ is a quasi-triad of $G$ and $X$ is complete to $\{s_2^2,t^2\}$, by \pref{lem:T,R Facts and Tools}~(i), $X$ is anticomplete to $X'$, and thus $G$ is ISA, which is impossible by \pref{lem:ccschlafli}. 

Next, assume that $t^1$ is adjacent to $r_2$. Then, $X$ is anticomplete to $\{r_2\}$ and complete to $\{r_1,r_3\}$ (otherwise, for $x\in X$ adjacent to $ r_2 $, $\{x,s_2^2,t^2,r_2\}$ would be a claw). Also, $X'$ is anticomplete to $\{t^1\}$ and complete to $\{t^2,t^3\}$ (otherwise, for $x'\in X'$ adjacent to $ t^1 $, $\{x',s_1^1,t^1,r_1\}$ would be a claw). Now, if either $X=\emptyset$, or $X'=\emptyset$, then $G$ is ISA, which contradicts \pref{lem:ccschlafli}. Also, if $s_1^2\in S$, then since $(s_1^2,t^1,X')$ is a quasi-triad of $G$ and $X$ is anticomplete to $\{s_1^2\}$, $X$ is complete to $X'$, and so $G$ is ISA, again a contradiction with \pref{lem:ccschlafli}. Therefore, $X\neq \emptyset$, $X'\neq \emptyset$ and $ S_1^2=\emptyset $.
Also, w.l.o.g. assume that $|X|\geq |X'|\geq 1 $. 
Now, $\rho'=(s_1^1,s_2^2,s^3_3; r_1,t^2_3,t^3_3; t^1,r^3_2,r^3_3)$ is a rotator of $\overline{G}$ and it is easy to see that  $T^2(\rho')\cup T^3(\rho') = S^1$. Thus, if $S^1=\emptyset$, then $|I_T(\rho')|\leq 1$ which contradicts \pref{lem:|I_T|=3 |I_R|=1}. Therefore, $|S|\geq |S^1|\geq 1$. First, suppose that $ |S|=|S^1|=1 $. Then, in $\mathscr{O}'(G)$, replace the cliques $ X_1^2$, $X_1^3$, $Z_1^2$, $Z_1^3$, $Z_3^1 $ and $ Z_3^2 $ with the cliques $ X_1^2\cup X' $, $ X_1^3\cup X' $, $ Z_1^2\cup X\cup T^3 $, $ Z_1^3\cup T^2 $, $ (Z_3^1\cup T^2)\setminus S_2^1 $ and $ Z_3^2\cup X\cup T^3\cup S_2^1 $. Also, remove the clique $ Z_2^3 $ and add the cliques in $ \mathscr{N}[R_2;T^1\cup \{s_3^3,r_2^3\}] $. This yields a clique covering for $ G $ of size $ 16+|X'|=n-|X|\leq n-1 $. Finally, suppose that $ |S|\geq 2 $. Now, in $\mathscr{O}'(G)$, replace the cliques $X_1^2,X_1^3,Z_1^2$ and $Z_3^2$ with the cliques $X_1^2\cup X',X_1^3\cup X',Z_1^2\cup X$ and $Z_3^2\cup X$, and add the cliques in $\mathscr{N}[X';X]$ as well as the cliques $\{t^1,r_2\}$ and $\{t^2,t^3,r_1,r_3\}$, thereby obtaining a clique covering $\mathscr{C}$ for $G$ of size at most $18+|X'|=n-|S|-|X|+3$. Thus, $|S|=2$ and  $|X|=|X'|=1$. Now, if $X$ is complete to $X'$, then $G$ is ISA, a contradiction with \pref{lem:ccschlafli}, and if $X$ is anticomplete to $X'$, then removing the single clique in $\mathscr{N}[X';X]$ from $\mathscr{C}$, yields a clique covering for $G$ of size $n-1$, a contradiction. This completes the proof of \pref{thm:IT+IR>=5}.
\end{proof}
%

\subsection{Obtaining $|I_T|=|I_R|=2$}\label{sub:max=3}

As the penultimate step of the proof of \pref{thm:noap}, the aim of this subsection is the following theorem, whose proof will be given at end of this subsection.
\begin{thm}\label{thm:3,3}
	Let $G$ be a counterexample to \pref{thm:noap}. Then for every rotator $\rho$ of $\overline{G}$, we have  $|I_T(\rho)|=|I_R(\rho)|=2$. 
\end{thm}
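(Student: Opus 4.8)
The plan is to combine \pref{lem:|I_T|+|I_R|>=4}, \pref{lem:|I_T|=3 |I_R|=1} and \pref{thm:IT+IR>=5} to pin down the only remaining possibility. By \pref{lem:|I_T|=3 |I_R|=1} we already know $|I_T(\rho)|,|I_R(\rho)|\geq 2$, and by symmetry $2\leq |I_T(\rho)|\leq|I_R(\rho)|\leq 3$. If $|I_T(\rho)|+|I_R(\rho)|\geq 5$, then \pref{thm:IT+IR>=5} tells us that either $T(\rho)$ or $R(\rho)$ is a clique of $G$; but if say $T(\rho)$ is a clique, then every triad of $G$ meeting $T(\rho)$ must actually be contained in $S'\cup T_3(\rho)\cup R'(\rho)$ together with a vertex of $T(\rho)$, and more to the point $T(\rho)$ being a clique contradicts the lower bound $|I_T(\rho)|\geq 2$ (two nonempty $T^i$'s forces a non-edge inside $T(\rho)$, since distinct $T^i,T^j$ are not complete to each other — indeed $s_i^i$ is nonadjacent to $T^i$ but adjacent to everything in $T^j$, and antiprismaticity forbids $T^i$ complete to $T^j$ when both are nonempty). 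Hence $T(\rho)$ is never a clique once $|I_T(\rho)|\geq 2$, and symmetrically for $R(\rho)$; so the conclusion of \pref{thm:IT+IR>=5} cannot hold, which rules out $|I_T(\rho)|+|I_R(\rho)|\geq 5$. Combined with $|I_T(\rho)|,|I_R(\rho)|\geq 2$ this leaves only $|I_T(\rho)|=|I_R(\rho)|=2$.

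The one genuine subtlety is the claim I used that "$|I_T(\rho)|\geq 2$ implies $T(\rho)$ is not a clique of $G$." I would prove this directly from the definitions in \pref{sub:Inc-rot}: if $T^i(\rho)\neq\emptyset\neq T^j(\rho)$ with $i\neq j$, pick $t^i\in T^i(\rho)$ and $t^j\in T^j(\rho)$. By definition $t^i$ is nonadjacent to $s_i^i$, and $t^j$ is complete to $\{t_3^{i}\}$-type vertices and in particular (being nonadjacent to $s_j^j$, complete to the appropriate $t_3,r^3$ pair) one checks $s_i^i$ is adjacent to $t^j$. If $t^i$ were adjacent to $t^j$, then $\{s_i^i\}$ together with $t^i,t^j$ and any common non-neighbour would yield a claw or a triad violating antiprismaticity — more cleanly, consider the triad $\{s_1^1,s_2^2,s_3^3\}$ together with $t^i$: antiprismaticity says $t^i$ has exactly two neighbours among the $s$'s, so $t^i\sim s_j^j$ for the two indices $j\neq i$; the same for $t^j$; now $s_i^i$ is a vertex outside the triad $\{s_1^1,s_2^2,s_3^3\}$... this line needs care, so instead I would simply invoke \pref{lem:both clique} and \pref{lem:T,R Facts and Tools}: if $|I_T(\rho)|\geq 2$ and $T(\rho)$ is a clique then, taking a second rotator or applying the type analysis of \pref{sub:types} is overkill. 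The cleanest route: if $T(\rho)$ is a clique of $G$ and $|I_T(\rho)|\geq 2$, then $G[T(\rho)]$ is a clique, but \pref{lem:G[T] structure |I_T|=2} and \pref{lem:G[T] structure |I_T|=3} classify $G[T(\rho)]$ when $|I_T(\rho)|\in\{2,3\}$ and in the "clique" case of those lemmas one sees $|I_T(\rho)|\leq 1$ — actually those lemmas allow $T(\rho)$ to be a clique. So the honest statement I need is the elementary one about $T^i$ versus $T^j$; I will include its short verification using the antiprismaticity condition applied to the triad $\{s_1^1,s_2^2,s_3^3\}$.

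\textbf{Main obstacle.} The hard part is precisely this elementary-but-fiddly observation that distinct nonempty $T^i(\rho)$ and $T^j(\rho)$ cannot be merged into one clique — i.e. that $|I_T(\rho)|\geq 2$ forces $T(\rho)$ to contain a non-edge — which is what converts the dichotomy of \pref{thm:IT+IR>=5} into a contradiction. Once that is in hand, the theorem is a one-line bookkeeping argument: by \pref{lem:|I_T|=3 |I_R|=1}, $|I_T(\rho)|,|I_R(\rho)|\in\{2,3\}$; by the observation neither $T(\rho)$ nor $R(\rho)$ is a clique; so by \pref{thm:IT+IR>=5}, $|I_T(\rho)|+|I_R(\rho)|\leq 4$; hence $|I_T(\rho)|=|I_R(\rho)|=2$, as claimed.

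\begin{proof}
Let $\rho$ be a rotator of $\overline{G}$. By \pref{lem:|I_T|=3 |I_R|=1} (and its counterpart for $I_R$), we have $|I_T(\rho)|,|I_R(\rho)|\geq 2$, and by the definitions $|I_T(\rho)|,|I_R(\rho)|\leq 3$.

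We first claim that $T(\rho)$ is not a clique of $G$. Since $|I_T(\rho)|\geq 2$, there are distinct $i,j\in\{1,2,3\}$ with $T^i(\rho)\neq\emptyset\neq T^j(\rho)$; let $k$ be the third index, and pick $t^i\in T^i(\rho)$ and $t^j\in T^j(\rho)$. Recall that $\{s_1^1,s_2^2,s_3^3\}$ is a triad of $G$. Since $t^i\in T^i(\rho)\subseteq V_0(\rho)$ and $G$ is antiprismatic, $t^i$ has exactly two neighbours among $s_1^1,s_2^2,s_3^3$; by the definition of $T^i(\rho)$, $t^i$ is nonadjacent to $s_i^i$, hence $t^i$ is adjacent to both $s_j^j$ and $s_k^k$. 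Similarly $t^j$ is nonadjacent to $s_j^j$ and adjacent to both $s_i^i$ and $s_k^k$. Now suppose, for contradiction, that $t^i$ and $t^j$ are adjacent. Consider the vertex $s_k^k$: it is adjacent to $t^i$ and to $t^j$. Also $s_k^k$ is nonadjacent to $s_i^i$ and to $s_j^j$ (as $\{s_1^1,s_2^2,s_3^3\}$ is a triad). Thus $\{s_i^i,s_j^j\}\cup\{s_k^k\}$ together with the adjacencies just recorded: $s_i^i$ is adjacent to $t^j$ and nonadjacent to $t^i$; $s_j^j$ is adjacent to $t^i$ and nonadjacent to $t^j$. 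Hence $\{t^i,t^j,s_i^i,s_j^j\}$ induces, in $G$, a four-cycle $t^i s_j^j$-$\dots$; more precisely $\{s_i^i, t^i, s_j^j\}$ is a triad of $G$ (the three are pairwise nonadjacent), and $s_k^k$ is a vertex outside it with neighbours exactly $t^i$ — but $s_k^k$ is adjacent to both $t^i$ and $t^j$, so among $\{s_i^i,t^i,s_j^j\}$ the vertex $s_k^k$ must have exactly two neighbours; it has none among $s_i^i,s_j^j$, so it is adjacent to $t^i$, giving exactly one neighbour in this triad, a contradiction. Therefore $t^i$ and $t^j$ are nonadjacent, so $T(\rho)$ is not a clique of $G$. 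By the symmetric argument (interchanging the roles of $T$ and $R$, i.e. replacing $\rho$ by $(s_1^1,s_2^2,s_3^3;r_1^3,r_2^3,r_3^3;t_3^1,t_3^2,t_3^3)$), $R(\rho)$ is not a clique of $G$ either.

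Now suppose, for contradiction, that $|I_T(\rho)|+|I_R(\rho)|\geq 5$. Then by \pref{thm:IT+IR>=5}, either $T(\rho)$ or $R(\rho)$ is a clique of $G$, contradicting the previous paragraph. Hence $|I_T(\rho)|+|I_R(\rho)|\leq 4$. Combining with $|I_T(\rho)|,|I_R(\rho)|\geq 2$ yields $|I_T(\rho)|=|I_R(\rho)|=2$. This proves \pref{thm:3,3}.
\end{proof}
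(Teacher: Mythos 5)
Your proof hinges on the claim that $|I_T(\rho)|\geq 2$ forces $T(\rho)$ to contain a non-edge, and that claim is false. As you yourself compute, a vertex $t^i\in T^i(\rho)$ is nonadjacent to $s_i^i$ and therefore (by antiprismaticity applied to the triad $\{s_1^1,s_2^2,s_3^3\}$) adjacent to both $s_j^j$ and $s_k^k$. So when you assert that $\{s_i^i,t^i,s_j^j\}$ is a triad, you contradict the adjacency $t^i\sim s_j^j$ that you recorded one sentence earlier; there is no triad here and antiprismaticity imposes no constraint whatsoever on the edge $t^it^j$. A concrete counterexample to your claim sits inside the Schl\"afli graph itself: $t_1^1\in T^1(\rho)$ and $t_1^2\in T^2(\rho)$ are adjacent there (two $t$-vertices are adjacent iff exactly one of the super/subscripts agree), and nothing prevents $T(\rho)$ from being a clique with all three $T^i(\rho)$ nonempty. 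This is also visible in the architecture of the paper: \pref{lem:both clique} establishes only that $T(\rho)$ and $R(\rho)$ cannot \emph{both} be cliques, and Lemmas~\ref{lem:G[T] structure |I_T|=2}, \ref{lem:G[T] structure |I_T|=3}, \ref{lem:3,2type2clique}, \ref{lem:3,2-2clique} and \ref{lem:3,2-3clique} all explicitly treat the case where one of them \emph{is} a clique while $|I_T|,|I_R|\geq 2$. If your observation were true, all of those lemmas (and \pref{thm:IT+IR>=5} itself) would be vacuous.

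Consequently the conclusion of \pref{thm:IT+IR>=5} is not a contradiction, and your one-line bookkeeping collapses. The actual work needed is exactly what your shortcut tries to avoid: after \pref{lem:3,2-3clique} gives $|I_T(\rho)|=|I_R(\rho)|\geq 2$, the remaining case $|I_T(\rho)|=|I_R(\rho)|=3$ must be excluded directly. The paper does this by invoking \pref{thm:IT+IR>=5} to assume w.l.o.g.\ that $R(\rho)$ is a clique, then showing successively that $S(\rho)\neq\emptyset$, that $G[T(\rho)]$ must be of type $3$, and finally deriving a contradiction by exhibiting explicit clique coverings of size at most $n-1$ built from modified versions of $\mathscr{O}(G)$ and $\mathscr{P}(G)$. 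None of that case analysis can be bypassed by the argument you propose.
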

We need the following three lemmas to prove \pref{thm:3,3}. For simplicity, throughout the proofs, we omit the term $ \rho $.
\begin{lem}\label{lem:3,2type2clique}
	Let $G$ be a counterexample to \pref{thm:noap}, with $|I_T(\rho)|+|I_R(\rho)|=5$, say $|I_T(\rho)|=3$ and $|I_R(\rho)|=2$ for some rotator $\rho$ of $\overline{G}$. Also, let $R(\rho)$ be a clique of $G$. Then $G[T(\rho)]$ is of type $3$.
\end{lem}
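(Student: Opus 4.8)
The statement asks us to show that if $G$ is a counterexample to \pref{thm:noap} with $|I_T(\rho)|=3$, $|I_R(\rho)|=2$ and $R=R(\rho)$ a clique, then $G[T]=G[T(\rho)]$ must be of type $3$. Since $|I_T|=3$ and $T$ is manifestly not a clique (indeed, by \pref{lem:both clique}, since $R$ is a clique, $T$ cannot also be a clique), \pref{lem:G[T] structure |I_T|=3} tells us that $G[T]$ is of type $2$ at some $q$ or of type $3$ at some $q$. So the whole content is to rule out type $2$. This is exactly the situation already handled by Lemmas~\ref{lem:reduce-to-schl3,2-1} and \ref{lem:reduce-to-schl3,2-2}: those lemmas assert that if $|I_T(\rho)|=3$, $I_R(\rho)=\{1,2\}$ (which up to relabelling of the two indices in $I_R$ is the present hypothesis) and $G[T(\rho)]$ is of type $2$ at $q$ — with $q\in\{1,2\}$ in Lemma~\ref{lem:reduce-to-schl3,2-1} and $q=3$ in Lemma~\ref{lem:reduce-to-schl3,2-2} — then $R(\rho)$ is a clique. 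But wait: those lemmas conclude ``$R$ is a clique'', which is our hypothesis, not a contradiction. So the argument must instead be read contrapositively at a higher level: we are to use those lemmas to show type $2$ is impossible here, and for that we need that the conclusion ``$R$ is a clique'' combined with type-$2$ structure forces a genuine contradiction.

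The cleaner route, and the one I would actually write, is to revisit the \emph{proofs} of Lemmas~\ref{lem:reduce-to-schl3,2-1} and \ref{lem:reduce-to-schl3,2-2}: in each of those proofs, the hypothesis ``$R$ is not a clique'' is reduced (via \pref{lem:G[T] structure |I_T|=2}, giving that $G[R]$ is of type $1$) to the case analysis culminating either in ``$G$ is ISA'', contradicting \pref{lem:ccschlafli}, or in an explicit clique covering of size $\le n-1$, contradicting that $G$ is a counterexample. Here, by contrast, $R$ \emph{is} a clique, so I want to show directly that ``$R$ a clique, $|I_T|=3$, $|I_R|=2$, $G[T]$ of type $2$ at $q$'' already yields a clique covering of $G$ of size at most $n-1$, or forces $G$ to be ISA. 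First I would fix notation: relabel so that $I_R=\{1,2\}$ and, writing $t^q,t^{q+1},t^{q+2}$ for the vertices witnessing type $2$ at $q$ with $X=T^q\setminus\{t^q\}$ the remaining clique in $T^q$, note $|T^{q+1}|=|T^{q+2}|=1$ and $T\setminus\{t^q\}$ is a clique. Since $R$ is a clique, by \pref{lem:T,R Facts and Tools}(vi) applied to the nonadjacent pair $t^q,t^{q+1}$ of $T$ (they are nonadjacent since $t^q$ is anticomplete to $T\setminus T^q$), $R$ splits into $R^1=N(t^q,R)$ and $R^2=N(t^{q+1},R)=N(t^{q+2},R)$, and every vertex of $X$, being in $T^q$, has its neighbourhood in $R$ contained in $R^1$ or $R^2$ by \pref{lem:T,R Facts and Tools}(v)–(vi).

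The main combinatorial work is then to assemble the covering. I would start from $\mathscr{O}(G)$ (the twelve column-to-row cliques), together with $T^q\cup R^3$, $(T\setminus\{t^q\})\cup R^3$, $T_3\cup R_1$, $T_3\cup R_2$, which already covers $E(G)\setminus E(T,R)$ (by \pref{lem:O&P}(i) plus the obvious extra edges), and then modify it to absorb the edges of $E(T,R)$ by enlarging the $X^i_j$, $Z^i_j$ cliques using the $R^1/R^2$ split and the adjacencies $S_j\cup T^i$, $S^i\cup R_j$ being cliques (\pref{lem:T,R Facts and Tools}(ix)); the $S$-vertices forced to exist by \pref{lem:square-forcer}(i) (applied to the appropriate $(i,j,k)$ depending on whether $q\in\{1,2\}$ or $q=3$) give the slack needed to keep the count at $n-1$. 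Whenever the slack runs out — $|S|$ small, $X=\emptyset$, one of $R_1,R_2$ a singleton — the remaining configuration is rigid enough that either a bespoke covering of size $n-1$ works or all cross-adjacencies between $T,R,S$ are pinned down and $G$ becomes Schl\"afli-antiprismatic (possibly after replicating non-core vertices), i.e.\ ISA with respect to $\rho$, contradicting \pref{lem:ccschlafli}. I expect the bookkeeping in the ``slack runs out'' subcases — tracking exactly which $S^i_j$ are present, which of $r_1\in R_1$ is adjacent to $t^q$, and whether $X$ is complete or anticomplete to the relevant part of $R$ — to be the main obstacle, since it mirrors and in fact largely duplicates the case trees inside Lemmas~\ref{lem:reduce-to-schl3,2-1} and \ref{lem:reduce-to-schl3,2-2}; in the write-up I would streamline by invoking those lemmas' internal structure where possible rather than re-deriving it, and would split cleanly into $q\in\{1,2\}$ versus $q=3$ exactly as those two lemmas do.
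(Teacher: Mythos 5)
Your high-level strategy is the right one and matches the paper's: assume $G[T(\rho)]$ is of type $2$ at some $q$ (which, given \pref{lem:both clique} and \pref{lem:G[T] structure |I_T|=3}, is the only alternative to type $3$) and contradict the counterexample hypothesis. You are also right that Lemmas~\ref{lem:reduce-to-schl3,2-1} and \ref{lem:reduce-to-schl3,2-2} cannot be invoked directly, since their conclusion is precisely the present hypothesis. However, there are two genuine problems. First, your proposed fallback ``\dots or forces $G$ to be ISA, contradicting \pref{lem:ccschlafli}'' is unavailable: \pref{lem:ccschlafli} requires that \emph{neither} $T(\rho)$ nor $R(\rho)$ is a clique, and here $R(\rho)$ is a clique by hypothesis. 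The paper's proof of this lemma never goes through ISA; every contradiction is obtained by exhibiting an explicit clique covering of size at most $n-1$. Relatedly, your plan to ``streamline by invoking the internal structure'' of Lemmas~\ref{lem:reduce-to-schl3,2-1} and \ref{lem:reduce-to-schl3,2-2} will not work either, because those proofs are organised around $G[R]$ being of type $1$ (i.e.\ $R$ not a clique) and their terminal contradictions are mostly of the ISA kind, which again is closed off here.

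Second, and more substantively, the entire content of the proof is the case analysis you defer with ``I expect the bookkeeping \dots to be the main obstacle.'' The paper's argument proceeds through a sequence of structural reductions before any final covering is written down: using \pref{lem:square-forcer}(i) and \pref{lem:one is complete to R}(i) it shows that for $q\in\{1,2\}$ exactly one of $t^q$ and $\{t^{q+1},t^{q+2}\}$ is complete to $R$, then that it must be $\{t^{q+1},t^{q+2}\}$, then that $S_3\neq\emptyset$ and $|S|\geq 2$, then that in fact $q=3$, and finally that $S_3=\emptyset$ when $q=3$ — each step ruled out by a bespoke modification of $\mathscr{O}'(G)=\mathscr{O}(G)\cup\{T^q\cup R^3,(T\setminus\{t^q\})\cup R^3,T_3\cup R\}$. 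None of this is present in your sketch, and without it the final coverings cannot even be stated. A small additional point: since $R$ is a clique and each $R_i$ is complete to $T_3$ (consider the triad $\{s_i^i,t_3^i,r_i^3\}$), the set $T_3\cup R$ is a single clique; using $T_3\cup R_1$ and $T_3\cup R_2$ separately, as you propose, wastes one clique in a count where the slack is often exactly one.
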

\begin{proof}
On the contrary, by lemmas \ref{lem:both clique} and \ref{lem:G[T] structure |I_T|=3}, $G[T]$ is of type $2$ at $q$, for some $q\in \{1,2,3\}$. Also, w.l.o.g. assume that $I_R=\{1,2\}$. Let $t^q\in T^q, t^{q+1}\in T^{q+1}$ and $t^{q+2}\in T^{q+2}$ be as in the definition of type $2$ at $q$, and let $X=T^q\setminus \{t^q\}$. Note that the cliques in $\mathscr{O}'(G)=\mathscr{O}(G)\cup \{T^q\cup R^3,(T\setminus \{t^q\})\cup R^3, T_3\cup R\}$ cover all the edges in $E(G)\setminus E(T,R)$.\vsp
	
(1) \textit{If $q\in \{1,2\}$, then $s_{k'}^k\in S$, for some $(k,k')\in \{(1,2),(2,1),(q,3)\}$. Also, exactly one of $t^q$ or $\{t^{q+1},t^{q+2}\}$ is complete to $R$ (and the other one is anticomplete to $R$).}\vsp
	
The first assertion follows from \pref{lem:square-forcer}~(i) (applying to $(i,j,k)=(q,3-q,3)$). Consequently, the second assertion follows from \pref{lem:one is complete to R}~(i) (applying to $(i,j)=(k,k')$) and \pref{lem:T,R Facts and Tools}~(vi).\vsp
	
Henceforth, let $(k,k')$ be as in (1).\vsp
	
(2) \textit{If $q \in \{1,2\}$, then $\{t^{q+1},t^{q+2}\}$ is complete to $R$ and $t^q$ is anticomplete to $R$.}\vsp
	
By symmetry, let $q=1$. Suppose not, by (1), $t^1$ is complete to $R$ and $\{t^{2},t^{3}\}$ is anticomplete to $R$. We observe that if $(k,k')=(2,1)$, then $(s_1^2,x,R_2), x\in X,$ is a quasi-triad of $G$ and since $R_1$ is complete to $R_2\cup \{s_1^2\}$, by \pref{lem:T,R Facts and Tools}~(i), $X$ is anticomplete to $R_1$, and if $k=1$, then $(s_{k'}^1,t^{k'},R_1)$ is a quasi-triad of $G$ and $X$ is complete to $\{s_{k'}^1,t^{k'}\}$, again by \pref{lem:T,R Facts and Tools}~(i), $X$ is anticomplete to $R_1$. Also, $(s_2^2,t^2,R_2)$ is a quasi-triad of $G$ and $X$ is complete to $\{s_2^2,t^2\}$. Thus by \pref{lem:T,R Facts and Tools}~(i), $X$ is anticomplete to $R_2$. Thus, $X$ is anticomplete to $R$.
In the sequel, first we claim that $s_1^3\in S$. Suppose not, in $\mathscr{O}'(G)$,  for every $i\in \{1,2\}$, replace the clique $Z_i^3$ with the clique $Z_i^3\cup \{t^1\}$ to cover the edges in $E(\{t^1\},R)$, thereby producing a clique covering $\mathscr{C}$ for $G$ of size $15=n-|S|-|X|-|R|+3$. Thus, $|S|+|X|+|R|\leq 3$, i.e. $|S|=1$, $X=\emptyset$ and $|R|=2$. Now, remove the clique $T^1\cup R^3$ from $\mathscr{C}$ and replace the cliques in $Z_3^1$ and $Z_3^2$ with the cliques $(Z_3^1\cup S_1^2)\setminus S_2^1$ and $(Z_3^2\cup S_2^1\cup \{t^1\})\setminus S_1^2$ to cover the edge $t^1r_3^3$ (note that for $i=1,2$, the edge $t^1r^3_i$ has been already covered by the clique $Z^3_i\cup \{t^1\}$). This yields a clique covering for $G$ of size $14=n-1$, a contradiction. Thus, $s_1^3\in S$, and consequently by (1), $|S|\geq 2$. 
Now, the collection $\mathscr{O}'(G)\cup \{\{t^1\}\cup R\}$ is a clique covering for $G$ of size $16=n-|S|-|R|-|X|+4$. Therefore $|S|+|R|+|X|\leq 4$, i.e. $|S|=2$, $|R|=2$ and $X=\emptyset$. 
Consequently, choose $l\in \{2,3\}$ for which $S_l^1=\emptyset$, and replace the cliques $X_2^1$ and $X_3^1$ with the cliques $X_l^1\cup N(t^1,R_1\cup R_{5-l})$ and $X_{5-l}^1\cup R_{l}$ in the collection $\mathscr{O}'(G)$, thereby obtaining a clique covering for $G$ of size $15=n-1$, again a contradiction. This proves (2).\vsp
	
(3) \textit{If $q\in \{1,2\}$, then $S_3\neq \emptyset$ and $|S|\geq 2$.}\vsp
	
On the contrary, suppose that $S_3=\emptyset$. By symmetry, let $q=1$. Due to (2), $\{t^{2},t^{3}\}$ is complete to $R$ and $t^1$ is anticomplete $R$. Now, in $\mathscr{O}'(G)$, replace the clique $X_3^2$ with the clique $X_3^2\cup R$, for every $i\in \{1,2\}$, replace the clique $Z_i^{3-i}$  with the clique $Z_i^{3-i}\cup T^3$ and also add the cliques in $\mathscr{N}[X;R]$, thereby covering the edges in $E(T,R)$ and obtaining a clique covering $\mathscr{C}$ for $G$ of size $15+|X|=n-|S|-|R|+3$. Thus, $|S|+|R|\leq 3$, i.e. $|S|=1$ and $|R|=2$ and so by (1), $|S|=|S^{3-i}_i|=1$ for some $i\in \{1,2\}$. Now, remove the clique $T^1\cup R^3$ from $\mathscr{C}$, replace the cliques $Z_1^3$ and $Z_2^3$ with the cliques $R\cup \{s_3^3\}$ and $\{s_3^3,t^1,r_1^3,r_2^3\}$ to cover the edges $t^1r_1^3$ and $t^1r_2^3$, and replace the cliques $Z_3^1$ and $Z_3^2$ with the cliques $(Z_3^1\cup S_1^2)\setminus S_2^1$ and $(Z_3^2\cup S_2^1\cup \{t^1\})\setminus S_1^2$, to cover the edge $t^1r_3^3$. This yields a clique covering of size $14+|X|=n-1$, a contradiction. This proves that $S_3\neq \emptyset$. Now, assume that $|S|\leq 1$, and thus by (1), $|S|=|S_3^1|=1$. In this case, considering $\mathscr{O}'(G)\cup \mathscr{N}[X;R]$, remove the clique  $T^1\cup R^3$, replace the cliques $X_1^3,X_2^3,Z_1^3,Z_2^3$ and $Z_3^2$ with the cliques $X_1^3\cup R_2,X_2^3\cup R_1,R\cup \{s_3^3,t^2\},\{s_3^3,t^1,r_1^3,r_2^3\}$ and $Z_3^2\cup \{t^1\}$ to cover the edges in $E(T,R)\cup E(\{t^1\},R^3)$, thereby comprising a clique covering for $G$ of size $14+|X|\leq n-1$, which is impossible. This proves (3).\vsp
	
(4) \textit{We have $q=3$.}\vsp
	
Suppose not, w.l.o.g. let $q=1$. By (2), $\{t^{2},t^{3}\}$ is complete to $R$ and $t^1$ is anticomplete $R$ and by (3), $S_3\neq \emptyset$ and $|S|\geq 2$. Note that $\mathscr{O}'(G)\cup \mathscr{N}[X;R]\cup \{R\cup \{t^2,t^3\}\}$ is a clique covering for $G$ size $16+|X|=n-|S|-|R|+4$. Hence, $|S|=|R|=2$. Since $S_3\neq \emptyset$, either $S^3=\emptyset$ or $S= S^3\cup S_3$.
Now, in $\mathscr{O}'(G)\cup \mathscr{N}[X;R]$, replace the cliques $X_1^3$ and $X_2^3$ with the cliques $(X_1^3\cup S_2^1\cup R_2)\setminus S_1^2$ and $(X_2^3\cup S_1^2\cup R_1)\setminus S_2^1$ to cover the edges in $E(T^3,R)$, if $s^3_2\in S$ (i.e. $(k,k')=(1,3)$), then replace the cliques $X_1^2$ and $X_3^2$ with the cliques $(X_1^2\cup R_2\cup S_3^1)$ and $(X_3^2\cup R_1)\setminus S_3^1$ and if $s^3_2\notin S$, then replace the cliques $Z_1^3$ and $Z_2^3$ with the cliques $Z_1^3\cup T^2$ and $Z_2^3\cup T^2$ to cover the edges in $E(T^2,R)$. This yields a clique covering for $G$ of size $15+|X|=n-1$, a contradiction This proves (4).\vsp
	
(5) \textit{If $S_3\neq \emptyset$, then $\{t^{q+1},t^{q+2}\}$ is complete to $R$ and $t^q$ is anticomplete to $R$.}\vsp
	
Let $s_3^p\in S$ for some $p\in \{1,2\}$. Note by (4), we have $q=3$. By \pref{lem:one is complete to R}~(i) (applying to $(i,j)=(p,3)$), either $t^3$ or $\{t^1,t^2\}$ is complete to $R$ (and the other one is anticomplete to $R$). Thus if (5) does not hold, then $t^3$ is complete to $R$ and $\{t^1,t^2\}$ is anticomplete to $R$. Note that since for every $j\in \{1,2\}$, $(s_j^j,t^j,R_j)$ is a quasi-triad of $G$ and $X$ is complete to $\{s_j^j,t^j\}$, by \pref{lem:T,R Facts and Tools}~(i), $X$ is anticomplete to $R$. Thus, adding the clique $\{t^3\}\cup R$ to the collection $\mathscr{O}'(G)$ provides a clique covering for $G$ of size $16=n-|S|-|R|-|X|+4$. Thus, $|S|+|R|+|X|\leq 4$. 
Also, if $|S|=|S_3^p|=1$, i.e. $S=\{s_3^p\}$, then in $\mathscr{O}'(G)$, remove the clique $T^3\cup R^3$ and replace the cliques $Z_p^{3-p},Z_3^{3-p}, Z_{3-p}^p$ and $Z_{3-p}^3$  with the cliques $Z_p^{3-p}\cup \{t^3\},Z_3^{3-p}\cup \{t^3\},(Z_{3-p}^p\cup \{t^3\})\setminus S_3^p$ and $Z_{3-p}^3\cup S_3^p$ to cover the edges in $E(\{t^3\};R')$, thereby comprising a clique covering for $G$ of size $14\leq n-1$, a contradiction. 
Thus, $|S|=|R|=2$ and $X=\emptyset$. Now,  in $\mathscr{O}'(G)$, replace the cliques $X_1^3$ and $X_2^3$ with the cliques $(X_1^3\cup S_2^1\cup R_2)\setminus S_1^2 $ and $(X_2^3\cup S_1^2\cup R_1)\setminus S_2^1$ to cover the edges in $E(\{t^3\},R)$, thereby obtaining a clique covering for $G$ of size $15=n-1$, a contradiction. This proves (5).\vsp
	
(6) \textit{We have $S_3=\emptyset$.}\vsp
	
By (4), we have $q=3$. If (6) does not hold, say $s_3^p\in \{1,2\}$ then by (5), $\{t^1,t^2\}$ is complete to $R$ and thus $t^3$ is anticomplete to $R$. We claim that $S^3\neq \emptyset$. On the contrary, considering $\mathscr{O}'(G)$, for every $i\in \{1,2\}$, replace the clique $Z_i^3$ with the clique $Z_i^3\cup \{t^1,t^2\}$ to cover the edges in $E(\{t^1,t^2\},R)$ and add the cliques in $\mathscr{N}[X;R\cup \{t^1,t^2\}]$, thereby obtaining a clique covering $\mathscr{C}$ for $G$ of size $15+|X|=n-|S|-|R|+3$. Thus, $|S|=1$, say $|S|=|S_3^p|=1$, for some $p\in \{1,2\}$, and $|R|=2$. Now, remove the clique $(T\setminus\{t^3\}\cup R^3)$ from $\mathscr{C}$ and note that the only uncovered edges are $t^1r_3^3$ and $t^2r_3^3$. In order to cover them, for every $i\in \{1,2\}$, replace the clique $Z_3^{3-i}$ with the clique $Z_3^{3-i}\cup \{t^i\}$. This yields a clique covering for $G$ of size $14+|X|=n-1$. Therefore, $S^3\neq \emptyset$, and consequently, $|S|\geq 2$. Now, the family $\mathscr{O}'(G)\cup \mathscr{N}[X;R]\cup \{\{t^1,t^2\}\cup R\}$ is a clique covering for $G$ of size a most $16+|X|=n-|S|-|R|+4$. Hence, $|S|+|R|\leq 4$, i.e. $|S|=|R|=2$. In this case, in $\mathscr{O}'(G)$, if $S_3^1=\emptyset$ replace the cliques $X_2^1,X_3^1,X_1^2$ and $X_3^2$ with the cliques $(X_2^1\cup S_3^2\cup R_1)\setminus S_2^3,(X_3^1\cup S_2^3\cup R_2)\setminus S_3^2, X_1^2\cup R_2$ and $X_3^2\cup R_1$ and if $S_3^1\neq \emptyset$ replace the cliques $X_2^1,X_3^1,X_1^2$ and $X_3^2$ with the cliques $X_2^1\cup R_1,X_3^1\cup R_2, (X_1^2\cup S_3^1\cup R_2)\setminus S_1^3$ and $(X_3^2\cup S_1^3\cup R_1)\setminus S_3^1$, thereby covering the edges in $E(\{t^1,t^2\},R)$ and also add the cliques in $\mathscr{N}[X;R]$, thereby producing a clique covering for $G$ of size $15+|X|=n-1$, a contradiction. This proves (6).\vsp
	
Note that by (4), $q=3$, and by (6), $S_3$ is empty. Now, in the collection $\mathscr{O}'(G)$,  for every $i\in \{1,2\}$, replace the cliques $X_3^i$ with the clique $X_3^i\cup N(t^i,R)$ and replace the clique $Z_i^{3-i}$ with the cliques in $\mathscr{N}[R_i;(Z_i^{3-i}\cup T^3 )\setminus R_i]$, thereby covering the edges in $E(T,R)$ and comprising a clique covering $\mathscr{C}$ for $G$ of size $13+|R|=n-|S|-|X|+1$. Thus, $|S|+|X|\leq 1$. If $|S|+|X|=1$, then removing the clique $T^3\cup R^3$ from $\mathscr{C}$ and replacing the cliques $Z_3^1$ and $Z_3^2$ with the cliques $Z_3^1\cup \{t^3,r_2^3\}$ and $Z_3^2\cup \{t^3,r_1^3\}$ to cover the edges in $E(\{t^3\},R^3)$, provides a clique covering for $G$ of size $12+|R|=n-1$, a contradiction. Thus, $S=X=\emptyset$. Now, consider the family $\mathscr{P}(G)$, merge the pairs $(\Delta _1^3,\Delta _2^3)$ and $(\nabla _1^3,\nabla _2^3)$ to cover the edges in $E(T^1,T^2)$ and $E(R_1,R_2)$, respectively, and also for every $i\in \{1,2,3\}$, replace the clique $\Omega ^i$ with the cliques $\Omega ^i\cup N(t^i,R)$ to cover the edges in $E(T,R)$, thereby obtaining a clique covering for $G$ of size $13=n-|R|+1\leq n-1$, again a contradiction. This proves \pref{lem:3,2type2clique}.
\end{proof}

\begin{lem}\label{lem:3,2-2clique}
	Let $G$ be a counterexample to \pref{thm:noap}, with $|I_T(\rho)|+|I_R(\rho)|=5$, say $|I_T(\rho)|=3$ and $|I_R(\rho)|=2$ for some rotator $\rho$ of $\overline{G}$. Then $G[R(\rho)]$ is of type $1$.
\end{lem}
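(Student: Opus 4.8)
By Lemmas~\ref{lem:both clique} and \ref{lem:G[T] structure |I_T|=3}, since $|I_T(\rho)|=3$ and $R(\rho)$ is a clique in the relevant situation (which is the only case where the statement is nontrivial), either $R(\rho)$ is a clique or $G[R(\rho)]$ is of type $1$; so we must rule out the possibility that $R(\rho)$ is a clique of $G$. Suppose for contradiction that $R(\rho)$ is a clique. Then by \pref{lem:3,2type2clique}, $G[T(\rho)]$ is of type $3$. Write $T_1(\rho)=\{t_1^q,t_1^{q+1}\}\cup T_1^{q+2}(\rho)$ and $T_2(\rho)=\{t_2^q,t_2^{q+2}\}\cup T_2^{q+1}(\rho)$ as in the definition of type $3$ at $q$. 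The idea, as in the proof of Lemmas~\ref{lem:reduce-to-schl3,2-1}--\ref{lem:reduce-to-schl3,2-3}, is to exploit the structure of type $3$ together with $R$ being a clique to build an explicit clique covering of $G$ of size at most $n-1$, contradicting the assumption that $G$ is a counterexample to \pref{thm:noap}.

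\textbf{Key steps.} First I would record the basic adjacency between $T(\rho)$ and $R(\rho)$: by \pref{lem:T,R Facts and Tools}~(vi), since $G[T(\rho)]$ is of type $3$ there are nonadjacent vertices $t_1^q,t_2^q$ in $T^q(\rho)$, and $R(\rho)$ splits into two cliques $R^1=N(t_1^q,R)$ and $R^2=N(t_2^q,R)$; moreover, as in the proof of \pref{lem:reduce-to-schl3,2-3}, I expect that $R^j$ is complete to $T_j(\rho)$ and anticomplete to $T_{3-j}(\rho)$ for $j\in\{1,2\}$ (using that every vertex of $T_1\cup T_2$ has the same behaviour toward $R$ as the corresponding $t_i^q$, otherwise a claw $\{t,s_\ast^\ast,t^\ast,r\}$ arises). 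Second, I would apply \pref{lem:square-forcer} (to the triples $(i,j,k)$ with $\{i,j\}\subseteq I_R(\rho)$ and $k\notin I_T(\rho)$—but here $I_T(\rho)=\{1,2,3\}$, so the relevant constraints come from considering $G[R]$ being a clique and using the square-forcer exclusion in the $R$-direction) to force certain $S_j^i(\rho)$ to be nonempty and thereby bound $|S(\rho)|$ from below. Third, I would take the collection $\mathscr{O}(G)$ (or $\mathscr{P}(G)$, depending on which gives a tighter count), merge the two $Z$-cliques corresponding to the clique $R(\rho)$, add the cliques $T_1(\rho)\cup R^1$ and $T_2(\rho)\cup R^2$ to cover $E(T,R)$ together with $E(T_1)\cup E(T_2)$, add $T_3\cup R^3$-type cliques to cover the edges of $\mathcal{E}(G)$ between $T_3,R^3$ and $R$, and handle $E(T^i(\rho))$ for $i\in\{1,2,3\}$ using the $\Delta$-cliques of $\mathscr{P}(G)$ modified along the type-$3$ matching structure. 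Counting all of these and using the lower bound on $|S(\rho)|$ should give a clique covering of size $\leq n-1$ in all subcases; the borderline subcases (e.g.\ $|T|=4$, $|R|$ small, $S(\rho)=\emptyset$) would be treated by hand with small local replacements, exactly in the style of \pref{lem:reduce-to-schl3,2-3}.

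\textbf{Main obstacle.} The hard part will be the bookkeeping in the borderline cases: when $|S(\rho)|$ and $|R(\rho)|$ are both as small as the inequalities permit (so the generic covering has exactly $n$ cliques), one must find the extra saving by a delicate local modification—typically removing a clique like $T_3\cup R^3$ and re-covering the few resulting uncovered edges (the edges from $T_3$ or from a particular $t_i^q$ to $R^3$) by absorbing them into neighbouring $Z$-cliques, using that the relevant $S_j^i(\rho)$ is empty or a singleton. Getting all of these micro-cases to close, and in particular verifying that each ad hoc replacement set is genuinely a clique (which rests on the precise type-$3$ adjacency and on \pref{lem:T,R Facts and Tools}~(viii,ix)), is where the real work lies; the global count itself is routine once the adjacency picture between $T(\rho)$ and $R(\rho)$ is pinned down. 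Since in every case we produce a clique covering of size at most $n-1$, we contradict the choice of $G$, and hence $R(\rho)$ cannot be a clique, i.e.\ $G[R(\rho)]$ is of type $1$.
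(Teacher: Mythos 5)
Your overall strategy coincides with the paper's: assume $R(\rho)$ is a clique, invoke \pref{lem:3,2type2clique} to conclude that $G[T(\rho)]$ is of type $3$, and then contradict the choice of $G$ by exhibiting a clique covering of size at most $n-1$. However, two of the ingredients you rely on do not work as stated, and they are precisely the ones that make the counting close. First, your source for the lower bound on $|S(\rho)|$ is \pref{lem:square-forcer}, but neither part of that lemma applies here: part (ii) needs some $k\notin I_T(\rho)$, which is impossible since $I_T(\rho)=\{1,2,3\}$, and part (i) needs $G[T\setminus T^i]$ to be a clique, which fails when $G[T(\rho)]$ is of type $3$ (for instance $t_1^{q+1}\in T_1$ is nonadjacent to $t_2^{q+2}\in T_2$, so no $T\setminus T^i$ is a clique in general). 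The paper instead proves $S(\rho)\neq\emptyset$ (its statement (1)) by a separate covering construction built from $\mathscr{P}(G)$, not from the square-forcer exclusion; you have no substitute for this step.

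Second, covering $E(T,R)$ by the two cliques $T_1\cup R^1$ and $T_2\cup R^2$ obtained from the partition of \pref{lem:T,R Facts and Tools}~(vi) is one clique too many. The paper's count is $|\mathscr{O}(G)\cup\{T_1\cup R^3,T_2\cup R^3,T_3\cup R\}|+1=16=n-|S|-|T|-|R|+7$, which forces $|S|+|T|+|R|\le 7$ and hence, together with $|S|\ge 1$, pins the borderline down to the single configuration $|S|=1$, $|T|=4$, $|R|=2$. Getting the single extra clique to be $T_{i_0}\cup R$ for one index $i_0$ requires showing that in a counterexample all of $R$ is complete to one of $T_1,T_2$ and anticomplete to the other (the paper's statements (1) and (2), the latter via \pref{lem:one is complete to R}); this is genuinely stronger than the partition and is not a purely structural fact. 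With your two-clique version the generic bound only yields $|S|+|T|+|R|\le 8$, and since your lower bound on $|S|$ is unsupported, the residual cases include $S(\rho)=\emptyset$ with $|T|$ up to $6$ and $|R|$ up to $4$ --- a much larger family of borderline configurations than you acknowledge, with no mechanism in your plan to dispose of them. As written, the plan does not close.
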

\begin{proof}
	Suppose not, by \pref{lem:G[T] structure |I_T|=2}, $R$ is a clique. By  \pref{lem:3,2type2clique}, $G[T]$ is of type $3$ at $q$, for some $q\in \{1,2,3\}$, with $T_1=\{t_1^q,t_1^{q+1}\}\cup T_1^{q+2}$ and $T_2=\{t_2^q,t_2^{q+2}\}\cup T_2^{q+1}$ as in the definition. Note that the cliques in $\mathscr{O}'(G)=\mathscr{O}(G)\cup \{T_1\cup R^3,T_2\cup R^3,T_3\cup R\}$ cover all the edges in $E(G)\setminus E(T,R)$.\vsp
	
	(1) \textit{We have $S\neq \emptyset$. Also, if $S=S^3\neq \emptyset$, then there exists a unique $i_0\in \{1,2\}$ such that $T_{i_0}$ is complete $R$ and $T_{3-i_0}$ is anticomplete to $R$.}\vsp
	
	Note that by \pref{lem:T,R Facts and Tools}~(vi), for every vertex $r\in R$, there exists $i\in\{1,2\}$ such that $r$ is complete to $T_i$ and anticomplete to $T_{3-i}$. Thus, if (1) does not hold, then either $S=\emptyset$, or $S=S^3\neq \emptyset$ and for every $i\in \{1,2\}$, there is a vertex in $R$ complete to $T_i$ (and anticomplete to $T_{3-i}$). Consider the collection $\mathscr{P}(G)$, merge the pair $(\nabla _1^3,\nabla _2^3)$ to cover the edges in $E(R_1,R_2)$, and for every $i\in \{1,2\}$, if $S_i=\emptyset$, then replace the clique $\Omega _i$ with the cliques in $\mathscr{N}[R_i;(\Omega _i\cup T)\setminus R_i\}]$, and if $S_i\neq \emptyset$, then add the cliques in $\mathscr{N}[R_i;T]$ to cover the edges in $E(T,R)$. This procedure yields a family $\mathscr{C}_1$ of cliques of $G$ covering all the edges in $E(G)\setminus (E(T_1)\cup E(T_2))$. If $S=S^3\neq \emptyset$, then by the assumption, the latter modification in producing $\mathscr{C}_1$ also covers the edges in $E(T_1)\cup E(T_2)$ and so $\cc(G)\leq 12+|S|+|R|\leq n-1$, a contradiction. Also, if $S=\emptyset$, then there exists at most one $j\in \{1,2\}$ such that the edges in $E(T_j)$ are not covered. Now, in $\mathscr{C}_1$, if $|T|=4$, then replace the clique $\Delta _{q+j}^{q-j+3}$ with the clique $\Delta _{q+j}^{q-j+3}\cup \{t^q_j\}$ and if $|T|\geq 5$, then add the clique $T_j$. This yields a clique covering for $G$ of size at most $n-1$. This proves (1).\vsp
	
	(2) \textit{There exists a unique $i_0\in \{1,2\}$ such that $T_{i_0}$ is complete $R$ and $T_{3-i_0}$ is anticomplete to $R$.}\vsp
	
	For if $S=S^3$, then (2) follows from (1). Otherwise, there exists $s_{p'}^{p}\in S$, for some $(p,p')\in J$ such that $p\neq 3$. Also, note that there is a vertex in $T^p$ nonadjacent to a vertex in $T^{p'}$. Thus, (2) follows from \pref{lem:one is complete to R}~(i) (applying to $(i,j)=(p,p')$).\vsp
	
	Finally, assuming $i_0$ to be as in (2), $\mathscr{C}=\mathscr{O}'(G)\cup \{T_{i_0}\cup R\}$ is a clique covering for $G$ of size $16=n-|S|-|T|-|R|+7$. Thus, $|S|+|T|+|R|\leq 7$, which by (1), yields $|S|=1$, $|T|=4$ and $|R|=2$, say $R_1=\{r_1\}$ and $R_2=\{r_2\}$. Now, apply the following modifications to $\mathscr{O}'(G)$ to cover the edges in $E(T,R)$. 
	If $S^3=S_3=\emptyset$, then replace the cliques $Z_1^2,Z_1^3,Z_2^1$ and $Z_2^3$  with the cliques $Z_1^2\cup N(r_1,T^3),Z_1^3\cup N(r_1,T^1\cup T^2),Z_2^1\cup N(r_2,T^3)$ and $Z_2^3\cup N(r_2,T^1\cup T^2)$, if $S_3\neq \emptyset$, then replace the cliques $Z_1^2,Z_1^3,Z_2^1$ and $Z_2^3$  with the cliques $(Z_1^2\cup N(r_1,T^1\cup T^3))\setminus S_3^2, Z_1^3\cup S_3^2\cup N(r_1,T^2) ,(Z_2^1\cup N(r_2,T^2\cup T^3))\setminus S_3^1$ and $Z_2^3\cup S_3^1\cup N(r_2,T^1)$ and if $S^3\neq \emptyset$, then replace the cliques $Z_1^2,Z_1^3,Z_2^1$ and $Z_2^3$  with the cliques $Z_1^2\cup S_2^3\cup N(r_1,T^1\cup T^3), (Z_1^3\cup N(r_1,T^2))\setminus S_2^3,Z_2^1\cup S_1^3\cup N(r_2,T^2\cup T^3)$ and $(Z_2^3\cup N(r_2,T^1))\setminus S_1^3$.
	This provides a clique covering for $G$ of size $15=n-1$, a contradiction.
	This proves \pref{lem:3,2-2clique}.
\end{proof}

\begin{lem}\label{lem:3,2-3clique}
	Let $G$ be a counterexample to \pref{thm:noap}. Then, for every  rotator $\rho$ of $\overline{G}$, $|I_T(\rho)|=|I_R(\rho)|\geq 2$.
\end{lem}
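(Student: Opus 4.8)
The plan is to combine the structural results already established in this subsection (and the previous ones) to pin down $|I_T(\rho)|$ and $|I_R(\rho)|$. By \pref{lem:|I_T|=3 |I_R|=1} we already know $|I_T(\rho)|,|I_R(\rho)|\geq 2$ for every rotator $\rho$, so the only thing left is to rule out the cases where one of them equals $3$. Suppose toward a contradiction that, say, $|I_T(\rho)|=3$. By \pref{lem:|I_T|+|I_R|>=4} we have $|I_R(\rho)|\geq 1$, but combined with \pref{lem:|I_T|=3 |I_R|=1} we get $|I_R(\rho)|\geq 2$. So either $|I_R(\rho)|=2$ (the $(3,2)$ case) or $|I_R(\rho)|=3$ (the $(3,3)$ case).

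First I would dispatch the $(3,3)$ case. Here $|I_T(\rho)|+|I_R(\rho)|=6\geq 5$, so \pref{thm:IT+IR>=5} applies and tells us that either $T(\rho)$ or $R(\rho)$ is a clique of $G$; by symmetry assume $R(\rho)$ is a clique. But then \pref{lem:3,2-2clique} (which only assumes $|I_T(\rho)|=3$, $|I_R(\rho)|=2$ in its hypothesis — so I would need to check the $(3,3)$ sub-case separately or note that the argument of \pref{lem:3,2-2clique} still applies, or better, reduce to it) yields a contradiction. Actually the cleanest route is: in the $(3,3)$ case with $R(\rho)$ a clique, one of $I_R(\rho)$'s coordinates is irrelevant, and one can instead invoke the combination of \pref{lem:3,2type2clique} and \pref{lem:3,2-2clique} after observing that a $(3,3)$ configuration with $R$ a clique forces the relevant structural dichotomy on $G[T(\rho)]$; since $G[T(\rho)]$ must then be of type $3$ by \pref{lem:3,2type2clique}, and \pref{lem:3,2-2clique} shows $G[R(\rho)]$ is of type $1$ — contradicting that $R(\rho)$ is a clique. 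So the $(3,3)$ case collapses.

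For the $(3,2)$ case, suppose $|I_T(\rho)|=3$ and $|I_R(\rho)|=2$. By \pref{lem:3,2-2clique}, $G[R(\rho)]$ is of type $1$; in particular $R(\rho)$ is not a clique. By \pref{lem:both clique}, this is consistent, so I push further: \pref{lem:3,2type2clique} is stated under the hypothesis that $R(\rho)$ \emph{is} a clique, so it does not directly apply here; instead I would invoke Lemmas~\ref{lem:reduce-to-schl3,2-1}, \ref{lem:reduce-to-schl3,2-2} and \ref{lem:reduce-to-schl3,2-3}. These three lemmas together say: if $G[T(\rho)]$ is of type $2$ at $q$ for any $q\in\{1,2,3\}$ (the three lemmas cover $q\in\{1,2\}$ and $q=3$ respectively), or if $G[T(\rho)]$ is of type $3$, then $R(\rho)$ \emph{is} a clique of $G$ — contradicting \pref{lem:3,2-2clique}. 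By \pref{lem:G[T] structure |I_T|=3} (with $|I_T(\rho)|=3$), $G[T(\rho)]$ is either a clique or of type $2$ at some $q$ or of type $3$; and \pref{lem:both clique} rules out $T(\rho)$ being a clique simultaneously with $R(\rho)$ not being one only if... — actually $T(\rho)$ could still be a clique, but then $G[T(\rho)]$ is a clique contradicts $|I_T(\rho)|=3$ (a clique on $T$ with all three $T^i$ nonempty is impossible since distinct $T^i,T^j$ contain a non-edge, as each $s_i^i$ is nonadjacent to $T^i$ and adjacent to... hmm) — more carefully, if $|I_T(\rho)|=3$ and $T(\rho)$ is a clique this contradicts, e.g., \pref{lem:T,R Facts and Tools}(iv) combined with $R(\rho)$ not being a clique via \pref{lem:both clique}. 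So $G[T(\rho)]$ is of type $2$ or type $3$, and in every case Lemmas~\ref{lem:reduce-to-schl3,2-1}--\ref{lem:reduce-to-schl3,2-3} force $R(\rho)$ to be a clique, contradicting \pref{lem:3,2-2clique}. Hence $|I_T(\rho)|\neq 3$, and symmetrically $|I_R(\rho)|\neq 3$, so $|I_T(\rho)|=|I_R(\rho)|=2$.

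The main obstacle I anticipate is bookkeeping the exact interplay of hypotheses: several of the invoked lemmas assume $R(\rho)$ is a clique (\pref{lem:3,2type2clique}, \pref{lem:both clique}) while others assume $G[T(\rho)]$ is of a specific type, and I must make sure the case analysis on $G[T(\rho)]$ via \pref{lem:G[T] structure |I_T|=3} together with \pref{lem:both clique} genuinely exhausts all possibilities without circularity. In particular, establishing that $T(\rho)$ cannot be a clique when $|I_T(\rho)|=3$ — needed so that \pref{lem:G[T] structure |I_T|=3} gives type $2$ or type $3$ — should be a quick consequence of \pref{lem:both clique} (if $T(\rho)$ is a clique then $R(\rho)$ is not, which is fine) combined with the observation that a clique $T(\rho)$ with $|I_T(\rho)|=3$ is outright impossible (any $t^i\in T^i$ is nonadjacent to $s_i^i$, which is adjacent to $T^j\cup T^k$; but more to the point, for $i\neq j$, some vertex of $T^i$ and some vertex of $T^j$ are nonadjacent — indeed $G[T']$ contains the triad $\{s_1^1,s_2^2,s_3^3\}$ after adjusting, so $T(\rho)$ together with the $s_i^i$ is not a clique, but $T(\rho)$ alone could conceivably be — one resolves this by noting $|I_T(\rho)|=3$ with $T(\rho)$ a clique would make $\rho$ not genuinely a rotator configuration, or simply by \pref{lem:both clique} applied twice). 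The rest is a short deduction chaining the already-proven lemmas.
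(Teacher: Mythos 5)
There is a genuine gap, and it sits exactly where the real work of this lemma lives. First, note that the statement only asserts $|I_T(\rho)|=|I_R(\rho)|$, so the $(3,3)$ configuration is \emph{not} excluded here — that exclusion is \pref{thm:3,3}, which is proved afterwards \emph{using} this lemma. Your attempt to dispatch $(3,3)$ is therefore out of scope, and it is also unsound as written, since \pref{lem:3,2-2clique} assumes $|I_R(\rho)|=2$ and you give no actual argument that it transfers. All that must be ruled out is the asymmetric case $|I_T(\rho)|=3$, $|I_R(\rho)|=2$.

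In that case your chain of citations breaks at the decisive step. From \pref{lem:3,2-2clique} you correctly get that $G[R(\rho)]$ is of type $1$, hence $R(\rho)$ is not a clique, and Lemmas~\ref{lem:reduce-to-schl3,2-1}--\ref{lem:reduce-to-schl3,2-3} then eliminate $G[T(\rho)]$ being of type $2$ or type $3$. But the remaining alternative from \pref{lem:G[T] structure |I_T|=3} — that $T(\rho)$ is a clique — is \emph{not} impossible, and your claimed contradiction does not exist: \pref{lem:both clique} only forbids $T(\rho)$ and $R(\rho)$ from both being cliques (here $R(\rho)$ already is not one), and \pref{lem:T,R Facts and Tools}(iv) merely says $G[T]$ has no triad, which a clique trivially satisfies; having all three $T^i$ nonempty is perfectly compatible with $T$ being a clique since vertices of distinct $T^i$ may be adjacent. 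In fact \pref{thm:IT+IR>=5}, together with $R(\rho)$ not being a clique, \emph{forces} $T(\rho)$ to be a clique, so this is the only configuration that actually survives, and it is where the paper spends essentially the entire proof: an eight-step analysis (establishing $S\neq\emptyset$ via the square-forcer lemma, identifying the unique $r_{i_0}$ complete to $T$, splitting into the type $1.1$ and $1.2$ sub-cases for $G[R]$, bounding $|S|$, $|R_1|$, $|R_2|$, $|T|$, and in each branch exhibiting an explicit clique covering of size at most $n-1$). None of this can be replaced by the lemma-chaining you propose; without it the proof is incomplete.
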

\begin{proof} 
By \pref{lem:|I_T|=3 |I_R|=1}, $|I_T|, |I_R|\geq 2$. On the contrary and w.l.o.g. assume that $|I_T|=3$ and $|I_R|=\{1,2\}$. By \pref{lem:3,2-2clique}, $G[R]$ is of type $1$, with $r_1\in R_1$, $r_2\in R_2$, $U^1\subseteq R_2$ and $U^2\subseteq R_1$ as in the definition. Thus, by \pref{thm:IT+IR>=5}, $ T $ is a clique.\vsp 
	
	(1) \textit{Either $s_{3-p}^p\in S$, for some $p\in \{1,2\}$, or $|S_3|=2$.}\vsp
	
	This follows from \pref{lem:square-forcer}~(i) (applying to $(i,j,k)=(1,2,3)$ and $(i,j,k)=(2,1,3)$).\vsp
	
	(2) \textit{There exists a unique $i_0\in \{1,2\}$, for which $r_{i_0}$ is complete to $T$ and $r_{3-i_0}$ is anticomplete to $T$. Also, $U^{3-i_0}$ is anticomplete to $T$}.\vsp
	
	By (1), either $s_{3-p}^p\in S$, for some $p\in \{1,2\}$ or $|S_3|=2$, and the first claim follows from \pref{lem:one is complete to R}~(i) or (ii) (applying to $(i,j)=(3-p,p)$ or $\{i,j\}=\{1,2\}$ and $l=3$, respectively). Now, note that by (1), there exists $k\in \{1,2,3\}$ such that both $s_k^1,s_k^2\in V(G)$. Since $(s_k^{3-i_0},r_{3-i_0},T^k)$ is a quasi-triad of $G$ and $U^{3-i_0}$ is complete to $\{s_k^{3-i_0},r_{3-i_0}\}$, by \pref{lem:T,R Facts and Tools}~(i), $U^{3-i_0}$ is anticomplete to $T^k$. Thus, for every $t\in T^k$, $(s_k^{i_0},t,U^{3-i_0})$ is a quasi-triad of $G$, and since $T\setminus T^k$ is complete to $\{s_k^{i_0},t\}$, by \pref{lem:T,R Facts and Tools}~(i), $U^{3-i_0}$ is anticomplete to $T\setminus T^k$, as desired. This proves (2).\vsp
	
	Henceforth, let $i_0$ be as in (2).\vsp
	
	(3) \textit{If $G[R]$ is of type $1.1$, then $|S|+|T|\leq 5$, i.e. $1\leq |S|\leq 2$.}\vsp
	
	By symmetry, we may assume that $G[R]$ is of type $1.1$ at $1$ (i.e. $U^1=\emptyset$). 
	Thus, $\mathscr{C}(G)=\mathscr{O}(G)\cup \mathscr{N}[R\setminus \{r_{3-i_0}\};T]\cup \{T\cup R^3,T_3\cup R_1,T_3\cup (R\setminus \{r_1\})\}$ is a clique covering for $G$ of size $14+|R|=n-|S|-|T|+5$. This, with (1),  implies (3).\vsp
	
	(4) \textit{If $G[R]$ is of type $1.1$, then $|S|=1$.}\vsp
	
	Again, by symmetry, we may assume that $G[R]$ is of type $1.1$ at $1$ (i.e. $U^1=\emptyset$). Also, let $\mathscr{C}$ be the family of cliques defined in (3). If (4), does not hold, then by (3), we have $|S|=2$ and $|T|=3$. Thus by (1), either $S^3$ or $S_3$ is empty. We claim that $R_{i_0}=\{r_{i_0}\}$. For this is trivial if $i_0=2$, and if $i_0=1$, then by (2), $U^2$ is anticomplete to $T$, and removing the cliques in $\mathscr{N}[U^2;T]$ from $\mathscr{C}$, yields a clique covering for $G$ of size $n-|U^2|$. Thus, $U^2=\emptyset$. This proves the claim. Now, remove the clique $N[r_{i_0};T]$ from $\mathscr{C}$, and in order to cover the edges in $E(\{r_{i_0}\},T)$, in the case that $S^3=\emptyset$, replace the cliques $X_1^3,X_2^3$ and $Z_{i_0}^3$ with the cliques $(X_{i_0}^3\cup S_{3-i_0}^{i_0})\setminus S_{i_0}^{3-i_0},(X_{3-i_0}^3\cup S_{i_0}^{3-i_0}\cup \{r_{i_0}\})\setminus S_{3-i_0}^{i_0}$ and $Z_{i_0}^3\cup T^1\cup T^2$ and in the case that $S_3=\emptyset$, replace the cliques $X_3^1,X_3^2$ and $Z_{i_0}^{3-i_0}$ with the cliques $X_3^1\cup \{r_{i_0}\},X_3^2\cup \{r_{i_0}\}$ and $Z_{i_0}^{3-i_0}\cup T^3$, thereby obtaining a clique covering for $G$ of size $n-1$, a contradiction. This proves (4).\vsp
	
	(5) \textit{$G[R]$ is of type $1.2.$} \vsp
	
	For if $G[R]$ is of type $1$ say at $1$, then by (4), $|S|=1$, and so by (1), $S=\{s_{3-p}^p\}$, for some $p\in \{1,2\}$. Consider the collection $\mathscr{P}(G)$, merge the pairs $\{\Delta _p^{3-p},\Delta _3^{3-p}\}$ and $\{\Delta _1^3,\Delta _2^3\}$, replace the cliques $\Delta _{3-p}^p$ and $\nabla _2^3$ with the cliques $\Delta _{3-p}^p\cup T_3$ and $\nabla _2^3\cup U^2$, thereby covering the edges in $E(T)\cup E(R)$, 
	Also, on order to cover the edges in $E(T,R)$, in the resulting family, if $i_0=p$, then replace the clique $\Omega _{i_0}$ with the cliques in $\mathscr{N}[R_{i_0};(\Omega_i\cup T)\setminus R_{i_0}]$, and if $i_0=3-p$, then replace the cliques $\Omega ^{i_0}$ and $\Omega _{i_0}$ with the cliques in $\mathscr{N}[T^{i_0};(\Omega ^{i_0}\cup R_{i_0})\setminus T^{i_0}]$ and $\mathscr{N}[R_{i_0};(\Omega _{i_0}\cup T)\setminus (T^{i_0}\cup R_{i_0})]$ (moreover, if $i_0=2$, then add the cliques in $\mathscr{N}[U^2;T]$), thereby obtaining a clique covering for $G$ of size at most $13+|T^{i_0}|+|U^2|=n-|T\setminus T^{i_0}|+1\leq n-1$, a contradiction. This proves (5). \vsp
	

	Henceforth, by (5), we assume that $G[R]$ is of type $1.2$ and w.l.o.g.  $2\leq |R_1|\leq |R_2|$. Note that the cliques in $\mathscr{O}'(G)=\mathscr{O}(G)\cup \mathscr{N}[R_1;T_3\cup R_2]\cup \{T\cup R^3\}$ cover all the edges in  $E(G)\setminus E(T,R)$. \vsp
	
	(6) \textit{We have $S_3\neq \emptyset$}\vsp
	
	Suppose not, in $\mathscr{O}'(G)$,  for every $i\in \{1,2\}$, replace the clique $X_3^i$ with the cliques in $\mathscr{N}[T^i;(X_3^i\cup R)\setminus T^i]$ and add the cliques in $\mathscr{N}[T^3;R]$, thereby obtaining a clique covering for $G$ of size $11+|T|+|R_1|=n-|S|-|R_2|+2$, which by (1) is at most $n-1$. This proves (6).\vsp
	
	(7) \textit{We have $|S|=|R_1|=|R_2|=2$. In particular, either $|S|=|S_3|=2$ or $S=\{s_{3-p}^p,s_3^{p'}\}$, for some $p,p'\in \{1,2\}$.}\vsp
	
	Note that $\mathscr{O}'(G)\cup \mathscr{N}[T;R]$ is a clique covering for $G$ of size $13+|T|+|R_1|=n-|S|-|R_2|+4$. Thus, $|S|+|R_2|\leq 4$, i.e. $|S|\leq 2$. Also, by (6), $|S_3|\geq 1$. This, together with (1), proves (7).\vsp
	
	Note that by (7), we have $n=15+|T|$ and $|\mathscr{O}'(G)|=15$.\vsp
	
	(8) \textit{We have $|T|\geq 5$.}\vsp
	
	First, we claim that $|T^3|\geq 2$. Suppose not, let $T^3=\{t^3\}$. In the family $\mathscr{O}'(G)\cup \mathscr{N}[T\setminus T^3;R]$, replace the cliques $X_1^3$ and $X_2^3$ with the cliques $(X_1^3\cup S_2^1\cup N(t^3,R_2))\setminus S_1^2$ and $(X_2^3\cup S_1^2\cup N(t^3,R_1))\setminus S_2^1$ (in fact, the truth of (7) ensures that these two are cliques of $G$) to cover the edges in $E(T^3,R)$. This yields a clique covering for $G$ of size $14+|T|=n-1$, a contradiction. This proves the claim. 
	Now, if $|T|\leq 4$, then for every $i\in \{1,2\}$, $|T^i|=1$, say $T^i=\{t^i\}$, and also $|T^3|=2$. By (7), either $|S|=|S_3|=2$ or $S=\{s_{3-p}^p,s_3^{p'}\}$, for some $p,p'\in \{1,2\}$. In the former case, consider $\mathscr{O}'(G)\cup \mathscr{N}[T\setminus T^1;R]$ and replace the cliques $X_{2}^1$ and $X_3^1$ with the cliques $X_{2}^1\cup S_3^{2}\cup N(t^1,R_1)$ and $(X_3^1\cup N(t^1,R_{2}))\setminus S_3^{2}$ to cover the edges in $E(T^1,R)$, thereby comprising a clique covering for $G$ of size $14+|T|=n-1$.
	In the latter case,
	in $\mathscr{O}'(G)\cup \mathscr{N}[T\setminus T^{3-p};R]$, replace the cliques $X_p^{3-p}$ and $X_3^{3-p}$ with the cliques  $X_p^{3-p}\cup S_3^p\cup N(t^{3-p},R_{3-p})$ and $(X_3^{3-p}\cup N(t^{3-p},R_p))\setminus S_3^p$. This provides a clique covering for $G$ of size $14+|T|=n-1$, a contradiction. This proves (8).\vsp
	
	Finally, the collection $\mathscr{O}'(G)\cup \mathscr{N}[R;T]$ is a clique covering for $G$ of size $19=n-|T|+4$, which by (8), is at most $n-1$, a contradiction. This proves \pref{lem:3,2-3clique}.
\end{proof}

Now, we are ready to prove \pref{thm:3,3}.

\begin{proof}[{\rm \textbf{Proof of \pref{thm:3,3}.}}]
By \pref{lem:3,2-3clique}, $|I_T|=|I_R|\geq 2$. Now, on the contrary, assume that $|I_T|=|I_R|=3$. By \pref{thm:IT+IR>=5} and w.l.o.g. assume that $R$ is a clique. \vsp
	
	(1) \textit{We have $S\neq \emptyset$.}\vsp
	
	Suppose not, let $S=\emptyset$. Consider the family $\mathscr{P}(G)$, merge the pairs $(\nabla _2^1,\nabla _3^1),(\nabla _1^2,\nabla _3^2)$ and $(\nabla _1^3,\nabla _2^3)$ to cover the edges in $E(R)$ and for every $i\in \{1,2,3\}$, replace the clique $\Omega _i$ with the cliques in $\mathscr{N}[R_i;(\Omega _i\cup T)\setminus R_i]$ to cover the edges in $E(T,R)$, and call the resulting collection $\mathscr{P}'(G)$, the clique in which cover all the edges in $E(G)\setminus (\cup_{i=1}^3E(T^i,T^{i+1}))$. By \pref{lem:both clique}, $T$ is not a clique. Also, if $G[T]$ is of type $2$ at $q$, for some $q\in \{1,2,3\}$ with $t^q,t^{q+1}$ and $t^{q+2}$, as in the definition, then in $\mathscr{P}'(G)$, merge the pair $(\Delta _{q+1}^q,\Delta _{q+2}^q)$ and replace the cliques $\Delta _{q+1}^{q+2}$ and $\Delta _{q+2}^{q+1}$ with the cliques $\Delta _{q+1}^{q+2}\cup (T^q\setminus \{t^q\})$ and $\Delta _{q+2}^{q+1}\cup (T^q\setminus \{t^q\})$, thereby obtaining a clique covering for $G$ of size $11+|R|\leq n-1$, a contradiction. Thus, by \pref{lem:G[T] structure |I_T|=3}, $G[T]$ is of type $3$ at $q$, for some $q\in \{1,2,3\}$, with $T_1=\{t_1^q,t_1^{q+1}\}\cup T_1^{q+2}$ and $T_2=\{t_2^q,t_2^{q+2}\}\cup T_2^{q+1}$ as in the definition. Now, in $\mathscr{P}'(G)$, if $|T|=4$, then 
	replace the cliques $\Delta _{q+1}^{q+2}$ and $\Delta _{q+2}^{q+1}$ with the cliques $\Delta _{q+1}^{q+2}\cup \{t_1^q\}$ and $\Delta _{q+2}^{q+1}\cup \{t^q_2\}$, if $|T|=5$, say $|T^{q-i+3}_{i}|=1$ for some $i\in \{1,2\}$, then add the clique $T_i$ and replace the cliques $\Delta _{q}^{q+i}$ and $\Delta _{q-i+3}^{q+i}$ with the cliques $(\Delta _{q}^{q+i}\cup \{t^{q-i+3}_1\})\setminus \{t^{q}_2\}$ and $(\Delta _{q-i+3}^{q+i}\cup \{t^{q}_2\})\setminus \{t^{q-i+3}_1\}$, and if $|T|=6$, then add the cliques $T_1$ and $T_2$, thereby obtaining a clique covering of size $12+|T_2^{q+1}|+|T_1^{q+2}|+|R|=n-1$, a contradiction. This proves (1).\vsp
	
	(2) \textit{$G[T]$ is of type $3$.}\vsp
	
	Suppose not, by Lemmas~\ref{lem:both clique} and \ref{lem:G[T] structure |I_T|=3}, $G[T]$ is of type $2$ at $q$, for some $q\in \{1,2,3\}$, and w.l.o.g. we may assume that $q=1$. Let $X=T^1\setminus \{t^1\}$. By (1), $|S|\geq 1$, say $s_{p'}^p\in S$, for some $(p,p')\in J$. Note that the cliques $\mathscr{O}'(G)=\mathscr{O}(G)\cup \{T^1\cup R^3,(T\setminus \{t^1\})\cup R^3,T_3\cup R\}$ cover all the edges in $E(G)\setminus E(T,R)$.
	If either $p=1$ or $p'=1$, then by \pref{lem:one is complete to R}~(i) (applying to $(i,j)=(p,p')$), either $t^1$ or $\{t^2,t^3\}$ is complete to $R$ (and the other one is anticomplete to $R$), and so  add the clique $\{t^1\}\cup R$ or $\{t^2,t^3\}\cup R$ as well as the cliques in $\mathscr{N}[X;R]$ to $\mathscr{O}'(G)$. Also, if $S_1=S^1=\emptyset$,  then in $\mathscr{O}'(G)$, for every $i\in \{2,3\}$, replace the clique $X_1^i$ and $X_{5-i}^i$ with the clique $X_1^i\cup N(t^i,R\setminus R_1)$ and $X_{5-i}^i\cup N(t^i,R_1)$ and also add the cliques in $\mathscr{N}[T^1;R]$. This yields a clique covering for $G$ of size $16+|X|=n-|S|-|R|+4$. Thus, $|S|=|S_{p'}^p|=1$ and $|R|=3$, say $R_i=\{r_i\}$, $i=1,2,3$. Let $\{p,p',p''\}=\{1,2,3\}$. Now, in $\mathscr{O}'(G)$, replace the cliques $Z_p^{p'},Z_p^{p''},Z_{p'}^{p},Z_{p'}^{p''},Z_{p''}^p$ and $Z_{p''}^{p'}$ with the cliques $Z_p^{p'}\cup N(r_p,T^{p''}),Z_p^{p''}\cup N(r_p,T^{p}\cup T^{p'}),Z_{p'}^{p}\cup N(r_p,T^{p''}),Z_{p'}^{p''}\cup N(r_p,T^{p}\cup T^{p'}),Z_{p''}^p\cup N(r_{p''},T\setminus T^p))\setminus S_{p'}^p$ and $Z_{p''}^{p'}\cup S_{p'}^p\cup N(r_{p''},T^p)$, thereby comprising a clique covering for $G$ of size $15\leq n-1$, a contradiction. This proves (2).\vsp

	By (2), $G[T]$ is of type $3$ at $q$, for some $q\in \{1,2,3\}$, with $T_1=\{t_1^q,t_1^{q+1}\}\cup T_1^{q+2}$ and $T_2=\{t_2^q,t_2^{q+2}\}\cup T_2^{q+1}$ as in the definition. By (1), $s_{p'}^p\in S$, for some $(p,p')\in J$, and since there is vertex in $T^p$ nonadjacent to a vertex in $T^{p'}$, by \pref{lem:one is complete to R}(i) (applying to $(i,j)=(p,p')$) and also \pref{lem:T,R Facts and Tools}~(vi), there exists $i_0\in \{1,2\}$ such that $T_{i_0}$ is complete to $R$ $T_{3-i_0}$ is anticomplete to $R$. Now, the collection $\mathscr{O}(G)\cup \{T_{i_0}\cup R,T_1\cup R^3,T_2\cup R^3,T_3\cup R\}$ is a clique covering for $G$ of size $16=n-|S|-|T|-|R|+7$, which by (1) is at most $n-1$,  a contradiction. This proves \pref{thm:3,3}.
\end{proof}

\subsection{Proof of \pref{thm:noap}}\label{sub:2,2}

In this subsection, using \pref{thm:3,3}, we complete the proof of \pref{thm:noap}. We need the following lemma. As usual, we omit the term $ \rho $ within the proofs.

\begin{lem}\label{lem:2,2with a clique}
	Let $G$ be a counterexample to \pref{thm:noap}. Then for every rotator $\rho $ of $\overline{G}$,  both $G[T(\rho)]$ and $G[R(\rho)]$ are of type $1$.
\end{lem}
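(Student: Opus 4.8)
The plan is to argue by contradiction, assuming that $G$ is a counterexample to \pref{thm:noap} and fixing a rotator $\rho$ of $\overline{G}$. By \pref{thm:3,3} we already know $|I_T(\rho)|=|I_R(\rho)|=2$, and by \pref{lem:both clique} at least one of $T(\rho)$ and $R(\rho)$ is not a clique. Suppose, say, that $T(\rho)$ is not a clique; then \pref{lem:G[T] structure |I_T|=2} forces $G[T(\rho)]$ to be of type $1$. So the only thing left to rule out is the case where $R(\rho)$ \emph{is} a clique while $G[T(\rho)]$ is of type $1$. The goal is therefore to show: if $|I_T(\rho)|=|I_R(\rho)|=2$, $G[T(\rho)]$ is of type $1$, and $R(\rho)$ is a clique, then one can build a clique covering of $G$ of size at most $n-1$, contradicting the assumption that $G$ is a counterexample.

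To set things up, write $I_T(\rho)=\{q,q'\}$ and pick the nonadjacent vertices $t^q\in T^q(\rho)$, $t^{q'}\in T^{q'}(\rho)$ with $U_q=T^{q'}(\rho)\setminus\{t^{q'}\}$ and $U_{q'}=T^q(\rho)\setminus\{t^q\}$ complete to $t^q$ and $t^{q'}$ respectively, as in the definition of type $1$. The first step is to extract, from \pref{lem:T,R Facts and Tools}(vi) and \pref{lem:square-forcer}(i) (applied to the appropriate triples involving the ``missing'' index in $I_R$), enough information about which of $t^q$, $t^{q'}$ (and the blocks $U_q,U_{q'}$) are complete or anticomplete to the clique $R(\rho)$ — this should parallel the way \pref{lem:one is complete to R} and the ``(1),(2),(3)'' arguments in Lemmas \ref{lem:reduce-to-schl3,2-1}--\ref{lem:3,2type2clique} are used. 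In particular I expect to show that $S(\rho)$ is nonempty, that some specific $s^k_{k'}\in S(\rho)$ exists, and hence (via \pref{lem:one is complete to R}) that exactly one of $t^q$, $t^{q'}$ is complete to $R(\rho)$ and the other anticomplete, with the $U$-blocks behaving in the complementary way. If at some point the configuration becomes ISA with respect to $\rho$, we invoke \pref{lem:ccschlafli} for an immediate contradiction.

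The second step is the explicit covering. Start from the standard collection $\mathscr{O}(G)$ (or $\mathscr{O}'(G)=\mathscr{O}(G)\cup\{\,\cdot\,\}$ as augmented in the earlier lemmas), which by \pref{lem:O&P}(i) already covers $E(G)\setminus\mathcal{E}(G)$, and then modify a bounded number of its $12$ cliques $X^i_j,Z^i_j$ to absorb the remaining edges: the edges inside $T(\rho)$ (there are only the edges inside $T^q(\rho)$ and $T^{q'}(\rho)$ plus the matching-type edges, since $G[T]$ is type $1$), the edges of $E(T(\rho),R(\rho))$ (here we use that $R(\rho)$ is a clique and the completeness/anticompleteness facts from step one), and the edges of $E(T_3(\rho))\cup E(R^3(\rho))\cup E(T(\rho),R^3(\rho))\cup E(T_3(\rho),R(\rho))$. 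Because $R(\rho)$ is a clique, $E(R(\rho))$ needs no special treatment, which keeps the bookkeeping tight. As in the analogous proofs, I would carry along the inequality $n=|S|+|T|+|R|+9$ and repeatedly use $|S^i_j|\le 1$, $|S(\rho)|\ge$ (the bound from step one), and the sizes of the $U$-blocks to show the resulting family has size $\le n-1$; a handful of small sub-cases (according to whether $|S(\rho)|=1$ or $\ge 2$, whether $S_3(\rho)=\emptyset$, and whether the $U$-blocks are empty) will need to be handled separately, exactly in the style of Lemmas \ref{lem:reduce-to-schl3,2-2} and \ref{lem:reduce-to-schl3,2-3}, with $\mathscr{P}(G)$ used instead of $\mathscr{O}(G)$ in the few cases where the $\mathscr{O}$-based bound is just off by one.

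The main obstacle I anticipate is purely combinatorial stamina rather than a conceptual difficulty: there are genuinely several small configurations (the two type-$1.1$ sub-cases, type $1.2$, and the cases $|S(\rho)|\le 1$ versus $|S(\rho)|=2$, together with whether $t^q$ or $t^{q'}$ is the one complete to $R(\rho)$) and in the tightest of them one must choose the right three or four cliques of $\mathscr{O}(G)$ to replace so that the total stays at $n-1$, occasionally needing the ISA exclusion (\pref{lem:ccschlafli}) to kill the would-be equality case. Once step one pins down the adjacency between $T(\rho)$, $R(\rho)$, $R^3(\rho)$ and $S(\rho)$, each sub-case is a short explicit verification of the kind already done repeatedly in \pref{sub:schlafli}, and symmetry ($T\leftrightarrow R$) halves the work, so after dispatching all of them we conclude that no counterexample $G$ exists and \pref{thm:noap} follows.
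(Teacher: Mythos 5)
Your overall strategy coincides with the paper's: reduce, via \pref{thm:3,3}, \pref{lem:both clique} and \pref{lem:G[T] structure |I_T|=2}, to the single configuration ``$|I_T(\rho)|=|I_R(\rho)|=2$, $G[T(\rho)]$ of type $1$, $R(\rho)$ a clique''; then use \pref{lem:square-forcer}~(i) (applied to $(i,j,k)=(1,p,5-p)$) to produce a vertex of $S(\rho)$, use \pref{lem:one is complete to R} to decide which of $t^q,t^{q'}$ is complete to the clique $R(\rho)$, and finish with modified versions of $\mathscr{O}(G)$ and $\mathscr{P}(G)$. That reduction and those tools are exactly what the paper does, and your bookkeeping $n=|S|+|T|+|R|+9$ is the right invariant to carry.

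There is, however, one concrete missing idea, and it is the one on which the paper's case analysis actually turns. The tools you list do not by themselves pin the configuration down tightly enough for the counting to close: in several branches the best covering obtainable from $\mathscr{O}(G)$ or $\mathscr{P}(G)$ has size exactly $n$ unless one first excludes certain elements of $S(\rho)$ and certain $U$-blocks. The paper does this in its statement (1) by constructing a \emph{second rotator} $\rho'=(s_1^1,s_2^2,s_3^3;r_1,t_3^2,t_3^3;t^1,r_2^3,r_3^3)$ out of the current configuration (in the branch where $t^2$ is the vertex complete to $R$) and applying \pref{thm:3,3} to $\rho'$: if $s_1^p\in S$ then $|I_R(\rho')|=3$, and if $S_3^1\cup U_1\neq\emptyset$ then $|I_T(\rho')|=3$, both contradictions. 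This forces $s_2^1\in S$, $s_3^1,s_1^p\notin S$ and $G[T]$ of type $1.1$ at $1$ in that branch, and these constraints are what make the subsequent claims ($2\leq|S|\leq 3$, $i_0=1$, $p=3$, $s_1^3\in S$, and ultimately that $G[T]$ must be of type $1.2$) provable and the final $n-1$ covering achievable. Your substitute escape hatch, \pref{lem:ccschlafli}, is never used in the paper's proof of this lemma and is largely unavailable here: a clique $R(\rho)$ meeting two of the $R_i(\rho)$ with $|R(\rho)|\geq 3$ can never agree with $\Gamma$ on $R(\rho)$, so the configuration is not ISA. Without the re-rotator argument (or an equivalent), the tight sub-cases you defer to ``combinatorial stamina'' would not close.
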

\begin{proof}
	By \pref{thm:3,3}, we have $|I_T|=|I_R|=2$. On the contrary, by \pref{lem:G[T] structure |I_T|=2} and w.l.o.g. let $R$ be a clique.  By symmetry, suppose that $I_T=\{1,2\}$ and $I_R=\{1,p\}$, for some $p\in \{2,3\}$. By \pref{lem:both clique}, $T$ is not a clique, and thus by \pref{lem:G[T] structure |I_T|=2}, $G[T]$ is of type $1$, with $t^1\in T^1, t^2\in T^2$, $U_1\subseteq T^2$ and $U_2\subseteq T^1$  as in the definition. Note that by \pref{lem:square-forcer}~(i) (applying to $(i,j,k)=(1,p,5-p)$), either $s_2^1$ or $s_1^p$ belongs to $S$. Also, if $s_2^1\in S$ or $p=2$, then apply \pref{lem:one is complete to R}~(i) for $\{i,j\}=\{1,2\}$, and if $s_1^p\in S$ and $p=3$, then apply \pref{lem:one is complete to R}~(ii) for $(l,l')=(p,1)$ to deduce that there is a unique $i_0\in \{1,2\}$ such that $t^{i_0}$ is complete to $R$ and  $t^{3-i_0}$ is anticomplete to $R$.\vsp
	
	(1) \textit{If $i_0=2$, then $s_2^1\in S$, $s_3^1, s^p_1\notin S $ and $G[T]$ is of type $ 1.1 $ at $1$.} \vsp
	
	Since $i_0=2$, $t^1$ is anticomplete to $R_1$. Choose $r_1\in R_1$. It can be checked that $\overline{G}$ induces a rotator on  $\rho'=(s_1^1,s^2_2,s^3_3;r_1,t_3^2,t_3^3; t^1, r^3_2,r^3_3)$. Note that $r^3_1\in T^1(\rho')$, $t^1_3\in R_1(\rho')$, $R_p\subseteq R_p(\rho')$ and $S_1^p\subseteq R_{5-p}(\rho')$. Now, if $s_1^p\in S$, then $|I_R(\rho')|=3$ which contradicts \pref{thm:3,3}. Therefore,  $s_1^p\not\in S$ and thus $s_2^1\in S$. Also, $s_2^1\in T^3(\rho')$ and $S^1_3\cup U_1 \subseteq T^2(\rho')$. Now, if $S^1_3\cup U_1\neq \emptyset$, then $|I_T(\rho')|=3$ which again contradicts \pref{thm:3,3}. Hence, $S_3^1=U^1=\emptyset$ and so $G[T]$ is of type $1.1$ at $1$. This proves (1). \vsp
	
	(2) \textit{Let $G[T]$ be of type $1.1$ at $i$, for some $i\in \{1,2\}$. Then $|S|\geq 2$. Also, if $i_0=1$, then $S\neq \{s_1^2,s_1^{3}\}$ and if $i_0=2$, then $S\neq \{s_2^1,s_1^{5-p}\}$.}\vsp
	
	Suppose not, assume that either $|S|=|S_2^1|=1$, or $|S|=|S_1^p|=1$, or $i_0=1$ and $S\neq \{s_1^2,s_1^{3}\}$ or $i_0=2$ and $S\neq \{s_2^1,s_1^{5-p}\}$.

	and if the third assertion does not hold, then by (1), $i=1$. Considering $\mathscr{P}(G)$, if $s_i^3\notin S$, then replace the clique $\Delta _{3-i}^3$ with the clique $\Delta_{3-i}^3\cup U_{3-i}$. Otherwise, add the clique $T\setminus \{t^i\}$, there by covering all edges in $E(T^1,T^2)$. Also, if $S_{5-p}=\emptyset$ (i.e. the case that either $p=2$ or $p=3$ and $s_2^1\notin S$), then merge the pair $(\nabla _1^{5-p},\nabla _p^{5-p})$. Otherwise (i.e. the case that $p=3$ and $s_2^1\in S$), then replace the clique $\nabla _1^2$ with the clique $\nabla _1^2\cup R_p$, there by covering all edges in $E(R_1,R_p)$. Thus, it remains to cover the edges in $E(T,R)$. For this purpose, add the cliques in $\mathscr{N}[U^{i_0};R]$, if either $S^{i_0}=\emptyset$ or $i_0\notin I_R$, then replace the clique $\Omega ^{i_0}$ with the cliques in $\mathscr{N}[T^{i_0};R\cup \{t_3^{i_0}\}]$, and if either $S^{i_0}\neq \emptyset$ and $i_0\in I_R$, then replace the cliques $\Omega ^{i_0}$ and $\Omega _{i_0}$ with the cliques in $\mathscr{N}[T^{i_0};(\Omega ^{i_0}\cup R)\setminus (T^{i_0}\cup R_{i_0})]$ and $\mathscr{N}[R_{i_0};(\Omega _{i_0}\cup T^{i_0})\setminus R_{i_0}]$.
	We leave the reader to check that this procedure yields a clique covering for $G$ of size at most $n-1$, a contradiction. This proves (2).\vsp 
	
	If $G[T]$ is of type $1.1$ at $i$, for some $i\in \{1,2\}$, then define $\mathscr{O}'(G)=\mathscr{O}(G)\cup \{T^i\cup R^3, (T\setminus \{t^i\})\cup R^3,T_3\cup R\}$, which is a collection of cliques of $G$ covering all the edges in $E(G)\setminus E(T,R)$.
	Therefore, by (1), $\mathscr{O}'(G)\cup \mathscr{N}[T\setminus \{t^{3-i_0}\};R]$ is a clique covering of size $14+|T|=n-|S|-|R|+5$ which is at most $n-1$, whenever $|S|\geq 4$. Hence, by (2), the following statement holds. \vsp
	
	(3) \textit{If $G[T]$ is of type $1.1$ at $i$, for some $i\in \{1,2\}$, then $2\leq |S|\leq 3$.}\vsp
	
	Now, we deduce more information about $G$. \vsp
	
	(4) \textit{We have $i_0=1$, $p=3$ and $|S_2^1\cup S_1^2|\leq 1$.}\vsp
	
	Suppose not, we may assume that $i_0=2$ (indeed, if $i_0=1$ and either $p=2$ or both $s_1^2,s_2^1\in S$, then in order to assume that $i_0=2$,  we may consider the decompositions corresponding to the rotators $(s_2^2,s_1^1,s_3^3;t_3^2,t_3^1,t_3^3;r_2^3,r_1^3,r_3^3)$ or $(s_2^1,s_1^2,s_3^3;t_3^2,t_3^1,t_3^3;r_1^3,r_2^3,r_3^3)$, respectively). Thus by (1), $s_2^1\in S$, $s_3^1, s^p_1\notin S $ and $G[T]$ is of type $ 1.1 $ at $1$. Now, considering $\mathscr{O}'(G)$, replace the cliques $X_1^2$ and $X_3^2$ with the cliques $X_1^2\cup R_p$ and $X_3^2\cup R_1$ as well as adding the cliques in $\mathscr{N}[U_2;R]$ to cover the edges in $E(T,R)$, thereby obtaining a clique covering $\mathscr{C}$ for $G$ of size $15+|U_2|=n-|S|-|R|+4$. Thus by (3), $|S|=|R|=2$ and since $i_0=2$ and $s_2^1\in S$, by (2),  we have $s_1^{5-p}\notin S$ (i.e. $S_1=\emptyset$).  Now, remove the clique $T^1\cup R^3$ from $\mathscr{C}$ and replace the cliques $Z_1^{5-p},Z_p^{5-p}$ and $Z_{5-p}^p$ with the cliques $R\cup S^{5-p}\cup \{s_{5-p}^{5-p}\},\{s_{5-p}^{5-p},r_1^3,r_p^3\}\cup S^{5-p}\cup T^1$ and $Z_{5-p}^p\cup T^1$ to cover the edges in $E(T^1,R^3)$, thereby obtaining clique covering for $G$ of size $14+|U_2|=n-1$, a contradiction. This proves (4).\vsp
	
	(5) \textit{If $G[T]$ is of type $ 1.1 $ at $i$, for some $i\in\{1,2\}$, then we have $s_1^3\in S$.} \vsp
	
	By (4), $i_0=1$ and $p=3$. Note that  if (5) does not hold, then $s^1_2\in S$ and again by (4), $s_1^2\notin S $. Now, in the collection $\mathscr{O'}(G)$, if $i=1$ (resp. $i=2$), then remove the clique $T^{1}\cup R^3$ (resp. $(T\setminus \{t^{2}\})\cup R^3$), replace the cliques $Z_1^3$, $Z_2^3$ and $Z_3^2$ with the cliques 
	$Z_1^3\cup \{t^1\}$, $Z_2^3\cup \{t^1\}$ and $Z_3^2\cup \{t^1\}$ to cover the edges in $E(\{t^1\},R')$ and add the cliques $\mathscr{N}[U_{3-i};R\cup \{t^1\}]$ to cover the edges in $E(T\setminus \{t^1\},R)\cup E(T^1,T^2)$, thereby obtaining a clique covering for $G$ of size $14+|U_{3-i}|=n-|S|-|R|+3$, which by (3) is at most $n-1$. This proves (5). \vsp
	
	(6) \textit{If $G[T]$ is of type $1.1$ at $i$, for some $i\in \{1,2\}$, then $S_2^1\cup S_1^2\cup S_2^3\neq \emptyset$.}\vsp
	
	Suppose not, by (4), we have $i_0=1$ and $p=3$ and by (5), $s_1^3\in S$. Considering $\mathscr{O'}(G)$, if $i=1$ (resp. $i=2$), remove the clique $T^1\cup R^3$ (resp. $(T\setminus \{t^2\})\cup R^3$), replace the cliques $Z_1^2,Z_3^2,Z_2^1$ and $Z_2^3$ with the cliques $Z_1^2\cup \{t^1\}, Z_3^2\cup \{t^1\},(Z_2^1\cup S_1^3)\setminus S_3^1$ and $(Z_2^3\cup S_3^1\cup \{t^1\})\setminus S_1^3$ to cover the edges in $E(\{t^1\},R')$. Also, add the cliques in $\mathscr{N}[U_{3-i};R\cup \{t^1\}]$ to cover the edges in $E(T\setminus \{t^1\},R)\cup E(T^1,T^2)$, thereby obtaining a clique covering for $G$ of size $14+|U_{3-i}|=n-|S|-|R|+3$, which by (2) is at most $n-1$. This proves (6). \vsp
	
	(7) \textit{If $G[T]$ is of type $1.1$ at $i$, for some $i\in \{1,2\}$, then $\{s_2^j,s_1^3\}\subseteq S\subseteq \{s_2^j,s_1^2,s_1^3\}$, for some $j\in \{1,3\}$.}\vsp
	
	By (4), $i_0=1$ and $p=3$, and by (5), $s_1^3\in S$. If $S_2=\emptyset$, then by (6), $s_1^2\in S$. Considering $\mathscr{O}'(G)$, in the case that $i=1$ (resp. $i=2$), remove the clique $(T\setminus \{t^1\})\cup R^3$ (resp. $T^2\cup R^3$) and replace the cliques $Z_2^1$ and $Z_2^3$ with the cliques $Z_2^1\cup \{t^2,r_3^3\}$ and $Z_2^3\cup \{t^2,r_1^3\}$ to cover the edges in $E(\{t^2\},R^3)$. Also, let $U'_2\subseteq U_2$ be the set of vertices of $U_2$ complete to $R$. Then $U_2\setminus U'_2$ is anticomplete to $R$ (indeed, if $t\in U_2$ is nonadjacent to some $r\in R_j$, $j\in \{1,3\}$, then, $t$ is anticomplete to $R_{4-j}$, since otherwise, $\{r',s_j^1,t,r\}$ would be a claw, for some $r'\in R_{4-j}$). Thus adding the clique $\{t^1\}\cup U'_2\cup R$ as well as the cliques in $\mathscr{N}[U_1;R]\cup \mathscr{N}[U_2;\{t^2\}]$ to the resulting family to  cover the edges in $E(T,R)\cup E(T^1,T^2)$, thereby providing a clique covering for $G$ of size $13+|T|=n-|S|-|R|+4$. Thus, $|S|=2$, i.e. $S=\{s_1^2,s_1^3\}$, and since $i_0=1$, this contradicts (2). Thus, $S_2\neq \emptyset$ and so $\{s_2^j,s_1^3\}\subseteq S$, for some $j\in \{1,3\}$.\\
	Also, if $s_1^2\notin S$, then in $\mathscr{O'}(G)$, replacing the cliques $Z_1^2$ and $Z_3^2$ with the cliques $Z_1^2\cup \{t^1\}$ and $Z_3^2\cup \{t^1\}$ and adding the cliques in $\mathscr{N}[U_{3-i};R]$ to cover the edges in $E(T,R)$, yield a clique covering for $G$ of size $13+|T|=n-|S|-|R|+4$. Hence, $|S|=2$. This, together with (3), proves (7).\vsp
	
	(8) \textit{$G[T]$ is of type $1.2$. }\vsp
	
	Suppose not, let $G[T]$ be of type $1.1$ at $i$ for some $i\in \{1,2\}$. By (4), $i_0=1$ and $p=3$. Also by (7), either $S=\{s_2^j,s_1^3\}$ or $S=\{s_2^j,s_1^2,s_1^3\}$, for some $j\in \{1,3\}$.  If $S=\{s_2^j,s_1^3\}$, for some $j\in \{1,3\}$, then considering $\mathscr{O}'(G)$ in the case that $i=1$ (resp. $i=2$), remove the cliques $T^1\cup R$ (resp. $(T\setminus \{t^2\})\cup R$), replace the cliques $Z_1^2$ and $Z_3^2$ with the cliques $Z_1^2\cup \{t^1\}$ and $Z_3^2\cup \{t^1\}$ and add the cliques in $\mathscr{N}[U_{3-i};R\cup \{t^1\}]$ to cover the edges in $E(T,R)\cup E(\{t^1\},\{r_1^3,r_3^3\})\cup E(T^1,T^2)$. Also, in order to cover the edge $t^1r_2^3$, if $j=1$, then replace the cliques $X_3^1$, $X_3^2$ and $Z_2^3$ with the cliques $(X_3^1\cup \{r_2^3\})\setminus \{t_3^1\}$, $(X_3^2\cup \{t_3^1\})\setminus T^2$ and $Z_2^3\cup T^2$ and if $j=3$, then replace the cliques $Z_2^1$ and $Z_2^3$ with the cliques $Z_2^1\cup S_1^3$ and $(Z_2^3\cup \{t^1\})\setminus S_1^3$. This yields a clique covering for $G$ of size $12+|T|=n-|R|+1\leq n-1$, a contradiction. Therefore, assume that $S=\{s_2^j,s_1^2,s_1^3\}$, for some $j\in \{1,3\}$. In this case, note that $U_2$ is anticomplete to $R$ (indeed, $U_2$ is anticomplete to $R_j$, since otherwise $\{t,s_2^j,t^2,r\}$ would be a claw for some $t\in U_2$, $r\in R_j$, and consequently, $U_2$ is anticomplete to $R_{4-j}$, since otherwise $\{r'',s_1^j,t,r'\}$ would be a claw for every $r'\in R_j$ and some $t\in U_2$, $r''\in R_{4-j}$). Now, in $\mathscr{O}'(G)$, replace the clique $Z_3^1$ and $Z_3^2$ with the cliques $(Z_3^1\cup S_1^2)\setminus S_2^1$ and $(Z_3^2\cup S_2^1\cup \{t^1\})\setminus S_1^2$ to cover the edges in $E(\{t^1\},R_3)$, replace the clique $X_3^1$ with the clique $(X_3^1\cup R_1)\setminus U_2$ to cover the edges in $E(\{t^1\},R_1)$, and add the cliques in $\mathscr{N}[U_1;R]\cup \mathscr{N}[U_2;\{s_3^3\}]$, thereby obtaining a clique covering for $G$ of size $13+|T|=n-|R|+1\leq n-1$, again a contradiction. This proves (8).\vsp

	By (8), $G[T]$ is of type $1.2$, and $\mathscr{C}=\mathscr{O}(G)\cup \{T_3\cup R\} \cup  \mathscr{N}[T^1;T^2\cup R^3]\cup \mathscr{N}[R;T]$ is a clique covering for $G$ of size $13+|T^1|+|R|=n-|S|-|T^2|+4$. Thus, $1\leq |S|\leq 2$. Now, we complete the proof through the following three cases.\\
	If $|S|=2$ and $S^3=\emptyset$, then remove the clique $Z_1^3$ from $\mathscr{C}$ and replace the cliques in $\mathscr{N}[R_1;T]$ with the cliques in $\mathscr{N}[R_1;T\cup \{s_3^3,r^3_1\}]$, thereby obtaining a clique covering for $G$ of size $12+|T^1|+|R|=n-|S|-|T^2|+3\leq n-1$, a contradiction.\\
	Also, if $|S|=|S^3|=2$ and $p=3$, then every vertex in $T^1$ is either complete or anticomplete to $R$ (indeed, if a vertex $t\in T^1$ is nonadjacent to some vertex $r\in R_i$ for some $i\in I_R$, then $t$ is anticomplete to $R_{4-i}$, since otherwise $\{r',s_1^j,t,r\}$ would be a claw for every vertex $r'\in R_{4-j}$ adjacent to $t$). Now, let $T'^1\subseteq T^1$ be the set of vertices in $T^1$ which are complete to $R$. In $\mathscr{C}$, remove the cliques in $\mathscr{N}[R,T]$ and in order to cover the edges in $E(T,R)$, add the cliques $T'^1\cup R$, replace the clique $Z_1^2$ with the clique $Z_1^2\cup S_2^3$ and also replace the cliques $Z_3^1$ and $Z_1^3$ with the cliques in $\mathscr{N}[R_3;(Z_3^1\cup T^2)\setminus R_3]$ and the cliques in $\mathscr{N}[R_1;(Z_1^3\cup T^2)\setminus S_2^3\cup R_3]$, thereby obtaining a clique covering for $G$ of size $12+|T^1|+|R|=n-|S|-|T^2|+3\leq n-1$ which is impossible.\\
	Finally assume that either $|S|=1$, or $|S|=2$ and $|S^3|=1$, or $|S|=|S^3|=2$ and $p=2$. Then, considering $\mathscr{P}(G)$, in order to cover the edges $E(T^1,T^2)$, for every $i\in \{1,2\}$ such that $s^3_{3-i}\notin S$, remove the clique $\Delta _{i}^3$, if $|S^3|=1$, then choose $j\in \{1,2\}$ satisfying $s_j^3\notin S$ and add the cliques in $\mathscr{N}[T^j;\Delta _{3-j}^3]$ and if $|S^3|=2$, then add the cliques in $\mathscr{N}[T^1;T^2]$.  Also, in order to cover the edges in $E(R_1,R_p)$, if $S_{5-p}=\emptyset$, then merge the pair $(\nabla _1^{5-p},\nabla _{p}^{5-p})$, if $|S_{5-p}|=1$, then choose $i\in \{1,p\}$ satisfying $s_{5-p}^{p-i+1}\notin S$ and replace the clique $\nabla_ {i}^{5-p}$ with the clique $\nabla_{i}^{5-p}\cup R_{p-i+1}$, and if $|S_{5-p}|=2$, then add the clique $R$. Finally, for every $i\in I_R$, in order to cover the edges $E(T,R_i)$, if $S_i=\emptyset$ or $i=3$, then replace the clique $\Omega _i$ and add clique $\mathscr{N}[R_i;(T\cup \Omega _i)\setminus R_i]$ and if $S_i\neq \emptyset$ and $i\neq 3$, then add the cliques in $\mathscr{N}[R_i;T]$. We leave it to the reader to check that this procedure produces a clique covering for $G$ of size at most $n-1$, a contradiction. This proves \pref{lem:2,2with a clique}.
\end{proof}

Now, we are ready to establish \pref{thm:noap}.

\begin{proof}[{\rm \textbf{Proof of \pref{thm:noap}.}}]
		Let $G$ be a counterexample. Due to \pref{thm:Inc-rotator-Exc-sf}, $\overline{G}$ contains an induced rotator $\rho$ and no square-forcer. Hence, by \pref{thm:3,3}, we have $|I_T(\rho)|=|I_R(\rho)|=2$, say  $I_T(\rho)=\{1,2\}$, $I_R(\rho)=\{1,p\}$, $p\in \{2,3\}$. Also, by \pref{lem:2,2with a clique}, both $G[T(\rho)]$ and $G[R(\rho)]$ are of type $1$ with $t^1\in T^1$, $U_2\subseteq T^1$, $t^2\in T^2$, $U_1\subseteq T^2$, $r_1\in R_1$, $U^p\subseteq R_1$, $r_p\in R_p$ and $U^1\subseteq R_p$ as in the definition.
	By \pref{lem:square-forcer}~(i) (applying to $(i,j,k)=(1,p,5-p)$), $s_{k'}^k\in S$, for some $(k,k')\in \{(1,2),(p,1)\}$. For every $i\in \{1,2\}$ and every $t\in U_i$, let $A_i(t)=\{s_{k'}^k,t^i,r_k,t\}$ and $B_i(t)=\{s_1^1,t^i,r_1,t\}$. Also, for every $j\in \{1,p\}$ and every $r\in U^j$, let $C_j(r)=\{s_{k'}^k,t^{k'},r_j,r\}$ and $D_j(r)=\{s_1^1,t^1,r_j,r\}$. 
	
	First, assume that $t^1$ is nonadjacent to $r_p$. Then, by \pref{lem:T,R Facts and Tools}~(vi), $ t^1 $ is adjacent to $ r_1 $ and $t^2$ is adjacent to $r_p$ and nonadjacent to $ r_1 $. Since $A_1(t), t\in U_1$, is not a claw, $r_1$ is complete to $U_1$ and $r_p$ is anticomplete to $U_1$. Also, since $A_2(t), t\in U_2$, is not a claw,  $r_1$ is anticomplete to $U_2$ and $r_p$ is complete to $U_2$. Further, since $C_1(r), r\in U^1$, is not a claw, $t^2$ is anticomplete to $U^1$ and $t^1$ is complete to $U^1$ and since  $C_p(r), r\in U^p$, is not a claw, $t^1$ is anticomplete to $U^p$ and $t^2$ is complete to $U^p$. Moreover, since $(s_1^1,r_1,U_2)$ is a quasi-triad of $G$ and $U_1$ is complete to $\{s_1^1,r_1\}$, by \pref{lem:T,R Facts and Tools}~(i), we have $|U_2|\leq 1$ and $U_1$ is anticomplete to $U_2$. Similarly, since $(s_1^1,t^1,U^p)$ is a quasi-triad of $G$ and $U^1$ is complete to $\{s_1^1,t^1\}$, we have $|U^p|\leq 1$ and $U^1$ is anticomplete to $U^p$. Also, $U_2$ is anticomplete to $U^1$ (otherwise, for $ t\in U_2 $ adjacent to $ r\in U^1 $, $(B_2(t)\cup \{r\})\setminus \{t^2\}$ would be a claw) and complete to $U^p$, (otherwise, for $ t\in U_2 $ nonadjacent to $ r\in U^p $, $(B_2(t)\cup \{r\})\setminus \{r_1\}$ would be a claw). Finally, $U_1$ is anticomplete to $U^p$ (otherwise, for $ t\in U_1 $ adjacent to $r\in U^p$, $(D_p(r)\cup \{t\})\setminus \{r_p\}$ would be a claw). Now, if either $U^1=\emptyset$, or $U_1=\emptyset$, or $U^p\neq \emptyset$, or $U_2\neq \emptyset$, then by \pref{lem:T,R Facts and Tools}~(vi), $U^1$ is complete to $U_1$ and so $G$ is ISA, which contradicts \pref{lem:ccschlafli}. Thus, both $U^1$ and $U_1$ are nonempty, say $0<|U_1|\leq |U^1|$, and $U^p=U_2=\emptyset$. Also, if either $p=2$ or $p=3$ and $s_2^3\in S$, then $U^1$ is complete to $U_1$ (otherwise, for $ t\in U_1 $ nonadjacent to $ r\in U^1 $, $\{s_2^p,t^1,t,r\}$ would be a claw) and so $G$ is ISA, which contradicts \pref{lem:ccschlafli}. Hence, $p=3$ and $s_2^3\notin S$. Subsequently, in the collection $\mathscr{O}'(G)=\mathscr{O}(G)\cup \{ T^2\cup R^3, (T\setminus \{t^2\})\cup R^3,T_3\cup R_1, T_3\cup (R\setminus \{r_1\})\}$ (the cliques in which cover all the edges in $E(G)\setminus E(T,R)$), add the cliques in $\mathscr{N}[T^2;R]$, replace the clique $X_2^1$ with the clique $X_2^1\cup U^1$ to cover the edges in $E(T^1,R_3)$. Also, in order to cover the edge $t^1r_1$, if both $S^1$ and $S_1$ have cardinality two, then add the clique $\{t^1,r_1\}$, if $|S^1|\leq 1$ (resp. $|S_1|\leq 1$), say $s^1_i\notin S$ (resp. $s_1^i\notin S$), for some $i\in \{2,3\}$, then replace the clique $X_i^1$  or possibly $X_i^1\cup U^1$ (resp. $Z_1^i$) with clique $X_i^1\cup \{r_1\}$ or possibly $X_i^1\cup U^1\cup \{r_1\}$ (resp. $Z_1^i\cup \{t^1\}$). It is easy to check that in the case that either $|S^1|\leq 1$ or $|S_1|\leq 1$, the resulting family $\mathscr{C}_1$ is a clique covering for $G$ of size $16+|T^2|=n-|S|-|U^1|+4$, and in the case that both $S^1$ and $S_1$ have cardinality two, it is a clique covering for $G$ of size $17+|T^2|=n-|S|-|U^1|+5$. Hence, $|S|\leq 4$ and if $|S|\geq 3$, then $|U_1|=|U^1|=1$.
	First, assume that $|S|\geq 3$. If $U_1$ is complete to $U^1$, then $G$ is ISA, a contradiction with \pref{lem:ccschlafli}, and if $U_1$ is anticomplete to $U^1$, then remove the single clique in $\mathscr{N}[U_1;R]$ from the $\mathscr{C}_1$ and replace the clique $Z_1^3$ or possibly $Z_1^3\cup \{t^1\}$ with the clique $Z_1^3\cup U_1\cup \{t^1\}$ to obtain a clique covering for $G$ of size  
	$n-1$, a contradiction. Therefore, $|S|\leq 2$. Now, considering $\mathscr{P}(G)$, 
	replace the cliques $\Delta^3_1$ and $\nabla^2_1$ with the cliques $\Delta_1^3\cup U_1$ and $\nabla^2_1\cup U^1$ to cover the edges in $E(T^1,T^2)\cup E(R_1,R_3)$, replace the clique $\Omega ^1$ with the clique $\Omega ^1\cup U^1$ to cover the edges in $E(T^1,R_3)$, replace the clique $\Omega _1$ with the clique $\Omega _1\cup U_1$ to cover the edges in $E(T^2, R_1)$, replace the clique $\Omega ^2$ with the cliques in $\mathscr{N}[T^2;(R_3\cup \Omega^2)\setminus T^2]$ to cover the edges in $E(T^2,R_3)$. Finally, add the clique $T^1\cup R_1$ to cover the edges in $E(T^1,R_1)$, thereby obtaining a clique covering for $G$ of size at most $12+|S|+|T^2|=n-|U^1|\leq n-1$, again a contradiction.
	
	Now, assume that $t^1$ is adjacent to $r_p$ and so by \pref{lem:T,R Facts and Tools}~(vi), $ t^1 $ is nonadjacent to $ r_1 $ and $t^2$ is adjacent to $r_1$ and nonadjacent to $ r_p $. Since $B_1(t),t\in U_1$, is not a claw, $r_1$ is anticomplete to $U_1$ and thus $r_p$ is complete to $U_1$. Also, since $B_2(t), t\in U_2$, is not a claw, $r_1$ is complete to $U_2$ and thus $r_p$ is anticomplete to $U_2$. Also, since $D_1(r), r\in U^1$, is not a claw, $t^1$ is anticomplete to $U^1$ and thus $t^2$ is complete to $U^1$ and since $D_p(r), r\in U^p$, is not a claw, $t^1$ is complete to $U^p$ and so $t^2$ is anticomplete to $U^p$. Moreover, since $(s_{k'}^k,r_k,U_{3-k'})$ is a quasi-triad of $G$ and $U_{k'}$ is complete to $\{s_{k'}^k,r_k\}$, by \pref{lem:T,R Facts and Tools}~(i), we have $|U_{3-k'}|\leq 1$ and $U_1$ is anticomplete to $U_2$. Similarly, since $(s_{k'}^k,t^{k'},U^{p-k+1})$ is a quasi-triad of $G$ and $U^k$ is complete to $\{s_{k'}^k,t^{k'}\}$, by \pref{lem:T,R Facts and Tools}~(i), we have $|U^{p-k+1}|\leq 1$ and $U^1$ is anticomplete to $U^p$. Also, $U_1$ is anticomplete to $U^1$ (otherwise, for $ t\in U_1 $ adjacent to $r\in U^1$, if $(k,k')=(1,2)$, then $(C_1(r)\cup \{t\})\setminus \{t^{k'}\}$ and if $(k,k')=(p,1)$, then $(C_1(r)\cup \{t\})\setminus \{r_1\}$ would be a claw). Finally, $U_2$ is anticomplete to $U^p$ (otherwise, for $ t\in U_2 $ adjacent to $ r\in U^p $, if $ (k,k')=(1,2) $, then $(C_p(r)\cup \{t\})\setminus \{r_p\}$ is a claw, and if $ (k,k')=(p,1) $, then  $(C_p(r)\cup \{t\})\setminus \{t^1\}$ would be a claw). Now, if either both $U^1$ and $U_1$ are nonempty, or both $U^p$ and $U_2$ are nonempty, or $U_1=U_2=\emptyset$, or $U^1=U^p=\emptyset$, then by \pref{lem:T,R Facts and Tools}~(vi), $U^1$ is complete to $U_1$ and so $G$ is ISA, which contradicts \pref{lem:ccschlafli}. Thus, assume that either $U_1=U^p=\emptyset$ and both $U^1$ and $U_2$ are nonempty, or $U^1=U_2=\emptyset$ and both $U^p$ and $U_1$ are nonempty, say the former occurs (otherwise, we may consider the rotator $(s_1^1,s_p^p,s_{5-p}^{5-p};r^3_1,r^3_p,r^3_{5-p};t_3^1,t_3^p,t_3^{5-p})$ of $\overline{G}$ as the initial rotator), say $0<|U_2|\leq |U^1|$. Also, note that $\rho'=(s_1^1,s_2^2,s_3^3;r_1,t^2_3,t^3_3; t^1,r^3_2,r^3_3)$ is a rotator of $\overline{G}$ and we have  $t^1_3\in R_1(\rho')$, $U^1\subseteq R_p(\rho')$ and $S_1^p\subseteq R_{5-p}(\rho')$. Thus, if $s_1^p\in S$, then $|I_R(\rho')|=3$,  which contradicts \pref{thm:3,3}. Therefore,  $s_1^p\not\in S$ and so by \pref{lem:square-forcer}~(i) (applying to $(i,j,k)=(1,p,5-p)$), $s_2^1\in S$. Also, $r^3_1\in T^1(\rho')$, $s_2^1\in T^3(\rho')$ and $S^1_3\subseteq T^2(\rho')$. Now, if $s_3^1\in S$ is not empty, then $|I_T(\rho')|=3$ which again contradicts \pref{thm:3,3}. Therefore, $s^1_3\notin S$. 
	
	Now, in the collection $\mathscr{O}''(G)=\mathscr{O}(G)\cup \{ T^1\cup R^3, (T\setminus \{t^1\})\cup R^3,T_3\cup R_p, T_3\cup (R\setminus \{r_p\})\}$ (the cliques in which cover all the edges in $E(G)\setminus E(T,R)$), add the cliques in $\mathscr{N}[T^1;R_p]$, replace the cliques $X_1^2$, $X_3^2$ and $Z_1^p$ with the cliques $X_1^2\cup U^1$, $X_3^2\cup \{r_1\}$ and $Z_1^p\cup U_2$, thereby covering the edges in $E(T,R)$. 
	The resulting family $\mathscr{C}_2$ is a clique covering for $G$ of size $17+|U_2|=n-|S|-|U^1|+4$. Thus, $|S|+|U^1|\leq 4$. If $|S|=3$, then $|U_2|=|U^1|=1$, in the case that $U_2$ is complete to $U^1$, $G$ is ISA, again a contradiction with \pref{lem:ccschlafli}, and in the case that $U_2$ is anticomplete to $U^1$,  removing the clique $U_2\in \mathscr{N}[t^1,R_p]$ from $\mathscr{C}_2$ implies that $\cc(G)\leq n-1$, a contradiction. Therefore, $|S|\leq 2$. Now, considering $\mathscr{P}(G)$, remove the clique $\Omega ^1$, add the cliques in $\mathscr{N}[T^1;(\Omega^1\cup R_p)\setminus T^1]$ to cover the edges in $E(T^1,R_p)$. Also, in order to cover the edges in $E(\{t^2\},R)\cup E(R_1,R_p)$, if $S^2=\emptyset$ or $p=3$, then replace the clique $\Omega ^2$ with the clique $\Omega ^2\cup U^1\cup \{r_1\}$ and if $S^2\neq \emptyset$ and $p=2$, then add the clique $U^1\cup \{t^2,r_1\}$, and in order to cover the edges in $E(\{t^2,r_1\},U_2)$, if $S_1=\emptyset$, then replace the clique $\Omega _1$ with the clique $\Omega _1\cup U_2\cup \{t^2\}$ and if $S_1\neq \emptyset$, then add the clique $U_2\cup \{t^2,r_1\}$. It is easy to see that these modifications lead to a clique covering for $G$ of size at most $12+|S|+|T^1|=n-|U^1|\leq n-1$, again a contradiction. This completes the proof of \pref{thm:noap}.
\end{proof}
\bibliographystyle{plain}

\appendix
\vspace{5mm}
\section{Basic classes of three-cliqued claw-free graphs} \label{app:3clique}
The following five classes of three-cliqued claw-free graphs are the building blocks in the structure theorem of all three-cliqued claw-free graphs.
\begin{itemize}
	\item[$\mathcal{TC}_1$] \textit{Line graphs.} Let $v_1, v_2, v_3$ be distinct pairwise nonadjacent vertices of a graph $H$, such
	that every edge of H is incident with one of $v_1, v_2, v_3$. Let $v_1, v_2, v_3$ be of degree at least
	three, and let all other vertices of $H$ be of degree at least one. Moreover, for all distinct $i, j \in
	\{1, 2, 3\}$, let there be at most one vertex different from $v_1, v_2, v_3$ that is adjacent to $v_i$ and
	not to $v_j$ in $H$.  Let $G$ be the line graph of $H$ and let $A,B,C$ be the sets of edges of $H$ incident with $v_1, v_2, v_3$, respectively. Then $(G,A,B,C)$ is a three-cliqued claw-free graph. 
	Moreover, let $F'$ be the set of all pairs $\{e,f\}$, where $e,f\in E(H)$ have a common endpoint in $H$ with degree exactly two and let  $F$ be a valid subset of $ F' $. 
	Define $\mathcal{TC}_1$ to be the class of all such three-cliqued graphs with the additional property that every vertex of $G$ is in a triad of $G\setminus F$.
	
	\item[$\mathcal{TC}_2$] \textit{Long circular interval graphs.} Let $G$ be a long circular interval graph with $\Sigma$ and $\mathcal{I}=\{I_1,\ldots, I_k\}$ as in the definition of long circular interval graph. By a line we mean either a subset $X\subseteq V(G)$ with $|X|\leq 1$, or a subset of
	some $I_i$ homeomorphic to the interval $[0,1]$, with both endpoints in $V (G)$. Let
	$L_1,L_2,L_3$ be pairwise disjoint lines with $V (G) \subseteq L_1\cup L_2\cup L_3$ and define $A=V(G)\cap L_1, B=V(G)\cap L_2 $ and $C=V(G)\cap L_3$. Then $(G,A,B,C) $ is a three-cliqued claw-free graph.
	Moreover, let $F'$ be the set of all pairs $\{u,v\}$ such that $u,v \in V (G)$ are distinct endpoints of $I_i$  for some $i$,  there is no $j\neq i$ for which $u,v\in I_j$ and $u,v$ are not in the same set $A$, $B$ or $C$. Also, let  $F$ be a subset of $ F' $. 
	We denote by $\mathcal{TC}_2$ the class of all such three-cliqued graphs with the additional property that every vertex of $G$ is in a triad of $G\setminus F$.
	
	\item[$\mathcal{TC}_3$] \textit{Near-antiprismatic graphs.} 
	Let $m \geq 2$ and construct a graph $H$ as follows. Its vertex set
	is disjoint union of three sets $\tilde{A},\tilde{B},\tilde{C}$, where $|\tilde{A}| = |\tilde{B}| = m + 1$ and $|\tilde{C}| = m$, say
	$\tilde{A} = \{a_0, a_1, \ldots, a_m\}$, $\tilde{B} = \{b_0, b_1,\ldots , b_m\}$ and $\tilde{C} = \{c_1,\ldots, c_m\}$. Adjacency is as follows.
	$\tilde{A},\tilde{B},\tilde{C}$ are three cliques. For $i, j\in\{0,\ldots,m\}$ with $(i, j )\neq (0, 0)$, let $a_i, b_j$ be adjacent if  and only if $i = j$, and for $ i\in\{1,\ldots,m\}$ and $j\in\{0,\ldots,m\}$, let $c_i$ be adjacent to $a_j, b_j$ if and only if
	$i \neq j \neq 0$. All other pairs not specified so far are nonadjacent. 
	Now let $X\subseteq \tilde{A}\cup \tilde{B}\cup \tilde{C} \setminus \{a_0, b_0\}$ with $|\tilde{C} \setminus X| \geq 2$. 
	Also, let $G=H\setminus X$, $A=\tilde{A}\setminus X$, $B=\tilde{B}\setminus X$ and $C=\tilde{C}\setminus X$.
	Then $(G,A,B,C)$ is a three-cliqued claw-free graph called near-antiprismatic. 
	Moreover, let $F'=\{\{a_i,b_i\}: 0\leq i\leq m\}\cup \{\{a_i,c_i\}: 1\leq i\leq m\}\cup \{\{b_i,c_i\}: 1\leq i\leq m\}$ and $F\subseteq F'$  such that
	\begin{itemize}
		\item $\{a_i,c_i\}\in F$ for at most one value of $ i\in\{1,\ldots , m\}$, and if so then $b_i\in X$,
		\item $\{b_i,c_i\}\in F$ for at most one value of $ i\in\{1,\ldots , m\}$, and if so then $a_i\in X$,
		\item $\{a_i,b_i\}\in F$ for at most one value of $ i\in\{1,\ldots , m\}$, and if so then $c_i\in X$. Moreover, $\{a_0,b_0\}$ may belong to $F$ with no restriction.
	\end{itemize}
	We denote by $\mathcal{TC}_3$ the class of all such three-cliqued graphs with the additional property that every vertex of $G$ is in a triad of $G\setminus F$.
	
	\item[$\mathcal{TC}_4$] \textit{Antiprismatic graphs.}
	Let $G$ be an antiprismatic graph and let $A,B,C$ be a partition of $V(G)$ into three cliques. 
	We denote by $\mathcal{TC}_4$ the class of all such three-cliqued graphs. Note that in this case $G$ may have vertices that are in no triad. Also, let $F$ be a valid set of changeable pairs of $G$.
	
	\item[$\mathcal{TC}_5$] \textit{Sporadic exceptions.}
	\begin{itemize}
		\item 
		Let $H$ be the graph with vertex set $\{v_1,\ldots, v_8\}$ and adjacency as follows: $v_i, v_j$ are
		adjacent for $1\leq i < j \leq 6$ with $j-i \leq 2$; $v_1, v_6, v_7$ are pairwise adjacent, and $v_7, v_8$ are adjacent. All other pairs not specified so far are nonadjacent. Let $X\subseteq \{v_3, v_4\}$, $A = \{v_1, v_2, v_3\}\setminus X$, $B = \{v_4, v_5, v_6\}\setminus X$, ${C} = \{v_7, v_8\}$ and $G=H\setminus X$; then $(G,A,B,C)$ is a three-cliqued claw-free graph, and all its vertices are in triads. Moreover, let $F'=\{\{v_1,v_4\}, \{v_3,v_6\}, \{v_2,v_5\}\}$ and $F\subseteq F'$ such that $\{v_1,v_4\},\{v_3,v_6\}\in F$.
		
		\item Let $H $ be the graph with vertex set $\{v_1, \ldots , v_9\}$, and adjacency as follows: the sets $A = \{v_1, v_2\}$, $\tilde{B} = \{v_3, v_4, v_5, v_6, v_9\}$ and $C = \{v_7, v_8\}$ are cliques; $v_1,v_8,v_9$ are pairwise adjacent; $v_2$ is adjacent to $v_3$ and $v_6$ is adjacent to $v_7$; the adjacency between the pairs $v_2v_4$ and $v_5v_7$ is arbitrary. All other pairs not specified so far are nonadjacent. Let $X \subseteq \{v_3, v_4, v_5, v_6\}$. Also, let $G=H\setminus X$ and $B=\tilde{B}\setminus X$. Then $(G,A,B,C)$ is a three-cliqued claw-free graph.
		Moreover, let $F'=\{\{v_2,v_4\},\{v_5,v_7\},\{v_1,v_3\}, \{v_6,v_8\}\}$ and $F\subseteq F'$ such that $\{v_1,v_3\}, \{v_6,v_8\}\in F$ and
		\begin{itemize}
			\item if $v_3\in X$, then either $v_2,v_4$ are adjacent, or $\{v_2,v_4\}\in F$,
			\item if $v_6\in X$, then either $v_5,v_7$ are adjacent, or $\{v_5,v_7\}\in F$,
			\item if $v_4, v_5\not\in X$, then for every $\{x,y\}\in\{\{v_2,v_4\},\{v_5,v_7\}\}$, either $x$ is adjacent to $y$, or $\{x,y\}\in F$.
		\end{itemize}
	\end{itemize}
	We denote by $\mathcal{TC}_5$ the class of such three-cliqued graphs  (given by one of these two constructions) with the additional property that every vertex of $G$ is in a triad  of $G\setminus F$.
\end{itemize}
\section{Basic classes of orientable antiprismatic graphs} \label{app:oap}
The following are the basic classes of orientable antiprismatic graphs which are appeared in the characterization of antiprismatic graphs. Note that they are recalled from \cite{seymour1}, where the description of orientable prismatic graphs is given (the graphs whose complements are  antiprismatic). Thus, we reformulate all the definitions in terms of the complements. \\

\textbf{The complement of mantled $L(K_{3,3})$.}
Let $G$ be a graph whose vertex set is the union of seven disjoint sets 
$W=\{a_j^i : 1\leq i,j\leq 3\}, V^1, V^2, V^3, V_1,V_2,V_3,$
with adjacency as follows. For $i,j,i',j'\in\{1,2,3\}$, $a_j^i$ and $a_{j'}^{i'}$ are adjacent if and only if $i= i'$ or $j=j'$. For $i\in\{1,2,3\}$, $V^i$ and $V_i$ are cliques; $V^i$ is anticomplete to $\{a^i_1,a^i_2,a^i_3\}$ and complete to the remainder of $W$; and $V_i$ is anticomplete to $\{a^1_i,a^2_i,a^3_i\}$  and complete to the remainder of $W$. Moreover, $V^1 \cup V^2 \cup V^3$ is complete to $V_1 \cup V_2\cup  V_3$, and there is no triad included in $V^1 \cup V^2 \cup V^3$ or in $V_1 \cup V_2 \cup V_3$. We call such a graph $G$ \textit{the complement of a mantled $L(K_{3,3})$}. \\

\textbf{The complement of a ring of five.}
Let $G$ be a graph with $V(G)$ the union of the disjoint sets $W = \{a_1,\ldots , a_5, b_1,\ldots , b_5\}$ and 
$V_0,V_1, \ldots , V_5$. Let adjacency be as follows (reading subscripts modulo 5). For $i\in\{1,\ldots, 5\}$,
$\{a_i, b_{i+1}, a_{i+2}\}$ is a triangle, and $\{b_1,\ldots,b_5\}$ is a clique; $V_0$ is anticomplete to $\{b_1,\ldots , b_5\}$ and complete to $\{a_1,\ldots , a_5\}$; $V_0,V_1,\ldots , V_5$ are all cliques; for $i \in \{ 1,\ldots , 5\}$, $V_i$ is anticomplete to $\{a_{i-1}, b_i, a_{i+1}\}$ and complete to the remainder of $W$; $V_0$ is complete to $V_1\cup \ldots \cup V_5$;
for $i\in\{1,\ldots, 5\}$, $V_i$	 is complete to $V_{i+2}$; and the adjacency between $V_i,V_{i+1}$ is arbitrary. We call
such a graph $G$ \textit{the complement of a  ring of five}. \\

\textbf{The complement of a path of triangles graph.}
Let $ m\geq 1 $ be some integer and $ G $ be a graph where $ V(G) $ is the union of pairwise disjoint cliques $X_1, \ldots , X_{2m+1}$, satisfying the following conditions (P1)--(P7).
\begin{enumerate}
	\setlength{\parskip}{0mm}
	\setlength{\itemsep}{1mm}
	\item[(P1)] For $ 1 \leq i \leq m $, there is a nonempty subset $ \hat{X}_{2i}\subseteq X_{2i} $; $|\hat{X}_2| = | \hat{X}_{2m}| = 1$ and for $0 < i <m$, at least one of $\hat{X}_{2i}, \hat{X}_{2i+2}$ has cardinality $1$.
	\item[(P2)] For $1 \leq  i < j \leq 2m+1$
	\begin{enumerate}
		\setlength{\itemsep}{0mm}
		\item[(1)] if $ j-i = 2$ modulo $3 $ and there exist $ u \in X_i $ and $ v \in X_j $, adjacent, then either $ i, j $ are odd and $ j = i + 2 $, or $ i, j $ are even and $ u \not\in \hat{X}_i $ and $ v \not\in \hat{X}_j $;
		\item[(2)] if $ j - i\neq 2$ modulo $3 $, then either $ j = i + 1 $ or $ X_i $ is complete to $ X_j $.
	\end{enumerate}
	\item[(P3)] For $ 1 \leq i \leq m + 1 $, $ X_{2i-1} $ is the union of three pairwise disjoint sets $ L_{2i-1} $, $ M_{2i-1} $, $ R_{2i-1} $,
	where $ L_1 =M_1 =M_{2m+1} = R_{2m+1} =\emptyset $.
	\item[(P4)] If $ R_1 =\emptyset $ then $ m \geq 2 $ and $ | \hat{X}_4| > 1 $, and if $ L_{2m+1} =\emptyset $, then $ m \geq 2 $ and $ | \hat{X}_{2m-2}| > 1 $.
	\item[(P5)] For $ 1 \leq i \leq m $, $ X_{2i} $ is complete to $ L_{2i-1} \cup R_{2i+1} $; $ X_{2i} \setminus \hat{X}_{2i} $ is complete to $ M_{2i-1} \cup
	M_{2i+1} $; and every vertex in $ X_{2i} \setminus \hat{X}_{2i} $ is adjacent to exactly one end of every non-edge between $ R_{2i-1} $ and $ L_{2i+1} $.
	\item[(P6)] For $ 1 \leq i \leq m $, if $ | \hat{X}_{2i}| = 1 $, then
	\begin{enumerate}
		\setlength{\itemsep}{0mm}
		\item[(1)] $ R_{2i-1},L_{2i+1} $ are anti-matched, and every non-edge between $ M_{2i-1} \cup R_{2i-1} $ and $ L_{2i+1} \cup M_{2i+1} $ is between $ R_{2i-1} $ and $ L_{2i+1} $;
		\item[(2)] the vertex in $ \hat{X}_{2i} $ is anticomplete to $ R_{2i-1} \cup M_{2i-1} \cup L_{2i+1} \cup M_{2i+1} $;
		\item[(3)] $ L_{2i-1} $ is anticomplete to $ X_{2i+1} $ and $ X_{2i-1} $ is anticomplete to $ R_{2i+1} $;
		\item[(4)] if $ i > 1 $, then $ M_{2i-1} $, $ \hat{X}_{2i-2} $ are anti-matched, and if $ i < m $, then $ M_{2i+1}, \hat{X}_{2i+2}$ are anti-matched.
	\end{enumerate}
	\item[(P7)] For $ 1 < i <m $, if $ | \hat{X}_{2i} | > 1 $, then
	\begin{enumerate}
		\setlength{\itemsep}{0mm}
		\item[(1)] $ R_{2i-1} = L_{2i+1} =\emptyset $;
		\item[(2)] if $ u \in X_{2i-1} $ and $ v \in X_{2i+1} $, then $ u, v $ are  adjacent if and only if they have the same neighbour in $ \hat{X}_{2i} $.
	\end{enumerate}
\end{enumerate}
Such a graph $G$ is called \textit{the complement of a path of triangles graph}. For each $ k\in\{1,2,3\} $, define
\[A_k =\bigcup (X_i\ :\ 1 \leq i \leq 2m+1 \text{ and } i = k \text{ modulo } 3).\]
Then, $ (G,A_1,A_2,A_3) $ is a three-cliqued graph. A permutation of $ (G,A_1,A_2,A_3) $ is called \textit{the complement of a canonically-coloured path of triangles graph}. \\

\textbf{The complement of a cycle of triangles graph.}
Let $ m \geq 5 $ be some integer with $ m = 2 $ modulo $ 3 $ and $ G $ be a graph where $ V(G) $ is the union of pairwise disjoint cliques $ X_1, \ldots , X_{2m} $, satisfying the following conditions (C1)--(C6) (reading subscripts modulo $ 2m $):
\begin{enumerate}
	\setlength{\parskip}{0mm}
	\setlength{\itemsep}{1mm}
	\item[(C1)] For $ 1 \leq i \leq m $, there is a nonempty subset $ \hat{X}_{2i} \subseteq X_{2i} $, and at least one of $ \hat{X}_{2i} $, $ \hat{X}_{2i+2} $ has
	cardinality $ 1 $.
	\item[(C2)] For  $ 1 \leq i \leq 2m $ and all $ k $ with $ 2 \leq k \leq 2m-2 $, let $ j \in \{1, \ldots , 2m\} $ with $ j = i + k $ modulo $ 2m $:
	\begin{enumerate}
		\setlength{\parskip}{0mm}
		\item[(1)] if $ k = 2$ modulo $3 $ and there exist $ u \in X_i $ and $ v \in X_j $, adjacent, then either $ i, j $ are
		odd and $ k \in \{2, 2m-2\} $, or $ i, j $ are even and $ u \not\in \hat{X}_i $ and $ v \not\in \hat{X}_j $;
		\item[(2)] if $ k\neq 2 $ modulo $ 3 $, then $ X_i $ is complete to $ X_j $. 
	\end{enumerate}
	\item[(C3)] For $ 1 \leq i \leq m+ 1 $, $ X_{2i-1} $ is the union of three pairwise disjoint sets $ L_{2i-1},M_{2i-1},R_{2i-1} $.
	\item[(C4)] For $ 1 \leq i \leq m $, $ X_{2i} $ is complete to $ L_{2i-1} \cup  R_{2i+1} $; $ X_{2i} \setminus \hat{X}_{2i} $ is complete to $ M_{2i-1} \cup M_{2i+1} $; and every vertex in $ X_{2i} \setminus \hat{X}_{2i} $ is adjacent to exactly one end of every
	non-edge between $ R_{2i-1} $ and $ L_{2i+1} $.
	\item[(C5)] For $ 1 \leq i \leq m $, if $ | \hat{X}_{2i}| = 1 $, then
	\begin{enumerate}
		\setlength{\parskip}{0mm}
		\item[(1)] $ R_{2i-1},L_{2i+1} $ are anti-matched, and every non-edge between $ M_{2i-1} \cup R_{2i-1} $ and $ L_{2i+1} \cup M_{2i+1} $ is between $ R_{2i-1} $ and $ L_{2i+1} $;
		\item[(2)] the vertex in $ \hat{X}_{2i} $ is anticomplete to $ R_{2i-1} \cup M_{2i-1} \cup L_{2i+1} \cup M_{2i+1} $;
		\item[(3)] $ L_{2i-1} $ is anticomplete to $ X_{2i+1} $ and $ X_{2i-1} $ is anticomplete to $ R_{2i+1} $;
		\item[(4)] $ M_{2i-1}, \hat{X}_{2i-2} $ are anti-matched and $ M_{2i+1}, \hat{X}_{2i+2} $ are anti-matched.
	\end{enumerate}
	\item[(C6)] For $ 1 \leq i \leq m $, if $ | \hat{X}_{2i} |>1 $, then
	\begin{enumerate}
		\setlength{\parskip}{0mm}
		\item[(1)] $ R_{2i-1} = L_{2i+1} =\emptyset $;
		\item[(2)] if $ u \in X_{2i-1} $ and $ v \in X_{2i+1} $, then $ u, v $ are adjacent if and only if they have the same neighbour in $ \hat{X}_{2i} $.
	\end{enumerate}
\end{enumerate}
Such a graph $ G $ is called \textit{the complement of a cycle of triangles graph}.

\section{Basic classes of non-orientable antiprismatic graphs} \label{app:noap}
The following classes of graphs are the building blocks in the structure theorems of non-orientable antiprismatic graphs. They are recalled from \cite{seymour2}, where the description of non-orientable prismatic graphs is given (the graphs whose complements are antiprismatic). Thus, we reformulate all the definitions in terms of the complements, maintaining the titles of the classes for simplicity. 
\begin{itemize}
	\item\textbf{Graphs of parallel-square type}\\
	Let $A\subset \{a_1,a_2,a_3,\ldots\}$, $B\subset \{b_1,b_2,b_3,\ldots\}$, $C\subset \{c_1,c_2,c_3,\ldots\}$ and $D\subset \{d_1,d_2,d_3,\ldots\} $ be four nonempty finite sets and $Z$ is a set with $|Z|\leq 1$. Define $G$ as the graph on the vertex set $V(G)=A\cup B\cup C\cup D\cup Z\cup \{u,v,x,y\}$, with adjacency as follows. The sets $A\cup \{u,y\},B\cup\{u,v\},C\cup\{v,x\}$ and $D\cup\{x,y\}$ are cliques, $Z$ is complete to $A\cup B\cup C\cup D$ and anticomplete to $\{u,v,x,y\}$, for every $i,j$, the vertex $a_i$ is adjacent to $b_j$ and $d_j$ if and only if $i\neq j$ and is adjacent to $c_j$ if and only if $i=j$, the vertex $b_i$ is adjacent to $c_j$ if and only if $i\neq j$ and is adjacent to $d_j$ if and only if $i=j$, the vertex $c_i$ is adjacent to $d_j$ if and only if $i\neq j$ and there are no more edges in $G$. Any such graph $G$ is antiprismatic and is called a graph of \textit{parallel-square type}.
	\item\textbf{Graphs of skew-square type}\\
	Let $A\subset \{a_1,a_2,a_3,\ldots\}$, $B\subset \{b_1,b_2,b_3,\ldots\}$ and $C\subset \{c_1,c_2,c_3,\ldots\} $ be three nonempty finite sets and define $G$ as the graph on the vertex set $V(G)=A\cup B\cup C\cup \{s,t,d_1,d_2,d_3\}$, with adjacency as follows. The sets $A,B,C$ and $\{s,t,d_1,d_2,d_3\}$ are cliques, $s$ is complete to $B$ and $t$ is complete to $A$. For every $i,j$, $a_i\in A$ is adjacent to $b_j\in B$ if and only if $i=j$. Also, $a_i\in A$ and $b_i\in B$ are adjacent to $c_j\in C$ if and only if $i\neq j$. For $1\leq i\leq 3$ and every $j$, $d_i$ is adjacent to $a_j\in A$ and $b_j\in B$, if and only if either $i=j$ or $j\geq 4$. Also, $d_i$ is adjacent to $c_j$ if and only if $i\neq j$ and $1\leq j\leq 3$ and there are no more edges in $G$. Any such graph is antiprismatic and is called a graph of \textit{skew-square type}.
	
	\item\textbf{The Class } $\mathcal{F}_0$\\
	Take the Schl\"afli graph with the vertices numbered $r_j^i,s_j^i$ and $t_j^i$ as usual (for the definition of Schl\"{a}fli graph, see \pref{sub:Inc-rot}). Suppose that $H$ be the subgraph induced on
	\[\{r_j^i\ :\ (i,j)\in I_1\}\cup \{s_j^i\ :\ (i,j)\in I_2\}\cup \{t_j^i\ :\ (i,j)\in I_3\},\]
	where $I=\{(i,j): 1\leq i,j\leq 3\}$ and
	\begin{itemize}
		\item $I\setminus \{(1,2),(2,1),(3,3)\}\subseteq I_1\subseteq I\setminus \{(3,3)\}$
		\item $\{(1,1),(2,1),(3,2)\}\subseteq I_2\subseteq \{(1,1),(2,1),(3,1),(3,2),(3,3)\}$
		\item $\{(2,1),(2,2),(1,3)\}\subseteq I_3\subseteq \{(2,1),(2,2),(1,3),(2,3),(3,3)\}$.
	\end{itemize}
	Let $G$ be the graph obtained from $H$ by deleting the edges in $E(H)\cap \{ s_1^3t_3^2,s_1^3t_3^3, s_3^3t_3^2\}$. We define $\mathcal{F}_0$ to be the class of all such graphs $G$ (they are all antiprismatic).
	\item\textbf{The class} $\mathcal{F}_1$\\
	Let $G$ be a graph with vertex set the disjoint union of sets $\{s,t\},R,A,B$, where $|R|\leq 1$, and with edges as follows:
	\begin{itemize}
		\item $s$, $t$ are nonadjacent, and both are anticomplete to $R$;
		\item $s$ is anticomplete to $A$; $t$ is anticomplete to $B$;
		\item every vertex in $A$ has at most one non-neighbour in $A$, and every vertex in $B$ has at most one non-neighbour in $B$;
		\item if $a,a'\in A$ are nonadjacent and $b,b'\in B$ are nonadjacent, then the in the subgraph induced on $\{a,a',b,b'\}$ is a matching;
		\item if $a,a'\in A$ are nonadjacent, and $b\in B$ is adjacent to all other vertices of $B$, then $b$ is adjacent to exactly one of $a,a'$.
		\item if $b,b'\in B$ are nonadjacent, and $a\in A$ is adjacent to all other vertices of $A$, then $a$ is adjacent to exactly one of $b,b'$.
		\item if $a\in A$ is adjacent to all other vertices of $A$, and $b\in B$ is adjacent to all other vertices of $B$, then $a,b$ are nonadjacent.
	\end{itemize}
	We define $\mathcal{F}_1$ to be the class of all such graphs $G$ (they are all antiprismatic).
	\item\textbf{The class} $\mathcal{F}_2$\\
	Let $H$ be a graph of parallel-square type and $A,B,C,D,Z,u,v,x,y$ be as in its definition. Assume  that $Z$ is nonempty and $a_1\in A, b_1\in B, c_1\not\in C, d_1\not\in D$. Also, for every $c_i\in C$ and $d_j\in D$, we have $i\neq j$.
	Let $G$ be obtained from $H$ by exponentiating the leaf triad $\{a_1,b_1,x\}$ (see the definition of exponentiating in \pref{sub:Inc-rotator-Exc-sf}). We define $\mathcal{F}_2$ to be the class of all such graphs $G$ (they are all antiprismatic).
	\item\textbf{The class} $\mathcal{F}_3$\\
	Let $H$ be obtained from a graph of parallel-square type with $A,B,C,D,Z,u,v,x,y$ as in its definition by removing the vertex $u$. Also, assume that $a_1\not\in A, a_2\in A, b_1\in B, b_2\not\in B, c_1\in C, c_2\not\in C, d_1\not\in D, d_2\in D$.
	Let $G$ be obtained from $H$ by exponentiating the leaf triads $\{b_1,c_1,y\}$ and $\{a_2, d_2,v\}$. We define $\mathcal{F}_3$ to be the class of all such graphs $G$ (they are all antiprismatic).
	\item\textbf{The class} $\mathcal{F}_4$\\
	Take the Schl\"{a}fli graph, with vertices numbered $r_j^i$, $s_j^i$, $t_j^i$ as usual (for the definition of Schl\"{a}fli graph, see \pref{sub:Inc-rot}). Let $H$ be the subgraph induced on
	\[Y\cup \{s_j^i: (i,j)\in I\}\cup \{t_1^1,t_2^2,t_3^3\}\]
	where $\emptyset \neq Y\subseteq \{r_1^3,r_2^3,r_3^3\}$ and $I\subseteq \{(i,j): 1\leq i,j \leq 3\}$ with $|I|\geq 8$, where
	\[\{(i,j): 1\leq i \leq 3\text{ and }1\leq j \leq 2\}\subseteq I.\]
	Let $G$ be obtained from $H$ by exponentiating the leaf triad $\{t_1^1,t_2^2,t_3^3\}$. We define $\mathcal{F}_4$ to be the class of all such graphs $G$ (they are all antiprismatic).
\end{itemize}

\section{Proof of \pref{lem:ccschlafli}} \label{app:schlafli}
In this section, we give a proof for \pref{lem:ccschlafli}. Let $G$ be Schl\"{a}fli-antiprismatic and let $\mathscr{S}(G)$ be the collection obtained from $\mathscr{O}(G)$ by adding the cliques $Y_j^i=V(G)\cap \{t_j^1,t_j^2,t_j^3,r_1^i,r_2^i,r_3^i\}$, $(i,j)\in J$. Note that $ \mathscr{S}(G) $ is a clique covering for $ G $ of size  $18$. Also, let $\mu$ be an automorphism of $\Gamma$ defined as follows (we leave to the reader to check that $\mu$ is in fact an automorphism).
\begin{align*}
r_1^1\mapsto s_1^1, \ s_2^2\mapsto s_2^1, \ t_3^3\mapsto s_3^1, \quad  r_3^2\mapsto t_1^1, \ s_2^1\mapsto t_2^1, \ t_1^3\mapsto t_3^1, \quad  r_2^3\mapsto r_1^1, \ t_2^3\mapsto r_2^1, \ s_2^3\mapsto r_3^1,\\
s_3^3\mapsto s_1^2, \ t_1^1\mapsto s_2^2, \ r_2^2\mapsto s_3^2, \quad  t_3^2\mapsto t_1^2, \ r_2^1\mapsto t_2^2, \ s_1^3\mapsto t_3^2, \quad  t_1^2\mapsto r_1^2, \ s_1^2\mapsto r_2^2, \ r_1^2\mapsto r_3^2,\\
t_2^2\mapsto s_1^3, \ r_3^3\mapsto s_2^3, \ s_1^1\mapsto s_3^3, \quad  s_3^2\mapsto t_1^3, \ t_2^1\mapsto t_2^3,  \ r_1^3\mapsto t_3^3, \quad  s_3^1\mapsto r_1^3, \ r_3^1\mapsto r_2^3, \ t_3^1\mapsto r_3^3.
\end{align*}

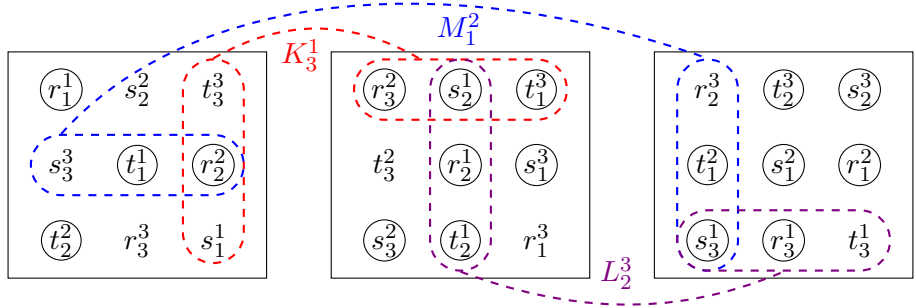
\begin{figure}
	\begin{center}
		\begin{tikzpicture}
		\begin{scope}[scale=.5]
		\SetVertexNoLabel
		\tikzset{VertexStyle/.style ={shape=circle,inner sep=1mm,minimum size=15pt,draw}}

\node at (-4.5,4) [circle,draw,inner sep=0pt] (r11) {$ r_1^1 $};
\node at (-4.5,2)    (s33) {$ s_3^3 $};
\node at (-4.5,0) [circle,draw,inner sep=0pt] (t22) {$t_2^2 $};
\node at (-2.5,4)  (s22) {$s_2^2$};
\node at (-2.5,2) [circle,draw,inner sep=0pt] (t11) {$t_1^1$};
\node at (-2.5,0)  (r33) {$ r_3^3$};
\node at (-.5,4)  (t33) {$ t_3^3$};
\node at (-.5,2) [circle,draw,inner sep=0pt] (r22) {$r_2^2$};
\node at (-.5,0)  (s11) {$s_1^1$};
		\node at (-2.5,2) [rectangle,draw,inner xsep=1.7cm,inner ysep=1.5cm] (S) {};
\node at (6,2) [rectangle,draw,inner xsep=1.7cm,inner ysep=1.5cm] (T) {};
\node at (4,4) [circle,draw,inner sep=0pt] (r23) {$ r_3^2 $};
\node at (4,2)  (t23) {$ t_3^2 $};
\node at (4,0) [circle,draw,inner sep=0pt] (s23) {$ s_3^2 $};
\node at (6,4) [circle,draw,inner sep=0pt] (s12) {$ s_2^1 $};
\node at (6,2) [circle,draw,inner sep=0pt] (r12) {$ r_2^1 $};
\node at (6,0) [circle,draw,inner sep=0pt] (t12) {$ t_2^1 $};
\node at (8,4) [circle,draw,inner sep=0pt](t31) {$ t_1^3 $};
\node at (8,2) [circle,draw,inner sep=0pt](s31) {$ s_1^3 $};
\node at (8,0) (r31) {$ r_1^3 $};
\node at (14.5,2) [rectangle,draw,inner xsep=1.7cm,inner ysep=1.5cm] (R) {};
\node at (12.5,4)  (r32) {$ r_2^3 $};
\node at (12.5,2) [circle,draw,inner sep=0pt] (t21) {$ t_1^2 $};
\node at (12.5,0) [circle,draw,inner sep=0pt] (s13) {$ s_3^1 $};
\node at (14.5,4) [circle,draw,inner sep=0pt] (t32) {$ t_2^3 $};
\node at (14.5,2) [circle,draw,inner sep=0pt] (s21) {$ s_1^2 $};
\node at (14.5,0) [circle,draw,inner sep=0pt] (r13) {$ r_3^1 $};
\node at (16.5,4) [circle,draw,inner sep=0pt] (s32) {$ s_2^3 $};
\node at (16.5,2) [circle,draw,inner sep=0pt] (r21) {$ r_1^2 $};
\node at (16.5,0)  (t13) {$ t_3^1 $};
\node at (-.5,2.1) [rotate=-90,rounded rectangle,dashed,thick,red,draw,inner xsep=1.35cm,inner ysep=.4cm] (k1) {};
\node at (6,4) [rounded rectangle,dashed,red,thick,draw,inner xsep=1.4cm,inner ysep=.4cm] (k2) {};
\node at (-2.5,2) [rounded rectangle,dashed,thick,blue,draw,inner xsep=1.4cm,inner ysep=.4cm] (m1) {};
\node at (12.5,2) [rotate=90,rounded rectangle,dashed,thick,blue,draw,inner xsep=1.4cm,inner ysep=.4cm] (m2) {};
\node at (14.5,0) [rounded rectangle,dashed,thick,violet,draw,inner xsep=1.4cm,inner ysep=.4cm] (l1) {};
\node at (6,2) [rotate=90,rounded rectangle,dashed,thick,violet,draw,inner xsep=1.4cm,inner ysep=.4cm] (l2) {};
\draw [bend left,red,thick,dashed] (k1.west) to node [below,xshift=-2mm] {$ K_3^1 $} (k2);
\draw [bend right,violet,thick,dashed] (l2.west) [yshift=1cm] to node [above] {$ L_2^3 $} (l1.south);
\draw [bend left,blue,thick,dashed] (m1.north west) [yshift=-1.7cm,xshift=-3cm] to node [below,xshift=2cm,yshift=.2cm] {$ M_1^2 $} (m2.east);
\end{scope}
\end{tikzpicture}
	\end{center}
	\vspace{-25pt}
	\caption{The automorphism $ \mu $. \label{fig:automorphism}}
\end{figure}

Also for every $(i,j)\in J$, define
\begin{align*}
K_j^i=\mu^{-1}(X_j^i), \quad 
L_j^i=\mu^{-1}(Y_j^i), \quad 
M_j^i=\mu^{-1}(Z_j^i). 
\end{align*}
Therefore, $\mathscr{M}(G)=\{K_j^i,L_j^i,M_j^i: (i,j)\in J\}$ is a clique covering for $G$ of size $18$ (see \pref{fig:automorphism}), and we use its modified versions to provide suitable clique coverings for some Schl\"{a}fli-antiprismatic graphs. We need a lemma beforehand which enables us to handle Schl\"{a}fli-antiprismatic graphs with at least 17 vertices. First, we need a definition. Let $k$ be a positive integer. A graph $H$ is said to be \textit{$k$-ultrahomogeneous}, if every isomorphism between two of its induced subgraphs on at most $k$ vertices can be extended to an automorphism of the whole graph. 
It is proved in \cite{cameron} that every $5$-ultrahomogeneous graph is $k$-ultrahomogeneous for every $k$ (nevertheless, there are only a few finite graphs that are $k$-ultrahomogeneous for every $k$).
  Also, the Schl\"{a}fli graph $\Gamma$ and its complement $\overline{\Gamma}$ are the only connected finite graphs that are $4$-ultrahomogeneous, but not $5$-ultrahomogeneous. Thus, every isomorphism between two induced subgraphs of $\Gamma$ on at most $4$ vertices can be extended to an automorphism of $\Gamma$, and this is what we are going to use in the proof of the following lemma. To learn more about this concept, the reader may refer to \cite{devillers,cameron}. For simplicity, we omit the term $ \rho $ within the proofs.
\begin{lem}\label{lem:schlafli Ultra}
	Let $G$ be a counterexample to \pref{thm:noap}, which is Schl\"{a}fli-antiprismatic with respect to some rotator $\rho$ of $\overline{G}$. Then, $|S(\rho)|+|T(\rho)|+|R(\rho)|\leq 7$.
\end{lem}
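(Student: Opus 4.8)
\textbf{Proof proposal for \pref{lem:schlafli Ultra}.}

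The plan is to exploit the fact that $\Gamma$ is $4$-ultrahomogeneous together with the observation that if $G$ is Schl\"{a}fli-antiprismatic with respect to $\rho$ and has at least $17$ vertices, then one can find inside $G$ a second rotator $\rho'$ of $\overline{G}$ that sits in an ``off-diagonal'' position relative to $\rho$, in such a way that the decomposition $\mathcal{D}(\rho')$ either has $|I_T(\rho')|+|I_R(\rho')|\leq 3$, contradicting \pref{lem:|I_T|+|I_R|>=4}, or has both $T(\rho')$ and $R(\rho')$ cliques, contradicting \pref{lem:both clique}, or allows us to apply \pref{thm:3,3} to reach a contradiction. Concretely, I would argue by contradiction: assume $|S|+|T|+|R|\geq 8$, so $n=|V(G)|\geq 17$.

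First I would set up the automorphism machinery. Since $G$ is Schl\"{a}fli-antiprismatic, $G$ is an induced subgraph of $\Gamma$ and $V(G)$ is a subset of the $27$-element vertex set of $\Gamma$; the sets $S',T',R'$ (together with the $12$ cliques of $\mathcal{D}(\rho)$) are precisely the images under $\mu$-type relabellings of the natural row/column structure of $\Gamma$. The key point is: for any four vertices $a,b,c,d$ of $\Gamma$ that induce a rotator-``building-block'' pattern, there is an automorphism of $\Gamma$ carrying the standard rotator $\rho_0=(s_1^1,s_2^2,s_3^3;t_3^1,t_3^2,t_3^3;r_1^3,r_2^3,r_3^3)$ onto it, because the relevant configuration is determined on at most $4$ vertices and $\Gamma$ is $4$-ultrahomogeneous (Cameron \cite{cameron}, Devillers \cite{devillers}). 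Hence every rotator $\rho'$ of $\overline{G}$ extends, via an automorphism $\sigma$ of $\Gamma$, to a labelling in which $V(G)$ becomes some subset $W$ of $V(\Gamma)$ and $\mathcal{D}(\rho')$ becomes the standard decomposition restricted to $W$. Counting: $\Gamma$ has $27$ vertices, the standard rotator occupies $9$, and the remaining $18$ are partitioned into the $12$ cliques $S_j^i,T^i,R_i$ with each $|S_j^i|\leq 1$ (three cells of size $1$ in each of $S,T,R$ in $\Gamma$ itself). Since $|V(G)|\geq 17$, the complement $V(\Gamma)\setminus W$ has at most $10$ vertices; by a pigeonhole/averaging argument over the natural $\mathbb{Z}_3$-symmetries of $\Gamma$ permuting the roles of the three ``sheets'', I can choose the automorphism $\sigma$ (equivalently, choose which rotator of $\overline{G}$ inside $\Gamma$ to call $\rho$) so that the missing vertices concentrate in a favourable pattern — for instance so that at least two of the three $T$-cells of size one in $\Gamma$ survive, or so that an entire row/column is missing — forcing one of $|I_T|\leq 1$, $|I_R|\leq 1$, both $T,R$ cliques, or $|I_T|=|I_R|=3$ with $S$ nearly empty.

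The main obstacle is the bookkeeping in this pigeonhole step: I need to verify that whenever $|V(\Gamma)\setminus W|\leq 10$ there is some choice among the (finitely many, explicitly listable) automorphisms of $\Gamma$ fixing setwise the ``rotator slots'' under which the induced decomposition $\mathcal{D}(\rho')$ violates one of \pref{lem:|I_T|+|I_R|>=4}, \pref{lem:both clique}, \pref{lem:|I_T|=3 |I_R|=1}, or \pref{thm:3,3}. This is essentially a finite case analysis on which of the $18$ non-rotator cells of $\Gamma$ lie outside $W$, organised by $|V(\Gamma)\setminus W|\in\{0,1,\dots,10\}$, and I would carry it out by first disposing of the symmetric ``generic'' subcases (missing vertices spread out, so every sheet keeps a size-one $T$-cell and a size-one $R$-cell, giving $|I_T|=|I_R|=3$ and $|S|\leq 2$, handled by \pref{thm:3,3}) and then the degenerate subcases where a whole sheet or a whole row is evacuated (giving $|I_T|\leq 1$ or the like, handled by \pref{lem:|I_T|+|I_R|>=4} / \pref{lem:|I_T|=3 |I_R|=1}). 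In every branch the contradiction is with an already-proved lemma, so no new clique covering needs to be constructed here; the payoff is the clean numerical bound $|S(\rho)|+|T(\rho)|+|R(\rho)|\leq 7$, equivalently $n\leq 16$, which is what the subsequent finite-graph arguments in \pref{app:schlafli} will need.
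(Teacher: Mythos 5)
Your proposal diverges from the paper's proof in a way that introduces a fatal circularity. You propose to derive the contradiction by finding a second rotator $\rho'$ whose decomposition violates \pref{thm:3,3} (your ``generic'' branch, where every sheet survives and $|I_T(\rho')|=|I_R(\rho')|=3$, is explicitly ``handled by \pref{thm:3,3}''). But \pref{thm:3,3} is proved in \pref{sub:max=3} using \pref{thm:IT+IR>=5}, which rests on \pref{lem:ccschlafli}, whose proof is exactly where \pref{lem:schlafli Ultra} is invoked; so \pref{thm:3,3} is not available here. The lemmas that \emph{are} available at this point (\pref{lem:|I_T|+|I_R|>=4}, \pref{lem:both clique}, \pref{lem:|I_T|=3 |I_R|=1}, \pref{lem:square-forcer}) do not exclude a rotator with $|I_T|=|I_R|=3$, so your generic branch has no contradiction to land on. Relatedly, your remark that ``no new clique covering needs to be constructed here'' discards the one mechanism that can produce a numerical bound: the counterexample hypothesis only enters through $\cc(G)\geq n$, so an explicit clique covering is indispensable, and purely structural rotator lemmas cannot by themselves force $n\leq 16$.

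The paper's argument is shorter and entirely covering-based. It first observes that the $18$ cliques $X_j^i,Z_j^i,Y_j^i$ cover any Schl\"afli-antiprismatic $G$, so $n\leq 18$; and that if some triangle of $\Gamma$ is disjoint from $V(G)$, then $4$-ultrahomogeneity applied to just those \emph{three} vertices gives an automorphism carrying them to $\{s_1^1,s_1^2,s_1^3\}$, whence the pulled-back covering with two cliques deleted has size $16$ and $n\leq 16$. If $|S|+|T|+|R|\geq 8$ then $n\geq 17$, and a short count always produces such a disjoint triangle: either some $6$-clique $\{t_i^1,t_i^2,t_i^3,r_1^{3-i},r_2^{3-i},r_3^{3-i}\}$ meets $V(G)$ in at most three vertices, or $|S|\leq 1$ and $|T|\leq 4$ and one takes $\{s_{i'}^{i'+1},s_{i'}^{i'+2},t_j^i\}$. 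Note also that the paper only ever extends an isomorphism between two triangles; your proposed extension of a $4$-vertex ``building block'' to an automorphism matching two full $9$-vertex rotators is an unproved (and unnecessary) strengthening, and the pigeonhole step that is supposed to locate a favourable $\rho'$ is asserted rather than carried out.
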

\begin{proof}
	First we observe that,\vsp
	
	(1) \textit{We have $n\leq 18$. Also, if there exists a triangle $\{x,y,z\}$ in $\Gamma$ disjoint from $V(G)$, then $n\leq 16$.}\vsp
	
	The first assertion follows from the fact that $\mathscr{S}(G)$ is a clique covering for $ G $ of size $ 18 $. For the second, note that since $\Gamma$ is $4$-ultrahomogeneous, there exists an automorphism $\sigma$ of $\Gamma$, mapping the vertices $x,y$ and $z$ to the vertices $s_1^1,s_1^2$ and $s_1^3$, respectively. Now, for every $(i,j)\in J$, define 
	\begin{align*}
	A_j^i=\sigma^{-1}(X_j^i), \quad 
	B_j^i=\sigma^{-1}(Y_j^i), \quad 
	C_j^i=\sigma^{-1}(Z_j^i).
	\end{align*}
	It is easy to see that the family of cliques $\{A_j^i,B_j^i,C_j^i: (i,j)\in J\}\setminus \{A_1^2,A_1^3\}$ is a clique covering for $G$ of size $16$, and thus $n\leq 16$. This proves (1).\vsp
	
	Now, on the contrary, assume that $|S|+|T|+|R|\geq 8$, and so $n\geq 17$. If $|T|+|R|\leq 7$, then there exists $i\in \{1,2\}$ such that $|Y_i^{3-i}|\leq 3$, and so we may choose  $\{x,y,z\}\subseteq\{t_i^1,t_i^2,t_i^3,r^{3-i}_1,r^{3-i}_2,r^{3-i}_3\}\setminus V(G) $ as a triangle of $\Gamma$ disjoint from $V(G)$, which contradicts (1). Thus, $|T|+|R|\geq 8$ and w.l.o.g. we may assume that $ |T|\leq |R| $. Also, if either $|S|\geq 2$ or $|T|+|R|\geq 10$, then $n\geq 19$, which is impossible by (1). Thus, $|S|\leq 1$ and $ |T|\leq 4 $. Now, choose  $t_j^i\notin T$, for some $i\in \{1,2,3\}$ and $j\in \{1,2\}$ and choose $i'\in \{1,2,3\}\setminus \{i\}$ such that $S_{i'}=\emptyset$. Therefore,  $\{s_{i'}^{i'+1},s_{i'}^{i'+2},t_j^i\}$ is a triangle of $\Gamma$ disjoint from $V(G)$, again a contradiction with (1). This proves \pref{lem:schlafli Ultra}.
\end{proof}

Now, we are ready to prove \pref{lem:ccschlafli}.

\begin{proof}[{\rm \textbf{Proof of \pref{lem:ccschlafli}.}}]
Suppose not, assume that $G$ is ISA. By \pref{lem:|I_T|=3 |I_R|=1}, $|I_T|,|I_R|\geq 2$. We will provide a clique covering for $G$ of size at most $n-1$ through the following three cases. Since a clique covering for a graph can be trivially extended to a clique covering of the same size for its replication, we may assume that $G$ is Schl\"{a}fli-antiprismatic. \vsp
	
	\textbf{Case 1.} $|I_T|=|I_R|=2$.\\
	By symmetry, we may assume that $I_T=\{1,2\}$, $I_R=\{1,p\}$, $p\in \{2,3\}$ and $|T|\geq |R|$. By \pref{lem:square-forcer}~(ii) (applying to $(i,j,k)=(1,2,3)$), either $S_2^1$ or $S_1^p$ is nonempty. \vsp
	
	(1.1) \textit{We have $|T|\geq 3$.}\vsp
	
	On the contrary, assume that $|T|=|R|=2$, say $T=\{t_1^1,t_2^2\}$ and $R=\{r_i^1,r_{p-i+1}^2\}$, $i\in \{1,p\}$. By \pref{lem:schlafli Ultra}, $1\leq |S|\leq 3$. Through the following four cases, we apply suitable modifications to $\mathscr{M}(G)$ which yields a clique covering for $G$ of size at most $n-1$, a contradiction.
	\begin{itemize}
		\item $(p,i)=(2,1)$. If $ S^1_2= \emptyset $, then remove the cliques $ K_2^1, K_3^1, L_2^1$, $L_2^3 $ and $ M_2^1 $ and replace the cliques $ L_1^2$ and $ L_3^2 $ with the cliques $(L_1^2\cup  S^3_1\cup \{t^3_3,r_1^1\})\setminus S_3^2$ and $ L_3^2\cup S^2_3\cup \{s^2_2\})\setminus S_1^3$.
		Also, if $ S^2_1= \emptyset $, then remove the cliques $ L_1^2, L_3^2, M_2^1, M_2^3 $ and $ K_2^1 $ and replace the cliques $ L_2^1$ and $ L_2^3 $ with the cliques $(L_2^1\cup  S^1_3\cup \{t^1_1,r_3^3\})\setminus S_2^3$ and $ L_2^3\cup S^3_2\cup \{s^2_2\})\setminus S_3^1$.
		Now, assume that $ |S^1_2|=|S^2_1|=1 $. Since $ |S|\leq 3 $, either $ S^1_3\cup S^3_2=\emptyset $, or $ S^2_3\cup S^3_1 =\emptyset $. In the former case, remove the cliques $ K_2^1, K_3^1, M_3^1, M_1^2$ and $ M_2^3 $, add the clique $ \{s_1^1,s^1_2, r^3_3\} $ and replace the cliques $ L_2^1, L_3^1, L_1^2, L_3^2, L_1^3 $ and $ L_2^3 $ with the cliques $ L_2^1\cup \{t^1_1,r_2^2\}, L_3^1\cup \{s_3^3\}, (L_1^2\cup S^3_1\cup \{s_1^1,t^2_2\})\setminus S^2_3, (L_3^2\cup S^2_3\cup \{r_3^3\})\setminus S^3_1, L_1^3\cup \{r_1^1\} $ and $ L_2^3\cup \{s_2^2, t^3_3\} $. In the latter case, remove the cliques $ K_2^1, K_3^2, K_1^3, M_2^1$ and $M_2^3 $, add the clique $ \{s^2_1,s^2_2, r_3^3\} $ and replace the cliques $ L_1^2, L_1^3, L_2^1, L_2^3, L_3^1 $ and $ L_3^2 $ with the cliques $ L_1^2\cup \{s^1_1,t_3^3\}, L_1^3\cup \{r_2^2\}, (L_2^1\cup S^1_3\cup \{t_1^1,r^3_3\})\setminus S^3_2, (L_2^3\cup S^3_2\cup \{s_2^2\})\setminus S^1_3, L_3^1\cup \{s_3^3\} $ and $ L_3^2\cup \{t_2^2, r^1_1\} $.
		\item $(p,i)=(2,2)$. If $S_2^1=\emptyset$, then remove the cliques $K_2^1$ and $K_3^1$ and if $S_2^1\neq \emptyset$, then remove the clique $K_3^1$ and replace the cliques $L_2^1$ and $L_2^3$ with the cliques $(L_2^1\cup S_3^1\cup \{s_1^1\})\setminus S_2^3$ and $(L_2^3\cup S_2^3\cup \{t_3^3\})\setminus S_3^1$. If $S_1^2=\emptyset$, then remove the cliques $M_2^1$ and $M_2^3$ and if $S_1^2\neq \emptyset$, then remove the clique $M_2^1$ and replace the cliques $L_1^2$ and $L_3^2$ with the cliques $(L_1^2\cup S_1^3\cup \{t_3^3\})\setminus S_3^2$ and $(L_3^2\cup S_3^2\cup \{s_2^2\})\setminus S_1^3$. Also, if $S_3^1\cup S_1^3=\emptyset$, then merge the pair $(L_3^1,M_1^2)$ and if $S_3^1\cup S_1^3\neq \emptyset$ and $S_3^2\cup S_2^3=\emptyset$, then merge the pair $(K_1^3,L_3^1)$, and call the resulting family $\mathscr{C}$. Now, if $S\neq S_2^1\cup S_1^2$, then $|\mathscr{C}|\leq n-1$, as required. Thus, we may assume that $S=S_2^1\cup S_1^2$. Now, if $|S|=|S_2^1\cup S_1^2|=2$, then $|\mathscr{C}|=n$, so remove the clique $K_3^2$ from $\mathscr{C}$ and replace the cliques $(L_1^2\cup S_1^3\cup \{t_3^3\})\setminus S_3^2$ and $L_1^3$ with the cliques $(L_1^2\cup S_1^3\cup \{s_1^1,t_3^3\})\setminus (S_3^2\cup \{r_1^2\})$ and $L_1^3\cup \{r_1^2\}$, if $|S|=|S_2^1|=1$, then merge the pair $(L_1^2,L_1^3)$ and if $|S|=|S_1^2|=1$, then merge the pair $(L_1^3,L_2^3)$.
		\item $(p,i)=(3,1)$. If $ S^2_1=\emptyset $, then remove the cliques $ L_1^2, L_3^2, M_2^1 $ and $ M_2^3 $ and in the resulting family, if $|S|=|S_2^1|=1$, then merge the pair $(L_3^1,M_1^2)$ and if $|S|=|S_1^3|=1$, then remove the cliques $L_2^1$ and $L_2^3$.
		Also, if $ S^2_1\neq \emptyset $ (i.e. $ |S|\geq 2 $) and $ S^1_2=\emptyset $, then remove the cliques $K_1^2, K_1^3, L_2^1, L_2^3 $, add the clique $ S^2_3\cup \{s_3^3, t^2_2, t^2_3, r_1^1 \} $, replace the cliques $ L_3^1, L_3^2 $ with the cliques  $ L_3^1\cup \{s_3^3\}, L_3^2\cup \{t^2_2, r_1^1\} $ and in the resulting family, if $ |S|=|S_1|=2$, then merge the pair $ (K_3^1,L_1^2) $. Finally, assume that both $ S^2_1, S^1_2 \neq \emptyset$. 
		In this case, remove the cliques $K_1^2, K_1^3$, add the clique $ S^2_3\cup \{s_3^3, t^2_2, t^2_3, r_1^1 \} $ and replace the cliques $ L_3^1, L_3^2 $ with the cliques  $ L_3^1\cup \{s_3^3\}, L_3^2\cup \{t^2_2, r_1^1\} $, and call the resulting family $\mathscr{C}$. Since $ |S|\leq 3 $, either $ S^1_3\cup S^3_2=\emptyset $, or $ S^2_3\cup S^3_1=\emptyset $. In the former case, remove the cliques $ M_1^3 $ and $ M_3^1$ from $\mathscr{C}$, replace the cliques $ L_2^1, L_3^1\cup \{s_3^3\}, L_1^3, L_2^3 $ with the cliques $ L_2^1\cup \{s_1^1,r_3^3\}, L_3^1\cup \{s_3^3,t^2_2\}, (L_1^3\cup \{r_1^1\})\setminus \{r^2_3\}, L_2^3\cup \{s^2_2,t_3^3,r^2_3\} $ and in the resulting family, if $ |S|=2 $, then remove the clique $ K_3^1 $ and replace the clique $ K_3^2 $ with the clique $ K_3^2\cup \{r^2_3\} $ (the edges $s_1^1s_2^1$ and $s_2^1t_3^3$ are already covered by the cliques $L_2^1\cup \{s_1^1,r_3^3\}$ and $L_2^3\cup \{s^2_2,t_3^3,r^2_3\}$). In the latter case, assuming $ S^1_3\cup S^3_2\neq \emptyset $ (i.e.n $|S|=3$), remove the cliques $ M_1^2,M_3^2 $ and replace the cliques $L_3^1, L_1^3, L_2^3 $ with the cliques $ L_3^1\cup \{s^3_3,t^1_1\}, (L_1^3\cup \{s_3^3\})\setminus \{r^2_3\}, L_2^3\cup \{t^1_1,r^2_3\}$.  
		\item $(p,i)=(3,3)$. If $S_2^1=\emptyset$, then remove the cliques $K_2^1,K_3^1,L_2^1$ and $L_2^3$, and in the resulting family, if $|S|=|S_1^3|=1$, then merge the pair $(K_1^3,L_3^1)$. Now, assume that $S_2^1\neq \emptyset$. First, suppose that $S_2^3=\emptyset$. Remove the cliques $K_2^1,K_1^2,K_1^3,M_2^1$ and $M_3^1$, add the clique $S_3^2\cup \{s_3^3,t_2^2,t_3^2\}$, replace the cliques $L_2^1,L_3^1, L_1^2,L_3^2$ and $L_2^3$ with the cliques $L_2^1\cup S_3^1\cup \{t_1^1,r_3^3\},L_3^1\cup \{t_2^2,s_3^3\},(L_1^2\cup S_1^3\cup \{t_3^3\})\setminus S_3^2,(L_3^2\cup S_3^2\cup \{s_2^2\})\setminus S_1^3$ and $(L_2^3\cup \{s_2^2,t_3^3\})\setminus S_3^1$ and in the resulting family, if $|S|=|S_2^1|=1$, then remove the clique $M_1^2$ (the edges in $E(M_1^2)$ are already covered by the cliques $L_2^1\cup S_3^1\cup \{t_1^1,r_3^3\},L_3^1\cup \{t_2^2,s_3^3\}$ and $M_3^2$).
		
		Next, assume that $|S_2^3|=1$. Remove the cliques $K_2^1,K_3^1,K_1^2$ and $K_1^3$, add the clique $\{s_1^1,s_2^1,r_3^3\}$ and replace the cliques $L_1^2,L_1^3$ and $L_2^1$ with the cliques $(L_1^2\cup \{t_3^1,s_3^3\})\setminus S_1^2,(L_1^3\cup S_1^2\cup \{t_2^2\})\setminus \{t_3^1\}$ and $L_2^1\cup \{t_1^1\}$, if $S_3^1=\emptyset$, then replace the clique $L_2^3$ with the clique $L_2^3\cup \{s_2^2,t_3^3\}$, if $S_3^1\neq \emptyset$, then add the cliques $\{s_2^1,s_2^2,t_3^3\}$, if $S_1^2=\emptyset$, then replace the cliques $L_3^1$ and $L_3^2$ with the cliques $(L_3^1\cup \{t_2^2,s_3^3\})\setminus S_2^3$ and $L_3^2\cup S_2^3$ and if $S_1^2\neq \emptyset$, then add the clique $S_1^3\cup \{s_3^3,t_2^2,r_1^3\}$, and finally if $|S|=|S_2|=2$, then remove the clique $M_1^2$ (the edges in $E(M_1^2)$ are already covered by the cliques are already covered by other cliques $L_2^1\{t_1^1\},(L_3^1\cup \{t_2^2,s_3^3\})\setminus S_2^3$ and $M_3^2$).
	\end{itemize}
	This proves (1.1). \vsp
	
	(1.2) \textit{If $|R|=2$, then $|T|=4$.}\vsp
	
	Suppose not, by (1.1), $|T|=3$, and w.l.o.g. we may assume that $T=\{t_1^1,t_{3-i}^i,t_2^2\}$ and $R=\{r_j^1,r_{p-j+1}^2\}$, for some $i\in \{1,2\}$ and $j\in \{1,p\}$. By \pref{lem:schlafli Ultra} $1\leq |S|\leq 2$. Through the following six cases, we apply suitable modifications to $\mathscr{M}(G)$ which yields a clique covering for $G$ of size at most $n-1$, a contradiction.
	\begin{itemize}
		\item $(p,j)=(2,1)$. By symmetry, we may assume that $i=1$. If $S_2^1=\emptyset$, then remove the cliques $K_2^1,K_3^1,K_1^2$ and $K_1^3$ and replace the cliques $L_1^2,L_1^3,L_2^1,L_2^3,L_3^1$ and $L_3^1$ with the cliques $L_1^2\cup \{t_2^2,r_1^1\},L_1^3\cup \{s_3^3\},(L_2^1\cup S_3^1\cup \{s_3^3,t_2^2\})\setminus S_2^3,(L_2^3\cup S_2^3\cup \{s_3^3,r_1^1\})\setminus S_3^1,L_3^1\cup \{s_3^3\}$ and $L_3^1\cup \{t_2^2,r_1^1\}$. Also, if $S_1^2=\emptyset$, then remove the cliques $L_1^2,L_3^2,M_2^1$ and $L_2^3$. Finally, if $|S|=|S_2^1\cup S_1^2|=2$, then remove the cliques $K_2^1,K_1^2$ and $K_2^3$ and replace the cliques $L_1^2,L_1^3,L_2^1,L_2^3,L_3^1$ and $L_3^2$ with the cliques $L_1^2\cup \{t_2^2,r_1^1\},L_1^3\cup \{s_3^3\},L_2^1\cup \{t_1^1,r_3^3\},L_2^3\cup \{s_2^2,t_1^1\},L_3^1\cup \{t_1^1\}$ and $L_3^1\cup \{s_2^2,r_3^3\}$.
		\item $(p,j)=(2,2)$. By symmetry, we may assume that $i=1$. First, suppose that $S_2^1=\emptyset$, and remove the cliques $K_2^1$ and $K_3^1$. Now, in the resulting family, if $S_3^1\cup S_2^3=\emptyset$, then remove the cliques $K_1^2$ and $K_1^3$ and replace the cliques $L_2^1,L_3^1,L_1^2$ and $L_1^3$ with the cliques $L_2^1\cup \{s_3^3,t_2^2\},L_3^1\cup \{s_3^3,t_2^2\},(L_1^2\cup \{t_2^2\})\setminus \{r_1^2\}$ and $L_1^3\cup \{s_3^3,r_1^2\}$, and if $S_3^2\cup S_1^3=\emptyset$, then remove the cliques $M_2^1$ and $M_1^2$ and replace the cliques $L_1^2,L_1^3,L_2^3,L_3^1$ and $L_3^2$ with the cliques $L_1^2\cup \{t_3^3\},L_1^3\cup \{r_2^1\},(L_2^3\cup \{s_3^3,t_1^1\})\setminus \{r_2^1\},L_3^1\cup \{s_3^3,t_1^1\}$ and $L_3^2\cup \{s_2^2\}$.
		
		Next, assume that $S_1^2=\emptyset$, and remove the cliques $M_2^1$ and $M_2^3$. Now, in the resulting family, if $S_3^1\cup S_1^3=\emptyset$, then merge the pairs $(L_1^2,L_1^3)$ and $(L_3^1,M_1^2)$, if $|S|=|S^1|=2$, then remove the clique $M_1^2$ and replace the cliques $L_1^3,L_2^3$ and $L_3^1$ with the cliques $L_1^3\cup \{s_3^3,r_2^1\},(L_2^3\cup \{t_1^1\})\setminus \{r_2^1\}$ and $L_3^1\cup \{s_3^3,t_1^1\}$, and if $|S|=|S_2^1\cup S_1^3|=2$, then merge the pair $(L_1^2,L_1^3)$.
		
		Finally, assume that $|S|=|S_2^1\cup S_1^2|=2$. Merge the pair $(K_1^3,L_3^1)$, remove the cliques $K_3^2$ and $M_2^1$ and replace the cliques $K_3^1,L_1^2,L_1^3$ and $L_3^2$ with the cliques $K_3^1\cup \{r_2^1\},(L_1^2\cup \{s_1^1,t_3^3\})\setminus \{r_1^2\},L_1^3\cup \{r_1^2\}$ and $L_3^2\cup \{s_2^2\}$.
		\item $(p,j)=(3,1)$ and $i=1$. If $S_1^2=\emptyset$, then remove cliques $L_1^2,L_3^2,M_2^1$ and $M_2^3$. Now, assume that $S_1^2\neq \emptyset$, i.e. either $S=\{s_2^1,s_1^2\}$ and $S=\{s_1^2,s_1^3\}$. In the former case, merge the pair $(L_3^1,M_1^2)$, remove the cliques $M_3^1,M_3^2$ and $M_2^3$, add the clique $\{s_3^3,t_3^1,r_1^1\}$ and replace the cliques $L_1^2,L_3^2,L_1^3$ and $L_2^3$ with the cliques $L_1^2\cup \{s_1^1\},L_3^2\cup \{t_2^2,r_3^3\},L_1^3\cup \{t_3^3\}$ and $L_2^3\cup \{s_2^2,t_1^1\}$. In the latter case, merge the pairs $(L_2^1,M_1^2)$ and $(L_2^3,M_3^2)$, remove the cliques $M_2^1$ and $M_2^3$, add the clique $S_1^2\cup \{s_2^2,r_3^3\}$ and replace the cliques $L_1^2$ and $L_3^2$ with the cliques $L_1^2\cup \{s_1^1,t_3^3\}$ and $L_3^2\cup \{t_2^2,r_1^1\}$.
		\item $(p,j)=(3,1)$ and  $i=2$. First, suppose that either $S_2^1$ or $S_1^2$ is empty.  If $S_2^1=\emptyset$, then remove the cliques $L_2^1$ and $L_2^3$, if $S_1^2=\emptyset$, then remove the cliques $M_2^1$ and $M_2^3$, if $S_2^1\cup S_1^3=\emptyset$, then merge the pair $(K_3^1,K_3^2)$, if $S_1^2\cup S_2^3=\emptyset$, then merge the pair $(L_3^1,L_3^2)$, if $S_2^3\neq \emptyset$ and $S_3^1\cup S_1^2\cup S_1^3=\emptyset$, then merge the pair $(L_3^2,M_1^2)$, if $S_3^1\cup S_1^3=\emptyset$, then remove the clique  $K_3^2$ and replace the cliques $L_1^2$ and $L_1^3$ with the cliques $L_1^2\cup \{s_1^1\}$ and $L_1^3\cup \{t_3^3\}$ and if $S_1^3\neq \emptyset$ and $S_2^1\cup S_3^1\cup S_3^2=\emptyset$, then remove the clique  $K_3^1$ and replace the cliques $L_1^2$ and $L_1^3$ with the cliques $L_1^2\cup \{s_1^1\}$ and $L_1^3\cup \{t_3^3\}$.
		
		Next, assume that $|S|=|S_2^1\cup S_1^2|=1$. Merge the pair $(L_3^1,M_1^2)$, remove the cliques $K_3^1$ and $M_2^3$ and replace the cliques $K_3^2,L_2^1,L_2^3,L_1^2$ and $L_3^2$ with the cliques $K_3^2\cup \{r_3^2\},L_2^1\cup \{s_1^1\},L_2^3\cup \{t_3^3\},L_1^2\cup \{s_1^1\}$ and $L_3^2\cup \{t_2^2,r_3^3\}$.
		\item $(p,j)=(3,3)$ and $i=1$. First, suppose that $S_2^1=\emptyset$. Remove the cliques $K_2^1,K_3^1,K_1^2$ and $K_1^3$, replace the cliques $L_1^2,L_1^3,L_3^1$ and $L_3^2$ with the cliques $(L_1^2\cup \{s_3^3,t_3^1\})\setminus S_1^2,(L_1^3\cup S_1^2\cup \{t_2^2\})\setminus \{t_3^1\},(L_3^1\cup \{s_3^3,t_2^2\})\setminus S_2^3$ and $L_3^2\cup S_2^3$, and in th resulting family, if $S_2^3=\emptyset$, then replace the clique $L_2^1$ with the clique $L_2^1\cup \{s_3^3,t_2^2\}$ and if $S_2^3\neq \emptyset$, then add the clique $\{s_3^3,t_2^1,t_2^2\}$.
		Next, assume that  $S_2^1\neq \emptyset$. In the case that $S_2^3=\emptyset$, remove the cliques $K_2^1,K_1^2,K_1^3$ and $M_1^2$ and replace the cliques $L_1^2,L_1^3,L_2^1,L_2^3$ and $L_3^1$ with the cliques $(L_1^2\cup \{s_3^3,t_3^1\})\setminus S_1^2,(L_1^3\cup S_1^2\cup \{t_2^2\})\setminus \{t_3^1\},L_2^1\cup S_3^1\cup \{t_1^1,r_3^3\},(L_2^3\cup \{s_2^2\})\setminus S_3^1$ and $L_3^1\cup \{s_3^3,t_2^2\}$, and in the resulting family, if $S_1^3=\emptyset$, then replace the clique $L_3^1\cup \{s_3^3,t_2^2\}$ with the clique $L_3^1\cup \{s_3^3,t_2^1,t_2^2\}$, if $S_1^3\neq \emptyset$, then add the clique $\{s_3^3,t_2^1,t_2^2\}$ and $S_3^1\neq \emptyset$, then remove the clique $M_1^2$ (the edges in $E(M_1^2)$ are already covered by the cliques $L_2^1\cup S_3^1\cup \{t_1^1,r_3^3\},L_3^1\cup \{s_3^3,t_2^2\}$ and $M_3^2$).  Also, in the case that $|S|=|S_2|=2$, merge the pair $(M_1^2,L_3^1)$, remove the cliques $K_3^2$ and $M_2^1$ and replace the cliques $L_1^2,L_1^3$ and $L_2^3$ with the cliques $L_1^2\cup \{t_3^1\},(L_1^3\cup \{s_1^1,t_3^3\})\setminus \{t_3^1\}$ and $L_2^3\cup \{s_2^2\}$.
		\item $(p,j)=(3,3)$ and $i=2$. If $S_2^1=\emptyset$, remove the cliques $K_2^1,K_3^1,L_2^1$ and $L_2^3$. Now, assume that $S_2^1\neq \emptyset$. In the case that $S_3^1\cup S_1^2\cup S_2^3=\emptyset$, merge the pair $(L_2^3,M_2^1)$, remove the cliques $K_2^1,K_1^2$ and $K_1^3$, add the clique $S_3^2\cup \{s_3^3,t_2^2,t_3^2\}$, replace the cliques $L_2^1,L_2^3$ and $L_3^1$ with the cliques $L_2^1\cup \{t_1^1,r_3^3\},L_2^3\cup \{s_2^2\}$ and $L_3^1\cup \{s_3^3,t_2^2\}$ and in the resulting family, if $|S|=|S_2^1|=1$, then remove the clique $M_1^2$ and replace the clique $L_3^2$ with the clique $L_3^2\cup \{s_3^3,t_1^1\}$ (the edges $s_3^3r_2^3$ and $t_1^1r_2^3$ are already covered by the cliques $L_2^1\cup \{t_1^1,r_3^3\}$ and $L_3^1\cup \{s^3_3,t_2^2\}$). Also, in the case that $S_3^1\cup S_1^2\cup S_2^3\neq \emptyset$, if $S_3^1\neq \emptyset$, then remove the cliques $K_3^1,M_1^2$ and $M_3^2$, add the clique $\{s_3^1,s_3^3,t_1^1,t_3^1\}$, merge the pair $(K_1^3,L_3^1)$ and replace the cliques $L_2^1,L_2^3$ and $L_3^2$ with the cliques $L_2^1\cup S_3^1\cup \{t_1^1,r_3^3\},(L_2^3\cup \{s_2^2\})\setminus S_3^1$ and $L_3^2\cup \{s_3^3,t_1^1\}$, if $S_1^2\neq \emptyset$, then remove the cliques $K_3^1,M_2^1$ and $M_3^1$, add the clique $\{s_1^2,s_2^2,t_3^3,r_1^2\}$, merge the pair $(L_3^1,M_1^2)$ and replace the cliques $L_2^1,L_2^3$ and $L_2^3$ with the cliques $L_2^1\cup  \{s_1^1\}$ and $L_2^3\cup \{s_2^2,t_3^3\}$, and if $S_2^3\neq \emptyset$, then remove the cliques $K_1^2,K_3^2$ and $M_1^2$, add the clique $\{s_1^1,t_2^2,t_3^2\}$, merge the pair $(L_2^3,M_2^1)$ and replace the cliques $L_1^2,L_1^3,L_3^1$ and $L_3^2$ with the cliques $L_1^2\cup \{s_3^3\},L_1^3\cup \{t_3^3\},L_3^1\cup \{s_3^3,t_2^2\}$ and $L_3^2\cup \{s_3^3,t_2^2\}$.
	\end{itemize}
	This proves (1.2).\vsp
	
	(1.3) \textit{We have $|R|\geq 3$.}\vsp
	
	Suppose not, assume that $|R|=2$. By (1.2), $|T|=4$ and w.l.o.g. we may assume that $T=\{t_1^1,t_2^2,t_1^2,t_2^2\}$ and $R=\{r_1^1,r_p^2\}$. By \pref{lem:schlafli Ultra}, $|S|=1$. Now, in $\mathscr{M}(G)$, if either $p=2$ and $|S|=|S_2^1|=1$, or $p=3$, then remove the cliques $M_2^1$ and $M_2^3$ and merge the pair $(L_3^1,L_3^2)$, and if $p=2$ and $|S|=|S_1^2|=1$, then remove the cliques $K_2^1$ and $K_3^1$ and merge the pair $(L_2^1,L_3^1)$, thereby obtaining a clique covering for $G$ of size $15=n-1$, a contradiction. This proves (1.3).\vsp
	
	(1.4) \textit{We have $|T|=4$.}\vsp
	
	For if this is not the case, then by (1.3) and the assumption $|T|\geq |R|$, we have $|T|=|R|=3$. Due to symmetry, let $T=\{t_1^1,t_{3-i}^i,t_2^2\}$, for some $i\in \{1,2\}$. Also, for every $(j,k)\in \{(1,1),(2,1),(1,p),(2,p)\}$, let $R_k^j=V(G)\cap \{r_k^j\}$, and also let $(j_0,k_0)$ be the unique element of $\{(1,1),(2,1),(1,p),(2,p)\}$ such that $R_{k_0}^{j_0}=\emptyset$. By \pref{lem:schlafli Ultra}, $|S|=1$. Through the following three cases, we apply suitable modifications to $\mathscr{M}(G)$  that yields a clique covering for $G$ of size at most $n-1$, a contradiction.
	\begin{itemize}
		\item $p=2$. By symmetry, we may assume that $i=1$. In the case that $|S|=|S_2^1|=1$, remove the cliques $M_2^1$ and $M_2^3$ and merge the pair $(L_1^2,L_1^3)$, and in the case that $|S|=|S_1^2|=1$, remove the cliques $K_2^1,K_3^1$ and $M_2^1$ and replace the cliques $L_1^2$ and $L_3^2$ with the cliques $L_1^2\cup R_1^1\cup \{t_3^3\}$ and $L_3^2\cup \{s_2^2\}$. 
		\item $p=3$ and $i=1$. In the case that $(j_0,k_0)=(1,1)$, remove the cliques $M_2^1$ and $M_1^2$, replace the cliques $L_2^1,L_3^1,L_1^3$ and $L_2^3$ with the cliques $L_2^1\cup \{t^1_1\},L_3^1\cup \{s_3^3\},L_1^3\cup \{t_3^3\}$ and $L_2^3\cup \{s_2^2\}$ and in the resulting family, if $|S|=|S_2^1|=1$, then merge the pair $(K_1^3,L_3^1\cup \{s_3^3\})$, and if $|S|=|S_1^3|=1$, then remove the clique $K_3^1$ and replace the cliques $L_1^2$ and $L_1^3\cup \{t_3^3\}$ with the cliques $L_1^2\cup \{t_3^1\}$ and $(L_1^3\cup \{s_1^1,t_3^3\})\setminus \{t_3^1\}$. Also, in the case that $(j_0,k_0)\in \{(2,1),(1,3)\}$, remove the clique $M_1^2$, replace the cliques $L_2^1$ and $L_3^1$ with the cliques $L_2^1\cup \{t_1^1\}$ and $L_3^1\cup \{s_3^3\}$ and in the resulting family, if $(j_0,k_0)=(2,1)$, then remove the cliques $L_1^2$ and $L_3^2$ and if $(j_0,k_0)=(1,3)$, then remove the cliques $M_2^1$ and $M_2^3$. Moreover, in the case that $(j_0,k_0)=(2,3)$, remove the clique $M_2^1$, replace the cliques $L_1^3$ and $L_2^3$ with the cliques $L_1^3\cup \{t_3^3,r_1^1\}$ and $L_2^3\cup \{s_2^2\}$, and in the resulting family, if $|S|=|S_2^1|=1$, then remove the clique $K_2^1$, merge the pair $(L_3^1,M_1^2)$ and replace the clique $L_2^1$ with the clique $L_2^1\cup \{t_1^1,r_3^3\}$, and if $|S|=|S_1^3|=1$, then remove the cliques $K_2^1$ and $K_3^1$.
		\item $p=3$ and $i=2$. First assume that $(j_0,k_0)=(2,3)$. If $|S|=|S_2^1|=1$, then merge the pair $(L_3^1,M_1^2)$, remove the cliques $K_3^1$ and $M_2^1$ and replace the cliques $L_2^1,L_1^3$ and $L_2^3$ and the cliques $L_2^1\cup \{s_1^1\},L_1^3\cup \{r_1^1\}$ and $L_2^3\cup \{s_2^2,t_3^3\}$, and if $|S|=|S_1^3|=1$, then remove the cliques $K_2^1,K_3^1,L_2^1$ and $L_2^3$. Now, suppose $(j_0,k_0)\neq (2,3)$. In the case that $|S|=|S_2^1|=1$, remove the cliques $K_3^1$ and $K_2^3$, replace the cliques $K_3^2,L_2^1,L_2^3,L_3^1$ and $L_3^2$ with the cliques $K_3^2\cup \{r_3^2\},L_2^1\cup \{s_1^1\},L_2^3\cup \{t_3^3\},L_3^1\cup \{t_1^1,t_1^2,r_3^3\}$ and $(L_3^2\cup \{s_2^2\})\setminus \{t_1^2\}$ and in the resulting family, if $(j_0,k_0)=(1,1)$, then remove the clique $M_2^1$ and replace the clique $L_2^3\cup \{t_3^3\}$ with $L_2^3\cup \{s_2^2,t_3^3\}$, if $(j_0,k_0)=(2,1)$, then merge the pair $(L_3^1,L_3^2)$ and if $(j_0,k_0)=(1,3)$, then remove the cliques $M_2^1$ and $M_2^3$. Also, in the case that $|S|=|S_1^3|=1$, remove the cliques $L_2^1$ and $L_2^3$ and in the resulting family, if $(j_0,k_0)=(1,1)$, then merge the pair $(K_1^3,L_3^1)$, if $(j_0,k_0)=(2,1)$, then merge the pair $(L_3^1,L_3^2)$ and if $(j_0,k_0)=(1,3)$, then remove the cliques $M_2^1$ and $M_2^3$.
	\end{itemize}
	This proves (1.4).\vsp
	
	Now (1.3) and (1.4) together imply that $|S|+|T|+|R|\geq 8$, which contradicts to \pref{lem:schlafli Ultra}.\\
	
	\textbf{Case 2.} $|I_T|+|I_R|=5$.\\
	By symmetry, assume that $|I_T|=3$ and $I_R=\{1,2\}$, and also $\{r_1^1,r_2^2\}\subseteq R\subseteq \{r_1^1,r_2^1,r_1^2,r_2^2\}$. Let $R_{3-i}^i=V(G)\cap \{r_{3-i}^i\}$, $i\in \{1,2\}$.\vsp
	
	(2.1) \textit{We have $|T|\geq 4$.}\vsp 
	
	Suppose not, assume that $|T|=3$, say $T=\{t_j^i,t_{3-j}^{i+1},t_{3-j}^{i+2}\}$. By symmetry, we may assume that $(i,j)\in \{(1,1),(1,2),(3,1)\}$. Through the following three cases, we apply some modifications to $\mathscr{M}(G)$  that yields a clique covering for $G$ of size at most $n-1$, a contradiction.
	\begin{itemize}
		\item $(i,j)=(1,1)$. Note that by \pref{lem:square-forcer} (applying to $(i,j,k)=(1,2,3)$), $S_2^1\cup S_3^1\cup S_1^2\neq \emptyset$. By \pref{lem:schlafli Ultra}, $|S|+|R|\leq 4$, i.e. $1\leq |S|\leq 2$ and $2\leq |R|\leq 3$. First, assume that $|R|=2$. If $S_2^1=\emptyset$, then remove the cliques $K_2^1,K_3^1,L_2^1$ and $L_2^3$. Now, suppose that $S_2^1\neq \emptyset$ and $S_1^2=\emptyset$. Remove the cliques $L_1^2$ and $L_2^3$ and in the resulting family, if $S_3^1=\emptyset$, then merge the pair $(M_1^3,M_2^3)$, remove the clique $M_3^1$ and replace the cliques $L_1^3$ and $L_2^3$ with the cliques $L_1^3\cup \{r_1^1\}$ and $L_2^3\cup \{s_2^2,t_3^3\}$, and if $S_3^1\neq \emptyset$, then remove the clique $M_2^3$ and replace the cliques $L_2^1$ and $L_3^1$ with the cliques $L_2^1\cup \{s_1^1, r_3^3\}$ and $L_3^1\cup \{t_2^2\}$. Thus, we may assume that $|S|=|S_2^1\cup S_1^2|=2$, and in this case, remove the cliques $K_2^1,M_3^2$ and $M_2^3$ and replace the cliques $L_2^1,L_1^2,L_3^2,L_1^3,L_2^3$ and $M_1^3$  with the cliques $L_2^1\cup \{r_3^3\},L_1^2\cup \{s_1^1,t_2^2\},L_3^2\cup \{r_3^3\},L_1^3\cup \{s_3^3,r_2^2\},L_2^3\cup \{s_2^2,t_1^1\}$ and $M_1^3\cup \{t_2^3\}$.
		
		Next, assume that $|R|=3$. Thus, $|S|=1$. If $S_2^1=\emptyset$, then remove the cliques $K_2^1$ and $K_3^1$ and merge the pair $(L_1^3,L_2^3)$. Also, if $|S|=|S_2^1|=1$, then merge the pairs $(L_1^2,L_1^3)$ and $(M_1^3,M_2^3)$, remove the clique $K_1^3$ and replace the cliques $L_3^1$ and $L_3^2$ with the cliques $L_3^1\cup \{t_2^2\}$ and $L_3^2\cup \{s_1^1, r_3^3\}$.
		\item $(i,j)=(1,2)$. Again by \pref{lem:square-forcer} (applying to $(i,j,k)=(1,2,3)$), $S_2^1\cup S_3^1\cup S_1^2\neq \emptyset$. By \pref{lem:schlafli Ultra}, $|S|+|R|\leq 4$, i.e. $1\leq |S|\leq 2$ and $2\leq |R|\leq 3$. First, assume that $|R|=2$. In the case that  $S_1^2=\emptyset$, remove the cliques $M_2^1$ and $M_2^3$ and in the resulting family, if $S_2^3=\emptyset$, then merge the pair $(L_2^1,L_3^1)$, if $S_3^2\cup S_2^3=\emptyset$, then merge the pair $(L_1^3,M_3^2)$ and if $S_2^3\neq \emptyset$ (and so exactly one of $S_2^1,S_3^1=\emptyset$), then remove the clique $K_1^2$ and replace the cliques $L_1^2$ and $L_1^3$ with the cliques $L_1^2\cup S_3^1\cup \{s_3^3\}$ and $(L_1^3\cup \{r_1^1\})\setminus S_3^1$. Now, assume that $S_1^2\neq \emptyset$. If $|S|=2$ and $S_3^2\cup S_2^3=\emptyset$, then merge the pair $(L_1^3,M_3^2)$, remove the cliques $K_2^1$ and $M_2^3$ and replace the cliques $K_2^3,L_2^1,L_2^3,L_1^2$ and $L_3^2$ with the cliques $(K_2^3\cup \{t_1^3\})\setminus \{t_2^1\},L_2^1\cup S_3^1\cup \{r_3^3\},(L_2^3\cup \{s_2^2\})\setminus S_3^1,(L_1^2\cup S_1^3\cup \{s_1^1\}$ and $(L_3^2\cup \{r_3^3\})\setminus S_1^3$. Also, if $|S|=2$ and $S_3^2\cup S_2^3\neq \emptyset$, then merge the pair $(K_1^2,L_1^3)$ and in the resulting family, in the case that $|S|=|S^2|=2$, remove the cliques $K_2^1$ and $M_1^3$ and replace the cliques $L_2^1,L_3^1,L_3^2$ and $M_2^3$ with the cliques $L_2^1\cup \{r_1^3\},(L_3^1\cup \{s_1^1,t_1^2,r_3^3\})\setminus \{r_1^3\},(L_3^2\cup \{s_2^2\})\setminus \{t_1^2\}$ and $M_2^3\cup \{t_1^2\}$ and in the case that $|S|=|S_1^2\cup S_2^3|=1$, remove the cliques $K_2^3$ and $M_2^3$ and replace the cliques $K_2^1,L_2^1,L_2^3,L_1^2$ and $L_3^2$ with the cliques $K_2^1\cup \{r_1^3\},(L_2^1\cup \{r_3^3\})\setminus S_2^3,L_2^3\cup S_2^3\cup \{s_2^2\},L_1^2\cup \{s_1^1\}$ and $L_3^2\cup \{r_3^3\}$. Finally, if $|S|=|S_1^2|=1$, then merge the pair $(L_1^3,M_3^2)$, remove the cliques $K_2^3,M_1^3$ and $M_2^3$ and replace the cliques $K_2^1,L_2^1,L_3^1,L_1^2,L_3^2$ and $L_2^3$ with the cliques $K_2^1\cup \{r_1^3\},L_2^1\cup \{r_1^3,r_3^3\},(L_3^1\cup \{s_1^1,r_3^3\})\setminus \{r_1^3\},L_1^2\cup \{s_1^1\},L_3^2\cup \{r_3^3\}$ and $L_2^3\cup \{s_2^2\}$.
		
		Next, assume that $|R|=3$. Thus, $|S|=1$. If $S_1^2=\emptyset$, then remove the cliques $M_2^1,M_2^3$ and $M_3^2$ and replace the cliques $L_1^2$ and $L_1^3$ with the cliques $L_1^2\cup \{s_3^3,r_2^2\}$ and $L_1^3\cup \{s_3^3,r_2^2\}$. Also, if $|S|=|S_1^2|=1$, then merge the pair $(K_1^2,L_1^3)$ and in the resulting family, if $R_2^1=\emptyset$, then remove the cliques $K_2^3$ and $M_1^3$ and replace the cliques $K_2^1,L_2^1,L_2^3$ and $L_3^1$ with the cliques $K_2^1\cup \{r_1^3\},L_2^1\cup \{r_1^3,r_3^3\},(L_3^1\cup \{s_1^1,r_3^3\})\setminus \{r_1^3\}$ and $L_2^3\cup \{s_2^2\}$, and if $R_1^2=\emptyset$, then remove the cliques $K_2^1$ and $M_2^3$ and replace the cliques $L_1^2,L_3^1$ and $L_3^2$ with the cliques $L_1^2\cup \{s_1^1\}, L_3^1\cup \{t_1^2\}$ and $(L_3^2\cup \{s_2^2,r_3^3\})\setminus \{t_1^2\}$.
		\item $(i,j)=(3,1)$. By \pref{lem:schlafli Ultra}, $|S|+|R|\leq 4$, i.e. $|S|\leq 2$ and $2\leq |R|\leq 4$. First, assume that $|R|=2$. If $S_1^2\neq \emptyset$, then remove the cliques $L_1^2,L_3^2,M_2^1$ and $M_2^3$ and if $S=\emptyset$, then merge the pair $(L_1^3,M_3^2)$. Now, assume that $S_1^2\neq \emptyset$. If $|S|=2$, then remove the cliques $M_1^2,M_2^1,M_2^3$ and $M_3^2$, add the clique $\{s_1^1,s_1^2,t_3^3\}$ and replace the cliques $L_2^1,L_3^1,L_1^2,L_3^2,L_1^3$ and $L_2^3$ with the cliques $(L_2^1\cup \{s_3^3,r_1^3\})\setminus S_2^1,(L_3^1\cup S_2^1\cup \{r_2^2\})\setminus \{r_1^3\},L_1^2\cup S_1^3\cup \{t_2^2,r_1^1\},(L_3^2\cup \{s_2^2,r_3^3\})\setminus S_1^3,(L_1^3\cup \{s_3^3,r_2^2\})\setminus S_3^2$ and $L_2^3\cup S_3^2$. Also, if $|S|=|S_1^2|=1$, then merge the pair $(L_1^3,M_3^2)$, remove the cliques $K_2^3,M_1^3,M_2^3$  and replace the cliques $K_2^1,L_2^1,L_3^1,L_1^2,L_3^2$ and $L_2^3$ with the cliques $K_2^1\cup \{r_1^3\},L_2^1\cup \{t_2^2,r_1^3,r_3^3\},(L_3^1\cup \{s_1^1,r_3^3\})\setminus \{r_1^3\},L_1^2\cup \{s_1^1,t_2^2\},L_3^2\cup \{r_3^3\}$ and $L_2^3\cup \{s_2^2\}$.
		Next, assume that $3\leq |R|\leq 4$. Thus, $|S|\leq 1$. If $|S|=1$ and $S_1^2=\emptyset$, then remove the cliques $M_2^1$ and $M_2^3$ and in the resulting family, if $S_3^1\neq \emptyset$, then merge the pair $(L_1^2,L_1^3)$ and if $S_3^1\neq \emptyset$, then remove the clique $K_2^1$ and replace the cliques $L_3^1$ and $L_3^2$ with the cliques $L_3^1\cup \{r_3^3\}$ and $L_3^2\cup \{s_2^2\}$. Also, if $|S|=|S_1^2|=1$, then merge the pair $(L_1^3,M_3^2)$, remove the cliques $K_2^1$ and $M_2^1$ and replace the cliques $L_3^1,L_1^2$ and $L_3^2$ with the cliques $L_3^1\cup \{r_3^3\},L_1^2\cup \{t_3^3,r_1^1\}$ and $L_3^2\cup \{s_2^2\}$. Finally, if $S=\emptyset$, then remove the cliques $M_2^1$ and $M_2^3$ and merge the pairs $(L_1^2,L_1^3)$ and $(L_1^3,M_3^2)$.
	\end{itemize}
	This proves (2.1).\vsp
	
	(2.2) \textit{We have $|T|=5$.}\vsp 
	
	Note that if $|T|=6$, then since $|R|\geq 2$, we have $|S|+|T|+|R|\geq 8$, which contradicts \pref{lem:schlafli Ultra}. Thus, if (2.2) does not hold, then by (1.2), $|T|=4$, and so by \pref{lem:schlafli Ultra}, $|S|+|R|\leq 3$, i.e. $|S|\leq 1$ and $2\leq |R|\leq 3$. In the sequel, first assume that $T=\{t_1^i,t_2^i,t_j^{i+1},t_j^{i+2}\}$, for some $i\in \{1,2,3\}$ and $j\in \{1,2\}$. By symmetry, we may assume that $(i,j)\in \{(1,1),(1,2),(3,2)\}$. Through the following three cases, we apply some modifications to $\mathscr{M}(G)$  that yields a clique covering for $G$ of size at most $n-1$, a contradiction.
	\begin{itemize}
		\item $(i,j)=(1,1)$. Note that by \pref{lem:square-forcer} (applying to $(i,j,k)=(1,2,3)$), $S_1^2\cup S_2^1\cup S_3^1\neq \emptyset$. Thus, $|S|=|S_1^2\cup S_2^1\cup S_3^1|=1$ and $|R|=2$. If $S_2^1=\emptyset$, then remove the cliques $K_2^1$ and $K_3^1$ and merge the pair $(L_2^1,L_3^1)$, and if $S_2^1\neq \emptyset$, then remove the cliques $L_1^2$ and $L_3^2$ and merge the pair $(M_1^3,M_2^3)$.
		\item $(i,j)=(1,2)$. Again by \pref{lem:square-forcer} (applying to $(i,j,k)=(1,2,3)$), we have $|S|=|S_1^2\cup S_2^1\cup S_3^1|=1$ and $|R|=2$. If $S_1^2=\emptyset$, then remove the cliques $M_2^1$ and $M_2^3$ and merge the pair $(L_3^1,L_3^2)$, and if $S_1^2\neq \emptyset$, then remove the cliques $M_1^3,M_2^3$ and $M_3^1$ and replace the cliques $L_2^1,L_3^1,L_1^2,L_3^2,L_1^3$ and $L_2^3$ with the cliques $L_2^1\cup \{r_1^3\},(L_3^1\cup \{s_1^1,r_3^3\})\setminus \{r_1^3\},L_1^2\cup \{s_1^1\},L_3^2\cup \{r_3^3\},L_1^3\cup \{t_3^3,r_1^1\}$ and $L_2^3\cup \{s_2^2\}$.
		\item $(i,j)=(3,1)$. First, assume that $|R|=2$. Thus, $|S|\leq 1$. In the case that $S_1^2=\emptyset$, remove the cliques $L_1^2$ and $L_3^2$ and in the resulting family, if $S_3^1=\emptyset$, then merge the pair $(M_1^3,M_2^3)$, remove the clique $M_3^1$ and replace the cliques $L_1^3,L_2^3$ and $M_2^1$ with the cliques $(L_1^3\cup \{t_3^3\})\setminus S_3^2,L_2^3\cup S_3^2\cup \{s_2^2,r_1^1\}$ and $M_2^1\cup S_2^3$, and if $S_3^1\neq \emptyset$, then remove the clique $M_2^3$ and replace the cliques $L_2^1$ and $L_3^1$ with the cliques $L_2^1\cup \{t_2^2,r_1^3,r_3^3\}$ and $(L_3^1\cup \{s_1^1\})\setminus \{r_1^3\}$. Also, in the case that $S_1^2\neq \emptyset$, merge the pair $(K_2^1,L_3^2)$, remove the cliques $K_1^2$ and $K_2^3$ and replace the cliques $K_2^1,L_1^2,L_1^3,L_2^1$ and $L_2^3$ with the cliques $K_2^1\cup \{r_1^3\}, L_1^2\cup \{t_2^2,r_1^1\},L_1^3\cup \{s_3^3\},L_2^1\cup \{r_3^3\}$ and $L_2^3\cup \{s_2^2\}$.
		
		Next, assume that $|R|=3$ and thus $S=\emptyset$, and in this case, merge the triple $(L_1^2,L_1^3,M_3^2)$ and the pair $(M_1^3,M_2^3)$.
	\end{itemize}
	
	Next, suppose that $T=\{t_1^i,t_2^i,t_j^{i+1},t_{3-j}^{i+2}\}$, for some $i\in \{1,2,3\}$ and $j\in \{1,2\}$. By symmetry, we may assume that $(i,j)\in \{(1,1),(1,2),(3,1),(3,2)\}$. Through the following possible cases, we apply some modifications to $\mathscr{M}(G)$  that yields a clique covering for $G$ of size at most $n-1$, a contradiction.
	\begin{itemize}
		\item $(i,j)=(1,1)$. First, assume that $|R|=2$. Thus, $|S|\leq 1$. If $S_2^1=\emptyset$, then remove the cliques $K_2^1$ and $K_3^1$ and in the resulting family, in the case that $S_1^3=\emptyset$, merge the pair $(L_2^1,L_3^1)$ and in the case that $S\setminus S_1^3=\emptyset$, remove the cliques $M_3^1$ and replace the cliques $L_1^3$ and $L_2^3$ with the cliques $L_1^3\cup \{t_3^3,r_1^1\}$ and $L_2^3\cup \{s_2^2\}$. Also, if $S_2^1\neq \emptyset$, then merge the pair $(K_1^2,L_1^3)$, remove the cliques $K_2^1$ and $M_3^1$ and replace the cliques $L_2^1,L_1^3$ and $L_2^3$ with the cliques $L_2^1\cup \{r_3^3\},L_1^3\cup \{t_3^3,r_1^1\}$ and $L_2^3\cup \{s_2^2,t_1^1\}$.
		
		Next, assume that $|R|=3$ and thus $S=\emptyset$, and in this case, remove the cliques $K_2^1,K_3^1$ and $M_3^2$ and replace the cliques $L_1^2,L_3^2,L_1^3$ and $L_2^3$ with the cliques $L_1^2\cup \{s_3^3,r_2^2\},L_3^2\cup \{t_1^1\},L_1^3\cup R_2^1\cup \{s_3^3,r_2^2\}$ and $(L_2^3\cup \{t_1^1\})\setminus R_2^1$.
		\item $(i,j)=(1,2)$. First, assume that $|R|=2$. Thus, $|S|\leq 1$. If $S_1^2=\emptyset$, then remove the cliques $L_1^2,L_1^3,M_2^1$ and $M_2^3$. Also, if $S_1^2\neq \emptyset$, then remove the cliques $M_1^2,M_2^3$ and $M_3^1$ and replace the cliques $L_2^1,L_3^1,L_1^2,L_3^2,L_1^3$ and $L_2^3$ with the cliques $L_2^1\cup \{s_3^3,r_1^3\},(L_3^1\cup \{t_1^1,r_2^2\})\setminus \{r_1^3\},L_1^2\cup \{s_1^1,t_2^2\},L_3^2\cup \{r_3^3\},L_1^3\cup \{t_3^3,r_1^1\}$ and $L_2^3\cup \{s_2^2\}$.
		
		Next, assume that $|R|=3$ and thus $S=\emptyset$, and in this case, remove the cliques $M_2^1$ and $M_2^3$ and merge the pair $(L_1^2,L_1^3)$.
		\item $(i,j)=(3,1)$. First, assume that $|R|=2$. Thus, $|S|\leq 1$. In the case that $S_2^1\cup S_3^2=\emptyset$, remove the cliques $L_2^1$ and $L_2^3$ and merge the pair $(K_2^1,K_2^3)$ and if $S_3^1\cup S_1^2=\emptyset$, then remove the cliques $L_1^2$ and $L_3^2$ and merge the pair $(M_1^3,M_2^3)$.
		
		Next, assume that $|R|=3$ and thus $S=\emptyset$, and in this case, merge the pairs $(K_2^1,K_2^3),(L_1^3,L_2^3)$ and $(M_1^3,M_2^3)$.
		\item $(i,j)=(3,2)$ and $|R|=2$. First, assume that $S_1^2\cup S_2\neq \emptyset$. Merge the pair $(K_1^2,L_1^3)$, if $S_2^3=\emptyset$, then remove the cliques $K_2^1$ and $K_2^3$ and replace the cliques $L_2^1,L_2^3,L_3^1$ and $L_3^2$ with the cliques $(L_2^1\cup \{r_3^3\})\setminus S_2^3,L_2^3\cup S_2^3\cup \{s_2^2\},(L_3^1\cup S_1^2 \cup \{s_2^2,r_3^3\})\setminus \{r_2^3\}$ and $(L_3^2\cup \{r_2^3\})\setminus S_1^2$ and if $S_2^3\neq \emptyset$, then remove the cliques $K_2^3$ and $M_1^3$ and replace the cliques $K_2^1,L_2^1,L_2^3,L_1^2,L_3^2$ and $M_2^3$ with the cliques $K_2^1\cup \{r_1^3\},(L_2^1\cup \{r_3^3\})\setminus S_2^3,L_2^3\cup S_2^3\cup \{s_2^2\}, L_1^2\cup \{s_1^1\}, L_3^2\cup \{r_3^3\}$ and $M_2^3\cup \{r_2^3\}$. 
		
		Next, assume that $S_1^2\cup S_2\neq \emptyset$. In this case, if $|S|=|S_3^1|=1$, then remove the cliques $K_2^1,K_3^2$ and $M_3^2$ and replace the cliques $L_3^1,L_3^2,L_1^2,L_1^3$ and $L_2^3$ with the cliques $(L_3^1\cup \{s_2^2,r_3^3\})\setminus \{r_2^3\},L_3^2\cup \{r_2^3\},L_1^2\cup S_3^1\cup \{s_1^1\},(L_1^3\cup \{t_3^3,r_2^2\})\setminus S_3^1$ and $L_2^3\cup \{s_3^3\}$, if $|S|=|S_1^3|=1$, then remove the cliques $K_3^2,M_3^1$ and $M_2^3$ and replace the cliques $L_2^1,L_3^1,L_1^2,L_1^3$ and $L_2^3$ with the cliques $L_2^1\cup \{r_1^3,r_3^3\},(L_3^1\cup \{s_1^1\})\setminus \{r_1^3\},L_1^2\cup \{s_1^1,r_2^2\},L_1^3\cup \{t_3^3\}$ and $L_2^3\cup \{s_2^2,r_1^1\}$, and if $S=S_3^2$, then merge the pair $(K_1^2,L_1^3)$, remove the cliques $K_2^1,M_3^1$ and $M_3^2$, add the clique $S_2^3\{t_3^1,t_3^3,r_2^2\}$, replace the cliques $L_3^1,L_3^2,L_1^3$ and $L_2^3$ with the cliques $(L_3^1\cup \{s_2^2,r_3^3\})\setminus \{r_2^3\},L_3^2\cup \{r_2^3\},L_1^3\cup \{s_3^3,r_1^1\}$ and $L_2^3\cup \{s_2^2\}$ and in the resulting family, if $S=\emptyset$, then remove the clique $K_2^3$ and replace the clique $L_2^1$ with the clique $L_2^1\cup \{r_3^3\}$.
		\item $(i,j)=(3,2)$ and $|R|=3$. Thus, $S=\emptyset$. Merge the pair $(L_1^3,M_3^2)$ and in the resulting family, if $R_2^1=\emptyset$, then remove the cliques $K_2^3$ and $M_2^3$ and replace the cliques $K_2^1,L_2^1,L_3^1$ and $L_2^3$ with the cliques $K_2^1\cup \{r_1^3\},L_2^1\cup \{r_1^3,r_3^3\},(L_3^1\cup \{s_1^1\})\setminus \{r_1^3\}$ and $L_2^3\cup \{s_2^2\}$, and if $R_1^2=\emptyset$, then remove the cliques $K_2^1$ and $M_1^3$ and replace the cliques $L_3^1,L_1^2,L_3^2$ and $M_2^3$ with the cliques $(L_3^1\cup \{s_2^2\})\setminus \{r_2^3\},L_1^2\cup \{s_1^1\},L_3^2\cup \{r_2^3,r_3^3\}$ and $M_2^3\cup \{r_2^3\}$.
	\end{itemize}
	This proves (2.2).\vsp
	
	By (2.2), $|T|=5$, say $t_j^i\notin T$, for some $i\in \{1,2,3\}$ and $j\in \{1,2\}$, and thus by \pref{lem:schlafli Ultra}, $S=\emptyset$ and $|R|=2$. By symmetry, we may assume that $(i,j)\in \{(1,1),(1,2),(3,1)\}$. Through the following three cases, we apply some modifications to $\mathscr{M}(G)$  that yields a clique covering for $G$ of size at most $n-1$, a contradiction.
	\begin{itemize}
		\item $(i,j)=(1,1)$. Remove the cliques $K_1^2,K_2^1$ and $K_2^3$ and replace the cliques $L_1^2,L_1^3,L_2^1,L_2^3,L_3^1$ and $L_3^2$ with the cliques $L_1^2\cup \{s_3^3,t_2^2\},L_1^3\cup \{r_1^1\},L_2^1\cup \{r_3^3\},L_2^3\cup \{s_2^2\},(L_3^1\cup \{s_2^2,r_3^3\})\setminus \{r_2^3\}$ and $L_3^2\cup \{r_2^3\}$.
		\item $(i,j)=(1,2)$. Remove the cliques $L_2^1$ and $L_2^3$ and merge the pair $(K_2^1,K_2^3)$.
		\item $(i,j)=(3,1)$. Remove the cliques $K_2^1$ and $K_3^1$ and merge the pair $(L_2^1,L_3^1)$.
	\end{itemize}

	\textbf{Case 3.} $|I_T|=|I_R|=3$.\\
	Due to symmetry, assume that $|T|\leq |R|$.\vsp
	
	(3.1) \textit{We have $|R|\geq 4$.}\vsp
	
	On the contrary, assume that $|T|=|R|=3$, and w.l.o.g, let $T=\{t_1^1,t_2^2,t_2^3\}$ and $R=\{r^i_j,r^{3-i}_{j+1},r^{3-i}_{j+2}\}$, for some $i\in \{1,2\}$ and $j\in \{1,2,3\}$. By \pref{lem:schlafli Ultra}, $|S|\leq 1$. Through the following six cases, we apply suitable modifications to $\mathscr{M}(G)$,  which yields a clique covering for $G$ of size at most $n-1$, a contradiction.
	\begin{itemize}
		\item $(i,j)=(1,1)$. In the case $S_2^1\cup S_1^2=\emptyset$, remove the cliques $L_2^1,L_1^2,L_3^2$ and $L_2^3$, in the case $|S|=|S_2^1|=1$, remove the cliques $L_1^2,L_3^2$ and merge the pair $(M_1^3,M_2^3)$ and in the case $|S|=|S_1^2|=1$, remove the cliques $L_2^1,L_2^3$ and merge the pair $(K_3^1,K_3^2)$.
		\item $(i,j)=(1,2)$. If $|S|=|S_3^1|=1$, then remove the cliques $K_3^1,K_1^3$ and $M_2^3$ and replace the cliques $L_2^1,L_3^1,L_1^2,L_3^2$ and $L_1^3$ with the cliques $L_2^1\cup \{s_1^1\},L_3^1\cup \{t_2^2,r_3^3\},L_1^2\cup \{t_3^1,t_3^3\},L_3^2\cup \{s_3^3\}$ and $(L_1^3\cup \{s_1^1\})\setminus \{t_3^1\}$. If $|S|=|S_1^2|=1$, then $K_3^1,K_3^2$ and $M_1^3$ and replace the cliques $L_2^1,L_3^1,L_1^2,L_1^3$ and $L_2^3$ with the cliques $L_2^1\cup \{s_1^1\},L_3^1\cup \{t_2^2,r_3^3\},(L_1^2\cup \{s_1^1,t_3^3\})\setminus \{r_1^2\},L_1^3\cup \{r_1^2\}$ and $L_2^3\cup \{t_3^3\}$. If $|S|=|S_2^3|=1$, then merge the pairs $(L_1^2,L_1^3)$ and $(M_1^3,M_2^3)$, remove the clique $K_3^2$ and replace the cliques $K_3^1,L_2^1$ and $L_2^3$ with the cliques $K_3^1\cup \{t_3^2\},L_2^1\cup \{s_1^1\}$ and $L_2^3\cup \{t_3^3\}$. Finally, if $S_3^1\cup S_1^2\cup S_2^3=\emptyset$, then merge the pairs  $(L_1^2,L_1^3)$ and $(M_1^3, M_2^3)$, and in the resulting family, if $ S^1_2\cup S^3_1=\emptyset $, then remove the clique $ K_3^2 $ and replace the cliques $K_3^1$, $L_2^1$ and $ L_2^3 $ with the cliques  $K_3^1\cup \{t^2_3\}$, $L_2^1\cup \{s_1^1\}$ and $ L_2^3\cup \{t^3_3\} $, and if $ S^2_3=\emptyset $, then remove the clique $ K_1^3 $ and replace the cliques $ L_3^1 $ and $ L_3^2 $ with the cliques $ L_3^1\cup \{t^2_2\} $ and $ L_3^2\cup \{s_3^3\} $.
		\item $(i,j)=(1,3)$. In the case that $S_2^1=\emptyset$, remove the cliques $K_2^1,K_3^1,L_2^1$ and $L_2^3$ and in the case $|S|=|S_2^1|=1$, remove the cliques $K_2^1, K_3^1, K_1^3$ and $M_1^3$, add the clique $\{s_2^1,t_1^1,r_2^2\}$ and replace the cliques $L_2^1,L_3^1,L_3^2$ and $ L_2^3$ with the cliques $L_2^1\cup \{s_1^1,r_3^3\},L_3^1\cup \{t_2^2\},L_3^2\cup \{s_3^3\}$ and $L_2^3\cup\{s_2^2,t^3_3\}$.
		\item $(i,j)=(2,1)$. In the case that $S_2^1\cup S_3^2=\emptyset$, remove the cliques $K_2^1,K_3^1$ and merge the pair $(L_1^3,L_2^3)$, and in the resulting family, if $S=\emptyset$, then remove the clique $K_1^3$ and replace the cliques $L_3^1$ and $L_3^2$ with the cliques $L_3^1\cup \{t_2^2\}$ and $L_3^2\cup \{s_3^3\}$. Also, in the case $|S|=|S_2^1|=1$, remove the cliques $K_1^3,K_3^1$ and $M_1^3$ and replace the cliques $L_2^1,L_2^3,L_3^1$ and $L_3^2$ with the cliques $L_2^1\cup \{s_1^1\},L_2^3\cup \{t_3^3\},L_3^1\cup \{t_2^2,r_3^3\}$ and $L_3^2\cup \{s_3^3\}$ and in the case $|S|=|S_3^2|=1$, remove the cliques $K_2^1,K_3^1$ and $M_1^3$ and replace the cliques $L_2^1$ and $L_3^1$ with the cliques $L_2^1\cup \{s_1^1\}$ and $L_3^1\cup \{t_2^2,r_3^3\}$.
		\item $(i,j)=(2,2)$. In the case $S_2^1=\emptyset$, remove the cliques $K_2^1,K_3^1,L_2^1$ and $L_2^3$ and in the case $|S|=|S_2^1|=1$, remove the cliques $L_1^2, L_3^2$ and $M_1^3$ and replace the cliques $L_2^1$ and $L_3^1$ with the cliques $L_2^1\cup \{s_1^1,r_3^3\}$ and $L_3^1\cup \{t_2^2\}$.
		\item $(i,j)=(2,3)$. In the case $S_1^2\cup S_3^1=\emptyset$, remove the cliques $L_1^2$ and $L_3^2$ and merge the pair $(M_1^3,M_2^3)$, and if $S=\emptyset$, then remove the clique $K_3^2$ and replace the cliques $K_3^1,L_2^1,L_2^3$ with the cliques $K_3^1\cup \{t_3^2\},L_2^1\cup \{s_1^1\},L_2^3\cup \{t_3^3\}$. Also, in the case $|S|=|S_1^2|=1$, remove the clique $K_3^2$, replace the cliques $L_2^1,L_2^3,L_3^1$ and $L_3^2$ with the cliques $L_2^1\cup \{s_1^1\},L_2^3\cup \{t_3^3\},L_3^1\setminus \{t_2^3\}$ and $L_3^2\cup \{t_2^3\}$ and merge the pairs $(K_3^1,L_1^2)$ and $(L_3^1\setminus \{t_2^3\},M_1^2)$ in the resulting family. Finally, in the case $|S|=|S_3^1|=1$, remove the cliques $L_1^2,L_3^2$ and $M_2^3$ and replace the cliques $L_2^1$ and $L_3^1$ with the cliques $L_2^1\cup \{s_1^1\}$ and $L_3^1\cup \{t_2^2,r_3^3\}$.
	\end{itemize}
	This proves (3.1).\vsp
	
	(3.2) \textit{If $|T|=3$, then $|R|\geq 5$.}\vsp
	
	On the contrary, let $|T|=3$ and $|R|\leq 4$. By (3.1), $|R|=4$. Also, w.l.o.g. we may assume that $T=\{t_1^1,t_2^2,t_2^3\}$. By \pref{lem:schlafli Ultra}, $S=\emptyset$. In the sequel, first suppose that $R=\{r_j^1,r_j^2,r_{j+1}^i,r_{j+2}^i\}$ for some $i\in \{1,2\}$ and some $j\in \{1,2,3\}$. Through the following four cases, we apply suitable modifications to $\mathscr{M}(G)$, which yields a clique covering for $G$ of size at most $n-1$, a contradiction.
	\begin{itemize}
		\item $(i,j)\in \{(2,1),(2,3)\}$. Remove the cliques $L_2^1$ and $L_2^3$ and merge the pair $(K_3^1,K_3^2)$.
		\item $(i,j)\in \{(1,1),(1,2)\}$. Remove the cliques $K_2^1$ and $K_3^1$ and merge the pair $(L_1^3,L_2^3)$.
		\item $(i,j)=(2,2)$. Merge the pairs $(L_1^2,L_1^3)$ and $ (M_1^3,M_2^3) $, remove the clique $K_3^2$ and replace the cliques
		$ K_3^1 $, $ L_2^1 $ and $ L_2^3 $ with the cliques $ K_3^1\cup \{t^2_3\} $, $ L_2^1\cup \{s_1^1\} $ and $ L_2^3\cup \{t^3_3,r_2^2\} $.
		\item $(i,j)=(1,3)$. Remove the cliques $L_1^2,L_3^2$ and $M_1^3$, and replace the cliques $L_2^1$ and $L_3^1$ with the cliques $L_2^1\cup \{s_1^1\}$ and $L_3^1\cup \{t_2^2,r_3^3\}$.
	\end{itemize} 
	
	Next, suppose that $R=\{r_j^1,r_j^2,r_{j+1}^i,r_{j+2}^{3-i}\}$ for some $i\in \{1,2\}$ and some $j\in \{1,2,3\}$. Through the four cases, we apply some modifications to $\mathscr{M}(G)$, which yields a clique covering for $G$ of size at most $n-1$, a contradiction.
	\begin{itemize}
		\item $(i,j)\in \{(1,2),(2,1)\}$. Remove the cliques $K_2^1$ and $K_3^1$ and merge the pair $(L_1^3,L_2^3)$.
		\item $(i,j)\in \{(1,3),(2,2)\}$. Remove the cliques $L_1^2$ and $L_3^2$, if $(i,j)=(1,3)$, then merge the pair $(K_3^1,K_3^2)$ and if $(i,j)=(2,2)$, then merge the pair $(M_1^3,M_2^3)$.
		\item $(i,j)=(1,1)$. Merge the pairs $(L_1^2,L_1^3)$ and $ (M_1^3,M_2^3) $, remove the clique $K_3^2$ and replace the cliques
		$ K_3^1 $, $ L_2^1 $ and $ L_2^3 $ with the cliques $ K_3^1\cup \{t^2_3\} $, $ L_2^1\cup \{s_1^1\} $ and $ L_2^3\cup \{t^3_3\} $.
		\item $(i,j)=(2,3)$. Remove the cliques $K_3^2,K_2^3$ and $M_1^3$ and replace the cliques $K_3^1,L_2^1,L_3^1,L_3^2$ and $L_2^3$ with the cliques $K_3^1\cup \{t_3^2\},L_2^1\cup \{s_1^1\},L_3^1\cup \{t_2^2,r_3^3\},L_3^2\cup \{s_2^2,t_1^1\}$ and $L_2^3\cup \{t_3^3\}$.
	\end{itemize}
	This proves (3.2).\vsp
	
	Now, (3.2) and the assumption $|T|\leq |R|$ implies that $|T|+|R|\geq 8$, which is impossible by \pref{lem:schlafli Ultra}.\\
	
	This completes the proof of \pref{lem:ccschlafli}.
\end{proof}
\end{document}